\def\NewTheorem#1{%
  \newaliascnt{#1}{thm}
  \newtheorem{#1}[#1]{\csname #1Name\endcsname}
  \aliascntresetthe{#1}
  \expandafter\def\csname #1autorefname\endcsname{\csname #1name\endcsname}
  \expandafter\def\csname #1Autorefname\endcsname{\csname #1Name\endcsname}
}
\newcommand{\theoremName}{\iflanguage{francais}{Th\'eor\`eme}{Theorem}}
\newcommand{\pbName}{\iflanguage{francais}{Probl\`eme}{Problem}}
\newtheorem{thm}{\theoremName}[subsection]
\newtheorem{pbnonumber}{\pbName}
\newtheorem{thmintro}{\theoremName}
\theoremstyle{definition}
\theoremstyle{remark}
\renewenvironment{proof}[1][]{\par
  \pushQED{\qed}%
  \normalfont \topsep6\p@\@plus6\p@\relax
  \trivlist
  \item[\hskip\labelsep
        \bfseries
    \proofname\ifthenelse{\equal{#1}{}}{}{\textmd{ (#1)}}\@addpunct{.}]\ignorespaces
}{%
  \popQED\endtrivlist\@endpefalse
}
\newcommand{\N}{\mathbb{N}}
\newcommand{\Z}{\mathbb{Z}}
\newcommand{\A}{\mathbb{A}}
\newcommand{\Lcot}{\mathbb{L}}
\newcommand{\T}{\mathbb{T}}
\newcommand{\pt}{{*}}
\newcommand{\id}{\operatorname{id}}
\newcommand{\ev}{\operatorname{ev}}
\newcommand{\pr}{\operatorname{pr}}
\newcommand{\dual}[1]{{#1}^{\vee}}
\newcommand{\op}{^{\mathrm{op}}}
\newcommand{\noloc}{\,:}
\newcommand{\loccit}{\emph{loc. cit.} }
\newcommand{\Ff}{\mathcal{F}} 
\newcommand{\Gg}{\mathcal{G}} 
\newcommand{\Cc}{\mathcal{C}} 
\newcommand{\Dd}{\mathcal{D}} 
\newcommand{\Oo}{\mathcal{O}} 
\newcommand{\inftyCat}{\mathbf{Cat}_\infty}
\newcommand{\sSets}{\mathbf{sSets}}
\newcommand{\PresLeft}{\mathbf{Pr}^{\mathrm{L}}_\infty}
\newcommand{\presh}{\operatorname{\mathcal{P}}}
\newcommand{\sifted}{\operatorname{\presh_\Sigma}}
\newcommand{\dAff}{\mathbf{dAff}}
\newcommand{\cdgaunbounded}{\mathbf{cdga}}
\newcommand{\cdga}{\cdgaunbounded^{\leq 0}}
\newcommand{\CAlg}{\mathrm{CAlg}}
\newcommand{\dgArt}{\mathbf{dgArt}}
\newcommand{\dgLie}{\mathbf{dgLie}}
\newcommand{\dgAlg}{\mathbf{dgAlg}}
\newcommand{\dgMod}{\mathbf{dgMod}}
\newcommand{\Perf}{\mathbf{Perf}}
\newcommand{\dSt}{\mathbf{dSt}}
\let\originalleft\left
\let\originalright\right
\renewcommand{\left}{\mathopen{}\mathclose\bgroup\originalleft}
\renewcommand{\right}{\aftergroup\egroup\originalright}
\newcommand{\Map}{\operatorname{Map}}
\newcommand{\Mapstack}{\operatorname{\underline{Ma}p}}
\newcommand{\Homint}{\operatorname{\underline{Hom}}}
\newcommand{\RHomint}{\operatorname{\mathbb{R}\underline{Hom}}}
\newcommand{\Fct}{\operatorname{Fct}}
\DeclareMathOperator*{\colim}{colim}
\newcommand{\B}{\operatorname{B}}
\newcommand{\K}{\operatorname{K}}
\let\OLDtimes\times
\DeclareMathBinOp*{\times}{\OLDtimes}
\let\OLDamalg\amalg
\DeclareMathBinOp*{\amalg}{\OLDamalg}
\let\OLDotimes\otimes
\DeclareMathBinOp*{\otimes}{\OLDotimes}
\let\OLDwedge\wedge
\DeclareMathBinOp*{\wedge}{\OLDwedge}
\newcommandx*{\timesunder}[5][1={},2={},3=-2pt,4=0pt,5=0mm,usedefault]{\times_{\makebox[#5]{\raisebox{#3}{\ensuremath{\scriptstyle #1}}}}^{\makebox[#5]{\raisebox{#4}{\ensuremath{\scriptstyle #2}}}}}
\newcommand{\End}{\operatorname{End}}
\newcommand{\Spec}{\operatorname{Spec}}
\newcommand{\Der}{\operatorname{Der}}
\newcommand{\Gm}{\mathbb{G}_m}
\newcommand{\eldebutpardefaut}{1}
\newcommandx*{\el}[4][2=\eldebutpardefaut,4={,}]{#1_{#2}#4\dots#4#1_{#3}}
\newcommandx*{\ael}[7][3=\eldebutpardefaut,5={,},6=(,7=),usedefault]{#1\left#6#2_{#3}\right#7#5\dots#5#1\left#6#2_{#4}\right#7}
\newcommandx*{\iel}[5][2=i,3=\eldebutpardefaut,5={,}]{#1_{#2_{#3}}#5\dots#5#1_{#2_{#4}}}
\let\OLDto\to
\DeclareArrow{\to}{\OLDto}
\DeclareArrow{\from}{\leftarrow}
\newcommand{\app}[4]{\begin{array}{c@{\hskip 2pt}c@{\hskip 2pt}c} #1 & \to & #2 \\ #3 & \mapsto & #4 \end{array}}
\newcommand{\quot}[2]{\ensuremath \mathchoice
{
\displaystyle #1 
\raisebox{-2pt}{$\displaystyle \hspace{-1pt}{/} $}
\raisebox{-4pt}{$\displaystyle \hspace{-1pt}{#2}$}
}{
\textstyle #1 
\raisebox{-1pt}{$\textstyle \hspace{-1pt}{/} $}
\raisebox{-2pt}{$\textstyle \hspace{-1pt}{#2}$}
}{
\scriptstyle #1 
\raisebox{-1pt}{$\scriptstyle \hspace{-1pt}{/} $}
\raisebox{-2pt}{$\scriptstyle \hspace{-1pt}{#2}$}
}{
\scriptscriptstyle #1 
\raisebox{-1pt}{$\scriptscriptstyle \hspace{-1pt}{/} $}
\raisebox{-2pt}{$\scriptscriptstyle \hspace{-1pt}{#2}$}
}
}
\newcommand{\comma}[2]{\ensuremath \mathchoice
{
\raisebox{4pt}{$\displaystyle #1 $}
\raisebox{2pt}{$\displaystyle / $}
\displaystyle \hspace{-1pt}{#2}
}{
\raisebox{2pt}{$\textstyle #1 $}
\raisebox{1pt}{$\textstyle / $}
\textstyle \hspace{-1pt}{#2}
}{
\raisebox{2pt}{$\scriptstyle #1 $}
\raisebox{1pt}{$\scriptstyle / $}
\scriptstyle \hspace{-1pt}{#2}
}{
\raisebox{2pt}{$\scriptscriptstyle #1 $}
\raisebox{1pt}{$\scriptscriptstyle / $}
\scriptscriptstyle \hspace{-1pt}{#2}
}
}
\newcommand{\mymatrix}{\shorthandoff{;:!?} \xymatrix}
\newcommandx*{\dcell}[6][1,2,3,4,5={=>},6={1pc},usedefault]{\ar@/^#6/[#1]^{#2}_{}="UP" \ar@/_#6/[#1]_{#3}^{}="DOWN" \ar @{#5} "UP";"DOWN" ^{#4} }
\newcommand{\commute}{\ar@{}|-{\circlearrowleft}}
\newcommandx*{\hfibre}[3][3={}]{\xymatrix{ #1 \times^h \ifthenelse{\equal{#3}{}}{}{_{#3}} #2 \ar[d] \ar[r] & #1 \ar[d] \\ #2 \ar[r] & \ifthenelse{\equal{#3}{}}{\bullet}{#3}}}
\newcommandx*{\cart}[3][1=1,2=5,3=10,usedefault]{\ar@{-}[]+D+<#3pt,#1pt>+<#2pt,0pt>;[]+D+<#3pt,-#3pt>+<#2pt,#1pt> \ar@{-}[]+D+<0pt,-#3pt>+<#2pt,#1pt>;[]+D+<#3pt,-#3pt>+<#2pt,#1pt>}
\newcommandx*{\cocart}[3][1=-1,2=8,3=10,usedefault]{\ar@{-}[]+U+<-#2pt,-#1pt>;[]+U+<-#2pt,-#1pt>+<0pt,#3pt> \ar@{-}[]+U+<-#2pt,-#1pt>;[]+U+<-#2pt,-#1pt>+<-#3pt,0pt>}
\setlist[enumerate]{label=\emph{(\roman*)},ref=\emph{(\roman*)}}
\title{Formal loops and tangent Lie algebras}
\author{Benjamin Hennion\\PhD thesis\\ \vspace{5mm}Université de Montpellier -- France}
\date{\today}
\newcommand{\ptfin}{\mathrm{Fin}^\pt}
\newcommand{\Ind}{\operatorname{\mathbf{Ind}}}
\newcommand{\Pro}{\operatorname{\mathbf{Pro}}}
\newcommand{\Tate}{\operatorname{\mathbf{Tate}}}
\newcommand{\Indu}[1]{\Ind^{\mathbb{#1}}}
\newcommand{\Prou}[1]{\Pro^{\mathbb{#1}}}
\newcommand{\Tateu}[1]{\Tate^{\mathbb{#1}}}
\newcommand{\inftyCatu}[1]{\inftyCat^{\mathbb{#1}}}
\newcommand{\PresLeftu}[1]{\mathbf{Pr}^{\mathrm{L,}\mathbb{#1}}_\infty}
\newcommand{\PresRightu}[1]{\mathbf{Pr}^{\mathrm{R,}\mathbb{#1}}_\infty}
\newcommand{\monoidalinftyCatu}[1]{\inftyCat^{\otimes, \mathbb{#1}}}
\renewcommand{\j}{\operatorname{j}}
\newcommand{\IPP}{\operatorname{\mathbf{IPP}}}
\newcommand{\Qcoh}{\operatorname{\mathbf{Qcoh}}}
\newcommand{\IP}{\mathbf{IP}}
\newcommand{\PI}{\mathbf{PI}}
\newcommand{\PIQ}{\mathbf{PIQ}}
\newcommand{\PIPerf}{\operatorname{\mathbf{PIPerf}}}
\newcommand{\IPPerf}{\operatorname{\mathbf{IPPerf}}}
\newcommand{\IPerf}{\operatorname{\mathbf{IPerf}}}
\newcommand{\PPerf}{\operatorname{\mathbf{PPerf}}}
\newcommand{\PIQcoh}{\operatorname{\mathbf{PIQcoh}}}
\newcommand{\cotangent}{\lambda}
\newcommand{\Indext}{\operatorname{\underline{\mathbf{Ind}}}^\mathbb U}
\newcommand{\Proext}{\operatorname{\underline{\mathbf{Pro}}}^\mathbb U}
\newcommand{\Indextu}[1]{\operatorname{\underline{\mathbf{Ind}}}^\mathbb{#1}}
\newcommand{\Tateextu}[1]{\operatorname{\underline{\mathbf{Tate}}}^\mathbb{#1}}
\newcommand{\Proextu}[1]{\operatorname{\underline{\mathbf{Pro}}}^\mathbb{#1}}
\newcommand{\undercat}{\operatorname{U}}
\newcommand{\overcat}{\operatorname{O}}  
\newcommand{\btw}{\operatorname{\mathrm{B}}}
\newcommand{\bubblestack}{\operatorname{\underline{\mathfrak{B}}}}
\newcommand{\bubblespace}{\operatorname{\mathfrak{B}}}
\newcommand{\formalsphere}{{\hat{\mathrm{S}}}}
\newcommand{\kaploop}{\mathcal L}
\newcommand{\kaplooppre}{\tilde{\mathcal L}}
\newcommand{\dStArt}{\dSt^{\mathrm{Art}}}
\newcommand{\dStArtlfp}{\dSt^{\mathrm{Art,lfp}}}
\newcommand{\dStptArt}{\dSt^{\pt,\operatorname{Art}}}
\newcommand{\Tatestack}{\dSt^{\mathrm{Tate}}}
\newcommand{\IQcoh}{\operatorname{\mathbf{IQcoh}}}
\newcommand{\shybounded}{\mathbf{IP} \dSt^{\mathrm{shy,b}}}
\newcommand{\closedforms}[1]{\mathbf A^{#1\mathrm{,cl}}}
\newcommand{\forms}[1]{\mathbf A^{#1}}
\newcommand{\IPclosedforms}[1]{\closedforms{#1}_{\IP}}
\newcommand{\IPforms}[1]{\forms{#1}_{\IP}}
\newcommand{\dStF}{\dSt^{\operatorname{f}}}
\newcommand{\Tens}{\operatorname{T}}
\newcommand{\homol}{\operatorname H}
\newcommand{\coho}{\operatorname C}
\newcommand{\dgLieLib}{\dgLie^{\operatorname{f,ft,}\geq1}}
\newcommand{\dgLieGood}{\dgLie^{\operatorname{good}}}
\newcommand{\dgModLib}{\dgMod^{\operatorname{f,ft,}\geq1}}
\newcommand{\dgExt}{\operatorname{\mathbf{dgExt}}}
\newcommand{\dgRep}{\operatorname{\mathbf{dgRep}}}
\newcommand{\gl}{\mathfrak{gl}}
\newcommand{\Envel}{\operatorname{\mathcal U \hspace{-1pt}}}
\newcommand{\Sym}{\operatorname{Sym}}
\newcommand{\libre}{\operatorname{Free}}
\newcommand{\oubli}{\operatorname{Forget}}
\newcommand{\unit}{1}
\newcommand{\for}[1]{{#1}^{\operatorname{f}}}
\newcommand{\lie}{\mathfrak{L}}
\newcommand{\formal}{\mathcal{F}}
\newcommand{\Lqcoh}{\operatorname{L}_{\operatorname{qcoh}}}
\newcommand{\tgtlie}{\ell}
\newcommand{\Ee}{\mathcal E}
\newcommand{\lierep}{\operatorname{Rep}}
\newcommand{\atiyah}{\operatorname{\mathbf{at}}}
\newcommand{\MCdgMod}{\mathrm{dgMod}}
\newcommand{\MCdgRep}{\mathrm{dgRep}}
\newcommand{\MCdgLie}{\mathrm{dgLie}}
\newcommand{\MCdgLieLib}{\MCdgLie^{\operatorname{f,ft,}\geq1}}
\newcommand{\MCcdgaunbounded}{\mathrm{cdga}}
\newcommand{\MCcdga}{\mathrm{cdga}^{\leq 0}}
\newcommand{\MCdgAlg}{\mathrm{dgAlg}}
\newcommand{\adjrep}{\mathrm{Ad}}
\renewcommand*{\glsgroupheading}[1]{}%
\newlist{assertions}{enumerate}{1}
\setlist[assertions]{label={(\alph*)}, ref={assertion (\alph*)}}
\newlist{disjunction}{enumerate}{1}
\setlist[disjunction]{label={(\arabic*)}, ref={case (\arabic*)}}
\begin{document}

\selectlanguage{english}
\maketitle

\includecomment{chap-intro}
\includecomment{chap-prelim}
\includecomment{chap-cats}
\includecomment{chap-indpro}
\includecomment{chap-loops}
\includecomment{chap-tgtlie}
\includecomment{chap-perspect}

\begin{abstract}
If $M$ is a symplectic manifold then the space of smooth loops $\mathrm C^{\infty}(\mathrm S^1,M)$ inherits of a quasi-symplectic form. We will focus in this thesis on an algebraic analogue of that result.
In their article \cite{kapranovvasserot:loop1}, Kapranov and Vasserot introduced and studied the formal loop space of a scheme $X$. It is an algebraic version of the space of smooth loops in a differentiable manifold.

We generalize their construction to higher dimensional loops. To any scheme $X$ -- not necessarily smooth -- we associate $\kaploop^d(X)$, the space of loops of dimension $d$. We prove it has a structure of (derived) Tate scheme -- ie its tangent is a Tate module: it is infinite dimensional but behaves nicely enough regarding duality.
We also define the bubble space $\bubblespace^d(X)$, a variation of the loop space.
We prove that $\bubblespace^d(X)$ is endowed with a natural symplectic form as soon as $X$ has one (in the sense of \cite{ptvv:dersymp}).

To prove our results, we develop a theory of Tate objects in a stable $(\infty,1)$-category $\Cc$. We also prove that the non-connective K-theory of $\Tate(\Cc)$ is the suspension of that of $\Cc$, giving an $\infty$-categorical version of a result of \cite{saito:deloop}.

The last chapter is aimed at a different problem: we prove there the existence of a Lie structure on the tangent of a derived Artin stack $X$. Moreover, any quasi-coherent module $E$ on $X$ is endowed with an action of this tangent Lie algebra through the Atiyah class of $E$. This in particular applies to not necessarily smooth schemes $X$.

Throughout this thesis, we will use the tools of $(\infty,1)$-categories and symplectic derived algebraic geometry.
\end{abstract}

\pagenumbering{roman}
\tableofcontents
\listoftodos

\chapter*{Introduction}
\addcontentsline{toc}{chapter}{Introduction}%
\begin{chap-intro}
\section*{Context}
Considering a differential manifold $M$, one can build the space of smooth loops $\operatorname{L}(M)$ in $M$. It is a central object of string theory. Moreover, if $M$ is symplectic then so is $\operatorname{L}(M)$ -- more precisely quasi-symplectic since it is not of finite dimension -- see for instance \cite{munozpresas:symp}.
We will be interested here in an algebraic analogue of that result.

The first question is then the following: what is an algebraic analogue of the space of smooth loops? 
An answer appeared in 1994 in Carlos Contou-Carrère's work (see \cite{contoucarrere:jacobienne}). He studies there $\Gm(\mathbb{C}(\!(t)\!))$, some sort of holomorphic functions in the multiplicative group scheme, and uses it to define the famous Contou-Carrère symbol.
This is the first occurrence of a \emph{formal loop space} known to the author.
This idea was then generalised to algebraic groups as the affine grassmanian $\mathfrak{Gr}_G = \quot{G(\mathbb C (\!(t)\!)}{G(\mathbb C[\![t]\!])}$ show up and got involved in the geometric Langlands program.
In their paper \cite{kapranovvasserot:loop1}, Mikhail Kapranov and Éric Vasserot introduced and studied the formal loop space of a smooth scheme $X$. It is an ind-scheme $\kaploop(X)$ which we can think of as the space of functions $\Spec \mathbb C(\!(t)\!) \to X$. This construction strongly inspired the one presented in this thesis.
Kapranov and Vasserot moreover endow the formal loop space with a factorisation structure and link it with vertex algebras and, in \cite{kapranovvasserot:loop4}, with chiral differential operators.

We will focus on higher dimensional formal loops.
There are at least two ways of considering them. The first one consists in replacing $\mathbb C (\!( t )\!)$ with some $d$-dimensional function field $\mathbb C(\!(t_1)\!) \dots (\!(t_d)\!)$, as was done for Beilinson's adèles as well as higher Contou-Carrère symbols -- see \cite{osipovzhu:contoucarrere} for dimension $2$ and \cite{bgw:contoucarrere} for higher dimensions.
This very fruitful idea requires a choice in ordering the local coordinates $\el{t}{d}$.
The author likes to think of adèles as maps out of some formal torus of dimension $d$ -- with a given flag, corresponding to the ordering of the variables $\el{t}{n}$ -- but not out of a formal sphere.
The second way is the one we will use in this thesis. It consists in considering the formal loop space as a derived geometrical object. The author learned after writing down this thesis that Mikhail Kapranov had an unpublished document in which this derived approach was developed. He his grateful to Kapranov for letting him read those notes, both inspired and inspiring.
Derived algebraic geometry appeared in several steps. First Maxim Kontsevich somehow predicted their existence in \cite{kontsevich:torusaction}. Ionut Ciocan-Fontanine and Mikhail Kapranov then introduced the first formal definition of a derived geometrical object: dg-schemes and dg-manifolds -- see \cite{ciocanfontaninekapranov:derivedquot}.
Bertrand Toën and Gabriele Vezzosi then developed a fully equipped theory of derived (higher) stacks using model categories in \cite{toen:hagii}.
The subject then expanded tremendously with the work of Jacob Lurie and his series of DAG papers,  of which we will only use \cite{lurie:dagx}.
He also modernized the theory by developing and using $(\infty,1)$-category theory -- see \cite{lurie:htt} and \cite{lurie:halg}.

Once the (higher dimensional) formal loop space defined, the second question finally comes up: the formal loop space is an infinite dimensional derived geometric object, how can we then talk about symplectic forms?
The formalism for (shifted) symplectic forms over finite derived stacks was developed by Tony Pantev, Bertrand Toën, Michel Vaquié and Gabriele Vezzosi in \cite{ptvv:dersymp}.
One core feature of derived algebraic geometry is the cotangent complex, derived version of the usual cotangent.
For a $2$-form to be non-degenerated, one should introduce a shift so that the amplitudes of the cotangent complex and its shifted dual coincide.
In our situation, the formal loop space is not finitely presented. To deal with its infinite dimensional (co)tangent, we will identify some more structure: it is a Tate module.

Tate vector spaces appeared in John Tate's thesis, and were then used and generalised by several authors. Let us cite here Alexander Beilinson or Vladimir Drinfeld (see \cite{drinfeld:tate}).
The core idea is to consider an infinite dimensional vector space $V$ as a discrete topological space. Its dual $V^*$ is then considered as a topological vector space. Dualising once more on gets $V$ back.
Tate vector spaces are then the extensions of a discrete topological vector space by the dual of one. One gets a family of dualisable -- although infinite dimensional -- vector spaces.
More recently, Oliver Bräunling, Michael Gröchenig and Jesse Wolfson (see \cite{bgw:tate}) developed a general theory for Tate objects in exact categories.

\section*{In this thesis}
In this thesis, we generalize the definition of Kapranov and Vasserot to higher dimensional loops -- which do not have an equivalent in differentiable geometry -- and approach the symplectic question. For $X$ a scheme of finite presentation, not necessarily smooth, we define $\kaploop^d(X)$, the space of formal loops of dimension $d$ in $X$.
We define $\kaploop^d_V(X)$ the space of maps from the formal neighbourhood of $0$ in $\A^d$ to $X$. This is a higher dimensional version of the space of germs of arcs as studied by Jan Denef and François Loeser in \cite{denefloeser:germs}.
Let also $\kaploop_U^d(X)$ denote the space of maps from a \emph{punctured} formal neighbourhood of $0$ in $\A^d$ to $X$. The formal loop space $\kaploop^d(X)$ is the formal completion of $\kaploop_V^d(X)$ in $\kaploop_U^d(X)$.
Understanding those three items is the main goal of this work.
The problem is mainly to give a meaningful definition of the punctured formal neighbourhood of dimension $d$. We can describe what its cohomology should be: 
\[
\homol^n(\hat \A^d\smallsetminus \{0\}) = \begin{cases}
k[\![\el{X}{d}]\!] & \text{ if } n = 0 \\ (\el{X}{d}[])^{-1} k[\el{X^{-1}}{d}] & \text{ if } n=d-1 \\ 0 & \text{ otherwise}
\end{cases}
\]
but defining this punctured formal neighbourhood with all its structure is actually not an easy task. Nevertheless, we can describe what maps out of it are, hence the definition of $\kaploop^d_U(X)$ and the formal loop space.
This geometric object is of infinite dimension, and part of this study is aimed at identifying some structure. Here comes the first result in that direction.
\begin{thmintro}[see \autoref{L-indpro}]\label{intro-kaploop}
The formal loop space of dimension $d$ in a scheme $X$ is represented by a derived ind-pro-scheme.
Moreover, the functor $X \mapsto \kaploop^d(X)$ satisfies the étale descent condition.
\end{thmintro}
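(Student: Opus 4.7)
The plan is to present $\kaploop^d_V$, $\kaploop^d_U$ and $\kaploop^d$ as concrete derived ind-pro-schemes obtained by assembling mapping stacks of small sources, and to derive étale descent for $\kaploop^d$ from the fact that $\Mapstack(Z,-)$ sends étale covers of the target to limits. For $\kaploop^d_V(X) = \Mapstack(\hat\A^d, X)$ this is the easy half: write the formal disk as the colimit $\hat\A^d \simeq \colim_n V_n$ of its infinitesimal neighbourhoods $V_n = \Spec k[x_1,\dots,x_d]/\mathfrak m^{n+1}$. Each $V_n$ is finite flat over $k$, so $\Mapstack(V_n,X)$ is representable by a derived scheme — a $d$-dimensional analogue of the classical jet scheme — and
\[
\kaploop^d_V(X) \simeq \lim_n \Mapstack(V_n, X)
\]
is therefore a pro-derived-scheme.

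The heart of the proof is the description of $\kaploop^d_U(X)$. I would model the punctured formal disk by the \v Cech cover coming from the standard affine cover of $\A^d \smallsetminus \{0\}$: setting $R_I = k[\![x_1,\dots,x_d]\!][x_i^{-1} : i \in I]$ for $\varnothing \ne I \subset \{1,\dots,d\}$, define
\[
\kaploop^d_U(X) \simeq \lim_{\varnothing \ne I} \Mapstack(\Spec R_I, X).
\]
For each $I$, the ring $R_I$ is a filtered colimit (along multiplication by the $x_i$, $i\in I$) of copies of $k[\![\underline x]\!]$, and $k[\![\underline x]\!]$ is itself a cofiltered limit of its Artinian quotients. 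Consequently $\Mapstack(\Spec R_I, X)$ is ind-pro-representable, the pro-direction indexed by orders of infinitesimal vanishing and the ind-direction by pole orders. The totalization being taken over the finite cubical poset of nonempty subsets $I$, and ind-pro-schemes being stable under finite limits, $\kaploop^d_U(X)$ is itself an ind-pro-derived-scheme; as a sanity check, the resulting structure sheaf has the cohomology announced in the introduction.

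The formal loop space $\kaploop^d(X)$ is then the formal completion of $\kaploop^d_V(X)$ inside $\kaploop^d_U(X)$; since formal completion of a closed sub-ind-pro-scheme inside an ind-pro-scheme remains ind-pro, $\kaploop^d(X)$ inherits that structure. For étale descent, each $\Mapstack(Z,-)$ sends étale hypercovers of the target to limits, a property preserved by the cofiltered and finite limits, filtered colimits and formal completions used to assemble the three functors; so $X \mapsto \kaploop^d(X)$ is an étale sheaf.

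The main obstacle is the middle step: rigorously organising the ind-pro diagrams so that the \v Cech totalization over the poset of $I$'s remains inside the category of ind-pro-schemes, rather than landing in the strictly larger category of Tate or doubly ind-pro objects. This requires careful use of the ind-pro machinery developed earlier in the thesis, in particular results on the interchange of finite limits with the $\Ind$ and $\Pro$ constructions, and a compatibility between the pole-order filtrations across the subsets $I$.
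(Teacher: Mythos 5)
Your overall skeleton (reduce $U^d_A$ to the standard affine cover of $\A^d\smallsetminus\{0\}$, read off the ind-direction from pole orders and the pro-direction from truncation orders, close up under finite limits over the cubical poset) is the same as the paper's, but two of your inferences do not go through as stated. First, the step ``$R_I$ is a filtered colimit of copies of $k[\![\underline x]\!]$, hence $\Mapstack(\Spec R_I,X)$ is ind-pro-representable'' is invalid for a general target: the transition maps in the ind-direction are multiplication by the $x_i$, which are module maps but not ring maps, so $R_I$ is an algebra object in ind-pro-vector spaces without being an ind-pro-algebra, and $\Spec R_I$ admits no corresponding pro-ind presentation. The inference is correct only for $X=\A^1$, where $\Map(\Spec C,\A^1)$ sees only the underlying complex of $C$; the paper (\autoref{L-shy}) therefore first writes an affine $X$ of finite presentation as a retract of a finite limit of copies of $\A^1$ and only then invokes stability of the relevant ind-pro-affines under finite limits (\autoref{shydaff-limits}). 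You also need, and do not supply, the statement that the realisation functor from these ind-pro-schemes to derived stacks is fully faithful and compatible with those finite limits (\autoref{ff-realisation}); this is where the real work lives (truncations, obstruction theory, $\lim^1$ sequences, bounds on cotangent amplitude and cohomological dimension, cf.\ \autoref{prop-cocompact}), and the cohomological sanity check is not a substitute for it.

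Second, your descent argument rests on the claim that $\Mapstack(Z,-)$ sends étale hypercovers of the target to limits. That confuses the variance: an étale atlas exhibits $X$ as a colimit of affines, and $\Mapstack(Z,-)$ does not carry colimits in the target to limits. The descent statement here is a costack condition, $\kaploop^d(X)\simeq\colim\kaploop^d(U_\bullet)$, and its essential content is that $\kaploop^d(U)\to\kaploop^d(X)$ is an epimorphism of stacks for an étale atlas $U\to X$; proving this requires lifting formal loops along étale maps, which is done by an infinitesimal-lifting induction together with algebraisability of the diagonal of $X$ to algebraise the resulting compatible family (\autoref{LV-epi}, \autoref{LU-fetale}, \autoref{L-epi}). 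Relatedly, your identification $\kaploop^d_V(X)\simeq\lim_n\Mapstack(V_n,X)$ already uses that $X$ does not distinguish $\colim_n (V_n\times\Spec A)$ from $\Spec A[\![\underline x]\!]$, i.e.\ algebraisability; this is harmless for schemes but it is a hypothesis, not a formality.
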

We use here methods from derived algebraic geometry as developed by Bertrand Toën and Gabriele Vezzosi in \cite{toen:hagii}. The author would like to emphasize here that the derived structure is necessary since, when $X$ is a scheme, the underlying schemes of $\kaploop^d(X)$, $\kaploop^d_U(X)$ and $\kaploop^d_V(X)$ are isomorphic as soon as $d \geq 2$.
Let us also note that derived algebraic geometry allowed us to define $\kaploop^d(X)$ for more general $X$'s, namely any derived stack. It for instance work for $X$ a classifying stack $\B G$ of an algebraic group. In those cases, the formal loop space $\kaploop^d(X)$ is no longer a derived ind-pro-scheme but an ind-pro-stack.
The case $d = 1$ and $X$ is a smooth scheme gives a derived enhancement of Kapranov and Vasserot's definition. This derived enhancement is conjectured to be trivial when $X$ is a smooth affine scheme in  \cite[9.2.10]{gaitsgoryrozenblyum:dgindschemes}. Gaitsgory and Rozenblyum also prove in \loccit their conjecture holds when $X$ is an algebraic group.

The proof of \autoref{intro-kaploop} is based on an important lemma. We identify a full sub-category $\Cc$ of the category of ind-pro-stacks such that the realisation functor $\Cc \to \dSt_k$ is fully faithful. We then prove that whenever $X$ is a derived affine scheme, the stack $\kaploop^d(X)$ is in the essential image of $\Cc$ and is thus endowed with an \emph{essentially unique} ind-pro-structure satisfying some properties. 
The generalisation to any $X$ is made using a descent argument.
Note that for general $X$'s, the ind-pro-structure is not known to satisfy nice properties one could want to have, for instance on the transition maps of the diagrams.

We then focus on the following problem: can we build a symplectic form on $\kaploop^d(X)$ when $X$ is symplectic?
Again, this question requires the tools of derived algebraic geometry and \emph{shifted symplectic structures} as in \cite{ptvv:dersymp}.

Because $\kaploop^d(X)$ is not finite, linking its tangent to its dual -- through an alleged symplectic form -- requires to identify once more some structure. We already know that it is an ind-pro-scheme but the proper context seems to be what we call Tate stacks.

Before saying what a Tate stack is, let us talk about Tate modules. They define a convenient context for infinite dimensional vector spaces. They where studied by Drinfeld, Beilinson and Lefschetz, among others, and more recently by Previdi \cite{previdi:thesis} and Bräunling, Gröchenig and Wolfson \cite{bgw:tate}.

As discussed above, a Tate module is an extension of a so-called discrete module with the dual of one. We can think of discrete modules as ind-objects in the category of finite dimensional modules. Their duals then naturally live in the category of pro-objects.

We will here introduce the notion of Tate objects in the context of stable $(\infty,1)$-categories as studied by Lurie in \cite{lurie:halg}. If $\Cc$ is a stable $(\infty,1)$-category -- playing the role of the category of finite dimensional vector spaces, we define $\Tate(\Cc)$ as the full subcategory of the $(\infty,1)$-category of pro-ind-objects $\Pro \Ind(\Cc)$ in $\Cc$ containing both $\Ind(\Cc)$ and $\Pro(\Cc)$ and stable by extensions and retracts.
The category $\Ind(\Cc)$ plays the role of the category of discrete modules while $\Pro(\Cc)$ contains their duals. Note that in a general setting, one must ask $\Tate(\Cc)$ to be stable by retracts as well as extensions.
We then have the following universal property
\begin{thmintro}[see \autoref{univpropT0} and \autoref{ktheorysusp}]\label{intro-tate}
For any $(\infty,1)$-category $\Dd$ both stable and idempotent complete, and any commutative diagram
\[
\mymatrix{
\Cc \ar[r]^-i \ar[d]_-j & \Ind(\Cc) \ar[d]^f \\ \Pro(\Cc) \ar[r]_-g & \Dd
}
\]
such that $i$ and $j$ are the canonical inclusions, $f$ and $g$ are exact, $f$ preserves filtered colimits and $g$ preserves cofiltered limits,
there exist an essentially unique functor $\phi \colon \Tate(\Cc) \to \Dd$ such that both $f$ and $g$ factor through $\phi$.
As a consequence, the category $\Tate(\Cc)$ is equivalent to the smallest stable and idempotent complete subcategory of $\Ind \Pro(\Cc)$ generated by both ind- and pro-objects.
Moreover, the non-connective K-theory spectrum of $\Tate(\Cc)$ is the suspension of that of $\Cc$
\[
\mathbb K(\Tate(\Cc)) \simeq \Sigma \mathbb K(\Cc)
\]
\end{thmintro}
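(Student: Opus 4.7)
For the universal property, my plan is to realise $\phi$ as the restriction of a functor defined on the ambient category $\Pro\Ind(\Cc)$. Using the universal property of $\Pro\Ind(\Cc)$ as the free cofiltered-limit completion of $\Ind(\Cc)$, I would extend $f$ essentially uniquely to a cofiltered-limit preserving functor $\hat f \colon \Pro\Ind(\Cc) \to \Dd$ (passing to a $\Pro$-completion of $\Dd$ if $\Dd$ lacks the necessary limits, and noting that on $\Tate$-objects the construction lands back in $\Dd$). The crucial compatibility to check is that $\hat f$ restricted to $\Pro(\Cc) \subset \Pro\Ind(\Cc)$ agrees with $g$: both would be cofiltered-limit preserving functors $\Pro(\Cc) \to \Dd$ restricting to $f \circ i \simeq g \circ j$ on $\Cc$, so they must coincide by the universal property of $\Pro(\Cc)$. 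Setting $\phi := \hat f|_{\Tate(\Cc)}$, uniqueness would then follow because $\Tate(\Cc)$ is generated by $\Ind(\Cc) \cup \Pro(\Cc)$ under fibre sequences and retracts, and any exact retract-preserving functor is determined by its restriction to these generators.

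To identify $\Tate(\Cc) \subset \Pro\Ind(\Cc)$ with the smallest stable, idempotent-complete subcategory $\Dd'$ of $\Ind\Pro(\Cc)$ containing $\Ind(\Cc)$ and $\Pro(\Cc)$, I would apply the universal property just proved to $\Dd = \Dd'$ (which is stable and idempotent complete by construction). This produces a functor $\Tate(\Cc) \to \Dd'$. A symmetric universal property in $\Ind\Pro(\Cc)$ would yield the reverse functor, and both compositions would be exact functors restricting to the identity on the generators $\Ind(\Cc) \cup \Pro(\Cc)$, hence the identity.

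For the K-theory identity, my plan is to follow Saito's strategy in an $\infty$-categorical form. First, an Eilenberg swindle, available since $\Ind(\Cc)$ admits countable coproducts, would give $\mathbb K(\Ind(\Cc)) \simeq 0$, and dually $\mathbb K(\Pro(\Cc)) \simeq 0$. Second, I would establish a Verdier-type localization sequence $\Ind(\Cc) \to \Tate(\Cc) \to \Pro(\Cc)/\Cc$ of stable $\infty$-categories: the composition is null, and the quotient is as claimed because modding a Tate extension $P \to T \to I$ by its ind-piece $I$ leaves precisely the pro-piece $P$, further quotiented by the compact overlap $\Cc$. Applying non-connective K-theory, which sends Verdier localizations to fibre sequences by Blumberg--Gepner--Tabuada, and combining with the fibre sequence $\mathbb K(\Cc) \to \mathbb K(\Pro(\Cc)) \to \mathbb K(\Pro(\Cc)/\Cc)$ together with the two vanishings above, one would read off $\mathbb K(\Tate(\Cc)) \simeq \mathbb K(\Pro(\Cc)/\Cc) \simeq \Sigma \mathbb K(\Cc)$.

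The hardest part will be making the Verdier localization $\Ind(\Cc) \to \Tate(\Cc) \to \Pro(\Cc)/\Cc$ rigorous at the $\infty$-categorical level: in Saito's exact-category framework this is classical, but the $\infty$-analogue requires carefully checking essential surjectivity up to retracts, that the quotient identifies correctly with $\Pro(\Cc)/\Cc$, and that the relevant fibre sequences behave as expected. A secondary technical concern is ensuring that the cofiltered-limit extension $\hat f$ really lands in $\Dd$ when restricted to $\Tate(\Cc)$ and preserves fibre sequences there; this ought to follow from the hypothesis that $\Dd$ is stable and idempotent complete, but the verification will pass through the extension-and-retract description of $\Tate(\Cc)$.
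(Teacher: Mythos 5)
Your proposal is correct and follows essentially the same route as the paper: the universal property is obtained there by right Kan extension of $f$ along $\Ind(\Cc) \to \Pro\Ind(\Cc)$ (valued in presheaves on $\Dd$ rather than a $\Pro$-completion, but this is the same device, with the compatibility with $g$ and the descent to the idempotent-completed $\Tate(\Cc)$ handled exactly as you sketch), and the K-theory statement by combining the Eilenberg swindle with a Verdier localization sequence and the Blumberg--Gepner--Tabuada fibre sequences. The only cosmetic difference is that the paper uses the mirror cofibre sequence $\Pro(\Cc) \to \Tate(\Cc) \to \Ind(\Cc)/\Cc$, so that it shares its third term with $\Cc \to \Ind(\Cc) \to \Ind(\Cc)/\Cc$; your version $\Ind(\Cc) \to \Tate(\Cc) \to \Pro(\Cc)/\Cc$ is its opposite and works equally well, and the step you flag as hardest (full faithfulness and identification of the quotient) is indeed where the paper spends its effort.
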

Note that the last part of this theorem is an $\infty$-categorical version of a result of Saito in \cite{saito:deloop}.
To prove \autoref{intro-tate} we use tools from \cite{lurie:halg}. Once defined Tate objects in a stable $(\infty,1)$-category, the arising complication is inherent to $(\infty,1)$-categories and the infinitely many coherences one has to check. We have to find a way of expressing them concisely.
The last part of \autoref{intro-tate} is proven by computing the quotients $\quot{\Ind(\Cc)}{\Cc}$ and $\quot{\Tate(\Cc)}{\Pro(\Cc)}$ in stable and idempotent complete $(\infty,1)$-categories and by showing those two categories are equivalent. This proof strategy is strongly inspired by Saito's proof in \cite{saito:deloop} and uses tools developed by Blumberg, Gepner and Tabuada in \cite{bgt:characterisationk}.

We also define the derived category of Tate modules on a scheme -- and more generally on a derived ind-pro-stack.
An Artin ind-pro-stack $X$ -- meaning an ind-pro-object in derived Artin stacks -- is then gifted with a cotangent complex $\Lcot_X$. This cotangent complex inherits a natural structure of pro-ind-module on $X$. This allows us to define a Tate stack as an Artin ind-pro-stack whose cotangent complex is a Tate module.
The formal loop space $\kaploop^d(X)$ is then a Tate stack as soon as $X$ is a finitely presented derived affine scheme. For a more general $X$, what precedes makes $\kaploop^d(X)$ some kind of \emph{locally} Tate stack. This structure suffices to define a determinantal anomaly
\[
\left[\mathrm{Det}_{\kaploop^d(X)}\right] \in \homol^2\left(\kaploop^d(X), \Oo_{\kaploop^d(X)}^{\times}\right)
\]
for any quasi-compact quasi-separated (derived) scheme $X$ -- this construction also works for slightly more general $X$'s, namely Deligne-Mumford stacks with algebraisable diagonal, see \autoref{determinantalanomaly}.
Kapranov and Vasserot proved in \cite{kapranovvasserot:loop4} that in dimension $1$, the determinantal anomaly governs the existence of sheaves of chiral differential operators on $X$.
One could expect to have a similar result in higher dimensions, with higher dimensional analogues of chiral operators and vertex algebras. This author plans on studying this in a future work.

Another feature of Tate modules is duality. It makes perfect sense and behaves properly. Using the theory of symplectic derived stacks developed by Pantev, Toën, Vaquié and Vezzosi in \cite{ptvv:dersymp}, we are then able to build a notion of symplectic Tate stack: a Tate stack $Z$ equipped with a ($n$-shifted) closed $2$-form which induces an equivalence
\[
\T_Z \to^\sim \Lcot_Z[n]
\]
of Tate modules over $Z$ between the tangent and (shifted) cotangent complexes of $Z$.

To make a step toward proving that $\kaploop^d(X)$ is a symplectic Tate stack, we actually study the bubble space $\bubblespace^d(X)$ -- see \autoref{dfbubble}. When $X$ is affine, we get an equivalence
\[
\bubblespace^d(X) \simeq \kaploop_V^d(X) \timesunder[\kaploop_U^d(X)] \kaploop^d_V(X)
\]
We then prove the following result
\begin{thmintro}[see \autoref{B-symplectic}]\label{intro-bubble}
If $X$ is an $n$-shifted symplectic stack then the bubble space $\bubblespace^d(X)$ is endowed with a structure of $(n-d)$-shifted symplectic Tate stack.
\end{thmintro}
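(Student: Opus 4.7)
The plan is to realise $\bubblespace^d(X)$ as a mapping stack out of a ``formal $d$-sphere'' and then run a Tate-enhanced version of the AKSZ/PTVV transgression procedure. Write $\formalsphere^d$ for the pushout $\hat{\A}^d \amalg_{\hat{\A}^d\smallsetminus\{0\}} \hat{\A}^d$ obtained from the gluing of two formal disks along the punctured formal disk. Adjunction between the pushout and the fibre product defining $\bubblespace^d(X) \simeq \kaploop^d_V(X) \timesunder[\kaploop^d_U(X)] \kaploop^d_V(X)$ identifies, for $X$ affine, the bubble space with $\Mapstack(\formalsphere^d,X)$. Descent along the étale topology (using \autoref{intro-kaploop}, which ensures the three formal loop spaces satisfy étale descent in $X$) extends this identification to general $X$.

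The second step is to equip $\formalsphere^d$ with a $d$-orientation in the Tate sense. A Mayer--Vietoris computation from the description of $\homol^*(\hat{\A}^d \smallsetminus \{0\}, \Oo)$ recalled in the introduction shows that $\coho^*(\formalsphere^d, \Oo)$ is concentrated in degrees $0$ and $d$: the degree-$0$ part is $k$ (glued constants) and the degree-$d$ part is the residue space $(X_1 \cdots X_d)^{-1} k[X_1^{-1}, \dots, X_d^{-1}]$. Crucially, this complex is naturally a Tate module over $k$, and the top residue map yields a morphism $[\formalsphere^d] \colon \dR\Gamma(\formalsphere^d, \Oo) \to k[-d]$ in the $\infty$-category of Tate $k$-modules. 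This is the analogue of a fundamental class, and together with the natural cdga structure it plays the role of a $d$-orientation: cup product followed by integration gives perfect Tate pairings $\dR\Gamma(\formalsphere^d, \Ee) \otimes \dR\Gamma(\formalsphere^d, \dual{\Ee}) \to k[-d]$ for $\Ee$ a perfect $\Oo_{\formalsphere^d}$-module.

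Next, we transgress the symplectic form. The tangent complex of $\bubblespace^d(X)$ at a map $f \colon \formalsphere^d \to X$ is $\dR\Gamma(\formalsphere^d, f^* \tgt_X)$, which inherits a Tate structure from $\Oo_{\formalsphere^d}$ and the perfection of $\tgt_X$. The $n$-shifted closed $2$-form $\omega$ on $X$ pulls back to a closed form on $\formalsphere^d \times \bubblespace^d(X)$ relative to $\bubblespace^d(X)$; pushing forward along the first projection by means of the orientation class $[\formalsphere^d]$ produces an $(n-d)$-shifted closed $2$-form on $\bubblespace^d(X)$. Formally, this is the composite of the usual transgression map in de Rham cohomology with the trace coming from the orientation; closedness of the resulting $2$-form is then a formal consequence as in \cite{ptvv:dersymp}.

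The main obstacle is non-degeneracy in the Tate context: one must show that the induced morphism $\tgt_{\bubblespace^d(X)} \to \Lcot_{\bubblespace^d(X)}[n-d]$ is an equivalence of Tate modules over $\bubblespace^d(X)$. At a point $f$ this reads as the map
\[
\dR\Gamma(\formalsphere^d, f^* \tgt_X) \to \dR\Gamma(\formalsphere^d, f^* \tgt_X)\dual[-d]
\]
(using the symplectic equivalence $\tgt_X \simeq \Lcot_X[n]$) being an equivalence of Tate modules, where $\dual{(-)}$ denotes the Tate dual. This is exactly the duality statement provided by the Tate orientation of the previous step, applied to the perfect module $f^* \tgt_X$. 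The delicate point, and the place where the ind-pro / Tate formalism developed earlier is used in an essential way, is to verify that all transgression and duality identifications respect the ind-pro structure so that the resulting equivalence takes place in Tate modules rather than merely at the level of underlying complexes; this is handled by observing that each step (pullback, pushforward along the oriented projection, duality pairing) is either visibly ind- or pro-representable, so extension by continuity along the inclusion of $\Ind(\Cc)$ and $\Pro(\Cc)$ into $\Tate(\Cc)$ from \autoref{intro-tate} pins down the required compatibilities.
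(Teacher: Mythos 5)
Your outline does follow the paper's strategy in broad strokes — realise $\bubblespace^d(X)$ as $\Mapstack(\formalsphere^d,X)$, orient the formal sphere by a residue map $\Oo_{\formalsphere^d}\to k[-d]$, transgress the form as in \cite{ptvv:dersymp} (this is \autoref{ipdst-form}), and deduce non-degeneracy from a duality on the sphere. But two of your steps conceal the actual content. The paper does \emph{not} define $\formalsphere^d$ as the pushout of two formal disks along the punctured formal disk: the punctured formal neighbourhood is precisely the object that is not available as a pro-ind-scheme, which is why this method applies to the bubble space and not to $\kaploop^d(X)$ itself. Instead $\formalsphere^d$ is defined directly as the pro-ind-object $\lim_n\colim_{p\geq n}\Spec(A_p\oplus A_n[-d])$ built from trivial square-zero extensions of Koszul algebras, and the identification of $\Mapstack(\formalsphere^d,X)$ with $\kaploop^d_V(X)\times_{\kaploop^d_U(X)}\kaploop^d_V(X)$ is an explicit computation with these presentations (\autoref{B-and-L-IP}), not an abstract adjunction. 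Your closing appeal to ``extension by continuity'' via the universal property of $\Tate(\Cc)$ does not produce the ind-pro presentation of the mapping stack on which the orientation, the integration map and the Tate structure of the tangent all depend.

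The more serious gap is non-degeneracy. Your assertion that ``cup product followed by integration gives perfect Tate pairings'' is exactly the statement that carries all the weight, and you offer no argument for it; a Mayer--Vietoris computation of $\homol^*(\formalsphere^d,\Oo)$ gives the underlying complex but says nothing about perfectness of the induced pairing on $\pr_*\ev^*\T_X$ as a \emph{Tate} module. In the paper this is where most of the proof of \autoref{B-symplectic} lives: one uses the lattice exact sequence $s_m^*\T_{\bubblestack^d(X)/\bubblestack^d(X)_0}\to s_m^*\T_{\bubblestack^d(X)}\to s_m^*r^*\T_{\bubblestack^d(X)_0}$ from \autoref{prop-formal-tate}, shows that the transgressed pairing \emph{vanishes} on the square of the pro-part because the multiplication of the trivial square-zero extension $A_p\oplus A_n[-d]$ kills the square of the augmentation ideal, and then identifies the residual pairing between the pro-part and the ind-part with the explicit self-duality $A_n\simeq\Homint_{A_1}(A_n,A_1)$ of the Koszul algebras (\autoref{dual-over-k}). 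Without this lattice analysis and the Koszul self-duality there is no reason for the induced map $\T_{\bubblestack^d(X)}\to\Lcot_{\bubblestack^d(X)}[n-d]$ to be an equivalence of Tate modules, so as written your argument asserts the theorem's key point rather than proving it.
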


The proof of this result is based on a classical method. The bubble space is in fact, as an ind-pro-stack, the mapping stack from what we call the formal sphere $\hat S^d$ of dimension $d$ to $X$.
There are therefore two maps
\[
\mymatrix{
\bubblespace^d(X) & \bubblespace^d(X) \times \hat S^d \ar[r]^-{\ev} \ar[l]_-{\pr} & X
}
\]
The symplectic form on $\bubblespace^d(X)$ is then $\int_{\hat S^d} \ev^* \omega_X$, where $\omega_X$ is the symplectic form on $X$.
The key argument is the construction of this integration on the formal sphere, ie on an oriented pro-ind-stack of dimension $d$.

This method would not work on $\kaploop^d(X)$, since the punctured formal neighbourhood does not have as much structure as the formal sphere: it is not known to be a pro-ind-scheme. Nevertheless, \autoref{intro-bubble} is a first step toward proving that $\kaploop^d(X)$ is symplectic. We can consider the nerve $Z_\bullet$ of the map $\kaploop^d_V(X) \to \kaploop^d_U(X)$. It is a groupoid object in ind-pro-stacks whose space of maps is $\bubblespace^d(X)$. The author expects that this groupoid is compatible in some sense with the symplectic structure so that $\kaploop^d_U(X)$ would inherit a symplectic form from realising this groupoid.
One the other hand, if $\kaploop^d_U(X)$ was proven to be symplectic, then the fibre product defining $\bubblespace^d(X)$ should be a Lagrangian intersection. The bubble space would then inherit a symplectic structure from that on $\kaploop^d(X)$.

In the last chapter, we will focus on a different problem. Whenever $X$ is a variety or a scheme, its shifted tangent complex $\T_X[-1]$ has long been suspected of having a (weak) Lie bracket, given by the Atiyah class.
In \cite{kapranov:atiyah}, Kapranov proved that statement for $X$ a smooth variety. He moreover showed that any quasi-coherent module over $X$ is endowed with a natural Lie action of $\T_X[-1]$.
We will prove in \autoref{chapterlie} the following
\begin{thmintro}\label{intro-tgtlie}
Let $X$ be an algebraic derived stack locally of finite presentation over a field $k$ of characteristic zero.
The shifted tangent complex $\T_X[-1] \in \Qcoh(X)$ admits a Lie structure (\autoref{tangent-lie}).
Moreover, the forgetful functor $\dgRep_X(\T_X[-1]) \to \Qcoh(X)$ -- from the $(\infty,1)$-category of Lie representations of $\T_X[-1]$ to the derived category of quasi-coherent modules over $X$ -- admits a section (\autoref{derived-global}). The action on a sheaf $E$ is given by its Atiyah class $\T_X[-1] \otimes E \to E$.
\end{thmintro}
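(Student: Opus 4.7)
The plan is to exploit Lurie's equivalence, valid in characteristic zero, between formal moduli problems over a base and dg-Lie algebras. Given $X$ derived Artin locally of finite presentation, the diagonal $\Delta \colon X \to X \times X$ has a formal neighbourhood $\hat{\Delta}$ which, together with its two projections to $X$, forms a formal groupoid over $X$. Working with the relative formal moduli problem / dg-Lie correspondence internally to $\Qcoh(X)$, this formal groupoid determines an essentially unique dg-Lie algebra object $\tgtlie_X \in \Qcoh(X)$, whose underlying complex is computed as the fibre at the identity section of the tangent of the source projection $\hat{\Delta} \to X$, namely a shift of $\T_X$. Tracking shifts carefully then identifies $\tgtlie_X$ with $\T_X[-1]$ as an object of $\Qcoh(X)$, endowing the latter with the sought Lie structure.

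For the second assertion, a quasi-coherent sheaf $E$ on $X$ determines two pullbacks $p_1^* E$ and $p_2^* E$ to $\hat{\Delta}$ which, upon further pullback to $X$ along $\Delta$, are canonically identified. Under the dictionary between formal groupoids and Lie algebras, this piece of descent data translates into an action of $\tgtlie_X \simeq \T_X[-1]$ on $E$. The assignment $E \mapsto (E, \text{action})$ is manifestly functorial in $E$ and provides the required section of the forgetful functor $\dgRep_X(\T_X[-1]) \to \Qcoh(X)$. The infinitesimal, first-order component of this action, a morphism $\T_X[-1] \otimes E \to E$, then coincides with the Atiyah class $\atiyah(E)$, because both invariants measure the failure of $p_1^* E$ and $p_2^* E$ to agree at first order on $\hat{\Delta}$.

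The main obstacle is setting up Lurie's formal moduli / dg-Lie correspondence precisely in the relative setting over a derived Artin stack, rather than over a field, and verifying the pro-coherence and descent properties needed for the construction to be genuinely functorial in $X$ and $E$. In particular, one must ensure that the formal neighbourhood $\hat{\Delta}$ behaves well enough, as a formal stack over $X$, to admit the tangent complex identification above, and that passing to $\Qcoh$-coefficients is compatible with the formal moduli functor. A secondary but nontrivial point is to pin down the shift conventions so that the first-order operator arising from the formal groupoid is literally the Atiyah class, rather than a rescaling or suspension of it; once this is fixed, functoriality of the Atiyah class in $E$ ensures that the section constructed above is the one promised by the statement.
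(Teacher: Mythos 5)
Your proposal follows essentially the same route as the paper: the Lie structure is extracted from the formal completion of the diagonal $X \to X \times X$ via the formal-moduli/dg-Lie correspondence glued over $X$, and the action on a sheaf comes from the two pullbacks to that formal neighbourhood, with the Atiyah class appearing as the first-order part. One caveat: you invoke the relative correspondence as an \emph{equivalence} ("determines an essentially unique dg-Lie algebra object"), but over a general derived Artin stack the functor $\formal_X \colon \dgLie_X \rightleftarrows \dStF_X \noloc \lie_X$ is only an adjunction (it is an equivalence only over an affine noetherian base), so the step as phrased would fail; the construction survives because one only needs to apply the right adjoint $\lie_X$ to $\for{(X \times X)}$ and use the counit $\formal_X \lie_X \to \id$ to transport the two-pullback descent data into an action, which is exactly how the paper proceeds. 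You correctly identify this relative set-up as the main obstacle, so the gap is one of phrasing rather than of strategy.
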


This in particular applies to non-smooth varieties, schemes or classifying stacks.
To prove \autoref{intro-tgtlie}, we will introduce the category $\dStF_X$ of so called formal stacks over a stack $X$ and build an adjunction
\[
\formal_X \colon \dgLie_X \rightleftarrows \dStF_X \noloc \lie_X
\]
The Lie structure on $\T_X[-1]$ is then the image through $\lie_X$ of the formal completion of the diagonal embedding $X \to X \times X$.
Let us emphasize here that the adjunction above is usually not an equivalence. It is though, when $X$ is both affine and noetherian.

The second part of \autoref{intro-tgtlie} is proved by constructing an action of $\T_X[-1]$ on the pullback functor $\Delta^* \colon \Qcoh(X \times X) \to \Qcoh(X)$, where $\Delta$ is the diagonal of $X$. This action is given by the so-called universal Atiyah class of $X$. It then follows that $\T_X[-1]$ acts on the functor $\id_{\Qcoh(X)} \simeq \Delta^* q^*$, where $q$ is one of the projections $X \times X \to X$.

\section*{Perspectives:}
To go further down the road of higher dimensional formal loop spaces, we give here a few research paths the author hopes to follow in the future.

It is known since \cite{kapranovvasserot:loop1} that in dimension $1$, the formal loop space can be defined over a curve $C$ and has a structure of factorisation monoid over $C$. It allowed Kapranov and Vasserot to use some local-to-global principle to prove their results.
This technique is also used on formal loops with values in a classifying stack $\B G$ in a very recent paper of Gaitsgory and Lurie -- see \cite{gaitsgorylurie:weil}.
In the case of  higher dimension, the formal loop space over a variety $V$ is still to be defined. It should also have a natural factorisation structure.

In their series of articles about formal loops, Kapranov and Vasserot link the $1$-dimensional formal loop space in a scheme $X$ with sheaves of chiral differential operators on $X$.
With the definition of higher dimensional formal loop spaces we provide, finding an analogue of their result in higher dimension becomes a very natural question.
The first step would be to find a suitable definition of higher dimensional chiral and vertex algebras. The author wishes to studying this point, in collaboration with Giovanni Faonte and Mikhail Kapranov.

Another direction the author is interested in is the development of a so-called local geometry -- named after local fields. The main motivating example of an object in the alleged geometry is the algebra of Laurent series $k(\!(t)\!)$.
We will see in this thesis that one of the main problem is to define the punctured formal neighbourhood with all its structure. For instance $k(\!(t)\!)$ is naturally an algebra in ind-pro-vector spaces of finite dimension but it is not an ind-pro-algebra in vector spaces.
The idea of local geometry is precisely to replace the site of algebras (or cdga's) by a site of commutative algebras in the symmetric monoidal category of ind-pro-vector spaces.

\section*{Techniques and conventions:}
Throughout this thesis, we will use the techniques of $(\infty,1)$-category theory. We will once in a while use explicitly the model of quasi-categories developed by Joyal and Lurie (see \cite{lurie:htt}). That being said, the results should be true with any equivalent model.
Let us fix now two universes $\mathbb U \in \mathbb V$ to deal with size issues. Every algebra, module or so will implicitly be $\mathbb U$-small.
The first part of this thesis will consist of reminders about $(\infty,1)$-categories and derived algebraic geometry. We will fix then some notations. The reader will find a notations glossary at the end of this document.

We will also use derived algebraic geometry, as introduced in \cite{toen:hagii}. We refer to \cite{toen:dagems} for a recent survey of this theory.
We will denote by $k$ a base field and by $\dSt_k$ the $(\infty,1)$-category of ($\mathbb U$-small) derived stacks over $k$.

\section*{Outline:}
This thesis begins with a few paragraphs, recalling some notions we will use. Among them are (monoidal) $(\infty,1)$-categories and derived algebraic geometry.
In \autoref{chaptercats}, we develop some more $(\infty,1)$-categorical tools. We also prove \autoref{intro-tate} (see \autoref{univpropT0} and \autoref{ktheorysusp}).
In \autoref{chapterIPandTate}, we set up a theory of geometric ind-pro-stacks.
We also define symplectic Tate stacks and give a few properties, including the construction of the determinantal anomaly (see \autoref{determinantalanomaly}).
Comes \autoref{chapterloops} where we finally define higher dimensional loop spaces and prove \autoref{intro-kaploop} (see \autoref{L-indpro}). We introduce the bubble space and prove \autoref{intro-bubble} (see \autoref{B-symplectic}).
In \autoref{chapterlie}, we will prove \autoref{intro-tgtlie} (see \autoref{tangent-lie} and \autoref{derived-global}).
Finally, in the last chapter, we will describe a few perspectives the author will, hopefully, someday follow.

\section*{Aknowledgements:}
I would like to thank my advisor Bertrand Toën for this opportunity, and for teaching my most of what I know. I also thank Damien Calaque and Marco Robalo for the many discussions we had about the content of this work.
I am grateful to Mikhail Kapranov, James Wallbridge and Giovanni Faonte for inviting me at the IPMU. My stay there was very fruitful and the discussions we had were very interesting.
I finally thank Michael Gröchenig for pinpointing a mistake in the previous version, and for the hole lot of comments and pieces of advise he offered me.%
\end{chap-intro}
\cleardoublepage
\chapter*{Preliminaries}%
\pagenumbering{arabic}
\addcontentsline{toc}{chapter}{Preliminaries}%
\begin{chap-prelim}
In this part, we recall some results and definitions from $(\infty,1)$-category theory and derived algebraic geometry.

\section{A few tools from higher category theory}

In the last decades, theory of $(\infty,1)$-categories has tremendously grown.
The core idea is to consider categories enriched over spaces, so that every object or morphism is considered up to higher homotopy.
The typical example of such a category is the category of topological spaces itself: for any topological spaces $X$ and $Y$, the set of maps $X \to Y$ inherits a topology.
It is often useful to talk about topological spaces up to homotopy equivalences. Doing so, one must also consider maps up to homotopy.
To do so, one can of course formally invert every homotopy equivalence and get a set of morphisms $[X,Y]$. This process loses information and mathematicians tried to keep trace of the space of morphisms.

The first fully equipped theory handy enough to work with such examples, called model categories, was introduced by Quillen.
A model category is a category with three collections of maps -- weak equivalences (typically homotopy equivalences), fibrations and cofibrations -- satisfying a bunch of conditions.
The datum of such collections allows us to compute limits and colimits up to homotopy. We refer to \cite{hovey:modcats} for a comprehensive review of the subject.

Using model categories, several mathematicians developed theories of $(\infty,1)$-categories. Let us name here Joyal's quasi-categories, complete Segal spaces or simplicial categories.
Each one of those theories is actually a model category and they are all equivalent one to another -- see \cite{bergner:oneinfty} for a review.

In \cite{lurie:htt}, Lurie developed the theory of quasi-categories. In this book, he builds everything necessary so that we can think of $(\infty,1)$-categories as we do usual categories. To prove something in this context still requires extra care though. We will use throughout this thesis the language as developed by Lurie, but we will try to keep in mind the $1$-categorical intuition.

In this section, we will fix a few notations and recall some results to which we will often refer.

\paragraph*{Notations:}
Let us first fix a few notations, borrowed from \cite{lurie:htt}.
\begin{itemize}
\item We will denote by $\inftyCatu U$ the $(\infty,1)$-category of $\mathbb U$-small $(\infty,1)$-categories -- see \cite[3.0.0.1]{lurie:htt};\glsadd{catinfty}
\item Let $\PresLeftu U$ denote the $(\infty,1)$-category of $\mathbb U$-presentable (and thus $\mathbb V$-small) $(\infty,1)$-categories with left adjoint functors -- see \cite[5.5.3.1]{lurie:htt};\glsadd{presleft}
\item The symbol $\sSets$ will denote the $(\infty,1)$-category of $\mathbb U$-small spaces;\glsadd{ssets}
\item For any $(\infty,1)$-categories $\Cc$ and $\Dd$ we will write $\Fct(\Cc,\Dd)$ for the $(\infty,1)$-category of functors from $\Cc$ to $\Dd$ (see \cite[1.2.7.3]{lurie:htt}). The category of presheaves will be denoted $\presh(\Cc) = \Fct(\Cc\op, \sSets)$.\glsadd{fct}\glsadd{presh}
\item For any $(\infty,1)$-category $\Cc$ and any objects $c$ and $d$ in $\Cc$, we will denote by $\Map_{\Cc}(c,d)$ the space of maps from $c$ to $d$.\glsadd{map}
\end{itemize}
The following theorem is a concatenation of results from Lurie.
\begin{thm}[Lurie]\label{indu-thm}
Let $\Cc$ be a $\mathbb V$-small $(\infty,1)$-category.\glsadd{ind}
There is an $(\infty,1)$-category $\Indu U(\Cc)$ and a functor $j \colon \Cc \to \Indu U(\Cc)$ such that
\begin{enumerate}
\item The $(\infty,1)$-category $\Indu U(\Cc)$ is $\mathbb V$-small;
\item The $(\infty,1)$-category $\Indu U(\Cc)$ admits $\mathbb U$-small filtered colimits and is generated by $\mathbb U$-small filtered colimits of objects in $j(\Cc)$;
\item The functor $j$ is fully faithful and preserves finite limits and finite colimits which exist in $\Cc$;
\item For any $c \in \Cc$, its image $j(c)$ is $\mathbb U$-small compact in $\Indu U (\Cc)$;
\item For every $(\infty,1)$-category $\Dd$ with every $\mathbb U$-small filtered colimits, the functor $j$ induces an equivalence
\[
 \Fct^{\mathbb U\mathrm{-c}}(\Indu U(\Cc), \Dd) \to^\sim \Fct(\Cc,\Dd)
\]
where $\Fct^{\mathbb U \mathrm{-c}}(\Indu U(\Cc), \Dd)$ denote the full subcategory of $\Fct(\Indu U(\Cc),\Dd)$ spanned by functors preserving $\mathbb U$-small filtered colimits.
\item If $\Cc$ is $\mathbb U$-small and admits all finite colimits then $\Indu U(\Cc)$ is $\mathbb U$-presentable;
\item If $\Cc$ is endowed with a symmetric monoidal structure then there exists such a structure on $\Indu U(\Cc)$ such that the monoidal product preserves $\mathbb U$-small filtered colimits in each variable.\label{indmonoidal}
\end{enumerate}
\end{thm}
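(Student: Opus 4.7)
The plan is to combine several results of Lurie from \cite{lurie:htt} and \cite{lurie:halg}, since this statement is explicitly presented as a concatenation. The construction itself is the standard one: first I would define $\Indu U(\Cc)$ as the full subcategory of the presheaf $(\infty,1)$-category $\presh(\Cc) = \Fct(\Cc\op, \sSets)$ spanned by those presheaves that arise as $\mathbb U$-small filtered colimits of representables. Equivalently, it is the essential image of the Yoneda embedding $j\colon\Cc \to \presh(\Cc)$ closed under $\mathbb U$-small filtered colimits. The functor $j$ is obtained by corestricting the Yoneda embedding.

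From this construction, properties (i)--(iv) follow rather directly. Assertion (i) is a cardinality check on the above generation process. For (iii), fullness and faithfulness of $j$ is the $\infty$-categorical Yoneda lemma (HTT 5.1.3.1); preservation of finite limits follows because the Yoneda embedding preserves all limits and the inclusion $\Indu U(\Cc) \hookrightarrow \presh(\Cc)$ reflects them; preservation of the finite colimits that exist in $\Cc$ comes from the compatibility of filtered colimits with finite limits, giving a version of the standard fact that representables viewed inside Ind behave correctly with respect to finite colimits. For (ii), the generation statement is literal in the construction. For (iv), $\mathbb U$-compactness of $j(c)$ is the statement that $\Map_{\Indu U(\Cc)}(j(c), -)$ commutes with $\mathbb U$-small filtered colimits, which is a direct consequence of the Yoneda lemma combined with the definition of filtered colimits in $\Indu U(\Cc)$.

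Assertion (v) is the universal property of Ind-completion and would be deduced from HTT 5.3.5.10 (or its $\mathbb U$-parametrized variant), namely that left Kan extension along $j$ provides an inverse to restriction along $j$ on the subcategory of filtered-colimit-preserving functors. For (vi), when $\Cc$ is $\mathbb U$-small and has finite colimits, $\Indu U(\Cc)$ satisfies the hypotheses of HTT 5.5.1.1 identifying it as a presentable $(\infty,1)$-category: it is cocomplete (small filtered colimits plus finite colimits generate all colimits) and accessibly generated by the essentially small set of compact objects $j(\Cc)$.

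The last point (vii), on the monoidal structure, is the main technical ingredient: here I would invoke the Day convolution / tensor product of presentable categories machinery from \cite{lurie:halg} (HA 4.8.1.10 together with the discussion of symmetric monoidal Ind-completion). Concretely, if $\Cc$ carries a symmetric monoidal structure, one extends $\otimes$ to $\Indu U(\Cc)$ by forcing it to commute with $\mathbb U$-small filtered colimits in each variable; the universal property (v), applied variable by variable, guarantees both existence and essential uniqueness of this extension and shows that $j$ is symmetric monoidal. I expect this last assertion to be the main point requiring care, because it is the only one where one must verify coherence of the extended monoidal structure at the level of $\infty$-operads rather than simply at the level of homotopy categories; the cleanest route is to apply the cited results of \cite{lurie:halg} directly rather than trying to rebuild the coherence data by hand.
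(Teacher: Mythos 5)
Your proposal is correct and follows essentially the same route as the paper: the paper sets $\Indu U(\Cc) = \presh^{\mathcal K}_{\emptyset}(\Cc)$ for $\mathcal K$ the collection of $\mathbb U$-small filtered simplicial sets (HTT 5.3.6.2), which is exactly your subcategory of $\presh(\Cc)$ generated under $\mathbb U$-small filtered colimits by representables, and then cites HTT 5.5.1.1, 5.3.5.14, 5.3.5.5 for items (i)--(vi). The only cosmetic difference is in (vii), where the paper invokes the ready-made statement on symmetric monoidal Ind-completions (HA 6.3.1.10) rather than rebuilding the structure via Day convolution, but both are legitimate citations to the same machinery.
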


\begin{proof}
Let us use the notations of \cite[5.3.6.2]{lurie:htt}. Let $\mathcal K$ denote the collection of $\mathbb U$-small filtered simplicial sets. We then set $\Indu U (\Cc) = \presh^\mathcal K _\emptyset(\Cc)$.
It satisfies the required properties because of \loccit 5.3.6.2 and 5.5.1.1. We also need tiny modifications of the proofs of \loccit 5.3.5.14 and 5.3.5.5. The last item is proved in \cite[6.3.1.10]{lurie:halg}.
\end{proof}

Lurie proved in \cite[5.3.5.15]{lurie:htt} that any map $c \to d \in \Indu U (\Cc)$ is a colimit of a $\mathbb U$-small filtered diagram $K \to \Fct(\Delta^1, \Cc)$. We will need afterwards the following small refinement of this statement, inspired by \cite[3.9]{bgw:tate}
\begin{prop}[Strictifying maps]\label{strictification}
Let $\Cc$ be a $\mathbb V$-small $(\infty,1)$-category and let $f \colon c \to d$ be a morphism in $\Indu U(\Cc)$. Let $\bar c \colon K \to \Cc$ and $\bar d \colon L \to \Cc$ be $\mathbb U$-small filtered diagrams of whom respectively $c$ and $d$ are colimits in $\Indu U(\Cc)$.
There exists a $\mathbb U$-small filtered diagram $\bar f \colon J \to \Fct(\Delta^1, \Cc)$ and a commutative diagram
\[
\mymatrix{
K \ar[d]^{\bar c} & J \ar[r]^{p_L} \ar[l]_{p_K} \ar[d]^{\bar f} & L \ar[d]^{\bar d} \\
\Cc & \Fct(\Delta^1, \Cc) \ar[r]^-{\ev_1} \ar[l]_-{\ev_0} & \Cc
}
\]
such that both maps $p_K$ and $p_L$ are cofinal, and such that $f$ is the colimit of $\bar f$.
\end{prop}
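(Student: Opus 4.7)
The plan is to build $J$ by unstraightening: take the left fibration over $K \times L$ classifying the functor $(k,l) \mapsto \Map_{\Cc}(\bar c(k), \bar d(l)) \times_{\Map_{\Indu U(\Cc)}(\bar c(k), d)} \{f \circ \iota_k\}$, where $\iota_k \colon \bar c(k) \to c$ is the canonical coprojection. Informally, objects of $J$ are triples $(k, l, \alpha)$ where $\alpha \colon \bar c(k) \to \bar d(l)$ is a morphism in $\Cc$ together with a coherent homotopy identifying the composite $\bar c(k) \to \bar d(l) \to d$ with $f \circ \iota_k$ in $\Indu U(\Cc)$. This category comes equipped with natural projections $p_K \colon J \to K$ and $p_L \colon J \to L$, and an evident functor $\bar f \colon J \to \Fct(\Delta^1, \Cc)$ sending $(k,l,\alpha)$ to $\alpha$, satisfying by construction $\ev_0 \circ \bar f = \bar c \circ p_K$ and $\ev_1 \circ \bar f = \bar d \circ p_L$.

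What remains is threefold: show $J$ is $\mathbb U$-small and filtered, establish cofinality of $p_K$ and $p_L$, and identify $\colim \bar f$ with $f$. Smallness is immediate from the $\mathbb U$-smallness of $K$, $L$ and the mapping spaces of $\Cc$. For filteredness I would use the characterization of filtered $\infty$-categories via cocones on finite diagrams: given $F \colon A \to J$ with $A$ finite, filteredness of $K$ and $L$ yields cocones on $p_K \circ F$ and $p_L \circ F$ with apices $k^\star$ and $l^\star$; the collection of arrows $\bar c(k_a) \to^{\alpha_a} \bar d(l_a) \to \bar d(l^\star)$ then assembles into a finite diagram in $\Cc$ under $\bar c(k^\star)$, and compactness of $\bar c(k^\star) \in \Cc \subset \Indu U(\Cc)$ (\autoref{indu-thm}) allows us, after possibly enlarging $l^\star$, to produce a coherent extension $\alpha^\star \colon \bar c(k^\star) \to \bar d(l^\star)$, giving a cocone in $J$.

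For cofinality of $p_K$, Joyal's criterion (\cite[4.1.3.1]{lurie:htt}) reduces us to showing that the fiber $J \times_K K_{k_0/}$ is weakly contractible for each $k_0 \in K$. This fiber models the $\infty$-groupoid of factorizations of $f \circ \iota_{k_0}$ through objects of the diagram $\bar d$: using the equivalence
\[
\Map_{\Indu U(\Cc)}(\bar c(k_0), d) \simeq \colim_{l \in L} \Map_{\Cc}(\bar c(k_0), \bar d(l))
\]
from compactness of $\bar c(k_0)$, the fiber is (cofinally equivalent to) the total space of a left fibration over the filtered category $L$ whose fibers are the homotopy fibers of the coprojection maps to a filtered colimit over a single fixed point; this is weakly contractible. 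Cofinality of $p_L$ is symmetric. Since colimits in $\Fct(\Delta^1, \Indu U(\Cc))$ are computed objectwise and both projections are cofinal, one obtains $\ev_0 \colim \bar f \simeq c$ and $\ev_1 \colim \bar f \simeq d$, and the coherence built into $J$ identifies the induced arrow with $f$.

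The main obstacle is the cofinality step, which requires controlling the $\infty$-categorical fibers carefully; a cleaner presentation may describe $J$ as an iterated comma construction of the form $K \times_{\Indu U(\Cc)_{/d}} (\Fct(\Delta^1, \Cc) \times_{\Cc} L)$ so that general results on left fibrations apply directly. In any formulation, the crucial input is compactness of objects in the essential image of $j \colon \Cc \to \Indu U(\Cc)$, which is exactly what makes ind-categories the right setting for strictifying morphisms.
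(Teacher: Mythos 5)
Your proposal is correct and follows essentially the same route as the paper: the paper's $J'$ is precisely the fibre product $\left(K \times_\Cc \Fct(\Delta^1,\Cc) \times_\Cc L\right) \times_{\Fct(\Delta^1,\Indu U(\Cc))} \quot{\Fct(\Delta^1,\Indu U(\Cc))}{f}$, i.e.\ your left fibration of factorizations, and filteredness is proved there by the same combination of filteredness of $K$ and $L$ with compactness of $j(\bar c(k))$ in $\Indu U(\Cc)$. The only cosmetic difference is that the paper first replaces $K$, $L$ (and afterwards $J'$) by filtered posets via \cite[4.3.2.14]{lurie:htt}, which lets it check filteredness by extending maps from finite posets and declare the cofinality of $p_K$, $p_L$ "straightforward", where you instead invoke Joyal's criterion directly.
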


\begin{proof}
Using \cite[4.3.2.14]{lurie:htt} we can assume that both $K$ and $L$ are filtered partially ordered sets.
Let us denote by $J'$ the fibre product
\[
\mymatrix{
J' \cart \ar[d] \ar[rr] && \quot{\Fct(\Delta^1, \Indu U(\Cc))}{f} \ar[d] \\
\displaystyle K \times_\Cc \Fct(\Delta^1, \Cc) \times_\Cc L \ar[r] & \Fct(\Delta^1, \Cc) \ar[r] & \Fct(\Delta^1, \Indu U(\Cc))
}
\]
Let us first prove that $J'$ is filtered.
Let $P$ be a partially ordered finite set and $P^\triangleright$ denote the partially ordered set $P \cup \{\infty\}$, where $\infty$ is a maximal element.
A morphism $P \to J'$ is the datum of a commutative diagram
\[
\mymatrix@!0@R=6mm@C=10mm{
&& P \times \{0 \} \ar[rrr]^\kappa \ar[ddr] &&& K \ar[rdd]^{\bar c} & \\
&P \times \{1\} \ar[rrr]|!{[ur];[drr]}\hole^(0.7)\lambda \ar[rrd] &&& L \ar[drr]^(0.4){\bar d} && \\
&&& P \times \Delta^1 \ar[rrr]_\psi \ar[dd] &&& \Cc \ar[dd] \\
\\
\{\infty\} \times \Delta^1 \ar[rrr] &&& P^\triangleright \times \Delta^1 \ar[rrr] &&& \Indu U(\Cc)
}
\]
Let us denote by $P_+$ the partially ordered set $P \cup \{ + \}$ where $+$ is a maximal element. Because $K$ is filtered, the map $\kappa$ extends to a morphism $\kappa' \colon P_+ \times \{0\} \to K$.
There exists $l \in L$ such that the induced map $\bar c(\kappa'(+)) \to c \to d$ factors through $\bar d(l) \to d$. Since $L$ is filtered, there is a map $\lambda' \colon P_+ \times \{1\} \to L$ extending $\lambda$.
We can moreover chose $\lambda'(+)$ greater than $l$ (ie with a map $l \to \lambda'(+)$ in $L$).
Using the map $\bar c(\kappa'(+)) \to \bar d(l) \to \bar d(\lambda'(+))$, we get a morphism $\psi' \colon P_+ \times \Delta^1 \to \Cc$ extending $\psi$, which by construction extends to $P_+^\triangleright \times \Delta^1$ -- where we set $\infty \geq +$.
This defines a morphism $P_+ \to J'$, proving that $J'$ is filtered.
Using \cite[4.3.2.14]{lurie:htt} we define $J$ to be a filtered partially ordered set with a cofinal map $J \to J'$.
Proving that the maps $J \to K$ and $J \to L$ are cofinal is now straightforward.
This also implies that the induced diagram $\bar f \colon J \to \Fct(\Delta^1, \Cc)$ has colimit $f$ in $\Indu U(\Cc)$.
\end{proof}

\begin{rmq}
Note that when $\Cc$ admits finite colimits then the category $\Indu U(\Cc)$ embeds in the $\mathbb V$-presentable category $\Indu V(\Cc)$.
\end{rmq}

\begin{df}
Let $\Cc$ be a $\mathbb V$-small $\infty$-category. We define $\Prou U(\Cc)$ as the $(\infty,1)$-category\glsadd{pro}
\[
\Prou U(\Cc) = \left( \Indu U(\Cc\op) \right) \op
\]
It satisfies properties dual to those of $\Indu U(\Cc)$.
\end{df}

\begin{df}[{see \cite[1.2.8.4]{lurie:htt}}]\glsadd{cone}
Let $K$ be a simplicial set. We will denote by $K^{\triangleright}$ the simplicial set obtained from $K$ by formally adding a final object. This final object will be called the cone point of $K^\triangleright$.
\end{df}

The following lemma is a direct consequence of results from Lurie's \cite{lurie:htt}.
\begin{lem}[Stabilisation]\label{tatification}
Let $\Cc$ be a $\mathbb V$-small pointed category with all suspensions. Let us assume that the suspension functor $\Cc \to \Cc$ is an equivalence.
There exists an $(\infty,1)$-category $\Cc^\mathrm{st}$ with a map $j \colon \Cc \to \Cc^\mathrm{st}$ such that
\begin{enumerate}
\item The category $\Cc^\mathrm{st}$ is $\mathbb V$-small and stable.
\item The functor $j$ is fully faithful and preserves all limits and finite colimits which exist in $\Cc$.
\item For any stable $(\infty,1)$-category $\Dd$ the induced map
\[
\Fct^\mathrm{ex}(\Cc^\mathrm{st},\Dd) \to \Fct^\mathrm{lex}(\Cc,\Dd)
\]
between exact functors and left exact functors is an equivalence.
\item For any stable category $\Dd$ with a fully faithful functor $\Cc \to \Dd$ preserving finite colimits and limits which exist in $\Cc$, the smallest stable subcategory of $\Dd$ containing the image of $\Cc$ is equivalent to $\Cc^\mathrm{st}$.
\end{enumerate}
\end{lem}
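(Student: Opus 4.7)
The plan is to realise $\Cc^{\mathrm{st}}$ as the smallest stable subcategory of a sufficiently large ambient stable $(\infty,1)$-category containing a faithful copy of $\Cc$. The key observation is that the hypothesis on $\Sigma\colon\Cc\to\Cc$ being an equivalence forces $\Cc$ to be canonically enriched in spectra: iterating the natural equivalences $\Map_\Cc(X,Y)\simeq\Omega\Map_\Cc(X,\Sigma Y)$ together with the analogous ones coming from an inverse of $\Sigma$ equips each mapping space with an unbounded chain of deloopings, hence with a canonical spectrum structure. Spectrum-enriched Yoneda then provides a fully faithful functor $\iota\colon\Cc\to\Fct(\Cc\op,\Sp)$, with the target formed in the larger universe $\mathbb V$; this target is $\mathbb V$-presentable and stable, since $\Sp$ is stable and finite (co)limits in a functor category are computed pointwise.

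I then define $\Cc^{\mathrm{st}}$ to be the smallest stable full subcategory of $\Fct(\Cc\op,\Sp)$ containing $\iota(\Cc)$. Property (i) is automatic: stability is built in, and $\mathbb V$-smallness follows because $\Cc^{\mathrm{st}}$ is obtained from $\iota(\Cc)$ by iterating the taking of fibers and cofibers countably many times. For (ii), full faithfulness of $\iota$ is spectrum-enriched Yoneda; preservation of the zero object is clear since $\Map_\Cc(-,0)\simeq\pt$ is sent to the zero presheaf; and $\iota$ sends a suspension cocone $\pt\leftarrow X\to\pt$ in $\Cc$ to the suspension $\Sigma\iota(X)$ in $\Fct(\Cc\op,\Sp)$, because evaluating at $Y$ yields $\Sigma\Map_\Cc(Y,X)\simeq\Map_\Cc(Y,\Sigma X)$, where the equivalence uses precisely that $\Sigma$ is an equivalence on $\Cc$. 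Hence $\iota$ preserves all limits and the finite colimits that exist in $\Cc$.

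The main content lies in the universal property (iii). Given a stable $(\infty,1)$-category $\Dd$ and a left-exact $f\colon\Cc\to\Dd$, the extension $\tilde f\colon\Cc^{\mathrm{st}}\to\Dd$ is built by induction along the filtration of $\Cc^{\mathrm{st}}$ by the fiber/cofiber-depth of objects relative to $\iota(\Cc)$: on a new object $c=\mathrm{cofib}(a\to b)$ with $\tilde f$ already defined on $a,b$, stability of $\Dd$ forces $\tilde f(c)=\mathrm{cofib}(\tilde f a\to\tilde f b)$, and symmetrically for fibers. Uniqueness up to contractible choice is the familiar statement that an exact functor between stable categories is determined by its restriction to any generating subcategory. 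The main technical obstacle is to assemble these object-level assignments into a genuine coherent $(\infty,1)$-functor, which I would handle by repeated application of Lurie's left-Kan-extension machinery \cite[\S 4.3]{lurie:htt} at each stage of the filtration, checking along the way that each extension step is still exact. Property (iv) is then a formal consequence of (iii): for any fully faithful $\Cc\hookrightarrow\Dd$ into a stable $\Dd$ preserving the relevant structure, (iii) yields an exact functor $\Cc^{\mathrm{st}}\to\Dd$ whose essential image is the smallest stable subcategory of $\Dd$ containing $\Cc$; comparing universal properties then gives the claimed equivalence.
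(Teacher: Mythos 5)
Your construction runs in the opposite direction from the paper's, and the direction you chose leaves the hardest assertion unproved. The paper takes \emph{(iii)} essentially as the definition: it sets $\Cc^{\mathrm{st}} = \presh^{\{K,\emptyset\}}_{\mathcal R}(\Cc)$ in the sense of \cite[5.3.6.2]{lurie:htt}, where $K$ is the pushout shape and $\mathcal R$ is the collection of existing cocartesian squares together with the zero object; the universal property is then supplied by \emph{loc.\ cit.}, and the remaining work is to check that the result is stable (every object is a finite colimit of objects of $\Cc$, so suspension is still an equivalence and \cite[1.4.2.27]{lurie:halg} applies) and that $j$ is fully faithful. You instead build a concrete model --- the stable subcategory of $\Fct(\Cc\op,\Sp)$ generated by the spectrally enriched Yoneda image --- and must then \emph{prove} \emph{(iii)}. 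Your argument for \emph{(iii)}, namely defining $\tilde f$ on objects by induction on cofiber-depth and assembling the result "by repeated left Kan extension", is not a proof: an object-by-object assignment does not determine an $(\infty,1)$-functor, the strata of your filtration are not full subcategories along which Kan extension can be meaningfully iterated, and a left Kan extension along a non-colimit-dense inclusion has no reason to be exact or even to restrict back to $f$. This coherence problem is exactly what the $\presh^{\mathcal K}_{\mathcal R}$ machinery exists to solve; it is the entire content of the lemma and cannot be deferred to "checking along the way".

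There are two further gaps. First, the coherent enrichment of $\Cc$ in spectra and the full faithfulness of the spectral Yoneda embedding are asserted rather than constructed; producing the spectra $\{\Map_\Cc(X,\Sigma^n Y)\}_n$ functorially in both variables with all higher coherences is genuine work (one standard route is $\Cc\to\presh(\Cc)\to\Sp(\presh(\Cc)_*)$, but full faithfulness and the identification of mapping spectra must then be checked). Second, for \emph{(ii)} you only verify that $\iota$ preserves suspensions. A general pushout square that happens to exist in $\Cc$ is sent by $\iota$ to a pushout of spectral presheaves only if every functor $\Map_\Cc(Y,\Sigma^n(-))$ carries it to a pullback of spaces, i.e.\ only if the square is also a local pullback; this does not follow from the hypotheses (your $\Cc$ is pointed with invertible suspension but is not assumed stable or even finitely cocomplete), whereas in the paper's construction preservation of the existing colimits is forced by the relations $\mathcal R$. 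Without this, your candidate $\Cc^{\mathrm{st}}$ may fail \emph{(ii)}, and the comparison of universal properties you invoke for \emph{(iv)} breaks down with it.
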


\begin{proof}
Let us denote by $K$ the simplicial set corresponding to a diagram $\bullet \from \bullet \to \bullet$. Let $\mathcal R$ denote the collection of all cocartesian diagrams $K^\triangleright \to \Cc$ and the zero $\emptyset^\triangleright = \bullet \to \Cc$ in $\Cc$.
We then set $\Cc^\mathrm{st} = \presh^{\{K,\emptyset\}}_{\mathcal R}(\Cc)$ using the notation of \cite[5.3.6.2]{lurie:htt}. Note that \emph{(iii)} is proven in \emph{loc. cit.}.
The category $\Cc^\mathrm{st}$ is pointed and it comes with two natural fully faithful maps
\[
\mymatrix{
\Cc \ar[r]^-j & \Cc^\mathrm{st} \ar[r] & \presh(\Cc)
}
\]
whose composite is the Yoneda functor and therefore preserves limits which exist in $\Cc$. It follows that $j$ also preserves those limits.
By definition, the functor $j$ preserves finite colimits which exist in $\Cc$.

Any object of $\Cc^\mathrm{st}$ is a finite colimit of objects in $\Cc$. Its suspension is therefore the colimit of the suspensions of those objects.
The suspension functor $\Cc^\mathrm{st} \to \Cc^\mathrm{st}$ is thus an equivalence.
Corollary \cite[1.4.2.27]{lurie:halg} implies that $\Cc^\mathrm{st}$ is stable.

We now focus on the assertion \emph{(iv)}. Let $f \colon \Cc \to \Dd$ be as required. Because of the third point, there is an essentially unique functor $g \colon \Cc^\mathrm{st} \to \Dd$ lifting $f$. Every object in $\Cc^\mathrm{st}$ can be written as both a colimit and a limit of objects of $\Cc$. It follows that $g$ is fully faithful and then that $\Cc^\mathrm{st}$ contains the smallest full and stable subcategory $\Dd'$ of $\Dd$ extending $\Cc$. There is also a universal map $\Cc^\mathrm{st} \to \Dd'$ which is easily seen to be an inverse to the inclusion.
\end{proof}

\begin{lem}
Let $\Cc$ be an idempotent complete $\mathbb V$-small $(\infty,1)$-category. We consider the natural embeddings $i \colon \Prou U (\Cc) \to \Prou U \Indu U(\Cc)$ and $j \colon \Indu U(\Cc) \to \Prou U \Indu U(\Cc)$. We will also denote by $k$ the embedding $\Cc \to \Prou U \Indu U(\Cc)$.
If an object of $\Prou U \Indu U(\Cc)$ is in both the essential images of $i$ and $j$, then it is in the essential image of $k$.
\end{lem}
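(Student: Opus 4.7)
The plan is to exhibit $X$ as a retract in $\Indu U(\Cc)$ of an object of $\Cc$, and then to conclude by idempotent completeness of $\Cc$.

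First, write $X \simeq \colim_{\alpha \in I} c_\alpha$ with $c_\alpha \in \Cc$ and $I$ a small filtered category (possible because $X$ lies in the essential image of $j$), and also $X \simeq \lim_{\beta \in J} d_\beta$ in $\Prou U \Indu U(\Cc)$ with $d_\beta \in \Cc$ and $J$ small cofiltered (because $X$ lies in the essential image of $i$, and $i$ factors as $\Prou U(\Cc) \to \Prou U \Indu U(\Cc)$).

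Next, compute $\Map_{\Prou U \Indu U(\Cc)}(X, X)$ by combining both presentations. Using the pro-presentation at the source and the fact that the target lies in the fully faithful image of $\Indu U(\Cc) \hookrightarrow \Prou U \Indu U(\Cc)$, the standard formula for mapping spaces out of a pro-object into a constant one gives
\[
\Map_{\Prou U \Indu U(\Cc)}(X, X) \simeq \colim_{\beta \in J\op} \Map_{\Indu U(\Cc)}(d_\beta, X).
\]
Each $d_\beta \in \Cc$ is compact in $\Indu U(\Cc)$ by \autoref{indu-thm}, so applying compactness to $X \simeq \colim_\alpha c_\alpha$ yields
\[
\Map_{\Indu U(\Cc)}(d_\beta, X) \simeq \colim_{\alpha \in I} \Map_{\Cc}(d_\beta, c_\alpha),
\]
and altogether $\Map(X,X)$ becomes the filtered colimit $\colim_{(\beta,\alpha) \in J\op \times I} \Map_{\Cc}(d_\beta, c_\alpha)$. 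The point $\id_X$ is therefore represented, up to homotopy, by some morphism $f \colon d_{\beta_0} \to c_{\alpha_0}$ of $\Cc$.

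Composing $f$ with the structural maps $X \to d_{\beta_0}$ (from the pro-structure) and $c_{\alpha_0} \to X$ (from the ind-structure) yields a factorisation of $\id_X$ through $c_{\alpha_0}$, exhibiting $X$ as a retract of $c_{\alpha_0}$ in $\Prou U \Indu U(\Cc)$. Since $j$ is fully faithful and $X, c_{\alpha_0} \in \Indu U(\Cc)$, the retract diagram already lives in $\Indu U(\Cc)$. Finally, idempotent completeness of $\Cc$ identifies it with the full subcategory of compact objects of $\Indu U(\Cc)$; in particular $\Cc$ is closed under retracts taken in $\Indu U(\Cc)$, hence $X$ is equivalent to an object of $\Cc$.

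The main subtlety is the mapping space computation: one must carefully handle the interplay between the pro-structure at the source, the ind-structure at the target, and the compactness of objects of $\Cc$ in $\Indu U(\Cc)$. Once this formula is in place, the factorisation of $\id_X$ through a finite stage is automatic as a point of a filtered colimit of spaces, and the idempotent completeness hypothesis wraps things up cleanly.
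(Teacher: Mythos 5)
Your proof is correct and follows essentially the same route as the paper: factor a map through a finite stage of the pro-presentation to exhibit the object as a retract of an object of $\Cc$ inside $\Indu U(\Cc)$, then invoke idempotent completeness via \cite[5.4.2.4]{lurie:htt}. The only (harmless) difference is that you also run the ind-presentation and compactness to land the retraction in some $c_{\alpha_0}$, whereas the paper stops at the stage $d_{\beta_0}$ of the pro-diagram, which already lies in $\Cc$.
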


\begin{proof}
Let $x \in \Indu U(\Cc)$.
Let us assume there exists a pro-object $y \in \Prou U(\Cc)$ and an equivalence $f \colon x \to y$. Let $\bar y \colon K \op \to \Cc$ be a cofiltered diagram of whom $y$ is a limit in $\Prou U(\Cc)$. The equivalence $f$ induces a morphism from the constant diagram $x \colon K\op \to \Indu U(\Cc)$ to $\bar y \colon K \op \to \Indu U(\Cc)$.
An inverse $g \colon y \to x$ of $f$ then induces a map $y_k = \bar y(k) \to x$ for some $k \in K$ such that the composite morphism $x \to y_k \to x$ is homotopic to the identity. Idempotent completeness and \cite[5.4.2.4]{lurie:htt} finish the proof.
\end{proof}

\begin{df} \label{indext}
Let $\Cc$ be a $\mathbb V$-small $(\infty,1)$-category.
Let 
\[
i \colon \Fct(\Cc, \inftyCatu V) \to \Fct(\Indu U(\Cc), \inftyCatu V)
\]
denote the left Kan extension functor.
We will denote by $\Indext_\Cc$ the composite functor\glsadd{indext}
\[
\mymatrix{
\Fct(\Cc, \inftyCatu V) \ar[r]^-i & \Fct(\Indu U(\Cc), \inftyCatu V) \ar[rr]^-{\Indu U \circ -} && \Fct(\Indu U(\Cc), \inftyCatu V)
}
\]
We will denote by $\Proext_\Cc$ the composite functor\glsadd{proext}
\[
\mymatrix{
\Fct(\Cc, \inftyCatu V) \ar[rr]^{\Prou U \circ -} && \Fct(\Cc, \inftyCatu V) \ar[r] & \Fct(\Prou U(\Cc), \inftyCatu V)
}
\]
We define the same way
\begin{align*}
&\Indextu V_C \colon \Fct(\Cc, \inftyCatu V) \to \Fct(\Indu V(\Cc), \inftyCat) \\
&\Proextu V_C \colon \Fct(\Cc, \inftyCatu V) \to \Fct(\Prou V(\Cc), \inftyCat) \\
\end{align*}
\end{df}

\begin{rmq}
The \autoref{indext} can be expanded as follows. To any functor $f \colon \Cc \to \inftyCatu V$ and any ind-object $c$ colimit of a diagram
\[
\mymatrix{
K \ar[r]^-{\bar c} & \Cc \ar[r] & \Indu U(\Cc)
}
\]
we construct an $(\infty,1)$-category
\[
\Indext_\Cc(f)(c) \simeq \Indu U\left(\colim f(\bar c)\right)
\]
To any pro-object $d$ limit of a diagram
\[
\mymatrix{
K\op \ar[r]^-{\bar d} & \Cc \ar[r] & \Prou U(\Cc)
}
\]
we associate an $(\infty,1)$-category
\[
\Proext_\Cc(f)(d) \simeq \lim \Prou U(f(\bar d))
\]
\end{rmq}

\begin{df}
Let $\inftyCat^{\mathbb V\mathrm{,st}}$ denote the subcategory of $\inftyCatu V$ spanned by stable categories with exact functors between them -- see \cite[1.1.4]{lurie:halg}.\glsadd{catstable}
Let $\inftyCat^{\mathbb V\mathrm{,st,id}}$ denote the full subcategory of $\inftyCat^{\mathbb V\mathrm{,st}}$ spanned by idempotent complete stable categories.\glsadd{catsstableid}
\end{df}

\begin{rmq}
It follows from \cite[1.1.4.6, 1.1.3.6, 1.1.1.13 and 1.1.4.4]{lurie:halg} that the functors $\Indext_\Cc$ and $\Proext_\Cc$ restricts to functors
\begin{align*}
& \Indext_\Cc \colon \Fct(\Cc, \inftyCat^{\mathbb V\mathrm{,st}}) \to \Fct(\Indu U (\Cc), \inftyCat^{\mathbb V\mathrm{,st}}) \\
& \Proext_\Cc \colon \Fct(\Cc, \inftyCat^{\mathbb V\mathrm{,st}}) \to \Fct(\Prou U (\Cc), \inftyCat^{\mathbb V\mathrm{,st}})
\end{align*}
\end{rmq}

\begin{df}\label{sifted}\glsadd{sifted}
Given an $(\infty,1)$-category $\Cc$ with all finite coproducts, we will denote by $\sifted(\Cc)$ its completion under sifted colimits. Recall (see \cite{lurie:htt}, part 5.5.8) that this is the $(\infty,1)$-category $\Fct^{\times}(\Cc\op,\sSets)$ of $(\infty,1)$-functors preserving finite products.
\end{df}

\paragraph{Symmetric monoidal $(\infty,1)$-categories:}
We will make use in the last chapter of the theory of symmetric monoidal $(\infty,1)$-categories as developed in \cite{lurie:halg}. Let us give a (very) quick review of those objects.

\begin{df}\label{ptfin}\glsadd{ptfin}
Let $\ptfin$ denote the category of pointed finite sets. For any $n \in \N$, we will denote by $\langle n \rangle$ the set $\{\pt, 1, \dots ,n\}$ pointed at $\pt$.
For any $n$ and $i \leq n$, the Segal map $\delta^i \colon \langle n \rangle \to \langle 1 \rangle$ is defined by $\delta^i(j) = 1$ if $j = i$ and $\delta^i(j) = \pt$ otherwise.
\end{df}
\begin{df}(see \cite[2.0.0.7]{lurie:halg})\label{monoidalcats}
Let $\Cc$ be an $(\infty,1)$-category. A symmetric monoidal structure on $\Cc$ is the datum of a coCartesian fibration $p \colon \Cc^{\otimes} \to \ptfin$ such that
\begin{itemize}
\item The fibre category $\Cc^{\otimes}_{\langle 1 \rangle}$ is equivalent to $\Cc$ and
\item For any $n$, the Segal maps induce an equivalence $\Cc^{\otimes}_{\langle n \rangle} \to (\Cc^{\otimes}_{\langle 1 \rangle})^n \simeq \Cc^n$.
\end{itemize}
where $\Cc^{\otimes}_{\langle n \rangle}$ denote the fibre of $p$ at $\langle n \rangle$.
We will denote by $\monoidalinftyCatu V$ the $(\infty,1)$-category of $\mathbb V$-small symmetric monoidal $(\infty,1)$-categories -- see \cite[2.1.4.13]{lurie:halg}.\glsadd{monoidalcats}
\end{df}
Such a coCartesian fibration is classified by a functor $\phi \colon \ptfin \to \inftyCatu V$ -- see \cite[3.3.2.2]{lurie:htt} -- such that $\phi(\langle n \rangle) \simeq \Cc^n$.
The tensor product on $\Cc$ is induced by the map of pointed finite sets $\mu \colon \langle 2 \rangle \to \langle 1 \rangle$ mapping both $1$ and $2$ to $1$
\[
\otimes = \phi(\mu) \colon \Cc^2 \to \Cc
\]

\begin{rmq}\label{indext-monoidal}
The forgetful functor $\monoidalinftyCatu V \to \inftyCatu V$ preserves all limits as well as filtered colimits -- see \cite[3.2.2.4 and 3.2.3.2]{lurie:halg}.
Moreover, it follows from \autoref{indu-thm} - \ref{indmonoidal} that the functor $\Indu U$ induces a functor
\[
\Indu U \colon \monoidalinftyCatu V \to \monoidalinftyCatu V
\]
The same holds for $\Prou U$.
The constructions $\Indext$ and $\Proext$ therefore restrict to
\begin{align*}
&\Indext_\Cc \colon \Fct(\Cc,\monoidalinftyCatu V) \to \Fct(\Indu U(\Cc), \monoidalinftyCatu V) \\
&\Proext_\Cc \colon \Fct(\Cc,\monoidalinftyCatu V) \to \Fct(\Prou U(\Cc), \monoidalinftyCatu V)
\end{align*}
\end{rmq}

\section{Derived algebraic geometry}

We present here some background results about derived algebraic geometry.
Let us assume $k$ is a field of characteristic $0$.
First introduced by Toën and Vezzosi in \cite{toen:hagii}, derived algebraic geometry is a generalisation of algebraic geometry in which we replace commutative algebras over $k$ by commutative differential graded algebras (or cdga's).
We refer to \cite{toen:dagems} for a recent survey of this theory.

\paragraph{Generalities on derived stacks :}We will denote by $\cdga_k$\glsadd{cdga} the $(\infty,1)$-category of cdga's over $k$ concentrated in non-positive cohomological degree. It is the $(\infty,1)$-localisation of a model category along weak equivalences. 
Let us denote $\dAff_k$\glsadd{daff} the opposite $(\infty,1)$-category of $\cdga_k$. It is the category of derived affine schemes over $k$.
In this thesis, we will adopt a cohomological convention for cdga's.

A derived prestack is a presheaf $\dAff_k\op \simeq \cdga_k \to \sSets$. We will thus write $\presh(\dAff_k)$ for the $(\infty,1)$-category of derived prestacks.
A derived stack is a prestack with a descent condition. We will denote by $\dSt_k$\glsadd{dst} the $(\infty,1)$-category of derived stacks.
It comes with an adjunction
\[
(-)^+ \colon \presh(\dAff_k) \rightleftarrows \dSt_k
\]
where the left adjoint $(-)^+$ is called the stackification functor.
\begin{rmq}
The categories of varieties, schemes or (non derived) stacks embed into $\dSt_k$.
\end{rmq}

\begin{df}
The $(\infty,1)$-category of derived stacks admits an internal hom $\Mapstack(X,Y)$ between two stacks $X$ and $Y$. It is the functor $\cdga_k \to \sSets$ defined by
\[
A \mapsto \Map_{\dSt_k}(X \times \Spec A, Y)
\]
We will call it the mapping stack from $X$ to $Y$.\glsadd{mapstack}
\end{df}

There is a derived version of Artin stacks of which we first give a recursive definition.
\begin{df}(see for instance \cite[5.2.2]{toenmoerdijk:crm})\label{artinstacks}
Let $X$ be a derived stack.
\begin{itemize}
\item We say that $X$ is a derived $0$-Artin stack if it is a derived affine scheme ;
\item We say that $X$ is a derived $n$-Artin stack if there is a family $(T_\alpha)$ of derived affine schemes and a smooth atlas
\[
u \colon \coprod T_\alpha \to X
\]
such that the nerve of $u$ has values in derived $(n-1)$-Artin stacks ;
\item We say that $X$ is a derived Artin stack if it is an $n$-Artin stack for some $n$.
\end{itemize}
We will denote by $\dStArt_k$\glsadd{dstart} the full subcategory of $\dSt_k$ spanned by derived Artin stacks.
\end{df}

To any cdga $A$ we associate the category $\dgMod_A$ of dg-modules over $A$.\glsadd{dgmod}
Similarly, to any derived stack $X$ we can associate a derived category $\Qcoh(X)$ of quasicoherent sheaves. It is a $\mathbb U$-presentable $(\infty,1)$-category given by the formula
\[
\Qcoh(X) \simeq \lim_{\Spec A \to X} \dgMod_A
\]
Moreover, for any map $f \colon X \to Y$, there is a natural pull back functor $f^* \colon \Qcoh(Y) \to \Qcoh(X)$. This functor admits a right adjoint, which we will denote by $f_*$.
This construction is actually a functor of $(\infty,1)$-categories.
\begin{df}
Let us denote by $\Qcoh$ the functor\glsadd{qcoh}
\[
\Qcoh \colon \dSt_k\op \to \PresLeftu U
\]
For any $X$ we can identify a full subcategory $\Perf(X) \subset \Qcoh(X)$\glsadd{perf} of perfect complexes. This defines a functor
\[
\Perf \colon \dSt_k\op \to \inftyCatu U
\]
\end{df}

\begin{rmq}
For any derived stack $X$ the categories $\Qcoh(X)$ and $\Perf(X)$ are actually stable and idempotent complete $(\infty,1)$-categories. The inclusion $\Perf(X) \to \Qcoh(X)$ is exact. Moreover, for any map $f \colon X \to Y$ the pull back functor $f^*$ preserves perfect modules and is also exact. 
\end{rmq}

Any derived Artin stack $X$ over a basis $S$ admits a cotangent complex $\Lcot_{X/S} \in \Qcoh(X)$. If $X$ if locally of finite presentation, then the its cotangent complex is perfect\glsadd{cotangent}
\[
\Lcot_{X/S} \in \Perf(X)
\]

\paragraph{Obstruction theory :}
Let $A \in \cdga_k$ and let $M \in \dgMod_A^{\leq -1}$ be an $A$-module concentrated in negative cohomological degrees. Let $d$ be a derivation $A \to A \oplus M$ and $s \colon A \to A \oplus M$ be the trivial derivation.
The square zero extension of $A$ by $M[-1]$ twisted by $d$ is the fibre product
\[
\mymatrix{
A \oplus_d M[-1] \cart \ar[r]^-p \ar[d] & A \ar[d]^d \\ A \ar[r]^-s & A \oplus M
}
\]
Let now $X$ be a derived stack and $M \in \Qcoh(X)^{\leq -1}$. We will denote by $X[M]$ the trivial square zero extension of $X$ by $M$.\glsadd{trivext}
Let also $d \colon X[M] \to X$ be a derivation -- ie a retract of the natural map $X \to X[M]$. We define the square zero extension of $X$ by $M[-1]$ twisted by $d$ as the colimit
\[
X_d[M[-1]] = \colim_{f \colon \Spec A \to X} \Spec(A \oplus_{f^*d} f^* M[-1])
\]
It is endowed with a natural morphism $X \to X_d[M[-1]]$ induced by the projections $p$ as above.
\begin{prop}[Obstruction theory on stacks]\label{obstruction}
Let $F \to G$ be an algebraic morphism of derived stacks. Let $X$ be a derived stack and let $M \in \Qcoh(X)^{\leq -1}$. Let $d$ be a derivation
\[
d \in \Map_{X/-}(X[M], X)
\]
We consider the map of simplicial sets
\[
\psi \colon \Map(X_d[M[-1]],F) \to \Map(X,F) \times_{\Map(X,G)} \Map(X_d[M[-1]],G)
\]
Let $y \in \Map(X,F) \times_{\Map(X,G)} \Map(X_d[M[-1]],G)$ and let $x \in \Map(X,F)$ be the induced map.
There exists a point $\alpha(y) \in \Map(x^* \Lcot_{F/G}, M)$ such that the fibre $\psi_y$ of $\psi$ at $y$ is equivalent to the space of paths from $0$ to $\alpha(y)$ in $\Map(x^* \Lcot_{F/G}, M)$
\[
\psi_y \simeq \Omega_{0,\alpha(y)}\Map(x^* \Lcot_{F/G}, M)
\]
\end{prop}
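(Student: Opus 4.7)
The plan is to reduce the statement to the universal property of the relative cotangent complex of $F \to G$, first by localising on $X$ to the affine case, then by treating the case where $F$ and $G$ are themselves affine, and finally by globalising in $F$ and $G$ via the algebraicity hypothesis.

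\textbf{Step 1 (reduction to $X$ affine).} First I would use the colimit presentation
\[
X_d[M[-1]] = \colim_{f\colon \Spec A \to X} \Spec(A \oplus_{f^*d} f^* M[-1]).
\]
Since $\Map(-,F)$ and $\Map(-,G)$ turn colimits of stacks into limits of spaces, $\psi_y$ is the limit over $f \colon \Spec A \to X$ of the corresponding affine fibres $\psi_{f^*y}$. On the other hand, $x^*\Lcot_{F/G}$ is quasi-coherent on $X$, so $\Map(x^*\Lcot_{F/G}, M)$ and its path spaces also satisfy affine descent on $X$. Constructing $\alpha(y)$ and the identification locally on $X$ is therefore enough, and this reduces us to the case $X = \Spec A$.

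\textbf{Step 2 (affine $F$ and $G$).} Next I would handle the case $F = \Spec B$, $G = \Spec C$, which is the heart of the argument. Unravelling the pullback square defining $A \oplus_d M[-1]$, a point of $\psi_y$ over $(x \colon B \to A, \tilde x_G \colon B \to A \oplus_d M[-1] \otimes_C)$ is a path in the space of $C$-algebra maps $B \to A \oplus M$ between the trivial square-zero extension coming from $s \colon A \to A \oplus M$ and $x$, and the extension determined by $d$ and $\tilde x_G$. The difference of these two $C$-algebra maps, both lifting $x$, is a $C$-linear derivation of $B$ with values in $M$, i.e.\ a point
\[
\alpha(y) \in \Map_A(\Lcot_{B/C} \otimes_B^{\mathbb L} A, M) \simeq \Map(x^*\Lcot_{F/G}, M).
\]
The space of paths in question is exactly $\Omega_{0,\alpha(y)} \Map(x^*\Lcot_{F/G}, M)$, which is the conclusion of the proposition.

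\textbf{Step 3 (algebraic $F \to G$ in general).} Finally I would remove the affineness of $F$ and $G$. Since $F \to G$ is algebraic, it admits a smooth atlas by derived affines; applying Step 2 along the (relative) Čech nerve of this atlas and invoking smooth descent for mapping stacks, for quasi-coherent modules, and for the relative cotangent complex, one can reassemble both sides of the equivalence. The main obstacle I expect is the coherence of this gluing: I must check that the locally defined classes $\alpha(y)$ descend to a well-defined homotopy class of morphism $x^*\Lcot_{F/G} \to M$, and that the identification $\psi_y \simeq \Omega_{0,\alpha(y)} \Map(x^*\Lcot_{F/G}, M)$ is functorial enough to survive the limit. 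This is where the algebraicity hypothesis is essential, since it is what guarantees the existence of the atlas and the compatibility of $\Lcot_{F/G}$ with smooth pullbacks.
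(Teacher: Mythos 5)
Your Steps 1 and 2 match the shape of the paper's argument: the paper also constructs the class $\alpha(y)$ from the commutative square relating $X[M]$, $X$ and $X_d[M[-1]]$ (identifying $\Map_{X/-/G}(X[M],F)\simeq\Map(x^*\Lcot_{F/G},M)$ via the relative cotangent complex), builds a comparison map $f\colon \Omega_{0,\alpha(y)}\Map(x^*\Lcot_{F/G},M)\to\psi_y$, and then reduces to $X$ affine by observing that the class of $X$ for which $f$ is an equivalence contains affines and is closed under colimits. One small but real difference: the paper constructs $\alpha$ and the comparison map $f$ \emph{globally in $X$ first}, and only then localises; your Step 1 localises first and constructs the identification affine-by-affine, which leaves you the extra burden of checking that these local equivalences are functorial in $\Spec A\to X$ before you may pass to the limit. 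This is fixable, but you should reorganise so that the map whose invertibility you are testing exists before you decompose $X$.

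The genuine gap is Step 3. Mapping spaces do not satisfy ``smooth descent in the target'': for a smooth atlas $U\to F$ with Čech nerve $U_\bullet$, one has $F\simeq\lvert U_\bullet\rvert$ as a \emph{colimit}, so $\Map(X,F)$ is neither the limit nor the colimit of $\Map(X,U_\bullet)$, and a point of $\Map(X,F)$ need not come from a point of $\Map(X,U)$ at all. So you cannot ``apply Step 2 along the relative Čech nerve and invoke smooth descent for mapping stacks'' to pass from affine $F,G$ to an algebraic morphism $F\to G$. That passage is precisely the nontrivial content of the existence of an obstruction theory for Artin stacks: the actual argument (an induction on the geometricity of $F$, lifting the given $X$-point smooth-locally on $X$ to the atlas and comparing the obstruction problems for $U\to F$ and for $U$) is carried out in \cite[1.4.2.6]{toen:hagii}, and the paper's proof simply invokes that result for $X$ affine and arbitrary $F$ rather than reproving it. In other words, the paper's only reduction is in the $X$ variable, where $\Map(-,F)$ genuinely turns colimits into limits; your proposal additionally tries to reduce in the $F$ variable, and that reduction does not go through as described.
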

\begin{proof}
This is a simple generalisation of \cite[1.4.2.6]{toen:hagii}. The proof is very similar.
We have a natural commutative square
\[
\mymatrix{
X[M] \ar[r]^-d \ar[d] & X \ar[d] \\ X \ar[r] & X_d[M[-1]]
}
\]
It induces a map
\[
\alpha \colon \Map(X,F) \times_{\Map(X,G)} \Map(X_d[M[-1]],G) \to \Map_{X/-/G}(X[M],F) \simeq \Map(x^* \Lcot_{F/G},M)
\]
Let $\Omega_{0,\alpha(y)}\Map_{X/-/G}(X[M],F)$ denote the space of paths from $0$ to $\alpha(y)$. It is the fibre product
\[
\mymatrix{
\Omega_{0,\alpha(y)}\Map_{X/-/G}(X[M],F) \cart[][20] \ar[r] \ar[d] & \pt \ar[d]^{\alpha(y)} \\
\pt \ar[r]^-0 & \Map_{X/-/G}(X[M],F)
}
\]
The composite map $\alpha \psi$ is by definition homotopic to the $0$ map. This defines a morphism
\[
f \colon \Omega_{0,\alpha(y)}\Map_{X/-/G}(X[M],F) \to \psi_y
\]
It now suffices to see that the category of $X$'s for which $f$ is an equivalence contains every derived affine scheme and is stable by colimits.
The first assertion is exactly \cite[1.4.2.6]{toen:hagii} and the second one is trivial.
\end{proof}

\paragraph{Algebraisable stacks :}
Let $X$ be a derived stack and $A$ be a cdga. Let $a = (\el{a}{p})$ be a sequence of elements of $A^0$ forming a regular sequence in $\homol^0(A)$.
Let $\quot{A}{\el{a^n}{p}}$ denote the Kozsul complex associated with the regular sequence $(\el{a^n}{p})$. It is endowed with a cdga structure.
There is a canonical map 
\[
\psi(A)_a \colon \colim_n X\left( \textstyle \quot{A}{\el{a^n}{p}} \right) \to X\left(\lim_n \textstyle \quot{A}{\el{a^n}{p}} \right)
\]
This map is usually not an equivalence.
\begin{df}\label{alg-diag}
A derived stack $X$ is called algebraisable if for any $A$ and any regular sequence $a$ the map $\psi(A)_a$ is an equivalence.

A map $f \colon X \to Y$ is called algebraisable if for any derived affine scheme $T$ and any map $T \to Y$, the fibre product $X \times_Y T$ is algebraisable.

We will say that a derived stack $X$ has algebraisable diagonal if the diagonal morphism $X \to X \times X$ is algebraisable.
\end{df}

\begin{rmq}
A derived stack $X$ has algebraisable diagonal if for any $A$ and $a$ the map $\psi(A)_a$ is fully faithful.
One could also rephrase the definition of being algebraisable as follows. A stack is algebraisable if it does not detect the difference between
\[
\colim_n \Spec\left( \textstyle \quot{A}{\el{a^n}{p}} \right) \text{~~~and~~~} \Spec\left( \lim_n \textstyle \quot{A}{\el{a^n}{p}} \right)
\]
\end{rmq}

\begin{ex}
Any derived affine scheme is algebraisable. Another important example of algebraisable stack is the stack of perfect complexes. 
In \cite{bhatt:algebraisable}, Bhargav Bhatt gives some more examples of algebraisable (non-derived) stacks -- although our definition slightly differs from his.
He proves that any quasi-compact quasi-separated algebraic space is algebraisable and also provides with examples of non-algebraisable stacks. Let us name $\mathrm K(\Gm,2)$ -- the Eilenberg-Maclane classifying stack of $\Gm$ -- as an example of non-algebraisable stack.
Algebraisability of Deligne-Mumford stacks is also look at in \cite{lurie:dagxii}.
\end{ex}
\end{chap-prelim}

\chapter{Categorical results}\label{chaptercats}%
\begin{chap-cats}
\section{Tate objects}
In this subsection we define the category of Tate objects in a stable $(\infty,1)$-category. We prove \autoref{intro-tate} claimed in the introduction.
\begin{df}
Let $\Cc$ be a $\mathbb V$-small stable $(\infty,1)$-category. We define the category $\Tateu U_0(\Cc)$ of pure Tate objects in $\Cc$ as the full sub-category of $\Prou U\Indu U(\Cc)$ spanned by the images of $\Indu U(\Cc)$ and $\Prou U(\Cc)$ through the canonical functors.
\glsadd{tate0}%
The category $\Tateu U_0(\Cc)$ obviously satisfies the conditions of \autoref{tatification} and we define the category $\Tateu U_\mathrm{el}(\Cc)$ of elementary Tate objects in $\Cc$ as the completion
\[
\Tateu U_\mathrm{el}(\Cc) = \left( \Tateu U_0(\Cc) \right) ^{\mathrm{st}}
\]
\glsadd{tateel}%
We also define the category $\Tateu U(\Cc)$ of Tate objects in $\Cc$ as the idempotent completion of $\Tateu U_\mathrm{el}(\Cc)$.
\glsadd{tate}%
We have fully faithful exact functors between stable $(\infty,1)$-categories
\[
\Tateu U_\mathrm{el}(\Cc) \to \Tateu U(\Cc) \to \Prou U\Indu U(\Cc)
\]
\end{df}

\begin{rmq}
Note that $\Tateu U (\Cc)$ is $\mathbb V$-small.
The construction $\Tateu U(-)$ defines a functor 
\[
\inftyCat^{\mathbb V\mathrm{,st}} \to \inftyCat^{\mathbb V\mathrm{,st,id}}
\]
It comes with a fully faithful --- ie pointwise fully faithful --- natural transformation
\[
\mymatrix{
\inftyCat^{\mathbb V\mathrm{,st}} \ar[rr]^-{\Tateu U} \ar[d] && \inftyCat^{\mathbb V\mathrm{,st,id}} \ar[d] \ar@{=>}[dll] \\ \inftyCatu V \ar[rr]_{\Prou U \Indu U} && \inftyCatu V
}
\]
\end{rmq}

\begin{rmq}
We can immediately see that the functor $\Tateu U$ map any fully faithful and exact functor $\Cc \to \Dd$ between stable categories to a fully faithful (and exact) functor $\Tateu U(\Cc) \to \Tateu U(\Dd)$.
\end{rmq}

Let us now give a universal property for the category of pure Tate objects. The next theorem states that for any $(\infty,1)$-category $\Dd$ and any commutative diagram
\[
\mymatrix{
\Cc \ar[r] \ar[d] & \Indu U(\Cc) \ar[d]^-f \\ \Prou U(\Cc) \ar[r]_-g & \Dd
}
\]
such that $f$ preserves $\mathbb U$-small filtered colimits and $g$ preserves $\mathbb U$-small cofiltered limits there exists an essentially unique functor $\Tateu U_0(\Cc) \to \Dd$ such that $f$ and $g$ are respectively equivalent to the composite functors
\begin{align*}
&\Indu U(\Cc) \to \Tateu U_0(\Cc) \to \Dd \\
&\Prou U(\Cc) \to \Tateu U_0(\Cc) \to \Dd
\end{align*}
This universal property was discovered during a discussion with Michael Gröchenig, whom the author thanks greatly.
To state formally this property, let us fix some notations. Let $i \colon \Indu U(\Cc) \to \Tateu U_0(\Cc)$ and $p \colon \Prou U(\Cc) \to \Tateu U_0(\Cc)$ denote the canonical inclusions.
Let also $\Fct_t(\Tateu U_0(\Cc),\Dd)$ denote the full subcategory of $\Fct(\Tateu U_0(\Cc),\Dd)$ spanned by those functors $\xi$ such that
\begin{itemize}
\item The composite functor $\xi i$ maps filtered colimit diagrams to colimit diagrams.
\item The composite functor $\xi p$ maps cofiltered limit diagrams to limit diagrams.
\end{itemize}
Let also $\Fct_m(\Cc,\Dd)$ denote the category of functors $g \colon \Cc \to \Dd$ such that
\begin{itemize}
\item For any filtered diagram $K \to \Cc$, the composite diagram $K \to \Cc \to \Dd$ admits a colimit in $\Dd$.
\item For any cofiltered diagram $K\op \to \Cc$, the composite diagram $K\op \to \Cc \to \Dd$ admits a limit in $\Dd$.
\end{itemize}
\begin{thm}\label{univpropT0}
Let $\Cc$ be a $\mathbb V$-small stable $(\infty,1)$-category. For any $(\infty,1)$-category $\Dd$, the restriction functor induces an equivalence
\[
\mymatrix{
\Fct_t(\Tateu U_0(\Cc),\Dd) \ar[r] & \Fct_m(\Cc,\Dd)
}
\]
\end{thm}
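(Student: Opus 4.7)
The plan is to construct an inverse $E \colon \Fct_m(\Cc,\Dd) \to \Fct_t(\Tateu U_0(\Cc),\Dd)$ to the restriction functor along the inclusion $\iota \colon \Cc \to \Tateu U_0(\Cc)$, and show that restriction and $E$ are mutually inverse. That the restriction of any $F \in \Fct_t(\Tateu U_0(\Cc),\Dd)$ actually lands in $\Fct_m(\Cc,\Dd)$ is clear: filtered diagrams $K \to \Cc$ admit colimits in $\Ind(\Cc) \subset \Tateu U_0(\Cc)$ whose image under $F|_{\Ind(\Cc)}$ must be a colimit by hypothesis, and dually for cofiltered limits via $F|_{\Pro(\Cc)}$.

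The first step is to build extensions along each of $\Cc \hookrightarrow \Ind(\Cc)$ and $\Cc \hookrightarrow \Pro(\Cc)$ separately. The condition defining $\Fct_m$ is exactly what is needed so that a mild relative version of \autoref{indu-thm}(v) applies (the universal property for functors into an arbitrary $\Dd$ in which the relevant filtered colimits, and no others, are assumed to exist). This produces essentially unique $\tilde g_i \colon \Ind(\Cc) \to \Dd$ sending $K$-indexed filtered colimits to colimits, and dually $\tilde g_p \colon \Pro(\Cc) \to \Dd$ sending cofiltered limits to limits, both agreeing with $g$ on $\Cc$.

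The second and central step is to glue $\tilde g_i$ and $\tilde g_p$ into a functor $\tilde g \colon \Tateu U_0(\Cc) \to \Dd$. Here the key input is the computation of mapping spaces in $\Pro\Ind(\Cc)$: for $c = \colim_K c_\alpha \in \Ind(\Cc)$ and $d = \lim_L d_\beta \in \Pro(\Cc)$, fully faithfulness of $\Cc \hookrightarrow \Ind(\Cc) \hookrightarrow \Pro\Ind(\Cc)$ gives $\Map(c,d) \simeq \lim_L \lim_K \Map_\Cc(c_\alpha, d_\beta)$, and similarly $\Map(d,c) \simeq \colim_L \colim_K \Map_\Cc(d_\beta, c_\alpha)$. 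In both cases the candidate target mapping space $\Map_\Dd(\tilde g(c), \tilde g(d))$ or $\Map_\Dd(\tilde g(d), \tilde g(c))$ receives a canonical map from the source, through $g$ applied componentwise and the universal (co)cones defining $\tilde g_i(c)$ and $\tilde g_p(d)$. These comparisons are forced by functoriality, so the values of any extension in $\Fct_t$ on mixed morphisms are prescribed by $g$. Together with the uniqueness of $\tilde g_i$ and $\tilde g_p$, this yields fully faithfulness of the restriction functor.

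For essential surjectivity one must upgrade the object/morphism-level construction to a genuine $(\infty,1)$-functor. The strategy is to pick a strict model of $\Tateu U_0(\Cc)$ as a quasicategory (using, for instance, the strictification of \autoref{strictification} to represent mixed morphisms by cofinal compatible systems) and assemble $\tilde g$ diagram by diagram, with the coherences built out of the unique extensions on $\Ind(\Cc)$ and $\Pro(\Cc)$ respectively. The main obstacle I expect is precisely this coherence: assembling the values on $\Ind(\Cc)$, on $\Pro(\Cc)$, and on mixed morphisms into a single simplicial map of quasicategories requires a non-trivial cofinality argument to check that all higher cells are automatically controlled by the universal properties of $\Ind$ and $\Pro$. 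Once this coherence is in place, the equivalence $\tilde g \circ \iota \simeq g$ is tautological, and the resulting $\tilde g$ belongs to $\Fct_t$ by construction, completing the proof.
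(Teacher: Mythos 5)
Your reduction of the problem is the right one, and your computation of the mapping spaces in $\Prou U\Indu U(\Cc)$ between an ind-object and a pro-object (in both directions) is correct and does show that the values of any extension in $\Fct_t$ on mixed morphisms are forced by $g$. But this only proves a uniqueness-type statement; the existence half of the theorem is exactly the step you defer. Your essential-surjectivity paragraph says the strategy is to "assemble $\tilde g$ diagram by diagram" in a strict model and that "the main obstacle I expect is precisely this coherence," to be handled by "a non-trivial cofinality argument." In an $(\infty,1)$-categorical setting, specifying a functor on objects and morphisms and then hoping the higher cells are "automatically controlled" is precisely what one cannot do by hand, and no argument is supplied for why the infinitely many coherences assemble. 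As written, the central construction of the proof is missing, not merely unexplicated. (A smaller issue of the same nature: \autoref{indu-thm} (v) requires $\Dd$ to admit \emph{all} $\mathbb U$-small filtered colimits, whereas $\Fct_m(\Cc,\Dd)$ only guarantees colimits of the specific composite diagrams $K \to \Cc \to \Dd$; so even the two separate extensions $\tilde g_i$, $\tilde g_p$ need a detour through $\presh(\Dd)$ rather than a direct appeal to that universal property.)

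The paper circumvents the coherence problem entirely by working formally with Kan extensions in presheaf categories, which is where your argument needs to go. Writing $\mathrm I\Cc$, $\mathrm P\Cc$, $\mathrm T\Cc$, $\mathrm{PI}\Cc$ for $\Indu U(\Cc)$, $\Prou U(\Cc)$, $\Tateu U_0(\Cc)$, $\Prou U\Indu U(\Cc)$, one considers the restriction functors $\Fct(\mathrm{PI}\Cc,\presh(\Dd)) \to \Fct(\mathrm T\Cc,\presh(\Dd))$ and $\Fct(\mathrm{PI}\Cc,\presh(\Dd)) \to \Fct(\mathrm I\Cc,\presh(\Dd))$, which admit respectively a fully faithful left adjoint (left Kan extension) and a fully faithful right adjoint (right Kan extension). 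Since the Yoneda embedding $\Dd \to \presh(\Dd)$ preserves limits and $\presh(\Dd)$ has all (co)limits, $\Fct_m(\Cc,\Dd) \simeq \Fct^{\mathrm c}(\mathrm I\Cc,\Dd)\times_{\Fct(\Cc,\Dd)}\Fct^{\mathrm l}(\mathrm P\Cc,\Dd)$ embeds fully faithfully into $\Fct(\mathrm I\Cc,\presh(\Dd))$, and $\Fct_t(\mathrm T\Cc,\Dd)$ embeds into $\Fct(\mathrm T\Cc,\presh(\Dd))$; one checks that the composites of the Kan extensions with the restrictions carry each essential image into the other, which produces the desired adjunction between $\Fct_t(\mathrm T\Cc,\Dd)$ and $\Fct_m(\Cc,\Dd)$ \emph{with all coherences supplied by the adjoint functor theorem for Kan extensions}. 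One then concludes it is an equivalence because one composite with the restriction is the identity and the restriction is conservative (every object of $\mathrm T\Cc$ is an ind- or a pro-object). If you replace your hand-assembly step with this formal device, the rest of your outline goes through.
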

\begin{proof}
Let us shorten the notations:
\[
\mathrm I \Cc = \Indu U(\Cc) \hspace{1cm} \mathrm P \Cc = \Prou U(\Cc) \hspace{1cm} \mathrm T = \Tateu U_0(\Cc) \hspace{1cm} \mathrm{PI} \Cc = \Prou U(\Indu U(\Cc))
\]
Recall that $\presh(\Dd)$ denotes the $(\infty,1)$-category of simplicial presheaves on $\Dd$.
The restriction functor $\Fct(\mathrm{PI} \Cc,\presh(\Dd)) \to \Fct(\mathrm T\Cc,\presh(\Dd))$ admits a left adjoint given by the left Kan extension. The restriction functor $\Fct(\mathrm{PI} \Cc,\presh(\Dd)) \to \Fct(\mathrm I\Cc,\presh(\Dd))$ admits a right adjoint, given by the right Kan extension. Let us fix their notation
\[
\mymatrix{
\Fct(\mathrm{T} \Cc,\presh(\Dd)) \ar@<2pt>[r]^\delta & \Fct(\mathrm{PI} \Cc,\presh(\Dd)) \ar@<2pt>[r]^\beta \ar@<2pt>[l]^\gamma & \Fct(\mathrm{I} \Cc,\presh(\Dd)) \ar@<2pt>[l]^\alpha
}
\]
the left adjoints being represented above their right adjoint. Note that both $\alpha$ and $\delta$ are fully faithful.
Let also $\tau$ denote the fully faithful functor
\[
\mymatrix{
\displaystyle \Fct_m(\Cc,\Dd) \simeq \Fct^{\mathrm c}(\mathrm I \Cc, \Dd) \timesunder[\Fct(\Cc,\Dd)] \Fct^{\mathrm l}(\mathrm P \Cc,\Dd) \ar[r]^-\tau &
\displaystyle \Fct(\mathrm I \Cc, \presh(\Dd)) \timesunder[\Fct(\Cc,\presh(\Dd))] \Fct^{\mathrm l}(\mathrm P \Cc,\presh(\Dd)) \simeq \Fct(\mathrm I \Cc, \presh(\Dd))
}
\]
where $\Fct^\mathrm c$ (resp. $\Fct^\mathrm l$) denotes the category of functors preserving filtered colimits (resp. cofiltered limits) which exist in the source. We use here that the Yoneda embedding $\Dd \to \presh(\Dd)$ preserves limits.
Let $\theta$ be the fully faithful functor
\[
\mymatrix{
\Fct_t(\mathrm T\Cc,\Dd) \ar[r]^-\theta &
\Fct(\mathrm T\Cc,\presh(\Dd))
}
\]
The composite functor $\beta \delta$ is nothing but the restriction along the canonical inclusion $\mathrm I \Cc \to \mathrm T \Cc$.
It follows that $\beta \delta \theta$ has image in the essential image of $\tau$. On the other hand, the functor $\gamma \alpha \tau$ has image in the essential image of $\theta$.
We hence get an adjunction
\[
f \colon \Fct_t(\mathrm T\Cc,\Dd) \rightleftarrows \Fct_m(\Cc,\Dd) \noloc g
\]
where $f$ is left adjoint to $g$. The functor $g$ is equivalent to the restriction functor and the unit transformation $fg \to \id_X$ is then an equivalence. Moreover, as objects of $\mathrm T\Cc$ are either pro-objects or ind-objects, the restriction functor $f$ is conservative. It follows that the above adjunction is an equivalence.
\end{proof}

\begin{cor}
The category of Tate objects is equivalent to the smallest stable and idempotent complete full subcategory of $\Indu U \Prou U(\Cc)$ generated by the images of $\Indu U(\Cc)$ and $\Prou U(\Cc)$.
\end{cor}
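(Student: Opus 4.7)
The plan is to build a comparison functor $\phi \colon \Tateu U(\Cc) \to \Indu U \Prou U(\Cc)$ by exploiting the universal property established in \autoref{univpropT0}, then show it is fully faithful with image equal to the claimed subcategory. Denote by $\mathcal T'$ the smallest stable and idempotent complete full subcategory of $\Indu U \Prou U(\Cc)$ containing the canonical images of $\Indu U(\Cc)$ and $\Prou U(\Cc)$, and observe that $\Indu U \Prou U(\Cc)$ is itself stable and idempotent complete (the latter because any $\Indu U$-completion absorbs retracts through filtered colimits of idempotents).

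First I would apply \autoref{univpropT0} with $\Dd = \Indu U \Prou U(\Cc)$ to the pair of canonical inclusions $f \colon \Indu U(\Cc) \to \Indu U \Prou U(\Cc)$ (obtained by applying $\Indu U$ to $\Cc \hookrightarrow \Prou U(\Cc)$) and $g \colon \Prou U(\Cc) \to \Indu U \Prou U(\Cc)$ (the Yoneda-like embedding into the $\Ind$-completion). By construction $f$ preserves filtered colimits and $g$ preserves cofiltered limits, and both restrict to the same functor $\Cc \to \Indu U \Prou U(\Cc)$, so the universal property yields an essentially unique $\phi_0 \colon \Tateu U_0(\Cc) \to \Indu U \Prou U(\Cc)$ factoring $f$ and $g$.

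The main obstacle is then to check that $\phi_0$ is fully faithful. On ind--ind and pro--pro pairs this is automatic since $f$ and $g$ are fully faithful. For a mixed pair $I = \colim_\beta I_\beta \in \Indu U(\Cc)$ and $P = \lim_\alpha P_\alpha \in \Prou U(\Cc)$ (with $I_\beta, P_\alpha \in \Cc$), I would compute both sides directly. Using the universal properties of $\Indu U$ and $\Prou U$, one obtains
\[
\Map_{\Prou U \Indu U(\Cc)}(P, I) \simeq \colim_\alpha \colim_\beta \Map_\Cc(P_\alpha, I_\beta) \simeq \Map_{\Indu U \Prou U(\Cc)}(P, I),
\]
\[
\Map_{\Prou U \Indu U(\Cc)}(I, P) \simeq \lim_\alpha \lim_\beta \Map_\Cc(I_\beta, P_\alpha) \simeq \Map_{\Indu U \Prou U(\Cc)}(I, P),
\]
the equivalences being given by commutativity of limits with limits (resp.\ filtered colimits with filtered colimits). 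The delicate point is to make these identifications at the $(\infty,1)$-categorical level, controlling the coherences; here the strictification result of \autoref{strictification} is a convenient tool to reduce the verification to honest iterated (co)limits of diagrams in $\Cc$.

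Once $\phi_0$ is fully faithful, its image lies in $\mathcal T'$, and by construction $\phi_0$ preserves those finite limits and colimits of $\Tateu U_0(\Cc)$ which exist (as these are computed within $\Indu U(\Cc)$ or $\Prou U(\Cc)$, where both target embeddings are exact). \autoref{tatification}~(iv) then extends $\phi_0$ to a fully faithful exact functor $\phi_{\mathrm{el}} \colon \Tateu U_{\mathrm{el}}(\Cc) \hookrightarrow \Indu U \Prou U(\Cc)$ whose essential image is the smallest stable subcategory generated by the image of $\phi_0$, namely the stable closure of $\Indu U(\Cc) \cup \Prou U(\Cc)$ in $\Indu U \Prou U(\Cc)$. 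Finally, since $\Indu U \Prou U(\Cc)$ is idempotent complete, the universal property of idempotent completion promotes $\phi_{\mathrm{el}}$ to a fully faithful exact functor $\phi \colon \Tateu U(\Cc) \hookrightarrow \Indu U \Prou U(\Cc)$ whose essential image is a stable, idempotent complete subcategory of $\Indu U \Prou U(\Cc)$ containing both $\Indu U(\Cc)$ and $\Prou U(\Cc)$. Minimality on one side and a direct induction along the constructions of $\Tateu U_{\mathrm{el}}(\Cc)$ and $\Tateu U(\Cc)$ on the other give the equality $\phi(\Tateu U(\Cc)) = \mathcal T'$, concluding the proof.
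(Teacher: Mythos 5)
Your proposal is correct and follows exactly the route the paper takes: its proof of this corollary is the one-line citation ``This follows from \autoref{univpropT0} and \autoref{tatification}'', and your argument is precisely the expansion of that — use the universal property to produce the comparison functor into $\Indu U \Prou U(\Cc)$, verify full faithfulness on pure Tate objects by the ind/pro mapping-space computations, then invoke \autoref{tatification}~\emph{(iv)} and idempotent completion to identify the essential image with the minimal stable idempotent complete subcategory. No gaps; the details you supply (in particular that the limits and colimits existing in $\Tateu U_0(\Cc)$ are those computed inside $\Indu U(\Cc)$ or $\Prou U(\Cc)$) match the remark the paper records immediately after the corollary.
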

\begin{proof}
This follows from \autoref{univpropT0} and \autoref{tatification}.
\end{proof}

\begin{rmq}
The fully faithful functor $\j \colon \Tateu U_0(\Cc) \to \Prou U\Indu U(\Cc)$ preserves both the limits and colimits which exist in $\Tateu U_0(\Cc)$. Let indeed $\bar x \colon K \to \Tateu U_0(\Cc)$ be a diagram which admits a colimit $x \in \Tateu U_0(\Cc)$. Let us denote by $x'$ a colimit of $\j \bar x$ in $\Prou U\Indu U(\Cc)$. We have, for any cofiltered diagram $y \bar \colon L\op \to \Indu U(\Cc)$
\begin{align*}
\Map_{\Prou U\Indu U(\Cc)}(x', \lim \bar y) &\simeq \colim_l \colim_k \Map_{\Prou U\Indu U(\Cc)}(\j \bar x, \bar y) \simeq \colim_l \colim_k \Map_{\Tateu U_0(\Cc)}(\bar x, \bar y)
\\ &\simeq \colim_l \Map_{\Tateu U_0(\Cc)}(x,\bar y) \simeq \colim_l \Map_{\Prou U\Indu U(\Cc)}(x,\bar y)
\\ &\simeq \colim_l \Map_{\Prou U\Indu U(\Cc)}(x,\bar y) \simeq \Map_{\Prou U\Indu U(\Cc)}(x,\lim \bar y)
\end{align*}
We show symmetrically that the inclusion $\Tateu U_0(\Cc) \to \Indu U \Prou U(\Cc)$ preserves limits.
It follows that limits and colimits that exist in $\Tateu U_0(\Cc)$ are exactly those coming from diagram in either $\Indu U(\Cc)$ or $\Prou U(\Dd)$. We can hence reformulate the universal property from \autoref{univpropT0} as follows:
The datum of a commutative square
\[
\mymatrix{
\Cc \ar[r] \ar[d] & \Indu U(\Cc) \ar[d]^f \\ \Prou U(\Cc) \ar[r]_-g & \Dd
}
\]
such that $f$ preserves filtered colimits and $g$ preserves cofiltered limits is equivalent to that of a functor $\Tateu U_0(\Cc) \to \Dd$ preserving both filtered colimits and cofiltered limits which exist in $\Tateu U_0(\Cc)$.
\end{rmq}

\begin{prop}
Let $\Cc$ be a $\mathbb V$-small stable and idempotent complete $(\infty,1)$-category. There is a stable and idempotent complete $(\infty,1)$-category $\quot{\Indu U(\Cc)}{\Cc}$ who fits in a commutative diagram
\[
\mymatrix{
\Cc \ar[r] \ar[d] & \Indu U(\Cc) \ar[r] \ar[d] & \quot{\Indu U(\Cc)}{\Cc} \ar@{-}[d]^{=} \\
\Prou U(\Cc) \ar[r] & \Tateu U(\Cc) \ar[r] & \quot{\Indu U(\Cc)}{\Cc}
}
\]
such that the two horizontal lines are cofibre sequences in the category of stable and idempotent complete $(\infty,1)$-categories.
\end{prop}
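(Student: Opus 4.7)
The plan is to first define $Q := \quot{\Indu U(\Cc)}{\Cc}$ as the Verdier quotient (cofibre) of $\Indu U(\Cc)$ by $\Cc$, computed in the $(\infty,1)$-category $\inftyCat^{\mathbb V\mathrm{,st,id}}$; its existence and the exactness of the top row follow from the formalism developed by Blumberg, Gepner and Tabuada in \cite{bgt:characterisationk}. It then remains to produce a functor $\Tateu U(\Cc) \to Q$ and to show that, paired with the canonical inclusion $\Prou U(\Cc) \hookrightarrow \Tateu U(\Cc)$, it realises the bottom row as a cofibre sequence with the same cofibre.

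To produce the functor, I would introduce the auxiliary Verdier quotient $Q' := \quot{\Tateu U(\Cc)}{\Prou U(\Cc)}$ in the same $(\infty,1)$-category. The composite $\Cc \to \Indu U(\Cc) \to \Tateu U(\Cc) \to Q'$ agrees, via the fact that the two embeddings of $\Cc$ into $\Tateu U(\Cc)$ coincide, with the composite $\Cc \to \Prou U(\Cc) \to \Tateu U(\Cc) \to Q'$, which vanishes by construction of $Q'$. The universal property of $Q$ as a Verdier quotient then furnishes an essentially unique exact functor $\phi \colon Q \to Q'$, and the core of the argument is to show that $\phi$ is an equivalence; once this is established, the defining cofibre sequence of $Q'$ transports along $\phi^{-1}$ to give the bottom row of the diagram.

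For essential surjectivity of $\phi$, I would exploit that every elementary Tate object $T$ fits in a fibre sequence $P \to T \to I$ with $P \in \Prou U(\Cc)$ and $I \in \Indu U(\Cc)$, so that $T$ becomes equivalent to $I$ in $Q'$. Since $\Tateu U(\Cc)$ is generated from $\Indu U(\Cc) \cup \Prou U(\Cc)$ by finite (co)limits and retracts, and since $Q$ is idempotent complete, every object of $Q'$ lies in the essential image of $\phi$. For full faithfulness, I would compute mapping spaces in both Verdier quotients via the explicit presentation of such quotients in stable $(\infty,1)$-categories supplied by \cite{bgt:characterisationk}. For $X, Y \in \Indu U(\Cc)$, a morphism in $Q'$ is represented by a roof $X \to Z \from Y$ in $\Tateu U(\Cc)$ whose backwards leg has cofibre in $\Prou U(\Cc)$; the task is to replace it, up to the standard equivalence relation on roofs, by a roof inside $\Indu U(\Cc)$ with backwards cofibre in $\Cc$, using once more the presentation of Tate objects as retracts of extensions of pro-objects by ind-objects.

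The main obstacle will be this full faithfulness step, which requires genuine manipulation of roofs in the Verdier quotient and the identification of cofibres across $\Tateu U(\Cc)$, $\Indu U(\Cc)$ and their respective subcategories. Here the strategy of Saito \cite{saito:deloop}, translated into the $(\infty,1)$-categorical language using the tools of \cite{bgt:characterisationk}, should provide the backbone of the computation.
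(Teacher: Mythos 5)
Your overall architecture is the right one and matches the paper's in spirit: both quotients are compared through the universal property of the Verdier quotient, and the equivalence is what has to be proved. Your essential-surjectivity argument (every elementary Tate object sits in a sequence $P \to T \to I$ with $P$ pro and $I$ ind, plus generation under finite (co)limits and retracts) is sound and is essentially the same input the paper uses when it proves that the right adjoint $q$ between the two quotients is conservative.

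The genuine gap is the full-faithfulness step, which you correctly identify as the main obstacle but do not resolve, and the sketch you give would run into trouble as stated. In a stable $(\infty,1)$-categorical Verdier quotient one does not manipulate equivalence classes of roofs; the mapping space $\Map_{\Cc/\Dd}(X,Y)$ is a filtered colimit of mapping spaces over one-sided modifications of the source (or the target) with (co)fibre in $\Dd$, and the comparison you need is a cofinality statement between two such filtered index categories. The variance matters: a modification $\tilde X \to X$ with cofibre a pro-object $P$ can be refined to one with cofibre in $\Cc$ because $P$ admits a cofinal system of maps \emph{to} objects of $\Cc$, whereas the dual modification on the target side would require a cofinal system of maps \emph{from} objects of $\Cc$ into a pro-object, which does not exist. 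Your proposal does not choose a side, and "the standard equivalence relation on roofs" hides exactly this point. The paper avoids the issue entirely: it applies $\Indu V$ to the whole square, completes both rows to localization sequences of presentable stable categories, and uses \cite[5.5, 5.12, 5.13]{bgt:characterisationk} to reduce the claim to showing that the comparison functor $p$ between the two cofibres is fully faithful and its right adjoint $q$ is conservative. Full faithfulness of $p$ then amounts to the concrete statement that any $x \in \Indu V\Indu U(\Cc)$ killed by the localization at $\Indu V(\Cc)$ is also killed by the localization at $\Indu V\Prou U(\Cc)$, which is proved by writing the image of $x$ as the functor $y = \lim_l \bar y(l) \mapsto \colim_k\colim_l \Map_{\Indu U(\Cc)}(\bar y(l),\bar x(k))$ and exchanging the two colimits. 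If you want to complete your version, you must either carry out the cofinality argument with the correct variance, or switch to this ind-completion computation.
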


\begin{proof}
Let us fix the following notations
\begin{align*}
&\mathrm I^\mathbb V \Cc = \Indu V(\Cc) \hspace{1cm} \mathrm I^\mathbb V \mathrm I \Cc = \Indu V \Indu U(\Cc) \\
&\mathrm I^\mathbb V \mathrm P \Cc = \Indu V \Prou U(\Cc) \hspace{1cm} \mathrm I^\mathbb V \mathrm T \Cc = \Indu V \Tateu U(\Cc)
\end{align*}
The commutative square
\[
\mymatrix{\Cc \ar[r] \ar[d] & \Indu U(\Cc) \ar[d] \\ \Prou U(\Cc) \ar[r] & \Tateu U(\Cc)}
\]
induces the square (1) of adjunctions between presentable stable $(\infty,1)$-categories
\[
\mymatrix{
\mathrm I^\mathbb V \Cc \ar@<2pt>[r] \ar@<-2pt>[d] &
\mathrm I^\mathbb V \mathrm I \Cc \ar@<2pt>[r] \ar@<-2pt>[d]_f \ar@<2pt>[l]^\varepsilon &
\Ee \ar@<-2pt>[d]_p \ar@<2pt>[l]^-\alpha \\
\mathrm I^\mathbb V \mathrm P \Cc \ar@<2pt>[r] \ar@<-2pt>[u]_g &
\mathrm I^\mathbb V \mathrm T \Cc \ar@<2pt>[l]^e \ar@<-2pt>[u] \ar@<2pt>[r] &
\Ee' \ar@<2pt>[l]^-a \ar@<-2pt>[u]_q
}
\]
which we complete on the right such that the two lines, from left to right, are exact sequences. We have represented here the left adjoints on top or on the left of their right adjoint.
From \cite[5.12 and 5.13]{bgt:characterisationk} we see that is suffices to prove that $p$ and $q$ are equivalences.
We will prove the sufficient assertions
\begin{assertions}
\item The functor $p$ is fully faithful.\label{pff}
\item The functor $q$ is conservative.\label{qcons}
\end{assertions}
Let us start with \ref{pff}. Using \cite[5.5]{bgt:characterisationk}, we deduce that both $a$ and $\alpha$ are fully faithful.
It thus suffices to prove the equivalence $ap \simeq f \alpha$. To do so, we will show that for any object $x \in \mathrm I^\mathbb V \mathrm I \Cc$, if $\varepsilon(x)$ vanishes, then so does $ef(x)$.
Let $\bar x \colon K \to \Indu U(\Cc)$ denote a $\mathbb V$-small filtered diagram whose colimit in $\mathrm I^\mathbb V \mathrm I \Cc$ is $x$. Let also $\bar y \colon L\op \to \Cc$ be a $\mathbb U$-small cofiltered diagram. We denote by $y$ its limit in $\Prou U(\Cc)$.
The image $ef(x)$ is the functor $\Prou U(\Cc) \to \sSets$ mapping $y$ to the simplicial set
\[
\colim_{k \in K} \colim_{l \in L} \Map_{\Indu U(\Cc)}(\bar y(l),\bar x(k)) \simeq
\colim_{l \in L} \colim_{k \in K} \Map_{\Indu U(\Cc)}(\bar y(l),\bar x(k))
\]
On the other hand, the assumption $\varepsilon(x) = 0$ implies that for any $c \in \Cc$, the space
\[
\colim_{k \in K} \Map_{\Indu U(\Cc)}(c,\bar x)
\]
is contractible. It follows that $ef(x)$ vanishes. We can now focus on \ref{qcons}. Since $q$ preserves exact sequences and $a$ is fully faithful, it suffices to prove that if $z \in \mathrm I^\mathbb V \mathrm T \Cc$ is such that both $f(z)$ and $e(z)$ vanish, then so does $z$.
We can see $z$ as a functor $\Tateu U(\Cc)\op \to \sSets$ preserving finite limits while $f(z)$ and $e(z)$ are its restriction respectively to $\Indu U(\Cc)\op$ and $\Prou U(\Cc)$. As $\Tateu U(\Cc)$ is generated by ind- and pro-objects under finite limits and retracts, we deduce that $z$ is equivalent to $0$.
\end{proof}

\begin{cor}\glsadd{K}\label{ktheorysusp}
Let $\Cc$ be a $\mathbb V$-small stable and idempotent complete $(\infty,1)$-category.
The spectrum of non-connective K-theory of $\Tateu U(\Cc)$ is the suspension of the non-connective $K$-theory of $\Cc$:
\[
\mathbb K(\Tateu U(\Cc)) \simeq \Sigma \mathbb K(\Cc)
\]
\end{cor}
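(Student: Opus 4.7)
The plan is to apply the non-connective K-theory functor $\mathbb K$ to the two exact sequences produced by the preceding proposition and combine them. The preceding proposition gives us the two cofibre sequences
\[
\Cc \to \Indu U(\Cc) \to \quot{\Indu U(\Cc)}{\Cc} \quad\text{and}\quad \Prou U(\Cc) \to \Tateu U(\Cc) \to \quot{\Indu U(\Cc)}{\Cc}
\]
in the $(\infty,1)$-category of stable idempotent complete $(\infty,1)$-categories, sharing the same cofibre.

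Next I would invoke the main result of \cite{bgt:characterisationk} to the effect that non-connective K-theory is a localising invariant: it sends such exact sequences to cofibre sequences of spectra. This reduces the question to computing $\mathbb K(\Indu U(\Cc))$ and $\mathbb K(\Prou U(\Cc))$. Both of these vanish by an Eilenberg swindle: the $(\infty,1)$-category $\Indu U(\Cc)$ admits countable coproducts (being closed under $\mathbb U$-small filtered colimits of objects of $\Cc$ which itself has finite coproducts by stability), and hence so does its K-theory spectrum, forcing $\mathbb K(\Indu U(\Cc)) \simeq 0$; dually $\mathbb K(\Prou U(\Cc)) \simeq \mathbb K(\Indu U(\Cc\op))\op \simeq 0$.

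Combining these facts, the first cofibre sequence yields an equivalence
\[
\mathbb K\!\left( \quot{\Indu U(\Cc)}{\Cc} \right) \simeq \Sigma \mathbb K(\Cc),
\]
since the middle term is zero and the cofibre of $\mathbb K(\Cc) \to 0$ is its suspension. Feeding this into the K-theory sequence associated to the second cofibre sequence, whose left term $\mathbb K(\Prou U(\Cc))$ also vanishes, gives the desired equivalence
\[
\mathbb K(\Tateu U(\Cc)) \simeq \mathbb K\!\left( \quot{\Indu U(\Cc)}{\Cc} \right) \simeq \Sigma \mathbb K(\Cc).
\]

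I expect the only subtlety to lie in the Eilenberg swindle step: one must be careful that $\Indu U(\Cc)$ really has the countable coproducts needed for the swindle to kill $\mathbb K$, and that the swindle is compatible with the non-connective definition of K-theory used in \cite{bgt:characterisationk}. Both points are by now standard, but they are what makes the whole argument work and it is where I would cite \loccit most explicitly. Everything else is formal manipulation of cofibre sequences in the stable $(\infty,1)$-category of spectra.
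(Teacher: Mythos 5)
Your proposal is correct and follows essentially the same route as the paper: apply the localising invariance of non-connective K-theory (citing \cite{bgt:characterisationk}) to the two cofibre sequences of the preceding proposition, kill $\mathbb K(\Indu U(\Cc))$ and $\mathbb K(\Prou U(\Cc))$ by the Eilenberg swindle using countable sums, and combine the resulting fibre sequences of spectra. The paper's proof is just a terser version of exactly this argument.
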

\begin{rmq}
This corollary is an $\infty$-categorical version of a theorem of Sho Saito in exact $1$-categories in \cite{saito:deloop}.
\end{rmq}
\begin{proof}
Let us use the notations $\mathrm{I} \Cc = \Indu U(\Cc)$, $\mathrm P \Cc = \Prou U(\Cc)$ and $\mathrm T \Cc = \Tateu U(\Cc)$. 
Because the K-theory functor preserves cofibre sequences of stable categories (see \cite[sect. 9]{bgt:characterisationk}), we get two exact sequences in the $(\infty,1)$-category of spectra
\begin{align*}
\mathbb K (\Cc) \to \mathbb K (&\mathrm I \Cc) \to \mathbb K \left(\quot{\Indu U(\Cc)}{\Cc}\right) \\
\mathbb K (\mathrm P \Cc) \to \mathbb K (&\mathrm T \Cc) \to \mathbb K \left(\quot{\Indu U(\Cc)}{\Cc}\right)
\end{align*}
The vanishing of $\mathbb K(\mathrm P \Cc)$ and $\mathbb K (\mathrm I \Cc)$ -- since those categories contain countable sums -- concludes the proof.
\end{proof}

\begin{lem}\label{tate-dual}
Let $\Cc$ be a $\mathbb V$-small stable $(\infty,1)$-category with a functor $f \colon \Cc\op \to \Cc$.
The functor $f$ induces a functor
\[
\tilde f \colon \left(\Prou V\Indu U(\Cc)\right)\op \to \Prou V\Indu U(\Cc)
\]
which maps (elementary) $\mathbb U$-Tate objects to (elementary) $\mathbb U$-Tate objects.

If moreover the functor $f$ is an equivalence, then $\tilde f$ induces an equivalence 
\[
\left(\Tateu U(\Cc)\right)\op \simeq \Tateu U(\Cc)
\]
\end{lem}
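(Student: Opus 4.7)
The plan is to build $\tilde f$ layer by layer using the functoriality of the $\Indu U$, $\Prou U$, $\Indu V$, $\Prou V$ constructions, and then verify the restriction to the Tate subcategories.

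Starting with $f \colon \Cc\op \to \Cc$, functoriality of $\Prou U$ gives $\Prou U(f) \colon \Prou U(\Cc\op) \to \Prou U \Cc$, i.e., a functor $(\Indu U \Cc)\op \to \Prou U \Cc$; likewise, functoriality of $\Indu U$ gives $(\Prou U \Cc)\op \to \Indu U \Cc$. Post-composing both with the canonical inclusions into $\Prou V \Indu U \Cc$ and applying the appropriate further ind/pro extension, together with the identifications $(\Prou V \Dd)\op \simeq \Indu V(\Dd\op)$ (which come from the paper's convention $\Prou V(\Ee) := (\Indu V \Ee\op)\op$), produces the desired functor $\tilde f \colon (\Prou V \Indu U \Cc)\op \to \Prou V \Indu U \Cc$ whose restriction to $\Cc\op$ is $f$.

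To check $\tilde f$ preserves pure Tate objects, I would compute directly. An ind-object $c = \colim_\alpha c_\alpha \in \Indu U \Cc$, viewed as an object of $(\Prou V \Indu U \Cc)\op$, is sent via $\tilde f$ to $\lim_\alpha f(c_\alpha)$ (by continuity of $\Prou U(f)$), which lies in $\Prou U \Cc$ and is hence pure Tate. Symmetrically, pro-objects go to ind-objects. This shows $\tilde f$ restricts to a functor $(\Tateu U_0 \Cc)\op \to \Tateu U_0 \Cc$, where I use that pure Tate objects in $(\Tateu U_0 \Cc)\op$ identify with those in $\Tateu U_0(\Cc\op)$ via the duality between ind and pro.

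For elementary Tate objects I would invoke the universal property of the stable completion (\autoref{tatification}): the restriction of $\tilde f$ to $(\Tateu U_0 \Cc)\op$ preserves finite limits, since on ind-objects it is built from $\Prou U(f)$ (which turns colimits in the source into limits in the target) and dually on the pro-side. Hence it extends uniquely to an exact functor $(\Tateu U_\mathrm{el} \Cc)\op \to \Tateu U_\mathrm{el} \Cc$, and passing to idempotent completions gives a functor on $(\Tateu U \Cc)\op$.

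Finally, for the equivalence claim, if $f$ admits an inverse $g \colon \Cc \to \Cc\op$, the same construction applied to $g$ produces $\tilde g$, and naturality of each functorial step (Ind, Pro, stabilization, idempotent completion) shows that $\tilde f \circ \tilde g$ and $\tilde g \circ \tilde f$ are the identity on Tate. The main obstacle is bookkeeping: with several ind/pro layers, each reversing variance, one has to track the identifications $(\Indu U \Cc)\op = \Prou U(\Cc\op)$, $(\Prou V \Indu U \Cc)\op \simeq \Indu V \Prou U(\Cc\op)$, and $(\Tateu U_0 \Cc)\op \simeq \Tateu U_0(\Cc\op)$ carefully, and verify their mutual compatibility. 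In particular, the last identification should follow from the characterisation of pure Tate objects as either ind- or pro-objects together with the elementary fact that the opposite of an ind-object is a pro-object.
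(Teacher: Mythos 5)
Your proposal is correct and follows essentially the same route as the paper: define $\tilde f$ as the (co)continuous extension of $\Cc\op \to \Cc \to \Prou V\Indu U(\Cc)$ through the ind/pro layers, observe that it swaps ind-objects and pro-objects (hence preserves pure Tate objects), and use exactness to propagate through the stable and idempotent completions. The paper's proof is just a terser version of the same argument, stating directly that the extension preserves finite limits and therefore Tate objects.
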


\begin{proof}
The category $\Prou V\Indu U(\Cc)$ has all $\mathbb V$-small limits and colimits --- it is the opposite category of a $\mathbb V$-presentable category. We define the functor $\tilde f$ as the extension of the composition
\[
\Cc\op \to \Cc \to \Prou V\Indu U(\Cc)
\]
It maps objects of $\Indu U(\Cc)$ to objects of $\Prou U(\Cc) \subset \Prou V(\Cc)$ and vice-versa and therefore preserves pure Tate objects. The functor $\tilde f$ also preserves finite limits. It follows that it preserves Tate objects.
\end{proof}

\begin{rmq}
Assume that the category $\Cc$ in a closed symmetric monoidal category. We have seen that $\Prou V \Indu U(\Cc)$ is also symmetric monoidal. It is moreover closed. 
Let $f \colon \Cc\op \to \Cc$ be the functor defined through the internal hom 
\[
f(-) = \Homint_\Cc(-, 1)
\]
The functor $\tilde f$ of the previous lemma is then equivalent to
\[
\Homint_{\Prou V\Indu U(\Cc)}(-,1)
\]
\end{rmq}

\begin{df}
Let $\Cc$ be a $\mathbb V$-small $(\infty,1)$-category.
We define the functor
\[
\Tateextu U \colon \mymatrix@1{\Fct(\Cc,\inftyCat^{\mathbb V\mathrm{,st}}) \ar[r]^-i & \Fct(\Indu U(\Cc),\inftyCat^{\mathbb V\mathrm{,st}}) \ar[rr]^-{\Tateu U \circ -} && \Fct(\Indu U(\Cc),\inftyCat^{\mathbb V\mathrm{,st,id}})}
\]
\end{df}


\begin{prop}\label{tatediagexist}
Let $\Cc$ be a $\mathbb V$-small stable $(\infty,1)$-category. For any elementary Tate objects $X \in \Tateu U_\mathrm{el}(\Cc)$ there exists a $\mathbb U$-small cofiltered diagram $\bar X \colon K\op \to \Indu U(\Cc)$ such that
\begin{itemize}
\item The object $X$ is a limit of $\bar X$ in $\Prou U \Indu U(\Cc)$ and
\item For any $k \in K$ the diagram $\ker\left( \bar X \to \bar X(k) \right) \colon \comma{k}{K}\op \to \Indu U(\Cc)$ has values in the essential image of $\Cc$.
\end{itemize}
\end{prop}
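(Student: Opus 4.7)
My strategy is to show by induction that the full subcategory $\mathcal E \subset \Tateu U_\mathrm{el}(\Cc)$ of objects admitting such a presentation is itself stable. Since $\Tateu U_\mathrm{el}(\Cc) = \Tateu U_0(\Cc)^\mathrm{st}$ is generated, as a stable subcategory of $\Prou U\Indu U(\Cc)$, by $\Indu U(\Cc) \cup \Prou U(\Cc)$, this will force $\mathcal E = \Tateu U_\mathrm{el}(\Cc)$. I thus need to check that $\mathcal E$ contains the pure Tate objects and $0$, and is closed under cofibers (a symmetric argument gives fibers, hence full stability).

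The base cases are immediate: a pro-object is by definition the limit of some cofiltered diagram $\bar P \colon K\op \to \Cc$, whose transition fibers lie in $\Cc$ by stability; an ind-object $I$ admits the constant diagram indexed by $K = \{\pt\}$ with vacuous kernel condition; the zero object is trivial.

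For the inductive step, consider $X_1, X_2 \in \mathcal E$ with representing diagrams $\bar X_i \colon K_i\op \to \Indu U(\Cc)$ and a morphism $f\colon X_1 \to X_2$ in $\Prou U\Indu U(\Cc)$. I would apply the dual of \autoref{strictification} to $\Indu U(\Cc)$ to obtain a $\mathbb U$-small cofiltered $J$, cofinal functors $p_i \colon J \to K_i$, and a natural transformation $\bar X_1 \circ p_1\op \to \bar X_2 \circ p_2\op$ of diagrams $J\op \to \Indu U(\Cc)$ whose limit is $f$. Cofinality ensures that $\lim(\bar X_i \circ p_i\op) \simeq X_i$ and that the reindexed diagrams still have transition fibers in $\Cc$, since any morphism in $J$ is sent by $p_i$ to a morphism of $K_i$. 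Set $\bar C(j) = \mathrm{cofib}(\bar X_1(p_1(j)) \to \bar X_2(p_2(j)))$ computed in $\Indu U(\Cc)$; by exactness of $\Indu U(\Cc) \hookrightarrow \Prou U\Indu U(\Cc)$ and the fact that cofibers commute with cofiltered limits in a stable category (being shifted fibers), one obtains $\lim \bar C \simeq \mathrm{cofib}(f)$. Finally, for each $j \to j'$ in $J$, a standard diagram chase identifies $\mathrm{fib}(\bar C(j') \to \bar C(j))$ with $\mathrm{cofib}(F_1 \to F_2)$, where $F_i = \mathrm{fib}(\bar X_i(p_i(j')) \to \bar X_i(p_i(j))) \in \Cc$; this cofiber lies in $\Cc$ because $\Cc$ is stable.

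The main obstacle is the coherent strictification of $f$ while preserving the kernel condition. The dual of \autoref{strictification} furnishes a common cofiltered index $J$ together with cofinal reindexings $p_i$, and cofinality does the double duty of preserving the limits $X_i$ and the transition fibers. Once this is in place the remaining steps---exactness of the embedding $\Indu U(\Cc) \hookrightarrow \Prou U\Indu U(\Cc)$, commutation of cofibers with cofiltered limits, and stability of $\Cc \subset \Indu U(\Cc)$---are formal manipulations in stable $\infty$-categories.
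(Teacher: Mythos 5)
Your proposal is correct and follows essentially the same route as the paper: one considers the full subcategory of objects admitting such a presentation, observes it contains the pure Tate objects, and closes it under extensions (equivalently fibers/cofibers) by strictifying the map via the dual of \autoref{strictification} and taking pointwise (co)fibers, the kernel condition being preserved by the $3\times 3$ argument you sketch. The only difference is cosmetic — the paper phrases the closure step via fibers of an exact sequence and leaves the final verification as "obvious", which you spell out.
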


\begin{rmq}
To state the above proposition in an informal way, any elementary Tate object $X$ can be represented by a diagram $\lim_\alpha \colim_\beta X_{\alpha\beta}$ such that for any $\alpha_0$, the kernel of canonical projection $X \to \colim_\beta X_{\alpha_0\beta}$ is actually a pro-object.
It implies that any elementary Tate object $X$ fits into an exact sequence
\[
X^p \to X \to X^i
\]
where $X^p \in \Prou U(\Cc)$ and $X^i \in \Indu U(\Cc)$. Such an exact sequence is called a lattice for $X$. The existence of lattices characterises elementary Tate objects.
We can moreover define an $\infty$-categorical version of Sato's Grassmannian: a functor mapping a Tate object to its category of lattices.
\end{rmq}

\begin{df}\label{tatediag}
Let $\Cc$ be a $\mathbb V$-small stable $(\infty,1)$-category. For any elementary Tate object $X \in \Tateu U_\mathrm{el}(\Cc)$, we will call a Tate diagram for $X$ any $\mathbb U$-small cofiltered diagram $\bar X \colon K\op \to \Indu U(\Cc)$ as in \autoref{tatediagexist}.
\end{df}

\begin{proof}[of \autoref{tatediagexist}]
Let $\Dd$ denote the full subcategory of $\Prou U \Indu U(\Cc)$ spanned by those objects $X$ satisfying the conclusion of the proposition.
The category $\Dd$ obviously contains both the essential images of $\Indu U(\Cc)$ and $\Prou U(\Cc)$.
It suffices to prove that $\Dd$ is stable by extension. We see that it is stable by shifts and we can thus consider an exact sequence $X \to X_0 \to X_1$ in $\Prou U \Indu U(\Cc)$ such that both $X_0$ and $X_1$ are in $\Dd$.
Let $\bar X_0 \colon K\op \to \Indu U(\Cc)$ and $\bar X_1 \colon L\op \to \Indu U(\Cc)$ be a $\mathbb U$-small cofiltered diagrams of whom $X_0$ and $X_1$ are limits in $\Prou U \Indu U(\Cc)$.
Using \autoref{strictification}, we can assume $K = L$ and that we have a diagram $K\op \to \Fct(\Delta^1, \Indu U(\Cc))$ of whom the map $X_0 \to X_1$ is a limit. Considering the pointwise kernel, we get a diagram $\bar X \colon K\op \to \Indu U(\Cc)$ of whom $X$ is a limit. It obviously satisfies the required property.
\end{proof}

\section{Adjunction and unit transformation}

We prove here a result about adjunction units between $(\infty,1)$-categories. A trustful reader could skip this part and refer to the results when needed.

Let $\Cc$ be a $\mathbb U$-small $(\infty,1)$-category. 
Let $s \colon \comma{\Cc}{\inftyCatu U} \to \inftyCatu V$ denote the constant functor $\Cc$ and $t$ the target functor $(\Cc \to \Dd) \mapsto \Dd$ --- composed with the inclusion $\inftyCatu U \to \inftyCatu V$.
The evaluation map
\[
(\inftyCatu U)^{\Delta^1} \times \Delta^1 \to \inftyCatu U \to \inftyCatu V
\]
define a natural transformation $e \colon s \to t$.
Let $\int t \to \comma{\Cc}{\inftyCatu U}$ denote the coCartesian fibration classfying $t$. The one classifying $s$ is the projection $\int s = \Cc \times \comma{\Cc}{\inftyCatu U} \to \comma{\Cc}{\inftyCatu U}$.
We can thus consider the map $E \colon \int s \to \int t$ induced by $e$.

\begin{df}
Let us denote by $F_\Cc$ the functor
\[
\mymatrix{
\Cc\op \times \int t \ar[r]^-\psi &
\Cc\op \times \left(\comma{\Cc}{\inftyCatu U}\right)\op \times \int t \ar[r]^-{E} &
\left(\int t\right)\op \times \int t \ar[r]^-{\Map_{\int t}} &
\sSets
}
\]
where $\psi$ is induced by the initial object of $\comma{\Cc}{\inftyCatu U}$.
\end{df}

\begin{lem}
Let $f$ be a functor $\Cc \to \Dd$ between $\mathbb U$-small $(\infty,1)$-categories.
It induces a map $\Dd \to \int t$. Moreover the functor
\[
\mymatrix{
\Cc\op \times \Dd \ar[r] & \Cc\op \times \int t \ar[r]^-{F_\Cc} & \sSets
}
\]
is equivalent to the functor 
\[
\mymatrix{
\Cc\op \times \Dd \ar[r]^-{f\op,\id} & \Dd\op \times \Dd \ar[r]^-{\Map_\Dd} & \sSets
}
\]
\end{lem}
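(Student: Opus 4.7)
The plan is to trace through the composite defining $F_\Cc$ on objects, identify what the mapping space in $\int t$ computes, and then verify naturality. Objects of $\int t$ are pairs $(g \colon \Cc \to \Ee,\, e \in \Ee)$, and the cocartesian edges over a morphism $g \to g'$ in $\comma{\Cc}{\inftyCatu U}$ (i.e.\ a functor $h \colon \Ee \to \Ee'$ with $hg \simeq g'$) send $(g,e)$ to $(g', h(e))$. Consequently, for any $(g,e) \in \int t$ and any object $(c, \id_\Cc)$ lying over the initial object $\id_\Cc \in \comma{\Cc}{\inftyCatu U}$, the space of morphisms in $\int t$ is computed as
\[
\Map_{\int t}\bigl((c,\id_\Cc),(g,e)\bigr) \simeq \Map_\Ee\bigl(g(c),\, e\bigr),
\]
since the cocartesian lift of $\id_\Cc \to g$ sends $(c,\id_\Cc)$ to $(g,g(c))$.

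First I would identify the effect of the composite $\psi$, then $E\op \times \id$ on objects. The functor $\psi$ inserts the initial object $\id_\Cc$ of $\comma{\Cc}{\inftyCatu U}$, sending $(c,(g,e))$ to $(c,\id_\Cc,(g,e))$. The natural transformation $e \colon s \to t$ evaluated on $\id_\Cc$ is the identity of $\Cc$, so $E(c,\id_\Cc) = (\id_\Cc, c)$ in $\int t$. Hence the composite $F_\Cc$ sends $(c,(g,e))$ to $\Map_{\int t}((\id_\Cc,c),(g,e))$, which by the cocartesian identification above is $\Map_\Ee(g(c),e)$. Specialising along $f \colon \Dd \to \int t$, where $d \in \Dd$ is sent to $(f,d)$, gives $\Map_\Dd(f(c),d)$, which is exactly the value of the claimed functor.

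To upgrade this pointwise identification to an equivalence of functors $\Cc\op \times \Dd \to \sSets$, I would invoke the standard description of mapping spaces in a cocartesian fibration as a homotopy fibre; more precisely, one has a fibre sequence
\[
\Map_\Ee\bigl(g(c),e\bigr) \to \Map_{\int t}\bigl((\id_\Cc,c),(g,e)\bigr) \to \Map_{\comma{\Cc}{\inftyCatu U}}(\id_\Cc, g),
\]
in which the last term is contractible because $\id_\Cc$ is initial. This identification is natural in both arguments: naturality in $c \in \Cc\op$ comes from functoriality of $E$ along $\Cc\op \to \left(\int s\right)\op$, $c \mapsto (c,\id_\Cc)$, and naturality in $d \in \Dd$ from the functoriality of $\Map_{\int t}(-, -)$ in its second variable pulled back along $f \colon \Dd \to \int t$.

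The main technical subtlety is the coherent form of the cocartesian mapping-space formula. Rather than attempting an explicit simplicial model, I would argue universally: the assignment $(g,e) \mapsto \Map_\Ee(g(c),e)$ is classified, via unstraightening, by the functor $t \colon \comma{\Cc}{\inftyCatu U} \to \inftyCatu V$ evaluated on the cone-point map $\id_\Cc \to g$, which is precisely the data packaged by the natural transformation $e$ of the definition of $F_\Cc$. Since $\id_\Cc$ is initial, the left Kan extension that produces the value at $(c,(g,e))$ collapses to the stated formula, and naturality in $(c,d)$ follows automatically from the functoriality of straightening/unstraightening.
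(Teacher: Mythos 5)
Your proof is correct and follows essentially the same route as the paper: you identify the value of $F_\Cc$ at $(c,d)$ as $\Map_{\int t}((\id_\Cc,c),(f,d))$ and reduce it to $\Map_\Dd(f(c),d)$ by precomposing with the coCartesian lift of $\id_\Cc \to f$, using that $\id_\Cc$ is initial so the base mapping space is contractible — this is exactly the invocation of \cite[2.4.4.2]{lurie:htt} in the paper's one-line argument. The only difference is presentational: the paper simply asserts that the natural transformation exists by construction, whereas you spend an extra paragraph on coherence via straightening, which is fine but not needed beyond what the definition of $F_\Cc$ already provides.
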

\begin{proof}
There is by definition a natural transformation $\theta$ between the two functors at hand. 
To any pair $(c,d) \in \Cc\op \times \Dd$, it associates the natural map
\[
\Map_\Dd(f(c),d) \simeq \Map_{\int t_\Cc}((f,f(c)),(f,d)) \to \Map_{\int t_\Cc}((\id_\Cc,c),(f,d))
\]
which is an equivalence (see \cite[2.4.4.2]{lurie:htt}).
\end{proof}

\newcommand{\unitfct}{\operatorname{\varepsilon}}
\newcommand{\mapfct}{\operatorname{M}}
We will denote by $\inftyCat^{\mathbb U \mathrm{,L}}$ the sub-category of $\inftyCatu U$ of all categories but only left adjoint functors between them.
\begin{prop}\label{prop-unit-transformation}
Let $\Cc$ be a $\mathbb U$-small $(\infty,1)$-category. There exists a functor 
\[
\mapfct_\Cc \colon \comma{\Cc}{\inftyCatu U} \to \comma{\Map_\Cc(-,-)}{\Fct(\Cc \times \Cc\op, \sSets)}
\]
mapping a functor $f \colon \Cc \to \Dd$ to the functor $\Map_\Dd(f(-),f(-))$.
It restricts to a functor
\[
\unitfct_\Cc \colon \comma{\Cc}{\inftyCat^{\mathbb U \mathrm{,L}}} \to \comma{\id}{\Fct(\Cc, \Cc)}
\]
mapping a functor $f \colon \Cc \to \Dd$ with a right adjoint $g$ the unit transformation of the adjunction $\id \to gf$.
\end{prop}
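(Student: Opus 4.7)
The plan is to build the functor $\mapfct_\Cc$ by bootstrapping from the two-variable analogue of $F_\Cc$ constructed just before the statement, and then obtain $\unitfct_\Cc$ by invoking the universal property characterising adjunctions via mapping spaces.

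First I would construct a functor
\[
F'_\Cc \colon \Cc\op \times \Cc \times \comma{\Cc}{\inftyCatu U} \to \sSets
\]
sending $(c_1, c_2, f\colon\Cc\to\Dd)$ to $\Map_\Dd(f(c_1),f(c_2))$. This is obtained by composing the evaluation $E \colon \Cc \times \comma{\Cc}{\inftyCatu U} \to \int t$ in the second and third variables with $F_\Cc$ in the first and resulting variables: concretely, the pair $(c_2, f)$ produces the object $f(c_2) \in \int t$, and then $F_\Cc(c_1, f(c_2)) \simeq \Map_\Dd(f(c_1), f(c_2))$ by the preceding lemma. The canonical map $\Cc \to \int t$ through identity functors, together with the naturality of $E$, produces a transformation from the constant functor $\Map_\Cc(-,-)$ (corresponding to the identity $f = \id_\Cc$) to $F'_\Cc$, which on objects is the action of $f$ on mapping spaces. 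Currying in the last variable and adjointing against the inclusion $\{\Map_\Cc(-,-)\} \hookrightarrow \Fct(\Cc\op\times\Cc,\sSets)$ yields the desired functor $\mapfct_\Cc$.

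Next I would restrict to $\comma{\Cc}{\inftyCat^{\mathbb U\mathrm{,L}}}$. For a left adjoint $f \colon \Cc \to \Dd$ with right adjoint $g$, the adjunction unit produces, naturally in $c_1,c_2$, an equivalence
\[
\Map_\Dd(f(c_1), f(c_2)) \xrightarrow{\ \sim\ } \Map_\Cc(c_1, gf(c_2)).
\]
This equivalence upgrades to one of functors $\Cc\op \times \Cc \to \sSets$, and the fact that it is natural in $f$ varying over left adjoints follows from Lurie's presentation of adjunctions in $\inftyCat^{\mathbb U\mathrm{,L}}$ (relative adjoint functor datum). Composing $\mapfct_\Cc$ with this natural equivalence sends $f$ to $\Map_\Cc(-, gf(-))$, which by the (parametrised) Yoneda lemma arises uniquely from a functor $\eta_f \colon \id_\Cc \to gf$ in $\Fct(\Cc,\Cc)$, and the transformation from $\id_\Cc \in \Fct(\Cc,\Cc)$ this produces is the unit. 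Uncurrying this assembly gives the functor $\unitfct_\Cc$.

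The main obstacle is the coherent naturality in $f$: producing the equivalence $\Map_\Dd(f(-),f(-)) \simeq \Map_\Cc(-, gf(-))$ pointwise is immediate, but threading it as a morphism of functors out of $\comma{\Cc}{\inftyCat^{\mathbb U\mathrm{,L}}}$ requires that the assignment $f \mapsto g$ is itself functorial on left adjoints. This is standard but delicate: one presents $\inftyCat^{\mathbb U\mathrm{,L}}$ as a subcategory of the arrow category via left-fibrations-classifying adjoints, and uses the universal characterisation of adjoint pairs (for instance via Cartesian edges in the relevant bifibration, as in the commented bifibration setup above) to produce the natural equivalence at the level of straightened coCartesian fibrations rather than pointwise. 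Once this coherence is in place, the remaining verifications — that $\mapfct_\Cc(f)$ computes $\Map_\Dd(f(-),f(-))$ and that $\unitfct_\Cc(f)$ is the unit — follow from the computation already recorded in the preceding lemma and the Yoneda argument.
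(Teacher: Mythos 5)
Your construction of $\mapfct_\Cc$ is exactly the paper's: compose the evaluation $E$ with $F_\Cc$ to get $\Cc\op \times \Cc \times \comma{\Cc}{\inftyCatu U} \to \sSets$, curry, and use that the initial object $\id_\Cc$ is sent to $\Map_\Cc(-,-)$ to land in the comma category. No issues there.

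For $\unitfct_\Cc$ you take a genuinely harder route than the paper, and the obstacle you flag --- coherent naturality in $f$ of the equivalence $\Map_\Dd(f(-),f(-)) \simeq \Map_\Cc(-,gf(-))$, i.e.\ functoriality of $f \mapsto g$ over $\comma{\Cc}{\inftyCat^{\mathbb U\mathrm{,L}}}$ --- is precisely what the paper's argument avoids having to establish. The paper considers the fully faithful functor $i \colon \Fct(\Cc,\Cc) \to \Fct(\Cc,\presh(\Cc)) \simeq \Fct(\Cc\times\Cc\op,\sSets)$ and observes that the restriction of $\mapfct_\Cc$ to left adjoints has image, \emph{objectwise}, in the full subcategory of right-representable functors, i.e.\ in the essential image of $i$. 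A functor whose values lie in the essential image of a fully faithful functor factors through it essentially uniquely, so $\unitfct_\Cc$ exists with no further coherence work; the functoriality of $f \mapsto gf$ and of the unit comes out of this factorisation for free rather than being an input. Your sketch (straightening, bifibrations, a relative adjoint-functor datum) could presumably be pushed through, but as written it defers the hardest step to a ``standard but delicate'' argument that you do not carry out, whereas the representability-plus-fully-faithfulness observation reduces it to a pointwise check that is immediate from the adjunction. If you keep your route, you must actually supply the coherent assignment $f \mapsto g$; with the paper's route you never need it.
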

\begin{proof}
We consider the composition
\[
\mymatrix{
\Cc\op \times \Cc \times \comma{\Cc}{\inftyCatu U} \ar[r]^-E & \Cc\op \times \int t \ar[r]^-{F_\Cc} & \sSets
}
\]
It induces a functor 
\[
\comma{\Cc}{\inftyCatu U} \to \Fct(\Cc \times \Cc\op, \sSets)
\]
The image of the initial object $\id_\Cc$ is the functor $\Map_\Cc(-,-)$. We get the required
\[
\mapfct_\Cc \colon \comma{\Cc}{\inftyCatu U} \to \comma{\Map_\Cc(-,-)}{\Fct(\Cc \times \Cc\op, \sSets)}
\]
Let $i$ denote the fully faithful functor
\[
\Fct(\Cc,\Cc) \to \Fct(\Cc, \presh(\Cc)) \simeq \Fct(\Cc \times \Cc\op, \sSets)
\]
The restriction of $\mapfct_\Cc$ to $\comma{\Cc}{\inftyCat^{\mathbb U\mathrm{,L}}}$ has image in the category of right representable functors $\Cc \times \Cc\op \to \sSets$. It therefore factors through $i$ and induces the functor
\[
\unitfct_\Cc \colon \comma{\Cc}{\inftyCat^{\mathbb U \mathrm{,L}}} \to \comma{\id}{\Fct(\Cc, \Cc)}
\]
\end{proof}

\newcommand{\counitfct}{\eta}
\begin{rmq}
There is a dual statement to \autoref{prop-unit-transformation}. Namely, there exists a functor
\[
\quot{\inftyCatu U}{\Cc} \to \quot{\Fct(\Cc \times \Cc\op, \sSets)}{\Map_\Cc(-,-)}
\]
which restricts to a functor
\[
\counitfct_\Cc \colon \quot{\inftyCat^{\mathbb U\mathrm{,L}}}{\Cc} \to \quot{\Fct(\Cc,\Cc)}{\id_\Cc}
\]
mapping a left adjoint $f$ to the counit transformation $fg \to \id_\Cc$ --- where $g$ is the right adjoint of $f$.
\end{rmq}

\begin{prop}\label{lim-adjoint}
Let $K$ be a $\mathbb U$-small filtered simplicial set. Let $\bar D \colon (K^{\triangleright})\op \to \inftyCat^{\mathbb V\mathrm{,L}}$ be a diagram. Let $\Dd$ be a limit of $K\op \to (K^\triangleright)\op \to \inftyCat^{\mathbb V\mathrm{,L}}$.
Let also $\Cc \in \inftyCat$ be the cone point of $\bar D$.
If the category $\Cc$ admits $K\op$-indexed limits then the natural functor $f \colon \Cc \to \Dd$ admits a right adjoint. This right adjoint is the limit in $\Fct(\Dd,\Cc)$ of a $K\op$-diagram induced by $\bar D$.
\end{prop}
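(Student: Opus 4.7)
The plan is to construct $g \colon \Dd \to \Cc$ explicitly as a limit of composites of pointwise right adjoints with the structural projections of $\Dd$, and then verify the adjunction via the limit description of mapping spaces in $\Dd$.

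First, I unpack the data. For $k \in K$, write $D_k = \bar D(k)$ so that $D_\infty = \Cc$. The cone structure supplies left adjoint functors $f_k \colon \Cc \to D_k$ and transitions $p_{k,k'} \colon D_{k'} \to D_k$ (for $\alpha \colon k \to k'$ in $K$) satisfying $f_k \simeq p_{k,k'} \circ f_{k'}$. Passing to right adjoints yields $g_k \colon D_k \to \Cc$ and $p_{k,k'}^R \colon D_k \to D_{k'}$ with $g_k \simeq g_{k'} \circ p_{k,k'}^R$. Writing $q_k \colon \Dd \to D_k$ for the structural projections (so that $q_k f \simeq f_k$ and $p_{k,k'} q_{k'} \simeq q_k$), the unit of $p_{k,k'} \dashv p_{k,k'}^R$ whiskered with $q_{k'}$ and $g_{k'}$ gives a natural transformation
\[
g_{k'} q_{k'} \to g_{k'} p_{k,k'}^R p_{k,k'} q_{k'} \simeq g_k q_k.
\]
Second, I assemble the $g_k q_k$'s into a diagram $\Psi \colon K\op \to \Fct(\Dd, \Cc)$. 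For this, I invoke the functoriality packaged in the preceding proposition, applying (the appropriate variant of) $\unitfct$ to $\bar D|_{K\op}$ viewed as a diagram both under $\Cc$ and under $\Dd$. The resulting composite encodes all higher coherences relating the $g_k$, $p_{k,k'}^R$, and $q_k$ into a genuine $(\infty,1)$-functor $\Psi$. Since $\Cc$ admits $K\op$-indexed limits by hypothesis, so does $\Fct(\Dd, \Cc)$ computed pointwise, and I set $g = \lim_{K\op} \Psi$.

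Finally, the adjunction is checked by direct computation on mapping spaces: recalling that the forgetful functor $\inftyCat^{\mathbb V,\mathrm L} \to \inftyCatu V$ preserves limits, so that $\Map_\Dd$ decomposes as a limit of $\Map_{D_k}$, for any $c \in \Cc$ and $d \in \Dd$ one obtains
\[
\Map_\Dd(f(c), d) \simeq \lim_k \Map_{D_k}(f_k(c), q_k(d)) \simeq \lim_k \Map_\Cc(c, g_k q_k(d)) \simeq \Map_\Cc(c, g(d)),
\]
natural in both variables, identifying $g$ as right adjoint to $f$. The main obstacle is the second step: producing an actual functor $\Psi \colon K\op \to \Fct(\Dd, \Cc)$ — rather than just a family of objects linked by pointwise natural transformations — requires a coherent passage to right adjoints across the whole diagram, and it is precisely the functoriality of $\unitfct_\Cc$ / $\counitfct_\Cc$ established just above that allows one to bypass an unbounded cascade of ad-hoc coherence verifications.
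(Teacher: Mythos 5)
Your proposal follows essentially the same route as the paper: both build the $K\op$-diagram $k \mapsto \Map_{\Dd_k}(f_k(-),p_k(-))$ coherently via the $F_\Cc$/$\unitfct_\Cc$ machinery of the preceding subsection, observe that it lands in the right-representable functors (i.e.\ in $\Fct(\Dd,\Cc)$) because each $f_k$ has a right adjoint, and define $g$ as its limit using that $\Cc$ admits $K\op$-indexed limits. The only divergence is cosmetic: the paper exhibits the adjunction by assembling a unit $\id_\Cc \to gf$ from a second diagram $\mu$, whereas you verify it by the direct mapping-space computation $\Map_\Dd(f(c),d)\simeq \lim_k \Map_{\Dd_k}(f_k(c),p_k(d))\simeq \Map_\Cc(c,g(d))$; both are valid.
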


\begin{proof}
The diagram $\bar D$ corresponds to a diagram $\tilde D \colon K\op \to \comma{\Cc}{\inftyCatu U}$.
Let us consider the pullback diagram
\[
\mymatrix{
\int (t_\Cc \circ \tilde D) \cart \ar[r] \ar[d] & \int t \ar[d] \\ K\op \ar[r]^-{\tilde D} & \comma{\Cc}{\inftyCatu U}
}
\]
The category $\Dd$ being a limit of $\tilde D$, there is a canonical natural transformation from the constant diagram $\Dd \colon K\op \to \inftyCatu U$ to $t \circ \tilde D$.
It induces a map $p \colon K\op \times \Dd \to \int t \circ \tilde D$.
Let us then consider the composite functor
\[
\mymatrix{
\Cc\op \times K\op \times \Dd \ar[r]^p & \Cc\op \times \int t \circ \tilde D \ar[r] & \Cc\op \times \int t \ar[r]^-{F_\Cc} & \sSets
}
\]
We get a functor $\psi \in \Fct(K\op, \Fct(\Dd \times \Cc\op, \sSets))$.
It maps a vertex $k \in K$ to the functor $\Map_{\Dd_k}(f_k(-), p_k(-))$ --- where $f_k \colon \Cc \to \Dd_k$ is $\tilde D(k)$ and $p_k \colon \Dd \to \Dd_k$ is the projection. For every $k$, the functor $f_k$ admits right adjoint.
It follows that $\psi$ has values in the full sub-category $\Fct(\Dd,\Cc)$ of $\Fct(\Dd \times \Cc\op, \sSets)$ spanned by right representable functors:
\[
\psi \colon K\op \to \Fct(\Dd,\Cc)
\]
Let $g$ be a limit of $\psi$. We will prove that $g$ is indeed a right adjoint of $f \colon \Cc \to \Dd$.
We can build, using the same process as for $\psi$, a diagram
\[
(K^\triangleright)\op \to \Fct(\Cc, \Cc)
\]
which corresponds to a diagram $\mu \colon K\op \to \comma{\id_\Cc}{\Fct(\Cc,\Cc)}$. The composition
\[
K\op \to \comma{\id_\Cc}{\Fct(\Cc,\Cc)} \to \Fct(\Cc,\Cc)
\]
is moreover equivalent to
\[
K \op \to^\psi \Fct(\Dd,\Cc) \to^{- \circ f} \Fct(\Cc,\Cc)
\]
The limit of $\mu$ therefore defines a natural transformation $\id_\Cc \to fg$. It exhibits $g$ as a right adjoint to $f$.
\end{proof}

\begin{lem}\label{colim-map}
Let $K$ be a filtered simplicial set and let $\bar \Cc \colon K \to \inftyCat$ be a diagram. For any $k \in K$ we will write $\Cc_k$ instead of $\bar \Cc(k)$. We will also write $\Cc$ for a colimit of $\bar \Cc$.
Every object of $\Cc$ is in the essential image of at least one of the canonical functors $f_k \colon \Cc_k \to \Cc$.
For any pair of objects in $\Cc$, we can assume they are the images of $x$ and $y$ in $\Cc_k$ for some $k$, and we have
\[
\Map_\Cc(f_k(x), f_k(y)) \simeq \colim_{\phi \colon k \to l} \Map_{\Cc_l}(\bar \Cc(\phi)(x),\bar \Cc(\phi)(y))
\]
\end{lem}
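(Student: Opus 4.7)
The plan is to reduce the computation of the colimit in $\inftyCat$ to a colimit in the underlying category $\sSets$ of simplicial sets. The essential input is that the forgetful functor $\inftyCat \to \sSets$ preserves filtered colimits: indeed, a filtered colimit in $\sSets$ of quasi-categories is again a quasi-category, because inner horn inclusions $\Lambda^n_i \hookrightarrow \Delta^n$ have compact (finite) source and target, and filtered colimits of simplicial sets commute with finite limits. This is essentially the content of \cite[HTT]{lurie:htt} on compactness in $\inftyCat$. I would first invoke this fact to fix a model of $\Cc$ as $\colim_K \Cc_k$ computed termwise in $\sSets$, with canonical maps $f_k \colon \Cc_k \to \Cc$.

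Assertion (i) then becomes transparent. An object of the quasi-category $\Cc$ is a $0$-simplex, and $0$-simplices of a filtered colimit of simplicial sets are given by the filtered colimit of $0$-simplices (as sets). Hence every object of $\Cc$ is represented, up to equivalence, by a $0$-simplex of some $\Cc_k$, i.e.~lies in the essential image of $f_k$.

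For assertion (ii), I would use the standard model for mapping spaces as a finite limit in $\sSets$:
\[
\Map_\Cc(a,b) \simeq \{a\} \times_\Cc \Fct(\Delta^1,\Cc) \times_\Cc \{b\},
\]
which produces a Kan complex computing $\Map_\Cc$ (up to Joyal equivalence). Because $\Delta^1$ is a finite simplicial set, $\Fct(\Delta^1,-)$ commutes with filtered colimits in $\sSets$, and fiber products are finite limits, which likewise commute with filtered colimits. Consequently, applied to $a = f_k(x)$, $b = f_k(y)$, this pullback can be identified with the filtered colimit, over the (still filtered) comma category $\comma{k}{K}$, of the corresponding pullbacks in each $\Cc_l$, yielding
\[
\Map_\Cc(f_k(x), f_k(y)) \simeq \colim_{\phi \colon k \to l} \Map_{\Cc_l}(\bar\Cc(\phi)(x), \bar\Cc(\phi)(y)).
\]
Here I use that the canonical functor $\comma{k}{K} \to K$ is cofinal in the filtered setting, so restricting the colimit indexed by $K$ to the one indexed by $\comma{k}{K}$ does not change the answer.

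The main technical point to justify carefully is the first paragraph: that filtered colimits in $\inftyCat$ are computed in $\sSets$. Once that is in place, (i) and (ii) are formal consequences of the compatibility of $\Fct(\Delta^1,-)$ and finite limits with filtered colimits of simplicial sets.
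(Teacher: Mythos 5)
Your proof is correct and is essentially the argument the paper gives: the paper's proof is exactly the computation $\Map_\Cc(f_k(x),f_k(y)) \simeq \Map(\Delta^1,\Cc)\times_{\Map(\pt\amalg\pt,\Cc)}\{(f_k(x),f_k(y))\}$, followed by commuting the filtered colimit past $\Map(\Delta^1,-)$ and $\Map(\pt\amalg\pt,-)$ (compactness of finite simplicial sets in $\inftyCat$) and past the finite limit of spaces. You have merely unpacked the same two ingredients at the level of the quasi-category model.
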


\begin{proof}
This is a simple computation, using that finite simplicial sets are compact in $\inftyCat$.
\begin{align*}
\Map_\Cc(f_k(x),f_k(y))
&\simeq \Map(\Delta^1, \Cc) \timesunder[\Map( \pt \amalg \pt, \Cc)] \{(f_k(x),f_k(y))\}\\
&\simeq \colim_{\phi \colon k \to l} \left(\Map(\Delta^1, \Cc_l) \timesunder[\Map( \pt \amalg \pt, \Cc_l)] \{(\bar \Cc(\phi)(x),\bar \Cc(\phi)(y))\}\right)\\
&\simeq \colim_{\phi \colon k \to l} \Map_{\Cc_l}(\bar \Cc(\phi)(x),\bar \Cc(\phi)(y))\\
\end{align*}
\end{proof}

\begin{lem} \label{colimcats}
Let $K$ be a $\mathbb V$-small filtered simplicial set and let $\bar \Cc \colon K \to \inftyCat^\mathbb V$ be a diagram of $\mathbb V$-small categories. Let us assume that for each vertex $k \in K$ the category $\Cc_k = \bar \Cc(k)$ admits finite colimits and that the transition maps in the diagram $\bar \Cc$ preserve finite colimits.
For any $k \to l \in K$, let us fix the following notations
\[
\mymatrix{
 &\colim \bar \Cc \ar[r]^-i & \Indu V(\colim \bar \Cc) & \\
\Cc_l & \Cc_k \ar[l]_{\phi_{kl}} \ar[u]^{u_k} \ar[r]_-{j_k} & \Indu V(\Cc_k) \ar[u]^{a_k} \ar@<2pt>[r]^{f_{kl}} & \Indu V(\Cc_l) \ar@<2pt>[l]^{g_{kl}}
}
\]
where the functor $a_k$ is $\Indu V(u_k)$. The functor $\phi_{kl}$ is the transition map $\bar \Cc(k \to l)$, the functor $f_{kl}$ is $\Indu V(\phi_{kl})$ and $g_{kl}$ is its right adjoint.
\begin{enumerate}
\item (Lurie) The category $\colim \bar \Cc$ admits finite colimits and for any $k$ the functor $u_k$ preserves such colimits. It follows that $a_k$ admits a right adjoint $b_k$.
\item (Lurie) The natural functor $\Indu V(\colim \bar \Cc) \to \colim \Indu V(\bar \Cc) \in \PresLeftu V$ is an equivalence. Those two categories are also equivalent to the limit of the diagram $\Indu V(\bar \Cc)^\mathrm R$ of right adjoints
\[
\Indu V(\bar \Cc)^\mathrm R \colon \mymatrix@1@!0@C=3cm{
K \ar[r]^-{\Indu V(\bar \Cc)} & \PresLeftu V \simeq \left(\PresRightu V\right) \op
}
\]
\item For any $k \in K$, the adjunction transformation $j_k \to b_k a_k j_k$ is a colimit of the diagram
\[
\mu_k \colon \mymatrix{
\displaystyle \comma{k}{K} \ar[rr]^-{\Indu 
V(\bar \Cc)} && \displaystyle \comma{\Indu V(\Cc_k)}{\inftyCat^{\mathbb V\mathrm{,L}}} \ar[r]^-{\unitfct \circ j_k} & \displaystyle \comma{j_k}{\Fct(\Cc_k, \Indu V(\Cc_k))}
}
\]
\end{enumerate}
If moreover $K$ is $\mathbb U$-small and if for any $k \to l \in K$, the map $g_{kl} \colon \Indu V(\Cc_l) \to \Indu V(\Cc_k)$ restricts to a map $\tilde g_{kl} \colon \Indu U(\Cc_l) \to \Indu U(\Cc_k)$ then
\begin{enumerate}[start = 4]
\item For any $k \in K$ the functor $b_k$ restricts to a functor $\tilde b_k \colon \Indu U(\Cc) \to \Indu U(\Cc_k)$, right adjoint to $\tilde a_k = \Indu U(u_k)$. Moreover for any $k \to l$ the map $\tilde g_{kl}$ is a right adjoint to $\tilde f_{kl} = \Indu U(\phi_{kl})$.
\item There exists a diagram $\Indu U(\bar \Cc)^\mathrm R \colon K\op \to \inftyCat$ mapping $k \to l$ to $\tilde g_{kl}$ whose limit satisfies
\[
\lim \Indu U(\bar \Cc)^\mathrm R \simeq \Indu U(\colim \bar \Cc)
\]
\item For any $k \in K$,  the adjunction transformation $\tilde \jmath_k \to \tilde b_k \tilde a_k \tilde \jmath_k$ is a colimit of the diagram
\[
\tilde \mu_k \colon \mymatrix{
\displaystyle \comma{k}{K} \ar[rr]^-{\Indu 
U(\bar \Cc)} && \displaystyle \comma{\Indu U(\Cc_k)}{\inftyCat^{\mathbb V\mathrm{,L}}} \ar[r]^-{\unitfct \circ \tilde \jmath_k} & \displaystyle \comma{\tilde \jmath_k}{\Fct(\Cc_k, \Indu U(\Cc_k))}
}
\]
where $\tilde \jmath_k$ is the canonical map $\Cc_k \to \Indu U(\Cc_k)$.
\end{enumerate}
\end{lem}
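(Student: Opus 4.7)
The plan is to dispose of parts~(i) and~(ii) by citation, deduce~(iii) from \autoref{colim-map} by a Yoneda argument, and then transport the $\mathbb V$-theory to the $\mathbb U$-theory in parts~(iv)--(vi) using the restriction hypothesis on the $\tilde g_{kl}$.

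Parts~(i) and~(ii) are due to Lurie. For~(i), one invokes the fact that a filtered colimit of $\infty$-categories admitting finite colimits (with right exact transitions) admits finite colimits and that the canonical functors are right exact (see \cite[5.5.7]{lurie:htt}). Part~(ii) combines the fact that $\Indu V(-)$ is a left adjoint from the $(\infty,1)$-category of $\mathbb V$-small categories with finite colimits to $\PresLeftu V$, hence preserves colimits, with the standard duality $\PresLeftu V \simeq (\PresRightu V)\op$ turning filtered colimits of left adjoints into cofiltered limits of their right adjoints.

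For~(iii), one computes the transformation pointwise via Yoneda. By definition of $b_k$ as right adjoint to $a_k$,
\[
b_k a_k j_k(c)(x) \simeq \Map_{\Indu V(\colim \bar \Cc)}(a_k j_k(x), a_k j_k(c)) \simeq \Map_{\colim \bar \Cc}(u_k(x), u_k(c)),
\]
and \autoref{colim-map} identifies the right-hand side with $\colim_{\phi\colon k\to l} \Map_{\Cc_l}(\phi_{kl}(x), \phi_{kl}(c))$, i.e.\ with the value at $x$ of $\colim_\phi g_{kl} f_{kl} j_k(c)$. To promote this pointwise identification to an equivalence in the overcategory $\comma{j_k}{\Fct(\Cc_k,\Indu V(\Cc_k))}$, one applies \autoref{prop-unit-transformation} to $\Indu V(\Cc_k)$: the functor $\unitfct$ turns the assignment $\phi \mapsto (j_k \to g_{kl} f_{kl} j_k)$ into a genuine diagram in the overcategory, whose colimit is the unit of the adjunction $a_k \dashv b_k$ evaluated at $j_k$.

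For parts~(iv)--(vi), the restriction hypothesis assembles the $\tilde g_{kl}$ into a subdiagram $\Indu U(\bar \Cc)^\mathrm R \colon K\op \to \inftyCat$ of $\Indu V(\bar \Cc)^\mathrm R$, and its limit $\mathcal E \subseteq \lim \Indu V(\bar \Cc)^\mathrm R \simeq \Indu V(\colim \bar \Cc)$ is a full subcategory. The heart of the argument is to identify $\mathcal E$ with $\Indu U(\colim \bar \Cc)$. The inclusion $\Indu U(\colim \bar \Cc) \subseteq \mathcal E$ is the easier direction: each representable $j(x) = a_l j_l(y)$ lies in $\mathcal E$ because the computation of~(iii) exhibits $b_m a_l j_l(y)$ as a $\mathbb U$-small filtered colimit of objects of $\Indu U(\Cc_m)$ for every $m$, and $\mathcal E$ is closed under $\mathbb U$-small filtered colimits since such colimits are computed componentwise and each $\Indu U(\Cc_l)$ is so closed. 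The converse inclusion, which is the main obstacle, requires reconstructing a $\mathbb U$-small filtered diagram of representables from a compatible system $(y_l)$ with $y_l \in \Indu U(\Cc_l)$; the plan is to write each $y_l$ as a $\mathbb U$-small filtered colimit of representables and to glue these presentations along the transition maps, using compactness of representables in $\Indu V(\colim \bar \Cc)$ to control the coherence. Once~(v) is established, (iv) is immediate ($\tilde b_k$ is the restriction of $b_k$, adjointness inherited from the $\mathbb V$-case) and~(vi) is a verbatim replay of the argument of~(iii) inside $\Indu U(\Cc_k)$, using that the colimit defining the unit in~(iii) already takes place in $\Indu U(\Cc_k)$ under the restriction hypothesis.
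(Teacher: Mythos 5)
Your handling of (i)--(iii) and (vi) is essentially the paper's own argument: (i) and (ii) are quoted from Lurie, (iii) is obtained by pushing the diagram $\mu_k$ into $\comma{\Map_{\Cc_k}(-,-)}{\Fct(\Cc_k \times \Cc_k\op,\sSets)}$ and computing its colimit with \autoref{colim-map} exactly as you do, and (vi) is read off from (iii) once everything is known to live in $\Indu U(\Cc_k)$. Your ``easier inclusion'' for (v) is also the paper's proof of (iv): one writes $b_k i u_l \simeq g_{kl}(\colim \mu_l)$ as a $\mathbb U$-small filtered colimit of objects of the form $\tilde g_{kl}\tilde f_{kl}\tilde\jmath_k(c)$, which lie in $\Indu U(\Cc_k)$ by hypothesis.

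The gap is in (v), precisely at the step you flag as the main obstacle. You propose to show $\lim \Indu U(\bar\Cc)^{\mathrm R} \subseteq \Indu U(\colim\bar\Cc)$ by taking a compatible family $(y_l)$, writing each $y_l$ as a $\mathbb U$-small filtered colimit of representables, and ``gluing these presentations along the transition maps, using compactness of representables to control the coherence''. That is a statement of intent, not an argument: the presentations of the various $y_l$ are indexed by unrelated $\mathbb U$-small categories and are compatible only up to coherent homotopy, and nothing in the sketch produces the single $\mathbb U$-small filtered diagram of representables with colimit $y$, nor the infinitely many coherences such a construction requires. The paper sidesteps this entirely by citing \cite[6.2.3.18]{lurie:halg}, which both constructs $\Indu U(\bar\Cc)^{\mathrm R}$ as an actual functor $K\op \to \inftyCat$ and identifies its limit with the colimit of the diagram of left adjoints $\tilde f_{kl}$; the equivalence of (ii) then restricts. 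If you insist on a hands-on route, the ingredient you are missing is that the counits exhibit $\id$ as the ($\mathbb U$-small) colimit $\colim_l \tilde a_l \tilde b_l$ on the limit category — but establishing that is essentially re-proving the Lurie result you are trying to avoid, so you should either cite it or devote a genuine argument to it.
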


\begin{proof}
The first item is \cite[5.5.7.11]{lurie:htt}. The second is a combination of \cite[5.5.7.10, 5.5.3.4 and 5.5.3.18]{lurie:htt} and \cite[6.3.7.9]{lurie:halg}.
Concerning \emph{(iii)}, we consider the colimit of the diagram
\[
\mymatrix{
\displaystyle \comma{k}{K} \ar[r]^-{\mu_k} & \comma{j_k}{\Fct(\Cc_k, \Indu V(\Cc_k))} \ar[r] & \comma{\Map_{\Cc_k}(-,-)}{\Fct(\Cc_k \times \Cc_k\op,\sSets)}
}
\]
This diagram is equivalent to 
\[
\mymatrix{
\theta \colon \displaystyle \comma{k}{K} \ar[r]^-{\bar \Cc} & \comma{\Cc_k}{\inftyCatu U} \ar[r]^-{\mapfct_{\Cc_k}} & \comma{\Map_{\Cc_k}(-,-)}{\Fct(\Cc_k \times \Cc_k\op,\sSets)}
}
\]
From \autoref{colim-map}, the colimit of $\theta$ is the functor
\begin{align*}
\Map_{\Cc}(u_k(-),u_k(-)) \simeq& \Map_{\Indu V(\Cc)}(i u_k(-),i u_k(-)) \\ \simeq& \Map_{\Indu V(\Cc)}(a_k j_k(-),a_k j_k(-))
\end{align*}
where $\Cc$ denotes a colimit of $\bar \Cc$.
This concludes the proof of \emph{(iii)} and we now focus on \emph{(iv)}.

Let $k \to l \in K$ and let $\id \to g_{kl} f_{kl}$ denote a unit for the adjunction. It restricts to a natural transformation $\id \to \tilde g_{kl} \tilde f_{kl}$ which exhibits $\tilde g_{kl}$ as a right adjoint to $\tilde f_{kl}$.
Using the same mechanism, if the functor $b_k$ restricts to $\tilde b_k$ as promised then $\tilde b_k$ is indeed a right adjoint to $\tilde a_k$. It thus suffices to prove that the functor $b_k i$ factors through the canonical inclusion $t_k \colon \Indu U(\Cc_k) \to \Indu V(\Cc_k)$.
Every object of $\Cc$ is in the essential image of $u_l$ for some $k \to l \in K$.
It is therefore enough to see that for any $k \to l$, the functor $b_k i u_l$  factors through $t_k$. We compute
\[
b_k i u_l \simeq b_k a_l j_l \simeq g_{kl} b_l a_l j_l \simeq g_{kl} (\colim \mu_l)
\]
The diagram $\mu_l \colon \comma{l}{K} \to \comma{j_l}{\Fct(\Cc_l, \Indu V(\Cc_l))}$ factors into
\[
\mymatrix{
\displaystyle \comma{l}{K} \ar[r]^-{\tilde \mu_l} & \displaystyle \comma{\tilde \jmath_l}{\Fct(\Cc_l, \Indu U(\Cc_l))} \ar[r]^-{t_l} & \displaystyle \comma{j_l}{\Fct(\Cc_l, \Indu V(\Cc_l))}
}
\]
Because $g_{kl}$, $\tilde g_{kl}$ and $t_l$ preserve $\mathbb U$-small filtered colimits, the functor $b_k i u_l$ is the colimit of the diagram
\[
\mymatrix{
\displaystyle \comma{l}{K} \ar[r]^-{\tilde \mu_l} & \displaystyle \comma{\tilde \jmath_l}{\Fct(\Cc_l, \Indu U(\Cc_l))} \ar[r]^-{t_k \tilde g_{kl}} & \displaystyle \comma{t_k \tilde g_{kl} \tilde \jmath_l}{\Fct(\Cc_l, \Indu V(\Cc_k))}
}
\]
The functor $t_k$ also preserves $\mathbb U$-small filtered colimits and we have
\[
b_k i u_l \simeq t_k (\colim \tilde g_{kl} \circ \tilde \mu_l)
\]

To prove \emph{(v)}, we use \cite[6.2.3.18]{lurie:halg} to define the diagram $\Indu U(\bar \Cc)^\mathrm R$. It then follows that the equivalence of \emph{(ii)}
\[
\lim \Indu V(\bar \Cc)^\mathrm R \simeq \Indu V(\colim \bar \Cc)
\]
restricts to the required equivalence.
We finally deduce \emph{(vi)} from the \emph{(iii)}.
\end{proof}

\begin{cor}\label{ind-right-adjoint}
Let $\Cc$ be an $(\infty,1)$-category and let $F \colon \Cc \to \inftyCat^{\mathbb V\mathrm{,L}}$ be a functor.
For any $c \in \Cc$ and any $f \colon c \to d \in \Indu U(\Cc)$, the functor
\[
\Indextu U_\Cc(F)(f) \colon \Indu U(F(c)) \simeq \Indextu U_\Cc(F)(c) \to \Indextu U_\Cc(F)(d)
\]
admits a right adjoint.
\end{cor}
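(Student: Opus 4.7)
The plan is to reduce the question to a case handled directly by \autoref{colimcats}, by exploiting the compactness of $c$ in $\Indu U(\Cc)$. First I would pick a $\mathbb U$-small filtered diagram $\bar d \colon K \to \Cc$ whose colimit in $\Indu U(\Cc)$ is $d$, so that by the construction of $\Indextu U_\Cc(F)$ recalled after \autoref{indext} we have
\[
\Indextu U_\Cc(F)(d) \simeq \Indu U\left(\colim_K F \circ \bar d\right).
\]
Since $c \in \Cc$ is compact in $\Indu U(\Cc)$ by \autoref{indu-thm}, the morphism $f$ factors in $\Indu U(\Cc)$ as $c \to^{f_0} d_{k_0} \to^{u_{k_0}} d$ for some $k_0 \in K$, where $d_{k_0} = \bar d(k_0)$ and $f_0$ is an honest morphism in $\Cc$. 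By functoriality of $\Indextu U_\Cc(F)$ this yields
\[
\Indextu U_\Cc(F)(f) \simeq \Indextu U_\Cc(F)(u_{k_0}) \circ \Indu U(F(f_0)),
\]
so it suffices to show that each factor admits a right adjoint.

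For the first factor, I would use that $F(f_0) \colon F(c) \to F(d_{k_0})$ is itself a left adjoint, admitting some right adjoint $\rho_0$. The pair $(\Indu U(F(f_0)), \Indu U(\rho_0))$ then forms an adjunction: this is the standard Ind--Yoneda computation, using compactness of the objects of $F(c)$ inside $\Indu U(F(c))$ to reduce a mapping-space computation in the ind-categories to a filtered colimit of mapping-space computations in $F(d_{k_0})$, where the original adjunction $(F(f_0), \rho_0)$ can be applied termwise.

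For the second factor, under the identification above, the morphism $\Indextu U_\Cc(F)(u_{k_0})$ corresponds to the canonical map $\tilde a_{k_0} \colon \Indu U(F(d_{k_0})) \to \Indu U(\colim_K F \circ \bar d)$ appearing in \autoref{colimcats}. To invoke item \emph{(iv)} of that lemma, the only hypothesis to verify is that each right adjoint $g_{kl}$ to $\Indu V(F(d_k \to d_l))$ restricts to $\Indu U$; but because $F(d_k \to d_l)$ is itself a left adjoint with some right adjoint $\rho_{kl}$, the same Ind--Yoneda argument as above gives $g_{kl} \simeq \Indu V(\rho_{kl})$, which manifestly restricts to $\Indu U(\rho_{kl})$. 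The lemma then delivers the required right adjoint $\tilde b_{k_0}$ to $\tilde a_{k_0}$, and composing the two right adjoints we have produced yields a right adjoint to $\Indextu U_\Cc(F)(f)$.

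The main obstacle I anticipate is bookkeeping rather than conceptual: I must confirm that the abstract left Kan extension defining $\Indextu U_\Cc$ matches the explicit colimit formula used in \autoref{colimcats}, that $\Indextu U_\Cc(F)(u_{k_0})$ is really the structural map $\tilde a_{k_0}$, and that the factorization of $f$ chosen via compactness produces a coherent factorization of $\Indextu U_\Cc(F)(f)$ at the $(\infty,1)$-categorical level. Unpacking these identifications is routine but requires some care with the naturality of the left Kan extension.
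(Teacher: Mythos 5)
Your argument is correct and is exactly the route the paper intends: the corollary is stated without proof immediately after \autoref{colimcats}, and your reduction — factoring $f$ through a stage $d_{k_0}$ of a presenting diagram via compactness of $c$, handling $\Indu U(F(f_0))$ by the Ind--Yoneda adjunction, and invoking item \emph{(iv)} of \autoref{colimcats} for the structural map after checking that the right adjoints $g_{kl} \simeq \Indu V(\rho_{kl})$ restrict to $\Indu U$ — is precisely the missing verification. The only caveat you inherit from the paper itself is that \autoref{colimcats} is stated for diagrams of categories admitting finite colimits, a hypothesis not literally imposed on $F$ in the corollary (though it holds in all the paper's applications).
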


\section{Computation techniques}

We will now establish a few computational rules for the functors $\Indext$ and $\Proext$.
A trustful reader not interested in $(\infty,1)$-category theory could skip this subsection and come back for the results when needed.
Let us start with a $\mathbb V$-small $\infty$-category $\Cc$.
Let $s_\Cc \colon \Cc^{\Delta^1} \to \Cc$ denote the source functor while $t_\Cc \colon \Cc^{\Delta^1} \to \Cc$ denote the target functor.
Using \cite[2.4.7.11 and 2.4.7.5]{lurie:htt} we see that $s_\Cc$ is a Cartesian fibration and $t_\Cc$ is a coCartesian fibration.

\begin{df}
let $\Cc$ be a $\mathbb V$-small $(\infty,1)$-category.
Let us denote by $\undercat_\Cc \colon \Cc\op \to \inftyCatu V$ the functor classified by $s_\Cc$.\glsadd{undercat}
Let us denote by $\overcat_\Cc \colon \Cc \to \inftyCatu V$ the functor classified by $t_\Cc$.\glsadd{overcat}
\end{df}

\begin{rmq}
The functor $\undercat_\Cc$ map an object $c \in \Cc$ to the comma category $\comma{c}{\Cc}$ and an arrow $c \to d$ to the forgetful functor
\[
\comma{d}{\Cc} \to \comma{c}{\Cc}
\]
The functor $\overcat_\Cc$ map an object $c \in \Cc$ to the comma category $\quot{\Cc}{c}$ and an arrow $c \to d$ to the forgetful functor
\[
\quot{\Cc}{c} \to \quot{\Cc}{d}
\]
\end{rmq}

\begin{lem}\label{quot-ind}
Let $\Cc$ be a $\mathbb V$-small $(\infty,1)$-category.
There is a natural equivalence
\[
\Indextu V_\Cc(\overcat_\Cc) \simeq \overcat_{\Indu V(\Cc)}
\]
It induces an equivalence
\[
\Indext_\Cc(\overcat_\Cc) \simeq \overcat_{\Indu U(\Cc)}
\]
\end{lem}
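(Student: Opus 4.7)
The plan is to build a pointwise natural transformation $\alpha \colon \Indextu V_\Cc(\overcat_\Cc) \to \overcat_{\Indu V(\Cc)}$ of functors $\Indu V(\Cc) \to \inftyCatu V$ and then to verify it is an equivalence by identifying both sides with the $\Indu V$-completion of a common subcategory. For the construction, observe that for each $c \in \Indu V(\Cc)$ and each $d \in \Cc$ equipped with a map $j(d) \to c$, post-composition yields a functor $\Cc_{/d} \to \Indu V(\Cc)_{/c}$. These assemble, via the universal property of the left Kan extension along the fully faithful $j$, into a single functor $\mathrm{LKE}_j(\overcat_\Cc)(c) \simeq \colim_{j(d)\to c} \Cc_{/d} \to \Indu V(\Cc)_{/c}$. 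Since the target admits all $\mathbb V$-small filtered colimits (these are computed in $\Indu V(\Cc)$, and the forgetful $\Indu V(\Cc)_{/c} \to \Indu V(\Cc)$ preserves and reflects them), the universal property of $\Indu V$ from \autoref{indu-thm} extends this to the desired $\alpha_c \colon \Indextu V_\Cc(\overcat_\Cc)(c) = \Indu V(\mathrm{LKE}_j(\overcat_\Cc)(c)) \to \Indu V(\Cc)_{/c}$.

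To prove $\alpha_c$ is an equivalence, I would introduce the full subcategory $\Cc^{/c} \subseteq \Indu V(\Cc)_{/c}$ spanned by pairs $(j(d) \to c)$ with $d \in \Cc$, and proceed in two steps. First, since each $d \in \Cc$ is compact in $\Indu V(\Cc)$, writing $c$ as a $\mathbb V$-small filtered colimit $c = \colim_k j(\bar c(k))$ allows one to factor any $j(d) \to c$ through some $\bar c(k)$ cofinally and coherently using \autoref{colim-map} and the strictification of maps in \autoref{strictification}; this identifies $\Cc^{/c}$ with $\colim_k \Cc_{/\bar c(k)}$ in $\inftyCatu V$. Second, the inclusion $\Cc^{/c} \hookrightarrow \Indu V(\Cc)_{/c}$ satisfies the hypotheses of \autoref{indu-thm}: every object of $\Cc^{/c}$ is compact in the slice (the forgetful to $\Indu V(\Cc)$ is conservative and creates filtered colimits, reducing the statement to compactness of $d$ in $\Indu V(\Cc)$), and every $(x \to c) \in \Indu V(\Cc)_{/c}$ is a $\mathbb V$-small filtered colimit of objects of $\Cc^{/c}$ (write $x = \colim_j j(\bar x(j))$ and apply compactness of each $\bar x(j)$ to get a compatible family $\bar x(j) \to c$). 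Together these give $\Indu V(\Cc)_{/c} \simeq \Indu V(\Cc^{/c})$, which composed with step one produces an inverse to $\alpha_c$.

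Naturality of $\alpha$ in $c \in \Indu V(\Cc)$ is then a matter of chasing through the universal properties used in its construction. The induced equivalence $\Indext_\Cc(\overcat_\Cc) \simeq \overcat_{\Indu U(\Cc)}$ follows by restricting everything along the embedding $\Indu U(\Cc) \hookrightarrow \Indu V(\Cc)$: since $\Indu U(\Cc)_{/c}$ is precisely the full subcategory of $\Indu V(\Cc)_{/c}$ whose source is a $\mathbb U$-small filtered colimit of objects of $\Cc$, it corresponds under the identification $\Indu V(\Cc)_{/c} \simeq \Indu V(\Cc^{/c})$ to $\Indu U(\Cc^{/c}) \simeq \Indu U(\colim_k \Cc_{/\bar c(k)})$, which is $\Indext_\Cc(\overcat_\Cc)(c)$ by definition. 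The main obstacle is the compactness step in the second paragraph — ensuring that the factorizations of $j(d) \to c$ through the $\bar c(k)$ assemble into a cofinal system of objects of $\Cc^{/c}$, and that mapping spaces in the slice are computed via the ambient ones compatibly with filtered colimits — but these coherences are exactly what the strictification machinery of \autoref{strictification} and \autoref{colim-map} is designed to deliver.
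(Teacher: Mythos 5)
Your proposal is correct and follows essentially the same route as the paper: both reduce to showing that, for $c = \colim_k j(\bar c(k))$, the comparison functor $\Indu V\bigl(\colim_k \quot{\Cc}{\bar c(k)}\bigr) \to \quot{\Indu V(\Cc)}{c}$ is an equivalence via \cite[5.3.5.11]{lurie:htt} — compactness of the objects coming from the slices $\quot{\Cc}{\bar c(k)}$, fully faithfulness, and essential surjectivity obtained exactly as in your factorization of $x \to c$ through the stages $\bar c(k)$. The only substantive difference is in how the transformation is built: rather than defining it pointwise and deferring naturality to ``chasing through universal properties,'' the paper realizes the extended functor $\tilde \overcat_\Cc$ as the coCartesian fibration $\comma{\Cc}{\Indu V(\Cc)} \to \Indu V(\Cc)$ and produces $T$ by a relative Kan extension over $\Indu V(\Cc)$, which is what secures the coherence of naturality in $c$ in the $(\infty,1)$-categorical setting.
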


\begin{rmq}\label{lim}
Because of \emph{(ii)} in \autoref{colimcats}, if the category $\Cc$ admits all finite colimits then we have
\[
\lim_k \quot{\Indu V(\Cc)}{\underline c_k} \simeq \Indu V\left( \colim_k \quot{\Cc}{\underline c_k} \right) \simeq \quot{\Indu V(\Cc)}{c}
\]
where the limit on the left hand side is computed using base change functors. If $K$ is $\mathbb U$-small and if $\Indu U(\Cc)$ admits pullbacks then it restricts to an equivalence
\[
\lim_k \quot{\Indu U(\Cc)}{\underline c_k} \simeq \quot{\Indu U(\Cc)}{c}
\]
Let us also note that there is a dual statement to \autoref{quot-ind} involving $\Proextu U$:
\[
\Proextu U_\Cc(\overcat_\Cc) \simeq \overcat_{\Prou U(\Cc)}
\]
\end{rmq}

\begin{proof}
Let us first consider the pullback category
\[
\mymatrix{
\comma{\Cc}{\Indu V(\Cc)} \ar[r] \ar[d]_q \cart & \Indu V(\Cc)^{\Delta^1} \ar[d]^{\mathrm{source,target}} \\ \Cc \times \Indu V(\Cc) \ar[r] & \Indu V(\Cc) \times \Indu V(\Cc)
}
\]
The functor $q \colon \comma{\Cc}{\Indu V(\Cc)} \to \Cc \times \Indu V(\Cc) \to \Indu V(\Cc)$ is a coCartesian fibration. Let $p$ denote the coCartesian fibration $p \colon \mathcal E \to \Indu V(\Cc)$ classified by the extension of $\overcat_\Cc$
\[
\tilde \overcat_\Cc \colon \Indu V(\Cc) \to \inftyCatu V
\]
There is a natural morphism functor $g \colon \mathcal E \to \comma{\Cc}{\Indu V(\Cc)}$ over $\Indu V(\Cc)$. It induces an equivalence fiberwise and therefore $g$ is an equivalence.
Let $\Dd \to \Indu V(\Cc)$ denote a coCartesian fibration classifying the functor
\[
\Indextu V_\Cc(\overcat_\Cc) \simeq \Indu V \circ \left( \tilde \overcat_\Cc \right) \colon  \Indu V(\Cc) \to \inftyCat
\]
We have a diagram of coCartesian fibration over $\Indu V(\Cc)$
\[
\Dd \from \mathcal E \simeq \comma{\Cc}{\Indu V(\Cc)} \to \Indu V(\Cc)^{\Delta^1}
\]
We consider the relative Kan extension $\Dd \to \Indu V(\Dd)^{\Delta^1}$ of $\comma{\Cc}{\Indu V(\Cc)} \to \Indu V(\Cc)^{\Delta^1}$.
We thus have the required natural transformation $T \colon \Indextu V_\Cc(\overcat_\Cc) \to \overcat_{\Indu V(\Cc)}$.

Let now $c \in \Indu V(\Cc)$. Let $\underline c \colon K \to \Cc$ be a $\mathbb V$-small filtered diagram whose colimit in $\Indu V(\Cc)$ is $c$.
The map
\[
T(c) \colon \Indu V\left( \colim_k \quot{\Cc}{\underline c_k} \right) \to \quot{\Indu V(\Cc)}{c}
\]
is equivalent to the ind-extension of the universal map
\[
f \colon \colim_k \quot{\Cc}{\underline c_k} \to \quot{\Indu V(\Cc)}{c}
\]
For every $k \in K$, let us denote by $f_k$ the natural functor
\[
f_k \colon \quot{\Cc}{\underline c_k} \to \quot{\Indu V(\Cc)}{c}
\]
Using \cite[5.3.5.11]{lurie:htt}, to prove $T(c)$ is an equivalence, it suffices to see that :
\begin{itemize}
\item the functors $f_k$ have values in compact objects,
\item the functor $f$ is fully faithful,
\item and the functor $T(c)$ is essentially surjective.
\end{itemize}
Those three items are straightforwardly proved. We will still expand the third one. Let thus $d \in \Indu V(\Cc)$ with a map $d \to c$ in $\Indu V(\Cc)$.
There exists a $\mathbb V$-small filtered diagram $\underline d \colon L \to \Cc$ whose colimit in $\Indu V(\Cc)$ is $d$.
For every $l \in L$ there exists an $k(l)$ such that the map $\underline d_l \to c$ factors through $\underline c_{k(l)} \to c$.
This implies that $d$ is in the essential image of $T(c)$.

The construction of a natural transformation $S \colon \Indext_\Cc(\overcat_\Cc) \to \overcat_{\Indu U(\Cc)}$ is similar to that of $T$.
If $\underline c \colon K \to \Cc$ is $\mathbb U$-small then the equivalence
\[
T(c) \colon \Indu V\left( \colim_k \quot{\Cc}{\underline c_k} \right) \to^\sim \quot{\Indu V(\Cc)}{c}
\]
restricts to the equivalence
\[
S(c) \colon \Indu U\left( \colim_k \quot{\Cc}{\underline c_k} \right) \to^\sim \quot{\Indu U(\Cc)}{c}
\]
\end{proof}

\begin{lem}
Let $\Cc$ be a $\mathbb V$-small $(\infty,1)$-category with all pushouts. The Cartesian fibration
\[
s_\Cc \colon \Cc^{\Delta^1} \to \Cc
\]
is then also a coCartesian fibration.
\end{lem}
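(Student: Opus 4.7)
The plan is to invoke the standard criterion that a Cartesian fibration is coCartesian precisely when each of its classifying pullback functors admits a left adjoint (see for instance \cite[5.2.2.5]{lurie:htt} or the discussion around \cite[2.4.7.9]{lurie:htt}). Since $s_\Cc$ is already known to be Cartesian, it is classified by the functor $\undercat_\Cc \colon \Cc\op \to \inftyCatu V$ sending $a \in \Cc$ to the under-category $\comma{a}{\Cc}$ and a morphism $\alpha \colon a \to a'$ to the restriction functor $\alpha^\ast \colon \comma{a'}{\Cc} \to \comma{a}{\Cc}$, $(f' \colon a' \to b) \mapsto (f' \circ \alpha \colon a \to b)$. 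It then suffices to prove that each $\alpha^\ast$ admits a left adjoint.

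First I would construct the candidate left adjoint using pushouts. Given $\alpha \colon a \to a'$ and an object $(f \colon a \to b) \in \comma{a}{\Cc}$, I set $\alpha_!(f)$ to be the canonical map $a' \to a' \amalg_a b$, where the pushout exists in $\Cc$ by hypothesis. Functoriality in $f$ follows from the universal property of pushouts, giving a functor $\alpha_! \colon \comma{a}{\Cc} \to \comma{a'}{\Cc}$. The unit $f \to \alpha^\ast \alpha_!(f)$ is the structural map $a \to a' \amalg_a b$ regarded as an object of $\comma{a}{\Cc}$ over $f$, again provided by the pushout square.

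To check that this really is an adjunction it is enough to verify, for every $(f \colon a \to b) \in \comma{a}{\Cc}$ and every $(g \colon a' \to c) \in \comma{a'}{\Cc}$, that composition with the unit induces an equivalence
\[
\Map_{\comma{a'}{\Cc}}(\alpha_!(f), g) \simeq \Map_{\comma{a}{\Cc}}(f, \alpha^\ast g).
\]
Both sides can be written as fibres of the corresponding spaces in $\Cc^{\Delta^1}$ over prescribed morphisms of the source, so this is exactly the universal property of the pushout $a' \amalg_a b$.

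The conceptually delicate point, and the only one requiring care, is the passage from the existence of adjoints on the classifying functors to the existence of coCartesian lifts in $\Cc^{\Delta^1}$: namely one must produce the lift as a genuine morphism in the $(\infty,1)$-category $\Cc^{\Delta^1}$, encoded by the pushout square itself as an object of $\Cc^{\Delta^1 \times \Delta^1}$. For this I would appeal to \cite[5.2.2.5]{lurie:htt}, which provides precisely this upgrade. Alternatively one can describe the coCartesian lift explicitly: given $f \colon a \to b$ and $\alpha \colon a \to a'$, the lift is the morphism $f \to \alpha_!(f)$ in $\Cc^{\Delta^1}$ corresponding to the pushout square, and its coCartesian property is verified directly by comparing mapping spaces in $\Cc^{\Delta^1}$ with mapping spaces in $\Cc$ via the universal property of the pushout.
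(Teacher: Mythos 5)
Your proposal is correct and follows essentially the same route as the paper, whose entire proof is the citation of \cite[5.2.2.5]{lurie:htt}; you have simply made explicit the content that citation is meant to carry, namely that the pullback functors $\alpha^\ast$ (precomposition) admit left adjoints $\alpha_! = a' \amalg_a (-)$ precisely because $\Cc$ has pushouts. No gap.
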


\begin{proof}
This is a consequence of \cite[5.2.2.5]{lurie:htt}.
\end{proof}

\begin{rmq}
If $\Cc$ is an $(\infty,1)$-category with all pullbacks, then the target functor
\[
t_\Cc \colon \Cc^{\Delta^1} \to \Cc
\]
is also a Cartesian fibration.
\end{rmq}

\begin{df}
Let $\Cc$ be an $(\infty,1)$-category.
If $\Cc$ admits all pushouts, we will denote by $\undercat_\Cc^{\amalg}$ the functor classifying the coCartesian fibration $s_\Cc$ :\glsadd{underamalg}
\[
\undercat_\Cc^{\amalg} \colon \Cc \to \inftyCatu V
\]
If $\Cc$ admits all pullbacks, we will denote by $\overcat_\Cc^{\times}$ the functor classifying the Cartesian fibration $t_\Cc$ :\glsadd{overprod}
\[
\overcat_\Cc^{\times} \colon \Cc\op \to \inftyCatu V
\]
Note that those two constructions are of course linked : the functor $\overcat_\Cc^{\times}$ is the composition of $\undercat_{\Cc\op}^{\amalg}$ with the functor $(-)\op \colon \inftyCatu V \to \inftyCatu V$.
\end{df}
\begin{rmq}
The functor $\undercat_\Cc^{\amalg}$ map an object $c$ to the comma category $\comma{c}{\Cc}$ and a map $c \to d$ to the functor
\[
- \amalg_c d \colon \comma{c}{\Cc} \to \comma{d}{\Cc}
\]
The functor $\overcat_\Cc^{\times}$ maps a morphism $c \to d$ to the pullback functor
\[
- \times_d c \colon \quot{\Cc}{d} \to \quot{\Cc}{c}
\]
\end{rmq}

\begin{lem}\label{comma-ind}
Let $\Cc$ be a $\mathbb V$-small $\infty$-category with all pushouts.
There is a natural equivalence
\[
\Indextu V_\Cc(\undercat^{\amalg}_\Cc) \simeq \undercat^{\amalg}_{\Indu V(\Cc)}
\]
It induces an equivalence 
\[
\Indextu U_\Cc(\undercat^{\amalg}_\Cc) \simeq \undercat^{\amalg}_{\Indu U(\Cc)}
\]
\end{lem}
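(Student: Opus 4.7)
The plan is to mirror the proof of \autoref{quot-ind} essentially verbatim, replacing the target Cartesian fibration $t_\Cc$ (classifying the contravariant slice functor) by the source coCartesian fibration $s_\Cc$ (which is coCartesian precisely because $\Cc$ has pushouts, and classifies the covariant pushout-slice $\undercat^{\amalg}_\Cc$). Since $\Cc \hookrightarrow \Indu V(\Cc)$ preserves finite colimits and $\Indu V(\Cc)$ admits all pushouts, the induced map $\Cc^{\Delta^1} \to \Indu V(\Cc)^{\Delta^1}$ is a morphism of coCartesian fibrations over $\Cc \to \Indu V(\Cc)$. Unstraightening the Ind-extension and taking a relative Kan extension against the coCartesian fibration classified by $\undercat^{\amalg}_{\Indu V(\Cc)}$ produces the required natural transformation
\[
T \colon \Indextu V_\Cc(\undercat^{\amalg}_\Cc) \to \undercat^{\amalg}_{\Indu V(\Cc)}.
\]

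Next I would check pointwise that $T(c)$ is an equivalence. Fix $c \in \Indu V(\Cc)$ and choose a $\mathbb V$-small filtered diagram $\underline c \colon K \to \Cc$ with colimit $c$ in $\Indu V(\Cc)$. Then $T(c)$ is the ind-extension of the universal functor
\[
f \colon \colim_k \comma{\underline c_k}{\Cc} \to \comma{c}{\Indu V(\Cc)},
\]
whose component $f_k$ sends $(\underline c_k \to x)$ to $(c \to x \amalg_{\underline c_k} c)$. By \cite[5.3.5.11]{lurie:htt} it suffices to verify the three bullet points from the proof of \autoref{quot-ind}: that each $f_k$ lands in compact objects, that $f$ is fully faithful, and that $T(c)$ is essentially surjective.

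The compactness point is the least immediate but follows from a direct mapping space computation: using the pushout presentation of $f_k(\underline c_k \to x) = (c \to x \amalg_{\underline c_k} c)$, one identifies
\[
\Map_{\comma{c}{\Indu V(\Cc)}}(f_k(\underline c_k \to x),\, (c \to z)) \simeq \Map_{\comma{\underline c_k}{\Indu V(\Cc)}}\big((\underline c_k \to x),\, (\underline c_k \to z)\big),
\]
where $\underline c_k \to z$ is the composite with $\underline c_k \to c$. Since $x, \underline c_k \in \Cc$ are compact in $\Indu V(\Cc)$ and filtered colimits commute with finite limits in $\sSets$, this fibre of mapping spaces commutes with filtered colimits in $z$, yielding compactness in $\comma{c}{\Indu V(\Cc)}$. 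The fully faithfulness of $f$ reduces via \autoref{colim-map} to the same identification, combined with the fact that $c = \colim_l \underline c_l$ and that for every $\underline c_k \to x$ in $\Cc$ any map $\underline c_k \to \underline c_l$ in the diagram induces the pushout transition $x \leftsquigarrow x \amalg_{\underline c_k} \underline c_l$. Essential surjectivity of $T(c)$ follows because an arbitrary object $(c \to y)$ of $\comma{c}{\Indu V(\Cc)}$ can be written, using \autoref{strictification}, as a $\mathbb V$-small filtered colimit of arrows $(\underline c_{k} \to x_{\alpha})$ with $x_\alpha \in \Cc$, exhibiting it in the essential image of the ind-extension of $f$.

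Finally, if $\underline c$ is $\mathbb U$-small, the equivalence $T(c)$ restricts to $\Indextu U_\Cc(\undercat^{\amalg}_\Cc)(c) \simeq \undercat^{\amalg}_{\Indu U(\Cc)}(c)$ exactly as in the last lines of \autoref{quot-ind}, since compact objects of $\Indu V(\Cc)$ lying over the $\mathbb U$-small ind-completion are preserved. I expect the main obstacle to be purely organisational: carefully setting up the relative Kan extension between the two coCartesian fibrations so that the comparison functor $T$ is canonical and functorial in $c$; once that machinery is in place the three pointwise checks go through as sketched.
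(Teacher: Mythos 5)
Your proof is correct, and your construction of the comparison transformation $T$ is the same as the paper's (pull back the source fibration against $\Indu V(\Cc) \times \Cc$, straighten, and compare by a relative Kan extension). Where you genuinely diverge is in the pointwise verification that $T(c)$ is an equivalence. The paper never checks the hypotheses of \cite[5.3.5.11]{lurie:htt} for a general ind-object $c$: it first treats the case $c \in \Cc$, where the functor $\comma{c}{\Cc} \to \comma{c}{\Indu V(\Cc)}$ is the plain inclusion (so landing in compact objects is immediate and no pushouts appear), and then deduces the general case formally from the decomposition
\[
\Indu V\left(\colim_k \comma{\underline c_k}{\Cc}\right) \simeq \lim_k \comma{\underline c_k}{\Indu V(\Cc)} \simeq \comma{c}{\Indu V(\Cc)},
\]
the limit being taken along the forgetful functors (the right adjoints of the pushout transition functors), via \autoref{colimcats} \emph{(ii)} --- and item \emph{(iv)} of that lemma for the $\mathbb U$-small refinement. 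You instead attack a general $c$ head-on, which forces you to prove that the functors $x \mapsto x \amalg_{\underline c_k} c$ carry $\comma{\underline c_k}{\Cc}$ into compact objects of $\comma{c}{\Indu V(\Cc)}$; your argument for this (the pushout/forget adjunction, compactness of $x$ and $\underline c_k$ in $\Indu V(\Cc)$, and commutation of filtered colimits with fibres in $\sSets$) is correct, and your use of \autoref{strictification} for essential surjectivity is exactly what is needed. The trade-off: your route is more computational but self-contained at the level of mapping spaces, while the paper's route offloads the general case onto \autoref{colimcats}, which also hands it the $\mathbb U$-small statement essentially for free. Your closing sentence about restricting to $\Indu U$ is the one place you are a little too quick --- you should say explicitly that $x \amalg_{\underline c_k} c$ remains in $\Indu U(\Cc)$ and rerun the strictification argument with $\mathbb U$-small diagrams, rather than appeal to preservation of compact objects.
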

\begin{rmq}\label{comma-pro}
Unwinding the definition, we can stated the above lemma as follows. Let $\bar c \colon K \to \Cc$ be a filtered diagram. The canonical functor
\[
\Indu U\left(\colim_k \comma{\bar c_k}{\Cc} \right) \to \comma{c}{\Indu U(\Cc)}
\]
is an equivalence -- where $c$ is a colimit of $\bar c$ in $\Indu U(\Cc)$.
Using \autoref{lim}, we can shows the following similar statement. If $\Cc$ admits pullbacks then there is an equivalence
\[
\Proextu U_{\Cc\op}(\undercat_{\Cc\op}^{\amalg}) \simeq \undercat_{\Prou U (\Cc\op)}^{\amalg}
\]
\end{rmq}
\begin{proof}
This is very similar to the proof of \autoref{quot-ind}. Let us first form the pullback category
\[
\mymatrix{
\quot{\Indu V(\Cc)}{\Cc} \cart \ar[r]^\pi \ar[d] & \Indu V(\Cc)^{\Delta^1} \ar[d]^{\mathrm{source, target}} \\ \Indu V(\Cc) \times \Cc \ar[r] & \Indu V(\Cc) \times \Indu V(\Cc)
}
\]
The induced map $q \colon \quot{\Indu V(\Cc)}{\Cc} \to \Indu V(\Cc) \times \Cc \to \Indu V(\Cc)$ is a coCartesian fibration. We can \todo{Détailler ?}show the same way we did in the proof of \autoref{quot-ind} that it is classified by the extension of $\undercat_\Cc^{\amalg}$
\[
\tilde \undercat_\Cc^{\amalg} \colon \Indu V(\Cc) \to \inftyCatu V
\]
The functor $\pi$ preserves coCartesian morphisms and therefore induces a natural transformation $\tilde \undercat_\Cc^{\amalg} \to \undercat^{\amalg}_{\Indu V(\Cc)}$.
This transformation extends to a natural transformation
\[
T \colon \Indextu V_\Cc(\undercat_\Cc^{\amalg}) \to \undercat^{\amalg}_{\Indu V(\Cc)}
\]
To prove that $T$ is an equivalence, it suffices to prove that for every $c \in \Indu V(\Cc)$ and any $\mathbb V$-small filtered diagram $\underline c \colon K \to \Cc$ whose colimit is $c$, the induced functor
\[
T(c) \colon \Indu V\left(\colim_k \comma{\underline c_k}{\Cc}\right) \to^\sim \comma{c}{\Indu V(\Cc)}
\]
is an equivalence.

Let us first assume that $K$ is a point and thus that $c$ belong to $\Cc$. The canonical functor $\comma{c}{\Cc} \to \comma{c}{\Indu V(\Cc)}$ is fully faithful and its image is contained in the category of compact objects of $\comma{c}{\Indu V(\Cc)}$.
The induced functor
\[
T(c) \colon \Indu V \left(\comma{c}{\Cc} \right) \to \comma{c}{\Indu V(\Cc)}
\]
is therefore fully faithful (see \cite[5.3.5.11]{lurie:htt}). Let $d \in \Indu V(\Cc)$ with a map $c \to d$.
Let $\underline d \colon L \to \Cc$ be a $\mathbb V$-small filtered diagram whose colimit in $\Indu V(\Cc)$ is $d$.
There exist some $l_0 \in L$ such that the map $c \to d$ factors through $\underline d_{l_0} \to d$.
The diagram $\comma{l_0}{L} \to \Cc$ is in the image of $F$ and its colimit in $\Indu V(\Cc)$ is $d$. The functor $F$ is also essentially surjective and thus an equivalence.
It restricts to an equivalence
\[
\Indu U \left(\comma{c}{\Cc} \right) \to \comma{c}{\Indu U(\Cc)}
\]
Let us go back to the general case $c \in \Indu V(\Cc)$. The targeted equivalence is
\[
\Indu V\left( \colim_k \comma{\underline c_k}{\Cc} \right) \simeq \lim_k \comma{\underline c_k}{\Indu V(\Cc)} \simeq \comma{c}{\Indu V(\Cc)}
\]
where the limit is computed using the forgetful functors. The same argument works when replacing $\mathbb V$ by $\mathbb U$, using \autoref{colimcats}, item \emph{(iv)}.
\end{proof}

\begin{lem}\label{pro-quot-times}
Let $\Cc$ be an $(\infty,1)$-category with all pullbacks. Let us denote by $j$ the inclusion $\Indu U (\Cc) \to \IP(\Cc) = \Indu U \Prou U(\Cc)$.
There is a fully faithful natural transformation
\[
\Upsilon^\Cc \colon \Proext_{\Cc\op}(\overcat_\Cc^{\times}) \to \overcat_{\IP(\Cc)}^{\times} \circ (j\op)
\]
between functors $(\Indu U(\Cc))\op \to \inftyCatu U$
\end{lem}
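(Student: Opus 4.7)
The plan is to unwind both sides at $c \in \Indu U(\Cc)$, construct $\Upsilon^\Cc(c)$ explicitly, then verify fully faithfulness by a direct mapping-space computation. Write $c = \colim_k \bar c_k$ as a filtered colimit of some $\bar c \colon K \to \Cc$ in $\Indu U(\Cc)$. By definition of $\Proext$, the left hand side evaluates to
\[
\Proext_{\Cc\op}(\overcat_\Cc^\times)(c) \simeq \lim_{k \in K} \Prou U(\quot{\Cc}{\bar c_k}),
\]
where the limit uses the pullback functors $\Prou U(\quot{\Cc}{\bar c_{k'}}) \to \Prou U(\quot{\Cc}{\bar c_k})$ induced by each $\bar c_k \to \bar c_{k'}$. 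For each $d \in \Cc$, a direct mapping-space computation exploiting that $\mathbb U$-filtered colimits commute with finite limits in $\sSets$ yields a canonical equivalence $\Prou U(\quot{\Cc}{d}) \simeq \quot{\Prou U(\Cc)}{d}$, and the fully faithful inclusion $\Prou U(\Cc) \hookrightarrow \IP(\Cc)$ produces a fully faithful embedding $\quot{\Prou U(\Cc)}{d} \hookrightarrow \quot{\IP(\Cc)}{d}$. I would then send a compatible family $(y_k) \in \lim_k \Prou U(\quot{\Cc}{\bar c_k})$ to the augmented object $\colim_k \tilde y_k \to c$ in $\quot{\IP(\Cc)}{c}$, where $\tilde y_k \in \Prou U(\Cc) \subset \IP(\Cc)$ is the pro-object represented by $y_k$ with its augmentation $\tilde y_k \to \bar c_k$, and the transition maps $\tilde y_k \to \tilde y_{k'}$ come from the compatibility $\tilde y_k \simeq \tilde y_{k'} \times_{\bar c_{k'}} \bar c_k$.

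For fully faithfulness, I would fix families $(y_k), (y'_k)$ and compute both mapping spaces. The left hand side is $\lim_k \Map_{\quot{\Prou U(\Cc)}{\bar c_k}}(\tilde y_k, \tilde y'_k)$ by construction. On the right, using that every $\bar c_k$ is compact in $\IP(\Cc)$ together with the commutation of $\mathbb U$-filtered colimits with finite limits,
\[
\Map_{\quot{\IP(\Cc)}{c}}(\Upsilon^\Cc(c)(y), \Upsilon^\Cc(c)(y')) \simeq \lim_{k} \; \colim_{k \leq k'} \Map_{\quot{\Prou U(\Cc)}{\bar c_{k'}}}(\tilde y_k, \tilde y'_{k'}),
\]
where $\tilde y_k$ is viewed over $\bar c_{k'}$ via $\tilde y_k \to \bar c_k \to \bar c_{k'}$. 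The adjunction between post-composition and base change in the overcategories of $\Prou U(\Cc)$, combined with the compatibility $\tilde y'_k \simeq \tilde y'_{k'} \times_{\bar c_{k'}} \bar c_k$, will show that every transition map in the inner colimit is an equivalence. The colimit therefore collapses to $\Map_{\quot{\Prou U(\Cc)}{\bar c_k}}(\tilde y_k, \tilde y'_k)$, so the two mapping spaces agree.

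The hard part is not the pointwise computation, which is mechanical, but rather assembling $\Upsilon^\Cc$ as a genuine natural transformation between functors on $(\Indu U(\Cc))\op$ rather than a mere family of functors. I would handle this fibrationally: the right hand functor is classified by the target Cartesian fibration $\IP(\Cc)^{\Delta^1} \to \IP(\Cc)$ pulled back along $j$, and the construction above can be assembled as a morphism over $\Indu U(\Cc)$ from a suitable fibration with fibers $\lim_k \Prou U(\quot{\Cc}{\bar c_k})$ into this Cartesian fibration, using the strictification of maps of ind-objects provided by \autoref{strictification} to handle dependence on the chosen presentation of $c$.
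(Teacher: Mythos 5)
Your pointwise identifications of both sides and your mapping-space computation for fully faithfulness are correct, but your route differs from the paper's in how it treats the two delicate points. For fully faithfulness you compute $\Map_{\quot{\IP(\Cc)}{c}}$ directly, using compactness of pro-objects in $\IP(\Cc)$ and the collapse of the inner colimit via the compatibility $\tilde y'_k \simeq \tilde y'_{k'} \times_{\bar c_{k'}} \bar c_k$; the paper instead observes that \emph{both} functors are right Kan extensions of their restrictions to $\Cc\op$ (for the right-hand side this is exactly \autoref{lim} applied to $\IP(\Cc) = \Indu U(\Prou U(\Cc))$), so that $\Upsilon^\Cc_c$ is a limit over $K\op$ of the functors $u^\Cc_{\bar c_k} \colon \Prou U(\quot{\Cc}{\bar c_k}) \to \quot{\IP(\Cc)}{\bar c_k}$, and a limit of fully faithful functors is fully faithful --- reducing everything to the evident case $c \in \Cc$. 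The same observation disposes of what you rightly flag as the hard part, the coherence of $\Upsilon^\Cc$ in $c$: the paper only constructs the transformation over $\Cc\op$, as a map of Cartesian fibrations $\quot{\Prou U(\Cc)}{\Cc} \to \quot{\IP(\Cc)}{\Cc}$ over $\Cc$ induced by the inclusion $\Prou U(\Cc) \to \IP(\Cc)$ (Cartesian morphisms are preserved by \cite[2.4.7.12]{lurie:htt}), and then extends by the universal property of the right Kan extension; no appeal to \autoref{strictification} is needed and the dependence on the chosen presentation of $c$ never arises. Your fibrational assembly could no doubt be completed, but the Kan-extension formulation is the step that makes both the assembly and the fully faithfulness essentially formal, and it is the one ingredient your sketch does not isolate.
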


\begin{rmq}\label{ind-quot-times}
To state this lemma more informally, for any filtered diagram $\bar c \colon K \to \Cc$, we have a fully faithful functor
\[
\lim_k \Prou U\left(\quot{\Cc}{\bar c_k}\right) \to \quot{\IP(\Cc)}{j(c)}
\]
where $c$ is a colimit of $\bar c$ in $\Indu U(\Cc)$.
This lemma has an ind-version, actually easier to prove. If $\bar d \colon K\op \to \Cc$ is now a cofiltered diagram, then there is a fully faithful functor
\[
\Indu U\left(\colim_k \quot{\Cc}{\bar d_k} \right) \to \quot{\IP(\Cc)}{i(d)}
\]
where $d$ is a limit of $\bar d$ in $\Prou U(\Cc)$.
To state that last fact formally, if $\Cc$ be an $(\infty,1)$-category with all pullbacks then there is a fully faithful natural transformation
\[
\Xi^\Cc \colon \Indext_{\Cc\op} (\overcat_\Cc^{\times}) \to \overcat_{\IP(\Cc)}^{\times} \circ (i\op)
\]
where $i$ is the canonical inclusion $\Prou U(\Cc) \to \IP(\Cc)$.
\end{rmq}

\begin{proof}
Let us first consider the functor $\Prou U \circ \overcat_\Cc^{\times} \colon \Cc \op \to \inftyCatu V$. It classifies the Cartesian fibration $F$ defined as the pullback
\[
\mymatrix{
\quot{\Prou U(\Cc)}{\Cc} \ar[d]_F \ar[r] \cart & \Prou U(\Cc)^{\Delta^1} \ar[d]^{t_{\Prou U(\Cc)}} \\ \Cc \ar[r] & \Prou U(\Cc)
}
\]
The canonical inclusion $\Prou U(\Cc) \to \IP(\Cc)$ defines a functor $f$ fitting in the commutative diagram
\[
\mymatrix{
\quot{\Prou U(\Cc)}{\Cc} \ar[r]^-f \ar[rd]_F & \quot{\IP(\Cc)}{\Cc} \ar[r] \ar[d] \cart & \IP(\Cc)^{\Delta^1} \ar[d]^{t_{\IP(\Cc)}} \\
& \Cc \ar[r] & \IP(\Cc)
}
\]
From \cite[2.4.7.12]{lurie:htt} we deduce that $f$ preserves Cartesian morphisms.
It therefore defines a natural transformation $u^\Cc$ from $\Prou U \circ \overcat_\Cc^{\times}$ to the restriction to $\Cc\op$ of $\overcat_{\IP(\Cc)}^{\times}$.
Since $\overcat_{\Prou U(\Cc)}^{\times} \circ (j\op)$ is the right Kan extension of its restriction to $\Cc\op$ (see \autoref{lim}), this defines the required natural transformation 
\[
\Upsilon^\Cc \colon \Proext_{\Cc\op} (\overcat_\Cc^{\times}) \to \overcat_{\IP(\Cc)}^{\times} \circ (j\op)
\]
To see that for any $c \in \Prou U(\Cc)$, the induced functor $\Upsilon^\Cc_c$ is fully faithful, it suffices to see that for any $c \in \Cc$ the functor $u^\Cc_c$ is fully faithful, which is obvious.
\end{proof}

\begin{lem}\label{bicomma}
Let $\Cc$ be a simplicial set.
If $\Cc$ is a quasi-category then the map $\Delta^1 \to \Delta^2$
\[
\mymatrix@!0@C=2pc@R=1pc{\bullet \ar[rr]^-f && \bullet} \hspace{7mm} \to \hspace{7mm} \mymatrix@!0@C=2pc@R=1pc{\bullet \ar[rr]^-f \ar[rd] &&  \bullet \\ & \bullet \ar[ru] & }
\]
induces an inner fibration $p \colon \Cc^{\Delta^2} \to \Cc^{\Delta^1}$.
If moreover $\Cc$ admits pullbacks then $p$ is a Cartesian fibration.
\end{lem}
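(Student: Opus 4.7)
The plan is to treat the two assertions separately, in each case reducing to a lifting problem inside $\Cc$.

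For the inner fibration statement, by adjunction, producing a lift of an inner horn $\Lambda^n_k \hookrightarrow \Delta^n$ (with $0 < k < n$) along $p$ amounts to extending a map
\[
\Lambda^n_k \times \Delta^2 \cup_{\Lambda^n_k \times \Delta^1} \Delta^n \times \Delta^1 \to \Cc
\]
to the full product $\Delta^n \times \Delta^2$. Since $\Cc$ is a quasi-category, it is enough to show that the displayed inclusion is inner anodyne; but this is precisely the pushout-product of the inner anodyne $\Lambda^n_k \hookrightarrow \Delta^n$ with the monomorphism $\Delta^1 \hookrightarrow \Delta^2$, hence inner anodyne by \cite[2.3.2.4]{lurie:htt}.

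For the Cartesian assertion, I would first produce a candidate Cartesian lift by hand. An object of $\Cc^{\Delta^2}$ is a 2-simplex $\sigma \colon a \overset{u}\to b \overset{v}\to c$ with composition $w$, and $p$ sends $\sigma$ to $w$. A morphism $\bar f \colon y \to w$ in $\Cc^{\Delta^1}$ is a weakly commutative square with top edge $w' \colon a' \to c'$ and vertical maps $\alpha \colon a' \to a$ and $\gamma \colon c' \to c$. Using that $\Cc$ admits pullbacks, I would form $b' = b \times_c c'$; the universal property, together with the commutation $v u \alpha \simeq \gamma w'$ provided by the square, yields a map $u' \colon a' \to b'$ and hence a 2-simplex $\tilde\sigma \colon a' \overset{u'}\to b' \to c'$ together with an arrow $\tilde f \colon \tilde\sigma \to \sigma$ in $\Cc^{\Delta^2}$ lifting $\bar f$, assembled into a single map $\Delta^1 \times \Delta^2 \to \Cc$.

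The main step will then be verifying that $\tilde f$ is $p$-Cartesian. By \cite[2.4.1.10]{lurie:htt} this amounts to solving, for every $n \geq 2$, a lifting problem for the inclusion
\[
\Lambda^n_n \times \Delta^2 \cup_{\Lambda^n_n \times \Delta^1} \Delta^n \times \Delta^1 \hookrightarrow \Delta^n \times \Delta^2
\]
whose restriction to $\Delta^{\{n-1,n\}} \times \Delta^2$ is prescribed to be $\tilde f$. The abstract pushout-product argument of the first paragraph breaks down here, since $\Lambda^n_n \hookrightarrow \Delta^n$ is only right anodyne, not inner anodyne. The hard part will therefore be to build these extensions explicitly: the data on $\Delta^n \times [02]$ is already given, and the missing data on the $\Delta^n$-family indexed by the middle vertex $1$ of $\Delta^2$ must be produced, together with the two compatibility 2-cells connecting it to $\Delta^n \times \{0\}$ and $\Delta^n \times \{2\}$. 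This is precisely what the universal property of $b' = b \times_c c'$ supplies, pointwise along $\Delta^n$, since the lifting problem at the vertex $1$ factors through the pullback description $\Cc_{/b \times_c c'} \simeq \Cc_{/b} \times_{\Cc_{/c}} \Cc_{/c'}$.
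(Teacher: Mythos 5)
Your proposal follows the same route as the paper's proof: the inner-fibration half is exactly the pushout-product argument (the paper leaves the inner-anodyne claim implicit where you cite \cite[2.3.2.4]{lurie:htt}), the candidate lift is built from the pullback $b\times_c c'$ (the paper's $b = y\times_z c$), and Cartesianness is reduced to the lifting problems against $\Lambda^n_n\times\Delta^2\cup_{\Lambda^n_n\times\Delta^1}\Delta^n\times\Delta^1\hookrightarrow\Delta^n\times\Delta^2$ and resolved by the universal property of the pullback. The one place you stop short is your final paragraph: the paper makes ``pointwise along $\Delta^n$'' precise by cutting the vertex $x$ out of both sides of the inclusion so that the remaining subcomplexes become joins $E\star\Lambda^2_2\subset F\star\Lambda^2_2$, which converts the extension into a cell-by-cell lift $F\to\Cc_{/g}$ using the finality of the limit cone $\bar b$, followed by one last use of the inner-horn condition to reinstate $x$; your sketch points at exactly this mechanism, so what remains is combinatorial bookkeeping rather than a missing idea.
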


\begin{df}\label{betweendef}
Let $\Cc$ be an $(\infty,1)$-category with pullbacks. Let us denote by\glsadd{btwtimes}
\[
\btw_\Cc^{\times} \colon \left(\Cc^{\Delta^1}\right)\op \to \inftyCat
\]
the functor classified by the Cartesian fibration $p$ of \autoref{bicomma}.
If $\Dd$ is an $(\infty,1)$-category with pushouts, we define similarly\glsadd{btwamalg}
\[
\btw_\Dd^{\amalg} \colon \Dd^{\Delta^1} \to \inftyCat
\]
\end{df}

\begin{rmq}
Let $\Cc$ be an $\infty$-category with pullbacks. The functor $\btw^{\times}_\Cc$ maps a morphism $f \colon x \to y$ to the category $\comma{f}{\left(\quot{\Cc}{y}\right)}$ of factorisations of $f$. It maps a commutative square
\[
\mymatrix{
x \ar[r]_f \ar[d] & y \ar[d] \\ z \ar[r]^g & t
}
\]
seen as a morphism $f \to g$ in $\Cc^{\Delta^1}$ to the base change functor
\[
(z \to a \to t) \mapsto (x \to a \times_t y \to y)
\]
\end{rmq}

\begin{proof}[of \autoref{bicomma}]
For every $0 < i < n$ and every commutative diagram
\[
\mymatrix{
\Lambda^n_i \ar[r] \ar[d] & \Cc^{\Delta^2} \ar[d]^p \\ \Delta^n \ar[r] & \Cc^{\Delta^1} 
}
\]
we must build a lift $\Delta^n \to \Cc^{\Delta^2}$. The datum of such a lift is equivalent to that of a lift $\phi$ in the induced commutative diagram
\[
\mymatrix{
\displaystyle \Delta^2 \times \Lambda^n_i \coprod_{\Delta^1 \times \Lambda^n_i} \Delta^1 \times \Delta^n \ar[r] \ar[d]
& \Cc \ar[d]
\\ \Delta^n \times \Delta^2 \ar[r] \ar@{-->}[ur]_\phi & \pt
}
\]
The existence of $\phi$ then follows from the fact that $\Cc$ is a quasi-category.

Let us now assume that $\Cc$ admits pullbacks. The functor $p$ is a Cartesian fibration if and only if every commutative diagram
\[
\mymatrix{
\displaystyle \Delta^2 \times \{1\} \coprod_{\Delta^1 \times \{1\}} \Delta^1 \times \Delta^1 \ar[r]^-f \ar[d]  & \Cc \ar[d] \\
\Delta^2 \times \Delta^1 \ar[r] & \pt
}
\]
admits a lift $\Delta^2 \times \Delta^1 \to \Cc$ which corresponds to a Cartesian morphism of $\Cc^{\Delta^2}$. Let us fix such a diagram. It corresponds to a diagram in $\Cc$
\[
\entrymodifiers={+[o]}
\mymatrix@!0@C=2pc@R=1.7pc{
& y \ar[dr] & \\
x \ar[rr] \ar[ru] && z \\ \\
a \ar[rr] \ar[rruu] \ar[uu] && c \ar[uu]
}
\]
Because $\Cc$ is a quasi-category, we can complete the diagram above with an arrow $a \to y$, faces and a tetrahedron $[a,x,y,z]$.
Let $g$ denote the map
\[
g \colon \Lambda^2_2 \hookrightarrow \Delta^2 \times \{ 1\} \coprod_{\Delta^1 \times \{ 1 \} } \Delta^1 \times \Delta^1 \to^f \Cc
\]
corresponding to the sub-diagram $y \to z \from c$.
By assumption, there exists a limit diagram $\bar b \colon \pt \star \Lambda^2_2 \to \Cc$ -- where $\star$ denotes the joint construction, see \cite[1.2.8 ]{lurie:htt}.
Note that the plain square 
\[
\entrymodifiers={+[o]}
\mymatrix@!0@C=2pc@R=1.7pc{
& y \ar[dr] &\\
x \ar@{.>}[rr] \ar@{.>}[ru] && z \\ \\
a  \ar[rr] \ar[rruu] \ar@{.>}[uu] \ar[uuur]|!{[uu];[uur]}\hole && c \ar[uu]
}
\]
forms a map $\bar a \colon \{0\} \star \Lambda^2_2 \to \Cc$. Because $\bar b$ is a limit diagram, there exists a map $\Delta^1 \star \Lambda^2_2 \to \Cc$ whose restriction to $\{0\} \star \Lambda^2_2$ is $\bar a$ and whose restriction to $\{1\} \star \Lambda^2_2$ is $\bar b$. This defines two tetrahedra $[a,b,c,z]$ and $[a,b,y,z]$ represented here
\[
\entrymodifiers={+[o]}
\mymatrix@!0@C=2pc@R=2pc{
& &y \ar[dr] & \\
x \ar@{.>}[rrr] \ar@{.>}[rru] &&  \ar[u] & z \\
&&  \ar@{-}[u] \\
&& b \ar[dr] \ar@{-}[u] \ar[ruu] & \\
a \ar[rrr] \ar[rrruuu] \ar@{.>}[uuu] \ar[uuuurr]|!{[uuu];[uuur]}\hole \ar[urr] &&& c \ar[uuu]
}
\]
Completing with the doted tetrahedron $[a,x,y,z]$ we built above, we at last get the required map $\phi \colon \Delta^2 \times \Delta^1 \to \Cc$.
To prove that the underlying morphism of $\Cc^{\Delta^2}$ is a Cartesian morphism, we have to see that for every commutative diagram
\[
\mymatrix{
\Delta^{\{n-1,n\}} \times \Delta^2 \ar[d] \ar[dr]^\phi \\
\displaystyle \Delta^n \times \Delta^1 \coprod_{\Lambda^n_n \times \Delta^1} \Lambda^n_n \times \Delta^2 \ar[r] \ar[d] & \Cc \\
\Delta^n \times \Delta^2
}
\]
there exists a lift $\Delta^n \times \Delta^2 \to \Cc$.
Let $A$ denote the sub-simplicial set of 
\[
\Delta^n \times \Delta^1 \coprod_{\Lambda^n_n \times \Delta^1} \Lambda^n_n \times \Delta^2
\]
defined by cutting out the vertex $x$. Let $B$ denote the sub-simplicial set of $\Delta^n \times \Delta^2$ defined by cutting out the vertex $x$. We get a commutative diagram
\[
\mymatrix{
\Delta^1 \star \Lambda^2_2 \ar[r] \ar[d] & \Delta^{\{n-1,n\}} \times \Delta^2 \ar[d] \ar[dr]^\phi \\
A \ar[r] \ar[d] & \displaystyle \Delta^n \times \Delta^1 \coprod_{\Lambda^n_n \times \Delta^1} \Lambda^n_n \times \Delta^2 \ar[r] \ar[d] & \Cc \\
B \ar[r] & \Delta^n \times \Delta^2
}
\]
Let also $E$ be the sub-simplicial set of $A$ defined by cutting out $\Lambda^2_2$ and $F$ the sub-simplicial set of $B$ obtained by cutting out $\Lambda^2_2$.
We now have $A \simeq E \star \Lambda^2_2$ and $B \simeq F \star \Lambda^2_2$ and a commutative diagram
\[
\mymatrix{
\Delta^1 \ar[dr] \ar[d] \\ E \ar[r] \ar[d] & \Cc_{/g} \\ F
}
\]
The map $E \to F$ is surjective on vertices. Adding cell after cell using the finality of $\bar b$, we build a lift $F \to \Cc_{/g}$.
We therefore have a lift $B \to \Cc$. Using now that fact that $\Cc$ is a quasi-category, this lifts again to a suitable map $\Delta^n \times \Delta^2 \to \Cc$
\end{proof}

Let $D$ be a filtered poset, which we see as a $1$-category. Let us define $\comma{D}{D^\triangleright}$ the category whose set of objects is the disjoint union of the set of objects and the set of morphisms of $D$ -- ie the set of pairs $x \leq y$. For any object $x \in D$, we will denote by $x \leq \infty$ the corresponding object of $\comma{D}{D^\triangleright}$.
A morphism $(a \colon x \leq y) \to (b \colon z \leq t)$ in $\comma{D}{D^\triangleright}$ is by definition a commutative square in $D$
\[
\mymatrix{
x \ar[r]_a \ar[d] & y \ar[d] \\ z \ar[r]^b & t
}
\]
which therefore corresponds to inequalities $x \leq z$ and $y \leq t$.
A morphism $(x \leq y) \to (z \leq \infty)$ or $(x \leq \infty) \to (z \leq \infty)$ is an inequality $x \leq z$ in $D$. There are no morphisms $(x \leq \infty) \to (z \leq t)$.
The functor
\[
\theta \colon \app{D}{\comma{D}{D^\triangleright}}{x}{(x \leq \infty)}
\]
is fully faithful.
Using Quillen's theorem A and the fact that $D$ is filtered (so that its nerve is contractible), we see $\theta$ is cofinal.
There is also a fully faithful functor
\[
D^{\Delta^1} \to \comma{D}{D^\triangleright}
\]
Let $L$ be the nerve of the category $\comma{D}{D^\triangleright}$ and $K$ the nerve of $D$.
For any object $x \in D$ we also define $K_x \subset K^{\Delta^1}$ to be the nerve of the full subcategory of $D^{\Delta^1}$ spanned by the objects $y \leq z$ where $y \leq x$.

\begin{lem}[Lurie]\label{colim-decomp}
Let $\Cc$ be an $\infty$-category. Let $\phi \colon K^{\Delta^1} \to \Cc$ be a diagram.
For any vertex $k \in K$, let $\phi_k$ denote a colimit diagram for the induced map
\[
K_k \to K^{\Delta^1} \to^\phi \Cc
\]
Then the diagram $\phi$ factors through some map $\kappa$
\[
K^{\Delta^1} \to L \to^\kappa \Cc
\]
such that 
\begin{enumerate}
\item The induced functor $\Cc_{\kappa/} \to \Cc_{\phi/}$ is a trivial fibration.
\item For any vertex $k \in K$, the induced map $(\comma{k}{K})^{\triangleright} \to L \to \Cc$ is a colimit diagram.
\end{enumerate}
\end{lem}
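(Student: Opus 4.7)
My plan is to construct $\kappa$ as a pointwise left Kan extension of $\phi$ along the inclusion $K^{\Delta^1} \hookrightarrow L$ induced by the fully faithful functor $D^{\Delta^1} \hookrightarrow \comma{D}{D^\triangleright}$.

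The first step will be to identify the vertices of $L$ not lying in $K^{\Delta^1}$: these are precisely the objects of the form $(k \leq \infty)$ for $k \in D$. By unwinding the description of morphisms in $\comma{D}{D^\triangleright}$, a morphism $(y \leq z) \to (k \leq \infty)$ in $L$ is the datum of the single inequality $y \leq k$, and one sees that the comma category $K^{\Delta^1} \times_L L_{/(k \leq \infty)}$ is exactly $K_k$ — the full subcategory of $D^{\Delta^1}$ on pairs $(y \leq z)$ with $y \leq k$.

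With this identification in hand, I will invoke Lurie's pointwise Kan extension machinery (HTT 4.3.2.13 and 4.3.2.15). Since each colimit $\phi_k$ is assumed to exist, this produces an extension $\kappa \colon L \to \Cc$ of $\phi$ which is a pointwise left Kan extension; by construction its restriction to $K_k^\triangleright$ (with cone point $(k \leq \infty)$) is a colimit diagram. Statement (i) is then an immediate instance of the universal property of left Kan extensions (HTT 4.3.2.17), which asserts that the restriction $\Cc_{\kappa/} \to \Cc_{\phi/}$ is a trivial fibration.

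To prove (ii), I will reduce to a cofinality statement. Since $\kappa|_{K_k^\triangleright}$ is already a colimit diagram, and cofinal maps between diagram shapes preserve colimit diagrams, it suffices to show that the inclusion $\comma{k}{K} \hookrightarrow K_k$ (sending $k \leq z'$ to the corresponding object with $y = k$) is cofinal. By Quillen's Theorem A for quasi-categories (HTT 4.1.3.1), this reduces to the contractibility, for each fixed $(y \leq z) \in K_k$, of the category of $z' \in D$ equipped with inequalities $k \leq z'$ and $z \leq z'$; this category is filtered because $D$ is, hence its nerve is contractible. The main obstacle I anticipate is not analytic but combinatorial: carefully verifying that the slice $L_{/(k \leq \infty)}$ genuinely captures $K_k$ at the level of simplicial sets; once this bookkeeping is done, everything reduces to standard $\infty$-categorical machinery.
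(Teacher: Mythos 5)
Your proof is correct, and it arrives at the same intermediate facts the paper uses, but by a different route for the construction step. The paper disposes of the existence of $\kappa$ and of item \emph{(i)} in one stroke by citing Lurie's decomposition-of-colimits result \cite[4.2.3.4]{lurie:htt}, applied to the functor $x \mapsto K_x$ from $D$ to $\quot{\sSets}{K}$; that proposition is precisely the statement of the lemma, packaged with its own auxiliary simplicial set in place of $L$ (a point the paper glosses over). You instead build $\kappa$ directly on $L$ as a pointwise left Kan extension of $\phi$ along the full inclusion $K^{\Delta^1} \hookrightarrow L$, after checking that the relevant comma category over $(k \leq \infty)$ is exactly $K_k$ — a check that is painless here because $\comma{D}{D^{\triangleright}}$ has at most one morphism between any two objects, so slices of its nerve are nerves of $1$-categorical slices. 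Item \emph{(i)} then falls out of the universal property of Kan extensions rather than being part of the cited black box. What your version buys is that the extension lives on the specific $L$ the paper actually needs, with no translation from Lurie's $K_F$; what the paper's version buys is brevity. For item \emph{(ii)} the two arguments coincide: both reduce to cofinality of $\comma{k}{K} \to K_k$, which the paper merely asserts and which you justify correctly via Theorem A, using that the poset of $z' \geq k$, $z' \geq z$ is filtered, hence weakly contractible. The only caveat is bookkeeping on the exact HTT reference numbers for the Kan-extension statements, which does not affect the validity of the argument.
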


\begin{rmq}
The above lemma can be informally stated as an equivalence
\[
\colim_{k \to l} \phi(k \to l) \simeq \colim_{k \in K} \colim_{l \in \comma{k}{K}} \phi(k \to l)
\]
where for any $k \to k'$, the induced morphism $\colim_{l \in \comma{k}{K}} \phi(k \to l) \to \colim_{l \in \comma{k'}{K}} \phi(k' \to l)$ is given by
\[
\colim_{l \in \comma{k}{K}} \phi(k \to l) \from^\sim \colim_{l \in \comma{k'}{K}} \phi(k \to k' \to l) \to \colim_{l \in \comma{k'}{K}} \phi(k' \to l)
\]
\end{rmq}

\begin{proof}
The existence of the diagram and the first item follows from \cite[4.2.3.4]{lurie:htt} applied to the functor
\[
\app{D}{\quot{\sSets}{K}}{x}{K_x}
\]
For the second item, we simply observe that the inclusion $\comma{x}{K} \to K_x$ is cofinal.
\end{proof}

\begin{prop}\label{comma-quot-ind}
Let $\Cc$ be a $\mathbb V$-small $\infty$-category with finite colimits.
There is a natural equivalence
\[
\Indextu V_{\Cc^{\Delta^1}}(\btw_\Cc^{\amalg}) \simeq \btw^{\amalg}_{\Indu V (\Cc)^{\Delta^1}}
\]
It induces an equivalence
\[
\Indextu U_{\Cc^{\Delta^1}}(\btw_\Cc^{\amalg}) \simeq \btw^{\amalg}_{\Indu U (\Cc)^{\Delta^1}}
\]
\end{prop}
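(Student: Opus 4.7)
The proof follows the template of \autoref{quot-ind} and \autoref{comma-ind}. The plan is first to construct a natural transformation
\[
T \colon \Indextu V_{\Cc^{\Delta^1}}(\btw_\Cc^{\amalg}) \to \btw^{\amalg}_{\Indu V(\Cc)^{\Delta^1}}
\]
by comparing coCartesian fibrations: one pulls back the fibration over $\Indu V(\Cc)^{\Delta^1}$ classifying $\btw^\amalg_{\Indu V(\Cc)^{\Delta^1}}$ along the canonical inclusion $\Cc^{\Delta^1} \hookrightarrow \Indu V(\Cc)^{\Delta^1}$ and identifies it with a fibration receiving a canonical comparison map from the one classifying $\Indu V \circ \btw_\Cc^{\amalg}$. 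The universal property of ind-completion then extends this to $T$, tacitly using \autoref{strictification} to identify $\Indu V(\Cc^{\Delta^1})$ with $\Indu V(\Cc)^{\Delta^1}$.

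Next, it suffices to show $T(f)$ is an equivalence pointwise. One first treats $f \colon x \to y \in \Cc^{\Delta^1}$: the comparison $\btw^{\amalg}_\Cc(f) \simeq \comma{f}{\comma{x}{\Cc}} \to \comma{f}{\comma{x}{\Indu V(\Cc)}} \simeq \btw^{\amalg}_{\Indu V(\Cc)^{\Delta^1}}(f)$ is fully faithful with image in the compact objects (by \autoref{comma-ind} applied to $\comma{x}{\Cc}$), so full faithfulness of $T(f)$ follows from \cite[5.3.5.11]{lurie:htt}. For essential surjectivity, given a factorisation $x \to z \to y$ in $\Indu V(\Cc)$, write $z \simeq \colim_\ell z_\ell$ as a filtered colimit of objects of $\Cc$; compactness of $x$ forces $x \to z$ to factor through some $z_{\ell_0}$, producing a cofinal subdiagram of compact factorisations whose colimit is $(x \to z \to y)$.

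For the general case $f \in \Indu V(\Cc)^{\Delta^1}$, apply \autoref{strictification} to present $f$ as a colimit of a filtered diagram $\bar f \colon K \to \Cc^{\Delta^1}$. By construction of $\Indextu V$ one has
\[
\Indextu V_{\Cc^{\Delta^1}}(\btw_\Cc^{\amalg})(f) \simeq \Indu V\bigl(\colim_{k \in K} \btw^{\amalg}_\Cc(\bar f_k)\bigr),
\]
while an analogue of \autoref{lim}, obtained by combining \autoref{quot-ind} (for $\quot{-}{y}$) and \autoref{comma-ind} (for $\comma{x}{-}$), identifies
\[
\btw^{\amalg}_{\Indu V(\Cc)^{\Delta^1}}(f) \simeq \lim_{k \in K} \btw^{\amalg}_{\Indu V(\Cc)^{\Delta^1}}(\bar f_k),
\]
the limit being taken along the Cartesian base-change functors. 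The two sides are exchanged by \autoref{colimcats}\emph{(ii)}: the pushout functors from the LHS and the base-change functors from the RHS form adjoint pairs in $\PresLeftu V$, so the Ind-completion of the colimit of left adjoints agrees with the limit of right adjoints.

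The principal obstacle is verifying this limit decomposition of the RHS, i.e.\ that the category of factorisations of a colimit morphism is genuinely the limit of the factorisation categories with the expected base-change transition functors. This requires carefully combining the filtered-colimit compatibilities of under- and over-comma categories from \autoref{quot-ind} and \autoref{comma-ind}, and identifying which functors serve as right adjoints to the pushout-functoriality maps. Once this is done, restricting from $\mathbb V$ to $\mathbb U$ is routine, following the pattern at the end of the proof of \autoref{comma-ind} via \autoref{colimcats}\emph{(iv)--(vi)}.
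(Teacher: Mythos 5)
Your construction of the comparison transformation $T$ and your separate treatment of the case $f \in \Cc^{\Delta^1}$ are consistent with the paper (the paper does not isolate that base case, but your compactness argument for it is sound). The genuine gap is exactly where you write "the principal obstacle": the limit decomposition
$\btw^{\amalg}_{\Indu V(\Cc)^{\Delta^1}}(f) \simeq \lim_{k} \btw^{\amalg}_{\Indu V(\Cc)^{\Delta^1}}(\bar f_k)$
is the entire content of the proof, and it does not follow by "carefully combining" \autoref{quot-ind} and \autoref{comma-ind} as you suggest. Along the diagonal diagram $k \mapsto (\bar x(k) \to \bar y(k))$ the source and target vary \emph{simultaneously}: the transition functor of $\btw^{\amalg}$ is a pushout on the source side composed with postcomposition on the target side, so its right adjoint mixes a forgetful functor with a pullback, and neither \autoref{quot-ind} (varying target, fixed source) nor \autoref{comma-ind} (varying source) applies to it directly. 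The naive repair --- writing $\quot{\comma{x}{\Dd}}{y}$ with $\Dd = \Indu V(\Cc)$ as the fibre product $\comma{x}{\Dd} \times_{\Dd} \quot{\Dd}{y}$ and decomposing each factor separately --- also fails, because the presentation $\quot{\Dd}{y} \simeq \lim_l \quot{\Dd}{\bar y(l)}$ along pullback functors is not compatible with the source projection to $\Dd$ (the underlying object changes from level to level), so the fibre product does not commute with these limits.

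The paper's resolution is a reindexing device you do not mention, namely \autoref{colim-decomp}: the colimit over $K$ of $\quot{\comma{\bar x(k)}{\Cc}}{\bar y(k)}$ is first rewritten as a colimit over $K^{\Delta^1}$ of the categories $\quot{\comma{\bar x(k)}{\Cc}}{\bar y(l)}$ (via the equivalence of slice categories $\left(\PresLeftu V\right)_{\kappa/} \simeq \left(\PresLeftu V\right)_{\bar h \circ \id_-/}$), and then decomposed as the iterated colimit $\colim_k \colim_{l \in \comma{k}{K}}$. In the inner colimit only the target varies, so \autoref{quot-ind} applies; in the outer one only the source varies, so \autoref{comma-ind} applies. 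This decoupling is what makes the two lemmas combinable; your proposal would need to supply it (or an equivalent device) before the final step goes through.
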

\begin{rmq}\label{between-pro}
There is a "pro" counterpart of \autoref{comma-quot-ind}. If $\Cc$ is an $\infty$-category which admits all pullbacks then 
\[
\Proextu U_{(\Cc\op)^{\Delta^1}}(\btw_{\Cc\op}^{\amalg}) \simeq \btw_{\Prou U(\Cc\op)}^{\amalg}
\]
\end{rmq}

\begin{rmq}
We can state informally \autoref{comma-quot-ind} as follows. For any morphism $f \colon x \to y$ in $\Indu V(\Cc)$ and any diagram $\bar f \colon K \times \Delta^1 \to \Cc$ whose colimit is $f$, the canonical functor
\[
\Indu V\left( \colim_k \quot{\comma{\bar x(k)}{\Cc}}{\bar y(k)} \right) \to \quot{\comma{x}{\Indu V(\Cc)}}{y}
\]
is an equivalence -- where $\bar x = \bar f(-,0)$ and $\bar y = \bar f(-,1)$. The proof is based on the following informal computation:
\begin{align*}
\Indu V\left( \colim_k \quot{\comma{\bar x(k)}{\Cc}}{\bar y(k)} \right) &\simeq \Indu V\left( \colim_{k \to l} \quot{\comma{\bar x(k)}{\Cc}}{\bar y(l)} \right) \simeq \Indu V\left( \colim_k \colim_{l \in \comma{k}{K}} \quot{\comma{\bar x(k)}{\Cc}}{\bar y(l)} \right) \\
&\simeq \lim_k \comma{\bar x(k)}{\Indu V\left(\colim_{l \in \comma{k}{K}} \quot{\Cc}{y(l)}\right)} \simeq \lim_k \comma{\bar x(k)}{\quot{\Indu V(\Cc)}{y}} \simeq \quot{\comma{x}{\Indu V(\Cc)}}{y}
\end{align*}
\end{rmq}

\begin{proof}
Let us deal with the case of $\Indu V$. The case of $\Indu U$ is very similar.
Let us consider the pullback category
\[
\mymatrix{
\comma{\Indu V(\Cc)}{\quot{\Cc}{\Indu V(\Cc)}}\ar[r]^-\psi \ar[d] \cart & \Indu V(\Cc)^{\Delta^2} \ar[d]^{p,q} \\
\Indu V(\Cc)^{\Delta^1} \times \Cc \ar[r] & \Indu V(\Cc)^{\Delta^1} \times \Indu V(\Cc)
}
\]
where $p$ is as in \autoref{bicomma} and $q$ is induces by the inclusion $\{1\} \to \Delta^2$.
The induced map 
\[
\comma{\Indu V(\Cc)}{\quot{\Cc}{\Indu V(\Cc)}} \to \Indu V(\Cc)^{\Delta^1}
\]
is a cocartesian fibration \todo{Détailler ?}classified by the extension of $\btw^{\amalg}_\Cc$
\[
\tilde \btw^{\amalg}_\Cc \colon \Indu V\left( \Cc^{\Delta^1} \right) \simeq \Indu V(\Cc)^{\Delta^1} \to \inftyCatu V
\]
The map $\psi$ therefore induces a natural transformation $\tilde \btw^{\amalg}_\Cc \to \btw^{\amalg}_{\Indu V (\Cc)^{\Delta^1}}$.
This naturally extends to the required transformation
\[
T \colon \Indextu V(\btw^{\amalg}_\Cc) \simeq \Indu V \circ \left(\tilde \btw^{\amalg}_\Cc\right) \to \btw^{\amalg}_{\Indu V (\Cc)^{\Delta^1}}
\]
Let now $f \colon c \to d$ be a morphism in $\Indu V(\Cc)$.
Let $K$ be a $\mathbb V$-small filtered simplicial set and let $\bar f \colon \K \to \Cc^{\Delta^1}$ such that $f$ is a colimit of $\bar f$ in
\[
\Indu V\left( \Cc^{\Delta^1} \right)
\]
Let $\phi \colon K \times \Delta^1 \to \Cc$ be induced by $\bar f$. Let us denote by $j \colon \Cc \to \Indu V(\Cc)$ the Yoneda embedding.
Let $\bar p$ denote a colimit diagram $(K \times \Delta^1)^{\triangleright} \to \Indu V(\Cc)$ extending $i \circ \phi$. The inclusion $K \simeq K \times \{1\} \to K \times \Delta^1$ is cofinal and the cone point of $\bar p$ is thus equivalent to $d$.
The restriction of $\bar p$ to $K^\triangleright \simeq (K \times \{0\})^\triangleright$ defines a diagram $\bar c \colon K \to \quot{\Indu V(\Cc)}{d}$ whose colimit is $f$.
Let us denote by $\tilde c$ the composite diagram
\[
\tilde c \colon \mymatrix@1{K \ar[r]^-{\bar c} & \quot{\Indu V(\Cc)}{d} \ar[r] & \Indu V(\Cc)^{\Delta^1} \simeq \Indu V \left( \Cc^{\Delta^1} \right)}
\]
It comes with a natural transformation $\alpha \colon \bar f  \to \tilde c$ induced by $\bar p$. Let us record for further use that the diagram $\bar c$ factors through
\[
\quot{\Cc}{d} = \Indu V(\Cc)^{\Delta^1} \timesunder[\Indu V(\Cc) \times \Indu V(\Cc)][][][][15pt] (\Cc \times \{d\})
\]
We now consider the map
\[
\gamma \colon \mymatrix@1{K^{\Delta^1} \times \Delta^1 \ar[r]^-{\ev,\pr} & K \times \Delta^1 \ar[r]^-\phi & \Cc}
\]
and denote $\bar g$ the induced map $K^{\Delta^1} \to \Cc^{\Delta^1}$. Note that the composition
\[
\mymatrix{ K \ar[r]^{\id_-} & K^{\Delta^1} \ar[r]^{\bar g} & \Cc^{\Delta^1}}
\]
equals $\bar f$.
We define the functor
\[
\bar h \colon \mymatrix@1{K^{\Delta^1} \ar[r]^-{\bar g} & \Cc^{\Delta^1} \ar[r]^-{\btw_\Cc^{\amalg}} & \inftyCat^{\mathbb V\mathrm{,fc}} \ar[r]^-{\Indu V} & \PresLeftu V}
\]
where $\inftyCat^{\mathbb V\mathrm{,fc}}$ is the category of $\mathbb V$-small $(\infty,1)$-categories with all finite colimits.
We can assume that $K$ is the nerve of a filtered $1$-category $D$.
Using \autoref{colim-decomp} (and its notations) we get a diagram $\kappa \colon L \to \PresLeftu V$ such that we have categorical equivalences
\[
\left( \PresLeftu V \right)_{\kappa \circ \theta/} \simeq \left( \PresLeftu V \right)_{\kappa/} \simeq \left( \PresLeftu V \right)_{\bar h/} \simeq \left( \PresLeftu V \right)_{\bar h \circ \id_-/}
\]
The natural transformation $\alpha$ defined above induces an object of
\[
\left( \PresLeftu V \right)_{\bar h \circ \id_-/} \simeq \left( \PresLeftu V \right)_{\kappa \circ \theta/}
\]
It defines a natural transformation of functors $K \to \PresLeftu V$
\[
A \colon \kappa \circ \theta \to \Indu V \circ \undercat^{\amalg}_{\quot{\Cc}{d}}(\bar c)
\]
Let $k$ be a vertex of $K$. Using \autoref{quot-ind}, we deduce that the functor $A_k$ is an equivalence and the natural transformation $A$ is thus an equivalence too. Now using \autoref{comma-ind}, we see that $T(f)$ is equivalent to the colimit of the diagram induced by $A$
\[
K \to \left(\PresLeftu V\right)^{\Delta^1}
\]
It follows that $T$ is an equivalence.
\end{proof}

We will finish this section with one more result. 
Let $\Cc$ be a $\mathbb V$-small $(\infty,1)$-category with finite colimits and $\Dd$ be any $(\infty,1)$-category. Let $g$ be a functor $\Dd \to \comma{\Cc}{\inftyCat^{\mathbb V\mathrm{,L}}}$ and let $\tilde g$ denote the composition of $g$ with the natural functor $\comma{\Cc}{\inftyCat^{\mathbb V\mathrm{,L}}} \to \inftyCatu V$.
We assume that for any object $x \in \Dd$, the category $\tilde g(x)$ admits finite colimits.
Let also $\alpha \colon \overcat_\Dd \to \tilde g$ be a natural transformation.
We consider the diagram
\[
\entrymodifiers={+[o]}
\mymatrix{
\Dd^{\Delta^1} \ar[r]^-{\int \alpha} \ar[d]_{t_\Dd} & \displaystyle \int \tilde g \ar[dl] & \Cc \times \Dd \ar[l]_-F \ar[dll]^\pr \\
\Dd
}
\]
where the map $F$ is induced by $g$.
The functor $F$ admits a relative right adjoint $G$ over $\Dd$ (see \cite[8.3.3]{lurie:halg}).
The source functor $t_\Dd$ admits a section $\id_-$ induced by the map $\Delta^1 \to \pt$.
It induces a functor $h \colon \Dd \to \Cc$
\[
h \colon \mymatrix{
\Dd \ar[r]^-{\id_-} & \Dd^{\Delta^1} \ar[r] & \displaystyle \int \tilde g \ar[r]^-G & \Cc \times \Dd \ar[r] & \Cc
}
\]
We define the same way $H \colon \Indu U(\Dd) \to \Indu U(\Cc)$, using \autoref{ind-right-adjoint}
\[
H \colon \mymatrix{
\Indu U(\Dd) \ar[r]^-{\id_-} & \Indu U(\Dd)^{\Delta^1} \ar[r] & \displaystyle \int \Indextu U_\Dd(\tilde g) \ar[r] & \Indu U(\Cc) \times \Indu U(\Dd) \ar[r] & \Indu U(\Cc)
}
\]
Let also $I \colon \Prou U(\Dd) \to \Prou U(\Cc)$ be defined similarly, but using \autoref{lim-adjoint}:
\[
I \colon \mymatrix@1{\Prou U(\Dd) \ar[r]^-{\id_-} & \Prou U(\Dd)^{\Delta^1} \ar[r] & \displaystyle \int \Proextu U_\Dd(\tilde g) \ar[r] & \Prou U(\Cc) \times \Prou U(\Dd) \ar[r] & \Prou U(\Cc)}
\]
\begin{lem}\label{two-ext}
The two functors $H$ and $\Indu U(h) \colon \Indu U(\Dd) \to \Indu U(\Cc)$ are equivalent.
The functors $I$ and $\Prou U(h) \colon \Prou U(\Dd) \to \Prou U(\Cc)$ are equivalent.
\end{lem}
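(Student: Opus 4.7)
The plan is to use the universal properties of $\Indu U(\Dd)$ and $\Prou U(\Dd)$. Namely, I will show that $H$ preserves $\mathbb U$-small filtered colimits and restricts to $h$ along the canonical inclusion $\Dd \hookrightarrow \Indu U(\Dd)$ (composed with $\Cc \hookrightarrow \Indu U(\Cc)$), and dually that $I$ preserves $\mathbb U$-small cofiltered limits and similarly restricts to $h$. The equivalences $H \simeq \Indu U(h)$ and $I \simeq \Prou U(h)$ will then follow from \autoref{indu-thm} and its pro-analogue.

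First I would identify the restrictions of $H$ and $I$ to $\Dd$. For an object $d \in \Dd$, seen as a compact object of $\Indu U(\Dd)$, the morphism $\id_-(d)$ sits in the full subcategory $\Dd^{\Delta^1} \subset \Indu U(\Dd)^{\Delta^1}$ as $\id_d$. Its image under $\int \Indextu U_\Dd(\alpha)$ lies in the fiber $\Indextu U_\Dd(\tilde g)(d) \simeq \Indu U(\tilde g(d))$ and equals $\alpha_d(\id_d)$ via the Yoneda inclusion $\tilde g(d) \hookrightarrow \Indu U(\tilde g(d))$. By \autoref{ind-right-adjoint} together with \autoref{colimcats} item \emph{(iv)}, the fiber over $d$ of the relative right adjoint of $\Indu U(F)$ is the ind-extension $\Indu U(G_d)$, and its value on objects coming from $\tilde g(d)$ coincides with that of $G_d$. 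Hence $H(d) \simeq G_d(\alpha_d(\id_d)) = h(d)$ in $\Indu U(\Cc)$. The analogous analysis in the pro setting, using \autoref{lim-adjoint} in place of \autoref{ind-right-adjoint} and the dual of \autoref{colimcats}, yields $I(d) \simeq h(d)$ in $\Prou U(\Cc)$.

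Second, I would verify that $H$ preserves $\mathbb U$-small filtered colimits. The section $\id_-$ is computed pointwise, hence preserves every colimit. The morphism $\int \Indextu U_\Dd(\alpha) \colon \Indu U(\Dd)^{\Delta^1} \to \int \Indextu U_\Dd(\tilde g)$ corresponds to a morphism of coCartesian fibrations over $\Indu U(\Dd)$ whose fibers are left adjoints between presentable categories; since filtered colimits in the total space are computed via coCartesian transport followed by filtered colimits in the fibers, this morphism preserves them. For the relative right adjoint step, writing $d = \colim_k d_k$ with $d_k \in \Dd$ and using the explicit description of the ind-extended right adjoint from \autoref{colimcats} items \emph{(v)}--\emph{(vi)}, one sees that both $H(d)$ and $\colim_k H(d_k)$ compute the ind-object $\colim_k h(d_k)$. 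The case of $I$ is dual: by \autoref{lim-adjoint}, the right adjoint to $\Prou U(F)$ is realised as a cofiltered limit of right adjoints indexed by the defining pro-diagrams, which makes preservation of cofiltered limits by $I$ manifest, and its value at $d = \lim_k d_k$ is the limit $\lim_k h(d_k) \simeq \Prou U(h)(d)$.

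The main obstacle is this second step, namely the compatibility of the relative right adjoints in the constructions of $H$ and $I$ with the ind- and pro- extensions of the fiberwise right adjoints $G_d$. Once pinned down via \autoref{ind-right-adjoint}, \autoref{colimcats} items \emph{(iv)}--\emph{(vi)} and \autoref{lim-adjoint}, together with the fact that $\int \alpha$ corresponds, after applying $\Indu U$ or $\Prou U$, to $\int \Indextu U_\Dd(\alpha)$ or $\int \Proextu U_\Dd(\alpha)$, the two equivalences reduce to a direct application of the universal properties of $\Indu U(\Dd)$ and $\Prou U(\Dd)$.
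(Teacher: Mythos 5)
Your proposal is correct and follows essentially the same route as the paper: the paper's proof establishes exactly your two sufficient conditions — that $H$ and $\Indu U(h)$ agree after restriction to $\Dd$ (via the compatibility of the sections and the natural transformation between the fiberwise right adjoints and their ind-extensions), and that $H$ preserves $\mathbb U$-small filtered colimits (using that $\pi_*$ and $\Indextu U_\Dd(\alpha)_x$ preserve them, together with \autoref{colim-decomp}). The only cosmetic difference is that the paper organizes the colimit-preservation step around \autoref{colim-decomp} where you appeal to \autoref{colimcats} items \emph{(v)}--\emph{(vi)}; the substance is the same.
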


\begin{rmq}
For an enlightening example of this construction, we invite the reader to look at \autoref{rmq-two-ext} or \autoref{IPcotangent-underlying}.
\end{rmq}

\begin{proof}
Let us deal with the case of $H$ and $\Indu U(h)$, the other one is similar.
We will prove the following sufficient conditions
\begin{enumerate}
\item The restrictions of both $\Indu U(h)$ and $H$ to $\Dd$ are equivalent ;
\item The functor $H$ preserves $\mathbb U$-small filtered colimits.
\end{enumerate}
To prove item \emph{(i)}, we consider the commutative diagram
\[
\mymatrix{
\Cc \times \Dd \ar[r] \ar[d] & \Indu U(\Cc) \times \Indu U(\Dd) \ar[d] \\
\int \tilde g \ar[r] \ar[d] & \int \Indextu U_\Dd(\tilde g) \ar[d] \\
\Dd \ar[r] & \Indu U(\Dd)
}
\]
The sections $\Dd \to \int \tilde g$ and $\Indu U(\Dd) \to \int \Indextu U_\Dd(\tilde g)$ are compatible: the induced diagram commutes
\[
\mymatrix{
\int \tilde g \ar[r] & \int \Indextu U_\Dd(\tilde g) \\
\Dd \ar[u] \ar[r] & \Indu U(\Dd) \ar[u]
}
\]
Moreover the right adjoints $\int \tilde g \to \Cc \times \Dd$ and $\int \Indextu U_\Dd(\tilde g) \to \Indu U(\Cc) \times \Indu U(\Dd)$ are weakly compatible: there is a natural transformation
\[
\mymatrix{
\Cc \times \Dd \ar[r] \ar@{=>}[rd] & \Indu U(\Cc) \times \Indu U(\Dd) \\
\int \tilde g \ar[r] \ar[u] & \int \Indextu U_\Dd(\tilde g) \ar[u]
}
\]
It follows that we have a natural transformation between the functors 
\[
\Dd \to^h \Cc \to \Indu U(\Cc) \text{ and } \Dd \to \Indu U(\Dd) \to^H \Indu V(\Cc)
\]
For any $x \in \Dd$, the induced map $h(x) \to H(x)$ in $\Indu U(\Cc)$ is an equivalence.
This concludes the proof of item \emph{(i)}.

Let us now prove the item \emph{(ii)}.
It suffices to look at a $\mathbb U$-small filtered diagram $\bar x \colon K \to \Dd$. Let $x$ denote a colimit of $\bar x$ in $\Indu U(\Dd)$.
Let us denote by $A$ the natural transformation
\[
A = \Indextu U_\Dd(\alpha) \colon \overcat_{\Indu U(\Dd)} \to \Indextu U_\Dd(\tilde g) = G
\]
between functors $\Indu U(\Dd) \to \inftyCat$.
Let us also denote by $\pi_*$ the right adjoint $G(x) \to \Indu U(\Cc)$.
By definition, we have $H(x) \simeq \pi_* A_x(\id_x)$. The functors $\pi_*$ and $A_x$ preserve $\mathbb U$-small filtered colimits and $H(x)$ is therefore the colimit of the diagram
\[
\bar A \colon K \to^{\bar x} \quot{\Indu U(\Dd)}{x} \to^{A_x} G(x) \to^{\pi_*} \Indu U(\Cc)
\]
We consider the functor
\[
\bar H_x \colon \mymatrix{
K^{\Delta^1} \ar[r]^-{\bar x^{\Delta^1}} & \Dd^{\Delta^1} \ar[r] & \displaystyle \int \tilde g \ar[r] & \Cc \times \Dd \ar[r] & \Cc \ar[r] & \Indu U(\Cc)
}
\]
We can assume that $K$ is the nerve of a filtered $1$-category. Using \autoref{colim-decomp} and its notations, we extend $\bar H_x$ to a map
\[
\zeta \colon L \to \Indu U(\Cc)
\]
and equivalences
\[
\Indu U(\Cc)_{\zeta \circ \theta/} \simeq \Indu U(\Cc)_{\zeta/} \simeq \Indu U(\Cc)_{\bar H_x/} \simeq \Indu U(\Cc)_{\bar H_x \circ \id_-/}
\]
Using the proof of \emph{(i)}, we have a natural transformation $\bar H_x \circ \id_- \to \bar A$. It induces a natural transformation $\zeta \circ \theta \to \bar A$. Using \autoref{colimcats} we see that it is an equivalence.
It follows that $H(x)$ is a colimit of the diagram
\[
K \to K^{\Delta^1} \to^{\bar H_x} \Indu U(\Cc)
\]
which equals $K \to^{\bar x} \Dd \to^h \Cc \to \Indu U(\Cc)$.
We now conclude using item \emph{(i)}.
\end{proof}
\end{chap-cats}

\chapter{Ind-pro-stacks and Tate stacks}\label{chapterIPandTate}%
\begin{chap-indpro}
In this chapter, we will develop the theory of Tate stacks. Those are infinite dimensional geometric objects whose tangent complex is a Tate module.
With the setting we are then able to define symplectic forms on infinite dimensional algebraic stacks.
\section{Ind-pro-stacks}%

Throughout this section, we will denote by $S$ a derived stack over some base field $k$ and by $\dSt_S$ the category of derived stack over the base $S$.

\subsection{Cotangent complex of a pro-stack}
\begin{df}
A pro-stack over $S$ an object of $\Prou U \dSt_S$.
\end{df}

\begin{rmq}
Note that the category $\Prou U \dSt_S$ is equivalent to the category of pro-stacks over $R$ with a morphism to $S$.
\end{rmq}

\begin{df}\label{iperf-df}
Let $\Perf \colon \dSt_S\op \to \inftyCatu U$ denote the functor mapping a stack to its category of perfect complexes.
We will denote by $\IPerf$ the functor\glsadd{iperf}
\[
\IPerf = \Indext_{\dSt_S\op}(\Perf) \colon (\Prou U\dSt_S)\op \to \PresLeft
\]
where $\Indext$ was defined in \autoref{indext}.
Whenever $X$ is a pro-stack, we will call $\IPerf(X)$ the derived category of ind-complexes on $X$. It is $\mathbb U$-presentable.
If $f \colon X \to Y$ is a map of pro-stacks, then the functor
\[
\IPerf(f) \colon \IPerf(Y) \to \IPerf(X)
\]
admits a right adjoint. We will denote $f_\mathbf{I}^* = \IPerf(f)$ and $f^\mathbf{I}_*$ its right adjoint.
\end{df}

\begin{rmq}
Let $X$ be a pro-stack and let $\bar X \colon K\op \to \dSt_S$ denote a $\mathbb U$-small cofiltered diagram of whom $X$ is a limit in $\Prou U\dSt_S$.
The derived category of ind-perfect complexes on $X$ is by definition the category
\[
\IPerf(X) = \Indu U(\colim \Perf(\bar X))
\]
It thus follows from \cite[1.1.4.6 and 1.1.3.6]{lurie:halg} that $\IPerf(X)$ is stable.
Note that it is also equivalent to the colimit
\[
\IPerf(X) = \colim \IPerf(\bar X) \in \PresLeftu V
\]
It is therefore equivalent to the limit of the diagram
\[
\IPerf_*(\bar X) \colon K \to \dSt_S\op \to \PresLeftu V \simeq (\PresRightu V)\op
\]
An object $E$ in $\IPerf(X)$ is therefore the datum of an object ${p_k}_* E$ of $\IPerf(X_k)$ for each $k \in K$ --- 
where $X_k = \bar X(k)$ and $p_k \colon X \to X_k$ is the natural projection --- and of some compatibilities between them.
\end{rmq}

\begin{df}
Let $X$ be a pro-stack. We define its derived category of pro-perfect complexes\glsadd{pperf}
\[
\PPerf(X) = \left( \IPerf(X) \right) \op
\]
The duality $\Perf(-) \to^\sim (\Perf(-))\op$ implies the equivalence
\[
\PPerf(X) \simeq \Prou U(\colim \Perf(\bar X))
\]
whenever $\bar X \colon \K\op \to \dSt_S$ is a cofiltered diagram of whom $X$ is a limit in $\Prou U\dSt_S$.
\end{df}

\begin{df}
Let us define the functor $\Tateu U_\mathbf P \colon (\Prou U \dSt_S)\op \to \inftyCat^{\mathbb V\mathrm{,st,id}}$
\[
\Tateu U_\mathbf P = \Tateextu U_{\dSt_S\op}(\Perf)
\]
\end{df}

\begin{rmq}
The functor $\Tateu U_\mathbf P$ maps a pro-stack $X$ given by a diagram $\bar X \colon K\op \to \dSt_S$ to the stable $(\infty,1)$-category
\[
\Tateu U_\mathbf P(X) = \Tateu U(\colim \Perf(\bar X))
\]
There is a canonical fully faithful natural transformation
\[
\Tateu U_\mathbf P \to \Prou U \circ \IPerf
\]
From \autoref{tate-dual} we also get a fully faithful
\[
\Tateu U_\mathbf P \to \Indu U \circ \PPerf
\]
\end{rmq}

\begin{df}
Let $\Qcoh \colon \dSt_S\op \to \inftyCatu V$ denote the functor mapping a derived stack to its derived category of quasi-coherent sheaves. It maps a morphism between stacks to the appropriate pullback functor.
We will denote by $\IQcoh$ the functor\glsadd{iqcoh}
\[
\IQcoh = \Indext_{\dSt_S\op}(\Qcoh) \colon (\Prou U\dSt_S)\op \to \inftyCatu V
\]
If $f \colon X \to Y$ is a map of pro-stacks, we will denote by $f^*_\mathbf{I}$ the functor $\IQcoh(f)$.
We also define
\[
\IQcoh^{\leq 0} = \Indext_{\dSt_S\op}(\Qcoh^{\leq 0})
\]
the functor of connective modules.
\end{df}

\begin{rmq}
There is a fully faithful natural transformation $\IPerf \to \IQcoh$ ; for any map $f \colon X \to Y$ of pro-stacks, there is therefore a commutative diagram
\[
\mymatrix{
\IPerf(Y) \ar[r] \ar[d]_{f_\mathbf I^*} & \IQcoh(Y) \ar[d]^{f_\mathbf I^*}\\
\IPerf(X) \ar[r] & \IQcoh(X)
}
\]
The two functors denoted by $f_\mathbf I^*$ are thus compatible.
Let us also say that the functor 
\[
f_\mathbf I^* \colon \IQcoh(Y) \to \IQcoh(X)
\]
does \emph{not} need to have a right adjoint. We next show that it sometimes has one.
\end{rmq}
\begin{prop}\label{prop-iqcoh}
Let $f \colon X \to Y$ be a map of pro-stacks. If $Y$ is actually a stack then the functor $f_\mathbf I^* \colon \IQcoh(Y) \to \IQcoh(X)$ admits a right adjoint.
\end{prop}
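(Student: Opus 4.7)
The plan is to verify that $f^*_\mathbf I$ is cocontinuous between $\mathbb V$-presentable $\infty$-categories and then invoke the adjoint functor theorem. Both $\IQcoh(Y)$ and $\IQcoh(X)$ are $\mathbb V$-presentable by the construction $\IQcoh = \Indext_{\dSt_S\op}(\Qcoh)$ together with \autoref{indu-thm}, so the adjoint functor theorem reduces the proposition to showing that $f^*_\mathbf I$ preserves all $\mathbb V$-small colimits.

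The first step is to exploit that $Y$ is a stack in order to reduce $f$ to a morphism between stacks. I would write $X$ as a cofiltered limit $\lim_{k \in K\op} X_k$ in $\Prou U \dSt_S$ of a diagram $\bar X \colon K\op \to \dSt_S$ of stacks, with projections $p_k \colon X \to X_k$. Because $Y$ is a stack, the universal property of $\Prou U$ (\autoref{indu-thm}(v) applied to $\dSt_S\op$) gives
\[
\Map_{\Prou U \dSt_S}(X, Y) \simeq \colim_{k \in K} \Map_{\dSt_S}(X_k, Y),
\]
so $f$ is represented by some morphism of stacks $f_{k_0} \colon X_{k_0} \to Y$ with $f \simeq f_{k_0} \circ p_{k_0}$. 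Functoriality of $\IQcoh$ then produces a decomposition $f^*_\mathbf I \simeq (p_{k_0})^*_\mathbf I \circ (f_{k_0})^*_\mathbf I$, reducing the claim to showing each factor is cocontinuous.

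The factor $(f_{k_0})^*_\mathbf I \colon \Indu U \Qcoh(Y) \to \Indu U \Qcoh(X_{k_0})$ is by construction the ind-extension of the cocontinuous pullback $f_{k_0}^* \colon \Qcoh(Y) \to \Qcoh(X_{k_0})$ between presentable stable categories. The factor $(p_{k_0})^*_\mathbf I$ is the ind-extension of the structure map $\iota_{k_0} \colon \Qcoh(X_{k_0}) \to \colim_l \Qcoh(X_l)$ into the colimit defining $\IQcoh(X)$; by \autoref{colimcats}(ii) this colimit can equivalently be computed in $\PresLeftu V$, so $\iota_{k_0}$ is cocontinuous as well. It remains to observe that ind-extending a cocontinuous functor between cocomplete $\infty$-categories remains cocontinuous: filtered colimits are preserved by construction, while finite colimits are preserved because the Yoneda embedding $\Cc \hookrightarrow \Indu U(\Cc)$ preserves finite colimits (\autoref{indu-thm}(iii)) and these commute with filtered colimits in $\Indu U(\Cc)$.

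I expect the main subtlety to be the cocontinuity of $(p_{k_0})^*_\mathbf I$, i.e.\ establishing that $\iota_{k_0}$ is genuinely a morphism in $\PresLeftu V$ rather than merely in $\inftyCatu V$; this is exactly where \autoref{colimcats}(ii) intervenes, upgrading the colimit computation from $\inftyCatu V$ to $\PresLeftu V$ so that all structure maps become left adjoints. Once each factor is cocontinuous, the composite $f^*_\mathbf I$ is cocontinuous between $\mathbb V$-presentable categories, and the adjoint functor theorem produces the sought-after right adjoint $f^\mathbf I_*$.
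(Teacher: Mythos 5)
Your factorisation of $f$ through a finite stage, $f \simeq f_{k_0}\circ p_{k_0}$ with $f_{k_0}\colon X_{k_0}\to Y$ a map of stacks, is exactly the right first move and is the reason the hypothesis ``$Y$ is a stack'' appears: it is the compactness of $j(Y)$ in $\Indu U(\dSt_S\op)$ from \autoref{indu-thm}~\emph{(iv)}. The argument breaks down, however, at its foundation: $\IQcoh(X)$ is not $\mathbb V$-presentable, so the adjoint functor theorem is not available. Indeed $\IQcoh(X)=\Indu U(\colim\Qcoh(\bar X))$ is the $\Indu U$- (not $\Indu V$-) completion of a category that is $\mathbb V$-small but not essentially $\mathbb U$-small; by \autoref{indu-thm}~\emph{(i)} it is therefore itself $\mathbb V$-small, hence cannot admit all $\mathbb V$-small colimits (a $\mathbb V$-small cocomplete category is a poset), and it is not $\mathbb U$-presentable either, since its compact objects form a $\mathbb V$-small but not $\mathbb U$-small collection. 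Note that \autoref{indu-thm}~\emph{(vi)} only yields presentability when the input category is $\mathbb U$-small; this is why the analogous statement for $\IPerf$ holds for \emph{arbitrary} maps of pro-stacks, whereas the remark following the definition of $\IQcoh$ explicitly warns that $f_\mathbf I^*$ need \emph{not} have a right adjoint in general --- something your argument, if it worked, would contradict.

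A secondary problem is the cocontinuity of $\iota_{k_0}\colon\Qcoh(X_{k_0})\to\colim_l\Qcoh(X_l)$: \autoref{colimcats}~\emph{(ii)} identifies $\Indu V$ of that colimit with a colimit taken in $\PresLeftu V$, but it says nothing of the sort about the colimit itself computed in $\inftyCatu V$, which is not presentable and whose structure maps are only known to preserve finite colimits (\autoref{colimcats}~\emph{(i)}). The paper's proof replaces the adjoint functor theorem by an explicit construction: by \autoref{colimcats}, the functor $\Indu U(\Qcoh(X_{k_0}))\to\Indu U(\colim_l\Qcoh(X_l))$ admits a right adjoint assembled from the right adjoints $\tilde g_{k_0 l}$ of the transition functors, via the identification of $\Indu U(\colim\bar\Cc)$ with a limit along right adjoints; this is the content of \autoref{ind-right-adjoint}, which the paper cites. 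If you want to keep the spirit of your argument, you must run it at the level of $\Indu V$, where presentability does hold, and then prove that the resulting right adjoint restricts to the subcategories $\Indu U(-)$; that restriction step is the actual mathematical content here, and it is what items \emph{(iv)}--\emph{(vi)} of \autoref{colimcats} supply.
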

\begin{proof}
It follows from \autoref{ind-right-adjoint}.
\end{proof}

\newcommand{\IQ}{\mathbf{IQ}}
\begin{df}
Let $f \colon X \to Y$ be a map of pro-stacks. We will denote by $f_*^\IQ$ the right adjoint to $f_\mathbf I^* \colon \IQcoh(Y) \to \IQcoh(X)$ \emph{if it exists}.
\end{df}

\begin{rmq}
In the situation of \autoref{prop-iqcoh}, there is a natural transformation
\[
\mymatrix{
\IPerf(X) \ar[r] \ar@{=>}[dr] & \IQcoh(X) \\ \IPerf(Y) \ar[u]^{f_*^\mathbf I} \ar[r] & \IQcoh(Y) \ar[u]_{f_*^\IQ}
}
\]
It does not need to be an equivalence.
\end{rmq}

\begin{df}
Let $X$ be a pro-stack over $S$. The structural sheaf $\Oo_X$ of $X$ is the pull-back of $\Oo_S$ along the structural map $X \to S$.
\end{df}

\begin{ex}
Let $X$ be a pro-stack over $S$ and $\bar X \colon K\op \to \dSt_S$ be a $\mathbb U$-small cofiltered diagram of whom $X$ is a limit in $\Prou U\dSt_S$.
Let $k$ be a vertex of $K$, let $X_k$ denote $\bar X(k)$ and let $p_k$ denote the induced map of pro-stacks $X \to X_k$. If $f \colon k \to l$ is an arrow in $K$, we will also denote by $f$ the map of stacks $\bar X(f)$.
We have
\[
(p_k)_*^\IQ (\Oo_X) \simeq \colim_{f \colon k \to l} f_* \Oo_{X_l}
\]
One can see this using \autoref{colimcats}
\[
(p_k)_*^\IQ (\Oo_X) \simeq (p_k)_*^\IQ (p_k)_\mathbf I^* (\Oo_{X_k}) \simeq \colim_{f \colon k \to l} f_* f^* (\Oo_{X_k}) \simeq \colim_{f \colon k \to l} f_* \Oo_{X_l}
\]
\end{ex}

\newcommand{\sqzext}{\operatorname{Ex}}
\begin{df}\label{derivation-pdst}
Let $T$ be a stack over $S$. Let us consider the functor 
\[
\Qcoh(T)^{\leq 0} \to \btw_{\dSt_S\op}^{\amalg}(\id_T) \simeq \left(\comma{T}{\dSt_T}\right)\op
\]
mapping a quasi-coherent sheaf $E$ to the square zero extension $T \to T[E] \to T$. This construction is functorial in $T$ and actually comes from a \todo{Reference}natural transformation
\[
\sqzext \colon \Qcoh^{\leq 0} \to \btw_{\dSt_S\op}^{\amalg}(\id_-)
\]
of functors $\dSt_S\op \to \inftyCatu V$ -- recall notation $\btw^{\amalg}$ from \autoref{betweendef}.
We will denote by $\sqzext^{\Pro}$ the natural transformation
\[
\sqzext^{\Pro} = \Indext_{\dSt_S\op}(\sqzext) \colon \IQcoh^{\leq 0} \to \Indextu U_{\dSt_S\op}(\btw_{\dSt_S\op}^{\amalg}(\id_-)) \simeq \btw_{(\Prou 
U\dSt_S)\op}^{\amalg}(\id_-)
\]
between functors $(\Prou U\dSt_S)\op \to \inftyCat$. The equivalence on the right is the one from \autoref{comma-quot-ind}.
If $X$ is a pro-stack and $E \in \IQcoh(X)^{\leq 0}$ then we will denote by $X \to X[E] \to X$ the image of $E$ by the functor $\sqzext^{\Pro}(X)$.
\end{df}
\begin{rmq}
Let us give a description of this functor.
Let $X$ be a pro-stack and let $\bar X \colon K\op \to \dSt_S$ denote a $\mathbb U$-small cofiltered diagram of whom $X$ is a limit in $\Prou U\dSt_S$. For every $k \in K$ we can compose the functor mentioned above with the base change functor
\[
\mymatrix{
(\Qcoh(X_k))\op \ar[r]^-{X_k[-]} & \comma{X_k}{\dSt_{X_k}} \ar[r]^-{- \times_{X_k} X} & \comma{X}{\Prou U\dSt_X}
}
\]
This is functorial in $k$ and we get a functor $\left(\colim \Qcoh(\bar X) \right)\op \to \comma{X}{\Prou U\dSt_X}$ which we extend and obtain a more explicit description of the square zero extension functor
\[
X[-] \colon (\IQcoh(X))\op \to \comma{X}{\Prou U\dSt_X}
\]
\end{rmq}

\begin{df}
Let $X$ be a pro-stack. 
\begin{itemize}
\item We finally define the functor of derivations over $X$ :
\[
\Der(X,-) = \Map_{X/-/S}(X[-],X) \colon \IQcoh(X)^{\leq 0} \to \sSets
\]
\item We say that $X$ admits a cotangent complex if the functor $\Der(X,-)$ is corepresentable -- ie there exists a $\Lcot_{X/S} \in \IQcoh(X)$ such that\glsadd{cotangent}
for any $E \in \IQcoh(X)^{\leq 0}$
\[
\Der(X,E) \simeq \Map(\Lcot_{X/S},E)
\]
\end{itemize}
\end{df}

\begin{df}
Let $\dStArt_S$ denote the full sub-category of $\dSt_S$ spanned by derived Artin stacks over $S$. 
An Artin pro-stack is an object of $\Prou U \dStArt_S$.
Let $\dStArtlfp_S$ the full sub-category of $\dStArt_S$ spanned by derived Artin stacks locally of finite presentation over $S$.
An Artin pro-stack locally of finite presentation is an object of $\Prou U\dStArtlfp_S$
\end{df}

\begin{prop}\label{cotangent-pdst}
Any Artin pro-stack $X$ over $S$ admits a cotangent complex $\Lcot_{X/S}$.
Let us assume that $\bar X \colon K\op \to \dStArt_S$ is a $\mathbb U$-small cofiltered diagram of whom $X$ is a limit in $\Prou U\dStArt_S$. When $k$ is a vertex of $K$, let us denote by $X_k$ the derived Artin stack $\bar X(k)$. If $f \colon k \to l$ is an arrow in $K$, we will also denote by $f \colon X_l \to X_k$ the map of stacks $\bar X(f)$.
The cotangent complex is given by the formula
\[
\Lcot_{X/S} = \colim_k p_k^* \Lcot_{X_k/S} \in \Indu U\left(\colim \Qcoh(\bar X) \right) \simeq \IQcoh(X)
\]
where $p_k$ is the canonical map $X \to X_k$.
The following formula stands
\[
{p_k}_*^\IQ \Lcot_{X/S} \simeq \colim_{f \colon k \to l} f_* \Lcot_{X_l/S}
\]
If $X$ is moreover locally of finite presentation over $S$, then its cotangent complex belongs to $\IPerf(X)$.
\end{prop}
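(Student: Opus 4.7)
The plan is to define the candidate cotangent complex as
\[
L \;=\; \colim_{k \in K\op} (p_k)^*_\mathbf I \Lcot_{X_k/S} \;\in\; \IQcoh(X) = \Indu U\bigl(\colim_K \Qcoh(\bar X)\bigr),
\]
where the colimit is taken over $K\op$ using the canonical transition maps $\bar X(\phi)^* \Lcot_{X_k/S} \to \Lcot_{X_l/S}$ for arrows $\phi \colon k \to l$ in $K$. Concretely $L$ is the ind-object associated to the coherent system $\{\Lcot_{X_k/S}\}_{k \in K}$ in $\colim_K \Qcoh(\bar X)$. The pushforward formula follows directly from this definition: since $(p_k)^\IQ_*$ is right adjoint to the canonical functor $(p_k)^*_\mathbf I \colon \Qcoh(X_k) \to \IQcoh(X)$, item \emph{(vi)} of \autoref{colimcats} computes $(p_k)^\IQ_* L$ as the announced colimit over morphisms out of $k$, exactly as in the computation given just after the introduction of $\Oo_X$.

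The core of the proof is showing that $L$ corepresents $\Der(X,-)$. The key observation is that both $\Der(X,-)$ and $\Map_{\IQcoh(X)}(L,-)$ preserve filtered colimits in the argument: the latter is tautological, while the former follows because $\sqzext^\Pro = \Indext(\sqzext)$ turns a filtered colimit of connective modules into an appropriate filtered limit of the corresponding square zero extensions in pro-stacks (via \autoref{comma-quot-ind}), which in turn is sent to a filtered colimit of spaces by $\Map_{X/-/S}(-,X)$. By \autoref{indu-thm} it then suffices to compare the two functors on the generating family of objects of the form $(p_k)^*_\mathbf I N$ for $N \in \Qcoh(X_k)^{\leq 0}$. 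For such $N$, the construction of $\sqzext^\Pro$ through \autoref{indext} together with \autoref{comma-quot-ind} identifies $X[(p_k)^*_\mathbf I N]$ with the pro-stack $X_k[N] \times_{X_k} X$ endowed with its canonical section, so that
\[
\Der\bigl(X, (p_k)^*_\mathbf I N\bigr) \;=\; \Map_{X/-/S}\bigl(X_k[N] \times_{X_k} X,\, X\bigr).
\]
One then wants to identify this with $\Der(X_k, N) \simeq \Map_{\Qcoh(X_k)}(\Lcot_{X_k/S}, N) \simeq \Map_{\IQcoh(X)}(L, (p_k)^*_\mathbf I N)$, where the last equivalence uses the defining colimit presentation of $L$ together with the natural morphism from its $k$-th summand.

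The main obstacle is justifying the equivalence $\Map_{X/-/S}(X_k[N] \times_{X_k} X, X) \simeq \Der(X_k, N)$, i.e.\ that retractions of the pulled-back extension over $X$ correspond to derivations of $X_k$ valued in $N$. The plan is to exploit that $X = \lim_{l \in K\op} X_l$ in pro-stacks and to expand the mapping space as $\lim_j \Map_S(X_k[N] \times_{X_k} X, X_j)$. For each fixed $j$, cofilteredness of $K\op$ supplies an $l$ with morphisms $k \to l$ and $j \to l$ in $K$; base change of trivial square zero extensions along $X_l \to X_k$ then identifies $X_k[N] \times_{X_k} X_l \simeq X_l[\bar X(k \to l)^* N]$, so the problem reduces to the classical derivation statement on the derived Artin stacks $X_l$, and the retraction condition forces the contribution to collapse onto $\Der(X_k, N)$ after passage to the cofiltered limit. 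This is the pro-analogue of \autoref{obstruction} and I would use \autoref{strictification} to manipulate the relevant morphisms cleanly.

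Finally, for the locally finitely presented case, each $\Lcot_{X_k/S}$ lies in $\Perf(X_k)$ and the transition maps preserve perfectness, so the system defining $L$ already lives in $\colim_K \Perf(\bar X)$; hence $L$ defines an object of $\IPerf(X) = \Indu U(\colim_K \Perf(\bar X))$, which yields the last assertion.
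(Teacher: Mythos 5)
Your construction of $L$, the pushforward formula via \autoref{colimcats}, and the identification $X[(p_k)^*_{\mathbf I} N] \simeq X_k[N] \times_{X_k} X$ all agree with the paper, which packages the first point as $\Lcot_{X/S} = \cotangent^{\Pro}_X(X)$ with $\cotangent^{\Pro} = \Indext_{\dSt_S\op}(\cotangent)$ and deduces the second from \autoref{two-ext}. The gap is in the verification of the universal property. First, $\Map_{\IQcoh(X)}(L,-)$ does \emph{not} tautologically preserve filtered colimits: $L$ is a filtered colimit of compact objects of $\IQcoh(X)$ and is therefore not itself compact, so $\Map(L,\colim_\alpha E_\alpha) \simeq \lim_{j}\colim_\alpha \Map(p_j^*\Lcot_{X_j/S},E_\alpha)$ and the limit over $K\op$ does not commute with the colimit over $\alpha$; the same outer limit over $j$ obstructs your argument that $\Der(X,-)$ preserves filtered colimits. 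So "agree on generators" is not by itself a valid reduction. Second, and more seriously, the identifications you propose on the generators are false: neither $\Der(X,(p_k)^*_{\mathbf I}N)$ nor $\Map_{\IQcoh(X)}(L,(p_k)^*_{\mathbf I}N)$ collapses to $\Der(X_k,N)$. Unwinding either side (using your own reduction to the stacks $X_l$) yields
\[
\lim_{j}\ \colim_{l}\ \Map_{\Qcoh(X_l)}\bigl(\bar X(j\to l)^*\Lcot_{X_j/S},\ \bar X(k\to l)^*N\bigr),
\]
and already for $K=\N$ with $\Lcot_{X_j/S}\simeq 0$ for $j\geq 1$ this is contractible while $\Der(X_0,N)$ need not be. The retraction condition forces no collapse, and the map from the "$k$-th summand" of $L$ is not an equivalence onto $\Map(L,(p_k)^*_{\mathbf I}N)$.

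The repair is the paper's actual argument: keep the decomposition along the \emph{target} throughout. Write both $\Der(X,-)=\lim_j \Map_{X/-/S}(X[-],X_j)$ and $\Map(L,-)=\lim_j\Map_{\IQcoh(X)}(p_j^*\Lcot_{X_j/S},-)$ as limits of $K\op$-diagrams of functors; each term of each diagram \emph{does} preserve filtered colimits (each $p_j^*\Lcot_{X_j/S}$ is compact, and $X[-]$ sends filtered colimits of modules to cofiltered limits of pro-stacks mapped into the fixed stack $X_j$), hence is determined by its restriction to $\colim\Qcoh(\bar X)^{\leq 0}$; and there the two diagrams are identified termwise by base change of square-zero extensions plus the classical universal property of $\Lcot_{X_j/S}$ over each $X_l$ --- exactly your reduction, but assembled before taking the limit over $j$ rather than after a spurious collapse. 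Your last paragraph on the locally finitely presented case is fine.
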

Before proving this proposition, let us fix the following notation
\begin{df}
Let $\Cc$ be a full sub-category of an $\infty$-category $\Dd$. There is a natural transformation from $\overcat_\Dd \colon d \mapsto \quot{\Dd}{d}$ to the constant functor $\Dd \colon \Dd \to \inftyCat$. We denote by $\overcat_\Dd^\Cc$ the fiber product
\[
\overcat_\Dd^\Cc = \overcat_\Dd \times_\Dd \Cc \colon \Dd \to \inftyCat
\]
\end{df}
\begin{rmq}
The functor $\overcat_\Dd^\Cc \colon \Dd \to \inftyCat$ maps an object $d \in \Dd$ to the comma category of objects in $\Cc$ over $d$
\[
\quot{\Cc}{d} = (\Cc \times \{d\} ) \timesunder[\Dd \times \Dd] \Dd^{\Delta^1}
\]
The \autoref{quot-ind} still holds when replacing $\overcat_\Cc$ by $\overcat_\Dd^\Cc$.
\end{rmq}
\begin{proof}[of the proposition]
The cotangent complex defines a natural transformation
\[
\cotangent \colon \overcat_{\dSt_S\op}^{(\dStArt_S)\op}  \to \Qcoh(-)
\]
To any stack $T$ and any Artin stack $U$ over $S$ with a map $f \colon T \to U$, it associates the quasi-coherent complex $f^* \Lcot_{U/S}$ on $T$.
Applying the functor $\Indext_{\dSt_S\op}$ we get a natural transformation $\cotangent^{\Pro}$
\[
\cotangent^{\Pro} = \Indext_{\dSt_S\op}(\cotangent) \colon \overcat^{(\Prou U\dStArt_S)\op}_{(\Prou U\dSt_S)\op} \to \IQcoh(-)
\]
Specifying it to $X$ we get a functor
\[
\cotangent^{\Pro}_X \colon \left( \comma{X}{\Prou U\dStArt_S} \right)\op \to \IQcoh(X)
\]
Let us set $\Lcot_{X/S} =\lambda^{\Pro}_X(X) \in \IQcoh(X)$. We have by definition the equivalence
\[
\Lcot_{X/S} \simeq \colim_k p_k^* \Lcot_{X_k/S}
\]
Let us now check that it satisfies the required universal property.
The functor $\Der(X,-)$ is the limit of the diagram $K\op \to \Fct(\IQcoh(X)^{\leq 0}, \sSets)$
\[
\Map_{X/-/S}(X[-], \bar X)
\]
This diagram factors by definition through a diagram
\[
\delta \colon K\op \to \Fct\left(\colim \Qcoh(\bar X)^{\leq 0}, \sSets\right) \simeq \lim \Fct(\Qcoh(\bar X)^{\leq 0}, \sSets)
\]
On the other hand, the functor $\Map(\Lcot_{X/S},-)$ is the limit of a diagram
\[
\mymatrix{K\op \ar[r]^-\mu & \lim \Fct(\Qcoh(\bar X)^{\leq 0}, \sSets) \ar[r] & \Fct(\IQcoh(X)^{\leq 0}, \sSets)}
\]
The universal property of the natural transformation $\lambda$ defines an equivalence between $\delta$ and $\mu$.
The formula for ${p_k}_*^\IQ \Lcot_{X/S}$ is a direct consequence of \autoref{two-ext} and the last statement is obvious.
\end{proof}

\begin{rmq}[about \autoref{two-ext}]\label{rmq-two-ext}
There are two ways of constructing the underlying complex of the cotangent complex of a pro-stack. One could first consider the functor
\[
\Lcot^1 \colon {\dStArt_S}\op \to \Qcoh(S)
\]
mapping a derived Artin stack $\pi \colon Y \to S$ to the quasi-coherent module $\pi_* \Lcot_{Y/S}$ and extend it
\[
\Indu U(\Lcot^1) \colon {\Prou U\dStArt_S}\op \to \Indu U\Qcoh(S) = \IQcoh(S)
\]
The second method consists in building the cotangent complex of a pro-stack $\varpi \colon X \to S$ as above 
\[
\Lcot_{X/S} \in \IQcoh(X)
\]
and considering $\varpi_*^\IQ \Lcot_{X/S} \in \IQcoh(S)$. This defines a functor
\[
\Lcot^2 \colon \app{{\Prou U\dStArt_S}\op}{\IQcoh(S)}{(X \to^\varpi S)}{\varpi_*^\IQ \Lcot_{X/S}}
\]
Comparing those two approaches is precisely the role of \autoref{two-ext}. It shows indeed that the functors $\Indu U(\Lcot^1)$ and $\Lcot^2$ are equivalent.
\end{rmq}

\begin{rmq}
The definition of the derived category of ind-quasi-coherent modules on a pro-stack is build for the above proposition and remark to hold.
\end{rmq}

\begin{rmq}\label{cotangent-prostacks}
We have actually proven that for any pro-stack $X$, the two functors 
\[
\IQcoh(X)^{\leq 0} \times \comma{X}{\dStArt_S} \to \sSets
\]
defined by
\begin{align*}
(E,Y) &\mapsto \Map_{X/-/S}(X[E], Y) \\
(E,Y) &\mapsto \Map_{\IQcoh(X)}(\lambda_X^{\Pro}(Y), E)
\end{align*}
are equivalent.
\end{rmq}

\subsection{Cotangent complex of an ind-pro-stack}

\begin{df}
An ind-pro-stack is an object of the category
\[
\IP\dSt_S = \Indu U \Prou U\dSt_S
\]
\end{df}

\begin{df}
Let us define the functor $\PIPerf \colon (\IP\dSt_S)\op \to \inftyCatu V$ as\glsadd{piperf}
\[
\PIPerf = \Proext_{(\Prou U\dSt_S)\op}(\IPerf)
\]
where $\Proext$ was defined in \autoref{indext}.
Whenever we have a morphism $f \colon X \to Y$ of ind-pro-stacks, we will denote by $f^*_\PI$ the functor
\[f^*_\PI = \PIPerf(f) \colon \PIPerf(Y) \to \PIPerf(X)
\]
\end{df}

\begin{rmq}
Let $X$ be an ind-pro-stack. Let $\bar X \colon K \to \Prou U\dSt_S$ denote a $\mathbb U$-small filtered diagram of whom $X$ is a colimit in $\IP\dSt_S$.
We have by definition
\[
\PIPerf(X) \simeq \lim \Prou U(\IPerf(\bar X))
\]
 admits a right adjoint $f_*^{\PI}$. It is the pro-extension of the right adjoint $f_*^{\mathbf I}$ to $f^*_{\mathbf I}$.
This result extends to any map $f$ of ind-pro-stacks since the limit of adjunctions is still an adjunction.
\end{rmq}

\begin{prop}\label{prop-piperf-right-adjoint}
Let $f \colon X \to Y$ be a map of ind-pro-stacks. If $Y$ is a pro-stack then the functor $f^*_\PI \colon \PIPerf(Y) \to \PIPerf(X)$ admits a right adjoint.
\end{prop}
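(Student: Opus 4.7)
The plan is to reduce the statement to \autoref{lim-adjoint}, in the same spirit as the proof of \autoref{prop-iqcoh} relies on \autoref{ind-right-adjoint}. First I would write $X$ as a $\mathbb U$-small filtered colimit $X = \colim_{k \in K} X_k$ of pro-stacks in $\IP\dSt_S$. Since $Y$ is itself a pro-stack, the morphism $f \colon X \to Y$ amounts to a compatible family of morphisms of pro-stacks $f_k \colon X_k \to Y$, and by definition of $\PIPerf = \Proext_{(\Prou U\dSt_S)\op}(\IPerf)$ together with \autoref{comma-ind} and \autoref{colimcats}, one has
\[
\PIPerf(X) \simeq \lim_{k \in K\op} \Prou U\IPerf(X_k), \qquad \PIPerf(Y) \simeq \Prou U\IPerf(Y),
\]
and $f^*_\PI$ is the functor from the cone point to this limit induced by the maps $\Prou U\IPerf(f_k)$.

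Next I would check that the hypotheses of \autoref{lim-adjoint} are met, namely that the diagram
\[
(K^\triangleright)\op \to \inftyCatu V
\]
with cone point $\Prou U\IPerf(Y)$ and restriction $k \mapsto \Prou U\IPerf(X_k)$ factors through $\inftyCat^{\mathbb V\mathrm{,L}}$. Each $f_k^*_\mathbf I \colon \IPerf(Y) \to \IPerf(X_k)$ is a left adjoint by \autoref{iperf-df}, with right adjoint $(f_k)_*^\mathbf I$. The adjunction isomorphism
\[
\Map_{\Prou U \IPerf(X_k)}(\Prou U f_k^*_\mathbf I(\bar c), \bar d) \simeq \lim_j \colim_i \Map(f_k^*_\mathbf I \bar c_i, \bar d_j) \simeq \lim_j \Map_{\Prou U\IPerf(Y)}(\bar c, (f_k)_*^\mathbf I \bar d_j)
\]
then shows that $\Prou U f_k^*_\mathbf I$ is itself a left adjoint, with right adjoint given by the pro-extension of $(f_k)_*^\mathbf I$. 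The same computation applied to the transition morphisms $X_l \to X_k$ shows that the entire diagram takes values in $\inftyCat^{\mathbb V\mathrm{,L}}$.

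Finally, the cone point $\PIPerf(Y) = \Prou U \IPerf(Y)$ admits all cofiltered limits, hence in particular all $K\op$-indexed limits. \autoref{lim-adjoint} then applies and yields the desired right adjoint. The only step requiring attention is the verification that passing from $\IPerf$ to $\Prou U \IPerf$ preserves the property of being a left adjoint, but the adjunction computation above makes this essentially formal; the rest is a direct application of the tools already established in \autoref{chaptercats}.
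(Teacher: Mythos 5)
Your proof is correct and follows essentially the same route as the paper: reduce to the case where both source and target are pro-stacks, where the right adjoint is the pro-extension $\Prou U(f_*^{\mathbf I})$, and then handle a general ind-pro-stack $X$ by writing $\PIPerf(X) \simeq \lim \PIPerf(\bar X)$ and invoking \autoref{lim-adjoint}. The only difference is that you spell out the $\lim_j\colim_i$ mapping-space computation showing that $\Prou U$ of a left adjoint is again a left adjoint, which the paper asserts without proof.
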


\begin{df}
Let $f \colon X \to Y$ be a map of ind-pro-stacks. If the functor
\[
f^*_\PI \colon \PIPerf(Y) \to \PIPerf(X)
\]
admits a right adjoint, we will denote it by $f^\PI_*$.
\end{df}

\begin{proof}[of the proposition]
If both $X$ and $Y$ are pro-stacks, then $f^\PI_* = \Prou U(f^\mathbf I_*)$ is right adjoint to $f_\PI^* = \Prou U(f_\mathbf I^*)$.
Let now $X$ be an ind-pro-stack and let $\bar X \colon K \to \Prou U\dSt_S$ denote a $\mathbb U$-small filtered diagram of whom $X$ is a colimit in $\IP\dSt_S$. We then have
\[
f_\PI^* \colon \PIPerf(Y) \to \PIPerf(X) \simeq \lim \PIPerf(\bar X) 
\]
The existence of a right adjoint $f^\PI_*$ then follows from \autoref{lim-adjoint}.
\end{proof}

\begin{df}
Let $X \in \IP\dSt_S$. We define $\IPPerf(X) = (\PIPerf(X))\op$.
If $X$ is the colimit in $\IP\dSt_S$ of a filtered diagram $K \to \Prou U\dSt_S$ then we have
\[
\IPPerf(X) \simeq \lim (\Indu U \circ \PPerf \circ \bar X)
\]
There is therefore a fully faithful functor $\Tateu U_\IP(X) \to \IPPerf(X)$.
We will denote by
\[
\dual{(-)} \colon \IPPerf(X) \to (\PIPerf(X))\op
\]
the duality functor.
\end{df}

\begin{df}
Let us define the functor $\Tateu U_\IP \colon (\IP\dSt_S)\op \to \inftyCat^{\mathbb V\mathrm{,st,id}}$ as the right Kan extension of $\Tateu U_\mathbf P$ along the inclusion $(\Prou U \dSt_S)\op \to (\IP\dSt_S)\op$.
It is by definition endowed with a canonical fully faithful natural transformation
\[
\Tateu U_\IP \to \PIPerf
\]
For any $X \in \IP\dSt_S$, an object of $\Tateu U_\IP(X)$ will be called a Tate module on $X$.
\end{df}

\begin{rmq}
We can characterise Tate objects: 
a module $E \in \PIPerf(X)$ is a Tate module if and only if for any pro-stack $U$ and any morphism $f \colon U \to X \in \IP\dSt_S$, the pullback $f^*_\IP(E)$ is in $\Tateu U_\mathbf P(U)$.

Let us also remark here that 
\end{rmq}

\begin{lem}\label{ipdst-tate-in-ipp}
Let $X$ be an ind-pro-stack over $S$. The fully faithful functors
\[
\mymatrix{
\Tateu U_\IP(X) \ar[r] & \PIPerf(X) \ar@{=}[r]^-{\dual{(-)}}& (\IPPerf(X))\op & \left(\Tateu U_\IP(X)\right)\op \ar[l]
}
\]
have the same essential image. We thus have an equivalence
\[
\dual{(-)} \colon \Tateu U_\IP(X) \simeq \left(\Tateu U_\IP(X)\right)\op
\]
\end{lem}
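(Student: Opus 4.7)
The plan is to first reduce to the case where $X$ is a pro-stack, and then apply \autoref{tate-dual}.

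First, suppose $X \in \IP\dSt_S$ is written as a filtered colimit $\bar X \colon L \to \Prou U\dSt_S$ of pro-stacks. By construction, $\PIPerf(X) \simeq \lim_l \PIPerf(\bar X_l)$, and by definition of $\Tateu U_\IP$ as the right Kan extension of $\Tateu U_\mathbf P$, also $\Tateu U_\IP(X) \simeq \lim_l \Tateu U_\mathbf P(\bar X_l)$. The duality $\dual{(-)}$ is compatible with all the structural pullback functors and therefore assembles into the limit. Consequently, membership in the essential image of $\Tateu U_\IP(X) \to \PIPerf(X)$ can be tested on each pro-stack pullback, and the same holds for the image of the composite $(\Tateu U_\IP(X))\op \to (\IPPerf(X))\op = \PIPerf(X)$. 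It thus suffices to establish the statement when $X$ is a pro-stack.

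Now let $X$ be a pro-stack, pick a $\mathbb U$-small cofiltered diagram $\bar X \colon K\op \to \dSt_S$ with $X$ as its limit, and set $\Cc = \colim \Perf(\bar X)$. Dualisation of perfect complexes is contravariantly functorial in maps of stacks and induces an equivalence $\sigma \colon \Cc\op \to^\sim \Cc$. Under the resulting identifications $\IPerf(X) \simeq \Indu U(\Cc)$, $\PPerf(X) \simeq \Prou U(\Cc)$, we obtain $\PIPerf(X) \simeq \Prou U\Indu U(\Cc)$ and the duality functor $\dual{(-)}$ is precisely the extension $\tilde\sigma \colon (\Prou U\Indu U(\Cc))\op \to \Prou U\Indu U(\Cc)$ provided by \autoref{tate-dual}. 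That same lemma asserts that $\tilde\sigma$ preserves Tate objects and restricts to an equivalence $\Tateu U(\Cc)\op \simeq \Tateu U(\Cc)$. Unwinding the definitions, the standard embedding of $\Tateu U_\mathbf P(X)$ in $\PIPerf(X)$ corresponds to $\Tateu U(\Cc) \hookrightarrow \Prou U\Indu U(\Cc)$, while the image of $\Tateu U_\mathbf P(X)\op$ under $\dual{(-)}$ is $\tilde\sigma(\Tateu U(\Cc)\op) = \Tateu U(\Cc)$. Hence the two essential images coincide, giving the desired equivalence.

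The main obstacle will be verifying carefully that, under the chosen identifications, the abstract duality $\dual{(-)} \colon \IPPerf(X) \to (\PIPerf(X))\op$ really agrees with the extension $\tilde\sigma$ furnished by \autoref{tate-dual}; in other words, that the equivalences $\PPerf(X) \simeq \Prou U(\Cc)$ and $\IPPerf(X) \simeq \Indu U\Prou U(\Cc)$ arising from perfect-complex duality are natural enough to commute with the constructions of \autoref{indext} and \autoref{tate-dual}. Once this coherence is settled, the reduction to pro-stacks via the right Kan extension is formal.
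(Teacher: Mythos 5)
Your proposal is correct and follows the same route as the paper, whose entire proof reads ``This is a corollary of \autoref{tate-dual}'': one identifies, for a pro-stack with diagram $\bar X$, the category $\PIPerf$ with $\Prou U\Indu U(\Cc)$ for $\Cc = \colim\Perf(\bar X)$, takes $\sigma$ to be perfect-complex duality, and applies \autoref{tate-dual}; the ind-pro case then follows by passing to the limit over the constituent pro-stacks, exactly as you do. Your write-up merely makes explicit the reduction and the coherence of the identifications that the paper leaves implicit.
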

\begin{proof}
This is a corollary of \autoref{tate-dual}.
\end{proof}

\begin{df}
Let us define $\PIQcoh \colon (\IP\dSt_S)\op \to \inftyCatu V$ to be the functor\glsadd{piqcoh}
\[
\PIQcoh = \Proext_{(\Prou U\dSt_S)\op}(\IQcoh)
\]
From \autoref{indext-monoidal}, for any ind-pro-stack $X$, the category $\PIQcoh(X)$ admits a natural monoidal structure.
We also define the subfunctor
\[
\PIQcoh^{\leq 0} = \Proext_{(\Prou U\dSt_S)\op}(\IQcoh^{\leq 0})
\]
\end{df}

\begin{rmq}
Let us give an informal description of the above definition. To an ind-pro-stack $X = \colim_\alpha \lim_\beta X_{\alpha\beta}$ we associate the category
\[
\PIQcoh(X) = \lim_\alpha \Prou U \Indu U\left(\colim_\beta \Perf(X_{\alpha\beta})\right)
\]
\end{rmq}

\begin{df}
Let $f \colon X \to Y$ be a map of ind-pro-stacks. We will denote by $f_\PI^*$ the functor $\PIQcoh(f)$. Whenever it exists, we will denote by $f^\PIQ_*$ the right adjoint to $f_\PI^*$.
\end{df}

\begin{prop}\label{prop-piqcoh-right-adjoint}
Let $f \colon X \to Y$ be a map of ind-pro-stacks. If $Y$ is actually a stack, then the induced functor $f^*_\PI$ admits a right adjoint.
\end{prop}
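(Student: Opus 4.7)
The plan is to mirror the proof of \autoref{prop-piperf-right-adjoint} in two steps. First I would treat the case where $X$ is itself a pro-stack. From the definition of $\PIQcoh$ via $\Proext$, we have $\PIQcoh(X) \simeq \Prou U(\IQcoh(X))$ and $\PIQcoh(Y) \simeq \Prou U(\IQcoh(Y))$, so that $f^*_\PI$ identifies with $\Prou U(f^*_{\mathbf I})$. Since $Y$ is a stack, \autoref{prop-iqcoh} provides a right adjoint $f^\IQ_*$ to $f^*_{\mathbf I}$, and applying $\Prou U$ to this adjunction yields $\Prou U(f^\IQ_*)$ as a right adjoint to $f^*_\PI$.

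For a general ind-pro-stack $X$, I would write $X \simeq \colim_K \bar X$ with $\bar X \colon K \to \Prou U\dSt_S$ a $\mathbb U$-small filtered diagram. The definition of $\PIQcoh$ then gives
\[
\PIQcoh(X) \simeq \lim_{K\op} \Prou U(\IQcoh(\bar X)),
\]
and $f^*_\PI$ becomes the canonical comparison from the cone point $\PIQcoh(Y)$ into this limit. Each composite $\bar X(k) \to X \to Y$ is a map from a pro-stack to a stack, so by the first step its pullback admits a right adjoint. I would then conclude by applying \autoref{lim-adjoint} to the resulting cone diagram, using that $\PIQcoh(Y)$ admits all $K\op$-indexed limits.

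The main obstacle will be verifying that the cone diagram really takes values in $\inftyCat^{\mathbb V\mathrm{,L}}$, i.e.\ that every transition functor $\Prou U(\IQcoh(\bar X(l))) \to \Prou U(\IQcoh(\bar X(k)))$ attached to an internal edge $k \to l$ of $K$ is a left adjoint between $\mathbb V$-presentable categories. Each $\IQcoh$ of a pro-stack is $\mathbb V$-presentable---being the $\Indu U$-completion of a $\mathbb U$-presentable colimit of presentable categories---and the transition maps preserve $\mathbb U$-small filtered colimits by construction of $\Indext$ and finite colimits by functoriality, so they are colimit-preserving between $\mathbb V$-presentable categories. The adjoint functor theorem then produces right adjoints at the $\IQcoh$ level, and these transport through $\Prou U$ exactly as in the first step, putting us in position to invoke \autoref{lim-adjoint}.
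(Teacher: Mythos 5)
Your first two paragraphs reproduce the paper's argument exactly: the paper's own proof is literally ``very similar to the proof of \autoref{prop-piperf-right-adjoint} but using \autoref{prop-iqcoh}'', i.e.\ first treat a pro-stack $X$ by applying $\Prou U$ to the adjunction supplied by \autoref{prop-iqcoh}, then write a general $X$ as a $\mathbb U$-small filtered colimit of pro-stacks, identify $\PIQcoh(X)$ with $\lim_{K\op} \PIQcoh(\bar X)$, and invoke \autoref{lim-adjoint}. Up to that point the proposal is correct and is the same proof.

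Your last paragraph, however, contains a genuine error. For a pro-stack $Z$ one has $\IQcoh(Z) = \Indu U\bigl(\colim_k \Qcoh(Z_k)\bigr)$, and this is \emph{not} $\mathbb V$-presentable: $\Indu U$ only freely adjoins $\mathbb U$-small filtered colimits to a $\mathbb V$-small category, which is precisely why $\IPerf$ lands in $\PresLeftu U$ (there the category being ind-completed is essentially $\mathbb U$-small) while $\IQcoh$ only lands in $\inftyCatu V$. If your adjoint-functor-theorem argument were valid, it would show that $f_{\mathbf I}^*$ admits a right adjoint for \emph{every} map of pro-stacks --- but the paper explicitly warns, just before \autoref{prop-iqcoh}, that this functor ``does \emph{not} need to have a right adjoint'', and your argument would make the hypothesis that $Y$ be a stack superfluous. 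Fortunately the obstacle you are trying to clear is not load-bearing: in the proof of \autoref{lim-adjoint} the only adjoints actually used are those of the cone functors $f_k \colon \Cc \to \Dd_k$, needed so that the diagram $k \mapsto \Map_{\Dd_k}(f_k(-),p_k(-))$ takes values in right-representable functors; the adjointability of the internal transition functors of the diagram is never invoked. The cone functors here are exactly the pullbacks $\PIQcoh(Y) \to \PIQcoh(\bar X(k))$ handled by your first step, so the correct move is to observe this rather than to try to produce adjoints for the transition functors.
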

\begin{proof}
This is very similar to the proof of \autoref{prop-piperf-right-adjoint} but using \autoref{prop-iqcoh}.
\end{proof}

\begin{rmq}
There is a fully faithful natural transformation $\PIPerf \to \PIQcoh$. Using the same notation $f_\PI^*$ for the images of a map $f \colon X \to Y$ is therefore only a small abuse.
Moreover, for any such map $f \colon X \to Y$, for which the right adjoints drawn below exist, there is a natural tranformation
\[
\mymatrix{
\PIPerf(Y) \ar[r] \ar[d]_{f^\PI_*} & \PIQcoh(Y) \ar[d]^{f^\PIQ_*} \\ \PIPerf(X) \ar[r] \ar@{=>}[ur] & \PIQcoh(X)
}
\]
It is generally not an equivalence.
\end{rmq}

\begin{df}
Let $\sqzext^{\IP}$ denote the natural transformation $\Proext_{(\Prou U\dSt_S)\op}(\sqzext^{\Pro})$
\[
\sqzext^{\IP} \colon \PIQcoh^{\leq 0} \to \Proext_{(\Prou U\dSt_S)\op} \left( \btw_{(\Prou U\dSt_S)\op}^{\amalg}(\id_-) \right) \simeq \btw_{(\IP\dSt_S)\op}^{\amalg}(\id_-)
\]
of functors $(\IP\dSt_S)\op \to \inftyCat$. The equivalence on the right hand side is the one of \autoref{between-pro}.
If $X$ is an ind-pro-stack and $E \in \PIQcoh(X)^{\leq 0}$ then we will denote by $X \to X[E] \to X$ the image of $E$ by the functor 
\[
\sqzext^{\IP}(X) \colon \PIQcoh(X)^{\leq 0} \to \left(\comma{X}{\IP\dSt_X}\right)\op
\]
\end{df}

\begin{rmq}
Let us decipher the above definition. Let $X = \colim_\alpha \lim_\beta X_{\alpha\beta}$ be an ind-pro-stack and let $E$ be a pro-ind-module over it. By definition $E$ is the datum, for every $\alpha$, of a pro-ind-object $E^\alpha$ in the category $\colim_\beta \Qcoh^{\leq 0}(X_{\alpha\beta})$.
Let us denote $E^\alpha = \lim_\gamma \colim_\delta E^\alpha_{\gamma\delta}$.
For any $\gamma$ and $\delta$, there is a $\beta_0(\gamma,\delta)$ such that $E^\alpha_{\gamma\delta}$ is in the essential image of $\Qcoh^{\leq 0}ccccccvc                                                                                                                   (X_{\alpha\beta_0(\gamma,\delta)})$.
We then have
\[
X[E] = \colim_{\alpha,\gamma} \lim_{\delta} \lim_{\beta \geq \beta_0(\gamma,\delta)} X_{\alpha\beta}[E_{\gamma\delta}] \in \IP\dSt_S
\]
\end{rmq}

\begin{df}\label{derivation-ipdst}
Let $X$ be an ind-pro-stack.
\begin{itemize}
\item We define the functor of derivations on $X$ 
\[
\Der(X,-) = \Map_{X/-/S}(X[-],X)
\]
\item We say that $X$ admits a cotangent complex if there exists $\Lcot_{X/S} \in \PIQcoh(X)$ such that for any $E \in \PIQcoh(X)^{\leq 0}$\glsadd{cotangent}
\[
\Der(X,E) \simeq \Map(\Lcot_{X/S},E)
\]
\item Let us assume that $f \colon X \to Y$ is a map of ind-pro-stacks and that $Y$ admits a cotangent complex. We say that $f$ is formally étale if $X$ admits a cotangent complex and the natural map $f^* \Lcot_{Y/S} \to \Lcot_{X/S}$ is an equivalence.
\end{itemize}
\end{df}

\begin{df}
An Artin ind-pro-stack over $S$ is an object in the category
\[
\IP\dStArt_S = \Indu U \Prou U\dStArt_S
\]
An Artin ind-pro-stack locally of finite presentation is an object of
\[
\IP\dStArtlfp_S = \Indu U \Prou U\dStArtlfp_S
\]
\end{df}

\begin{prop}\label{ipcotangent}
Any Artin ind-pro-stack $X$ admits a cotangent complex
\[
\Lcot_{X/S} \in \PIQcoh(X)
\]
Let us assume that $\bar X \colon K \to \Pro\dStArt_S$ is a $\mathbb U$-small filtered diagram of whom $X$ is a colimit in $\IP\dStArt_S$.
For any vertex $k \in K$ we will denote by $X_k$ the pro-stack $\bar X(k)$ and by $i_k$ the structural map $X_k \to X$.
For any $f \colon k \to l$ in $K$, let us also denote by $f$ the induced map $X_k \to X_l$.
We have for all $k \in K$
\[
i_{k,\PI}^* \Lcot_{X/S} \simeq \lim_{f \colon k \to l} f^*_\mathbf I \Lcot_{X_l/S} \in \PIQcoh(X_k)
\]
If moreover $X$ is locally of finite presentation then $\Lcot_{X/S}$ belongs to $\PIPerf(X)$.
\end{prop}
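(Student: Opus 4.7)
The plan is to mimic one level up the proof of \autoref{cotangent-pdst}. Recall that there we produced the cotangent complex of a pro-stack by applying $\Indext_{\dSt_S\op}$ to the natural transformation $\cotangent \colon \overcat_{\dSt_S\op}^{(\dStArt_S)\op} \to \Qcoh(-)$, obtaining a natural transformation
\[
\cotangent^{\Pro} \colon \overcat_{(\Prou U\dSt_S)\op}^{(\Prou U\dStArt_S)\op} \to \IQcoh(-).
\]
The first step is to apply $\Proext_{(\Prou U\dSt_S)\op}$ to $\cotangent^{\Pro}$ and use the pro-analogue of \autoref{quot-ind} (see \autoref{lim}) to identify the source, obtaining
\[
\cotangent^{\IP} \colon \overcat_{(\IP\dSt_S)\op}^{(\IP\dStArt_S)\op} \to \PIQcoh(-).
\]
Specialising at the Artin ind-pro-stack $X$ and evaluating at $\id_X$, we set $\Lcot_{X/S} = \cotangent^{\IP}_X(\id_X) \in \PIQcoh(X)$. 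By construction this object is equivalent to the limit over $k \in K\op$ of $i_{k,\PI}^* \Lcot_{X_k/S}$ (which itself is a limit indexed by $\comma{k}{K}$ of the $f^*_\mathbf I \Lcot_{X_l/S}$, by \autoref{cotangent-pdst}), yielding the explicit formula announced after restriction to each $X_k$.

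Next I would verify the universal property. Let $E \in \PIQcoh(X)^{\leq 0}$; by construction of $\sqzext^\IP$ via $\Proext$ and by the formula $\IP\dSt_S = \Indu U\Prou U\dSt_S$, the square-zero extension $X[E]$ and the derivation functor decompose as
\[
\Der(X,E) \simeq \lim_{k \in K\op} \Der(X_k, E_k), \qquad E_k := i_{k,\PI}^* E,
\]
where on the right-hand side $\Der$ denotes derivations of pro-stacks (cf.~\autoref{derivation-pdst}). By \autoref{cotangent-pdst}, each term is equivalent to $\Map_{\IQcoh(X_k)}(\Lcot_{X_k/S}, E_k)$. Passing to the limit and using the same identification of the mapping space out of $\Lcot_{X/S}$ provided by the construction via $\Proext$ (compare \autoref{cotangent-prostacks}), one concludes
\[
\Der(X,E) \simeq \lim_k \Map(\Lcot_{X_k/S}, E_k) \simeq \Map_{\PIQcoh(X)}(\Lcot_{X/S}, E).
\]

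The comparison of the two \emph{a priori} different ways of writing $\Map(\Lcot_{X/S}, E)$ -- either as a limit on $K\op$ of the pro-stack Hom's, or as a single Hom in $\PIQcoh(X)$ -- is exactly the main obstacle: it is the higher-categorical analogue of the compatibility discussed in \autoref{rmq-two-ext} for pro-stacks, and it is settled by invoking \autoref{two-ext} applied to the diagram $\Lcot^{\Pro}$ of categories with right adjoints $f^\mathbf I_*$ and the natural transformation $\cotangent^{\Pro}$. Once this identification is in hand, the universal property is established. Finally, for the last assertion, if $X$ is locally of finite presentation then each $X_k$ is so, hence \autoref{cotangent-pdst} gives $\Lcot_{X_k/S} \in \IPerf(X_k)$; since $\Lcot_{X/S}$ is a limit of objects pulled back from $\IPerf(X_k) \subset \PIPerf(X)$ and the fully faithful inclusion $\PIPerf \hookrightarrow \PIQcoh$ of \autoref{ipdst-tate-in-ipp} commutes with these limits, we conclude $\Lcot_{X/S} \in \PIPerf(X)$.
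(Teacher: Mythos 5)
Your construction of $\Lcot_{X/S}$ is exactly the paper's: apply $\Proext_{(\Prou U\dSt_S)\op}$ to $\cotangent^{\Pro}$, evaluate $\cotangent^{\IP}_X$ at $X$ itself, and read off the formula for $i_{k,\PI}^*\Lcot_{X/S}$; the final paragraph on perfectness is also fine. The gap is in the verification of the universal property. The decomposition $\Der(X,E)\simeq \lim_k \Der(X_k,E_k)$ is false as written. Since $X[E]\simeq \colim_k X_k[E_k]$, what one actually obtains is
\[
\Der(X,E)=\Map_{X/-/S}(X[E],X)\simeq \lim_k \Map_{X_k/-/S}\bigl(X_k[E_k],X\bigr),
\]
where the target is the whole ind-pro-stack $X$, not $X_k$. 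The $k$-th term is corepresented by $i_{k,\PI}^*\Lcot_{X/S}\simeq \lim_{f\colon k\to l}f^*_{\mathbf I}\Lcot_{X_l/S}$, \emph{not} by $\Lcot_{X_k/S}$, and these genuinely differ: for instance if $X_k$ is a point of a nontrivial ind-scheme $X$, self-derivations of $X_k$ are trivial while derivations of $X_k$ into $X$ compute the tangent space of $X$ at that point. Consequently $\lim_k\Map(\Lcot_{X_k/S},E_k)$ is not $\Map_{\PIQcoh(X)}(\Lcot_{X/S},E)$ and your chain of equivalences does not close up.

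The repair is the paper's argument: fix $k$ and compare the two functors $\Map_{X_k/-/S}(X_k[-],X)$ and $\Map_{\PIQcoh(X_k)}(i_{k,\PI}^*\Lcot_{X/S},-)$ on $\PIQcoh(X_k)^{\leq 0}$. Both are pro-extensions of their restrictions to $\IQcoh(X_k)^{\leq 0}$ --- this observation is essential, because for a general pro-ind-module $E_k$ one cannot commute the colimit over $\comma{k}{K}$ past the mapping space --- and on $\IQcoh(X_k)^{\leq 0}$ both restrictions are colimits over $\comma{k}{K}$ of diagrams that are identified termwise by \autoref{cotangent-prostacks}. Note also that \autoref{two-ext} is not what settles this comparison: in this circle of ideas it serves to identify $\pi_*^{\PIQ}\Lcot_{X/S}$ with the ind-pro-extension of $T\mapsto \pi_*\Lcot_{T/S}$ (\autoref{IPcotangent-underlying}), not to establish corepresentability of the derivation functor.
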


\begin{proof}
Let us recall the natural transformation $\cotangent^{\Pro}$ from the proof of \autoref{cotangent-pdst}
\[
\cotangent^{\Pro} = \Indext_{\dSt_S\op}(\cotangent) \colon \overcat^{(\Prou U\dStArt_S)\op}_{(\Prou U\dSt_S)\op} \to \IQcoh(-)
\]
of functors $(\Prou U\dSt_S)\op \to \inftyCat$. Applying the functor $\Proext_{(\Prou U\dSt_S)\op}$ we define the natural transformation $\cotangent^{\IP}$
\[
\cotangent^{\IP} = \Proext_{(\Prou U\dSt_S)\op} \left( \cotangent^{\Pro} \right) \colon \overcat_{(\IP\dSt_S)\op}^{(\IP\dStArt_S)\op} \to \PIQcoh(-)
\]
between functors $(\IP\dSt_S)\op \to \inftyCat$.
Specifying to $X$ we get a functor
\[
\cotangent^{\IP}_X \colon \left(\comma{X}{\IP\dStArt_S}\right) \op \to \PIQcoh(X)
\]
We now define $\Lcot_{X/S} = \cotangent^{\IP}_X(X)$.
By definition we have
\[
i_{k,\PI}^* \Lcot_{X/S} \simeq \lim \cotangent^{\Pro}_{X_k}(\bar X) \simeq \lim_{f \colon k \to l} f^*_\mathbf I \Lcot_{X_l/S}
\]
for every $k \in K$.
Let us now prove that it satisfies the expected universal property. It suffices to compare for every $k \in K$ the functors
\[
\Map_{X_k/-/S}(X_k[-], X) \hspace{1cm} \text{and} \hspace{1cm} \Map_{\PIQcoh(X_k)}(i_{k,\PI}^*\Lcot_{X/S}, -)
\]
defined on $\PIQcoh(X_k)^{\leq 0}$. They are both pro-extensions to $\PIQcoh(X_k)^{\leq 0}$ of their restrictions $\IQcoh(X_k)^{\leq 0} \to \sSets$.
The restricted functor $\Map_{X_k/-/S}(X_k[-], X)$ is a colimit of the diagram
\[
\Map_{X_k/-/S}(X_k[-], \bar X) \colon \left(\comma{k}{K}\right)\op \to \Fct(\IQcoh(X_k)^{\leq 0}, \sSets)
\]
while $\Map_{\PIQcoh(X_k)}(i_{k,\PI}^*\Lcot_{X/S}, -)$ is a colimit to the diagram
\[
\Map_{\IQcoh(X_k)}(\cotangent^{\Pro}_{X_k}(\bar X), -) \colon \left(\comma{k}{K}\right)\op \to \Fct(\IQcoh(X_k)^{\leq 0},\sSets)
\]
We finish the proof with \autoref{cotangent-prostacks}.
\end{proof}

\begin{prop}\label{IPcotangent-underlying}
Let $X \in \IP\dStArt_S$.
Let us denote by $\pi \colon X \to S$ the structural map.
Let also $\tilde \Lcot^\IP$ denote the functor
\[
\left(\IP\dStArt_S\right)\op \to \Prou U \Indu U \Qcoh(S)
\]
obtained by extending the functor $(\dStArt_S)\op \to \Qcoh(S)$ mapping $f \colon T \to S$ to $f_* \Lcot_{T/S}$.
Then we have $\pi_*^\PIQ \Lcot_{X/S} \simeq \tilde \Lcot^{\IP} (X)$
\end{prop}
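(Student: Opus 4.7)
The plan is to apply \autoref{two-ext} (the ``Pro'' version) with $\Cc = \IQcoh(S)$ and $\Dd = (\Prou U\dStArt_S)\op$. The functor $g \colon \Dd \to \comma{\Cc}{\inftyCat^{\mathbb V\mathrm{,L}}}$ sends a pro-Artin-stack $\varpi \colon Y \to S$ to the pullback $\varpi^*_\mathbf I \colon \IQcoh(S) \to \IQcoh(Y)$, whose right adjoint exists by \autoref{prop-iqcoh}. The natural transformation $\alpha \colon \overcat_\Dd \to \tilde g$ is the one extracted from $\cotangent^\Pro$ in the proof of \autoref{cotangent-pdst}: on $(Y \to Y') \in \overcat_\Dd(Y)$ it returns $\cotangent^\Pro_Y(Y') \in \IQcoh(Y)$; in particular $\alpha_Y(\id_Y) = \Lcot_{Y/S}$.

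The first step is to identify the functor $h \colon \Dd \to \Cc$ produced by \autoref{two-ext} with $Y \mapsto \varpi_*^\IQ \Lcot_{Y/S}$. Unwinding the construction of $h$ (section $\id_-$, then $\int \alpha$, then the relative right adjoint $G$, then projection to $\Cc$), this amounts to applying the right adjoint of $\varpi^*_\mathbf I$ to $\alpha_Y(\id_Y) = \Lcot_{Y/S}$. Invoking \autoref{rmq-two-ext}, itself a direct consequence of \autoref{two-ext} applied one layer down, then identifies $h(Y) = \varpi_*^\IQ \Lcot_{Y/S}$ with $\Indu U(\Lcot^1)(Y)$, i.e.\ with the restriction of $\tilde\Lcot^\IP$ to $(\Prou U\dStArt_S)\op$.

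The second step is to identify the Pro-extended functor $I \colon \Prou U(\Dd) = (\IP\dStArt_S)\op \to \Prou U(\Cc) = \PIQcoh(S)$ of \autoref{two-ext} with $X \mapsto \pi_*^\PIQ \Lcot_{X/S}$. At $X = \colim_k Y_k$, the Grothendieck construction of $\Proextu U_\Dd(\tilde g)$ reproduces $\PIQcoh(X)$, the composite of the section $\id_-$ with $\Proextu U_\Dd(\alpha)$ picks out the object $\Lcot_{X/S} \in \PIQcoh(X)$ of \autoref{ipcotangent}, and the limit-of-right-adjoints construction of \autoref{lim-adjoint} that underlies \autoref{two-ext} yields precisely the right adjoint $\pi_*^\PIQ$ of \autoref{prop-piqcoh-right-adjoint}.

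With these two identifications in place, \autoref{two-ext} gives $I \simeq \Prou U(h)$, which at $X = \colim_k Y_k$ reads
\[
\pi_*^\PIQ \Lcot_{X/S} \simeq \lim_k h(Y_k) \simeq \lim_k \Indu U(\Lcot^1)(Y_k) = \tilde\Lcot^\IP(X),
\]
the last equality being the definition of $\tilde\Lcot^\IP$ as the Pro-extension of $\Indu U(\Lcot^1)$. The main obstacle will be the patient bookkeeping required for the two identifications above: tracking the sections $\id_-$ and the relative right adjoints through the successive Grothendieck constructions and Pro-extensions, and checking compatibility with the concrete formulas of \autoref{cotangent-pdst} and \autoref{ipcotangent}. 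Once that is done the result is immediate.
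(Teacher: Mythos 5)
Your proof is correct and follows the paper's own (very terse) argument exactly: the paper deduces the existence of $\pi_*^\PIQ$ from \autoref{prop-piqcoh-right-adjoint} and then "applies \autoref{two-ext} twice", which is precisely your two steps — once in the Ind-direction at the pro-stack level (your invocation of \autoref{rmq-two-ext}) and once in the Pro-direction at the ind-pro level. Your write-up supplies the bookkeeping the paper leaves implicit, but the route is the same.
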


\begin{proof}
The existence of $\pi_*^\PIQ$ is deduced from \autoref{prop-piqcoh-right-adjoint}. The result then follows by applying \autoref{two-ext} twice.
\end{proof}

\begin{df}
Let $X$ by an Artin ind-pro-stack locally of finite presentation over $S$. We will call the tangent complex of $X$ the ind-pro-perfect complex on $X$
\[
\T_{X/S} = \dual \Lcot_{X/S} \in \IPPerf(X)
\]
\end{df}

\subsection{Uniqueness of pro-structure}

\begin{lem}\label{coconnective}
Let $Y$ and $Z$ be derived Artin stacks. The following is true
\begin{enumerate}
\item The canonical map 
\[
\Map(Z,Y) \to \lim_n \Map(\tau_{\leq n} Z,Y)
\]
is an equivalence;
\item If $Y$ is $q$-Artin and $Z$ is $m$-truncated then the mapping space $\Map(Z,Y)$ is $(m + q)$-truncated.
\end{enumerate}
\end{lem}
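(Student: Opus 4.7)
The plan is to reduce both claims to the affine case, using a convergence property of $Z$ for (i) and induction on the Artin degree $q$ of $Y$ for (ii).

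For (i), I would first show that any derived Artin stack $Z$ is the colimit in $\dSt$ of its truncations, $Z \simeq \colim_n \tau_{\leq n} Z$. At the level of derived affines this encodes the convergence $A \simeq \lim_n \tau^{\geq -n} A$ of any cdga, taken through the contravariant functor $\Spec$; the general case follows by descent along a smooth atlas, once one observes that the natural maps $\tau_{\leq n} Z \to Z$ are compatible with restriction to an affine chart. Granting this, the required equivalence is formal: since $\Map(-, Y)$ transforms colimits in the first variable into limits of spaces, we obtain $\Map(Z,Y) \simeq \Map(\colim_n \tau_{\leq n} Z, Y) \simeq \lim_n \Map(\tau_{\leq n} Z, Y)$.

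For (ii), I would proceed by induction on $q$. In the base case $q = 0$, $Y = \Spec B$ is a derived affine and the $\Spec\dashv\Gamma$-adjunction rewrites $\Map(Z, Y) \simeq \Map_{\cdga}(B, \Gamma(Z, \Oo_Z))$. The hypothesis that $Z$ is $m$-truncated forces $\Gamma(Z, \Oo_Z)$ to have cohomological amplitude in $[-m,0]$ (one applies the statement to the universal case $Y = \A^1$, whence the claim bootstraps from affines $Z = \Spec A$ where it is the direct assertion $A \in \cdga^{[-m,0]}$); a direct analysis of mapping spaces via a semifree resolution of $B$ then yields the $m$-truncation. For the inductive step, pick a smooth atlas $\pi\colon U \to Y$ with $U$ a disjoint union of derived affines, so that the Čech nerve $N_\bullet(\pi)$ takes values in $(q-1)$-Artin stacks. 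By smooth hyperdescent, $\Map(Z, Y)$ is the realization of the simplicial space $\Map(Z, N_\bullet(\pi))$; each level is $(m + q - 1)$-truncated by induction, and the realization of a groupoid object of $(m+q-1)$-truncated spaces is $(m+q)$-truncated.

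The main obstacle is the descent step in (ii): one must identify $\Map(Z, Y)$ with the realization of the simplicial space $\Map(Z, N_\bullet(\pi))$, which is not automatic because $\Map(Z, -)$ does not in general preserve colimits in its target. The identification relies on the fact that $\pi$ is an effective epimorphism in the $\infty$-topos $\dSt$ and that its Čech nerve exhibits $Y$ as an effective groupoid quotient, together with control of how a realization raises truncation by one. A secondary subtlety arises in (i), where the decomposition $Z \simeq \colim_n \tau_{\leq n} Z$ is a colimit in stacks rather than prestacks; this uses that stackification preserves colimits, together with the explicit affine case $\Spec A \simeq \colim_n \Spec \tau^{\geq -n} A$.
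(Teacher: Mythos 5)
Both halves of your plan rest on a step that fails, and in each case the failure is the whole content of the lemma.

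For (i): the decomposition $Z \simeq \colim_n \tau_{\leq n} Z$ does \emph{not} hold in $\dSt_k$, not even for $Z = \Spec A$ affine. Saying that $\colim_n \Spec(\tau_{\leq n}A) \to \Spec A$ is an equivalence of stacks is saying that $F(A) \to \lim_n F(\tau_{\leq n}A)$ is an equivalence for \emph{every} derived stack $F$, i.e.\ that every derived stack is nilcomplete --- which is false in general, and which for $F = Y$ Artin is exactly statement (i). (Note also that $\Spec \colon \cdga_k\op \to \dSt_k$ turns colimits of cdga's into limits of stacks, so the Postnikov limit $A \simeq \lim_n \tau_{\leq n}A$ gives you nothing in the colimit direction.) So your argument assumes a statement that is both false and strictly stronger than the conclusion. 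For (ii): the identification of $\Map(Z,Y)$ with the realization of $\Map(Z, N_\bullet(\pi))$ for an atlas $\pi \colon U \to Y$ is also false, because a map $Z \to Y$ only lifts to $U$ locally on $Z$, not globally. Concretely, for $Y = \B G$ and $U = \pt$ the realization of $\Map(Z, N_\bullet(\pi))$ only sees the trivializable $G$-bundles on $Z$, whereas $\Map(Z, \B G)$ is the full groupoid of bundles. The functor $\Map(Z,-)$ preserves limits, not colimits, and effectivity of the groupoid $N_\bullet(\pi)$ in the topos does not repair this.

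The paper runs the induction on the Artin degree of the \emph{source} $Z$, which is the variable in which $\Map(-,Y)$ does behave well: if $U_\bullet$ is the nerve of a smooth atlas of $Z$, then $Z \simeq \colim_{[p]} U_p$ and hence $\Map(Z,Y) \simeq \lim_{[p]} \Map(U_p, Y)$ legitimately. The affine base case of both statements is quoted from Toën--Vezzosi (HAG~II, C.0.10 and 2.2.4.6); the inductive step for (i) uses that truncation commutes with the nerve (truncated stacks are stable under flat pullback, so $\tau_{\leq k}(U_\bullet)$ is the nerve of the atlas $\tau_{\leq k}U \to \tau_{\leq k}Z$) and then swaps the two limits; and (ii) follows immediately because a limit of $(m+q)$-truncated spaces is $(m+q)$-truncated --- no realization, and no induction on $Y$, is needed. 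If you want to salvage your outline, replace both of your descent steps by descent along an atlas of $Z$.
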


\begin{proof}
We prove both items recursively on the Artin degree of $Z$. 
The case of $Z$ affine is proved in \cite[C.0.10 and 2.2.4.6]{toen:hagii}. We assume that the result is true for $n$-Artin stacks. Let $Z$ be $(n+1)$-Artin. There is an atlas $u \colon U \to Z$.
Let us remark that for $k \in\N$ the truncation $\tau_{\leq k} u \colon \tau_{\leq k} U \to \tau_{\leq k} Z$ is also a smooth atlas --- indeed we have $\tau_{\leq k} U \simeq U \times_Z \tau_{\leq k} Z$. Let us denote by $U_\bullet$ the nerve of $u$ and by $\tau_{\leq k}U_\bullet$ the nerve of $\tau_{\leq k} u$. Because $k$-truncated stacks are stable by flat pullbacks, the groupoid $\tau_{\leq k}U_\bullet$ is equivalent to $\tau_{\leq k}(U_\bullet)$. We have
\[
\Map(Z,Y) \simeq \lim_{[p] \in \Delta} \Map(U_p,Y) \simeq \lim_{[p] \in \Delta} \lim_k \Map(\tau_{\leq k}U_p,Y) \simeq \lim_k \Map(\tau_{\leq k} Z,Y)
\]
That proves item \emph{(i)}.
If moreover $Z$ is $m$-truncated, then we can replace $U$ by $\tau_{\leq m} U$. If follows that $\Map(Z,Y)$ is a limit of $(m + q)$-truncated spaces. This finishes the proof of \emph{(ii)}.
\end{proof}

We will use this well known lemma:
\begin{lem}\label{finite-groupoids}
\todo{Reference}Let $S \colon \Delta \to \sSets$ be a cosimplicial object in simplicial sets. Let us assume that for any $[p] \in \Delta$ the simplicial set $S_p$ is $n$-coconnective. Then the natural morphism
\[
\lim_{[p] \in \Delta} S_p \to \underset{p \leq n+1}{\lim_{[p] \in \Delta}} S_p
\]
is an equivalence.
\end{lem}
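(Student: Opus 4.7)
The plan is to use the Bousfield--Kan tower associated with the cosimplicial object $S$. Recall that $\lim_{[p] \in \Delta} S_p$ may be computed as the limit of the tower $\{\mathrm{Tot}^m(S)\}_{m \geq 0}$, where $\mathrm{Tot}^m(S) = \lim_{p \leq m} S_p$ denotes the partial totalization, i.e. the limit of $S$ restricted to the full subcategory $\Delta^{\leq m} \subset \Delta$. It is therefore enough to show that the restriction map $\mathrm{Tot}^{m+1}(S) \to \mathrm{Tot}^m(S)$ is a weak equivalence for every $m \geq n+1$, so that the tower becomes essentially constant from level $n+1$ onward and its limit agrees with $\mathrm{Tot}^{n+1}(S) = \lim_{p \leq n+1} S_p$.

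The next step identifies the homotopy fiber of each transition map. The classical Bousfield--Kan identification exhibits the fiber of $\mathrm{Tot}^{m+1}(S) \to \mathrm{Tot}^m(S)$ at any chosen basepoint as $\Omega^{m+1} M^{m+1}(S)$, where $M^{m+1}(S)$ denotes the $(m+1)$-st matching object of $S$, obtained as a finite limit of copies of spaces $S_p$ with $p \leq m$ along the latching subcategory at $[m+1]$. Since every $S_p$ is $n$-coconnective and $n$-coconnectivity is preserved under finite limits in $\sSets$, the matching object $M^{m+1}(S)$ is itself $n$-coconnective. Hence
\[
\pi_i \Omega^{m+1} M^{m+1}(S) \simeq \pi_{i + m + 1} M^{m+1}(S) = 0
\]
whenever $i + m + 1 > n$. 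For $m \geq n + 1$ this vanishing holds for every $i \geq 0$, so the fiber is contractible at every basepoint, and this forces $\mathrm{Tot}^{m+1}(S) \to \mathrm{Tot}^m(S)$ to be a weak equivalence in that range.

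The only genuinely delicate point is to run the fiber-sequence argument uniformly over all connected components of $\mathrm{Tot}^m(S)$, so that pointwise contractibility of the fiber upgrades to a global weak equivalence. This is the standard feature of the Bousfield--Kan setup for cosimplicial spaces and causes no real difficulty here: the uniform $n$-coconnectivity of all the $S_p$ makes the matching-object argument insensitive to the choice of basepoint, giving the desired equivalence $\lim_{[p]\in\Delta} S_p \simeq \lim_{p \leq n+1} S_p$.
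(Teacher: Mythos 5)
The paper offers no proof of this lemma: it is asserted as ``well known'' with an unresolved \verb|\todo{Reference}|, so there is nothing internal to compare your argument against. On its own terms your Bousfield--Kan argument is the standard one and is essentially correct, but two points deserve tightening. First, the fibre of $\mathrm{Tot}^{m+1}(S)\to\mathrm{Tot}^m(S)$ is not $\Omega^{m+1}M^{m+1}(S)$ with $M^{m+1}(S)$ the matching object; it is $\Omega^{m+1}N^{m+1}(S)$, where $N^{m+1}(S)$ is the fibre of the matching map $S_{m+1}\to M^{m+1}(S)$. This does not damage the argument, since a fibre of a map between $n$-coconnective spaces is again $n$-coconnective (by the long exact sequence of the fibration), so the vanishing $\pi_i\Omega^{m+1}N^{m+1}(S)=0$ for $m\geq n+1$ still holds; but as written you loop the wrong space. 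Second, the final paragraph understates what remains to be checked: contractibility of the fibres over points in the image only gives that each transition map is a weak equivalence onto the components it hits, and surjectivity on $\pi_0$ is a genuine obstruction-theoretic step, not a matter of uniformity over components. Concretely, a point of $\mathrm{Tot}^m(S)$ gives a map $\partial\Delta^{m+1}\to S_{m+1}$ compatible with the matching data, and the obstruction to extending it over $\Delta^{m+1}$ lies in $\pi_m\bigl(N^{m+1}(S)\bigr)$, which vanishes precisely because $m\geq n+1>n$. With that step made explicit, the tower $\{\mathrm{Tot}^m(S)\}$ is essentially constant from level $n+1$ on and the conclusion follows.
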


\begin{lem}\label{truncatedcompact}
Let $\bar X \colon \N\op \to \dSt_S$ be a diagram such that
\begin{enumerate}
\item There exists $m \in \N$ and $n \in \N$ such that for any $k \in K$ the stack $\bar X(k)$ is $n$-Artin, $m$-truncated and of finite presentation;
\item There exists a diagram $\bar u \colon \N \times \Delta^1 \to \dSt_S$ such that the restriction of $\bar u$ to $\N \times \{1\}$ is equivalent to $\bar X$, every map $\bar u(k) \colon \bar u(k)(0) \to \bar u(k)(1) \simeq \bar X(k)$ is a smooth atlas and the limit $\lim_k \bar u(k)$ is an epimorphism.
\label{truncatedcompact:atlas}
\end{enumerate}
If $Y$ is an algebraic derived stack of finite presentation then the canonical morphism
\[
\colim \Map\left( \bar X,Y\right) \to \Map\left(\lim \bar X, Y \right)
\]
is an equivalence.
\end{lem}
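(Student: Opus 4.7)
I proceed by induction on the Artin degree $n$ appearing in hypothesis~(i). Each step uses the atlas of hypothesis~\ref{truncatedcompact:atlas} to perform Čech descent, and Lemmas~\ref{coconnective} and~\ref{finite-groupoids} to collapse the resulting cosimplicial limit to a finite one, reducing the problem to the affine case.

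\emph{Base case $n=0$.} Each $X_k = \Spec A_k$ is affine with $A_k$ an $m$-coconnective cdga. The Yoneda embedding $\dAff_S \hookrightarrow \dSt_S$ preserves limits, so $\lim \bar X \simeq \Spec(\colim_k A_k)$ in $\dSt_S$, and the filtered colimit remains $m$-coconnective. The claim then amounts to $Y$ being locally of finite presentation, i.e.~to the fact that its functor of points commutes with filtered colimits of $m$-truncated cdgas.

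\emph{Inductive step.} Let $U_k^{\bullet}$ denote the Čech nerve of the smooth atlas $U_k \to X_k$ furnished by hypothesis~\ref{truncatedcompact:atlas}, with $U_k$ a disjoint union of affines. Each $U_k^p$ is a derived fibre product of $m$-truncated stacks of finite presentation; fibres of maps between $m$-truncated spaces are $m$-truncated, so each $U_k^p$ is again $m$-truncated and of finite presentation. Moreover $U_k^p$ is affine for $p=0$ and $(n-1)$-Artin for $p \geq 1$. Since cofiltered limits commute with fibre products, $\lim_k U_k^{\bullet}$ is the Čech nerve of $\lim_k U_k \to \lim \bar X$, which by hypothesis is an epimorphism. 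Descent along both sides yields
\[
\Map(\lim \bar X, Y) \simeq \lim_{[p] \in \Delta} \Map(\lim_k U_k^p, Y), \qquad \Map(X_k, Y) \simeq \lim_{[p] \in \Delta} \Map(U_k^p, Y).
\]
By Lemma~\ref{coconnective}\emph{(ii)} every such mapping space is $(m+q)$-truncated, where $q$ is the Artin degree of $Y$. Lemma~\ref{finite-groupoids} then allows us to truncate both cosimplicial limits to indices $[p] \leq m+q+1$. Finite limits commute with filtered colimits in spaces, so the claim reduces to
\[
\colim_k \Map(U_k^p, Y) \xrightarrow{\sim} \Map(\lim_k U_k^p, Y)
\]
for each $p \leq m+q+1$. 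The case $p=0$ is the base case, and for $p \geq 1$ one invokes the induction hypothesis applied to the diagram $k \mapsto U_k^p$.

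\emph{Main obstacle.} The induction hypothesis for $p \geq 1$ requires a compatible family of smooth atlases $V_k^p \to U_k^p$ whose limit is still an epimorphism onto $\lim_k U_k^p$. The plan is to fix a smooth atlas $V_\infty^p \to \lim_k U_k^p$ by an affine $V_\infty^p$ and to lift it to a compatible family $V_k^p \to U_k^p$ for $k$ ranging over a cofinal subset of $\N$, using the finite presentation of each $U_k^p$ (inherited from that of $X_k$ and $U_k$) to realize such a descent. Reindexing along this cofinal subset preserves both the limit and the colimit of the diagram, and closes the induction. This atlas-lifting construction is the technical heart of the argument.
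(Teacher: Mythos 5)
Your proof follows essentially the same route as the paper's: induction on the Artin degree, Čech descent along the atlases of hypothesis \ref{truncatedcompact:atlas}, $m$-truncatedness of the nerve levels together with Lemmas \ref{coconnective} and \ref{finite-groupoids} to reduce the cosimplicial limit to a finite one, and then the exchange of a finite limit with a filtered colimit. The one step you flag as unexecuted --- producing a compatible family of smooth atlases for the diagrams $k \mapsto U_k^p$ of nerve levels so that hypothesis \ref{truncatedcompact:atlas} can be fed into the induction --- is passed over in silence in the paper as well, which simply invokes the inductive hypothesis for $k \mapsto \bar Z(k)_p$ after checking only truncatedness and the Artin degree; so your proposal is, if anything, more explicit than the source about where the remaining technical work lies.
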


\begin{proof}
Let us prove the statement recursively on the Artin degree $n$. If $n$ equals $0$, this is a simple reformulation of the finite presentation of $Y$.
Let us assume that the statement at hand is true for some $n$ and let $\bar X(0)$ be $(n+1)$-Artin.
Considering the nerves of the epimorphisms $\bar u(k)$, we get a diagram
\[
\bar Z \colon \N\op \times \Delta\op \to \dSt_S
\]
Note that $\bar Z$ has values in $n$-Artin stacks.
The limit $\lim_k \bar u(k)$ is also an atlas and the natural map
\[
\colim_{[p] \in \Delta} \lim_{k \in \N} \bar Z(k)_p \to \lim_{k \in \N} \colim_{[p] \in \Delta} \bar Z(k)_p \simeq \lim \bar X
\]
is therefore an equivalence.
We now write
\begin{align*}
\Map\left(\lim \bar X,Y \right)
&\simeq \Map\left(\colim_{[p] \in \Delta} \lim_{k \in \N} \bar Z(k)_p,Y \right) \\
&\simeq \lim_{[p] \in \Delta} \Map\left(\lim_{k \in \N} \bar Z(k)_p,Y \right) \\
&\simeq \lim_{[p] \in \Delta} \colim_{k \in \N} \Map\left(\bar Z(k)_p,Y \right)
\end{align*}
We also have
\[
\colim \Map\left( \bar X,Y \right) \simeq \colim_{k \in \N} \lim_{[p] \in \Delta} \Map\left( \bar Z(k)_p, Y \right)
\]
It thus suffices to prove that the canonical morphism of simplicial sets
\[
\colim_{k \in \N} \lim_{[p] \in \Delta} \Map\left( \bar Z(k)_p, Y \right) \to \lim_{[p] \in \Delta} \colim_{k \in \N} \Map\left(\bar Z(k)_p,Y \right)
\]
is an equivalence. Let us notice that each $\bar Z(k)_p$ is $m$-truncated. It is indeed a fibre product of $m$-truncated derived stacks along flat maps.
Let $q$ be an integer such that $Y$ is $q$-Artin. The simplicial set $\Map(\bar Z(k)_p,Y)$ is then $(m + q)$-coconnective (\autoref{coconnective}).
It follows from \autoref{finite-groupoids} that the limit at hand is in fact finite and we have the required equivalence.
\end{proof}

\begin{lem}\label{exact-seq}
Let $\bar M \colon \N\op \to \sSets$ be a diagram. For any $i \in \N$ and any point $x = (x_n) \in \lim \bar M$, we have the following exact sequence
\[
\mymatrix{
0 \ar[r] & \displaystyle {\lim_n}^1 \pi_{i+1}(\bar M(n),x_n) \ar[r] & \displaystyle \pi_i\left(\lim_n \bar M(n), x\right) \ar[r] & \displaystyle \lim_n \pi_i(\bar M(n), x_n) \ar[r] & 0
}
\]
\end{lem}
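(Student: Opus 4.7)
The plan is to realise the limit as the fibre of a comparison map between products. After taking a levelwise fibrant replacement (or, equivalently, viewing $\lim \bar M$ as the $(\infty,1)$-categorical limit, which is canonically the homotopy limit in $\sSets$), we may assume that each transition map $f_{n+1}\colon \bar M(n+1) \to \bar M(n)$ is a Kan fibration. One then has a standard homotopy fibre sequence
\[
\lim_n \bar M(n) \to \prod_n \bar M(n) \xrightarrow{\sigma} \prod_n \bar M(n),
\]
where $\sigma$ sends a sequence $(y_n)$ to the sequence whose $n$-th coordinate compares $y_n$ with $f_{n+1}(y_{n+1})$ (concretely, one can take a choice of homotopy, or in the simplicial model a path). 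The fibre of $\sigma$ over the constant sequence at $(x_n)$ consists exactly of compatible families, hence recovers $\lim_n \bar M(n)$.

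I would then apply the long exact sequence of homotopy groups to this fibre sequence. Using the natural identification
\[
\pi_i\Bigl(\prod_n \bar M(n),\,(x_n)\Bigr) \simeq \prod_n \pi_i(\bar M(n), x_n),
\]
the induced map $\sigma_\ast$ on $\pi_i$ is the usual comparison
\[
(\alpha_n)_n \longmapsto \bigl(\alpha_n\cdot (f_{n+1})_\ast(\alpha_{n+1})^{-1}\bigr)_n.
\]
By definition, the kernel of $\sigma_\ast$ at level $\pi_i$ is $\lim_n \pi_i(\bar M(n), x_n)$, while its cokernel (understood as a pointed set of orbits when $i+1 \leq 1$) is precisely $\lim^1_n \pi_{i+1}(\bar M(n), x_n)$. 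Extracting the relevant three-term piece of the long exact sequence immediately yields the claimed short exact sequence.

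The only genuinely delicate point is the low-dimensional range $i = 0$ (and the non-abelian behaviour of $\pi_1$), where $\lim^1$ has to be interpreted as a pointed set of orbits rather than a cokernel of abelian groups, and exactness becomes a pointed-set statement requiring a small diagram chase. This is classical and goes back to the Bousfield--Kan description of $\lim^1$ for towers of (possibly non-abelian) groups; I would simply invoke it rather than reprove it.
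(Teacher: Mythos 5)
The paper does not actually prove this lemma; it simply cites \cite{hirschhorn:limfibrations}, and the argument given there (the Milnor $\lim^1$-sequence for a tower of fibrations, as in Bousfield--Kan IX.3) is exactly the one you sketch, so your proposal is correct and follows the same route as the intended reference. The only point needing care is that $\sigma$ is not literally a self-map of $\prod_n \bar M(n)$ (the product is not a group object), so the fibre sequence must be phrased as the strict pullback of $\prod_n \bar M(n) \to \prod_n \bar M(n) \times \prod_n \bar M(n)$ against the path fibration over the diagonal --- which your parenthetical about choosing a path already acknowledges --- after which the induced map on homotopy groups is the difference map $(\alpha_n) \mapsto (\alpha_n \cdot (f_{n+1})_*(\alpha_{n+1})^{-1})$ whose kernel and cokernel are $\lim_n$ and $\lim^1_n$ as you state.
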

A proof of that lemma can be found for instance in \cite{hirschhorn:limfibrations}.

\begin{lem}\label{colim-lim-swap}
Let $M \colon \N\op \times K \to \sSets$ denote a diagram, where $K$ is a filtered simplicial set. If for any $i \in \N$ there exists $N_i$ such that for any $n \geq N_i$ and any $k \in K$ the induced morphism $M(n,k) \to M(n-1,k)$ is an $i$-equivalence then the canonical map
\[
\phi \colon \colim_{k \in K} \lim_{n \in \N} M(n,k) \to \lim_{n \in \N} \colim_{k \in K} M(n,k)
\]
is an equivalence. We recall that an $i$-equivalence of simplicial sets is a morphism which induces isomorphisms on the homotopy groups of dimension lower or equal to $i$.
\end{lem}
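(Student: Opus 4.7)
The plan is to show that $\phi$ is a weak equivalence by checking that it induces a bijection on $\pi_0$ and isomorphisms on all higher homotopy groups at every basepoint. The two main ingredients are Lemma~\ref{exact-seq} (the Milnor-type $\lim^1$ sequence for sequential limits of simplicial sets) and the classical fact that in $\sSets$ filtered colimits commute with $\pi_i$ for every $i$.

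First I would compute $\pi_i$ of the target. By the filtered-colimit fact, $\pi_i(\colim_k M(n,k)) \simeq \colim_k \pi_i(M(n,k))$ for each $n$. The hypothesis says that for $n \geq N_i$ the map $M(n,k) \to M(n-1,k)$ is an $i$-equivalence, hence induces an isomorphism on $\pi_i$. Thus for each fixed $k$ the tower $n \mapsto \pi_i(M(n,k))$ is eventually constant (equal to $\pi_i(M(N_i,k))$ for $n \geq N_i$), and the same with $i$ replaced by $i+1$ using $N_{i+1}$. Since eventually constant towers of groups satisfy the Mittag--Leffler condition, their $\lim^1$ vanishes; applying Lemma~\ref{exact-seq} to $n \mapsto \colim_k M(n,k)$ therefore yields
\[
\pi_i\bigl(\lim_n \colim_k M(n,k)\bigr) \;\simeq\; \lim_n \colim_k \pi_i(M(n,k)) \;\simeq\; \colim_k \pi_i(M(N_i,k)).
\]

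Next I would run the same analysis on the source. For each $k$, the tower $n \mapsto \pi_j(M(n,k))$ is eventually constant for every $j$, so Lemma~\ref{exact-seq} gives $\pi_i(\lim_n M(n,k)) \simeq \pi_i(M(N_i,k))$. Passing to the filtered colimit over $k$ and again using that filtered colimits commute with $\pi_i$,
\[
\pi_i\bigl(\colim_k \lim_n M(n,k)\bigr) \;\simeq\; \colim_k \pi_i(\lim_n M(n,k)) \;\simeq\; \colim_k \pi_i(M(N_i,k)).
\]
By naturality of the Milnor sequence and of the filtered-colimit/$\pi_i$ identification, the map induced by $\phi$ between these two groups is the identity of $\colim_k \pi_i(M(N_i,k))$, hence an isomorphism.

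The only subtlety to be careful with is the choice of basepoints: a point of the source is a compatible family of points $(y_k)$ in $\lim_n M(n,k)$ modulo the filtered colimit, and one must check that any basepoint of the target lifts, up to the equivalences above, to such a family. This is where the assumption that the connectivity bound $N_i$ is uniform in $k$ is used: the $\pi_0$ case of the above computation gives $\pi_0$-surjectivity, allowing us to lift basepoints componentwise, and the higher $\pi_i$ case is then obtained by fixing such a lift. This is the only genuinely delicate step; everything else is a direct comparison of eventually constant towers.
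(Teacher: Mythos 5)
Your proposal is correct and follows essentially the same route as the paper: both arguments apply the Milnor $\lim^1$ sequence of \autoref{exact-seq} to the source and target, use that filtered colimits commute with homotopy groups, and exploit the uniform bound $N_i$ to see that the relevant towers of homotopy groups are eventually constant (killing the $\lim^1$ terms and stabilising the limits). The only cosmetic difference is that the paper concludes via the five lemma applied to the resulting morphism of short exact sequences, while you identify both sides explicitly with $\colim_k \pi_i(M(N_i,k))$; your remark on basepoints corresponds to the paper's reduction to a $K$ with an initial object.
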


\begin{proof}
We can assume that $K$ admits an initial object $k_0$.
Let us write $M_{nk}$ instead of $M(n,k)$. Let us fix $i \in \N$. If $i \geq 1$, we also fix a base point $x \in \lim_n M_{nk_0}$. Every homotopy group below is computed at $x$ or at the natural point induced by $x$. We will omit the reference to the base point. We have a morphism of short exact sequences
\[
\mymatrix{
0 \ar[r] &
\displaystyle \colim_k {\lim_{n}}^1 \pi_{i+1}(M_{nk}) \ar[r] \ar[d] &
\displaystyle \colim_k \pi_{i}\left(\lim_{n} M_{nk}\right) \ar[r] \ar[d] &
\displaystyle \colim_k \lim_{n} \pi_i(M_{nk}) \ar[d] \ar[r] & 0 \\
0 \ar[r] &
\displaystyle {\lim_{n}}^1 \colim_k \pi_{i+1}(M_{nk}) \ar[r] &
\displaystyle \pi_{i}\left(\lim_{n} \colim_k M_{nk}\right) \ar[r] &
\displaystyle \lim_{n} \colim_k \pi_i(M_{nk}) \ar[r] & 0 \\
}
\]
We can restrict every limit to $n \geq N_{i+1}$. Using the assumption we see that the limits on the right hand side are then constant and so are the $1$-limits on the left.  If follows that the vertical maps on the sides are isomorphisms, and so is the middle map.
This begin true for any $i$, we conclude that $\phi$ is an equivalence.
\end{proof}

\begin{df}\label{shy}
Let $\bar X \colon \N\op \to \dSt_S$ be a diagram. We say that $\bar X$ is a shy diagram if
\begin{enumerate}
\item For any $k \in \N$ the stack $\bar X(k)$ is algebraic and of finite presentation;
\item For any $k \in \N$ the map $\bar X(k \to k+1) \colon \bar X(k+1) \to \bar X(k)$ is affine;
\item The stack $\bar X(0)$ is of finite cohomological dimension.
\end{enumerate}
If $X$ is the limit of $\bar X$ in the category of prostacks, we will also say that $\bar X$ is a shy diagram for $X$.
\end{df}

\begin{prop}\label{prop-cocompact}
Let $\bar X \colon \N\op \to \dSt_S$ be a shy diagram.
If $Y$ is an algebraic derived stack of finite presentation then the canonical morphism
\[
\colim \Map\left( \bar X,Y\right) \to \Map\left(\lim \bar X, Y \right)
\]
is an equivalence.
\end{prop}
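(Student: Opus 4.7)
The plan is to bootstrap from \autoref{truncatedcompact} by truncating the diagram, and then swap the resulting $\lim_m$ (over truncations) with $\colim_k$ (over the shy tower) via \autoref{colim-lim-swap}. Fix once and for all integers $n$ and $q$ such that $\bar X(0)$ is $n$-Artin and $Y$ is $q$-Artin, and let $c$ be the cohomological dimension of $\bar X(0)$. Since the transition maps of $\bar X$ are affine, each $\bar X(k)$ is again $n$-Artin and of cohomological dimension at most $c$. This uniformity, together with finite presentation, is what allows the rest of the argument to run uniformly in $k$.

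First, I build a compatible smooth atlas for $\bar X$: choose a smooth atlas $u_0 \colon U_0 \to \bar X(0)$ with $U_0$ affine, and set $U_k = U_0 \times_{\bar X(0)} \bar X(k)$, which is affine because the transition maps are. The resulting diagram $\bar u$ has limit $\lim_k U_k \simeq U_0 \times_{\bar X(0)} \lim \bar X$, a smooth epimorphism onto $\lim \bar X$. Next, for each fixed $m$, apply $\tau_{\leq m}$ termwise: the diagram $\tau_{\leq m}\bar X$ is still a tower of $n$-Artin, finitely presented, affinely-connected stacks, now uniformly $m$-truncated, and $\tau_{\leq m}\bar u$ is a compatible atlas satisfying condition \ref{truncatedcompact:atlas}. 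Thus \autoref{truncatedcompact} yields
\[
\colim_k \Map(\tau_{\leq m}\bar X(k), Y) \simeq \Map(\lim_k \tau_{\leq m}\bar X(k), Y).
\]
Since the transition maps are affine and truncation commutes with filtered colimits of cdga's (applied on each affine piece of the atlas), we also have $\lim_k \tau_{\leq m}\bar X(k) \simeq \tau_{\leq m}\lim_k \bar X(k)$.

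Combining these with \autoref{coconnective}(i) gives a chain of equivalences
\[
\Map(\lim \bar X, Y) \simeq \lim_m \Map(\tau_{\leq m}\lim \bar X, Y) \simeq \lim_m \colim_k \Map(\tau_{\leq m}\bar X(k), Y),
\]
while also $\colim_k \Map(\bar X(k), Y) \simeq \colim_k \lim_m \Map(\tau_{\leq m}\bar X(k), Y)$. It remains to swap $\lim_m$ and $\colim_k$ via \autoref{colim-lim-swap}, which requires that, for each $i$, there exist $N_i$ such that for $m \geq N_i$ and \emph{every} $k$, the restriction map
\[
\Map(\tau_{\leq m}\bar X(k), Y) \to \Map(\tau_{\leq m-1}\bar X(k), Y)
\]
is an $i$-equivalence.

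This final step is the real work, and it is handled by obstruction theory. Recall that $\tau_{\leq m}\bar X(k)$ is a square-zero extension of $\tau_{\leq m-1}\bar X(k)$ by a shift of $\pi_m\bar X(k)$. By \autoref{obstruction}, applied to $F = Y \to G = S$, the homotopy fibres of the above restriction map over a point $x \colon \tau_{\leq m-1}\bar X(k) \to Y$ are (loops based at) a space of the form $\Map_{\tau_{\leq m-1}\bar X(k)}(x^*\Lcot_{Y/S}, \pi_m\bar X(k)[m])$. Since $Y$ is locally of finite presentation, $x^*\Lcot_{Y/S}$ is perfect with cohomological amplitude bounded by a constant $a$ depending only on $Y$; since $\bar X(k)$ has cohomological dimension bounded by $c$ uniformly in $k$, and the target $\pi_m \bar X(k)[m]$ is $m$-connective, these mapping spaces are at least $(m - a - c)$-connective, uniformly in $k$ and in $x$. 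Taking $N_i = i + a + c + 2$ suffices, and \autoref{colim-lim-swap} gives the required commutation, completing the proof. The main subtle point — and the step I would spend most care on — is pinning down the obstruction-theoretic connectivity bound uniformly in $k$, using only the finite cohomological dimension of $\bar X(0)$ propagated via the affine transition maps.
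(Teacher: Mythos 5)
Your proposal is correct and follows essentially the same route as the paper: truncate, apply \autoref{coconnective} and \autoref{truncatedcompact} levelwise, and then justify the exchange of $\lim_m$ and $\colim_k$ via \autoref{colim-lim-swap} by an obstruction-theoretic connectivity estimate that is uniform in $k$ thanks to the uniform bound on cohomological dimension and the Tor-amplitude of $\Lcot_Y$. The only difference is cosmetic: you spell out the compatible atlas needed to invoke \autoref{truncatedcompact}, where the paper compresses this into the remark that truncation preserves shy diagrams.
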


\begin{proof}
Since for any $n$, the truncation functor $\tau_{\leq n}$ preserves shy diagrams, let us use \autoref{coconnective} and \autoref{truncatedcompact}
\begin{align*}
\Map(\lim \bar X,Y) \simeq & \lim_n \Map(\tau_{\leq n}(\lim \bar X),Y)\\ & \simeq \lim_n \Map(\lim \tau_{\leq n}\bar X,Y) \simeq \lim_n \colim \Map(\tau_{\leq n} \bar X,Y)
\end{align*}
On the other hand we have 
\[
\colim \Map(\bar X,Y) \simeq \colim \lim_n \Map(\tau_{\leq n} \bar X,Y)
\]
and we are to study the canonical map 
\[
\phi \colon \colim \lim_n \Map(\tau_{\leq n} \bar X,Y) \to \lim_n \colim \Map(\tau_{\leq n} \bar X,Y)
\]
Because of \autoref{colim-lim-swap}, it suffices to prove the assertion
\begin{enumerate}[label=(\arabic*)]
\item For any $i \in \N$ there exists $N_i \in \N$ such that for any $n \geq N_i$ and any $k \in \N$ the map
\[
p_{n,k} \colon \Map\left( \tau_{\leq n} \bar X(k), Y \right) \to \Map\left( \tau_{\leq n -1} \bar X(k), Y \right)
\]
induces an equivalence on the $\pi_j$'s for any $j \leq i$.
\end{enumerate}
For any map $f \colon \tau_{\leq n-1} \bar X(k) \to Y$ we will denote by $F_{n,k}(f)$ the fibre of $p_{n,k}$ at $f$. We have to prove that for any such $f$ the simplicial set $F_{n,k}(f)$ is $i$-connective.
Let thus $f$ be one of those maps. The derived stack $\tau_{\leq n} \bar X(k)$ is a square zero extension of $\tau_{\leq n-1} \bar X(k)$ by a module $M[n]$, where
\[
M = \ker \left(\Oo_{\tau_{\leq n} \bar X(k)} \to \Oo_{\tau_{\leq n-1} \bar X(k)} \right) [-n]
\]
Note that $M$ is concentrated in degree $0$. It follows from the obstruction theory of $Y$ --see \autoref{obstruction} -- that $F_{n,k}(f)$ is not empty if and only if the obstruction class 

\[
\alpha(f) \in G_{n,k}(f) = \Map_{\Oo_{\tau_{\leq n-1} \bar X(k)}}(f^* \Lcot_Y, M[n+1])
\]
of $f$ vanishes.
Moreover, if $\alpha(f)$ vanishes, then we have an equivalence
\[
F_{n,k}(f) \simeq \Map_{\Oo_{\tau_{\leq n-1} \bar X(k)}}(f^* \Lcot_Y, M[n])
\]
Using assumptions \emph{(iii)} and \emph{(ii)} we have that $\bar X(k)$ --- and therefore its truncation too --- is of finite cohomological dimension $d$.
Let us denote by $[a,b]$ the Tor-amplitude of $\Lcot_Y$.
We get that $G_{n,k}(f)$ is $(s+1)$-connective for $s = a + n - d$ and that $F_{n,k}(f)$ is $s$-connective if $\alpha(f)$ vanishes.
%
Let us remark here that $d$ and $a$ do not depend on either $k$ or $f$ and thus neither does $N_i = i + d - a$ (we set $N_i = 0$ if this quantity is negative). For any $n \geq N_i$ and any $f$ as above, the simplicial set $G_{n,k}(f)$ is at least $1$-connective. The obstruction class $\alpha(f)$ therefore vanishes and $F_{n,k}(f)$ is indeed $i$-connective. This proves (1) and concludes this proof.
\end{proof}

\newcommand{\shy}{\mathbf P\dSt^{\mathrm{shy}}}
\begin{df}\label{prochamps}
Let $\shy_S$ denote the full subcategory of $\Prou U\dSt_S$ spanned by the prostacks which admit shy diagrams.
Every object $X$ in $\shy_S$ is thus the limit of a shy diagram $\bar X \colon \N\op \to \dSt_S$.

We will say that $X$ is of cotangent tor-amplitude in $[a,b]$ if there exists a shy diagram $\bar X \colon \N\op \to \dSt_S$ for $X$ such that every cotangent $\Lcot_{\bar X(n)}$ is of tor-amplitude in $[a,b]$. We will also say that $X$ is of cohomological dimension at most $d$ if there is a shy diagram $\bar X$ with values in derived stacks of cohomological dimension at most $d$.
The pro-stack $X$ will be called $q$-Artin if there is a shy diagram for it, with values in $q$-Artin derived stacks.
Let us denote by $\Cc^{[a,b]}_{d,q}$ the full subcategory of $\shy_S$ spanned by objects of cotangent tor-amplitude in $[a,b]$, of cohomological dimension at most $d$ and $q$-Artin.
\end{df}

\begin{thm}
The limit functor $i_\mathrm{shy} \colon \shy_S \to \dSt_S$ is fully faithful and has values in Artin stacks.
\end{thm}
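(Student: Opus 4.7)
The plan is to split the statement into its two assertions and treat them separately, reusing \autoref{prop-cocompact} for the mapping-space computation and the theory of affine morphisms for the geometric one.

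For fully faithfulness, let $X, Y \in \shy_S$ with shy diagrams $\bar X, \bar Y \colon \N\op \to \dSt_S$. By definition of the pro-category and of the shy diagrams,
\[
\Map_{\Prou U\dSt_S}(X, Y) \simeq \lim_m \colim_n \Map_{\dSt_S}(\bar X(n), \bar Y(m)),
\]
while on the target side, since $i_\mathrm{shy}(X)$ is a limit in $\dSt_S$ of $\bar Y$,
\[
\Map_{\dSt_S}(i_\mathrm{shy}(X), i_\mathrm{shy}(Y)) \simeq \lim_m \Map_{\dSt_S}\left(\lim \bar X, \bar Y(m)\right).
\]
For each fixed $m$, the stack $\bar Y(m)$ is algebraic and of finite presentation, so by \autoref{prop-cocompact} applied to $\bar X$ and $Y = \bar Y(m)$, the inner map
\[
\colim_n \Map_{\dSt_S}(\bar X(n), \bar Y(m)) \to \Map_{\dSt_S}\left(\lim \bar X, \bar Y(m)\right)
\]
is an equivalence. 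Taking the limit in $m$ gives the desired equivalence of mapping spaces, hence fully faithfulness of $i_\mathrm{shy}$.

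For the Artin property, let $\bar X \colon \N\op \to \dSt_S$ be a shy diagram with $\bar X(0)$ a $q$-Artin stack. Since each transition map $\bar X(n+1) \to \bar X(n)$ is affine, by composition each projection $\pi_n \colon \bar X(n) \to \bar X(0)$ is affine, and we can write $\bar X(n) \simeq \Spec_{\bar X(0)} A_n$ with $A_n$ a quasi-coherent cdga on $\bar X(0)$; the diagram $\bar X$ corresponds to a filtered diagram of cdgas $A_0 \to A_1 \to \cdots$. Setting $A = \colim_n A_n$ in quasi-coherent cdgas on $\bar X(0)$, for any $f \colon T \to \bar X(0)$ one computes the limit pointwise in $\dSt_S$:
\[
\Map_{\bar X(0)}(T, \lim \bar X) \simeq \lim_n \Map_{\mathrm{cdga}/\Oo_T}(f^* A_n, \Oo_T) \simeq \Map_{\mathrm{cdga}/\Oo_T}(f^* A, \Oo_T),
\]
showing $i_\mathrm{shy}(X) \simeq \Spec_{\bar X(0)} A$ is affine over $\bar X(0)$. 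Since $\bar X(0)$ is $q$-Artin and affine morphisms are in particular representable and smooth-locally affine, the relative spectrum $\Spec_{\bar X(0)} A$ is again $q$-Artin, proving that $i_\mathrm{shy}$ lands in Artin stacks.

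The main conceptual obstacle is entirely packaged in \autoref{prop-cocompact}, whose proof (obstruction-theoretic truncation argument using the bounded cohomological dimension of $\bar X(0)$) is the real work; once that is in hand both assertions become formal. The only other point requiring care is checking that the actual $\dSt_S$-limit of a tower of affine morphisms really is the relative spectrum of the colimit cdga, which reduces to the functoriality of $\Spec$ and the commutation of $\colim$ with pullback $f^*$ of quasi-coherent cdgas.
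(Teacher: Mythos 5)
Your proof is correct and matches the paper's (one-line) argument: the fully faithfulness is exactly the intended application of \autoref{prop-cocompact} to each $\bar Y(m)$, which is algebraic of finite presentation by the definition of a shy diagram, followed by passing to the limit in $m$. Your treatment of the Artin-valued claim --- identifying the limit of the tower of affine morphisms with $\Spec_{\bar X(0)}(\colim_n A_n)$ and concluding that an affine morphism over a $q$-Artin stack yields a $q$-Artin stack --- correctly supplies the standard argument the paper leaves implicit.
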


\begin{proof}
This follows directly from \autoref{prop-cocompact}.
\end{proof}

\begin{df}
A map of pro-stacks $f \colon X \to Y$ if an open immersion if there exists a diagram
\[
\bar f \colon \N\op \times \Delta^1 \to \dSt_k
\]
such that 
\begin{itemize}
\item The limit of $\bar f$ in maps of pro-stacks is $f$;
\item The restriction $\N\op \times \{0\} \to \dSt_k$ of $\bar f$ is a shy diagram for $X$ and the restriction $\N\op \times \{1\} \to \dSt_k$ is a shy diagram for $Y$;
\item For any $n$, the induced map of stacks $\{n\} \times \Delta^1 \to \dSt_k$ is an open immersion.
\end{itemize}
\end{df}

\subsection{Uniqueness of ind-pro-structures}
\label{unique-ipstructure}

\begin{df}\label{indprochamps}
Let $\shybounded_S$ denote the full subcategory of $\Indu U(\shy_S)$ spanned by colimits of $\mathbb U$-small filtered diagrams $K \to \shy_S$  which factors through $\Cc^{[a,b]}_{d,q}$ for some 4-uplet $a,b,d,q$.\glsadd{shybounded}
For any $X \in \shybounded_S$ we will say that $X$ is of cotangent tor-amplitude in $[a,b]$ and of cohomological dimension at most $d$ if it is the colimit (in $\Indu U(\shy_S)$) of a diagram $K \to \Cc^{[a,b]}_{d,q}$.
\end{df}

\begin{thm}\label{ff-realisation}
The colimit functor $\Indu U(\shy_S) \to \dSt_S$ restricts to a full faithful embedding $\shybounded_S \to \dSt_S$.
\end{thm}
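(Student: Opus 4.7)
The plan is to reduce the theorem to a compactness statement for the realisation of a single object of $\shy_S$, which in turn is proved by a uniform version of \autoref{prop-cocompact}. Fix two objects $X, Y \in \shybounded_S$, written as $X = \colim_i X_i$ and $Y = \colim_j Y_j$ in $\Indu U(\shy_S)$, where the ind-diagrams factor through some $\Cc^{[a,b]}_{d,q}$. Write $|-|$ for the realisation. On one hand,
\[
\Map_{\shybounded_S}(X,Y) \simeq \lim_i \colim_j \Map_{\shy_S}(X_i,Y_j) \simeq \lim_i \colim_j \Map_{\dSt_S}(|X_i|,|Y_j|),
\]
by the fully faithfulness of $\shy_S \to \dSt_S$ (preceding theorem). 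On the other hand, since $|X| \simeq \colim_i |X_i|$ and $|Y| \simeq \colim_j |Y_j|$ in $\dSt_S$,
\[
\Map_{\dSt_S}(|X|,|Y|) \simeq \lim_i \Map_{\dSt_S}\bigl(|X_i|, \colim_j |Y_j|\bigr).
\]
Thus it suffices, for each fixed $i$, to show that the canonical map
\[
\colim_j \Map_{\dSt_S}(|X_i|,|Y_j|) \longrightarrow \Map_{\dSt_S}\bigl(|X_i|, \colim_j|Y_j|\bigr)
\]
is an equivalence.

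To handle this, fix shy presentations $|X_i| \simeq \lim_n X_{i,n}$ and $|Y_j| \simeq \lim_m Y_{j,m}$ with all stacks belonging to a common $\Cc^{[a,b]}_{d,q}$. Truncating as in \autoref{coconnective} one reduces to a statement on truncations $\tau_{\leq k}$, which, thanks to the uniform bound $q$ on the Artin degree, again give shy diagrams in a class $\Cc^{[a',b']}_{d',q}$. For each truncated slice, one can then invoke the argument of \autoref{prop-cocompact}: the finite cohomological dimension $d$ and the tor-amplitude bound $[a,b]$ give rise to an obstruction-theoretic analysis showing that, for every $i \in \N$, there exists an $N_i$ such that for $n \geq N_i$ the fibres
\[
\Map\bigl(\tau_{\leq n} X_{i,\bullet}, Z\bigr) \longrightarrow \Map\bigl(\tau_{\leq n-1} X_{i,\bullet}, Z\bigr)
\]
are $i$-equivalences, \emph{uniformly in the target} $Z$ ranging over the bounded family of $Y_{j,m}$'s and of their truncations. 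This uniformity is the crucial input provided by the $\shybounded_S$-condition.

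Having this uniform connectivity estimate one applies \autoref{colim-lim-swap} repeatedly to commute the filtered colimit $\colim_j$ past the cosimplicial limits appearing in the Artin-recursive computation of $\Map(\tau_{\leq k}|X_i|,-)$, past the limit over the shy index $m$ for $Y_j$, and past the limit over truncation levels $k$. Piecing these together identifies $\colim_j\Map(|X_i|,|Y_j|)$ with $\Map(|X_i|,\colim_j|Y_j|)$, which is the desired statement.

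The main obstacle is precisely the uniformity of the obstruction-theoretic connectivity bounds across the ind-diagram $Y = \colim_j Y_j$. In \autoref{prop-cocompact} the integer $N_i$ depended only on the tor-amplitude of $\Lcot_Y$ and the cohomological dimension of $|X_i|$; here one needs the analogous constants for every $Y_j$ and every $Y_{j,m}$ to be chosen once and for all, and this is only possible because the ind-diagram factors through a single bounded $\Cc^{[a,b]}_{d,q}$. Once this uniformity is in place, every remaining manipulation is a formal application of \autoref{colim-lim-swap} and \autoref{coconnective}.
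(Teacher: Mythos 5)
Your reduction and your identification of the key input are essentially those of the paper: the paper's \autoref{iequi} is precisely the ``uniform version of \autoref{prop-cocompact}'' you invoke (the bound $N_i = i + d - a + 1$ depends only on the class $\Cc^{[a,b]}_{d,q}$, not on the index of the ind-diagram), and the commutation of $\colim_j$ past the various limits is carried out there with \autoref{colim-lim-swap} for the $\N$-indexed limits and with \autoref{finite-groupoids} (finiteness of the cosimplicial limit, granted the coconnectivity supplied by \autoref{coconnective}) for the nerve of an atlas in the Artin induction. So for the fully-faithfulness computation itself you are on the paper's track.

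There is, however, a genuine gap. You compute $\Map_{\dSt_S}(|X|,|Y|)$ as $\lim_i \Map_{\dSt_S}(|X_i|,\colim_j|Y_j|)$ and then propose to identify the target with the pointwise filtered colimit of the $\Map(|X_i|,|Y_j|)$. But $\colim_j|Y_j|$ taken in $\dSt_S$ is the \emph{stackification} of the presheaf colimit, and a filtered colimit of stacks computed objectwise need not satisfy descent: the descent condition is a totalization over a hypercovering, an infinite limit, which does not commute with filtered colimits in general. In particular the base case of the Artin induction (``$|X_i|$ affine, colimits of presheaves are computed pointwise'') does not apply to the stack-theoretic colimit. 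The paper resolves this by splitting the theorem into two assertions: first, fully faithfulness of $\shybounded_S \to \presh(\dAff_S)$ (essentially your argument, carried out entirely at the presheaf level); second, the assertion that the presheaf colimit of a diagram in $\Cc^{[a,b]}_{d,q}$ already satisfies étale descent, proved by running the same combination of \autoref{coconnective}, \autoref{iequi}, \autoref{colim-lim-swap} and \autoref{finite-groupoids} against a hypercovering $H_\bullet$ of an affine. Without this second step the mapping space into $\colim_j|Y_j|$ computed in $\dSt_S$ cannot be identified with the one computed in presheaves, and your argument does not close.
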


\begin{lem}\label{iequi}
Let $a,b,d,q$ be integers with $a \leq b$. Let $T \in \shy_S$ and $\bar X \colon K \to \Cc^{[a,b]}_{d,q}$ be a $\mathbb U$-small filtered diagram. For any $i \in \N$ there exists $N_i$ such that for any $n \geq N_i$ and any $k \in K$, the induced map
\[
\Map(\tau_{\leq n}T, \bar X(k)) \to \Map(\tau_{\leq n-1} T, \bar X(k))
\]
is an $i$-equivalence.
\end{lem}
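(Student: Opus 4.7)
The plan is to follow the obstruction-theoretic template of \autoref{prop-cocompact} one layer deeper. First I would fix a shy diagram $\bar T \colon \N\op \to \dSt_S$ realising $T$, so that every $\tau_{\leq m}\bar T(p)$ has cohomological dimension bounded by some $d_T$ depending only on $T$ (since the transition maps are affine and $\bar T(0)$ has finite cohomological dimension, as exploited in \autoref{prop-cocompact}). For each $k \in K$, the hypothesis $\bar X(k) \in \Cc^{[a,b]}_{d,q}$ supplies a shy diagram $X_{k,\bullet}$ with $\bar X(k) \simeq \lim_l X_{k,l}$ whose levels are algebraic derived stacks of finite presentation and whose cotangent complexes $\Lcot_{X_{k,l}/S}$ have tor-amplitude in $[a,b]$, uniformly in $k$ and $l$. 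Mapping out of $\tau_{\leq m}T$ commutes with $\lim_l$ (limits in the target), and for each $l$ the proposition \autoref{prop-cocompact} rewrites $\Map(\tau_{\leq m}T, X_{k,l})$ as $\colim_p \Map(\tau_{\leq m}\bar T(p), X_{k,l})$. The map of the lemma thus identifies with
\[
\lim_l \colim_p \bigl( \Map(\tau_{\leq n}\bar T(p), X_{k,l}) \to \Map(\tau_{\leq n-1}\bar T(p), X_{k,l}) \bigr).
\]

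At the innermost level the obstruction-theoretic calculation of \autoref{prop-cocompact} applies unchanged. The truncation $\tau_{\leq n}\bar T(p)$ is a square-zero extension of $\tau_{\leq n-1}\bar T(p)$ by $M_p[n]$ with $M_p$ in cohomological degree $0$, so by \autoref{obstruction} the fibre at any point $f \colon \tau_{\leq n-1}\bar T(p) \to X_{k,l}$ is controlled by $\Map(f^*\Lcot_{X_{k,l}/S}, M_p[n])$ together with the obstruction class in $\Map(f^*\Lcot_{X_{k,l}/S}, M_p[n+1])$. The tor-amplitude bound on $\Lcot_{X_{k,l}/S}$ and the cohomological-dimension bound $d_T$ on $\tau_{\leq n-1}\bar T(p)$ force the obstruction space to be $(s+1)$-connective and the fibre to be $s$-connective, where $s = n + a - d_T$, uniformly in $p, k, l$ and $f$.

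Taking $N_i$ to be a suitable linear function of $i, d_T, a$ ensures $s \geq i+1$ for $n \geq N_i$; the obstruction classes then vanish and each innermost restriction map is an $(i+1)$-equivalence, uniformly in $(p,k,l)$. Filtered colimits over $p$ commute with $\pi_j$ and hence preserve this $(i+1)$-equivalence at each $(k,l)$, while the $\N\op$-indexed limit over $l$ costs exactly one connectivity degree through the Milnor exact sequence (\autoref{exact-seq}), producing the required $i$-equivalence. Since $N_i$ depends only on $i, a$ and $d_T$, the bound is automatically uniform in $k$. The main bookkeeping subtlety is to propagate the uniform connectivity estimates through two nested limit/colimit passages; the uniformity of the bounds $(a,b,d,q)$ built into the definition of $\Cc^{[a,b]}_{d,q}$ is precisely what makes this possible.
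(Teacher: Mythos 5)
Your proof follows the same route as the paper's: decompose the map as a limit (over the target's shy diagram) of filtered colimits (over the source's shy diagram), apply the obstruction-theoretic connectivity estimate of \autoref{prop-cocompact} at the innermost level to get fibres that are $(a+n-d_T)$-connective uniformly, and lose exactly one degree through the Milnor sequence of \autoref{exact-seq} when passing to the limit. The paper likewise sets $N_i = i + d - a + 1$ with $d$ the cohomological dimension of the source and $a$ the uniform tor-amplitude bound coming from $\Cc^{[a,b]}_{d,q}$, so your argument is essentially identical.
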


\begin{rmq}
For the proof of this lemma, we actually do not need the integer $q$.
\end{rmq}

\begin{proof}
Let us fix $i \in \N$. Let $k \in K$ and $\bar T \colon \N \to \dSt_S$ be a shy diagram for $T$.
We observe here that $\tau_{\leq n} \bar T$ is a shy diagram whose limit is $\tau_{\leq n} T$. Let also $\bar Y_k \colon \N \to \dSt_S$ be a shy diagram for $\bar X(k)$.
The map at hand 
\[
\psi_{nk} \colon \Map(\tau_{\leq n}T, \bar X(k)) \to \Map(\tau_{\leq n-1} T, \bar X(k))
\]
is then the limit of the colimits 
\[
\lim_{p \in \N} \colim_{q \in \N} \Map(\tau_{\leq n} \bar T(q), \bar Y_k(p)) \to 
\lim_{p \in \N} \colim_{q \in \N} \Map(\tau_{\leq n-1} \bar T(q), \bar Y_k(p))
\]
Let now $f$ be a map $\tau_{\leq n-1} T \to \bar X(k)$. It corresponds to a family of morphisms
\[
f_p \colon \pt \to \colim_{q \in \N} \Map(\tau_{\leq n-1} \bar T(q), \bar Y_k(p))
\]
Moreover, the fibre $F_{nk}(f)$ of $\psi_{nk}$ over $f$ is the limit of the fibres $F_{nk}^p(f)$ of the maps
\[
\psi_{nk}^p \colon \colim_{q \in \N} \Map(\tau_{\leq n} \bar T(q), \bar Y_k(p)) \to 
\colim_{q \in \N} \Map(\tau_{\leq n-1} \bar T(q), \bar Y_k(p))
\]
over the points $f_p$.
Using the exact sequence of \autoref{exact-seq}, it suffices to prove that $F_{nk}^p(f)$ is $(i+1)$-connective for any $f$ and any $p$.
For such an $f$ and such a $p$, there exists $q_0 \in \N$ such that the map $f_p$ factors through the canonical map
\[
\Map(\tau_{\leq n-1} \bar T(q_0), \bar Y_k(p)) \to \colim_{q \in \N} \Map(\tau_{\leq n-1} \bar T(q), \bar Y_k(p))
\]
We deduce that $F_{nk}^p(f)$ is equivalent to the colimit
\[
F_{nk}^p(f) \simeq \colim_{q \geq q_0} G_{nk}^{pq}(f)
\]
where $G_{nk}^{pq}(f)$ is the fibre at the point induced by $f_p$ of the map
\[
\Map(\tau_{\leq n} \bar T(q), \bar Y_k(p)) \to \Map(\tau_{\leq n-1} \bar T(q), \bar Y_k(p))
\]
The interval $[a,b]$ contains the tor-amplitude of $\Lcot_{\bar Y_k(p)}$ and $d$ is an integer greater than the cohomological dimension of $\bar T(q)$. We saw in the proof of \autoref{prop-cocompact} that $G_{nk}^{pq}(f)$ is then $(a + n - d)$-connective.
We set $N_i = i + d - a +1$.
\end{proof}

\begin{proof}[of \autoref{ff-realisation}]
We will prove the sufficient following assertions
\begin{enumerate}[label=(\arabic*)]
\item The colimit functor $\Indu U(\shy_S) \to \presh(\dAff_S)$ restricts to a fully faithful functor
\[
\eta \colon \shybounded_S \to \presh(\dAff_S)
\]
\item The functor $\eta$ has values in the full subcategory of stacks.
\end{enumerate}
Let us focus on assertion (1) first. We consider two $\mathbb U$-small filtered diagrams $\bar X \colon K \to \shy_S$ and $\bar Y \colon L \to \shy_S$.
We have
\[
\Map_{\Indu U(\shy_S)}\left(\colim \bar X, \colim \bar Y\right) \simeq \lim_k \Map_{\Indu U(\shy_S)}(\bar X(k), \colim \bar Y)
\]
and
\[
\Map_{\presh(\dAff)}\left( \colim i_\mathrm{shy} \bar X, \colim i_\mathrm{shy} \bar Y \right) \simeq \lim_k \Map_{\presh(\dAff)}\left( i_\mathrm{shy} \bar X(k), \colim i_\mathrm{shy} \bar Y \right)
\]
We can thus replace the diagram $\bar X$ in $\shy_S$ by a simple object $X \in \shy_S$. We now assume that $\bar Y$ factors through $\Cc^{[a,b]}_{d,q}$ for some $a,b,d,q$. We have to prove that the following canonical morphism is an equivalence
\[
\phi \colon \colim_{l \in L} \Map(i_\mathrm{shy} X,i_\mathrm{shy} \bar Y(l)) \to \Map\left(i_\mathrm{shy} X, \colim i_\mathrm{shy} \bar Y\right)
\]
where the mapping spaces are computed in prestacks.
If $i_\mathrm{shy} X$ is affine then $\phi$ is an equivalence because colimits in $\presh(\dAff_S)$ are computed pointwise. Let us assume that $\phi$ is an equivalence whenever $i_\mathrm{shy} X$ is $(q-1)$-Artin and let us assume that $i_\mathrm{shy} X$ is $q$-Artin.
Let $u \colon U \to i_\mathrm{shy} X$ be an atlas of $i_\mathrm{shy} X$ and let $Z_\bullet$ be the nerve of $u$ in $\dSt_S$. We saw in the proof of \autoref{truncatedcompact} that $Z_\bullet$ factors through $\shy_S$.
The map $\phi$ is now equivalent to the natural map
\begin{align*}
\colim_{l \in L} \Map(i_\mathrm{shy} X,i_\mathrm{shy} \bar Y(l)) \to \lim_{[p]\in \Delta} & \colim_{l \in L} \Map(Z_p,i_\mathrm{shy} \bar Y(l)) \\ & \simeq \lim_{[p]\in \Delta} \Map\left(Z_p,\colim i_\mathrm{shy} \bar Y\right) \simeq \Map(i_\mathrm{shy} X,\colim i_\mathrm{shy} \bar Y)
\end{align*}
Remembering \autoref{coconnective}, it suffices to study the map
\[
\colim_{l \in L} \lim_n \Map(\tau_{\leq n} i_\mathrm{shy} X,i_\mathrm{shy} \bar Y(l)) \to \lim_{[p]\in \Delta} \colim_{l \in L} \lim_n \Map(\tau_{\leq n}Z_p,i_\mathrm{shy} \bar Y(l))
\]
Applying \autoref{iequi} and then \autoref{colim-lim-swap}, we see that $\phi$ is an equivalence if the natural morphism
\[
\lim_n \colim_{l \in L} \lim_{[p] \in \Delta} \Map(\tau_{\leq n}Z_p,i_\mathrm{shy} \bar Y(l)) \to \lim_n \lim_{[p] \in \Delta} \colim_{l \in L} \Map(\tau_{\leq n}Z_p, i_\mathrm{shy} \bar Y(l))
\]
is an equivalence. The stack $i_\mathrm{shy} \bar Y(l)$ is by assumption $q$-Artin, where $q$ does not depend on $l$. Now using \autoref{coconnective} and \autoref{finite-groupoids}, we conclude that $\phi$ is an equivalence. This proves (1). We now focus on assertion (2).
If suffices to see that the colimit in $\presh(\dAff_S)$ of the diagram $i_\mathrm{shy} \bar Y$ as above is actually a stack. Let $H_\bullet \colon \Delta\op \cup \{-1\} \to \dAff_S$ be an hypercovering of an affine $\Spec(A) = H_{-1}$. We have to prove the following equivalence
\[
\colim_l \lim_{[p] \in \Delta} \Map(H_p,i_\mathrm{shy} \bar Y(l)) \to \lim_{[p] \in \Delta} \colim_l \Map(H_p,i_\mathrm{shy} \bar Y(l))
\]
Using the same arguments as for the proof of (1), we have
\begin{align*}
\colim_l \lim_{[p] \in \Delta} \Map(H_p,i_\mathrm{shy} \bar Y(l))
&\simeq \colim_l \lim_{[p] \in \Delta} \lim_n \Map(\tau_{\leq n}H_p,i_\mathrm{shy} \bar Y(l)) \\
&\simeq \lim_n \colim_l \lim_{[p] \in \Delta} \Map(\tau_{\leq n}H_p,i_\mathrm{shy} \bar Y(l)) \\
&\simeq \lim_n \lim_{[p] \in \Delta} \colim_l \Map(\tau_{\leq n}H_p,i_\mathrm{shy} \bar Y(l)) \\
&\simeq \lim_{[p] \in \Delta} \colim_l \lim_n \Map(\tau_{\leq n}H_p,i_\mathrm{shy} \bar Y(l)) \\
&\simeq \lim_{[p] \in \Delta} \colim_l \Map(H_p,i_\mathrm{shy} \bar Y(l))
\end{align*}
\end{proof}

We will need one last lemma about that category $\shybounded_S$.
\begin{lem}\label{shydaff-limits}
The fully faithful functor $\shybounded_S \cap \IP\dAff_S \to \IP\dSt_S \to \dSt_S$ preserves finite limits.
\end{lem}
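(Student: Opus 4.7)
The plan is to verify the functor preserves the terminal object (which is trivially $S$) and pullbacks, which together generate all finite limits. I focus on pullbacks.

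Given a cospan $X \to Z \from Y$ in $\shybounded_S \cap \IP\dAff_S$, I would first write each term as a $\mathbb U$-small filtered colimit in $\Indu U(\shy_S)$ of pro-affines in $\Cc^{[a,b]}_{d,q} \cap \Prou U\dAff_S$ for suitable $a,b,d,q$. Applying \autoref{strictification} to the two structural maps and passing to a common cofinal filtered index $K$, I arrange the cospan as the colimit of a diagram $k \mapsto (X_k \to Z_k \from Y_k)$ of cospans of shy pro-affines. Setting $P_k = X_k \times_{Z_k} Y_k$ in $\Prou U\dAff_S$, the candidate pullback is $P = \colim_k P_k$.

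The next task is to verify $P \in \shybounded_S \cap \IP\dAff_S$. One more application of \autoref{strictification} lets me replace the shy diagrams for $X_k$, $Y_k$, $Z_k$ so that the cospan is realised levelwise by finitely presented affines $\bar X_k(n) \to \bar Z_k(n) \from \bar Y_k(n)$; the levelwise pullback is then a finitely presented affine with affine transition maps and zero cohomological dimension, yielding a shy diagram for $P_k$. The tor-amplitude of $\Lcot_{P_k}$ is controlled uniformly in $k$ via the cofibre sequence
\[
\Lcot_{Z_k}|_{P_k} \to \Lcot_{X_k}|_{P_k} \oplus \Lcot_{Y_k}|_{P_k} \to \Lcot_{P_k},
\]
so each $P_k$ lies in some $\Cc^{[a',b']}_{d',0}$ whose bounds depend only on $a,b,d,q$; hence $P \in \shybounded_S$.

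Finally, I need to show that the image of $P$ in $\dSt_S$ computes the pullback $X \times_Z Y$ there. The realisation of a pro-affine is a cofiltered limit in $\dSt_S$ and so commutes with the finite limit: the image of $P_k$ is the pullback of the images of $X_k$, $Y_k$ and $Z_k$. On the other hand, the proof of \autoref{ff-realisation} shows that, on $\shybounded_S$, the realisation is already computed at the level of presheaves of spaces on $\dAff_S$, where filtered colimits commute with finite limits pointwise. Combining these, the image of $P$ is the filtered colimit of the pullbacks of images, which equals the pullback $X \times_Z Y$ in $\dSt_S$. The main obstacle I anticipate is precisely this last compatibility, namely swapping a filtered colimit and a pullback in $\dSt_S$; it rests crucially on the output of \autoref{ff-realisation} that no sheafification intervenes for shybounded ind-pro-stacks, so both operations can be performed pointwise at the level of presheaves of spaces without leaving the essential image.
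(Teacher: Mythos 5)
Your proposal is correct and follows essentially the same route as the paper: strictify the cospan to a filtered diagram of cospans of shy pro-affines with uniform bounds, take levelwise pullbacks of affines, control the tor-amplitude of the cotangent complex of the pullback (the paper records the bound $[a-1,b+1]$), and conclude that the realisation of the resulting ind-pro-diagram computes the fibre product in $\dSt_S$. The paper is terser on the final commutation of the filtered colimit with the pullback, which you justify (validly) via the presheaf-level computation from \autoref{ff-realisation}; one could equally invoke that filtered colimits commute with finite limits in any $\infty$-topos.
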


\begin{proof}
The case of an empty limit is obvious. Let then $X \to Y \from Z$ be a diagram in $\shybounded_S \cap \IP\dAff_S$. There exist $a$ and $b$ and a diagram 
\[
\sigma \colon K \to \Fct\left( \Lambda^2_1, \Cc^{[a,b]}_{0,0} \right)
\]
such that $K$ is a $\mathbb U$-small filtered simplicial set and the colimit in $\IP\dSt_S$ is $X \to Y \from Z$. We can moreover assume that $\sigma$ has values in $\Fct(\Lambda^2_1, \Prou U(\dAff_S)) \simeq \Prou U(\Fct(\Lambda^2_1, \dAff_S))$.
We deduce that the fibre product $X \times_Y Z$ is the realisation of the ind-pro-diagram in derived affine stacks with cotangent complex of tor amplitude in $[a-1,b+1]$. It follows that $X \times_Y Z$ is again in $\shybounded_S \cap \IP\dAff_S$. 
\end{proof}

\section{Symplectic Tate stacks}%
\subsection{Tate stacks: definition and first properties}
\label{tatestacks}We can now define what a Tate stack is.
\begin{df}
A Tate stack is a derived Artin ind-pro-stack locally of finite presentation whose cotangent complex -- see \autoref{ipcotangent} -- is a Tate module.
Equivalently, an Artin ind-pro-stack locally of finite presentation is Tate if its tangent complex is a Tate module.
 We will denote by $\Tatestack_k$ the full subcategory of $\IP\dSt_k$ spanned by Tate stacks.\glsadd{tatestack}
\end{df}

This notion has several good properties. For instance, using \autoref{ipdst-tate-in-ipp}, if a $X$ is a Tate stack then comparing its tangent $\T_X$ and its cotangent $\Lcot_X$ makes sense, in the category of Tate modules over $X$.
We will explore that path below, defining symplectic Tate stacks.

Another consequence of Tatity\footnote{or Tateness or Tatitude} is the existence of a determinantal anomaly as defined in \cite{kapranovvasserot:loop2}. If $X$ is a Tate stack, then using the determinant map $\mathrm K \to \B\mathbb G_m$ -- see \cite{toen:derivedK3} for its formal definition -- we get a class $[\mathrm{det}_X] \in \homol^2(\Oo_X^{\times})$ : the determinantal anomaly -- see \autoref{determinantalanomaly}.

Let us consider the natural morphism of prestacks
\[
\theta \colon \Tateu U \to \mathrm K^{\Tate}
\]
where $\Tateu U$ denote the prestack $A \mapsto \Tateu U(\Perf(A))$ and $\mathrm K^{\Tate} \colon A \mapsto \K(\Tateu U(\Perf(A)))$ -- $\mathrm K$ denoting the connective $K$-theory functor.
From \autoref{ktheorysusp} we get an exact sequence
\[
\mymatrix{
\B \mathrm K \ar[r] & \mathrm K^{\Tate} \ar[r] & \mathrm K_0^{\Tate}
}
\]
\begin{lem}
The prestack $\mathrm K_0^{\Tate}$ vanishes Nisnevich-locally. It follows that the map $\tilde \theta$, obtained from $\theta$ by stackifying both ends, factors through the stack associated to $\B \mathrm K$.
\end{lem}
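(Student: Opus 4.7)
The plan is to identify the prestack $\mathrm K_0^{\Tate}$ with the classical negative $K$-theory functor $K_{-1}$, and then invoke the known Nisnevich-local vanishing of $K_{-1}$.

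First, I would unwind the definition of $\mathrm K_0^{\Tate}$ using \autoref{ktheorysusp}. By that corollary, for any cdga $A$ there is an equivalence of non-connective $K$-theory spectra
\[
\mathbb K(\Tateu U(\Perf(A))) \simeq \Sigma\, \mathbb K(\Perf(A)) = \Sigma\,\mathbb K(A).
\]
Since $\mathrm K^{\Tate}(A) = \K(\Tateu U(\Perf(A)))$ is by definition the connective cover of the left-hand side, one obtains $\pi_0 \mathrm K^{\Tate}(A) \simeq \pi_{-1}\mathbb K(A) = K_{-1}(A)$, and for $n \geq 1$ the homotopy group $\pi_n \mathrm K^{\Tate}(A) = K_{n-1}(A)$. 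The displayed cofiber sequence $\B\mathrm K \to \mathrm K^{\Tate} \to \mathrm K_0^{\Tate}$ is then simply the Postnikov decomposition of the prestack $\mathrm K^{\Tate}$: the fiber identifies with $\tau_{\geq 1}\mathrm K^{\Tate} \simeq \B\mathrm K$, and the cofiber is the Eilenberg--MacLane prestack on the group $\pi_0 = K_{-1}$.

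Second, I would reduce to the classical vanishing statement. Negative $K$-theory is nilinvariant, so for any cdga $A$ the canonical map $K_{-1}(A) \to K_{-1}(\pi_0 A)$ is an isomorphism. It therefore suffices to show that the classical presheaf $A \mapsto K_{-1}(A)$ on ordinary commutative rings is trivial after Nisnevich sheafification. This follows from the combination of Thomason--Trobaugh's Nisnevich descent for (non-connective) $K$-theory and the vanishing of $K_{-1}$ on strictly Henselian local rings (which reduces via Bass's fundamental theorem to the regular case when appropriate, and more generally is part of Weibel's vanishing conjecture, now established in our setting). Any class in $K_{-1}(A)$ becomes zero after pulling back to each of the strict Henselizations at points of $\Spec \pi_0 A$, hence vanishes on some Nisnevich cover.

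Finally, I would conclude by exactness of the Nisnevich stackification. Applied to the fiber sequence of prestacks of connective spectra $\B\mathrm K \to \mathrm K^{\Tate} \to \mathrm K_0^{\Tate}$, stackification produces a fiber sequence $(\B\mathrm K)^+ \to (\mathrm K^{\Tate})^+ \to (\mathrm K_0^{\Tate})^+$; the previous step makes the right-hand term trivial, so the left-hand map is an equivalence of stacks. The stackified morphism $\tilde\theta \colon (\Tateu U)^+ \to (\mathrm K^{\Tate})^+$ therefore lifts essentially uniquely through $(\B\mathrm K)^+$.

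The only substantive obstacle is the second step: writing down an honest Nisnevich cover trivializing an arbitrary class in $K_{-1}(A)$, rather than citing it. Since the paper eventually applies the construction only to qcqs (derived) schemes and Deligne--Mumford stacks with algebraisable diagonal, one may work over Noetherian classical bases, where the required vanishing on Henselian local rings is standard.
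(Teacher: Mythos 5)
Your overall strategy is the same as the paper's: identify $\pi_0\mathrm K^{\Tate}(A)$ with $\mathrm K_{-1}(A)$ via \autoref{ktheorysusp}, reduce from the derived ring $A$ to $\homol^0(A)$, and then kill $\mathrm K_{-1}$ on Henselian local rings so that the Nisnevich stalks of $\mathrm K_0^{\Tate}$ vanish. The derived-to-classical reduction is fine in substance (the paper does it by comparing the Bass exact sequences for $A$ and $\homol^0(A)$, using that $\mathrm K_0$ only sees the truncation; that is the precise content of your appeal to ``nilinvariance'').

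The genuine gap is in your justification of the crucial classical input, namely that $\mathrm K_{-1}(R)=0$ for $R$ a (strictly) Henselian local ring. Neither of the two reasons you offer establishes it. Weibel's vanishing conjecture asserts $\mathrm K_{-n}(X)=0$ for $n>\dim X$, so for a Henselian local ring of dimension $d\geq 1$ it says nothing about $\mathrm K_{-1}$; and the reduction ``via Bass's fundamental theorem to the regular case'' only applies when $R$ is regular, whereas $\mathrm K_{-1}$ of general (non-Henselian) normal local rings can be nonzero, so regularity cannot be assumed. The vanishing you need is a genuine theorem of Drinfeld -- \cite[theorem 3.7]{drinfeld:tate} -- valid for arbitrary Henselian local rings, and this is exactly what the paper cites. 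With that reference substituted for your second step, the argument closes; note also that once the stalks vanish you do not need Thomason--Trobaugh descent, since continuity of $K$-theory already identifies the Nisnevich stalks of the presheaf $A\mapsto \mathrm K_{-1}(A)$ with $\mathrm K_{-1}$ of Henselian local rings.
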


\begin{proof}
Is suffices to prove that for any Henselian cdga $A$, we have
\[
\mathrm K_{-1}(\Perf(A) \simeq \mathrm K_0(\Tateu U(\Perf(A))) \simeq 0
\]
A cdga $A$ is Henselian if and only if $\homol^0(A)$ is. Using the Bass exact sequences, we get
\[
\mymatrix{
\mathrm K_0(A[t]) \oplus K_0(A[t^{-1}]) \ar[r] \ar[d]^f & \mathrm K_0(A[t,t^{-1}]) \ar[r] \ar[d]^g & \mathrm K_{-1}(A) \ar[r] \ar[d]^h & 0 \\
\mathrm K_0(\homol^0(A)[t]) \oplus K_0(\homol^0(A)[t^{-1}]) \ar[r] & \mathrm K_0(\homol^0(A)[t,t^{-1}]) \ar[r] & \mathrm K_{-1}(\homol^0(A)) \ar[r] & 0
}
\]
Since $\mathrm K_0$ only depends on the non-derived part of an affine scheme (see \cite[2.3.2]{waldhausen:ktheory}), both $f$ and $g$ are isomorphisms and hence so is $h$. We can thus restrict to the non-derived case -- which can be found in \cite[theorem 3.7]{drinfeld:tate}.
\end{proof}

\begin{df}
We define the Tate determinantal map as the composite map
\[
\Tateu U \to \mathcal T \to \B \mathrm K \to \mathrm K(\Gm,2)
\]
where $\mathcal T$ is the stack associated to $\Tateu U$ and $\mathrm K(\Gm,2)$ is the Eilenberg-Maclane classifying stack.
To any derived stack $X$ with a Tate module $E$, we associate the determinantal anomaly $[\det_E] \in \homol^2(X,\Oo_X^{\times})$, image of $E$ by the morphism
\[
\Map(X,\Tateu U) \to \Map(X,\mathrm K(\Gm,2))
\]
\end{df}

\begin{rmq}
We built here some determinant map for Tate objects. Those kind of determinant maps already appeared in \cite{osipovzhu:categorical}.
\end{rmq}

Let now $X$ be an ind-pro-stack. Let also $R$ denote the realisation functor $\Prou U\dSt_k \to \dSt_k$. Let finally $\bar X \colon K \to \Prou U\dSt_k$ denote a $\mathbb U$-small filtered diagram whose colimit in $\IP\dSt_k$ is $X$. We have a canonical functor
\[
F_X \colon \lim \Tateu U_\mathbf P(\bar X) \simeq \Tateu U_\IP(X) \to \lim \Tateu U(R \bar X)
\]
\begin{df}\label{determinantalanomaly}
Let $X$ be an ind-pro-stack and $E$ be a Tate module on $X$. Let $X'$ be the realisation of $X$ in $\Indu U\dSt_k$ and $X''$ be its image in $\dSt_k$.
We define the determinantal anomaly of $E$ the image of $F_X(E)$ by the map
\[
\mymatrix{
\Map_{\Indu U\dSt_k}(X',\mathcal T) \to \Map_{\Indu U\dSt_k}(X',\mathrm K(\Gm,2)) \simeq \Map_{\dSt_k}(X'',\mathrm K(\Gm,2))
}
\]
In particular if $X$ is a Tate stack, we will denote by $[\det_X] \in \homol^2(X'',\Oo_{X''}^{\times})$ the determinantal anomaly associated to its tangent $\T_X \in \Tateu U_\IP(X)$.
\end{df}

Let us conclude this section with following
\begin{lem}\label{tate-limits}
The inclusion $\Tatestack_k \to \IP\dSt_k$ preserves finite limits.
\end{lem}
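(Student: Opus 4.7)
The plan is to reduce the claim to the standard generating cases for finite limits (the empty limit and pullbacks), and then to verify the three defining conditions for being a Tate stack: being ind-pro-Artin, being locally of finite presentation, and having Tate cotangent complex. The empty case is immediate: $\Spec k$ is Artin, locally of finite presentation, and has zero cotangent complex, which is tautologically a Tate module. So it suffices to treat pullbacks $W = X \times_Z Y$ of a diagram $X \to Z \from Y$ of Tate stacks, computed in $\IP\dSt_k$.

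To see that $W$ is Artin ind-pro and locally of finite presentation, one uses that $\IP\dSt_k = \Indu U \Prou U(\dSt_k)$ and that finite limits in $\Indu U(-)$ and $\Prou U(-)$ of diagrams indexed by locally finitely presented Artin stacks can be computed, up to refining the indexing diagrams to a common shape (as in \autoref{strictification}), by termwise pullback. Termwise pullbacks of derived Artin stacks locally of finite presentation remain Artin and locally of finite presentation, so $W$ lies in $\IP\dStArtlfp_k$.

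The core of the proof is to show $\Lcot_{W/k} \in \PIPerf(W)$ is Tate. Denote the projections by $p_X, p_Y, p_Z \colon W \to X,Y,Z$. Using \autoref{ipcotangent} and the compatibility of the cotangent functor with pullbacks of ind-pro-Artin stacks, one obtains a cofibre sequence
\[
p_{Z,\PI}^* \Lcot_{Z/k} \to p_{X,\PI}^* \Lcot_{X/k} \oplus p_{Y,\PI}^* \Lcot_{Y/k} \to \Lcot_{W/k}
\]
in $\PIPerf(W)$. By assumption each of $\Lcot_{X/k}, \Lcot_{Y/k}, \Lcot_{Z/k}$ is a Tate module, so by the characterisation of Tate modules via pullback (any pullback of a Tate module is Tate, since $\Tateu U_\IP$ is defined by right Kan extension and $\Tateu U$ is functorial in exact functors), their pullbacks to $W$ are Tate. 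As Tate modules form a stable subcategory of $\PIPerf$ (closed under finite limits, colimits, and retracts by construction), the cofibre $\Lcot_{W/k}$ is again Tate. The case of a binary product is an obvious specialization (take $Z = \Spec k$), and the cotangent then splits as a direct sum of pulled-back Tate modules.

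The only step that requires some care is the verification that finite limits of ind-pro-diagrams of locally finitely presented Artin stacks again lie in $\IP\dStArtlfp_k$, and more specifically the strictification argument needed to identify the cotangent complex of the limit with the termwise cofibre of the cotangents of the defining diagram; this is where \autoref{strictification} and the computational lemmas of \autoref{comma-quot-ind} and \autoref{between-pro} from the previous chapter are used. Everything else is a direct consequence of the stability and pullback-invariance of Tate modules.
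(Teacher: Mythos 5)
Your proof is correct and follows essentially the same route as the paper: reduce to the pullback case, observe that the fibre product stays Artin ind-pro and locally of finite presentation, and conclude from the standard (co)fibre sequence relating the (co)tangent of $X \times_Y Z$ to the pullbacks of those of $X$, $Y$, $Z$, using that Tate modules are stable under pullback and finite (co)limits. The paper phrases this with the cartesian square of tangent complexes rather than the cofibre sequence of cotangents, which is just the dual formulation, and it asserts the Artin-ness of the finite limit without the strictification detail you supply.
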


\begin{proof}
Let us first notice that a finite limit of Artin ind-pro-stacks is again an Artin ind-pro-stack. Let now $X \to Y \from Z$ be a diagram of Tate stacks.
The fibre product
\[
\mymatrix{
X \times_Y Z \cart \ar[d]_{p_Z} \ar[r]^-{p_X} & X \ar[d]^g \\ Z \ar[r] & Y
}
\]
is an Artin ind-pro-stack. It thus suffices to test if its tangent $\T_{X \times_Y Z}$ is a Tate module. The following cartesian square concludes
\[
\mymatrix{
\T_{X \times_Y Z} \cart \ar[r] \ar[d] & p_X^* \T_X \ar[d] \\ p_Z^* \T_Z \ar[r] & p_X^* g^* \T_Y
}
\]
\end{proof}

\subsection{Shifted symplectic Tate stacks}

\newcommand{\deRham}{\operatorname{\mathbf{DR}}}
\newcommand{\NCw}{\operatorname{\mathrm{NC^w}}}
\newcommand{\globalcomplex}{\operatorname{C}}

We assume now that the basis $S$ is the spectrum of a ring $k$ of characteristic zero.
Recall from \cite{ptvv:dersymp} the stack in graded complexes $\deRham$ mapping a cdga over $k$ to its graded complex of forms.
It actually comes with a mixed structure induced by the de Rham differential.
The authors also defined there the stack in graded complexes $\NCw$ mapping a cdga to its graded complex of closed forms.
Those two stacks are linked by a morphism $\NCw \to \deRham$ forgetting the closure.

We will denote by $\forms p, \closedforms p \colon \cdga_k \to \dgMod_k$ the complexes of weight $p$ in $\deRham[-p]$ and $\NCw[-p]$ respectively.
The stack $\forms p$ will therefore map a cdga to its complexes of $p$-forms while $\closedforms p$ will map it to its closed $p$-forms.
For any cdga $A$, a cocycle of degree $n$ of $\forms p (A)$ is an $n$-shifted $p$-forms on $\Spec A$. 
The functors $\closedforms p$ and $\forms p$ extend to functors
\[
\closedforms p,~ \forms p \colon \dSt_k\op \to \dgMod_k
\]\glsadd{forms}
\begin{df}
Let us denote by $\IPforms p$ and $\IPclosedforms p$ the extensions
\[
(\IP\dSt_k)\op \to \Prou U\Indu U\dgMod_k
\]
of $\forms p$ and $\closedforms p$, respectively.
They come with a natural projection $\IPclosedforms p \to \IPforms p$.

Let $X \in \IP\dSt_k$. An $n$-shifted (closed) $p$-form on $X$ is a morphism $k[-n] \to \IPforms p(X)$ (resp. $\IPclosedforms p(X)$). \glsadd{ipforms}
For any closed form $\omega \colon k[-n] \to \IPclosedforms p(X)$, the induced map $k[-n] \to \IPclosedforms p(X) \to \IPforms p(X)$ is called the underlying form of $\omega$.
\end{df}

\begin{rmq}
In the above definition, we associate to any ind-pro-stack $X = \colim_\alpha \lim_\beta X_{\alpha\beta}$ its complex of forms
\[
\IPforms p(X) = \lim_\alpha \colim_\beta \forms p(X_{\alpha\beta}) \in \Prou U \Indu U \dgMod_k
\]
\end{rmq}

For any ind-pro-stack $X$, the derived category $\PIQcoh(X)$ is endowed with a canonical monoidal structure. In particular, one defines a symmetric product $E \mapsto \Sym^2_\PI(E)$ as well as an antisymmetric product
\[
E \wedge_\PI E = \Sym_\PI^2(E[-1])[2]
\]

\begin{thm}\label{prop-formsareforms}
Let $X$ be an Artin ind-pro-stack over $k$.
The push-forward functor
\[
\pi^\PIQ_* \colon \PIQcoh(X) \to \Prou V \Indu V(\dgMod_k)
\]
exists (see \autoref{prop-piqcoh-right-adjoint}) and maps $\Lcot_X \wedge_\PI \Lcot_X$ to $\IPforms 2(X)$.
In particular, any $2$-form $k[-n] \to \IPforms 2(X)$ corresponds to a morphism $\Oo_X[-n] \to \Lcot_X \wedge_\PI \Lcot_X$ in $\PIQcoh(X)$.
\end{thm}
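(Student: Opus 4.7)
The plan is to reduce to the affine case and then propagate the identification through the $\Proext$ and $\Indext$ machinery developed earlier, using \autoref{two-ext} to match the two natural constructions.

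First, I would verify the statement for derived affine schemes $\Spec A$: here $\pi_*(\Lcot_A \wedge \Lcot_A) \simeq \forms 2(\Spec A)$ by the very definition of the complex of $2$-forms --- after all, $\forms 2(A)$ is the weight-$2$ part of $\Sym_A(\Lcot_A[1])$, which is $\Lcot_A \wedge \Lcot_A$ viewed as a $k$-module via the forgetful functor. Smooth descent then upgrades this to any derived Artin stack $T$: the wedge square commutes with smooth pullback, so the formula passes to the nerve of an atlas of $T$, giving a natural equivalence
\[
\pi_{T,*}(\Lcot_T \wedge \Lcot_T) \simeq \forms 2(T) \in \dgMod_k
\]
functorially in $T \in \dStArt_S$. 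In particular, one obtains a natural equivalence between the restrictions of the two functors $\forms 2$ and $\pi_{(-),*}(\Lcot \wedge \Lcot)$ to $\dStArt_S\op$.

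Next I would extend to an Artin pro-stack $X$ with shy diagram $\bar X \colon K\op \to \dStArt_S$. By \autoref{cotangent-pdst}, $\Lcot_X = \colim_k p_k^* \Lcot_{X_k}$ in $\IQcoh(X)$; since the tensor product in $\IQcoh(X)$ preserves filtered colimits in each variable (\autoref{indu-thm}.\ref{indmonoidal}), the wedge square $\Lcot_X \wedge_\mathbf I \Lcot_X$ is the ind-object with pieces $p_k^*(\Lcot_{X_k} \wedge \Lcot_{X_k})$. Applying \autoref{two-ext} to the derivation furnishing $\pi_{(-),*}(\Lcot \wedge \Lcot)$ identifies the ind-extension of $T \mapsto \pi_{T,*}(\Lcot_T \wedge \Lcot_T)$ with $\pi^\IQ_*(\Lcot_X \wedge_\mathbf I \Lcot_X)$; on the other hand, the ind-extension of $\forms 2$ is by definition $\IPforms 2(X)$ (at the pro-stack level). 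Combining with the previous step gives the equivalence on pro-stacks.

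Finally, for an ind-pro-stack $X$ written as a filtered colimit $\colim_\alpha X_\alpha$ of pro-stacks, the cotangent is $\Lcot_X = \lim_\alpha i_{\alpha,\PI}^* \Lcot_{X_\alpha}$ by \autoref{ipcotangent}, and the wedge square again distributes because the monoidal structure on $\PIQcoh(X)$ is the pro-extension of the one on each $\IQcoh(X_\alpha)$. A second application of \autoref{two-ext}, this time to the pro-extension, identifies $\pi^\PIQ_*(\Lcot_X \wedge_\PI \Lcot_X)$ with $\lim_\alpha \pi^\IQ_{\alpha,*}(\Lcot_{X_\alpha} \wedge_\mathbf I \Lcot_{X_\alpha})$, which by the pro-stack case equals $\lim_\alpha \IPforms 2(X_\alpha) = \IPforms 2(X)$. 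The main difficulty will be bookkeeping: checking that the monoidal structure, the formation of the wedge square, and the functoriality of $\pi_*^\PIQ$ are all natural enough for \autoref{two-ext} to apply, i.e.\ that the whole construction $T \mapsto (\pi_{T,*}(\Lcot_T \wedge \Lcot_T), \forms 2(T))$ fits into the formal setup of that lemma so that both extensions are computed by the same universal formula. The last assertion of the theorem then follows by adjunction between $\pi^*_\PI$ and $\pi^\PIQ_*$, since $\pi^*_\PI \Oo_X[-n] \simeq \Oo_X[-n]$.
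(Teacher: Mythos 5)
Your plan is correct and follows essentially the same route as the paper: the paper's proof cites \cite[1.14]{ptvv:dersymp} for the Artin-stack base case (which you re-derive via affine schemes and smooth descent), invokes \autoref{IPcotangent-underlying} — itself two applications of \autoref{two-ext}, matching your pro-stack and ind-pro-stack steps — and records the compatibility $\cotangent^\IP \wedge_\PI \cotangent^\IP \simeq \Proextu U \Indextu U(\cotangent \wedge \cotangent)$, which is your "wedge distributes through the extensions" bookkeeping. The final adjunction step is likewise how the paper reads off the last assertion.
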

\begin{proof}
This follows from \cite[1.14]{ptvv:dersymp}, from \autoref{IPcotangent-underlying} and from the equivalence
\[
\cotangent^\IP \wedge_\PI \cotangent^\IP = \Proextu U \Indextu U (\lambda) \wedge_\PI \Proextu U \Indextu U(\lambda) \simeq \Proextu U \Indextu U (\cotangent \wedge \cotangent)
\]
where $\cotangent^\IP$ is defined in the proof of \autoref{ipcotangent}.
\end{proof}

\begin{df}
Let $X$ be a Tate stack.
Let $\omega \colon k[-n] \to \IPforms 2(X)$ be an $n$-shifted $2$-form on $X$.
It induces a map in the category of Tate modules on $X$
\[
\underline \omega \colon \T_X \to \Lcot_X[n]
\]
We say that $\omega$ is non-degenerate if the map $\underline \omega$ is an equivalence.
A closed $2$-form is non-degenerate if the underlying form is.
\end{df}

\begin{df}\index{Symplectic Tate stack}
A symplectic form on a Tate stack is a non-degenerate closed $2$-form. A symplectic Tate stack is a Tate stack equipped with a symplectic form.
\end{df}

\subsection{Mapping stacks admit closed forms}

\newcommand{\EV}{\operatorname{EV}}

In this section, we will extend the proof from \cite{ptvv:dersymp} to ind-pro-stacks. Note that if $X$ is a pro-ind-stack and $Y$ is a stack, then $\Mapstack(X,Y)$ is an ind-pro-stack.
We will then need an evaluation functor $\Mapstack(X,Y) \times X \to Y$.
It appears that this evaluation map only lives in the category of ind-pro-ind-pro-stacks
\[
\colim_\alpha \lim_{\beta} \colim_{\xi} \lim_{\zeta} \Mapstack(X_{\alpha\zeta},Y) \times X_{\beta\xi} \to Y
\]

To build this map properly, we will need the following remark.
\begin{df}
Let $\Cc$ be a category.
There is one natural fully faithful functor
\[
\phi \colon \PI(\Cc) \to (\IP)^2(\Cc)
\]
but three $\IP(\Cc) \to (\IP)^2(\Cc)$. We will only consider the functor
\[
\psi \colon \IP(\Cc) \to (\IP)^2(\Cc)
\]
induced by the Yoneda embedding $\Pro(\Cc) \to \PI (\Pro (\Cc))$.
Let us also denote by $\xi$ the natural fully faithful functor $\Cc \to (\IP)^2 (\Cc)$.
\end{df}

We can now construct the required evaluation map. We will work for now on a more general basis. Let therefore $X$ be a pro-ind-stack over a stack $S$. Let also $Y$ be a stack.
Whenever $T$ is a stack over $S$, the symbol $\Map_S(T,Y)$ will denote the internal hom from $X$ to $Y \times S$ in $\dSt_S$. It comes with an evaluation map $\ev \colon \Mapstack_S(T,Y) \times_S T \to Y \times S \in \dSt_S$.

Let $y \colon \dSt_S \to \dSt_S$ denote the functor $T \mapsto Y \times T$
There exists a natural transformation
\[
\EV \colon \overcat_{\dSt_S\op} \to \overcat_{\dSt_S}^{\times} \circ y\op
\]
between functors $\dSt_S\op \to \inftyCat$. For a stack $X$ over $S$, the functor
\[
\EV_X \colon \left(\comma{X}{\dSt_S} \right) \op \to \dSt_{Y \times X}
\]
maps a morphism $X \to T$ to the map
\[
\mymatrix@1{
\displaystyle \Mapstack_S(T,Y) \times_S X \ar[r] & \displaystyle \Mapstack_S(X,Y) \times_S X \ar[r]^-{\ev \times \pr} &  Y \times X
}
\]
Let us consider the natural transformation
\[
\Proext_{\dSt_S\op}(\EV) \colon \overcat_{(\Indu U \dSt_S)\op} \to \Proext_{\dSt_S\op} \left( \overcat_{\dSt_S}^{\times} \circ y\op \right)
\]
of functors $(\Indu U\dSt_S)\op \to \inftyCat$.
We define $\EV^{\Ind}$ to be the natural transformation
\[
\EV^{\Ind} = \Upsilon^{\dSt_S\op} \circ \Proext_{\dSt_S\op}(\EV)
\]
where $\Upsilon^{\dSt_S\op}$ is defined as in \autoref{pro-quot-times}.
To any $X \in \Indu U\dSt_S$ it associates a functor
\[
\EV^{\Ind}_X \colon \left(\comma{X}{\Indu U\dSt_S} \right) \op \to \IP\dSt_{Y \times X}
\]

\begin{df}
Let $Y$ be a stack.
We define the natural transformation $\EV^{\PI}$
\[
\EV^{\PI} = \Xi^{\Indu U\dSt_S\op} \circ \Indext_{(\Ind\dSt_S)\op}(\EV^{\Ind}) \colon \overcat_{\PI\dSt_S} \to \overcat^{\times}_{\IP^2\dSt_S} \circ y\op
\]
where $\Xi^{\Indu U\dSt_S\op}$ is defined in \autoref{ind-quot-times}.
To any $X \in \PI\dSt_S$ it associates a functor
\[
\EV^{\PI}_X \colon \left( \comma{X}{\PI\dSt_S} \right) \op \to \quot{\IP^2\dSt_S}{Y \times X}
\]
We then define the evaluation map in $\IP^2\dSt_S$
\[
\ev^{X,Y} \colon \mymatrix@1{\displaystyle \psi \Mapstack_S(X,Y) \times_S \phi X \ar[rr]^-{\EV^{\PI}_X(X)} && \xi Y \times \phi X \ar[r] & \xi Y}
\]
\end{df}

We assume now that $S = \Spec k$.
Let us recall the following definition from \cite[2.1]{ptvv:dersymp}
\begin{df}
A derived stack $X$ is $\Oo$-compact if for any derived affine scheme $T$ the following conditions hold
\begin{itemize}
\item The quasi-coherent sheaf $\Oo_{X \times T}$ is compact in $\Qcoh(X \times T)$ ;
\item Pushing forward along the projection $X \times T \to T$ preserves perfect complexes.
\end{itemize}
Let us denote by $\dSt_k^{\Oo}$ the full subcategory of $\dSt_k$ spanned by $\Oo$-compact derived stacks.
\end{df}

\begin{df}
An $\Oo$-compact pro-ind-stack is a pro-ind-object in the category of $\Oo$-compact derived stacks.
We will denote by $\PI\dSt^{\Oo}_k$ their category.
\end{df}

\begin{lem}
There is a functor
\[
\PI \dSt_k^\Oo \to \Fct\left( \IP\dSt_k \times \Delta^1 \times \Delta^1, (\IP)^2 (\dgMod_k)\op \right)
\]
defining for any $\Oo$-compact pro-ind-stack $X$ and any ind-pro-stack $F$ a commutative square
\[
\mymatrix{
\closedforms p_{\IP^2} (\psi F \times \phi X) \ar[r] \ar[d] & \IPclosedforms p(\psi F) \otimes_k \phi \Oo_X \ar[d] \\
\forms p_{\IP^2} (\psi F \times \phi X) \ar[r] & \IPforms p(\psi F) \otimes_k \phi \Oo_X
}
\]
where $\closedforms p_{\IP^2}$ and $\forms p_{\IP^2}$ are the extensions of $\IPclosedforms p$ and $\IPforms p$ to
\[
(\IP)^2\dSt_k \to (\IP)^2 (\dgMod_k\op)
\]
\end{lem}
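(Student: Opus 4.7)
The plan is to first establish the square at the level of ordinary stacks, and then extend by the universal properties of the various $\Ind$ and $\Pro$ completions developed earlier in the chapter.

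The starting point is a Künneth-type statement for $\Oo$-compact stacks. Let $Y$ be an $\Oo$-compact derived stack and $G$ any derived stack. Using the splitting $\Lcot_{G \times Y} \simeq \pr_G^*\Lcot_G \oplus \pr_Y^*\Lcot_Y$, the complex of $p$-forms on $G \times Y$ decomposes (up to the total weight) into pieces of the form $\forms{i}(G) \otimes \pr_{G,*}\pr_Y^*\Sym^{p-i}\Lcot_Y$. The $\Oo$-compactness of $Y$ ensures that the $i=p$ summand can be identified with $\forms p(G) \otimes_k \globalcomplex(Y,\Oo_Y)$ (compare \cite[2.1]{ptvv:dersymp}), and projection onto this piece gives a natural map
\[
\forms p(G\times Y) \to \forms p(G) \otimes_k \globalcomplex(Y,\Oo_Y).
\]
The analogous construction for closed forms, using the mixed structure of $\deRham$ and $\NCw$, produces a compatible map at the level of $\closedforms p$, and the square obtained by forgetting closure commutes by construction. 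By naturality in both $G$ and $Y$, this assembles into a natural transformation between functors $\dSt_k \times \dSt_k^\Oo \to \Fct(\Delta^1\times\Delta^1,\dgMod_k\op)$.

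Now I extend. Fix first the $F$-variable as a derived stack. Applying $\Proext_{\dSt_k\op}$ and then $\Indext_{(\Prou U\dSt_k)\op}$ in the $Y$-variable, using \autoref{indext-monoidal} to propagate the monoidal product $\otimes_k$ through, yields a natural transformation in which the second slot is fed by a pro-ind-stack $X$ (landing on the $\phi$-side of $(\IP)^2\dgMod_k$). Doing the same for $F$ — first $\Indext$, then $\Proext$ — puts the first slot on the $\psi$-side. Together, the product $\psi F \times \phi X$ in $(\IP)^2\dSt_k$ is sent by the extension of $\forms p$, resp.\ $\closedforms p$, to a pro-ind-pro-ind-module, and the extended transformation gives the horizontal arrows of the announced square. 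Commutativity of the square and its naturality in both $F \in \IP\dSt_k$ and $X \in \PI\dSt_k^\Oo$ follow because $\Indext$ and $\Proext$ preserve natural transformations and commutative squares, and the pointwise tensor product with $\phi\Oo_X$ is computed via the same universal property.

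The main obstacle is bookkeeping the four nested $\Ind/\Pro$ layers and ensuring that the tensor product $\otimes_k$ and the extension procedures commute as required. Concretely, one has to verify that taking $\Indext$/$\Proext$ of a bilinear functor (forms tensored with $\Oo$) agrees, under the equivalences \autoref{comma-ind}, \autoref{comma-quot-ind} and \autoref{indext-monoidal}, with the bilinear functor formed from the extensions. This is a routine but lengthy check of compatibilities between \autoref{two-ext} applied in each variable, together with the fact that the Kan extensions involved preserve the monoidal structure of $\dgMod_k$ in each argument; once this is granted, the commutativity of the target square propagates automatically through the extensions.
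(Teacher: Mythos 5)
Your proposal is correct and follows essentially the same route as the paper: the commutative square for ordinary stacks is exactly the construction of \cite[part 2.1]{ptvv:dersymp} (which the paper simply cites rather than re-deriving via the Künneth splitting as you do), and the passage to ind-pro- and pro-ind-objects is the same functorial extension through the nested $\Ind$/$\Pro$ layers, which the paper writes as the composite $\PI\dSt^\Oo_k \to \PI\Fct(\Pro\dSt_k\times\Delta^1\times\Delta^1,\Pro(\dgMod_k\op)) \to \Fct(\Pro\dSt_k\times\Delta^1\times\Delta^1,\PI\Pro(\dgMod_k\op)) \to \Fct(\IP\dSt_k\times\Delta^1\times\Delta^1,(\IP)^2(\dgMod_k\op))$. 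Your closing compatibility checks (monoidality of the extensions, ordering of $\Indext$/$\Proext$ in each variable) are exactly the bookkeeping the paper leaves implicit.
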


\begin{proof}
Recall in \cite[part 2.1]{ptvv:dersymp} the construction for any $\Oo$-compact stack $X$ and any stack $F$ of a commutative diagram:
\[
\mymatrix{
\NCw(F \times X) \ar[r] \ar[d] & \NCw(F) \otimes_k \Oo_X \ar[d] \\
\deRham(F \times X) \ar[r] & \deRham(F) \otimes_k \Oo_X
}
\]
Taking the part of weight $p$ and shifting, we get
\[
\mymatrix{
\closedforms p(F \times X) \ar[r] \ar[d] & \closedforms p(F) \otimes_k \Oo_X \ar[d] \\
\forms p(F \times X) \ar[r] & \forms p(F) \otimes_k \Oo_X
}
\]
This construction is functorial in both $F$ and $X$ so it corresponds to a functor
\[
\dSt^\Oo_k \to \Fct(\dSt_k \times \Delta^1 \times \Delta^1, \dgMod_k\op)
\]
We can now form the functor
\begin{align*}
\PI\dSt^\Oo_k
&\to \PI \Fct\left(\Pro \dSt_k \times \Delta^1 \times \Delta^1, \Pro (\dgMod_k\op) \right) \\
&\to \Fct\left(\Pro \dSt_k \times \Delta^1 \times \Delta^1, \PI \Pro (\dgMod_k\op) \right) \\
&\to \Fct\left(\IP \dSt_k \times \Delta^1 \times \Delta^1, (\IP)^2 (\dgMod_k\op) \right) \\
\end{align*}
By construction, for any ind-pro-stack $F$ and any $\Oo$-compact pro-ind-stack, it induces the commutative diagram
\[
\mymatrix{
\closedforms p_{\IP^2}(\psi F \times \phi X) \ar[r] \ar[d] & \psi\IPclosedforms p(F) \otimes_k \phi\Oo_X \ar[d] \\
\forms p_{\IP^2}(\psi F \times \phi X) \ar[r] & \psi\IPforms p(F) \otimes_k \phi\Oo_X
}
\]
\end{proof}
\begin{rmq}
Let us remark that we can informally describe the horizontal maps using the maps from \cite{ptvv:dersymp}:
\begin{align*}
\Theta_{\IP^2}(\psi F \times \phi X) = \lim_\alpha & \colim_\beta \lim_\gamma \colim_\delta \Theta(F_{\alpha\delta} \times X_{\beta\gamma})\\
& \to \lim_\alpha \colim_\beta \lim_\gamma \colim_\delta \Theta(F_{\alpha\delta}) \otimes (\Oo_{X_{\beta\gamma}}) = \psi \Theta_{\IP}(F) \otimes \phi \Oo_X \\
\end{align*}
where $\Theta$ is either $\closedforms p$ or $\forms p$.
\end{rmq}

\begin{df}
Let $F$ be an ind-pro-stack and let $X$ be an $\Oo$-compact pro-ind-stack. Let $\eta \colon \Oo_X \to k[-d]$ be a map of ind-pro-$k$-modules. Let finally $\Theta$ be either $\closedforms p$ or $\forms p$. We define the integration map
\[
\int_\eta \colon \mymatrix@1{\Theta_{\IP^2}(\psi F \times \phi X) \ar[r] & \psi \Theta_{\IP}(F) \otimes \phi \Oo_X \ar[r]^-{\id \otimes \phi \eta} & \psi\Theta_{\IP}(F)[-d]}
\]
\end{df}

\begin{thm}\label{ipdst-form}
Let $Y$ be a derived stack and $\omega_Y$ be an $n$-shifted closed $2$-form on $Y$. Let $X$ be an $\Oo$-compact pro-ind-stack and let also $\eta \colon \Oo_X \to k[-d]$ be a map. The mapping ind-pro-stack $\Mapstack(X,Y)$ admits an $(n-d)$-shifted closed $2$-form.
\end{thm}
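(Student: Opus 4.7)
The plan is to mimic the PTVV construction of shifted symplectic forms on mapping stacks, but carried out in the larger ambient category $(\IP)^2\dSt_k$ of ind-pro-ind-pro-stacks, where the evaluation map has already been constructed as $\ev^{X,Y}\colon \psi\Mapstack(X,Y)\times\phi X\to\xi Y$.

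First, I would pull back the closed $2$-form $\omega_Y$ along this evaluation. Concretely, $\omega_Y$ corresponds to a map $k[-n]\to\closedforms{2}(Y)$, and by functoriality of $\closedforms{2}_{\IP^2}$ on $(\IP)^2\dSt_k$ together with the canonical maps $\closedforms{2}(Y)\to\closedforms{2}_{\IP^2}(\xi Y)$, we obtain a class
\[
(\ev^{X,Y})^*\omega_Y\colon k[-n]\to\closedforms{2}_{\IP^2}\bigl(\psi\Mapstack(X,Y)\times\phi X\bigr).
\]
Second, I would apply the integration map constructed in the previous lemma,
\[
\int_\eta\colon\closedforms{2}_{\IP^2}\bigl(\psi\Mapstack(X,Y)\times\phi X\bigr)\longrightarrow \psi\,\IPclosedforms{2}\bigl(\Mapstack(X,Y)\bigr)[-d],
\]
which exists because $X$ is $\Oo$-compact and $\eta\colon\Oo_X\to k[-d]$ is given. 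The composition $\int_\eta(\ev^{X,Y})^*\omega_Y$ is a morphism $k[-(n-d)]\to\psi\,\IPclosedforms{2}(\Mapstack(X,Y))$, and since the functor $\psi\colon\IP(\dgMod_k\op)\to(\IP)^2(\dgMod_k\op)$ is fully faithful, this is equivalent to a morphism $k[-(n-d)]\to\IPclosedforms{2}(\Mapstack(X,Y))$, which is by definition an $(n-d)$-shifted closed $2$-form on $\Mapstack(X,Y)$.

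The main subtlety, rather than any difficult computation, will be checking that the evaluation map $\ev^{X,Y}$ really does allow pulling back forms, i.e.\ that the extension $\closedforms{2}_{\IP^2}$ is functorial on all of $(\IP)^2\dSt_k$ in the required sense, and that the composition of the constructions $\EV$, $\EV^{\Ind}$, $\EV^{\PI}$ used to build $\ev^{X,Y}$ is compatible with the integration map defined above. This ultimately reduces to the naturality statements encoded in the lemma producing the square
\[
\mymatrix{
\closedforms{2}_{\IP^2}(\psi F\times\phi X)\ar[r]\ar[d] & \psi\IPclosedforms{2}(F)\otimes_k\phi\Oo_X\ar[d]\\
\forms{2}_{\IP^2}(\psi F\times\phi X)\ar[r] & \psi\IPforms{2}(F)\otimes_k\phi\Oo_X
}
\]
applied to $F=\Mapstack(X,Y)$, so the argument is essentially formal once the framework of the previous sections is in place. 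No non-degeneracy is claimed, so the proof stops here; non-degeneracy (the Tate/symplectic property) will be addressed separately for the bubble space.
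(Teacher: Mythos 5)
Your proposal matches the paper's proof: both pull back $\omega_Y$ along the evaluation map $\ev^{X,Y}$ in $(\IP)^2\dSt_k$, apply the integration map $\int_\eta$ furnished by $\Oo$-compactness of $X$ and the orientation $\eta$, and then use full faithfulness of $\psi$ to descend the resulting class to an $(n-d)$-shifted closed $2$-form on $\Mapstack(X,Y)$. The compatibility issues you flag are exactly what the preceding lemmas on $\EV^{\PI}$ and the commutative square for $\closedforms{2}_{\IP^2}$ are there to handle, so the argument is the same.
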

\begin{proof}
Let us denote by $Z$ the mapping ind-pro-stack $\Map(X,Y)$.
We consider the diagram
\[
\mymatrix@1{
\chi k[-n] \ar[r]^-{\omega_Y} & \chi \closedforms 2 (Y) \ar[r]^-{\ev^*} & \closedforms 2_{\IP^2} (X \times Z) \ar[r]^-{\int_\eta} & \psi\closedforms 2_{\IP} (Z)[-d]
}
\]
where $\chi \colon \dgMod_k \to^\xi \IP(\dgMod_k\op) \to^\psi (\IP)^2(\dgMod_k\op)$ is the canonical inclusion.
Note that since the functor $\psi$ is fully faithful, this induces a map in $\IP(\dgMod_k\op)$
\[
\mymatrix@1{
\xi k \ar[r] & \IPclosedforms 2 (Z)[n-d]}
\]
and therefore a an $(n-d)$-shifted closed $2$-form on $Z = \Map(X,Y)$. The underlying form is given by the composition
\[
\mymatrix@1{
\chi k[-n] \ar[r]^-{\omega_Y} & \chi \forms 2 (Y) \ar[r]^-{\ev^*} & \forms 2_{\IP^2} (X \times Z) \ar[r]^-{\int_\eta} & \psi\forms 2_{\IP} (Z)[-d]
}
\]
\end{proof}
\begin{rmq} \label{describe-form}
\todo{Formalise}Let us describe the form issued by \autoref{ipdst-form}.
We set the notations $X = \lim_\alpha \colim_\beta X_{\alpha\beta}$ and $Z_{\alpha\beta} = \Map(X_{\alpha\beta},Y)$. By assumption, we have a map
\[
\eta \colon \colim_\alpha \lim_\beta \Oo_{X_{\alpha\beta}} \to k[-d]
\]
For any $\alpha$, there exists therefore $\beta(\alpha)$ and a map $\eta_{\alpha\beta(\alpha)} \colon \Oo_{X_{\alpha\beta(\alpha)}} \to k[-d]$ in $\dgMod(k)$.
Unwinding the definitions, we see that the induced form $\int_\eta \omega_Y$
\[
\mymatrix@1{
\xi k \ar[r] & \IPforms 2 (\Mapstack(X,Y))[n-d]  \simeq \lim_\alpha \colim_\beta \forms 2 (Z_{\alpha\beta})[n-d]
}
\]
is the universal map obtained from the maps
\[
\mymatrix@1{
k \ar[r]^-{\omega_{\alpha\beta(\alpha)}} & \forms 2 (Z_{\alpha\beta(\alpha)})[n-d] \ar[r] & \colim_\beta \forms 2 (Z_{\alpha\beta})[n-d]
}
\]
where $\omega_{\alpha\beta(\alpha)}$ is built using $\eta_{\alpha\beta(\alpha)}$ and the procedure of \cite{ptvv:dersymp}. Note that $\omega_{\alpha\beta(\alpha)}$ can be seen as a map $\T_{X_{\alpha\beta(\alpha)}} \otimes \T_{X_{\alpha\beta(\alpha)}} \to \Oo_{X_{\alpha\beta(\alpha)}}$.
We also know from \autoref{prop-formsareforms} that the form $\int_\eta \omega_Y$ induces a map
\[
\T_Z \otimes \T_Z \to \Oo_Z[n-d]
\]
in $\IPP(Z)$. Let us fix $\alpha_0$ and pull back the map above to $Z_{\alpha_0}$. We get
\[
\colim_{\alpha \geq \alpha_0} \lim_\beta g_{\alpha_0\alpha}^* p_{\alpha\beta}^* ( \T_{Z_{\alpha\beta}} \otimes \T_{Z_{\alpha\beta}}) \simeq i_{\alpha_0}^* (\T_Z \otimes \T_Z) \to \Oo_{Z_{\alpha_0}}[n-d]
\]
This map is the universal map obtained from the maps
\begin{align*}
\lim_\beta g_{\alpha_0\alpha}^* p_{\alpha\beta}^* ( \T_{Z_{\alpha\beta}} \otimes \T_{Z_{\alpha\beta}}) \to{} & g_{\alpha_0\alpha}^* p_{\alpha\beta(\alpha)}^* ( \T_{Z_{\alpha\beta(\alpha)}} \otimes \T_{Z_{\alpha\beta(\alpha)}})
\\ &\to g_{\alpha_0\alpha}^* p_{\alpha\beta(\alpha)}^* (\Oo_{X_{\alpha\beta(\alpha)}})[n-d] \simeq \Oo_{X_{\alpha_0}}[n-d]
\end{align*}
where $g_{\alpha_0\alpha}$ is the structural map $Z_{\alpha_0} \to Z_\alpha$ and $p_{\alpha\beta}$ is the projection $Z_\alpha = \lim_\beta Z_{\alpha\beta} \to Z_{\alpha\beta}$.
\end{rmq}

\subsection{Mapping stacks have a Tate structure}
\newcommand{\coker}{\operatorname{coker}}
\begin{df}\label{map-cotate}
Let $S$ be an $\Oo$-compact pro-ind-stack. We say that $S$ is an $\Oo$-Tate stack if
there exist a poset $K$ and a diagram $\bar S \colon K\op \to \Indu U \dSt_k$ such that
\begin{enumerate}
\item The limit of $\bar S$ in $\PI\dSt_k$ is equivalent to $S$ ;\label{map-diagproj}
\item For any $i \leq j \in K$ the pro-module over $\bar S(i)$ \label{map-structuralring}
\[
\coker\left(\Oo_{\bar S(i)} \to \bar S(i \leq j)_* \Oo_{\bar S(j)} \right)
\]
is trivial in the pro-direction -- ie belong to $\Qcoh(\bar S(i))$.
\item For any $i \leq j \in K$ the induced map $\bar S(i \leq j)$ is represented by a diagram
\[
\bar f \colon L \times \Delta^1 \to \dSt_k
\]
such that
\begin{itemize}
\item For any $l \in L$ the projections $\bar f(l,0) \to \pt$ and $\bar f(l,1) \to \pt$ satisfy the base change formula ;
\item For any $l \in L$ the map $\bar f(l)$ satisfies the base change and projection formulae ;
\item For any $m \leq l \in L$ the induced map $\bar f(m \leq l, 0)$ satisfies the base change and projection formulae.
\end{itemize}
\end{enumerate}
\end{df}

\begin{rmq}
We will usually work with pro-ind-stacks $S$ given by an explicit diagram already satisfying those assumptions.
\end{rmq}

\begin{prop}\label{map-tate}
Let us assume that $Y$ is a derived Artin stack locally of finite presentation.
Let $S$ be an $\mathcal O$-compact pro-ind-stack.
If $S$ is an $\Oo$-Tate stack then the ind-pro-stack $\Mapstack(S,Y)$ is a Tate stack.
\end{prop}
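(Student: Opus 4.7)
The plan is to use the explicit ind-pro presentation $\Mapstack(S,Y) \simeq \colim_i \lim_l \Mapstack(\bar S(i)(l), Y)$ coming from the $\Oo$-Tate diagram of $S$, compute the cotangent complex at the pro-stack level via \autoref{ipcotangent}, and then exhibit an elementary Tate exact sequence whose discrete and codiscrete pieces are controlled by the cokernel hypothesis (condition (2) of \autoref{map-cotate}) together with the base change and projection formulae (condition (3)).

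\textbf{Step 1 (Geometricity).} Let $\bar S \colon K\op \to \Indu U \dSt_k$ be a diagram satisfying the conditions of \autoref{map-cotate}, with $\bar S(i) = \colim_l \bar S(i)(l)$. Since each $\bar S(i)(l)$ is $\Oo$-compact and $Y$ is derived Artin locally of finite presentation, each $Z_{il} := \Mapstack(\bar S(i)(l), Y)$ is derived Artin lfp by the standard representability theorem for mapping stacks. Writing $Z_i := \lim_l Z_{il}$, the presentation $\Mapstack(S,Y) \simeq \colim_i Z_i$ exhibits it as an object of $\IP\dStArtlfp_k$.

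\textbf{Step 2 (Cotangent and Tate structure on each $Z_i$).} By \autoref{ipcotangent}, the pullback of the cotangent to $Z_i$ is $\lim_{j\geq i} g_{ij}^* \Lcot_{Z_j} \in \PIPerf(Z_i)$, where $g_{ij} \colon Z_i \to Z_j$ is induced by $\bar S(i\leq j)$. Using the base change and projection formulae (condition (3)), one rewrites
\[
g_{ij}^* \Lcot_{Z_j} \;\simeq\; \pi_{i,*}\!\left(p_2^*\, \bar S(i\leq j)_* \Oo_{\bar S(j)} \otimes \ev_i^* \Lcot_Y\right),
\]
where $\pi_i$, $p_2$, $\ev_i$ are respectively the first projection, second projection, and evaluation for $Z_i \times \bar S(i)$. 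The short exact sequence of condition (2) gives
\[
\Oo_{\bar S(i)} \to \bar S(i\leq j)_* \Oo_{\bar S(j)} \to C_{ij}
\]
with $C_{ij} \in \Qcoh(\bar S(i))$. Tensoring with $\ev_i^* \Lcot_Y$, applying $\pi_{i,*}$, and taking the limit over $j \geq i$ yields an exact triangle in $\PIPerf(Z_i)$ of the form $\Lcot_{Z_i} \to \Lcot_{\Mapstack(S,Y)}|_{Z_i} \to L_i^{\mathbf P}$. The left-hand term lies in $\IPerf(Z_i)$ (ind-direction) by \autoref{cotangent-pdst}; the right-hand term lies in $\Prou U \Perf(Z_i)$ (pro-direction) because condition (2) strips the potential ind-inflation, leaving only the pro-structure from $\lim_j$. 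Hence the pullback to each $Z_i$ is an elementary Tate module.

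\textbf{Step 3 (Globalisation).} Since $\Tateu U_\IP$ is the right Kan extension of $\Tateu U_\mathbf P$ along $(\Prou U \dSt_k)\op \to (\IP\dSt_k)\op$, and the maps $Z_i \to \Mapstack(S,Y)$ form the defining ind-diagram, the compatible Tate structures produced in Step 2 assemble into a Tate module structure on the global cotangent. Combined with Step 1, this exhibits $\Mapstack(S,Y)$ as a Tate stack, and \autoref{ipdst-tate-in-ipp} dualises to present the tangent as a Tate module.

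\textbf{Main obstacle.} The critical technical point is Step 2: verifying that $L_i^{\mathbf P} := \lim_{j\geq i} \pi_{i,*}(p_2^* C_{ij} \otimes \ev_i^* \Lcot_Y)$ truly lies in $\Prou U \Perf(Z_i)$ rather than merely in $\PIPerf(Z_i)$. This splits into (a) showing termwise perfectness, relying on $\Oo$-compactness of each $\bar S(i)(l)$ and perfectness of $\Lcot_Y$, and (b) ruling out any ind-inflation from $C_{ij}$, which is precisely the content of condition (2). A secondary difficulty is the coherent assembly of the pointwise Tate sequences as $i$ varies: this is controlled by the naturality afforded by condition (3) but requires careful tracking of the homotopy-coherent data, especially via the strictification tools developed in \autoref{chaptercats}.
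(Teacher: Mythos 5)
Your proposal follows essentially the same route as the paper's proof: reduce to showing that the restriction of the cotangent complex to each pro-stack $Z_i$ is Tate, exhibit it in an exact triangle whose other two vertices are the ind-perfect $\Lcot_{Z_i}$ and a pro-perfect relative term, and use conditions (2) and (3) of \autoref{map-cotate} to identify that relative term and rule out any residual ind-direction. One concrete correction: the displayed formula $g_{ij}^*\Lcot_{Z_j}\simeq \pi_{i,*}\bigl(p_2^*\,\bar S(i\leq j)_*\Oo_{\bar S(j)}\otimes \ev_i^*\Lcot_Y\bigr)$ is false as written, since the mapping-stack formula reads $\T_{\Mapstack(T,Y)}\simeq \pr_*\ev^*\T_Y$ and not its cotangent analogue; the base-change and projection manipulations must be carried out on tangent complexes (as the paper does) and dualised at the end, which also reverses your triangle — the canonical map is $s_i^*\Lcot_Z\to\Lcot_{Z_i}$ with the pro-perfect piece appearing as its fibre, not $\Lcot_{Z_i}$ as a subobject (either rotation still witnesses elementary Tateness, so the conclusion survives). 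Finally, the step you correctly flag as the main obstacle is where all the work lies: for fixed $j$ one must strictify $\bar S(i\leq j)$ into a levelwise diagram over $L$, write the relative term as $\colim_l p_l^*\Lcot_{Z_{il}/Z_{jl}}[-1]$, and prove this $L$-diagram essentially constant by reducing, via the base-change and projection formulae of condition (3), to the assertion that the transition maps between the cokernels $\coker\bigl(\Oo_{\bar S(i)}\to \bar S(i\leq j)_*\Oo_{\bar S(j)}\bigr)$ are equivalences, which is exactly what condition (2) supplies.
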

\begin{proof}
Let $Z = \Map(S,Y)$ as an ind-pro-stack.
Let $\bar S \colon K\op \to \Indu U \dSt_k$ be as in \autoref{map-cotate}.
We will denote by $\bar Z \colon K \to \Prou U\dSt_k$ the induced diagram and for any $i \in K$ by $s_i \colon \bar Z(i) \to \bar Z$ the induced map.

Let us first remark that $Z$ is an Artin ind-pro-stack locally of finite presentation.
It suffices to prove that $s_i^* \Lcot_{Z}$ is a Tate module on $\bar Z(i)$, for any $i \in K$. Let us fix such an $i$ and denote by $Z_i$ the pro-stack $\bar Z(i)$.

We consider the differential map
\[
s_i^* \Lcot_{Z} \to \Lcot_{Z_i}
\]
It is by definition equivalent to the natural map
\[
\lim \cotangent^{\Pro}_{Z_i}(\bar Z|_{K^{\geq i}}) \to^f \cotangent^{\Pro}_{Z_i}(Z_i)
\]
where $K^{\geq i}$ is the comma category $\comma{i}{K}$ and $\bar Z|_{K^{\geq i}}$ is the induced diagram
\[
K^{\geq i} \to \comma{Z_i}{\Prou U\dSt_S}
\]
Let $\phi_i$ denote the diagram
\[
\phi_i \colon \left(K^{\geq i}\right)\op \to \IPerf(Z_i)
\]
obtained as the kernel of $f$. It is now enough to prove that $\phi_i$ factors through $\Perf(Z_i)$.

Let $j \geq i$ in $K$ and let us denote by $g_{ij}$ the induced map $Z_i \to Z_j$ of pro-stacks.
Let $\bar f \colon L \times \Delta^1 \to \dSt_k$ represents the map $\bar S(i \leq j) \colon \bar S(j) \to \bar S(i) \in \Indu U \dSt_k$ as in assumption \ref{map-diagproj} in \autoref{map-cotate}.
Up to a change of $L$ through a cofinal map, we can assume that the induced diagram
\[
\coker\left(\Oo_{\bar S(i)} \to \bar S(i \leq j)_* \Oo_{\bar S(j)}\right)
\]
is essentially constant -- see assumption \ref{map-structuralring}.
We denote by $\bar h \colon L\op \times \Delta^1 \to \dSt_k$ the induced diagram, so that $g_{ij}$ is the limit of $\bar h$ in $\Prou U \dSt_k$.
For any $l \in L$ we will denote by $h_l \colon Z_{il} \to Z_{jl}$ the map $\bar h(l)$.
Let us denote by $\bar Z_i$ the induced diagram $l \mapsto Z_{il}$ and by $\bar Z_j$ the diagram $l \mapsto Z_{jl}$.
Let also $p_l$ denote the projection $Z_i \to Z_{il}$

We have an exact sequence
\[
\phi_i(j) \to \colim_l p_l^* h_l^* \Lcot_{Z_{jl}} \to \colim_l p_l^* \Lcot_{Z_{il}}
\]
Let us denote by $\psi_{ij}$ the diagram obtained as the kernel
\[
\psi_{ij} \to \cotangent^{\Pro}_{Z_i}(\bar Z_j) \to \cotangent^{\Pro}_{Z_i}(\bar Z_i)
\]
so that $\phi_i(j)$ is the colimit $\colim \psi_{ij}$ in $\IPerf(Z_i)$.
It suffices to prove that the diagram $\psi_{ij} \colon L \to \Perf(Z_i)$ is essentially constant (up to a cofinal change of posets).
By definition, we have
\[
\psi_{ij}(l) \simeq p_l^* \Lcot_{Z_{il}/Z_{jl}} [-1]
\]
Let $m \to l$ be a map in $L$ and $t$ the induced map $Z_{il} \to Z_{im}$. The map $\psi_{ij}(m \to l)$ is equivalent to the map $p_l^* \xi$ where $\xi$ fits in the fibre sequence in $\Perf(Z_{il})$
\[
\mymatrix{
t^* \Lcot_{Z_{im}/Z_{jm}} [-1] \ar[r] \ar[d]_\xi & t^* h_m^* \Lcot_{Z_{jm}} \ar[d] \ar[r] & t^* \Lcot_{Z_{im}} \ar[d] \\
\Lcot_{Z_{il}/Z_{jl}} [-1] \ar[r] & h_l^* \Lcot_{Z_{jl}} \ar[r] & \Lcot_{Z_{il}}
}
\]
We consider the dual diagram
\[
\mymatrix{
t^* \T_{Z_{im}/Z_{jm}} [1] \ar@{<-}[r] \ar@{<-}[d] & t^* h_m^* \T_{Z_{jm}} \ar@{<-}[d] \ar@{<-}[r] & t^* \T_{Z_{im}} \ar@{<-}[d] \\
\T_{Z_{il}/Z_{jl}} [1] \ar@{<-}[r] & h_l^* \T_{Z_{jl}} \ar@{<-}[r] & \T_{Z_{il}} \ar@{}[ul]|{(\sigma)}
}
\]
Using base change along the maps from $S_{im}$, $S_{jm}$ and $S_{jl}$ to the point, we get that the square $(\sigma)$ is equivalent to
\[
\mymatrix{
\pi_* (\id \times s f_m)_* (\id \times s f_m)^* E  & \ar[l] \pi_* (\id \times s)_* (\id \times s)^* E \\
\pi_* (\id \times f_l)_* (\id \times f_l)^* E \ar[u] & \pi_* E \ar[l] \ar[u]
}
\]
where $\pi \colon Z_{il} \times S_{il} \to Z_{il}$ is the projection, where $s \colon S_{im} \to S_{il}$ is the map induced by $m \to l$ and where $E \simeq \ev^* \T_Y$ with $\ev \colon Z_{il} \times S_{il} \to Y$ the evaluation map.
Note that we use here the well known fact $\T_{\Map(X,Y)} \simeq \pr_* \ev^* \T_Y$ where
\[
\mymatrix{
\Map(X,Y) & \Map(X,Y) \times X \ar[r]^-\ev \ar[l]_\pr & Y
}
\]
are the canonical maps.

Now using the projection and base change formulae along the morphisms $s$, $f_l$ and $f_m$ we get that $(\sigma)$ is equivalent to the image by $\pi_*$ of the square
\[
\mymatrix{
E \otimes p^* s_* {f_m}_* \Oo_{S_{jm}} & E \otimes p^* s_* \Oo_{S_{im}} \ar[l] \\
E \otimes p^* {f_l}_* \Oo_{S_{jl}} \ar[u] & E \otimes p^* \Oo_{S_{il}} \ar[u] \ar[l]  
}
\]
We therefore focus on the diagram
\[
\mymatrix{
s_* {f_{m}}_* \Oo_{S_{jm}} & s_* \Oo_{S_{im}} \ar[l] \\
{f_l}_* \Oo_{S_{jl}} \ar[u] & \Oo_{S_{il}} \ar[l] \ar[u]
}
\]
The map induced between the cofibres is an equivalence, using assumption \ref{map-structuralring}.
It follows that the diagram $\psi_{ij}$ is essentially constant, and thus that $Z$ is a Tate stack.
\end{proof}
\end{chap-indpro}

\chapter{Loop and bubble spaces}\label{chapterloops}%
\begin{chap-loops}
In this chapter, we will at last define and study the higher dimensional formal loop spaces. We will prove it admits a Tate structure and then study the bubble space, an object defined using the formal loop space. We will then prove the bubble space to admit a symplectic structure.
\section{Formal loops}%

\subsection{Dehydrated algebras and de Rham stacks}
In this part, we define a refinement of the reduced algebra associated to a cdga. This allows us to define a well behaved de Rham stack associated to an infinite stack. Indeed, without any noetherian assumption, the nilradical of a ring -- the ideal of nilpotent elements -- is a priori not nilpotent itself.
The construction below gives an alternative definition of the reduced algebra -- which we call the dehydrated algebra -- associated to any cdga $A$, so that $A$ is, in some sense, a nilpotent extension of its dehydrated algebra.
Whenever $A$ is finitely presented, this construction coincides with the usual reduced algebra. 
\begin{df}
Let $A \in \cdga_k$.
We define its dehydrated algebra as the ind-algebra $A_\mathrm{deh} = \colim_{I} \quot{\homol^0(A)}{I}$ where the colimit is taken over the filtered poset of nilpotent ideals of $\homol^0(A)$. The case $I = 0$ gives a canonical map $A \to A_\mathrm{deh}$ in ind-cdga's.
This construction is functorial in $A$.
\end{df}

\begin{rmq}
Whenever $A$ is of finite presentation, then $A_\mathrm{deh}$ is equivalent to the reduced algebra associated to $A$. In that case, the nilradical $\sqrt{A}$ of $A$ is nilpotent.
Moreover, if $A$ is any cdga, it is a filtered colimits of cdga's $A_\alpha$ of finite presentation. We then have $A_\mathrm{deh} \simeq \colim (A_\alpha)_\mathrm{red}$ in ind-algebras.
\end{rmq}

\begin{lem}
The realisation $B$ of $A_\mathrm{deh}$ in the category of algebras is equivalent to the reduced algebra $A_\mathrm{red}$.
\end{lem}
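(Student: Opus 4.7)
The plan is as follows. The realisation of the ind-algebra $A_{\mathrm{deh}} = \colim_I H^0(A)/I$ in the $1$-category of algebras is the genuine filtered colimit of the diagram $I \mapsto H^0(A)/I$, taken over the poset of nilpotent ideals of $H^0(A)$. Since filtered colimits of quotient algebras are computed as quotients by the (filtered) union of the defining ideals, I would identify this realisation as
\[
B \simeq H^0(A) / N, \qquad N = \bigcup_{I \text{ nilpotent}} I = \sum_{I \text{ nilpotent}} I.
\]
Note that the poset of nilpotent ideals is already filtered, since the sum $I + J$ of two nilpotent ideals is nilpotent.

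The heart of the proof is then the identification $N = \sqrt{H^0(A)}$, where $\sqrt{H^0(A)}$ denotes the nilradical. One inclusion is immediate: every element of a nilpotent ideal is nilpotent, so $N \subseteq \sqrt{H^0(A)}$. For the other, given a nilpotent element $x \in H^0(A)$ with $x^n = 0$, the principal ideal $(x)$ satisfies $(x)^n = 0$ and is therefore a nilpotent ideal containing $x$; hence $\sqrt{H^0(A)} \subseteq N$.

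Combining these, $B \simeq H^0(A)/\sqrt{H^0(A)}$, which by definition is the reduced algebra $A_{\mathrm{red}}$ associated to $A$ (recall $A$ is concentrated in non-positive degrees, so its reduction only sees $H^0$). There is no real obstacle here; the only subtlety worth flagging is that, in the absence of any noetherian hypothesis, the ideal $\sqrt{H^0(A)}$ need not itself be nilpotent, which is precisely why the ind-structure on $A_{\mathrm{deh}}$ retains strictly more information than its $1$-categorical realisation $B$.
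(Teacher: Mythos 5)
Your proof is correct and rests on the same key observation as the paper's: the nilradical of $\homol^0(A)$ is the union of all nilpotent ideals, because a nilpotent element $x$ with $x^n=0$ generates a nilpotent ideal $(x)$. The paper phrases this by first checking $B$ is reduced and then producing the map $A_{\mathrm{red}} \to B$, whereas you compute the filtered colimit directly as the quotient by $\bigcup_I I$, but the mathematical content is identical.
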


\begin{proof}
Let us first remark that $B$ is reduced. Indeed any nilpotent element $x$ of $B$ comes from a nilpotent element of $A$. It therefore belongs to a nilpotent ideal $(x)$.
This define a natural map of algebras $A_\mathrm{red} \to B$. To see that it is an isomorphism, it suffices to say that $\sqrt{A}$ is the union of all nilpotent ideals.
\end{proof}

\begin{df}
Let $X$ be a prestack. We define its de Rham prestack $X_\mathrm{dR}$ as the composition\glsadd{derham}
\[
\mymatrix{
\cdga_k \ar[r]^-{(-)_\mathrm{deh}} & \Indu U(\cdga_k) \ar[r]^-{\Indu U(X)} & \Indu U(\sSets) \ar[r]^-{\colim} & \sSets
}
\]
This defines an endofunctor of $(\infty,1)$-category $\presh(\dAff_k)$.
We have by definition
\[
X_\mathrm{dR}(A) = \colim_{I} X\left( \quot{\homol^0(A)}{I} \right)
\]
\end{df}

\begin{rmq}
If $X$ is a stack of finite presentation, then it is determined by the images of the cdga's of finite presentation. The prestack $X_\mathrm{dR}$ is then the left Kan extension of the functor
\[
\app{\cdgaunbounded_k^{\leq 0\mathrm{,fp}}}{\sSets}{A}{X(A_\mathrm{red})}
\]
\end{rmq}

\begin{df}
Let $f \colon X \to Y$ be a functor of prestacks. We define the formal completion $\hat X_Y$ of $X$ in $Y$ as the fibre product
\[
\mymatrix{
\hat X_Y \cart \ar[r] \ar[d] & X_\mathrm{dR} \ar[d] \\ Y \ar[r] & Y_\mathrm{dR}
}
\]
This construction obviously defines a functor $\mathrm{FC} \colon \presh(\dAff_k)^{\Delta^1} \to \presh(\dAff_k)$.
\end{df}

\begin{rmq}
The natural map $\hat X_Y \to Y$ is formally étale, in the sense that for any $A \in \cdga_k$ and any nilpotent ideal $I \subset \homol^0(A)$ the morphism
\[
\hat X_Y (A) \to \hat X_Y \left(\textstyle \quot{\homol^0(A)}{I} \right) \timesunder[Y\left(\quot{\homol^0(A)}{I} \right)][][-4pt] Y(A)
\]
is an equivalence.
\end{rmq}

\subsection{Higher dimensional formal loop spaces}
Here we finally define the higher dimensional formal loop spaces.
To any cdga $A$ we associate the formal completion $V_A^d$ of $0$ in $\A^d_A$. We see it as a derived affine scheme whose ring of functions $A[\![X_{1\dots d}]\!]$ is the algebra of formal series in $d$ variables $\el{X}{d}$. Let us denote by $U_A^d$ the open subscheme of $V_A^d$ complementary of the point $0$.
We then consider the functors $\dSt_k \times \cdga_k \to \sSets$
\begin{align*}
& \kaplooppre^d_V \colon (X,A) \mapsto \Map_{\dSt_k}(V_A^d, X) \\
& \kaplooppre^d_U \colon (X,A) \mapsto \Map_{\dSt_k}(U_A^d, X)
\end{align*}

\begin{df}
Let us consider the functors $\kaplooppre_U^d$ and $\kaplooppre_V^d$ as functors $\dSt_k \to \presh(\dAff)$. They come with a natural morphism $\kaplooppre_V^d \to \kaplooppre_U^d$.
We define $\kaplooppre^d$ to be the pointwise formal completion of $\kaplooppre_V^d$ into $\kaplooppre_U^d$ :
\[
\kaplooppre^d(X) = \mathrm{FC}\left(\kaplooppre^d_V(X) \to \kaplooppre^d_U(X)\right)
\]
We also define $\kaploop^d$, $\kaploop^d_U$ and $\kaploop^d_V$ as the stackified version of $\kaplooppre^d$, $\kaplooppre^d_U$ and $\kaplooppre^d_V$ respectively.
We will call $\kaploop^d(X)$ the formal loop stack in $X$.\glsadd{loopstack}
\end{df}

\begin{rmq}
The stack $\kaploop^d_V(X)$ is a higher dimensional analogue to the stack of germs in $X$, as studied for instance by Denef and Loeser in \cite{denefloeser:germs}.
\end{rmq}

\begin{rmq}
By definition, the derived scheme $U_A^d$ is the (finite) colimit in derived stacks
\[
U_A^d = \colim_q \colim_{\el{i}{q}} \Spec\left( A[\![X_{1\dots d}]\!][X^{-1}_{i_1\dots i_q}] \right)
\]
where $A[\![X_{1\dots d}]\!][X^{-1}_{i_1\dots i_q}]$ denote the algebra of formal series localized at the generators $\iel{X^{-1}}{q}$.
It follows that the space of $A$-points of $\kaploop^d(X)$ is equivalent to the simplicial set
\[
\kaploop^d(X)(A) \simeq \colim _{I\subset \homol^0(A)} \lim_q \lim_{\el{i}{q}} \Map\left( \Spec\left(A[\![X_{1\dots d}]\!][X^{-1}_{i_1\dots i_q}]^{\sqrt{I}}\right), X \right)
\]
where $A[\![X_{1 \dots d}]\!][X^{-1}_{i_1\dots i_q}]^{\sqrt{I}}$ is the sub-cdga of $A[\![X_{1 \dots d}]\!][X^{-1}_{i_1\dots i_q}]$ consisting of series
\[
\sum_{\el{n}{d}} a_{\el{n}{d}} X_1^{n_1} \dots X_d^{n_d}
\]
where $a_{\el{n}{d}}$ is in the kernel of the map $A \to \quot{\homol^0(A)}{I}$ as soon as at least one of the $n_i$'s is negative. Recall that in the colimit above, the symbol $I$ denotes a nilpotent ideal of $\homol^0(A)$.
\end{rmq}

\begin{lem}\label{LV-epi}
Let $X$ be a derived Artin stack of finite presentation with algebraisable diagonal (see \autoref{alg-diag}) and let $t \colon T = \Spec(A) \to X$ be a smooth atlas. The induced map $\kaploop_V^d(T) \to \kaploop_V^d(X)$ is an epimorphism of stacks.
\end{lem}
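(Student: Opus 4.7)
The plan is to verify the lifting criterion defining an epimorphism of stacks: given $B \in \cdga_k$ and a $B$-point $\varphi \colon V_B^d \to X$ of $\kaploop_V^d(X)$, I will produce, after an étale cover $B \to B'$, a lift $\tilde\varphi \colon V_{B'}^d \to T$ satisfying $t \circ \tilde\varphi \simeq \varphi_{B'}$.

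First I would restrict $\varphi$ along the closed immersion $\Spec B \hookrightarrow V_B^d$ obtained by setting the coordinates to zero, producing $\varphi_0 \colon \Spec B \to X$. Since $t$ is a smooth atlas it admits sections étale-locally, so there exist an étale cover $B \to B'$ and a lift $s_0 \colon \Spec B' \to T$ of the pullback of $\varphi_0$ to $B'$.

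Next, writing $V_{B',n}^d$ for the $n$-th order thickening $\Spec\bigl(B'[X_1,\ldots,X_d]/(X_1,\ldots,X_d)^n\bigr)$, each inclusion $V_{B',n}^d \hookrightarrow V_{B',n+1}^d$ is a nilpotent closed immersion and can be refined as a finite tower of square-zero extensions. Using the formal smoothness of $t$, I would extend $s_0$ inductively to a coherent tower of lifts $s_n \colon V_{B',n}^d \to T$ equipped with equivalences $\alpha_n \colon t \circ s_n \simeq \varphi_{B'}|_{V_{B',n}^d}$. At each step, \autoref{obstruction} identifies the obstruction to lifting with a class in a mapping space involving $\Lcot_{T/X}$; this class vanishes because $t$ is smooth, and the space of lifts is controlled by the same cotangent group, so one may simultaneously extend both $s_n$ and $\alpha_n$.

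Because $T$ is an affine derived scheme and $B'[\![X_1,\ldots,X_d]\!] \simeq \lim_n B'[X_1,\ldots,X_d]/(X_1,\ldots,X_d)^n$, representability gives
\[
\Map(V_{B'}^d, T) \simeq \lim_n \Map(V_{B',n}^d, T)
\]
so the tower $\{s_n\}$ glues to a map $\tilde\varphi \colon V_{B'}^d \to T$. It remains to upgrade the tower $\{\alpha_n\}$ to a single equivalence $t \circ \tilde\varphi \simeq \varphi_{B'}$ inside $\Map(V_{B'}^d, X)$. This is where the algebraisability of the diagonal of $X$ enters: by \autoref{alg-diag}, the comparison map $\psi(B'[X_1,\ldots,X_d])_{(X_1,\ldots,X_d)}$ is fully faithful, and this full faithfulness will promote the compatible system $\{\alpha_n\}$ on finite thickenings to an equivalence on the formal neighbourhood, closing the argument; étaleness of $B \to B'$ then yields the desired epimorphism.

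The hard part will be the last step. Invoking algebraisability of the diagonal requires that the family $\{(s_n,\alpha_n)\}$ truly define coherent points in the relevant $\lim_n$ of mapping spaces, and this demands that the obstruction-theoretic extensions of step two be carried out with enough higher coherence. Thus the technical core of the proof reduces to organising these infinitesimal lifts as a genuine pro-system so that full faithfulness of $\psi$ can be applied.
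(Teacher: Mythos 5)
Your proposal follows essentially the same route as the paper: restrict to the closed point, lift étale-locally using the atlas, extend inductively along the nilpotent thickenings by smoothness, and invoke algebraisability of the diagonal to pass from the compatible system on finite thickenings to the formal neighbourhood. The paper handles the coherence issue you flag at the end by mapping the tower into the single algebraisable stack $\Spec(B[\![X_{1\dots d}]\!]) \times_X T$ (algebraisable because the diagonal of $X$ is and the ambient product is affine), which packages your lift $\tilde\varphi$ and the homotopy $\{\alpha_n\}$ into one object and lets $\psi$ be applied once rather than separately for $T$ and for $X$.
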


\begin{proof}
It suffices to study the map $\kaplooppre_V^d(T) \to \kaplooppre_V^d(X)$.
Let $B$ be a cdga.
Let us consider a $B$-point $x \colon \Spec B \to \kaplooppre_V^d(X)$. It induces a $B$-point of $X$
\[
\Spec B \to \Spec(B[\![X_{1\dots d}]\!]) \to^x X
\]
Because $t$ is an epimorphism, there exist an étale map $f \colon \Spec C \to \Spec B$ and a commutative diagram
\[
\mymatrix{
\Spec C \ar[r]^-c \ar[d]_f & T \ar[d]^t \\ \Spec B \ar[r] & X
}
\]
It corresponds to a $C$-point of $\Spec B \times_X T$.
For any $n \in \N$, let us denote by $S_n$ the spectrum $\Spec C_n$, by $X_n$ the spectrum $\Spec B_n$ and by $T_n$ the pullback $T \times_X X_n$.
We will also consider the natural fully faithful functor $\Delta^n \simeq \{0,\dots,n\} \to \N$.
We have a natural diagram
\[
\alpha_0 \colon \Lambda^{2,2} \times \N \amalg_{\Lambda^{2,2} \times \Delta^0} \Delta^2 \times \Delta^0 \to \dSt_k
\]
informally drown has a commutative diagram
\[
\mymatrix@R=0pt{
S_0 \ar[d] \ar@/_20pt/[dd] \ar[r] & \dots \ar[r] & S_n \ar[r] \ar[d] & \dots \\
X_0 \ar[r] & \dots \ar[r] & X_n \ar[r] & \dots \\
T_0 \ar[u] \ar[r] & \dots \ar[r] & T_n \ar[r] \ar[u] & \dots
}
\]
Let $n \in \N$ and let us assume we have built a diagram
\[
\alpha_n \colon (\Lambda^{2,2} \times \N) \amalg_{\Lambda^{2,2} \times \Delta^n}  \Delta^2 \times \Delta^n \to \dSt_k
\]
extending $\alpha_{n-1}$.
There is a sub-diagram of $\alpha_n$
\[
\mymatrix{
S_n \ar[r] \ar[d] & S_{n+1} \\ T_n \ar[r] & T_{n+1} \ar[d]^{t_{n+1}} \\ & X_{n+1}
}
\]
Since the map $t_{n+1}$ is smooth (it is a pullback of $t$), we can complete this diagram with a map $S_{n+1} \to T_{n+1}$ and a commutative square. Using the composition in $\dSt_k$, we get a diagram $\alpha_{n+1}$ extending $\alpha_n$.
We get recursively a diagram $\alpha \colon \Delta^2 \times \N \to \dSt_k$. Taking the colimit along $\N$, we get a commutative diagram
\[
\mymatrix{
\Spec C \ar[d]_f \ar[r] & \colim_n \Spec C_n \ar[d] \ar[rr] && T \ar[d]^t \\
\Spec B \ar[r] & \colim_n \Spec B_n \ar[r] & \Spec(B[\![X_{1\dots d}]\!]) \ar[r] & X
}
\]
This defines a map $\phi \colon \colim \Spec(C_n) \to \Spec(B[\![X_{1\dots d}]\!]) \times_X T$.
We have the cartesian diagram
\[
\mymatrix{
\Spec(B[\![X_{1\dots d}]\!]) \times_X T \ar[r] \ar[d] \cart & X \ar[d] \\ \Spec(B[\![X_{1 \dots d}]\!]) \times T \ar[r] & X \times X
}
\]
The diagonal of $X$ is algebraisable and thus so is the stack $\Spec(B[\![X_{1\dots d}]\!]) \times_X T$. The morphism $\phi$ therefore defines the required map
\[
\Spec(C[\![X_{1 \dots d}]\!]) \to \Spec(B[\![X_{1\dots d}]\!]) \times_X T
\]
\end{proof}

\begin{rmq}
Let us remark here that if $X$ is an algebraisable stack, then $\kaplooppre_V^d(X)$ is a stack, hence the natural map is an equivalence
\[
\kaplooppre_V^d(X) \simeq \kaploop_V^d(X)
\]
\end{rmq}

\begin{lem}\label{LU-fetale}
Let $f \colon X \to Y$ be an étale map of derived Artin stacks. 
For any cdga $A \in \cdga_k$ and any nilpotent ideal $I \subset \homol^0(A)$, the induced map
\[
\theta \colon \mymatrix{
\kaplooppre^d_U(X)(A) \ar[r] & \kaplooppre^d_U(X)\left(\quot{\homol^0(A)}{I}\right) \displaystyle \timesunder[\kaplooppre^d_U(Y)\left(\quot{\homol^0(A)}{I}\right)][][-4pt] \kaplooppre^d_U(Y)(A)
}
\]
is an equivalence.
\end{lem}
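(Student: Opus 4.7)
The plan is to reduce the statement to the affine pieces of the punctured formal neighbourhood and then invoke the lifting property of étale maps against nilpotent extensions. Recall from the remark just before the lemma that
\[
U_A^d \simeq \colim_{\emptyset \neq J \subset \{1,\dots,d\}} \Spec B_{A,J},
\qquad B_{A,J} = A[\![X_1,\dots,X_d]\!]\bigl[X_j^{-1}\bigr]_{j\in J},
\]
as a \emph{finite} colimit in derived stacks. Hence $\kaplooppre^d_U(X)(A)\simeq \lim_J X(B_{A,J})$, and the analogous formula holds for $Y$ and for $A/I$ in place of $A$. Since finite limits commute with the finite limit defining the right-hand side of $\theta$, it suffices to prove that for every $J$ the natural map
\[
\theta_J \colon X(B_{A,J}) \longrightarrow X(B_{A/I,J}) \times_{Y(B_{A/I,J})} Y(B_{A,J})
\]
is an equivalence.

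The first substantive step is to observe that the surjection $A \twoheadrightarrow A/I$ induces, for each $J$, a surjection $B_{A,J}\twoheadrightarrow B_{A/I,J}$ whose kernel is $I\cdot B_{A,J}$ and, because $I\subset \homol^0(A)$ satisfies $I^n=0$, this kernel is still nilpotent: any element lies in $I\cdot\homol^0(B_{A,J})$ (after passing to $\homol^0$), and coefficient-wise a product of $n$ such elements falls into $I^n\cdot\homol^0(B_{A,J})=0$. More precisely, the map $B_{A,J}\to B_{A/I,J}$ has connective fibre and its $\homol^0$-kernel is a nilpotent ideal, so it is a (finite iterated) nilpotent square-zero extension in the sense of derived deformation theory.

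The second step is to use that $f\colon X\to Y$ is étale. By the obstruction-theoretic description of formal étaleness (\autoref{obstruction} applied with $\Lcot_{X/Y}\simeq 0$), for any square-zero extension of cdga's with connective fibre, the map of mapping spaces into $X$ is the fibre product of the corresponding map into $Y$ with the mapping space out of the base. Iterating this along the finite filtration of $B_{A,J}\twoheadrightarrow B_{A/I,J}$ by the powers $I^k\cdot B_{A,J}$ (using $I^n=0$), we conclude that each $\theta_J$ is an equivalence. The main technical obstacle is precisely checking that nilpotence of $I$ in $\homol^0(A)$ really does descend to nilpotence of the kernel inside the non-noetherian ring $B_{A,J}$ of Laurent formal series; once this is granted, the rest is a clean combination of formal étaleness and the commutation of a finite limit with a fibre product.
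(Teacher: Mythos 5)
Your reduction to the affine pieces $B_{A,J}$ and the idea of playing formal étaleness against square-zero extensions is exactly the skeleton of the paper's proof, but there is a genuine gap in the middle step. You claim that $B_{A,J}\twoheadrightarrow B_{A/I,J}$ is a \emph{finite} iterated nilpotent square-zero extension because its $\homol^0$-kernel is nilpotent. This is false whenever $A$ has unbounded cohomology: the target $B_{A/I,J}=(\quot{\homol^0(A)}{I})[\![X_{1\dots d}]\!][X_j^{-1}]$ is discrete, while the source retains all the negative-degree cohomology of $A$, so the fibre of the map is not controlled by the powers of $I$ at all. Your proposed filtration by $I^k\cdot B_{A,J}$ only interpolates between $\homol^0$ and $\quot{\homol^0}{I}$; it never reaches down to kill $\homol^{-1}(A),\homol^{-2}(A),\dots$, and no finite chain of square-zero extensions can, so the "iterate obstruction theory finitely many times" step does not terminate.

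The paper closes this gap in three moves you are missing. First it treats the genuinely nilpotent (and discrete) surjection $\xi(\homol^0(A))\to\xi(\quot{\homol^0(A)}{I})$, which is where your $I^n=0$ argument correctly applies. Then it climbs the Postnikov tower of $A$: each $\xi(A_{\leq n+1})\simeq(\xi A)_{\leq n+1}$ is a square-zero extension of $\xi(A_{\leq n})$ by $\homol^{-n-1}(\xi A)$, so formal étaleness propagates the fibre-product property one truncation at a time. Finally — and this is the step your argument has no substitute for — it invokes nilcompleteness of the Artin stacks $X$ and $Y$, i.e.\ $X(B)\simeq\lim_n X(B_{\leq n})$, to pass from all finite truncations to $\xi A$ itself. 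Your proof would be repaired by replacing the "finite iterated square-zero extension" claim with this two-stage induction plus the nilcompleteness argument; as written, the limit over the infinite Postnikov tower is never justified.
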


\begin{proof}
The map $\theta$ is a finite limit of maps
\[
\mu \colon \mymatrix{
X(\xi A) \ar[r] &  X\left(\xi \left(\quot{\homol^0(A)}{I}\right) \right) \displaystyle \timesunder[Y\left(\xi\left(\quot{\homol^0(A)}{I}\right) \right)][][-4pt] Y (\xi A )
}
\]
where $\xi A = A[\![X_{1 \dots d}]\!][X^{-1}_{i_1\dots i_p}]$ and $\xi (\quot{\homol^0(A)}{I})$ is defined similarly.
The natural map $\xi(\homol^0(A)) \to \xi (\quot{\homol^0(A)}{I})$ is also a nilpotent extension. We deduce from the étaleness of $f$ that the map
\[
\mymatrix{
X(\xi(\homol^0(A))) \ar[r] & X\left(\xi \left(\quot{\homol^0(A)}{I}\right) \right) \displaystyle \timesunder[Y\left(\xi\left(\quot{\homol^0(A)}{I}\right) \right)][][-4pt] Y (\xi( \homol^0(A)) )
}
\]
is an equivalence.
Let now $n \in \N$. We assume that the natural map
\[
\mymatrix{
X(\xi(A_{\leq n})) \ar[r] & X\left(\xi \left(\quot{\homol^0(A)}{I}\right) \right) \displaystyle \timesunder[Y\left(\xi\left(\quot{\homol^0(A)}{I}\right) \right)][][-4pt] Y (\xi(A_{\leq n}))
}
\]
is an equivalence. The cdga $\xi(A_{\leq n+1}) \simeq (\xi A)_{\leq n+1}$ is a square zero extension of $\xi(A_{\leq n})$ by $\homol^{-n-1}(\xi A)$. We thus have the equivalence
\[
\mymatrix{
X(\xi(A_{\leq n+1})) \ar[r]^-\sim & X(\xi(A_{\leq n})) \displaystyle \timesunder[Y(\xi(A_{\leq n}))][][-3pt] Y(\xi(A_{\leq n+1}))
}
\]
The natural map 
\[
\mymatrix{
X(\xi(A_{\leq n+1})) \ar[r] & X\left(\xi \left(\quot{\homol^0(A)}{I}\right) \right) \displaystyle \timesunder[Y\left(\xi\left(\quot{\homol^0(A)}{I}\right) \right)][][-4pt] Y (\xi(A_{\leq n+1}))
}
\]
is thus an equivalence too.
The stacks $X$ and $Y$ are nilcomplete, hence $\mu$ is also an equivalence -- recall that a derived stack $X$ is nilcomplete if for any cdga $B$ we have
\[
X(B) \simeq \lim_n X(B_{\leq n})
\]
It follows that $\theta$ is an equivalence.
\end{proof}

\begin{cor}\label{L-fetale}
Let $f \colon X \to Y$ be an étale map of derived Artin stacks.
For any cdga $A \in \cdga_k$ and any nilpotent ideal $I \subset \homol^0(A)$, the induced map
\[
\theta \colon
\mymatrix{
\kaplooppre^d(X)(A) \ar[r] & \kaplooppre^d(X)\left(\textstyle \quot{\homol^0(A)}{I}\right) \displaystyle \timesunder[\kaplooppre^d(Y)\left(\quot{\homol^0(A)}{I}\right)][][-4pt] \kaplooppre^d(Y)(A)
}
\]
is an equivalence.
\end{cor}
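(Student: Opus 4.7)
The plan is to reduce this to the previous lemma \autoref{LU-fetale} by unwinding the definition of $\kaplooppre^d(X)$ as a formal completion and exploiting the invariance of the de Rham prestack under nilpotent extensions. Write $F_X = \kaplooppre^d_V(X)$, $G_X = \kaplooppre^d_U(X)$, and similarly $F_Y, G_Y$. By definition of $\mathrm{FC}$, we have a natural equivalence
\[
\kaplooppre^d(X)(A) \simeq (F_X)_{\mathrm{dR}}(A) \times_{(G_X)_{\mathrm{dR}}(A)} G_X(A),
\]
and analogously for $Y$. The étale map $f$ induces a morphism of such fiber product presentations.

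The first key observation is that for any prestack $H$ and any nilpotent ideal $I \subset \homol^0(A)$, the natural map $H_{\mathrm{dR}}(A) \to H_{\mathrm{dR}}(\homol^0(A)/I)$ is an equivalence. Indeed, both sides are filtered colimits of $H$ evaluated on quotients of $\homol^0(A)$ by nilpotent ideals; the nilpotent ideals of $\homol^0(A)/I$ correspond bijectively to nilpotent ideals of $\homol^0(A)$ containing $I$, and since $I$ itself is nilpotent the sum $J + I$ is nilpotent whenever $J$ is, so the poset of nilpotent ideals containing $I$ is cofinal inside the poset of all nilpotent ideals of $\homol^0(A)$. Applying this to $F_X, G_X, F_Y, G_Y$ we conclude that each of the four de Rham evaluations takes the same value on $A$ and on $A' := \homol^0(A)/I$.

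Setting $W = \kaplooppre^d$ and using these identifications, the target of $\theta$ can be rewritten as
\[
\bigl[(F_X)_{\mathrm{dR}}(A) \times_{(G_X)_{\mathrm{dR}}(A)} G_X(A')\bigr] \times_{\bigl[(F_Y)_{\mathrm{dR}}(A) \times_{(G_Y)_{\mathrm{dR}}(A)} G_Y(A')\bigr]} \bigl[(F_Y)_{\mathrm{dR}}(A) \times_{(G_Y)_{\mathrm{dR}}(A)} G_Y(A)\bigr].
\]
Rearranging the iterated pullback (all the de Rham pieces appear in compatible positions on both sides of the outer fiber product), this simplifies to
\[
(F_X)_{\mathrm{dR}}(A) \times_{(G_X)_{\mathrm{dR}}(A)} \Bigl( G_X(A') \times_{G_Y(A')} G_Y(A) \Bigr).
\]
Now the inner fiber product is precisely the target of the map studied in \autoref{LU-fetale}, which asserts it is equivalent to $G_X(A)$. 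Substituting, we obtain exactly $(F_X)_{\mathrm{dR}}(A) \times_{(G_X)_{\mathrm{dR}}(A)} G_X(A) = W(X)(A)$, showing that $\theta$ is an equivalence.

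The only genuinely delicate point is the cofinality argument showing $H_{\mathrm{dR}}(A) \simeq H_{\mathrm{dR}}(A')$; everything else is a formal manipulation of fiber products that reduces the statement for $\kaplooppre^d$ to the already established statement for $\kaplooppre^d_U$.
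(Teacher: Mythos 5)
Your proof is correct and is essentially the argument the paper intends: the paper states \autoref{L-fetale} as an immediate corollary of \autoref{LU-fetale} without writing out the deduction, and your argument supplies exactly the omitted formal steps (the cofinality of nilpotent ideals containing $I$, giving $H_{\mathrm{dR}}(A) \simeq H_{\mathrm{dR}}(\homol^0(A)/I)$, followed by the rearrangement of the iterated fibre product so that the inner square becomes the one treated by \autoref{LU-fetale}). The $3\times 3$ limit rearrangement you invoke is valid because, after the de Rham identification, the relevant transition maps on the de Rham factors are equivalences, so nothing further is needed.
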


\begin{prop}\label{L-epi}
Let $X$ be a derived Deligne-Mumford stack of finite presentation with algebraisable diagonal. Let $t \colon T \to X$ be an étale atlas. The induced map $\kaploop^d(T) \to \kaploop^d(X)$ is an epimorphism of stacks.
\end{prop}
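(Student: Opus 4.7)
The plan is to verify the epimorphism condition pointwise: given a derived affine $A$-point of $\kaploop^d(X)$, I will produce an étale cover $A \to B$ over which the point lifts to $\kaploop^d(T)$. By definition of formal completion, such an $A$-point consists of an $A$-point $u \colon \Spec A \to \kaploop^d_U(X)$ together with an $A$-point of $\kaploop^d_V(X)_{\mathrm{dR}}$ and an identification in $\kaploop^d_U(X)_{\mathrm{dR}}$. Since nilpotent ideals of $\homol^0(A)$ form a filtered poset (the sum of two nilpotent ideals is nilpotent), the datum of a point of $\kaploop^d_V(X)_{\mathrm{dR}}(A)$ is realised by an actual map $\bar v \colon \Spec(\homol^0(A)/I) \to \kaploop^d_V(X)$ for some nilpotent $I$; enlarging $I$ if necessary, the compatibility with $u$ can moreover be lifted to an identification of $u|_{\homol^0(A)/I}$ with the image of $\bar v$ in $\kaploop^d_U(X)(\homol^0(A)/I)$.

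Next I apply \autoref{LV-epi} (whose hypotheses are met since $X$ is algebraic of finite presentation with algebraisable diagonal, and an étale atlas is in particular smooth): there exist an étale cover $\homol^0(A)/I \to C_0$ and a lift $\bar v' \colon \Spec C_0 \to \kaploop^d_V(T)$ of $\bar v$. By the topological invariance of the étale site under nilpotent extensions, the étale map $\homol^0(A)/I \to C_0$ lifts uniquely to an étale map $\homol^0(A) \to \homol^0(B)$, and then to an étale cdga extension $A \to B$ with $\homol^0(B)/I\homol^0(B) \simeq C_0$. Composing $\bar v'$ with the canonical map $\kaploop^d_V(T) \to \kaploop^d_U(T)$ yields a $C_0$-point of $\kaploop^d_U(T)$ compatible with $u|_{C_0}$ in $\kaploop^d_U(X)$.

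Now \autoref{LU-fetale} applied to the étale map $t \colon T \to X$, together with the cdga $B$ and the nilpotent ideal $I\homol^0(B) \subset \homol^0(B)$, gives an equivalence
\[
\kaploop^d_U(T)(B) \to^\sim \kaploop^d_U(T)(C_0) \times_{\kaploop^d_U(X)(C_0)} \kaploop^d_U(X)(B).
\]
The pair $(\bar v'$ composed into $\kaploop^d_U(T), u|_B)$ with its homotopy determines a point of the right-hand side, and thus a unique lift $u' \colon \Spec B \to \kaploop^d_U(T)$ of $u|_B$ such that $u'|_{C_0}$ agrees with the composite of $\bar v'$. The triple $(\bar v', u', \text{compatibility})$ then defines the desired $B$-point of $\kaploop^d(T)$ lifting the original point of $\kaploop^d(X)$, because the de Rham compatibility follows automatically from the uniqueness clause of the previous equivalence.

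The main obstacle in this approach is the careful handling of the filtered colimit defining the de Rham stack: one must show that the identification between the two points of $\kaploop^d_U(X)_{\mathrm{dR}}(A)$ coming from $u$ and $\bar v$ can be realised at a single finite level, and that enlarging the nilpotent ideal $I$ to arrange this is harmless for the subsequent arguments. Everything else is essentially formal, combining \autoref{LV-epi}, \autoref{LU-fetale}, and the classical étale lifting through nilpotent thickenings.
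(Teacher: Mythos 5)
Your proposal is correct and follows essentially the same route as the paper's proof: reduce the $A$-point to a finite nilpotent level $I$ using the filtered colimit defining the de Rham/formal completion, lift the $V$-part étale-locally via \autoref{LV-epi}, and then lift the $U$-part over the full cdga $B$ via the equivalence of \autoref{LU-fetale}. The only difference is that you spell out two steps the paper leaves implicit (reduction to prestacks and the étale lifting of $\homol^0(A)/I \to C_0$ to $A \to B$ through the nilpotent thickening), which is harmless.
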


\begin{proof}
We can work on the map of prestacks $\kaplooppre^d(T) \to \kaplooppre^d(X)$.
Let $A \in \cdga_k$. Let $x$ be an $A$-point of $\kaplooppre^d(X)$. It corresponds to a vertex in the simplicial set
\[
\mymatrix{
\displaystyle \colim_I \textstyle {\kaplooppre_V^d(X)\left(\quot{\homol^0(A)}{I}\right)} \displaystyle \timesunder[\kaplooppre_U^d(X)\left(\quot{\homol^0(A)}{I}\right)][][-4pt] \kaplooppre_U^d(X)(A)
}
\]
There exists therefore a nilpotent ideal $I$ such that $x$ comes from a commutative diagram
\[
\mymatrix{
U_{\quot{\homol^0(A)}{I}}^d \ar[d] \ar[r] & U_A^d \ar[d] \\ V_{\quot{\homol^0(A)}{I}} \ar[r]_-v & X
}
\]
Using \autoref{LV-epi} we get an étale morphism $\psi \colon A \to B$ such that the map $v$ lifts to a map $u \colon V_{\quot{B}{J}} \to T$ where $J$ is the image of $I$ by $\psi$.
This defines a point in
\[
\textstyle \kaplooppre^d_U(T)\left(\quot{\homol^0(B)}{J} \right) \displaystyle \timesunder[\kaplooppre^d_U(X)\left(\quot{\homol^0(B)}{J}\right)][][-4pt] \kaplooppre^d_U(X)(B)
\]
Because of \autoref{LU-fetale}, we get a point of $\kaplooppre^d(T)(B)$. We now observe that this point is compatible with $x$.
\end{proof}

In the case of dimension $d=1$, \autoref{LU-fetale} can be modified in the following way. Let $f \colon X \to Y$ be a smooth map of derived Artin stacks.
For any cdga $A \in \cdga_k$ and any nilpotent ideal $I \subset \homol^0(A)$, the induced map
\[
\theta \colon \mymatrix{
\kaplooppre^1_U(X)(A) \ar[r] & \kaplooppre^1_U(X)\left(\quot{\homol^0(A)}{I}\right) \displaystyle \timesunder[\kaplooppre^1_U(Y)\left(\quot{\homol^0(A)}{I}\right)][][-4pt] \kaplooppre^1_U(Y)(A)
}
\]
is essentially surjective.
The following proposition follows.

\begin{prop}
Let $X$ be an Artin derived stack of finite presentation and with algebraisable diagonal. Let $t \colon T \to X$ be a smooth atlas. The induced map $\kaploop^1(T) \to \kaploop^1(X)$ is an epimorphism of stacks.
\end{prop}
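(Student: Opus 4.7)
The plan is to follow the proof of \autoref{L-epi} almost verbatim, substituting the smooth-map variant of \autoref{LU-fetale} (mentioned just before the statement) for \autoref{LU-fetale} itself. Working at the level of prestacks, I would fix $A \in \cdga_k$ and an $A$-point $x$ of $\kaplooppre^1(X)$. Unwinding the definition of $\kaplooppre^1$ as the pointwise formal completion of $\kaplooppre^1_V(X)$ in $\kaplooppre^1_U(X)$, this point $x$ is represented --- after choosing a nilpotent ideal $I \subset \homol^0(A)$ large enough --- by a commutative square whose bottom edge is a map $v \colon V^1_{\quot{\homol^0(A)}{I}} \to X$ and whose right edge is an $A$-point of $\kaplooppre^1_U(X)$ restricting over $\quot{\homol^0(A)}{I}$ to the $U^1$-part of $v$.

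First I would apply \autoref{LV-epi} to the smooth atlas $t$ and to the point $v$: this is where the hypotheses of the proposition (finite presentation, algebraisable diagonal, smoothness of $t$) are used exactly as in the source of that lemma. We obtain an étale morphism $A \to B$ and a lift $u \colon V^1_{\quot{B}{J}} \to T$ of $v$, where $J$ denotes the image of $I$ in $\homol^0(B)$. Next, I would invoke the smooth-map version of \autoref{LU-fetale} recalled in the paragraph preceding the proposition: since $t$ is smooth and we are in dimension $d=1$, the natural map
\[
\theta \colon \kaplooppre^1_U(T)(B) \to \kaplooppre^1_U(T)\!\left(\textstyle\quot{\homol^0(B)}{J}\right) \timesunder[\kaplooppre^1_U(X)(\quot{\homol^0(B)}{J})][][-4pt] \kaplooppre^1_U(X)(B)
\]
is essentially surjective. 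The pair consisting of the restriction of $u$ to $U^1_{\quot{B}{J}}$ and the base change to $B$ of the $U^1$-part of $x$ gives a point of the target of $\theta$, which therefore lifts to a point of $\kaplooppre^1_U(T)(B)$. Combining this with $u$ yields a $B$-point of $\kaplooppre^1(T)$ whose image in $\kaplooppre^1(X)$ agrees with the pullback of $x$ along the étale cover $\Spec B \to \Spec A$.

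The only subtlety compared to \autoref{L-epi} is that here $\theta$ is merely essentially surjective rather than an equivalence, but this is exactly what is needed to conclude that $\kaploop^1(T) \to \kaploop^1(X)$ is an epimorphism of stacks: epimorphism is checked by producing, étale-locally, a lift of any $A$-point, and that is precisely what essential surjectivity supplies. The main obstacle --- and the reason the statement is restricted to $d=1$ --- is the verification that $\theta$ is essentially surjective for smooth $t$; this relies on the fact that $U^1_A$ is affine, so one can lift maps along a smooth morphism directly, whereas for $d \geq 2$ the non-affineness of $U^d_A$ forces one to glue and hence to retreat to the étale hypothesis.
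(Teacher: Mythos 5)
Your proposal is correct and is essentially the paper's own argument: the paper gives no separate proof for this proposition, stating only that it "follows" from the remark that $\theta$ is essentially surjective for smooth maps in dimension $1$, and your write-up supplies exactly the intended details (apply \autoref{LV-epi} to the smooth atlas, then use essential surjectivity of the smooth variant of \autoref{LU-fetale} in place of the equivalence used in \autoref{L-epi}). Your closing observation — that the restriction to $d=1$ comes from $U^1_A$ being affine, so that $\kaplooppre^1_U(X)(A)$ is a single mapping space rather than a finite limit of them, and essential surjectivity is not destroyed by passing to a limit — correctly identifies why the argument does not extend to $d\geq 2$.
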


\begin{ex}
The proposition above implies for instance that $\kaploop^1(\B G) \simeq \B \kaploop^1(G)$ for any algebraic group $G$ -- where $\B G$ is the classifying stack of $G$-bundles.
\end{ex}

\subsection{Tate structure and determinantal anomaly}\label{determinantalclass}
We saw in \autoref{tatestacks} that to any Tate stack $X$, we can associate a determinantal anomaly. It a class in $\homol^2(X,\Oo_X^{\times})$.
We will prove in this subsection that the stack $\kaploop^d(X)$ is endowed with a structure of Tate stack as soon as $X$ is affine. We will moreover build a determinantal anomaly on $\kaploop^d(X)$ for any quasi-compact and separated scheme $X$.

\begin{lem}\label{L-shy}
For any $B \in \cdga_k$ of finite presentation, the functors 
\[
\kaplooppre^d_U(\Spec B), \kaplooppre^d(\Spec B) \colon \cdga_k \to \sSets
\]
are in the essential image of the fully faithful functor 
\[
\shybounded_k \cap \IP\dAff_k \to \IP\dSt_k \to \dSt_k \to \presh(\dAff)
\]
(see \autoref{indprochamps}).
It follows that $\kaplooppre^d_U(\Spec B) \simeq \kaploop^d_U(\Spec B)$ and $\kaplooppre^d(\Spec B) \simeq \kaploop^d(\Spec B)$.
\end{lem}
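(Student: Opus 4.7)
The plan is to exhibit both functors as realizations of explicit ind-pro-affine derived schemes belonging to $\shybounded_k\cap\IP\dAff_k$, by unwinding the definitions of $V_A^d$ and $U_A^d$ and using the finite presentation of $B$ to convert maps into $\Spec B$ into Weil-restriction-type affine objects. The first point is that $V_A^d=\lim_n V_A^{d,n}$ with $V_A^{d,n}=\Spec A[X_1,\dots,X_d]/(X)^{n+1}$. Since $V_A^{d,n}\to\Spec A$ is finite flat and $B$ is finitely presented, the functor $A\mapsto\Hom_{\cdga}(B,A[X]/(X)^{n+1})$ is representable by a finitely presented derived affine scheme $W_n(\Spec B)$ (the $n$-th jet scheme of $\Spec B$), and $\kaplooppre_V^d(\Spec B)=\lim_n W_n(\Spec B)$ is therefore a pro-object in $\dAff_k$. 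The transition maps $W_{n+1}\to W_n$ are affine (obtained from the closed immersion $V_A^{d,n}\hookrightarrow V_A^{d,n+1}$), and their cotangent tor-amplitudes are uniformly bounded because $\Lcot_B$ is perfect and the Weil restriction shifts it by a bounded amount depending on $d$ and $n$ only up to a fixed range.

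For $\kaplooppre_U^d(\Spec B)$, I use the Čech cover $U_A^d=\colim_{\sigma}D_{A,\sigma}$ over the finite simplicial set $\Sigma$ indexing non-empty subsets $\sigma\subseteq\{1,\dots,d\}$, with $D_{A,\sigma}=\Spec A[\![X]\!][X_i^{-1}:i\in\sigma]$. This gives $\kaplooppre_U^d(\Spec B)(A)=\lim_{\sigma\in\Sigma}\Hom_{\cdga}(B,A[\![X]\!][X_\sigma^{-1}])$, a \emph{finite} limit. For each $\sigma$ I filter $A[\![X]\!][X_\sigma^{-1}]$ by the pole order in $\prod_{i\in\sigma}X_i$, writing it as $\colim_N X_\sigma^{-N}A[\![X]\!]$, and further write $X_\sigma^{-N}A[\![X]\!]=\lim_M X_\sigma^{-N}A[X]/(X)^{M}$. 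Because $B$ is finitely presented any algebra map out of it lands in $X_\sigma^{-N}A[\![X]\!]$ for some $N$, so the $\Hom(B,-)$ functor becomes a filtered colimit of functors each representable by a pro-affine scheme built from finite-rank $A$-modules; this produces the desired ind-pro-affine representative, and its shyness together with uniform bounds on the cotangent tor-amplitude follow (again) from $\Lcot_B$ being perfect, with the pole-order and truncation parameters affecting only the cohomological dimension by bounded amounts. Taking the finite limit over $\sigma$ preserves membership in $\shybounded_k\cap\IP\dAff_k$ by \autoref{shydaff-limits}.

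For $\kaplooppre^d(\Spec B)=\mathrm{FC}(\kaplooppre_V^d(\Spec B)\to\kaplooppre_U^d(\Spec B))$, I observe that the formal completion is a fiber product in $\dSt_k$ of the two previous functors (and their de Rham stacks). The de Rham construction on an object represented by a shy ind-pro-affine diagram is again represented by such a diagram, since $\mathrm{deh}$ commutes with the cofiltered limits over $n$ and with the filtered colimits over $N$ appearing in the construction of $A[\![X]\!]$ and $A[\![X]\!][X_\sigma^{-1}]$; concretely this uses that $B$ of finite presentation sees only the reduced quotient through a bounded nilpotent thickening. Applying \autoref{shydaff-limits} once more, the fibre product lives in $\shybounded_k\cap\IP\dAff_k$. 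Finally, the last sentence of the lemma is automatic: by \autoref{ff-realisation} any object of $\shybounded_k$ realised in $\presh(\dAff_k)$ already lies in $\dSt_k$, so the prestack $\kaplooppre_U^d(\Spec B)$ (resp.\ $\kaplooppre^d(\Spec B)$) is already a stack and agrees with its stackification.

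The main obstacle is the second paragraph: making the ind-pro presentation of $\Hom_{\cdga}(B,A[\![X]\!][X_\sigma^{-1}])$ both explicit and functorial in $A$, and verifying the uniform shy/bounded estimates, because the filtration by pole order is not by subalgebras but by $A$-submodules of $A[\![X]\!][X_\sigma^{-1}]$, and one must keep track of how products of bounded-pole elements produce elements of larger pole order before passing to the algebra map level. Once this bookkeeping is handled for a single intersection $D_{A,\sigma}$, the passage to the global $U_A^d$ and to the formal completion is formal.
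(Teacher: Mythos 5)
Your plan stalls exactly where you say it does, and the paper's proof contains a simple idea that dissolves that obstacle: since $B$ is of finite presentation, $\Spec B$ is a retract of a \emph{finite} limit of copies of $\A^1$, so $\kaplooppre^d_U(\Spec B)$ is, up to retract, a finite limit of the functors $Z^d_E \colon A \mapsto \Map(k[Y], A[\![X_{1\dots d}]\!][X^{-1}_{i_1\dots i_q}])$. For the free algebra $k[Y]$ the mapping space is just the underlying space of the target, so one writes down the ind-pro-affine representative directly as $Z^d_E \simeq \colim_n \lim_p \Spec(k[a_{\alpha_1,\dots,\alpha_d}, -n\delta_i \leq \alpha_i \leq p])$, one polynomial variable per coefficient; the multiplicativity bookkeeping you flag as "the main obstacle" (the pole-order filtration being by $A$-submodules rather than subalgebras, relations raising pole order, and the derived mapping space for a cell complex $B$) never arises. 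One then concludes with \autoref{shydaff-limits}. Your second paragraph, as written, asserts but does not establish the representability and the uniform shy/bounded estimates for general $B$, so it is a genuine gap rather than deferred bookkeeping.

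There is a second, independent problem in your treatment of $\kaplooppre^d(\Spec B)$: the claim that the de Rham construction applied to a shy ind-pro-affine diagram is again represented by such a diagram is false. Already $(\A^1)_{\mathrm{dR}}$, i.e.\ $A \mapsto \colim_I \homol^0(A)/I$, is not representable by an (ind-pro-)affine scheme, and commuting $(-)_{\mathrm{deh}}$ past the limits and colimits does not produce a diagram of cdga's corepresenting it. What is representable is the formal completion itself, and the paper gets at it directly: after the reduction to $\A^1$, the functor $G^d_E \colon A \mapsto \colim_I \Map(k[Y], A[\![X]\!][X^{-1}_{i_1\dots i_q}]^{\sqrt{I}})$ is realised by the explicit ind-pro-scheme $\colim_{n,m}\lim_p \Spec(k[a_{\alpha}]/J)$ where $J$ is generated by the $m$-th powers of the coefficients $a_{\alpha}$ with some $\alpha_i<0$ --- the nilpotency condition defining the completion is imposed as an ideal in the ind direction, rather than by manipulating de Rham prestacks. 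Your first paragraph on $\kaplooppre^d_V$ and your final observation (that \autoref{ff-realisation} makes the realised prestacks stacks, hence equal to their stackifications) are fine and consistent with the paper.
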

\begin{proof}
Let us first remark that $\Spec B$ is a retract of a \emph{finite} limit of copies of the affine line $\A^1$.
It follows that the functor $\kaplooppre_U^d(\Spec B)$ is, up to a retract, a finite limit of functors
\[
Z_E^d \colon A \mapsto \Map\left(k[Y], A[\![X_{1 \dots d}]\!][X^{-1}_{i_1\dots i_q}]\right)
\]
where $E = \{ \el{i}{q} \} \subset F = \{1,\dots,d\}$.
The functor $Z^d_E$ is the realisation of an affine ind-pro-scheme
\[ 
Z^d_E \simeq \colim_n \lim_p \Spec\left( k[ a_{\el{\alpha}{d}}, -n \delta_i \leq \alpha_i \leq p] \right)
\]
where $\delta_i = 1$ if $i \in E$ and $\delta_i = 0$ otherwise. The variable $a_{\el{\alpha}{d}}$ corresponds to the coefficient of $X_1^{\alpha_1} \dots X_d^{\alpha_d}$.
The functor $Z^d_E$ is thus in the category $\shybounded \cap \IP\dAff_k$. The result about $\kaplooppre^d_U(\Spec B)$ then follows from \autoref{shydaff-limits}.
The case of $\kaplooppre^d(\Spec B)$ is similar: we decompose it into a finite limit of functors
\[
G_E^d \colon A \mapsto \colim_{I \subset \homol^0(A)} \Map\left(k[Y], A[\![X_{1 \dots d}]\!][X^{-1}_{i_1\dots i_q}]^{\sqrt{I}}\right)
\]
where $I$ is a nilpotent ideal of $\homol^0(A)$.
We then observe that $G_E^d$ is the realisation of the ind-pro-scheme
\[ 
G^d_E \simeq \colim_{n,m} \lim_p \Spec\left( \quot{k[ a_{\el{\alpha}{d}}, -n \delta_i \leq \alpha_i \leq p]}{J} \right)
\]
where $J$ is the ideal generated by the symbols $a_{\el{\alpha}{d}}^m$ with at least one of the $\alpha_i$'s negative.
\end{proof}

\begin{rmq}
Let $n$ and $p$ be integers and let $k(E,n,p)$ denote the number of families $(\el{\alpha}{d})$ such that $-n \delta_i \leq \alpha_i \leq p$ for all $i$. We have 
\[
Z^d_E \simeq \colim_n \lim_p (\A^1)^{k(E,n,p)}
\] 
\end{rmq}

\begin{df}
From \autoref{L-shy}, we get a functor $\underline \kaploop^d \colon \dAff_k^\mathrm{fp} \to \IP\dSt_k$. It follows from \autoref{L-epi} that $\underline \kaploop^d$ is a costack in ind-pro-stacks. We thus define\glsadd{loopipstack}
\[
\underline \kaploop^d \colon \dSt_k^{\mathrm{lfp}} \to \IP\dSt_k
\]
to be its left Kan extension along the inclusion $\dAff_k^\mathrm{fp} \to \dSt_k^\mathrm{lfp}$ -- where $\dSt_k^\mathrm{lfp}$ is $(\infty,1)$-category of derived stacks locally of finite presentation.
This new functor $\underline \kaploop^d$ preserves small colimits by definition.
\end{df}

\begin{prop}\label{L-indpro}
There is a natural transformation $\theta$ from the composite functor
\[
\mymatrix{
\dSt_k^\mathrm{lfp} \ar[r]^-{\underline \kaploop^d} & \IP\dSt_k \ar[r]^{|-|^\IP} & \dSt_k
}
\]
to the functor $\kaploop^d$.
Moreover, the restriction of $\theta$ to derived Deligne-Mumford stacks of finite presentation with algebraisable diagonal is an equivalence.
\end{prop}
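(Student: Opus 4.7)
I will construct $\theta$ by exploiting the universal properties involved. By \autoref{L-shy}, the restrictions of $|-|^\IP \circ \underline{\kaploop}^d$ and $\kaploop^d$ to $\dAff_k^{\mathrm{fp}}$ are canonically equivalent. Writing any $X \in \dSt_k^{\mathrm{lfp}}$ as a colimit of affines of finite presentation, the left-Kan-extension definition of $\underline{\kaploop}^d$ yields $\underline{\kaploop}^d(X) \simeq \colim_{\Spec B \to X} \underline{\kaploop}^d(\Spec B)$ in $\IP\dSt_k$. On affines over $X$ one has, by \autoref{L-shy}, a natural cocone $\{\kaploop^d(\Spec B) \simeq |\underline{\kaploop}^d(\Spec B)|^\IP \to |\underline{\kaploop}^d(X)|^\IP\}$ obtained from the universal map from the colimit of realizations to the realization of the colimit, and a parallel cocone $\{\kaploop^d(\Spec B) \to \kaploop^d(X)\}$ from functoriality of $\kaploop^d$. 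Matching these two cocones on $\dAff_k^{\mathrm{fp}}$, where they are tautologically the same, and using the stackification defining $\kaploop^d$, produces the natural transformation $\theta_X \colon |\underline{\kaploop}^d(X)|^\IP \to \kaploop^d(X)$, natural in $X$.

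For the equivalence on a derived Deligne-Mumford stack $X$ of finite presentation with algebraisable diagonal, I argue by étale descent. Fix an étale atlas $u \colon T \to X$ with $T$ an affine derived scheme of finite presentation (such an atlas exists by the DM and finite presentation hypotheses), and let $T_\bullet$ denote its Čech nerve. The algebraisability of the diagonal of $X$ is preserved under the fibre products appearing in $T_\bullet$, so each $T_n$ is again a derived DM stack of finite presentation with algebraisable diagonal. Iterating the atlas construction on each $T_n$ produces a bi-simplicial diagram $S_{\bullet\bullet}$ in $\dAff_k^{\mathrm{fp}}$ whose colimit in $\dSt_k^{\mathrm{lfp}}$ is $X$.

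By \autoref{L-epi}, $\kaploop^d(T) \to \kaploop^d(X)$ is an epimorphism of stacks; combined with the formally étale property of \autoref{L-fetale} (which controls how $\kaploop^d$ interacts with the descent data), this promotes to the descent formula $\kaploop^d(X) \simeq |\kaploop^d(S_{\bullet\bullet})|$ in $\dSt_k$. On the other hand, since $\underline{\kaploop}^d$ is a left Kan extension, $\underline{\kaploop}^d(X) \simeq \colim_\Delta \underline{\kaploop}^d(S_{\bullet\bullet})$ in $\IP\dSt_k$. On $\dAff_k^{\mathrm{fp}}$ the two sides of $\theta$ agree by \autoref{L-shy}, so $\theta_X$ is an equivalence provided the realization functor $|-|^\IP$ commutes with this specific bi-simplicial colimit.

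The main obstacle is thus this commutation of $|-|^\IP$ with the relevant colimits of ind-pro-stacks, which does not hold in full generality. To establish it in this case, I will invoke the explicit shy ind-pro-affine presentation of $\underline{\kaploop}^d(\Spec B)$ given in the proof of \autoref{L-shy}, together with the fully faithful embedding $\shybounded_k \cap \IP\dAff_k \hookrightarrow \dSt_k$ of \autoref{ff-realisation}. Since the diagram $\underline{\kaploop}^d(S_{\bullet\bullet})$ lives inside $\shybounded_k \cap \IP\dAff_k$ where realization is fully faithful and compatible with finite limits by \autoref{shydaff-limits}, the colimit can be computed inside $\shybounded_k$ and realization tautologically commutes with it. This yields the desired equivalence $\theta_X$.
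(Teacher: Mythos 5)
Your proof follows the same route as the paper's: $\theta$ is obtained from the universal property of the left Kan extension together with the identification of \autoref{L-shy} on affines, and the Deligne--Mumford case is reduced via \autoref{L-epi} to descent along an étale atlas. The paper's own proof is in fact far terser --- it consists of exactly these two observations and leaves all the descent bookkeeping implicit --- so your write-up is a faithful expansion of it, and you are right to single out the commutation of $|-|^\IP$ with the \v{C}ech colimit as the point where the real content lies.

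That said, your resolution of that last point does not hold together as stated. Full faithfulness of the realisation on $\shybounded_k \cap \IP\dAff_k$ (\autoref{ff-realisation}) computes mapping spaces between objects of that subcategory, and \autoref{shydaff-limits} concerns finite \emph{limits}; neither gives that $|-|^\IP$ carries the colimit of $\underline\kaploop^d(S_{\bullet\bullet})$ \emph{computed in $\IP\dSt_k$} (which is how $\underline\kaploop^d(X)$ is defined, and which need not land back in $\shybounded_k$ --- the paper itself warns that the ind-pro-structure on $\underline\kaploop^d(X)$ for general $X$ is not known to be well behaved) to the corresponding colimit in $\dSt_k$. A fully faithful functor reflects colimits but does not preserve them, and knowing $\Map(|Y|^\IP,|Z|^\IP)\simeq\Map(Y,Z)$ for $Z$ ranging only over the subcategory does not determine $|Y|^\IP$ as a stack. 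What full faithfulness does give is that $\kaploop^d(X)\simeq\colim|\underline\kaploop^d(S_{\bullet\bullet})|^\IP$ corepresents, on the image of $\shybounded_k$, the same functor as $\colim\underline\kaploop^d(S_{\bullet\bullet})$; upgrading this to the asserted equivalence $\theta_X$ still requires an argument, e.g. producing the candidate inverse $\kaploop^d(X)\to|\underline\kaploop^d(X)|^\IP$ from the cocone $\kaploop^d(S_{\bullet\bullet})\to|\underline\kaploop^d(X)|^\IP$ and checking both composites. The paper elides this step entirely, so your attempt is no less complete than the original, but the gap you correctly flagged is not yet closed by the argument you give.
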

\begin{proof}
There is by definition a natural transformation
\[
\theta \colon | \underline \kaploop^d(-) |^\IP \to \kaploop^d(-)
\]
Moreover, the restriction of $\theta$ to affine derived scheme of finite presentation is an equivalence -- see \autoref{L-shy}. 
The fact that $\theta_X$ is an equivalence for any Deligne-Mumford stack $X$ follows from \autoref{L-epi}.
\end{proof}

\begin{lem}\label{subsets-colim}
Let $F$ be a non-empty finite set.
For any family $(M_D)$ of complexes over $k$ indexed by subsets $D$ of $F$, we have
\[
\colim_{\emptyset \neq E \subset F} \bigoplus_{\emptyset \neq D \subset E} M_D \simeq M_F[d-1]
\]
where $d$ is the cardinal of $F$ (the maps in the colimit diagram are the canonical projections).
\end{lem}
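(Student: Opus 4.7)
The plan is to decompose the diagram summand by summand and reduce each piece to the homotopy type of a small poset via Kan extension along a cosieve. Write $I$ for the indexing poset of non-empty subsets of $F$ ordered by reverse inclusion, so that an arrow $E \to E'$ in $I$ corresponds to the projection $\bigoplus_{\emptyset \neq D \subset E} M_D \to \bigoplus_{\emptyset \neq D \subset E'} M_D$. For each non-empty $D \subset F$, define the subfunctor $\Phi_D \colon I \to \dgMod_k$ by $\Phi_D(E) = M_D$ if $D \subset E$ and $\Phi_D(E) = 0$ otherwise, with identity transitions inside $I_D := \{E \colon D \subset E\}$ and zero transitions out. These transitions are functorial and assemble into $\Phi \simeq \bigoplus_D \Phi_D$, so it suffices to compute $\colim_I \Phi_D$ for each $D$.

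I would then notice that $J_D := I \setminus I_D$ is a cosieve in $I$: if $E \supset E'$ and $D \not\subset E$, then $D \not\subset E'$. Hence $K_D$, the extension by zero of $\mathrm{const}_{M_D}$ from $J_D$, is a left Kan extension and fits in a cofibre sequence of functors $K_D \to \mathrm{const}_{M_D} \to \Phi_D$. Applying $\colim_I$, using that it preserves cofibres and sends the left Kan extension from $J_D$ to the colimit over $J_D$, and using that $|I|$ is contractible since $F$ is initial, yields the cofibre sequence
\[
M_D \otimes |J_D| \to M_D \to \colim_I \Phi_D.
\]
The remaining task is to identify $|J_D|$. I would cover $J_D$ by the subposets $J^{(d)} := \{\emptyset \neq E \subset F \setminus \{d\}\}$ for $d \in D$; each $J^{(d)}$ has $F \setminus \{d\}$ as initial object and so is contractible, and more generally $\bigcap_{d \in S} J^{(d)} = \{\emptyset \neq E \subset F \setminus S\}$ is empty iff $S = F$ and contractible otherwise. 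The nerve lemma then identifies $|J_D|$ with the simplicial complex on vertex set $D$ whose simplices are the $S \subset D$ with $S \neq F$: for $D \subsetneq F$ this is the full simplex $\Delta^{|D|-1}$ (contractible), and for $D = F$ it is $\partial \Delta^{d-1} \simeq S^{d-2}$.

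Assembling: for $D \subsetneq F$ the map $M_D \otimes |J_D| \to M_D$ is an equivalence between tensor products by contractible spaces, hence $\colim_I \Phi_D \simeq 0$. For $D = F$ one has $M_F \otimes C_\ast(S^{d-2}) \simeq M_F \oplus M_F[d-2]$ and the map to $M_F$ is the split surjection induced by the collapse $S^{d-2} \to \ast$; its cofibre is $M_F[d-1]$. Summing over $D$ gives the formula. The principal obstacle is the nerve-lemma identification of $|J_D|$ (the contractibility check on the intersections); the remainder is bookkeeping with cofibre sequences and Kan extensions along (co)sieves.
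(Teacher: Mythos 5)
Your argument is correct, but it takes a genuinely different route from the paper. The paper proceeds by induction on $d = \card F$: it cuts the indexing poset along whether $E$ contains a chosen element, writes the colimit as a homotopy pushout whose three corners are $M_{\{d\}}$, $M_{F\smallsetminus\{d\}}[d-2]$ and $M_{\{d\}} \oplus M_F[d-2] \oplus M_{F\smallsetminus\{d\}}[d-2]$ (the last two identified by applying the induction hypothesis, after splitting the inner direct sum according to the position of $D$ relative to $\{d\}$), and reads off the cofibre. You instead give a direct, non-inductive computation: decompose the functor as $\bigoplus_D \Phi_D$, express each $\Phi_D$ as the cofibre of the counit from the left Kan extension off the cosieve $J_D = \{E : D \not\subset E\}$, and identify $|J_D|$ by the nerve lemma applied to the cover by the $J^{(a)}$, $a \in D$ (all of whose non-empty intersections have a maximum, hence are contractible). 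All the steps check out: $I_D$ is indeed a sieve, the extension by zero is indeed the left Kan extension, and $N(J_D) = \bigcup_{a\in D} N(J^{(a)})$ because the largest term of a chain already misses some element of $D$. What your approach buys is more information and a conceptual explanation of the shift: each summand with $D \subsetneq F$ contributes nothing, and the $[d-1]$ is the suspension of the reduced chains of $\partial\Delta^{d-1} \simeq S^{d-2}$ (for $D = F$ your poset $J_F$ is literally the barycentric subdivision of $\partial\Delta^{d-1}$, so the nerve lemma is not even needed in that case). The paper's induction is shorter to write but opaque about where the sphere comes from. The only blemish is the degenerate case $d=1$, where $S^{-1} = \emptyset$ and the formula ``$M_F \oplus M_F[d-2]$'' should be read as $M_F \otimes C_*(\emptyset) = 0$; the cofibre sequence then still gives $\colim_I \Phi_F \simeq M_F$, so the conclusion is unaffected.
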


\begin{proof}
We can and do assume that $F$ is the finite set $\{1, \dots, d\}$ and we proceed recursively on $d$.
The case $d = 1$ is obvious.
Let now $d \geq 2$ and let us assume the statement is true for $F \smallsetminus \{d\}$. Let $(M_D)$ be a family as above. We have a cocartesian diagram
\[
\mymatrix{
\displaystyle \colim_{\{d\} \subsetneq E \subset F} \bigoplus_{\emptyset \neq D \subset E} M_D \ar[r] \ar[d]
& \displaystyle \colim_{ \emptyset \neq E \subset F\smallsetminus\{d\}} \bigoplus_{\emptyset \neq D \subset E} M_D \ar[d] \\
\displaystyle M_{\{d\}} \ar[r]
& \displaystyle \colim_{\emptyset \neq E \subset F} \bigoplus_{\emptyset \neq D \subset E} M_D \cocart
}
\]
We have by assumption
\[
\colim_{ \emptyset \neq E \subset F\smallsetminus\{d\}} \bigoplus_{\emptyset \neq D \subset E} M_D \simeq M_{F \smallsetminus \{d\}} [d-2]
\]
and
\begin{align*}
\colim_{\{d\} \subsetneq E \subset F} \bigoplus_{\emptyset \neq D \subset E} M_D
&\simeq M_{\{d\}} \oplus \left(\colim_{\{d\} \subsetneq E \subset F} \bigoplus_{\{d\} \subsetneq D \subset E} M_D \right) \oplus \left(\colim_{\{d\} \subsetneq E \subset F} \bigoplus_{\emptyset \neq D \subset E \smallsetminus \{d\}} M_D\right) \\
&\simeq M_{\{d\}} \oplus M_F[d-2] \oplus M_{F \smallsetminus \{d\}}[d-2]
\end{align*}
The result follows.
\end{proof}

\begin{lem}\label{LU-affine-tate}
For any $B \in \cdga_k$ of \emph{finite presentation}, the ind-pro-stack $\underline \kaploop^d_U(\Spec B)$ is a Tate stack.
\end{lem}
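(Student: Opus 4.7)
The plan is to reduce to checking that the elementary building blocks $Z_E^d$ of \autoref{L-shy} are each a Tate stack. By that lemma, $\underline\kaploop^d_U(\Spec B)$ is, up to a retract, a finite limit in $\shybounded_k \cap \IP\dAff_k$ of the ind-pro-affine schemes
\[
Z_E^d \simeq \colim_n \lim_p \Spec\bigl( k[ a_{\el{\alpha}{d}} : -n\delta_i \leq \alpha_i \leq p ] \bigr), \quad \emptyset \neq E \subset \{1,\dots,d\},
\]
with $\delta_i = 1$ for $i \in E$ and $\delta_i = 0$ otherwise; by \autoref{shydaff-limits} this limit agrees with the corresponding limit in $\dSt_k$ and in $\IP\dSt_k$. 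The subcategory $\Tatestack_k \subset \IP\dSt_k$ is stable under finite limits by \autoref{tate-limits} and under retracts because $\Tateu U$ is idempotent-complete by definition. It therefore suffices to prove that each $Z_E^d$ is a Tate stack.

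Each $Z_E^d$ manifestly lies in $\IP\dStArtlfp_k$ (a filtered colimit of pro-objects in smooth affine schemes of finite presentation with affine transition maps), so by \autoref{ipcotangent} it admits a perfect cotangent complex. I would compute it pro-stratum by pro-stratum: setting $Y_n = \lim_p \A^{k(E,n,p)}$ and letting $i_n \colon Y_n \to Z_E^d$ be the canonical map, the formula of \autoref{ipcotangent} gives
\[
i_{n,\PI}^* \Lcot_{Z_E^d} \simeq \lim_{m \geq n} f^*_{\mathbf I} \Lcot_{Y_m},
\]
where $f \colon Y_n \hookrightarrow Y_m$ is the closed immersion obtained by setting to zero the extra coefficients $a_\alpha$ with $-m\delta_i \leq \alpha_i < -n\delta_i$. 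Since $\Lcot_{\A^{k(E,m,p)}}$ is free on the rectangle $\{\alpha : -m\delta_i \leq \alpha_i \leq p\}$, unwinding identifies $i_{n,\PI}^*\Lcot_{Z_E^d}$ with the free pro-ind $\Oo_{Y_n}$-module indexed by multi-indices $\alpha \in \Z^d$ satisfying $\alpha_i \geq 0$ for $i \notin E$; the $p \to \infty$ direction supplies the pro-structure (the convergent ``formal'' directions) and the $m \to \infty$ direction (non-trivial exactly because $E \neq \emptyset$) supplies the ind-structure (the ``Laurent'' directions inverted by $X_i^{-1}$, $i \in E$).

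The main obstacle is to promote this description to the statement that $\Lcot_{Z_E^d}$ lies in $\Tateu U_\IP(Z_E^d) \subset \PIPerf(Z_E^d)$ rather than merely in the ambient pro-ind-perfect category. Since $\Tateu U_\IP$ is by definition the right Kan extension of $\Tateu U_\mathbf P$ along $(\Prou U\dSt_k)\op \hookrightarrow (\IP\dSt_k)\op$, it is enough to verify the Tate property on each pro-stratum $Y_n$ compatibly along the ind-transitions $Y_n \hookrightarrow Y_{n+1}$. For fixed $n$ one produces an explicit lattice
\[
P_n \longrightarrow i_{n,\PI}^*\Lcot_{Z_E^d} \longrightarrow I_n
\]
with $P_n \in \Prou U(\Perf(Y_n))$ the pro-sub-complex generated by multi-indices having all $\alpha_i \geq 0$, and $I_n \in \Indu U(\Perf(Y_n))$ the ind-quotient generated by those with some $\alpha_i < 0$ for $i \in E$. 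By \autoref{tatediagexist} this exhibits $i_{n,\PI}^*\Lcot_{Z_E^d}$ as an elementary Tate object; naturality of the filtration of the multi-index lattice by $n$ secures compatibility along $Y_n \hookrightarrow Y_{n+1}$, and the resulting system assembles into an object of $\Tateu U_\IP(Z_E^d)$. This gives the Tate structure on $\Lcot_{Z_E^d}$, completing the proof.
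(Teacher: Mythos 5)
Your reduction to the individual building blocks $Z_E^d$ is where the argument breaks down: for $d \geq 2$ these are \emph{not} Tate stacks, so the sufficient condition you reduce to is false and the claimed lattice does not exist. Concretely, take $d=2$ and $E=\{1\}$. By the computation in the proof of \autoref{L-shy}, the cotangent complex of $Z_E^2$ is the pullback from the point of the pro-ind-module $\lim_n \colim_p k^{M^{p,n}}$, where $M^{p,n}$ is spanned by the generators $a_{\alpha_1,\alpha_2}$ with $-n \leq \alpha_1 \leq p$ and $0 \leq \alpha_2 \leq p$. The summand spanned by the indices with $\alpha_1 < 0$ --- exactly your proposed ind-quotient $I_n$ --- still has $\alpha_2$ ranging over $[0,p]$, so it grows in the pro-direction $p$ as well as in the ind-direction $n$: it is a genuinely mixed object of the shape $\prod_{\alpha_1<0}\bigoplus_{\alpha_2\geq 0}k$, not an ind-object. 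The kernels of its projections to the finite pro-stages are again mixed, so no presentation as in \autoref{tatediagexist} exists, $i_{n,\PI}^*\Lcot_{Z_E^2}$ is not an elementary Tate object, and $Z_E^2$ is not Tate. (Your argument does go through for $d=1$, where there is a single $E$ and the quotient really is an ind-object; the failure is specific to $d\geq 2$.)

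The paper avoids this by never separating the $Z_E^d$: it computes the cotangent of the whole limit $\underline{\kaploop}^d_U(\A^1)\simeq \lim_{\emptyset\neq E\subset F} Z_E^d$ at once, which becomes a \emph{colimit} over $E$ of the modules $\bigoplus_{D\subset E}M_D^{p,n}$, and then invokes the combinatorial cancellation of \autoref{subsets-colim}. That cancellation kills every mixed summand and leaves only $M_\emptyset^{p,n}\oplus M_F^{p,n}[d-1]$; since $M_\emptyset^{p,n}$ is independent of $n$ (pure pro) and $M_F^{p,n}$ is independent of $p$ (pure ind), the result is visibly Tate. In other words, the Tate property of $\underline{\kaploop}^d_U(\Spec B)$ is an emergent feature of the full limit over $E$ and is not inherited from the pieces; \autoref{tate-limits} only gives the implication in the direction you do not need. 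Your outer reductions (to $\A^1$ via \autoref{shydaff-limits} and retracts, and the cotangent formula of \autoref{ipcotangent}) do match the paper, but the core cancellation argument is missing and cannot be replaced by a stratum-by-stratum lattice.
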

\begin{proof}
Let us first focus on the case of the affine line $\A^1$.
We have to prove that the cotangent complex $\Lcot_{\underline \kaploop^d_U(\A^1)}$ is a Tate module.
For any subset $D \subset F$ we define $M_D^{p,n}$ to be the free $k$-complex generated by the symbols
\[
\{a_{\el{\alpha}{d}}, -n \leq \alpha_i < 0 \text{ if } i \in D, 0 \leq \alpha_i \leq p \text{ otherwise}\}
\]
in degree $0$.
From the proof of \autoref{L-shy}, we have
\[
Z^d_E \simeq \colim_n \lim_p \Spec \left( k\left[ \textstyle \bigoplus_{D \subset E} M_D^{p,n} \right] \right) \text{~~~ and ~~~} \underline \kaploop^d_U(\A^1) \simeq \lim_{\emptyset \neq E \subset F} Z^d_E
\]
where $F = \{1 , \dots , d\}$.
If we denote by $\pi$ the projection $\underline \kaploop_U^d(\A^1) \to \Spec k$, we get
\[
\Lcot_{\underline \kaploop^d_U(\A^1)} \simeq \pi^* \left(\colim_{\emptyset \neq E \subset F} \lim_n \colim_p \bigoplus_{D \subset E} M_D^{p,n}\right)
\simeq \pi^* \left(\lim_n \colim_p \colim_{\emptyset \neq E \subset F} \bigoplus_{D \subset E} M_D^{p,n}\right)
\]
Using \autoref{subsets-colim} we have
\[
\Lcot_{\underline \kaploop^d_U(\A^1)} \simeq \pi^* \left(\lim_n \colim_p M_\emptyset^{p,n} \oplus M_F^{p,n}[d-1] \right)
\]
Moreover, we have $M_\emptyset^{p,n} \simeq M_\emptyset^{p,0}$ and $M_F^{p,n} \simeq M_F^{0,n}$. It follows that $\Lcot_{\underline \kaploop^d_U(\A^1)}$ is a Tate module on the ind-pro-stack $\underline \kaploop^d_U(\A^1)$.
The case of $\underline \kaploop^d_U(\Spec B)$ then follows from \autoref{shydaff-limits} and from \autoref{tate-limits}.
\end{proof}

\begin{lem}\label{LU-ip-fetale}
Let $B \to C$ be an étale map between cdga's of finite presentation. The induced map $f \colon \underline \kaploop^d_U(\Spec C) \to \underline \kaploop^d_U(\Spec B)$ is formally étale -- see \autoref{derivation-ipdst}.
\end{lem}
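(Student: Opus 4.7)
The plan is to prove the vanishing of the relative cotangent complex $\Lcot_{\underline \kaploop^d_U(\Spec C)/\underline \kaploop^d_U(\Spec B)}$ in $\PIQcoh(\underline \kaploop^d_U(\Spec C))$; this is equivalent to showing that the canonical morphism $f^*_\PI \Lcot_{\underline \kaploop^d_U(\Spec B)} \to \Lcot_{\underline \kaploop^d_U(\Spec C)}$ is an equivalence of pro-ind-modules. Both cotangent complexes exist (and in fact are Tate) by \autoref{LU-affine-tate} applied to $\Spec B$ and $\Spec C$ respectively, so the question is really about the comparison map.

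First, I would translate this vanishing into a statement about derivations via the corepresentation in \autoref{derivation-ipdst}: it suffices to show that for every pro-ind-module $E \in \PIQcoh(\underline \kaploop^d_U(\Spec C))^{\leq 0}$, the space of relative derivations $\Der(\underline \kaploop^d_U(\Spec C)/\underline \kaploop^d_U(\Spec B), E)$ is contractible. Unwinding the square-zero extension functor $\sqzext^\IP$ at each level of the ind-pro-structure coming from \autoref{LU-affine-tate}, such a relative derivation amounts to extending an $A$-point $u \colon U_A^d \to \Spec C$ to a square-zero thickening $U_{A'}^d \to \Spec C$, compatibly with a prescribed extension $U_{A'}^d \to \Spec B$ of the composite $v = g \circ u \colon U_A^d \to \Spec B$, where $g \colon \Spec C \to \Spec B$ is the given étale map.

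Next, by derived obstruction theory (\autoref{obstruction}) applied to $g$, the space of such lifts fits into a fibre sequence controlled by $\Map(u^* \Lcot_{C/B}, M)$ for the appropriate module $M$ parameterising the square-zero extension. Since $B \to C$ is étale we have $\Lcot_{C/B} \simeq 0$, so all these obstruction spaces are contractible and the lifts are essentially unique. Reassembling these contractibility statements along the ind-pro-diagram describing $\underline \kaploop^d_U(\Spec C)$ then yields the desired vanishing of $\Der(\underline \kaploop^d_U(\Spec C)/\underline \kaploop^d_U(\Spec B), E)$.

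The main technical obstacle will be to track compatibly the ind-pro-structure both on $E$ and on the square-zero extension $\underline \kaploop^d_U(\Spec C)[E]$ produced by $\sqzext^\IP$; one must verify that relative derivations really are computed level-wise on the underlying diagrams of affine pro-ind-schemes, and that the explicit presentation of $\underline \kaploop^d_U(\Spec B)$ as a retract of a finite limit of ind-pro-affine schemes from \autoref{L-shy} is compatible with the relative cotangent along the étale map $g$. Once this bookkeeping is in place, the conclusion is immediate from $\Lcot_{C/B} \simeq 0$.
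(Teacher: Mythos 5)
Your overall strategy --- reduce formal \'etaleness to the contractibility of relative derivation spaces, then win because $\Lcot_{C/B} \simeq 0$ --- is the same as the paper's, and the \'etale obstruction theory is indeed the engine of the argument. The gap is precisely in the step you defer as ``bookkeeping'': the claim that relative derivations of the ind-pro-stack can be ``computed level-wise on the underlying diagrams''. That is not the mechanism that works, and it is the genuinely hard point of the lemma. A derivation of $\underline\kaploop^d_U(\Spec C)$ with values in a pro-ind-module is a retraction of $\underline\kaploop^d_U(\Spec C) \to \underline\kaploop^d_U(\Spec C)[E]$ in $\IP\dSt_k$, and there is no a priori level-by-level compatibility between the ind-pro-presentations of the source and target of $f$ that would let you match strata; nor is it clear, from the definitions alone, that mapping spaces in $\IP\dSt_k$ out of the square-zero extension $Z[E]$ have anything to do with mapping spaces of derived stacks.

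The paper's resolution is different and is the actual content of the proof: after restricting to pro-affine test objects $Z \to \underline\kaploop^d_U(\Spec C)$ with $E \in \IQcoh(Z)^{\leq 0}$, one uses that $\underline\kaploop^d_U(\Spec B)$, $\underline\kaploop^d_U(\Spec C)$ and $Z[E]$ all lie in the subcategory $\shybounded_k$ of \autoref{indprochamps}, on which the realisation functor to $\dSt_k$ is fully faithful (\autoref{ff-realisation}). This converts the comparison map of derivation spaces into a map of mapping spaces of honest derived stacks out of $|Z[E]|$, which is a trivial square-zero extension of the derived affine scheme $|Z|$; the resulting lifting statement is exactly \autoref{LU-fetale}, which is itself proved by the obstruction-theoretic argument you invoke. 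So your \'etale input is correct, but without \autoref{ff-realisation} the ``reassembling along the ind-pro-diagram'' step has no justification, and that full-faithfulness theorem --- rather than $\Lcot_{C/B} \simeq 0$ --- is what the proof really turns on.
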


\begin{proof}
Let us denote $X = \Spec B$ and $Y = \Spec C$.
We have to prove that the induced map
\[
j \colon \Map_{\underline \kaploop^d_U(Y)/-}\left(\underline \kaploop^d_U(Y)[-], \underline \kaploop^d_U(Y)\right) \to \Map_{\underline \kaploop^d_U(Y)/-}\left(\underline \kaploop^d_U(Y)[-], \underline \kaploop_U^d(X)\right) 
\]
is an equivalence of functors $\PIQcoh( \underline \kaploop^d(Y) )^{\leq 0} \to \sSets$.
Since $\underline \kaploop^d_U(Y)$ is ind-pro-affine, we can restrict to the study of the morphism
\[
j_Z \colon \Map_{Z/-}\left(Z[-], \underline \kaploop^d_U(Y)\right) \to \Map_{Z/-}\left(Z[-], \underline \kaploop_U^d(X)\right) 
\]
of functors $\IQcoh(Z)^{\leq 0} \to \sSets$, for any pro-affine scheme $Z$ and any map $Z \to \underline \kaploop^d_U(Y)$.
Let us fix $E \in \IQcoh(Z)^{\leq 0}$. The pro-stack $Z[E]$ is in fact an affine pro-scheme.
Recall that both $\underline \kaploop^d_U(Y)$ and $\underline \kaploop^d_U(X)$ belong to $\shybounded_k$.
It follows from the proof of \autoref{ff-realisation} that the morphism $j_Z(E)$ is equivalent to
\[
|j_Z(E)| \colon \Map_{|Z|/-}\left( |Z[E]|, \kaploop^d_U(Y) \right) \to \Map_{|Z|/-}\left( |Z[E]|, \kaploop^d_U(X) \right) 
\]
where $|-|$ is the realisation functor and the mapping spaces are computed in $\dSt_k$.
It now suffices to see that $|Z[E]|$ is a trivial square zero extension of the derived affine scheme $|Z|$ and to use \autoref{LU-fetale}.
\end{proof}

\begin{prop}\label{L-affine-tate}
Let $\Spec B$ be a derived affine scheme of finite presentation. The ind-pro-stack $\underline \kaploop^d(\Spec B)$ admits a cotangent complex. This cotangent complex is moreover a Tate module.
For any étale map $B \to C$ the induced map $f \colon \underline \kaploop^d(\Spec C) \to \underline \kaploop^d(\Spec B)$ is formally étale -- see \autoref{derivation-ipdst}.
\end{prop}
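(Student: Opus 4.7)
The plan is to establish that the natural morphism $g \colon \underline \kaploop^d(\Spec B) \to \underline \kaploop_U^d(\Spec B)$, coming from the formal completion construction on affines, is formally étale in the sense of \autoref{derivation-ipdst}; the existence of the cotangent complex, the Tate property, and the functoriality statement will all follow by transfer from $\underline \kaploop_U^d(\Spec B)$, for which these facts were established in \autoref{LU-affine-tate} and \autoref{LU-ip-fetale}.

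First I would observe that by the proof of \autoref{L-shy}, $\underline \kaploop^d(\Spec B)$ lies in $\shybounded_k \cap \IP\dAff_k$; in particular it is an Artin ind-pro-stack, so it admits a cotangent complex by \autoref{ipcotangent}. To check formal étaleness of $g$, I would mimic the strategy of \autoref{LU-ip-fetale}. Since the domain is ind-pro-affine, it suffices to fix a pro-affine scheme $Z$ with a map to $\underline \kaploop^d(\Spec B)$ and an object $E \in \IQcoh(Z)^{\leq 0}$, and to check that the natural map of derivation spaces
\[
j_Z(E) \colon \Map_{Z/-}(Z[E], \underline \kaploop^d(\Spec B)) \to \Map_{Z/-}(Z[E], \underline \kaploop_U^d(\Spec B))
\]
is an equivalence. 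Since $Z[E]$ remains a pro-affine scheme and the targets both lie in $\shybounded_k$, the proof of \autoref{ff-realisation} identifies $j_Z(E)$ with the analogous map of derived mapping spaces between the realizations $\kaploop^d(\Spec B)$ and $\kaploop_U^d(\Spec B)$. This last map is an equivalence because $\kaploop^d(\Spec B)$ is by construction the formal completion of $\kaploop_V^d(\Spec B)$ in $\kaploop_U^d(\Spec B)$, and such formal completions are formally étale onto their target (as noted after the definition of formal completion) — the relevant square-zero extension $|Z[E]|$ of $|Z|$ being a genuine trivial square-zero extension. This gives the equivalence $g^* \Lcot_{\underline \kaploop_U^d(\Spec B)} \simeq \Lcot_{\underline \kaploop^d(\Spec B)}$.

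Once this is in hand, the Tate property is immediate: by \autoref{LU-affine-tate}, $\Lcot_{\underline \kaploop_U^d(\Spec B)}$ is a Tate module, and pullback along any map of ind-pro-stacks preserves Tate modules (by functoriality of the construction $\Tateu U_\IP$ in \autoref{unique-ipstructure}). For the last statement, given an étale map $B \to C$, I would consider the commutative square
\[
\mymatrix{
\underline \kaploop^d(\Spec C) \ar[r]^-{g_C} \ar[d]_f & \underline \kaploop_U^d(\Spec C) \ar[d]^h \\
\underline \kaploop^d(\Spec B) \ar[r]^-{g_B} & \underline \kaploop_U^d(\Spec B)
}
\]
in which $g_B$, $g_C$ are formally étale by the first step and $h$ is formally étale by \autoref{LU-ip-fetale}. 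Chasing cotangent pullbacks around the square then gives $f^* \Lcot_{\underline \kaploop^d(\Spec B)} \simeq f^* g_B^* \Lcot_{\underline \kaploop_U^d(\Spec B)} \simeq g_C^* h^* \Lcot_{\underline \kaploop_U^d(\Spec B)} \simeq g_C^* \Lcot_{\underline \kaploop_U^d(\Spec C)} \simeq \Lcot_{\underline \kaploop^d(\Spec C)}$, showing that $f$ is formally étale.

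The main obstacle is the first step, namely the careful reduction of formal étaleness of $g$ at the ind-pro-stack level to the classical statement about formal completions. One has to make sure that the pro-affine test object $Z[E]$ really does realize to a trivial square-zero extension of $|Z|$ (so that the classical formally étale property of $\hat X_Y \to Y$ can be applied), and that no size issues obstruct the application of \autoref{ff-realisation}; the remainder is formal.
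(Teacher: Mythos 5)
Your proposal is correct and follows essentially the same route as the paper: the paper's proof also reduces everything to showing that the natural map $\underline\kaploop^d(\Spec B)\to\underline\kaploop^d_U(\Spec B)$ is formally étale, by testing derivations on pro-affine schemes $X$, realizing, and observing that $|X[E]|$ is a trivial square-zero extension of $|X|$ so that the defining pullback square of the formal completion (equivalently, the de Rham mapping spaces, which are both contractible under $|X|$) forces the map of derivation spaces to be an equivalence; the Tate property and functoriality are then transferred from \autoref{LU-affine-tate} and \autoref{LU-ip-fetale} exactly as you do.
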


\begin{proof}
Let us write $Y = \Spec B$.
Let us denote by $i \colon \underline \kaploop^d(Y) \to \underline \kaploop^d_U(Y)$ the natural map. 
We will prove that the map $i$ is formally étale, the result will then follow from \autoref{LU-affine-tate} and \autoref{LU-ip-fetale}.
To do so, we consider the natural map
\[
j \colon \Map_{\underline \kaploop^d(Y)/-}\left(\underline \kaploop^d(Y)[-], \underline \kaploop^d(Y)\right) \to \Map_{\underline \kaploop^d(Y)/-}\left(\underline \kaploop^d(Y)[-], \underline \kaploop_U^d(Y)\right) 
\]
of functors $\PIQcoh( \underline \kaploop^d(Y) )^{\leq 0} \to \sSets$.
To prove that $j$ is an equivalence, we can consider for every affine pro-scheme $X \to \underline \kaploop^d(Y)$ the morphism of functors $\IQcoh(X)^{\leq 0} \to \sSets$
\[
j_X \colon \Map_{X/-}\left( X[-], \underline \kaploop^d(Y) \right) \to \Map_{X/-}\left( X[-], \underline \kaploop^d_U(Y) \right) 
\]
Let us fix $E \in \IQcoh(X)^{\leq 0}$. The morphism $j_X(E)$ is equivalent to
\[
|j_X(E)| \colon \Map_{|X|/-}\left( |X[E]|, \kaploop^d(Y) \right) \to \Map_{|X|/-}\left( |X[E]|, \kaploop^d_U(Y) \right) 
\]
where the mapping space are computed in $\dSt_k$. The map $|j_X(E)|$ is a pullback of the map
\[
f \colon \Map_{|X|/-}\left( |X[E]|, \kaploop^d_V(Y)_\mathrm{dR} \right) \to \Map_{|X|/-}\left( |X[E]|, \kaploop^d_U(Y)_\mathrm{dR} \right) 
\]
It now suffices to see that $|X[E]|$ is a trivial square zero extension of the derived affine scheme $|X|$ and thus $f$ is an equivalence (both of its ends are actually contractible).
\end{proof}

Let us recall from \autoref{determinantalanomaly} the determinantal anomaly
\[
[\mathrm{Det}_{\underline \kaploop^d(\Spec A)}] \in \homol^2\left(\kaploop^d(\Spec A), \Oo_{\kaploop^d(\Spec A)}^{\times}\right)
\]
It is associated to the tangent $\T_{\underline \kaploop^d(\Spec A)} \in \Tateu U_\IP(\underline \kaploop^d(\Spec A))$ through the determinant map.
Using \autoref{L-affine-tate}, we see that this construction is functorial in $A$, and from \autoref{L-epi} we get that it satisfies étale descent.
Thus, for any quasi-compact and quasi-separated (derived) scheme (or Deligne-Mumford stack with algebraisable diagonal), we have a well-defined determinantal anomaly
\[
[\mathrm{Det}_{\underline \kaploop^d(X)}] \in \homol^2\left(\kaploop^d(X), \Oo_{\kaploop^d(X)}^{\times}\right)
\]

\begin{rmq}
It is known since \cite{kapranovvasserot:loop4} that in dimension $d=1$, if $[\mathrm{Det}_{\kaploop^1(X)}]$ vanishes, then there are essentially no non-trivial automorphisms of sheaves of chiral differential operators on $X$.
\end{rmq}%

\section{Bubble spaces}%

\subsection{Local cohomology}

This subsection is inspired by a result from \cite[Éxposé 2]{SGA2}, giving a formula for local cohomology -- see \autoref{formula-coholoc}. We will first develop two duality results we will need afterwards, and then prove the formula.

Let $A \in \cdga_k$ be a cdga over a field $k$. Let $(\el{f}{p})$ be points of $A^0$ whose images in $\homol^0(A)$ form a regular sequence.

Let us denote by $A_{n,k}$ the Kozsul complex associated to the regular sequence $(\el{f^n}{k})$ for $k \leq p$. We set $A_{n,0} = A$ and $A_n = A_{n,p}$ for any $n$.
If $k<p$, the multiplication by $f^n_{k+1}$ induces an endomorphism $\varphi^n_{k+1}$ of $A_{n,k}$. Recall that $A_{n,k+1}$ is isomorphic to the cone of $\varphi^n_{k+1}$:
\[
\mymatrix{
A_{n,k} \ar[r]^{\varphi^n_{k+1}} \ar[d] & A_{n,k} \ar[d] \\ 0 \ar[r] & A_{n,k+1} \cocart
}
\]
Let us now remark that for any couple $(n,k)$, the $A$-module $A_{n,k}$ is perfect.
\begin{lem} \label{dual-over-A}
Let $k \leq p$.
The $A$-linear dual $A_{n,k}^{\quot{\vee}{A}} = \RHomint_A(A_{n,k},A)$ of $A_{n,k}$ is equivalent to $A_{n,k}[-k]$;
\end{lem}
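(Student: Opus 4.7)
The plan is to proceed by induction on $k \leq p$, using the recursive cone description of the Kozsul complexes stated in the excerpt, together with the fact that $\RHomint_A(-,A)$ sends cofibre sequences in perfect $A$-modules to fibre sequences.

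For the base case $k=0$ we have $A_{n,0} = A$, so $\RHomint_A(A_{n,0},A) \simeq A = A_{n,0}[-0]$, and under this identification the map induced by multiplication by any $g \in A^0$ is again multiplication by $g$. I would in fact carry through the induction with the slightly stronger statement: there exists an equivalence $A_{n,k}^{\vee/A} \simeq A_{n,k}[-k]$ intertwining the two actions of each $f_j^n$ (this compatibility with scalar multiplication is what makes the inductive step go through).

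For the inductive step, assume the statement for $k$. Applying $\RHomint_A(-,A)$ to the cofibre sequence
\[
A_{n,k} \xrightarrow{\varphi^n_{k+1}} A_{n,k} \to A_{n,k+1}
\]
produces a fibre sequence
\[
\RHomint_A(A_{n,k+1},A) \to \RHomint_A(A_{n,k},A) \xrightarrow{(\varphi^n_{k+1})^\vee} \RHomint_A(A_{n,k},A).
\]
The map $\varphi^n_{k+1}$ is $A$-linear multiplication by $f_{k+1}^n$, and since $A$ is commutative its $A$-linear dual is again multiplication by $f_{k+1}^n$. Under the inductive equivalence $\RHomint_A(A_{n,k},A) \simeq A_{n,k}[-k]$ (which, by the strengthened hypothesis, intertwines multiplication by $f_{k+1}^n$), the sequence becomes
\[
\RHomint_A(A_{n,k+1},A) \to A_{n,k}[-k] \xrightarrow{\varphi^n_{k+1}[-k]} A_{n,k}[-k].
\]
Since in a stable $(\infty,1)$-category the fibre of a morphism is the cofibre shifted by $-1$, the leftmost term is
\[
\mathrm{cone}(\varphi^n_{k+1}[-k])[-1] \simeq A_{n,k+1}[-k][-1] = A_{n,k+1}[-(k+1)],
\]
and compatibility with the action of the remaining $f_j^n$ is preserved because all the identifications are $A$-linear. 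This closes the induction.

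The step that requires a little care is the identification of $(\varphi^n_{k+1})^\vee$ with $\varphi^n_{k+1}$ after transport along the inductive equivalence; rather than checking this after the fact, it is cleanest to bake it into the inductive hypothesis (as an equivalence of $A[f_1^n,\dots,f_p^n]$-modules, say), so that the computation of the cone at the end really does reproduce $A_{n,k+1}$ and not some twist of it. Everything else is formal manipulation of cofibre/fibre sequences in the stable $(\infty,1)$-category of perfect $A$-modules.
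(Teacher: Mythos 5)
Your proof is correct and follows essentially the same route as the paper: induction on $k$, a strengthened inductive hypothesis ensuring the duality equivalence intertwines multiplication by elements of $A$ (the paper states this for arbitrary $a \in A$, you for the $f_j^n$), and identification of the dual of the cone sequence with the shifted cone sequence. Nothing further to add.
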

\begin{proof}
We will prove the statement recursively on the number $k$.
When $k = 0$, the result is trivial.
Let $k \geq 0$ and let us assume that $A_{n,k}^{\quot{\vee}{A}}$ is equivalent to $A_{n,k}[-k]$. Let us also assume that for any $a \in A$, the diagram induced by multiplication by $a$ commutes
\[
\mymatrix{
A_{n,k}^{\quot{\vee}{A}} \ar@{-}[r]^-\sim \ar[d]_{\dual a} & A_{n,k}[-k] \ar[d]^a \\
A_{n,k}^{\quot{\vee}{A}} \ar@{-}[r]^-\sim & A_{n,k}[-k]
}
\]
We obtain the following equivalence of exact sequences
\[
\mymatrix{
A_{n,k+1}[-k-1] \ar[r] \ar@{-}[d]^\sim & A_{n,k}[-k] \ar@{-}[d]^\sim \ar[r]^{\varphi^n_{k+1}} & A_{n,k}[-k] \ar@{-}[d]^\sim \\
A_{n,k+1}^{\quot{\vee}{A}} \ar[r] & A_{n,k}^{\quot{\vee}{A}} \ar[r]^{\dual{\left(\varphi^n_{k+1}\right)}} & A_{n,k}^{\quot{\vee}{A}}
}
\]
The statement about multiplication is straightforward.
\end{proof}

\begin{lem} \label{dual-over-k}
Let us assume $A$ is a formal series ring over $A_1$:
\[
A = A_1[\![\el{f}{p}]\!]
\]
It follows that for any $n$, the $A_1$-module $A_n$ is free of finite type and that there is map $r_n \colon A_n \to A_1$ mapping $\el{f^n}{p}[]$ to $1$ and any other generator to zero.
We deduce an equivalence 
\[
A_n \to^\sim A_n^{\quot{\vee}{A_1}} = \RHomint_{A_1}(A_n, A_1)
\]
given by the pairing
\[
\mymatrix{
A_n \otimes_{A_1} A_n \ar[r]^-{\times} & A_n \ar[r]^{r_n} & A_1
}
\]
\end{lem}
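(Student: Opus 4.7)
The plan is to use the hypothesis $A = A_1[\![f_1, \dots, f_p]\!]$ to identify $A_n$ explicitly as a finite-rank free $A_1$-module and then exhibit a dual basis for the pairing.

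Since $(f_1, \dots, f_p)$ is a regular sequence in $\homol^0(A)$, so is $(f_1^n, \dots, f_p^n)$, and hence the Koszul complex $A_n$ is quasi-isomorphic to the ordinary quotient $A/(f_1^n, \dots, f_p^n)$. Using the power series presentation, this quotient identifies with the truncated polynomial algebra
\[
A_n \simeq A_1[f_1, \dots, f_p]/(f_1^n, \dots, f_p^n) \simeq \bigotimes_{i=1}^p A_1[f_i]/(f_i^n),
\]
which is $A_1$-free of rank $n^p$ with basis the monomials $f^\alpha := f_1^{\alpha_1} \cdots f_p^{\alpha_p}$ indexed by multi-indices $\alpha$ with $0 \leq \alpha_i \leq n-1$. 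Under this identification, the map $r_n$ is the $A_1$-linear projection onto the line generated by the top monomial $f_1^{n-1} \cdots f_p^{n-1}$.

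Next, I would compute the pairing explicitly on this basis: for any two multi-indices $\alpha, \beta$ we have $r_n(f^\alpha \cdot f^\beta) = r_n(f^{\alpha+\beta})$, which equals $1$ when $\alpha_i + \beta_i = n-1$ for every $i$ and vanishes otherwise, since any monomial with some $\alpha_i + \beta_i \geq n$ is already zero in the truncated algebra, and every other surviving monomial is killed by $r_n$. Writing $\beta^\vee_i = n - 1 - \alpha_i$, this exhibits $\{f^{\beta^\vee}\}$ as the $A_1$-basis dual to $\{f^\alpha\}$, so the induced map $A_n \to A_n^{\quot{\vee}{A_1}}$ is an isomorphism on bases and is therefore an equivalence.

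The only nontrivial step is the first one: identifying the derived Koszul complex $A_n$ with the plain quotient $A/(f_1^n, \dots, f_p^n)$. This follows from the regularity of the sequence and can be established by induction on $k$ using the cofibre sequences relating $A_{n,k+1}$ to $A_{n,k}$ via multiplication by $f_{k+1}^n$, in the same spirit as the proof of \autoref{dual-over-A}. Once this identification is in place, the remainder of the argument is finite-rank linear algebra over $A_1$ and requires no further input.
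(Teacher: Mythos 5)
Your proof is correct and matches what the paper intends: the paper actually omits a proof of this lemma entirely, and the remark immediately following it records exactly the dual-basis formula you derive (the inverse sending $\alpha$ to $\sum_{\underline i}\alpha(f^{\underline i})f^{n-1-\underline i}$, i.e.\ the statement that $\{f^{n-1-\alpha}\}$ is dual to $\{f^{\alpha}\}$ under $r_n\circ\times$). Your identification of $A_n$ with the rank-$n^p$ free module on the monomials $f^{\alpha}$, $0\leq\alpha_i\leq n-1$, and the observation that the Gram matrix of the pairing is the anti-diagonal permutation matrix, is precisely the computation the paper leaves implicit.
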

\begin{rmq}
Note that we can express the inverse $A_n^{\quot{\vee}{A_1}} \to A_n$ of the equivalence above: it map a function $\alpha \colon A_n \to A_1$ to the serie
\[
\sum_{\underline i} \alpha(f^{\underline i}) f^{n-1-\underline i}
\]
where $\underline i$ varies through the uplets $(\el{i}{p})$ and where $f^{\underline i} = f_1^{i_1} \dots f_p^{i_p}$.
\end{rmq}

\todo{Hypothèse sur $X$}
We can now focus on the announced formula.
Let $X$ be a quasi-compact and quasi-separated derived scheme and let $i \colon Z \to X$ be a closed embedding defined be a finitely generated ideal $\mathcal I \subset \Oo_X$. Let $j \colon U \to X$ denote the complementary open subscheme.

Let us denote by $\bar Y$ the diagram $\N \to \dSt_X$ defined by 
\[
\bar Y(n) = Y_n = \Spec_X \left( \quot{\Oo_X}{\mathcal I^n} \right)
\]
For any $n \in \N$, we will denote by $i_n \colon Y_n \to X$ the inclusion.
Let us fix the notation
\[
\Qcoh_* \colon \mymatrix@1{ \dSt_k \ar[r]^-{\Qcoh\op} & \left(\PresLeftu V\right)\op \simeq \PresRightu V}
\]
It maps every morphism $\phi \colon S \to T$ to the forgetful functor $\phi_* \colon \Qcoh(S) \to \Qcoh(T)$. This functor also admit a right adjoint, denoted by $\phi^!$. We denote by
\[
\Qcoh^! \colon \dSt_k\op \to \PresRightu V
\]
the corresponding diagram. It will also be handy to denote $\Qcoh$ by $\Qcoh^*$.
We finally set the following notations
\[
\mymatrix{
\Qcoh(\hat X) = \lim \Qcoh^*(\bar Y) \ar@<-2pt>[r]_-{\hat\imath_*} & \Qcoh(X) \ar@<-2pt>[l]_-{\hat\imath^*} \ar@<2pt>[r]^{j^*} & \Qcoh(U)  \ar@<2pt>[l]^{j_*} \\
& \Qcoh_Z(X) \ar@<2pt>[u]^g \ar@{<-}@<-2pt>[u]_f&
}
\]
Gaitsgory has proven the functors $f \hat\imath_*$ and $\hat\imath^* g$ to be equivalences.
The functor $f$ then corresponds to $\hat\imath^*$ through this equivalence.
We can also form the adjunction
\[
\mymatrix{
\lim \Qcoh^!(\bar Y) \ar@<2pt>[r]^-{\tilde\imath_*} \ar@<-2pt>@{<-}[r]_-{\tilde\imath^!} & \Qcoh(X)
}
\]

\begin{lem}[Gaitsgory-Rozenblyum]\label{gaits-cofinal}
Let $A \in \cdga_k$ and let $p$ be a positive integer. The natural morphism induced by the multiplication $A_p \otimes_A A_p \to A_p$ is an equivalence
\[
\colim_n \RHomint_A\left(A_n \otimes_A A_p, -\right) \simeq \colim_{n \geq p} \RHomint_A\left(A_n \otimes_A A_p, -\right) \from^\sim \RHomint_A(A_p, -)
\]
\end{lem}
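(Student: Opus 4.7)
The first equivalence is cofinality of $\N_{\geq p} \subset \N$, so I would concentrate on the second. The plan is to rewrite the colimit using the tensor--$\RHomint$ adjunction as $\colim_n \RHomint_A(A_n, N)$ with $N := \RHomint_A(A_p, M)$, then exhibit $\RHomint_A(A_p, M)$ as a retract of the colimit by constructing an explicit section-retraction pair, and finally show that the complementary summand vanishes.

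For the retract: for $n \geq p$, I would define a chain map $\alpha_n \colon A_n \to A_p$ which is the identity in Koszul degree $0$ and sends $e_{i_1}^{(n)} \wedge \cdots \wedge e_{i_k}^{(n)}$ to $f_{i_1}^{n-p} \cdots f_{i_k}^{n-p} \cdot e_{i_1}^{(p)} \wedge \cdots \wedge e_{i_k}^{(p)}$; the map $A_n \otimes_A A_p \to A_p$ used in the lemma is then the composite $A_n \otimes A_p \xrightarrow{\alpha_n \otimes \id} A_p \otimes A_p \xrightarrow{\mu} A_p$ with $\mu$ the cdga multiplication of $A_p$. In the other direction, the inclusion $A \hookrightarrow A_n$ in Koszul degree $0$ gives $\sigma_n \colon A_p = A \otimes_A A_p \to A_n \otimes A_p$, and a direct computation will show that the composite $A_p \xrightarrow{\sigma_n} A_n \otimes A_p \xrightarrow{\mu \circ (\alpha_n \otimes \id)} A_p$ equals the identity. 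Dualising and taking the colimit then exhibits $\RHomint_A(A_p, M)$ as a retract of $\colim_n \RHomint_A(A_n \otimes A_p, M)$, so everything reduces to showing that the complementary summand $K_n := \operatorname{fib}\bigl( \RHomint_A(A_n \otimes A_p, M) \to \RHomint_A(A_p, M) \bigr) \simeq \RHomint_A\bigl((A_n/A) \otimes_A A_p, M\bigr)$ has $\colim_n K_n \simeq 0$, where $A_n/A := \operatorname{cofib}(A \hookrightarrow A_n)$.

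Since filtered colimits in $\dgMod_A$ are exact, so that $\homol^i(\colim_n K_n) = \colim_n \homol^i(K_n)$, it suffices to show that the $p$-fold composite of transition maps $K_n \to K_{n+p}$ is nullhomotopic for every $n$: this forces the colimit on each cohomology group to vanish. For this, I would compute that the single Koszul transition $A_{m+1} \to A_m$ sends $e_{i_1}^{(m+1)} \wedge \cdots \wedge e_{i_k}^{(m+1)}$ to $f_{i_1} \cdots f_{i_k} \cdot e_{i_1}^{(m)} \wedge \cdots \wedge e_{i_k}^{(m)}$, so the $p$-fold composite multiplies each Koszul degree $-k$ component by $f_{i_1}^p \cdots f_{i_k}^p$. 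On the quotient $A_n/A$ only degrees $-k$ with $k \geq 1$ remain, hence every component of the $p$-fold transition $(A_{n+p}/A) \otimes A_p \to (A_n/A) \otimes A_p$ carries at least one factor $f_{i_j}^p$. The crux is that $A_p$ is the Koszul complex for $(f_1^p, \ldots, f_p^p)$, so each multiplication by $f_i^p$ on $A_p$ is nullhomotopic (an explicit nullhomotopy being wedging with $e_i^{(p)}$), and consequently any product of these multiplications is nullhomotopic as well. Applying $\RHomint_A(-, M)$ yields the required nullhomotopy $K_n \to K_{n+p}$. The main obstacle is precisely this last combinatorial step: individual transitions $K_n \to K_{n+1}$ are generally not nullhomotopic, and one truly needs to iterate $p$ times and exploit the specific relationship between $A_p$ and the $p$-th powers $f_i^p$.
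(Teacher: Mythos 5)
The paper itself does not prove this lemma --- it simply cites \cite[7.1.5]{gaitsgoryrozenblyum:dgindschemes} --- so I assess your argument on its own merits. Your reduction is correctly set up: the cofinality step, the chain-level retraction $\mu\circ(\alpha_n\otimes\id)$ with section $\sigma_n$ compatible with the transition maps, the identification of the complementary summand as $K_n=\RHomint_A((A_n/A)\otimes_A A_p,M)$, and the reduction of $\colim_n K_n\simeq 0$ to the nullhomotopy of the $p$-fold transition are all fine. The gap is in the justification of that last nullhomotopy. The wedge-degree components of $A_n/A$ are \emph{not} subcomplexes: the Koszul differential sends $e_{i_1}\wedge\cdots\wedge e_{i_k}$ to $\sum_j\pm f_{i_j}^n\, e_{i_1}\wedge\cdots\widehat{e_{i_j}}\cdots\wedge e_{i_k}$, which for $k\geq 2$ survives in the quotient $A_n/A$. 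So the $p$-fold transition is only \emph{componentwise} a multiplication by $\prod_{i\in S}f_i^p$, and the individual nullhomotopies $h_i=e_i^{(p)}\wedge(-)$ need not assemble into a nullhomotopy of the chain map; ``each component is nullhomotopic'' does not imply the map is. Already for $p=2$ the natural candidate $H(e_S\otimes x)=\pm e_S\otimes(\prod_{i\in S}h_i)(x)$ fails: the cross terms produced by $d(e_{12})=f_1^ne_2-f_2^ne_1$ force an extra correction term (of the shape $e_2\otimes f_1^{n}h_2h_1(x)$) that your argument does not supply. For $p=1$ there is no mixing, which is perhaps why the issue is easy to miss.

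The claim you need is nevertheless true, and your proof can be repaired structurally. For $n\geq p$ the element $f_i^n=f_i^{n-p}\cdot d\bigl(e_i^{(p)}\bigr)=d\bigl(f_i^{n-p}e_i^{(p)}\bigr)$ is a boundary in $A_p$, so $A_n\otimes_AA_p\simeq\mathrm{Kos}_{A_p}(f_1^n,\dots,f_p^n)$ splits in the derived category of $A_p$-modules as $\bigoplus_{S\subseteq\{1,\dots,p\}}A_p[\,|S|\,]$, the $S=\emptyset$ summand being the unit. Since $\operatorname{Hom}_{D(A_p)}(A_p,A_p[1])=H^1(A_p)=0$, each tensor factor of the $p$-fold transition is an upper-triangular $2\times 2$ matrix with diagonal $(\id,\ f_i^p\simeq 0)$; hence in the tensor product every matrix entry whose target summand has $S\neq\emptyset$ contains a factor $f_i^p\simeq 0$ and is null. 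This is exactly the statement that $(A_{n+p}/A)\otimes_AA_p\to(A_n/A)\otimes_AA_p$ is nullhomotopic, after which your colimit argument on cohomology groups goes through. Alternatively one can bypass the nullhomotopy altogether: by adjunction $\colim_n\RHomint_A(A_n\otimes_AA_p,M)\simeq\Gamma_{f_1}\cdots\Gamma_{f_p}(N)$ with $N=\RHomint_A(A_p,M)$, and $N[1/f_i]\simeq 0$ because $f_i^p$ acts nullhomotopically on $N$, so each $\Gamma_{f_i}$ acts as the identity on $N$.
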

\begin{proof}
See \cite[7.1.5]{gaitsgoryrozenblyum:dgindschemes}.
\end{proof}
\begin{prop}
The functor $T = \tilde\imath_* \tilde\imath^!$ is the colimit of the diagram
\[
\mymatrix@1@C=15mm{\N \ar[r]^-{\bar Y} & \dSt_X \ar[r]^-{\Qcoh_*} & \displaystyle \quot{\PresLeftu V}{\Qcoh(X)} \ar[r]^-{\eta_{\Qcoh(X)}} & \displaystyle \quot{\Fct(\Qcoh(X),\Qcoh(X))}{\id} }
\]
It is moreover a right localisation equivalent to the local cohomology functor $g f$.
This induces an equivalence 
\[
\lim \Qcoh^!(\bar Y) \to \Qcoh_Z(X)
\]
commuting with the functors to $\Qcoh(X)$.
\end{prop}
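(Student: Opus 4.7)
The plan is to identify the colimit as a concrete endofunctor of $\Qcoh(X)$, then to compare it with $\tilde\imath_* \tilde\imath^!$, then to verify the idempotency using the Koszul computations, and finally to extract the equivalence with $\Qcoh_Z(X)$. First, observe that each $i_n \colon Y_n \to X$ is affine, so $i_{n*}$ preserves all colimits and admits a right adjoint $i_n^!$; this confirms that $\Qcoh_*(\bar Y)$ lands in $\quot{\PresLeftu V}{\Qcoh(X)}$, and the composite with $\eta_{\Qcoh(X)}$ (which is the counit version of $\counitfct$ from \autoref{prop-unit-transformation}) sends $n$ to the augmented endofunctor $i_{n*} i_n^! \to \id_{\Qcoh(X)}$. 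Since colimits in a slice $\quot{\Fct(\Qcoh(X),\Qcoh(X))}{\id}$ are computed in $\Fct(\Qcoh(X),\Qcoh(X))$, the colimit in question is $(\colim_n i_{n*} i_n^!) \to \id$.

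To compare this with $\tilde\imath_* \tilde\imath^!$, use that by definition $\lim \Qcoh^!(\bar Y)$ is the limit in $\PresRightu V$, which under the standard duality is the colimit of $\Qcoh_*(\bar Y)$ in $\PresLeftu V$. Thus $\tilde\imath_* = \colim_n i_{n*}$ as maps to $\Qcoh(X)$, and the dual of \autoref{colimcats} (iii) for the counit—applied to the sifted colimit of adjunctions—yields that the counit $\tilde\imath_* \tilde\imath^! \to \id$ is precisely the colimit in $\quot{\Fct(\Qcoh(X),\Qcoh(X))}{\id}$ of the counits $i_{n*} i_n^! \to \id$. This proves the first claim.

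Next, to show that $T$ is a right localisation, it suffices to prove idempotency $TT \simeq T$, since the triangle identity will then force $\tilde\imath_*$ to be fully faithful. This is checked locally on $X$: writing $X = \Spec A$ with $\mathcal I = (f_1, \ldots, f_p)$ the chosen regular sequence, \autoref{dual-over-A} identifies $i_{n*} i_n^!(M)$ with $\RHomint_A(A_n, M)$, where $A_n$ is the Koszul complex (and hence perfect over $A$). Since $A_n$ is perfect, $\RHomint_A(A_n, -)$ preserves colimits, giving
\[
T T M \simeq \colim_n \colim_p \RHomint_A(A_n \otimes_A A_p, M),
\]
and \autoref{gaits-cofinal} collapses the inner $n$-colimit to $\RHomint_A(A_p, M)$, so $T T M \simeq T M$. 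The same local computation shows that the counit $T M \to M$ is an equivalence exactly when $j^* M = 0$, that is, when $M \in \Qcoh_Z(X)$: indeed the cone of $\RHomint_A(A_n, M) \to M$ is killed by a power of each $f_i$, and conversely any $M$ supported on $Z$ is filtered by its $\mathcal I^n$-torsion, which forces the colimit to absorb it. This identifies the essential image of $\tilde\imath_*$ with $\Qcoh_Z(X)$, so $\tilde\imath_*$ restricts to an equivalence $\lim \Qcoh^!(\bar Y) \to \Qcoh_Z(X)$ commuting with the forgetful functors to $\Qcoh(X)$, under which $T$ corresponds to the standard right localisation $g f$.

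The main obstacle is the second paragraph: turning the informal statement ``the counit of the colimit adjunction is the colimit of the counits'' into a genuine identification in the slice $(\infty,1)$-category, which forces one to invoke \autoref{colimcats} together with \autoref{prop-unit-transformation} and track the coherence of the sectioning $\eta_{\Qcoh(X)}$ along the whole $\N$-diagram. Once that functorial identification is secured, the rest of the proof is a matter of reducing to the affine Koszul case and applying \autoref{gaits-cofinal}.
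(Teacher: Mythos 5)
Your first two steps track the paper closely: the colimit formula is obtained exactly as in the paper (via the dual of \autoref{lim-adjoint}), and the idempotency $T^2 \simeq T$ is checked affine-locally using the Koszul duality of \autoref{dual-over-A} and \autoref{gaits-cofinal}. The identification of the $T$-colocal objects with $\Qcoh_Z(X)$ is also in the right spirit, although your justification is looser than the paper's, which invokes the fact that $\Oo_Z$ is a compact generator of $\Qcoh_Z(X)$ to show that the corresponding right adjoint is conservative.

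The genuine gap is in your final inference. You assert that idempotency of $T$ ``will force $\tilde\imath_*$ to be fully faithful'' via the triangle identity, and later that identifying the essential image of $\tilde\imath_*$ with $\Qcoh_Z(X)$ already yields the equivalence $\lim \Qcoh^!(\bar Y) \to \Qcoh_Z(X)$. Neither implication holds. For an adjunction $L \dashv R$, idempotency of the comonad $LR$ makes $T = LR$ a colocalisation of the target, but says nothing about the unit $\id \to RL$: take $L \colon \mathcal C \times \mathcal C' \to \mathcal C$ the projection and $R(c) = (c,0)$; then $LR = \id$ is idempotent while $L$ is far from fully faithful. The triangle identity only exhibits $\tilde\imath_*\eta$ as a split monomorphism, and upgrading this to an equivalence of $\eta$ requires precisely the conservativity of $\tilde\imath_*$ on $\lim \Qcoh^!(\bar Y)$ — which is the one substantive point your argument never addresses. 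The paper devotes the last part of its proof to exactly this: given a compatible family $(E_n) \in \lim \Qcoh^!(\bar Y)$ with $\colim_n {i_n}_* E_n \simeq 0$, it shows every map $\Oo_{Y_n}[p] \to E_n$ vanishes by factoring it through $h_{nN}^!\,{h_{nN}}_*\Oo_{Y_n}[p]$ for $N$ large, whence each $E_n \simeq 0$. Without this (or an equivalent full-faithfulness argument for $\tilde\imath_*$), your proof establishes that $T$ is a colocalisation with colocal objects $\Qcoh_Z(X)$, but not that $\lim \Qcoh^!(\bar Y)$ itself is equivalent to $\Qcoh_Z(X)$.
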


\begin{rmq}\label{formula-coholoc}
Let us denote by $\Homint_{\Oo_X}(-,-)$ the internal hom of the category $\Qcoh(X)$. It corresponds to a functor $\Qcoh(X)\op \to \Fct(\Qcoh(X),\Qcoh(X))$.
There is moreover a functor $\Oo_* \colon \dSt_X \to \Qcoh(X)\op$ mapping a morphism $\phi \colon S \to X$ to $\phi_* \Oo_S$.
The composite functor
\[
\Homint_{\Oo_X}(\Oo_*(-),-) \colon \dSt_X \to \Fct(\Qcoh(X), \Qcoh(X))
\]
is then equivalent to $\eta_{\Qcoh(X)} \circ \Qcoh_*$, using the uniqueness of right adjoints.

It follows that for any quasi-coherent module $M \in \Qcoh(X)$, we have an exact sequence
\[
\colim_n \Homint_{\Oo_X}(\Oo_{Y_n}, M) \to M \to j_* j^* M
\]
and thus gives a (functorial) formula for local cohomology 
\[
\mathrm H_Z(M) \simeq \colim_n \Homint_{\Oo_X}(\Oo_{Y_n}, M)
\]
It is a generalisation to derived schemes of \cite[Exposé 2, Théorème 6]{SGA2}.
\end{rmq}

\begin{proof}[of the proposition]
The first statement follows from the proof of \autoref{lim-adjoint}, applied to the opposite adjunction.
Let us consider the adjunction morphism $\alpha \colon T = \tilde\imath_* \tilde\imath ^! \to \id$. We must prove that both the induced maps
\[
T^2 \to T
\]
are equivalences. We can restrict to the affine case which follows from \autoref{gaits-cofinal}. The functor $T$ is therefore a right localisation. We will denote by $\Qcoh^T(X)$ the category of $T$-local objects; it comes with functors:
\[
\mymatrix{
\Qcoh^T(X) \ar@<2pt>[r]^-{u} & \Qcoh(X) \ar@<2pt>[l]^-{v}
}
\]
such that $v u \simeq \id$ and $u v \simeq T$.
Using now the vanishing of $j^* \tilde\imath_*$, we get a canonical fully faithful functor $\psi \colon \Qcoh^T(X) \to \Qcoh_Y(X)$ such that $u = g \psi$. It follows that $\psi$ admits a right adjoint $\xi$ and that
\[
\psi = f u \hspace{1cm} \text{and} \hspace{1cm} \xi = v g
\]
We will now prove that the functor $\xi$ is conservative. Let therefore $E \in \Qcoh_Y(X)$ such that $\xi E = 0$. We need to prove that $E$ is equivalent to zero.
We have $T g E = 0$ and ${i_1}_* i_1^! T g E \simeq \RHomint_{\Oo_X}(\Oo_Z,g E)$. Because $\Oo_Z$ is a compact generator of $\Qcoh_Y(X)$ --- see \cite[3.7]{toen:dgazumaya} ---, this implies that $g E$ is supported on $U$. It therefore vanishes.

The vanishing of $j^* \tilde\imath_*$ implies the existence of a functor
\[
\mymatrix@1{\lim \Qcoh^!(\bar Y) \ar[r]^-{\gamma} & \Qcoh_Y(X)}
\]
such that $g \gamma \simeq \tilde \imath_*$. The functor $\varepsilon = \tilde \imath^! g$ is right adjoint to $\gamma$.
The computation
\[
g \gamma \varepsilon \simeq \tilde \imath_* \tilde \imath^! g = T g \simeq g
\]
proves that $\varepsilon$ is fully faithful. We now have to prove that $\gamma$ is conservative. Is it enough to prove that $\tilde\imath_*$ is conservative.
Let $(E_n) \in \lim \Qcoh^!(\bar Y)$.
The colimit
\[
\colim_n {i_n}_* E_n
\]
vanishes if and only if for any $n$, any $p \in \Z$ and any $e \colon \Oo_{Y_n}[p] \to E_n$, there exist $N \geq n$ such that the natural morphism $f \colon {h_{nN}}_* \Oo_{Y_n}[p] \to {h_{nN}}_* E_n \to E_N$ vanishes. The symbol $h_{nN}$ stands for the map $\bar Y(n \leq N)$.
We know that $e$ is the composite map
\[
\mymatrix{
\Oo_{Y_n}[p] \ar[r] & h_{nN}^! {h_{nN}}_* \Oo_{_n}[p] \ar[r]^-{h_{nN}^! f} & h_{nN}^! E_N = E_n
}
\]
The point $e$ is therefore zero and $E_n$ is contractible.
\end{proof}

\subsection{Definition and properties}

We define here the bubble space, obtained from the formal loop space. We will prove in the next sections it admits a structure of symplectic Tate stack.

\begin{df}
The formal sphere of dimension $d$ is the pro-ind-stack
\[
\formalsphere^d = \lim_n \colim_{p \geq n} \Spec(A_p \oplus \Homint_A(A_n,A)) \simeq \lim_n \colim_{p \geq n} \Spec(A_p \oplus A_n[-d])
\]
where $A = k[\el{x}{d}]$ and $A_n = \quot{A}{(\el{x^n}{d})}$.
\end{df}

\begin{rmq}
\todo{Spec pas affine}The notation $\Spec(A_p \oplus A_n[-d])$ is slightly abusive. The cdga $A_p \oplus A_n[-d]$ is not concentrated in non positive degrees. In particular, the derived stack $\Spec(A_p \oplus A_n[-d])$ is not a derived affine scheme.
It behaves like one though, regarding its derived category:
\[
\Qcoh(\Spec(A_p \oplus A_n[-d])) \simeq \dgMod_{A_p \oplus A_n[-d]}
\]
\end{rmq}

Let us define the ind-pro-algebra 
\[
\Oo_{\formalsphere^d} = \colim_n \lim_{p \geq n} A_p \oplus A_n[-d]
\]
where $A_p \oplus A_n[-d]$ is the trivial square zero extension of $A_p$ by the module $A_n[-d]$.
For any $m \in \N$, let us denote by $\formalsphere^d_m$ the ind-stack
\[
\formalsphere^d_m = \colim_{p \geq m} \Spec(A_p \oplus A_m[-d])
\]

\begin{df}\label{dfbubble}
\todo{écrire ça mieux}Let $T$ be a derived Artin stack. We define the $d$-bubble stack of $T$ as the mapping ind-pro-stack
\glsadd{bubble}
\[
\bubblestack(T) = \Mapstack(\formalsphere^d, T) \colon \Spec B \mapsto \colim_n \lim_{p \geq n} T \left(B \otimes (A_p \oplus A_n[-d]) \right)
\]
Again, the cdga $A_p \oplus A_n[-d]$ is not concentrated in non positive degree. This notation is thus slightly abusive and by $T(B \otimes (A_p \oplus A_n[-d]))$ we mean
\[
\Map(\Spec(A_p \oplus A_n[-d]) \times \Spec B,X)
\]
We will denote by $\bar \bubblestack(T)$ the diagram $\N \to \Prou U \dSt_k$ of whom $\bubblestack(T)$ is a colimit in $\IP\dSt_k$.
Let us also denote by $\bubblestack_m(T)$ the mapping pro-stack
\[
\bubblestack_m(T) = \Map(\formalsphere^d_m, T) \colon \Spec B \mapsto \lim_{p \geq m} T \left(B \otimes (A_p \oplus A_m[-d]) \right)
\]
and $\bar \bubblestack_m(T) \colon \{ p \in \N | p \geq m\}\op \to \dSt_S$ the corresponding diagram.
In particular
\[
\bubblestack_0(T) = \Map(\formalsphere^d_0, T) \colon \Spec B \mapsto \lim_{p} T \left(B \otimes A_p\right)
\]
Those stacks come with natural maps
\[
\mymatrix{
\bubblestack_0(T) \ar[r]^-{s_0} & \bubblestack(T) \ar[r]^-r & \bubblestack_0(T)
}
\]
\[
\mymatrix{
\bubblestack_m(T) \ar[r]^-{s_m} & \bubblestack(T)
}
\]
\end{df}

\begin{prop}\label{B-and-L-IP}
If $T$ is an affine scheme of finite type, the bubble stack $\bubblestack(T)$ is the product in ind-pro-stacks
\[
\mymatrix{
\bubblestack(T) \ar[r] \ar[d] \cart[3] & {} \underline{\kaploop}_V^d(T) \ar[d] \\ {} \underline \kaploop_V^d(T) \ar[r] & {} \underline \kaploop_U^d(T)
}
\]
\end{prop}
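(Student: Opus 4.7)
The plan is to identify the formal sphere $\formalsphere^d$ with a homotopy pushout of two copies of the formal $d$-disk $V^d$ along the punctured formal $d$-disk $U^d$ in pro-ind-stacks, from which the claim will follow by applying the internal mapping functor $\Map(-,T)$, which sends pushouts in the source to fibre products in the target.

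To establish this pushout identification, I would first observe the standard presentation of a trivial square-zero extension as a homotopy fibre product of cdgas,
\[
A_p \oplus A_n[-d] \simeq A_p \times^h_{A_p \oplus A_n[-d+1]} A_p,
\]
where both legs are the canonical inclusion $A_p \hookrightarrow A_p \oplus A_n[-d+1]$ corresponding to the zero derivation. Taking $\lim_p$ and then $\colim_n$ on both sides, and invoking the local cohomology formula of \autoref{formula-coholoc} together with the duality $\RHomint_A(A_n, A) \simeq A_n[-d]$ of \autoref{dual-over-A}, one identifies
\[
\colim_n \lim_p \left( A_p \oplus A_n[-d+1] \right) \simeq \Oo_{V^d} \oplus \colim_n A_n[-d+1] \simeq \Oo_{U^d},
\]
while $\lim_p A_p \simeq \Oo_{V^d}$. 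Hence $\Oo_{\formalsphere^d}$ is equivalent, as an ind-pro-cdga, to $\Oo_{V^d} \times^h_{\Oo_{U^d}} \Oo_{V^d}$, with both legs being the localisation $\Oo_{V^d} \to \Oo_{U^d}$.

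Dually, this exhibits $\formalsphere^d$ as a homotopy pushout $V^d \sqcup^h_{U^d} V^d$ in pro-ind-stacks. For $T$ affine of finite type, applying $\Map(-, T)$ and using that it turns colimits in the source into limits, yields
\[
\bubblestack(T) \simeq \Map(V^d, T) \times_{\Map(U^d, T)} \Map(V^d, T) \simeq \underline{\kaploop}_V^d(T) \times_{\underline{\kaploop}_U^d(T)} \underline{\kaploop}_V^d(T).
\]
The main obstacle will be reconciling the two natural ind-pro-presentations of $\underline{\kaploop}_U^d(T)$: the one arising from the local cohomology decomposition of $\Oo_{U^d}$, and the cover-indexed presentation from \autoref{L-shy}, which is built from a finite limit over nonempty subsets $E \subset \{1, \ldots, d\}$ corresponding to the affine cover $\{X_i \neq 0\}$. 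A cofinality argument between these two indexing systems should show they define the same object in $\IP\dSt_k$, ensuring that the fibre product computed in ind-pro-stacks agrees with the one obtained from the pushout description.
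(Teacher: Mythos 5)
Your route is genuinely different from the paper's. The paper does not exhibit $\formalsphere^d$ as a pushout; it constructs the comparison map $\delta \colon \bubblestack(T) \to \underline\kaploop_V^d(T) \times_{\underline\kaploop_U^d(T)} \underline\kaploop_V^d(T)$ directly in $\IP\dSt_k$ (using algebraisability of $T$ to get the map to $\underline\kaploop_V^d(T)$), reduces to $T = \A^1$ via finite limits, and then computes both sides explicitly in the coordinates $a_{\alpha_1\dots\alpha_d}$ of \autoref{L-shy}, the key input being the combinatorial \autoref{subsets-colim} on colimits over subsets of $\{1,\dots,d\}$. Your pushout description is conceptually attractive and uniform in $T$ (no reduction to $\A^1$ needed), and at the level of underlying derived stacks it can be made to work: maps out of a pushout are tautologically a fibre product of mapping spaces, and $\Map(D,\Spec C) \simeq \Map_{\cdga}(C,\Gamma(D,\Oo_D))$ for any prestack $D$.

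The genuine gap is exactly at the step you defer to "a cofinality argument", and a cofinality argument will not close it. The proposition is a statement about a fibre product in $\IP\dSt_k$ taken with respect to the \emph{specific} ind-pro-presentation of $\underline\kaploop_U^d(T)$ from \autoref{L-shy}, namely the finite limit $\lim_{\emptyset\neq E\subset F} Z_E^d$ of genuine ind-pro-diagrams of derived affine schemes indexed by the \v{C}ech cover. Your presentation of the middle term, $\colim_n\lim_p \Spec(A_p\oplus A_n[-d+1])$, is a formal ind-pro-diagram of \emph{non-connective} square-zero extensions; these are not derived affine schemes, so the two indexing systems do not present diagrams in the same category and \autoref{ff-realisation} (full faithfulness of the realisation on $\shybounded$) cannot be invoked to identify them from an equivalence of realisations. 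Moreover, the splitting $\Gamma(U_B^d,\Oo)\simeq B\hat\otimes A \oplus \colim_n (B\hat\otimes A_n)[-d+1]$ obtained from the local cohomology triangle is a priori only an equivalence of complexes; to run your argument you need it as an equivalence of cdgas onto the \emph{trivial} square-zero extension, functorially in $B$ and compatibly with identifying both zero sections with the restriction map $\Oo_{V^d}\to\Oo_{U^d}$. The content of the paper's \autoref{subsets-colim} computation is precisely this comparison, carried out directly on the \autoref{L-shy} presentations, which is why the paper's proof avoids the issue. As written, your proof establishes the statement for the realisations but not for the ind-pro-objects themselves.
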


\begin{proof}
There is a natural map $V_k^d \to \formalsphere^d$ induced by the morphism
\[
\colim_n \lim_{p \geq n} A_p \oplus A_n[-d] \to \lim_p A_p
\]
Because $T$ is algebraisable, it induces a map $\bubblestack(T) \to \underline \kaploop_V^d(T)$ and thus a diagonal morphism
\[
\delta \colon \bubblestack(T) \to \underline \kaploop_V^d(T) \timesunder[\underline \kaploop_U^d(T)] \underline \kaploop_V^d(T)
\]
We will prove that $\delta$ is an equivalence.
Note that because $T$ is a finite limit of copies of $\A^1$, we can restrict to the case $T = \A^1$.
Let us first compute the fibre product $Z = \underline \kaploop^d_V(\A^1) \times_{\underline \kaploop_U^d(\A^1)} \underline \kaploop_V^d(\A^1)$. It is the pullback of ind-pro-stacks
\[
\mymatrix{
Z \ar[r] \ar[d] \cart &
\displaystyle \lim_p \Spec\left( k[ a_{\el{\alpha}{d}}, 0 \leq \alpha_i \leq p] \right) \ar[d] \\
\displaystyle \lim_p \Spec\left( k[ a_{\el{\alpha}{d}}, 0 \leq \alpha_i \leq p] \right) \ar[r] &
\displaystyle \colim_n \lim_p \lim_{I \subset J} \Spec\left( k[ a_{\el{\alpha}{d}}, -n \delta_{i \in I} \leq \alpha_i \leq p] \right)
}
\]
where $J = \{1, \dots , d\}$ and $\delta_{i \in I} = 1$ if $i \in I$ and $0$ otherwise.
For any subset $K \subset J$ we define $M_K^{p,n}$ to be the free complex generated by the  symbols
\[
\{a_{\el{\alpha}{d}}, -n \leq \alpha_i < 0 \text{ if } i \in K, 0 \leq \alpha_i \leq p \text{ otherwise}\}
\]
We then have the cartesian diagram
\[
\mymatrix{
Z \ar[r] \ar[d] \cart & \lim_p \Spec\left( k[ M^{p,0}_\emptyset ] \right) \ar[d] \\
\lim_p \Spec\left( k[ M^{p,0}_\emptyset ] \right) \ar[r] &
\colim_n \lim_p \lim_{I \subset J} \Spec\left(k\left[ \bigoplus_{K \subset I} M_K^{p,n} \right] \right)
}
\]
Using \autoref{subsets-colim} we get 
\[
Z \simeq \colim_n \lim_p \Spec\left( k\left[M^{p,0}_\emptyset \oplus M^{0,n}_J[d]\right] \right)
\]
\end{proof}

\begin{rmq}
Let us consider the map $\lim_p A_p \to A_0 \simeq k$ mapping a formal serie to its coefficient of degree $0$. The $(\lim A_p)$-ind-module $\colim A_n[-d]$ is endowed with a natural map to $k[-d]$. This induces a morphism $\Oo_{\formalsphere^d} \to k \oplus k[-d]$ and hence a map $\mathrm S^d \to \formalsphere^d$, where $\mathrm S^d$ is the topological sphere of dimension $d$. We then have a rather natural morphism
\[
\bubblestack^d(X) \to \Mapstack(\mathrm S^d,X)
\]
\end{rmq}

\subsection{Its tangent is a Tate module}

We already know from \autoref{map-tate} that the bubble stack is a Tate stack. We give here another decomposition of its tangent complex. We will need it when proving $\bubblestack^d(T)$ is symplectic.

\begin{prop}\label{prop-formal-tate}
Let us assume that the Artin stack $T$ is locally of finite presentation.
The ind-pro-stack $\bubblestack^d(T)$ is then a Tate stack.
Moreover for any $m \in \N$ we have an exact sequence
\[
\mymatrix{
s_m^*r^* \Lcot_{\bubblestack^d(T)_0} \ar[r] & s_m^*\Lcot_{\bubblestack^d(T)} \ar[r] & s_m^*\Lcot_{\bubblestack^d(T)/\bubblestack^d(T)_0}
}
\]
where the left hand side is an ind-perfect module and the right hand side is a pro-perfect module.
\end{prop}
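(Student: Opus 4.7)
The plan is to prove the two assertions together: the cofibre sequence of cotangent complexes associated with the projection $r \colon \bubblestack^d(T) \to \bubblestack^d_0(T)$ will provide the exact sequence, and identifying its two outer terms as respectively ind-perfect and pro-perfect will simultaneously yield the Tate property of $\bubblestack^d(T)$.

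First I would record that the morphism $r$ induces a canonical cofibre sequence of cotangent complexes
\[
r^* \Lcot_{\bubblestack^d(T)_0} \to \Lcot_{\bubblestack^d(T)} \to \Lcot_{\bubblestack^d(T)/\bubblestack^d(T)_0}
\]
in $\PIPerf(\bubblestack^d(T))$ (this requires knowing that $r$ itself admits a relative cotangent, which follows from \autoref{ipcotangent} applied to both ind-pro-stacks). Pulling back along the exact functor $s_m^*$ yields the desired exact sequence. Next I would identify the left-hand term: since $\formalsphere^d_0$ is an ind-stack, $\bubblestack^d(T)_0 = \Mapstack(\formalsphere^d_0, T)$ is a pro-stack, so by \autoref{cotangent-pdst} its cotangent lies in $\IPerf(\bubblestack^d(T)_0)$. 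The composite $r \circ s_m \colon \bubblestack^d_m(T) \to \bubblestack^d_0(T)$ is a map of pro-stacks and pullback along such a map preserves the ind-perfect subcategory, so $s_m^* r^* \Lcot_{\bubblestack^d(T)_0}$ is ind-perfect.

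For the right-hand term, I would use the formula from \autoref{ipcotangent} applied to the ind-pro-stack $\bubblestack^d(T)$ with its defining diagram $\bar\bubblestack(T) \colon \N \to \Prou U\dSt_k$:
\[
s_m^* \Lcot_{\bubblestack^d(T)} \simeq \lim_{m \leq m'} g_{mm'}^* \Lcot_{\bubblestack^d_{m'}(T)},
\]
where $g_{mm'} \colon \bubblestack^d_m(T) \to \bubblestack^d_{m'}(T)$ is the structural map. The corresponding formula for $\bubblestack^d(T)_0$ is constant in $m'$, so the relative piece is the pro-limit of the relative cotangents $g_{mm'}^* \Lcot_{\bubblestack^d_{m'}(T)/\bubblestack^d(T)_0}$. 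For each $m'$, the map $\bubblestack^d_{m'}(T) \to \bubblestack^d(T)_0$ classifies, at a $B$-point $\varphi \colon \formalsphere^d_{m'} \to T$, the space of lifts along the square-zero extension $\formalsphere^d_{m'} \to \formalsphere^d_0$ by the perfect module $A_{m'}[-d]$. By standard obstruction theory (\autoref{obstruction}) the relative tangent at $\varphi$ is $\Map(\varphi^* \Lcot_T, A_{m'}[-d])$, a perfect module of finite rank, so $\Lcot_{\bubblestack^d_{m'}(T)/\bubblestack^d(T)_0}$ is itself a perfect module on the pro-stack $\bubblestack^d_{m'}(T)$. The outer pro-limit in $m'$ therefore provides the pro-perfect structure on $s_m^* \Lcot_{\bubblestack^d(T)/\bubblestack^d(T)_0}$.

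Finally, Tateness of $\bubblestack^d(T)$ follows formally: on each pro-stack $\bubblestack^d_m(T)$, the module $s_m^* \Lcot_{\bubblestack^d(T)}$ sits in an extension with an ind-perfect kernel and a pro-perfect cokernel, hence is a (pure) Tate module. Since the $s_m$ realise $\bubblestack^d(T)$ as the colimit of the $\bubblestack^d_m(T)$ in ind-pro-stacks, this compatible family of Tate structures assembles into a Tate module $\Lcot_{\bubblestack^d(T)} \in \Tateu U_\IP(\bubblestack^d(T))$. The hard step will be the identification of the pro-perfect structure on the right-hand term: the pointwise deformation-theoretic computation is clean, but globalising the limit formula into a module that genuinely lives in the pro-perfect subcategory of $\PIPerf(\bubblestack^d_m(T))$, rather than just in $\PIPerf$, requires careful use of \autoref{two-ext} and of the explicit diagrammatic description of $\formalsphere^d$ as a square-zero extension (in the pro-direction) of $\formalsphere^d_0$ by $A_n[-d]$.
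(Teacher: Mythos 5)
Your proposal is correct and follows essentially the same route as the paper's proof: the cofibre sequence induced by $r$, ind-perfectness of the pulled-back $\Lcot_{\bubblestack^d(T)_0}$ because $\bubblestack^d(T)_0$ is a pro-stack, and the key point that the relative cotangent at each ind-level $m'$ is a genuinely perfect (not merely ind-perfect) module because it only sees the augmentation ideal $A_{m'}[-d]$ of $\formalsphere^d_{m'} \to \formalsphere^d_0$, which is independent of the pro-parameter $p$. Do note that the ``hard step'' you defer at the end is where the paper spends the bulk of its argument — dualising to tangent complexes and using base-change and projection formulae to reduce the essential constancy in $p$ to the constancy of the fibre $A_n[-d]$ of the structure-sheaf map $\Oo_{S_{n,q}} \to \Oo_{S_{n,p}}$ — so your pointwise obstruction-theoretic computation is the right idea, but the globalisation you flag is the real content (and the resulting module is an elementary Tate module rather than a pure one).
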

\begin{proof}
\todo{Finir notations}Throughout this proof, we will write $\bubblestack$ instead of
$\bubblestack^d(T)$ and $\bubblestack_m$ instead of $\bubblestack^d(T)_m$ for any $m$.
Let us first remark that $\bubblestack$ is an Artin ind-pro-stack locally of finite presentation.
It suffices to prove that $s_m^* \Lcot_{\bubblestack}$ is a Tate module on $\bubblestack_m$, for any $m \in \N$. We will actually prove that it is an elementary Tate module.
We consider the map
\[
s_m^* r^* \Lcot_{\bubblestack_0} \to s_m^* \Lcot_{\bubblestack}
\]
It is by definition equivalent to the natural map
\[
\cotangent^{\Pro}_{\bubblestack_m}(\bubblestack_0) \to^f \lim \cotangent^{\Pro}_{\bubblestack_m}(\bar \bubblestack_{\geq m}(T))
\]
where $\bar \bubblestack_{\geq m}(T)$ is the restriction of $\bar \bubblestack(T)$ to $\{ n \geq m \} \subset \N$.
Let $\phi$ denote the diagram
\[
\phi \colon \{ n \in \N | n \geq m\}\op \to \IPerf(\bubblestack_m(T))
\]
obtained as the cokernel of $f$. It is now enough to prove that $\phi$ factors through $\Perf(\bubblestack_m(T))$. Let $n \geq m$ be an integer and let $g_{mn}$ denote the induced map $\bubblestack_m(T) \to \bubblestack_n(T)$. We have  an exact sequence
\[
s_m^* r^* \Lcot_{\bubblestack_0(T)} \simeq g_{mn}^* s_n^* r^* \Lcot_{\bubblestack_0(T)} \to g_{m,n}^* \Lcot_{\bubblestack_n(T)} \to \phi(n)
\]
Let us denote by $\psi(n)$ the cofiber
\[
s_n^* r^* \Lcot_{\bubblestack_0(T)} \to \Lcot_{\bubblestack_n(T)} \to \psi(n)
\]
so that $\phi(n) \simeq g_{mn}^* \psi(n)$.
This sequence is equivalent to the colimit (in $\IPerf(\bubblestack_n(T))$) of a cofiber sequence of diagrams $\{ p \in \N | p \geq n \}\op \to \Perf(\bubblestack_n(T))$
\[
\lambda^{\Pro}_{\bubblestack_n(T)}(\bar \bubblestack_0(T)) \to \lambda^{\Pro}_{\bubblestack_n(T)}(\bar \bubblestack_n(T) ) \to \bar \psi(n)
\]
It suffices to prove that the diagram $\bar \psi(n) \colon \{ p \in \N | p \geq n \}\op \to \Perf(\bubblestack_n(T))$ is (essentially) constant.
Let $p \in \N$, $p \geq n$. The perfect complex $\bar \psi(n)(p)$ fits in the exact sequence
\[
t_{np}^* \varepsilon_{np}^* \Lcot_{\bubblestack_{0,p}(T)} \to \pi_{n,p}^* \Lcot_{\bubblestack_{n,p}(T)} \to \bar \psi(n)(p)
\]
where $t_{np} \colon \bubblestack_n(T) \to \bubblestack_{n,p}(T)$ is the canonical projection and $\varepsilon_{np} \colon \bubblestack_{n,p}(T) \to \bubblestack_{0,p}(T)$ is induced by the augmentation $\Oo_{S_{n,p}} \to \Oo_{S_{0,p}}$.
It follows that $\bar \psi(n)(p)$ is equivalent to
\[
t_{np}^* \Lcot_{\bubblestack_{n,p}(T)/\bubblestack_{0,p}(T)}
\]
Moreover, for any $q \geq p \geq n$, the induced map $\bar \psi(n)(p) \to \bar \psi(n)(q)$ is obtained (through $t_{nq}^*$) from the cofiber, in $\Perf(\bubblestack_{n,q}(T))$
\[
\mymatrix@R=2mm{
\alpha_{npq}^* \varepsilon_{np}^* \Lcot_{\bubblestack_{0,p}(T)} \ar[r] \ar@{}[rdddd]|*{(\sigma)} & \alpha_{npq}^* \Lcot_{\bubblestack_{n,p}(T)} \ar[dddd] \ar[r] & \alpha_{npq}^* \Lcot_{\bubblestack_{n,p}(T)/\bubblestack_{0,p}(T)} \ar[dddd] \\
\varepsilon_{nq}^* \alpha_{0pq}^* \Lcot_{\bubblestack_{0,p}(T)} \ar@{=}[u] \ar[ddd] \\ \\ \\
\varepsilon_{nq}^* \Lcot_{\bubblestack_{0,q}(T)} \ar[r] & \Lcot_{\bubblestack_{n,q}(T)} \ar[r] & \Lcot_{\bubblestack_{n,q}(T)/\bubblestack_{0,q}(T)}
}
\]
where $\alpha_{npq}$ is the map $\bubblestack_{n,q}(T) \to \bubblestack_{n,p}(T)$.
Let us denote by $(\sigma)$ the square on the left hand side above.
Let us fix a few more notations
\[
\mymatrix@R=7mm@C=5mm{
&
\bubblestack_{n,p}(T) \times S_{0,p} \ar[dd]_{\varphi_{np}} \ar[dl]_(0.6){a_{0p}} & &
\bubblestack_{n,q}(T) \times S_{0,p} \ar[dd]_{\psi_{npq}} \ar[ll] \ar[rr] & &
\bubblestack_{n,q}(T) \times S_{0,q} \ar[dd]^{\varphi_{nq}} \\
S_{0,p} \ar[dd]_{\xi_{np}} \\ &
\bubblestack_{n,p}(T) \times S_{n,p} \ar[dl]^{a_{np}} \ar@{-}[d] & &
\bubblestack_{n,q}(T) \times S_{n,p} \ar[ll] \ar[rr]^{b_{npq}} \ar@{-}[d] \ar[ld] & &
\bubblestack_{n,q}(T) \times S_{n,q} \ar[dl]^{a_{nq}} \ar[dd]^{\varpi_{nq}} \ar[dr]^(0.7){\ev_{nq}} \\
S_{n,p} & \ar[d]^(0.35){\varpi_{np}} & S_{n,p} \ar@{-}[ll]_(0.4)= & \ar[d] & S_{n,q} \ar[ll]_(0.3){\beta_{npq}} & & T\\ &
\bubblestack_{n,p}(T) & & \bubblestack_{n,q}(T) \ar[ll]_{\alpha_{npq}} \ar@{-}[rr]_{=} & & \bubblestack_{n,q}(T)
}
\]
The diagram $(\sigma)$ is then dual to the diagram
\[
\mymatrix{
\alpha_{npq}^* \varepsilon_{np}^* {\varpi_{0p}}_* \ev_{0p}^* \T_T &
\alpha_{npq}^* {\varpi_{np}}_*  \ev_{np}^* \T_T \ar[l] \\
\varepsilon_{nq}^* {\varpi_{0q}}_* \ev_{0q}^* \T_T \ar[u] &
{\varpi_{nq}}_* \ev_{nq}^* \T_T \ar[l] \ar[u]
}
\]
\todo{En faire un lemme ?}Moreover, the functor $\varpi_{np}$ (for any $n$ and $p$) satisfies the base change formula. This square is thus equivalent to the image by ${\varpi_{nq}}_*$ of the square
\[
\mymatrix{
{\psi_{npq}}_* {b_{npq}}_* b_{npq}^* \psi_{npq}^* \ev_{nq}^* \T_T &
{b_{npq}}_* b_{npq}^* \ev_{nq}^* \T_T \ar[l] \\
{\varphi_{nq}}_* \varphi_{nq}^* \ev_{nq}^* \T_T \ar[u] &
\ev_{nq}^* \T_T \ar[l] \ar[u]
}
\]
Using now the \todo{En faire un lemme ?}projection and base change formulae along the morphisms $\varphi_{nq}$, $b_{npq}$ and $\psi_{npq}$, we see that this last square is again equivalent to
\[
\mymatrix{
(a_{nq}^* {\beta_{npq}}_* {\xi_{np}}_* \Oo_{S_{0,p}}) \otimes (\ev_{nq}^* \T_T) &
(a_{nq}^* {\beta_{npq}}_* \Oo_{S_{n,p}}) \otimes (\ev_{nq}^* \T_T) \ar[l]
\\
(a_{nq}^* {\xi_{nq}}_* \Oo_{S_{0,q}}) \otimes (\ev_{nq}^* \T_T) \ar[u] &
(a_{nq}^* \Oo_{S_{n,q}}) \otimes (\ev_{nq}^* \T_T) \ar[l] \ar[u]
}
\]
We therefore focus on the diagram
\[
\mymatrix{
\Oo_{S_{n,q}} \ar[r] \ar[d] & {\xi_{nq}}_* \Oo_{S_{0,q}} \ar[d] \\
{\beta_{npq}}_* \Oo_{S_{n,p}} \ar[r] & {\beta_{npq}}_* {\xi_{np}}_* \Oo_{S_{0,p}}
}
\]
By definition, the fibres of the horizontal maps are both equivalent to 
$A_n[-d]$
and the map induced by the diagram above is an equivalence.
We have proven that for any $q \geq p \geq n$ the induced map $\bar \psi(n)(p) \to \bar \psi(n)(q)$ is an equivalence.
It implies that $\Lcot_{\bubblestack(T)}$ is a Tate module.
\end{proof}

\subsection{A symplectic structure (shifted by $d$)}
In this subsection, we will prove the following
\begin{thm}\label{B-symplectic}
Assume $T$ is $q$-shifted symplectic.
The ind-pro-stack $\bubblestack^d(T)$ admits a symplectic Tate structure shifted by $q-d$.
Moreover, for any $m \in \N$ we have an exact sequence
\[
s_m^* r^* \Lcot_{\bubblestack^d(T)_0} \to s_m^* \Lcot_{\bubblestack^d(T)} \to s_m^* r^* \T_{\bubblestack^d(T)_0}[q-d]
\]
\end{thm}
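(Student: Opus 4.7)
The plan is to construct the closed form using Theorem~\ref{ipdst-form} applied to an ``orientation'' on the formal sphere, and then to deduce both the non-degeneracy and the advertised fibre sequence from a careful unpacking of the form on each stratum of $\bubblestack^d(T)$. First I would build the integration map $\eta \colon \Oo_{\formalsphere^d} \to k[-d]$. By definition
\[
\Oo_{\formalsphere^d} \simeq \colim_n \lim_{p\geq n} (A_p \oplus A_n[-d]),
\]
so projecting onto the $A_n[-d]$ summand and using the residue map $r_n \colon A_n \to A_1 \simeq k$ coming from Lemma~\ref{dual-over-k} provides a map of ind-pro $k$-modules $\eta \colon \Oo_{\formalsphere^d} \to k[-d]$. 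I would next check that $\formalsphere^d$ is an $\Oo$-Tate pro-ind-stack in the sense of Definition~\ref{map-cotate}: the pieces $\Spec(A_p \oplus A_n[-d])$ are $\Oo$-compact (they are square-zero extensions of affines by perfect modules), the cokernels appearing in \emph{(ii)} are computed by Lemma~\ref{dual-over-k} and the requisite base-change and projection formulae hold since all the maps involved are finite flat (in the appropriate direction) on the nose.

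With this in hand, applying Theorem~\ref{ipdst-form} to the $q$-shifted closed form $\omega_T$ on $T$ and to $(\formalsphere^d,\eta)$ yields a $(q-d)$-shifted closed $2$-form
\[
\omega_\bubblestack = \textstyle\int_\eta \ev^* \omega_T \colon k[d-q] \to \IPclosedforms{2}(\bubblestack^d(T)).
\]
Proposition~\ref{map-tate} already guarantees that $\bubblestack^d(T)$ is a Tate stack, so it remains to show that $\omega_\bubblestack$ is non-degenerate, and simultaneously to identify the cofibre $s_m^* \Lcot_{\bubblestack/\bubblestack_0}$ of Proposition~\ref{prop-formal-tate}. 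I would tackle both at once. Using Remark~\ref{describe-form}, the underlying $2$-form is obtained on each stratum $\bubblestack_{n,p}(T)$ from the form
\[
\omega_{n,p} \colon \T_{\bubblestack_{n,p}(T)} \otimes \T_{\bubblestack_{n,p}(T)} \to \Oo_{\bubblestack_{n,p}(T)}[q-d]
\]
built via $\T_{\Map(S_{n,p},T)} \simeq \pr_* \ev^* \T_T$, the symplectic pairing $\omega_T \colon \T_T \otimes \T_T \to \Oo_T[q]$, and the integration $\pr_*\Oo_{S_{n,p}} \simeq A_p \oplus A_n[-d] \to k[-d]$.

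To establish non-degeneracy, I would trace through the proof of Proposition~\ref{prop-formal-tate}. There it is shown that the cofibre sequence
\[
s_m^* r^* \Lcot_{\bubblestack_0} \longrightarrow s_m^* \Lcot_{\bubblestack} \longrightarrow s_m^*\Lcot_{\bubblestack/\bubblestack_0}
\]
arises as a colimit, indexed by $p\geq m$, of cofibre sequences whose third term is $t_{m,p}^*\Lcot_{\bubblestack_{m,p}(T)/\bubblestack_{0,p}(T)}$, itself computed in that proof in terms of pushforwards of $\ev^*\T_T$ tensored with the ``dual'' summand $A_m[-d]$ of $\Oo_{S_{m,p}}$. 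On this cofibre, the symplectic pairing $\omega_T$ together with the self-duality $A_m \otimes_{A_1} A_m \to A_1 \simeq k$ of Lemma~\ref{dual-over-k} identifies $\omega_{m,p}$ restricted to the relative tangent with an equivalence
\[
t_{m,p}^* \T_{\bubblestack_{m,p}(T)/\bubblestack_{0,p}(T)} \simeq t_{m,p}^* \varepsilon_{m,p}^* \Lcot_{\bubblestack_{0,p}(T)}[q-d].
\]
Taking the colimit over $p$ and using $s_m = r \circ s_m$ at the level of $\bubblestack_0$ gives the exact sequence of the statement, and dualising exhibits $\underline{\omega_\bubblestack} \colon \T_\bubblestack \to \Lcot_\bubblestack[q-d]$ as an extension of two equivalences (the pairing of the ``ind'' part with the ``pro'' part, and vice versa), hence an equivalence of Tate modules.

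The main obstacle I anticipate is the last step: checking that the residue pairing and the symplectic form on $T$ really do assemble into the exchange between ind- and pro-parts. Concretely, one must verify that the local identification of $\omega_{m,p}$ on the relative tangent is compatible with the transition maps in $p$ (this is where Lemma~\ref{gaits-cofinal} and the compatibility of the residues $r_n$ with the Koszul duality of Lemmas~\ref{dual-over-A} and~\ref{dual-over-k} come in), so that the colimit of these equivalences gives a coherent equivalence at the level of ind-pro-perfect complexes rather than merely pointwise equivalences in the diagrams.
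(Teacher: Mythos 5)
Your overall strategy coincides with the paper's: build $\eta$ from the residue maps $r_n$ of \autoref{dual-over-k}, integrate $\omega_T$ via \autoref{ipdst-form}, and then analyse the resulting pairing against the exact sequence of \autoref{prop-formal-tate}, concluding non-degeneracy from the self-duality $A_n \otimes_{A_1} A_n \to A_1$. However, there is a genuine gap at the decisive step. You assert that ``dualising exhibits $\underline{\omega}$ as an extension of two equivalences (the pairing of the ind part with the pro part, and vice versa),'' but for $\underline{\omega}\colon s_m^*\T_{\bubblestack^d(T)} \to s_m^*\Lcot_{\bubblestack^d(T)}[d-q]$ to be compatible with the two-step filtration at all — i.e.\ for the ladder of exact sequences to exist, with the outer vertical maps dual to one another — you must first show that the relative tangent $s_m^*\T_{\bubblestack^d(T)/\bubblestack^d(T)_0}$ is \emph{isotropic}: the composite pairing
\[
\theta \colon s_m^* \T_{\bubblestack^d(T)/\bubblestack^d(T)_0} \otimes s_m^* \T_{\bubblestack^d(T)/\bubblestack^d(T)_0} \to s_m^*\T_{\bubblestack^d(T)}^{\otimes 2} \to \Oo_{\bubblestack^d(T)_m}[q-d]
\]
must vanish. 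Your proposal never identifies this as something to be proved; instead you speak of ``$\omega_{m,p}$ restricted to the relative tangent'' giving an equivalence with $\Lcot_{\bubblestack_{0,p}(T)}[q-d]$, which conflates the (vanishing) self-pairing of the relative tangent with the (non-degenerate) pairing between the relative tangent and the pullback of $\T_{\bubblestack^d(T)_0}$.

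The paper's proof of this vanishing is short but essential: unwinding \autoref{describe-form}, $\theta$ is induced on each stratum by a composite whose last arrow is the multiplication map of $\Oo_{S_{n,p}} = A_p \oplus A_n[-d]$ restricted to the augmentation ideal; since this is a \emph{trivial square-zero} extension, that multiplication is zero, hence $\theta = 0$. Only then does the full pairing factor through $s_m^*\T_{\bubblestack^d(T)_0} \otimes s_m^*\T_{\bubblestack^d(T)/\bubblestack^d(T)_0}$, producing the morphism of exact sequences and reducing everything to showing that the single map $\tau_m \colon s_m^*\T_{\bubblestack^d(T)/\bubblestack^d(T)_0} \to s_m^* r^* \Lcot_{\bubblestack^d(T)_0}[d-q]$ is an equivalence, which is where \autoref{dual-over-k} enters as you describe. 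Your anticipated obstacle (coherence of the identifications over the transition maps in $p$) is a legitimate secondary concern, but the isotropy of the pro-direction is the missing key idea, and it is precisely the geometric content of the bubble space being built from trivial square-zero extensions.
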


\begin{proof}
Let us start with the following remark: the residue map $r_n \colon A_n \to k = A_1$ defined in \autoref{dual-over-k} defines a map $\Oo_{\formalsphere^d} \to k[-d]$.
From \autoref{ipdst-form}, we have a $(q-d)$-shifted closed $2$-form on $\bubblestack^d(T)$. We have a morphism from \autoref{prop-formsareforms}
\[
\Oo_{\bubblestack^d(T)}[q-d] \to \Lcot_{\bubblestack^d(T)} \otimes \Lcot_{\bubblestack^d(T)}
\]
in $\PIPerf(\bubblestack^d(T))$.
Let $m \in \N$. We get a map
\[
\Oo_{\bubblestack^d(T)_m}[q-d] \to s_m^* \Lcot_{\bubblestack^d(T)} \otimes s_m^* \Lcot_{\bubblestack^d(T)}
\]
and then
\[
s_m^* \T_{\bubblestack^d(T)} \otimes s_m^* \T_{\bubblestack^d(T)} \to \Oo_{\bubblestack^d(T)_m}[q-d]
\]
in $\IPPerf(\bubblestack^d(T)_m)$. We consider the composite map
\[
\theta \colon s_m^* \T_{\bubblestack^d(T)/\bubblestack^d(T)_0} \otimes s_m^* \T_{\bubblestack^d(T)/\bubblestack^d(T)_0} \to
s_m^* \T_{\bubblestack^d(T)} \otimes s_m^* \T_{\bubblestack^d(T)} \to \Oo_{\bubblestack^d(T)_m}[q-d]
\]
Using the \autoref{describe-form} and the proof of \autoref{prop-formal-tate} we see that $\theta$ is induced by the morphisms (varying $n$ and $p$)
\[
\mymatrix{
{\varpi_{np}}_* \left( E \otimes E \otimes \ev_{np}^* \left( \T_T \otimes \T_T \right) \right) \ar[r]^-A & {\varpi_{np}}_* \left( E \otimes E [q] \right) \ar[r]^-B & {\varpi_{np}}_* \left( \Oo_{\bubblestack^d(T)_{np} \times S_{n,p}} [q] \right)
}
\]
where $E = a_{np}^* {\xi_{np}}_* {h_{np}}_* \gamma_n^! \Oo_{\A^d}$ and the map $A$ is induced by the symplectic form on $T$. The map $B$ is induced by the multiplication in $\Oo_{S_{n,p}}$.
This sheaf of functions is a trivial square zero extension of augmentation ideal ${\xi_{np}}_* {h_{np}}_* \gamma_n^! \Oo_{\A^d}$ and $B$ therefore vanishes.
It follows that the morphism
\[
s_m^* \T_{\bubblestack^d(T)} \otimes s_m^* \T_{\bubblestack^d(T)/\bubblestack^d(T)_0} \to
s_m^* \T_{\bubblestack^d(T)} \otimes s_m^* \T_{\bubblestack^d(T)} \to \Oo_{\bubblestack^d(T)_m}[q-d]
\]
factors through $s_m^* \T_{\bubblestack^d(T)_0} \otimes s_m^* \T_{\bubblestack^d(T)/\bubblestack^d(T)_0}$.
Now using \autoref{prop-formal-tate} we get a map of exact sequences in the category of Tate modules over $\bubblestack^d(T)_m$
\[
\mymatrix{
s_m^* \T_{\bubblestack^d(T)/\bubblestack^d(T)_0} \ar[r] \ar[d]_{\tau_m} &
s_m^* \T_{\bubblestack^d(T)} \ar[r] \ar[d] & s_m^* r^* \T_{\bubblestack^d(T)_0} \ar[d] \\
s_m^* r^* \Lcot_{\bubblestack^d(T)_0}[d-q] \ar[r] & s_m^* \Lcot_{\bubblestack^d(T)}[d-q] \ar[r] & s_m^* \Lcot_{\bubblestack^d(T)/\bubblestack^d(T)_0}[d-q]
}
\]
where the maps on the sides are dual one to another.
It therefore suffices to see that the map $\tau_m \colon s_m^* \T_{\bubblestack^d(T)/\bubblestack^d(T)_0} \to s_m^* r^* \Lcot_{\bubblestack^d(T)_0}[d-q]$ is an equivalence.
We now observe that $\tau_m$ is a colimit indexed by $p \geq m$ of maps
\[
g_{pm}^* t_{pp}^* \left( \varepsilon_{pp}^* \Lcot_{\bubblestack^d(T)_{0p}} \to \T_{\bubblestack^d(T)_{pp}/\bubblestack^d(T)_{0p}} \right)
\]
Let us fix $p \geq m$ and $G = a_{pp}^* {\xi_{pp}}_* \Oo_{S_{0p}}$. The map $
F_p \colon \T_{\bubblestack^d(T)_{pp}/\bubblestack^d(T)_{0p}} \to \varepsilon_{pp}^* \Lcot_{\bubblestack^d(T)_{0p}}$ at hand is induced by the pairing
\[
\T_{\bubblestack^d(T)_{pp}/\bubblestack^d(T)_{0p}} \otimes \varepsilon_{pp}^* \T_{\bubblestack^d(T)_{0p}} \simeq
\mymatrix{
{\varpi_{pp}}_* \left(E \otimes \ev_{pp}^* \T_T \right) \otimes {\varpi_{pp}}_* \left( G \otimes \ev_{pp}^* \T_T \right) \ar[d] \\
{\varpi_{pp}}_* \left( E \otimes \ev_{pp}^* \T_T \otimes G \otimes \ev_{pp}^* \T_T \right) \ar[d] \\
{\varpi_{pp}}_* \left( E \otimes G \right)[q] \ar[d] \\
{\varpi_{pp}}_* \left( \Oo_{\bubblestack^d(T)_{pp} \times S_{pp}} \right) [q] \ar[d] \\
\Oo_{\bubblestack^d(T)_{pp}} [q-d]
}
\]
We can now conclude using \autoref{dual-over-k}.
\end{proof}
%

\end{chap-loops}

\chapter{Tangent Lie algebra}\label{chapterlie}%
\begin{chap-tgtlie}
We will study in this last chapter the tangent complex of a derived Artin stack. The main theorem builds a Lie structure on the shifted tangent complex $\T_X[-1]$ of a derived Artin stack $X$ locally of finite presentation.
We will also prove that for any perfect complex $E$ on $X$, its Atiyah class defines a Lie action of the tangent $\T_X[-1]$ on $E$.
In this chapter, $k$ will be a field of characteristic zero.
Given $A\in \cdga_k$, we will use the following notations
\begin{itemize}
\item The $(\infty,1)$-category $\dgMod_A$ of (unbounded) dg-modules over $A$ ;
\item The $(\infty,1)$-category $\cdgaunbounded_A$ of (unbounded) commutative dg-algebras over $A$ ;\glsadd{cdgaunbounded}
\item The $(\infty,1)$-category $\cdga_A$ of commutative dg-algebras over $A$ cohomologically concentrated in non positive degree ;
\item The $(\infty,1)$-category $\dgAlg_A$ of (neither bounded nor commutative) dg-algebras over $A$ ;
\item The $(\infty,1)$-category $\dgLie_A$ of (unbounded) dg-Lie algebras over $A$.\glsadd{dglie}
\end{itemize}
Each one of those $(\infty,1)$-categories appears as the underlying $(\infty,1)$-category of a model category. We will denote by $\MCdgMod_A$, $\MCcdgaunbounded_A$, $\MCcdga_A$, $\MCdgAlg_A$ and $\MCdgLie_A$ the model categories.
\todo{nom: formal group?}\section{Lie algebras and formal stacks over a cdga}
\newcommand{\adjoint}{\operatorname{D}}
In this part we will mimic a construction found in Lurie's \cite{lurie:dagx}

\begin{thm}[Lurie]\label{thm-lurie}
Let $k$ be a field of characteristic zero.
There is an adjunction of $(\infty,1)$-categories:
\[
\coho_k \colon \dgLie_k \rightleftarrows {\left(\quot{\cdgaunbounded_k}{k}\right)}\op \,: \adjoint_k
\]
Whenever $L$ is a dg-Lie algebra:
\begin{enumerate}
\item If $L$ is freely generated by a dg-module $V$ then the algebra $\coho_k(L)$ is equivalent to the trivial square zero extension $k \oplus \dual V[-1]$.
\item If $L$ is concentrated in positive degree and every vector space $L^n$ is finite dimensional, then the adjunction morphism $L \to \adjoint_k \coho_k L$ is an equivalence.
\end{enumerate}
\end{thm}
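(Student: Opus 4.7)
The plan is to follow Lurie's construction from his formal moduli problems paper: realise the adjunction at the model-categorical level via Chevalley--Eilenberg cochains, and then descend to $(\infty,1)$-categories. I define $\coho_k$ on a strict dg-Lie algebra $L$ as the augmented cdga
\[
\coho_k(L) = \bigl(\Sym_k(\dual{L}[-1]),\, d_L + d_{\mathrm{CE}}\bigr),
\]
augmented to $k$ by projection onto symmetric degree zero, where $d_{\mathrm{CE}}$ is the derivation obtained by dualising the Lie bracket. A weight-filtration spectral sequence shows that $\coho_k$ sends weak equivalences between quasi-free dg-Lie algebras to weak equivalences, so it descends to an $(\infty,1)$-functor $\dgLie_k \to (\cdgaunbounded_k/k)\op$. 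To produce $\adjoint_k$, I invoke the adjoint functor theorem: both sides are presentable $(\infty,1)$-categories and $\coho_k$ preserves colimits (pushouts of quasi-free dg-Lie algebras are sent to pullbacks of augmented cdgas). More concretely, for an augmented cdga $A$ with augmentation ideal $\mathfrak{m}_A$, the underlying dg-module of $\adjoint_k(A)$ is a model for $\dual{\mathfrak{m}_A}[-1]$, with Lie bracket dual to the multiplication on $\mathfrak{m}_A$.

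Property (i) is essentially Koszul duality between the operads $\mathrm{Lie}$ and $\mathrm{Com}$ in characteristic zero. For $L$ the free dg-Lie algebra on $V$, the Lie cohomology $\homol^*(L,k) = \Ext^*_{U(L)}(k,k) = \Ext^*_{\mathrm T(V)}(k,k)$ is concentrated in cohomological degrees $0$ and $1$ with values $k$ and $\dual{V}$ respectively, so the cup product on it is forced to vanish for degree reasons. In characteristic zero a cdga whose cohomology is concentrated in two adjacent degrees is formal, hence $\coho_k(L) \simeq k \oplus \dual{V}[-1]$ as augmented cdgas.

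For (ii) I would run an induction on a Postnikov-type filtration of $L$. The positivity and finite-type hypotheses let me present $L$ as a sequential colimit of homotopy pushouts of quasi-free dg-Lie algebras
\[
L_n = L_{n-1} \amalg_{\libre(U_n)} \libre(W_n),
\]
with $U_n$ and $W_n$ finite-dimensional and concentrated in a single strictly positive cohomological degree. Since $\coho_k$ converts such homotopy pushouts into honest homotopy pullbacks of augmented cdgas and $\adjoint_k$ preserves limits, checking that the unit $L \to \adjoint_k \coho_k L$ is an equivalence reduces to the case of a single free dg-Lie algebra $\libre(W)$ with $W$ finite-dimensional in strictly positive degree. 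There, (i) gives $\coho_k \libre(W) \simeq k \oplus \dual{W}[-1]$, and a direct computation of $\adjoint_k$ on a trivial square-zero extension (via its universal property and the double-dual isomorphism $\dual{\dual{W}} \simeq W$) recovers $\libre(W)$ with the expected unit.

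The main obstacle is controlling the completion implicit in $\Sym(\dual{L}[-1])$. Without the positivity and finite-type hypotheses, $\coho_k$ need not send pushouts in $\dgLie_k$ to pullbacks in augmented cdgas, and the unit $L \to \adjoint_k \coho_k L$ typically fails to be an equivalence (its failure is measured by a pro-completion effect on $L$). A secondary subtlety will be promoting the dg-module identification $\adjoint_k \coho_k \libre(W) \simeq W$ to an equivalence of dg-Lie algebras by carefully tracking the adjunction unit rather than merely comparing underlying objects.
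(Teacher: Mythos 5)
First, note that the paper offers no proof of this statement: it is quoted from Lurie's \cite{lurie:dagx} and used as a black box --- the paper's own generalisation to a noetherian base (\autoref{good-fullyff}) explicitly reduces back to this field case. Your architecture (Chevalley--Eilenberg cochains, the adjoint functor theorem, Koszul duality for (i), cellular induction for (ii)) matches Lurie's, but your proof of (ii) has a genuine gap. You reduce to free dg-Lie algebras by writing $L$ as an iterated pushout of cells and arguing that ``$\coho_k$ converts such homotopy pushouts into honest homotopy pullbacks of augmented cdgas and $\adjoint_k$ preserves limits''. But $\adjoint_k$ is a right adjoint out of ${\left(\quot{\cdgaunbounded_k}{k}\right)}\op$: the limits it preserves are the limits of \emph{that} category, i.e.\ the colimits (relative tensor products) of augmented cdgas, not the pullbacks that $\coho_k$ produces. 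So knowing the unit is an equivalence on $L_{n-1}$, $\libre(U_n)$ and $\libre(W_n)$ gives you nothing about $L_n$. Worse, every cofibrant dg-Lie algebra is such an iterated pushout of frees along frees, so if this reduction were valid the unit would be an equivalence unconditionally --- which is false, and is precisely why the positivity and finite-dimensionality hypotheses must enter through a convergence argument. Lurie's actual route (mimicked over a noetherian base in \autoref{good-fullyff}) identifies the underlying complex of $\adjoint_k(B)$ with $\dual{\left(\Lcot_{B/k}\otimes_B k\right)}[-1]$, shows $\Lcot_{\coho_k L/k}\otimes_{\coho_k L} k \simeq \dual L[-1]$ using the finiteness of the cell structure, and only then uses the hypotheses to see that the resulting comparison $L \to \dual{\dual L}$ is an equivalence.

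Two secondary problems. For (i), you conflate the weight grading of $\Ext^*_{\Envel L}(k,k)$ with its cohomological grading: for a general dg-module $V$ the answer $k\oplus\dual V[-1]$ is not ``concentrated in two adjacent degrees'', and the formality criterion you invoke is false as stated; the standard argument is rather that $\homol_k(\libre(V))\simeq\Sym_k(\libre(V)[1])$ deformation-retracts onto the square-zero coalgebra $k\oplus V[1]$, and one dualises. Finally, your closing caveat --- that without the hypotheses $\coho_k$ need not send pushouts to pullbacks --- contradicts your own appeal to the adjoint functor theorem: colimit preservation holds for \emph{all} dg-Lie algebras in characteristic zero (this is \autoref{chevalleycolim}, after Lurie) and is needed for $\adjoint_k$ to exist at all; what fails without the hypotheses is only that the unit is an equivalence. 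Relatedly, $\coho_k(L)$ is the linear dual of $\Sym_k(L[1])$, hence a \emph{completed} symmetric algebra on $\dual L[-1]$; you flag the completion issue but the uncompleted $\Sym_k(\dual L[-1])$ is not the correct functor outside the range where the two agree.
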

The goal is to extend this result to more general basis, namely a commutative dg-algebra over $k$ concentrated in non positive degree.
The existence of the adjunction and the point (i) will be proved over any basis, the analog of point (ii) will need the base dg-algebra to be noetherian.

Throughout this section, $A$ will be a commutative dg-algebra concentrated in non-positive degree over the base field $k$ (still of characteristic zero).
\subsection{Algebraic theory of dg-Lie algebras}
Let us consider the adjunction $\libre \colon \dgLie_A \rightleftarrows \dgMod_A \noloc \oubli$ of $(\infty,1)$-categories.
\begin{df}
Let $\dgModLib_A$ denote the full sub-category of $\dgMod_A$ spanned by the free dg-modules of finite type whose generators are in positive degree.
An object of $\dgModLib_A$ is thus (equivalent to) the dg-module
\[
\bigoplus_{i=1}^n A^{p_i}[-i]
\]
for some $n \geq 1$ and some family $(\el{p}{n})$ of non negative integers.

Let $\dgLieLib_A$ denote the essential image of $\dgModLib_A$ in $\dgLie_A$ by the functor $\libre$.
\end{df}
Let us recall that $\sifted(\Cc)$ stands for the sifted completion of a category $\Cc$ with finite coproducts.
\begin{prop} \label{dglie-algtheory}
The Yoneda functors
\begin{align*}
&\dgMod_A \to \sifted(\dgModLib_A) = \Fct^{\times} \left( {\left( \dgModLib_A \right)}\op,\sSets \right) \\
&\dgLie_A \to \sifted(\dgLieLib_A) = \Fct^{\times} \left( {\left( \dgLieLib_A \right)}\op,\sSets \right)
\end{align*}
are equivalences of $(\infty,1)$-categories.
\end{prop}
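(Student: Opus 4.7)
The plan is to apply Lurie's recognition theorem for sifted cocompletions (\cite[5.5.8.22]{lurie:htt}): given a presentable $(\infty,1)$-category $\Dd$ and a small full subcategory $\Cc \subset \Dd$ closed under finite coproducts in $\Dd$, the Yoneda embedding $\Dd \to \sifted(\Cc)$ is an equivalence provided that each object of $\Cc$ is compact projective in $\Dd$ (i.e.\ $\Map_\Dd(C,-)$ preserves sifted colimits) and that $\Cc$ generates $\Dd$ under sifted colimits. I apply this first to $(\Dd,\Cc) = (\dgMod_A, \dgModLib_A)$ and then to $(\Dd,\Cc) = (\dgLie_A, \dgLieLib_A)$.

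For the module case, closure under finite coproducts is immediate since objects of $\dgModLib_A$ are finite direct sums of shifts $A[-i]$ with $i\geq 1$. Compact projectivity of $A[-i]$ follows from the cofibrancy of $A[-i]$ in $\MCdgMod_A$ with the projective model structure: the mapping space $\Map_{\dgMod_A}(A[-i], M)$ is then computed as the Dold--Kan nerve of a truncation of $M[i]$, and both filtered colimits and geometric realizations in $\dgMod_A$ are computed degree-wise on cofibrant representatives, so this functor preserves sifted colimits. The generation condition reduces, after cofibrant replacement, to the standard fact that every cofibrant dg-module in $\MCdgMod_A$ is a retract of a cell object, itself a transfinite composition of pushouts of generating cofibrations $0\to A[-i]$ and $A[-i]\to \mathrm{cone}(\id_{A[-i]})$, and hence a filtered (in particular sifted) colimit of finite-type cell objects lying in $\dgModLib_A$.

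For the Lie case, I transfer along the Quillen adjunction $\libre \dashv \oubli$ between $\MCdgMod_A$ and $\MCdgLie_A$. The decisive ingredient, which uses the characteristic-zero hypothesis essentially, is that the free-Lie monad $T = \oubli\libre$ preserves sifted colimits: indeed $T(V) = \bigoplus_{n\geq 1} \mathrm{Lie}(n)\otimes_{\Sigma_n} V^{\otimes n}$, and taking $\Sigma_n$-coinvariants is exact over a field of characteristic zero, so each summand — and hence $T$ itself — preserves sifted colimits. Consequently $\libre$ sends compact projective objects of $\dgMod_A$ to compact projective objects of $\dgLie_A$, and $\dgLieLib_A = \libre(\dgModLib_A)$ inherits property (a) as well as closure under finite coproducts (since $\libre$ is a left adjoint). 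For the generation condition (b), every $L \in \dgLie_A$ is the geometric realization of its monadic bar resolution $L \simeq |T^{\bullet+1}(\oubli L)|$, which exhibits $L$ as a sifted colimit of free Lie algebras; each $\libre(M)$ is in turn $\libre$ applied to a sifted colimit of objects of $\dgModLib_A$ from the module case, and by sifted-cocontinuity of $\libre$ this is a sifted colimit of objects of $\dgLieLib_A$.

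The main technical hurdle is thus the compact-projectivity verification, especially the stability of corepresentable functors under reflexive coequalizers (rather than merely filtered colimits). In the module case it is handled by cofibrancy and degree-wise computation; in the Lie case it is precisely the rationality of $\Sigma_n$-coinvariants that saves the argument, since without it the forgetful functor $\oubli$ would fail to commute with geometric realizations. Once this point is secured, the remainder is a purely formal application of Lurie's recognition theorem.
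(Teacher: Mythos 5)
Your overall strategy (recognition criterion for $\sifted(\Cc)$ via compact projectivity and generation under sifted colimits, then transfer to Lie algebras through the monadic adjunction $\libre \dashv \oubli$ and the bar resolution) is exactly the route the paper takes, and your Lie-algebra step is fine — indeed you supply the characteristic-zero reason why the monad $\oubli\libre$, hence $\oubli$, preserves sifted colimits, which the paper merely asserts.

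There is, however, a genuine gap in your generation argument for the module case. You claim that a cofibrant dg-module is a filtered colimit of ``finite-type cell objects lying in $\dgModLib_A$.'' This is false on two counts. First, $\dgModLib_A$ consists only of finite direct sums $\bigoplus_i A^{p_i}[-i]$ with $i \geq 1$ and zero differential; a finite cell object is an \emph{iterated extension} of such shifts, with generally nontrivial attaching maps, and is therefore a perfect complex that need not split as a direct sum of shifts of $A$ — so it does not lie in $\dgModLib_A$. Second, the generating cofibrations of $\MCdgMod_A$ involve cells $A[n]$ for \emph{all} $n \in \Z$, not only $n = -i$ with $i \geq 1$; an unbounded dg-module with cohomology in negative degrees cannot be built from cells of $\dgModLib_A$ by attachments alone. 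The statement you need is still true, but the argument must pass through the observation that, since $\dgModLib_A$ is closed under finite coproducts, its closure in $\dgMod_A$ under sifted colimits is automatically closed under \emph{all} small colimits (finite colimits being generated by finite coproducts and geometric realizations); this closure then contains every suspension $A[-i][k] = A[k-i]$, hence all shifts $A[n]$, $n \in \Z$, hence every cell object and every dg-module. Without this step your cell-attachment argument does not reach $\dgMod_A$. (A minor additional caveat: geometric realizations in $\dgMod_A$ are totalizations of bicomplexes, not degree-wise colimits, so the compact-projectivity verification should be phrased via the skeletal filtration rather than ``degree-wise computation''; the conclusion is nevertheless correct.)
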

\begin{rmq}\label{dglie-sifted}
The above proposition implies that every dg-Lie algebra is colimit of a \emph{sifted} diagram of objects in $\dgLieLib_A$.
\end{rmq}
\begin{proof}
Every dg-module can be obtained as the colimit of a diagram in $\dgModLib_A$ and objects of $\dgModLib_A$ are compact projective in $\dgMod_A$ (an object is compact projective if the functor it corepresents preserves sifted colimits). The proposition \cite[5.5.8.25]{lurie:htt} makes the first functor an equivalence.

The forgetful functor $\oubli$ is monadic. Every dg-Lie algebra can thus be obtained as a colimit of a simplicial diagram with values in the $(\infty,1)$-category of free dg-Lie algebras (see \cite[6.2.2.12]{lurie:halg}). From those two facts we deduce that every dg-Lie algebra is a colimit of objects in $\dgLieLib_A$.
The forgetful functor $\oubli$ preserves sifted colimits and objects in $\dgLieLib_A$ are thus compact projective in $\dgLie_A$.
\end{proof}

\begin{rmq}
The above proposition implies that when $A \to B$ is a morphism in $\MCcdga_k$, the following square of $(\infty,1)$-categories commutes:
\[
\mymatrix{
\dgLie_A \ar@{-}[r]^-\sim \ar[d]_{B \otimes_A -} &
\sifted(\dgLieLib_A)
\ar[d]_{{(B \otimes_A -)}_!} \\
\dgLie_B \ar@{-}[r]^-\sim &
\sifted(\dgLieLib_B)
}
\]
The following proposition actually proves that this comes from a natural transformation between functors $\cdga_k \to \inftyCat$.
\end{rmq}

\begin{prop} \label{commute-dglie}
There are $(\infty,1)$-categories $\int \dgLie$ and $\int \sifted(\dgLieLib)$, each endowed with a coCartesian fibration to $\MCcdga_k$, respectively representing the functors $A \mapsto \dgLie_A$ and $A \mapsto \sifted(\dgLieLib_A)$. There is an equivalence over $\MCcdga_k$:
\[
\mymatrix{
\int \sifted(\dgLieLib) \ar[rr]^-\sim \ar[rd] && \int \dgLie \ar[dl] \\ & \MCcdga_k &
}
\]
This induces an equivalence of functors $\MCcdga_k \to \PresLeftu U$ which moreover descend to a natural transformation
\[
\mymatrix{\cdga_k \dcell[r][][][\sim][=>][12pt] & \PresLeftu U}
\]
\end{prop}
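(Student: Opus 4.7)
The plan is to package the pointwise equivalences of \autoref{dglie-algtheory} into a natural equivalence by assembling both sides into coCartesian fibrations over $\MCcdga_k$ and comparing them there. First I would build the two fibrations. Because base change $B\otimes_A -$ is the left Quillen functor of a Quillen adjunction $\MCdgLie_A \rightleftarrows \MCdgLie_B$, the assignment $A\mapsto \MCdgLie_A$ is a relative model category over $\MCcdga_k$ in the sense of Hinich. Applying the relative nerve (or equivalently, unstraightening) produces a coCartesian fibration $\int \dgLie \to \MCcdga_k$ whose fibre over $A$ is $\dgLie_A$ and whose coCartesian pushforward along $A\to B$ is $B\otimes_A -$. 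Next, extension of scalars $B\otimes_A -$ preserves free Lie algebras on finite free modules in positive degree (because it preserves the forgetful/free adjunction and finite direct sums of shifted copies of the unit); it therefore restricts to a functor $\dgLieLib_A \to \dgLieLib_B$. Unstraightening the resulting functor $\MCcdga_k \to \inftyCatu V$, $A\mapsto \dgLieLib_A$, and composing with the fibrewise application of $\sifted(-) = \Fct^{\times}((-)\op,\sSets)$ (which is functorial by the universal property \cite[5.5.8]{lurie:htt}), yields the coCartesian fibration $\int \sifted(\dgLieLib) \to \MCcdga_k$.

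Second, I would construct a morphism $\int \sifted(\dgLieLib) \to \int \dgLie$ of coCartesian fibrations over $\MCcdga_k$. Fibrewise this is just the equivalence of \autoref{dglie-algtheory} coming from the Yoneda embedding $\dgLieLib_A \hookrightarrow \dgLie_A$ extended by sifted colimits, and naturality in $A$ reduces to the compatibility of $B\otimes_A-$ with the inclusions $\dgLieLib_A \hookrightarrow \dgLie_A$ and $\dgLieLib_A \to \dgLieLib_B$, which holds by construction since $B\otimes_A-$ preserves sifted colimits and free Lie algebras. A morphism of coCartesian fibrations which is a fibrewise equivalence is an equivalence, and \autoref{dglie-algtheory} supplies exactly this fibrewise condition, giving the desired equivalence over $\MCcdga_k$.

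Third, I would pass to $\PresLeftu U$ and then descend to $\cdga_k$. The fibres $\dgLie_A$ are presentable $(\infty,1)$-categories and the pushforwards $B\otimes_A -$ are left adjoints; the straightening of $\int \dgLie \to \MCcdga_k$ therefore factors through $\PresLeftu U \subset \inftyCatu V$, and similarly for $\sifted(\dgLieLib)$. To descend to $\cdga_k$ it is enough to check that both functors send weak equivalences in $\MCcdga_k$ to equivalences in $\PresLeftu U$. For a quasi-isomorphism $A\to B$ between cofibrant cdga's, the Quillen adjunction $\MCdgLie_A \rightleftarrows \MCdgLie_B$ is a Quillen equivalence (because the forgetful functor to dg-modules is conservative and the base change on dg-modules is a Quillen equivalence in characteristic zero), so $B\otimes_A-$ induces an equivalence on underlying $(\infty,1)$-categories. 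The same conclusion for $\sifted(\dgLieLib_-)$ follows via the equivalence already established.

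The main technical obstacle will be the strict functoriality needed to apply unstraightening: producing a genuine functor $\MCcdga_k \to \inftyCatu V$ rather than a pseudo-functor, and ensuring that the Yoneda maps assemble into a strict natural transformation. I expect to handle this by working with an explicit cofibrant replacement or by invoking the relative-nerve construction as in \cite[3.2.5]{lurie:halg}, which converts the underlying relative category structure (where $B\otimes_A -$ is strictly functorial on cofibrant objects) into the coCartesian fibration directly, bypassing any coherence issues.
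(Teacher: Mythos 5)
Your proposal is correct and follows essentially the same route as the paper: build the two coCartesian fibrations over $\MCcdga_k$ (the paper does this by localising an explicit $1$-categorical Grothendieck construction $\int \MCdgLie$ and invoking \cite[2.4.19]{lurie:dagx}), produce a map between them that is a fibrewise equivalence by \autoref{dglie-algtheory}, check it preserves coCartesian edges, and descend to $\cdga_k$ by noting both functors send quasi-isomorphisms of cdga's to equivalences. The coherence issue you flag at the end is resolved in the paper exactly as you anticipate: the comparison functor is obtained as a \emph{relative} left Kan extension of the inclusion $\int \dgLieLib \to \int \dgLie$ along $\int \dgLieLib \to \int \sifted(\dgLieLib)$ via \cite[4.3.2.14]{lurie:htt}, with preservation of coCartesian morphisms supplied by \cite[4.3.1.12]{lurie:htt}.
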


\begin{rmq}
This proposition establishes an equivalence of functors $\cdga_k \to \PresLeftu U$ between $A \mapsto \dgLie_A$ and $A \mapsto \sifted(\dgLieLib_A)$.
\end{rmq}

\begin{proof}
Let us define $\int \MCdgLie$ as the following category.
\begin{itemize}
\item An object is couple $(A,L)$ where $A \in \MCcdga_k$ and $L \in \MCdgLie_A$.
\item A morphism $(A,L) \to (B,L')$ is a morphism $A \to B$ and a morphism of $A$-dg-Lie algebras $L \to L'$
\end{itemize}
We define $\int \dgLie$ to be the localization of $\int \MCdgLie$ along quasi-isomorphisms of dg-Lie algebras.
Using \cite[2.4.19]{lurie:dagx}, there is a coCartesian fibration of $(\infty,1)$-categories $p \colon \int \dgLie \to \MCcdga_k$.

This coCartesian fibration defines a functor $\dgLie \colon \MCcdga_k \to \inftyCatu V$ mapping a cdga $A$ to $\dgLie_A$ and a morphism $A \to B$ to the base change functor.
It comes by construction with a subfunctor
\[
\dgLieLib \colon \MCcdga_k \to \inftyCatu U
\]
Let us denote by $\sifted(\dgLieLib)$ its composite functor with 
\[
\sifted \colon \inftyCatu U \to \inftyCatu V
\]
Let us denote by $\int \dgLieLib \to \MCcdga_k$ the coCartesian associated with the functor $\dgLieLib$ and by $\int \sifted(\dgLieLib)$ that associated to $\sifted(\dgLieLib)$.

We get a diagram
\[
\shorthandoff{:;!?}
\xy <6mm,0cm>:
(1,0)*+{\int \sifted(\dgLieLib)}="0",
(5,-2)*+{\int \dgLieLib}="1",
(-3,-2)*+{\int \dgLie}="2",
(3,-5)*+{\MCcdga_k}="3",
\ar "1";"0" _(0.4){G}
\ar@{-->} "0";"2" _(0.6){F}
\ar "1";"2" ^-{F_0}
\ar "0";"3" |!{"1";"2"}\hole
\ar "1";"3"
\ar "2";"3"
\endxy
\]
The functor $F_0$ has a relative left Kan extension $F$ along $G$ (see \cite[4.3.2.14]{lurie:htt}).
Because of \autoref{dglie-algtheory}, it now suffices to prove that $F$ preserves coCartesian morphisms. This is a consequence of \cite[4.3.1.12]{lurie:htt}.
We get the announced equivalence of functors
\[
\mymatrix{\MCcdga_k \dcell[r][][][\sim][=>][12pt] & \PresLeftu U}
\]
We now observe that both the involved functors map quasi-isomorphisms of $\MCcdga_k$ to equivalences of categories. It follows that this natural transformation factors through the localisation $\cdga_k$ of $\MCcdga_k$.
\end{proof}

\subsection{Poincaré-Birkhoff-Witt over a cdga in characteristic zero}
In this short parenthesis, we prove the PBW-theorem over a cdga of characteristic $0$. The prove is a simple generalisation of that of Paul M. Cohn over a algebra in characteristic $0$ -- see \cite[theorem 2]{cohn:pbw}.
\begin{thm}
Let $A$ be a commutative dg-algebra over a field $k$ of characteristic zero.
For any dg-Lie algebra $L$ over $A$, there is a natural isomorphism of $A$-dg-modules
\[
\Sym_A L \to \Envel_A L
\]
\end{thm}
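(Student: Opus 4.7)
The plan is to adapt Cohn's argument to the dg-setting, the only real subtleties being the Koszul signs and the compatibility with the differential. First I would define the \emph{symmetrisation map}
\[
\phi \colon \Sym_A L \longrightarrow \Envel_A L
\]
by the formula $\phi(x_1 \cdots x_n) = \tfrac{1}{n!} \sum_{\sigma \in S_n} \varepsilon(\sigma ; x_1, \dots, x_n)\, x_{\sigma(1)} \cdots x_{\sigma(n)}$, where $\varepsilon(\sigma; x_1, \dots, x_n)$ is the Koszul sign associated to the permutation $\sigma$ acting on the homogeneous elements $x_1, \dots, x_n$ of $L$. This is well defined because $k$ has characteristic $0$, and it is $A$-linear since $\Envel_A L$ is an $A$-algebra and the formula involves only symmetrisation over $S_n$. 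I would check that $\phi$ is a map of $A$-dg-modules by a direct computation: the differential on $\Sym_A L$ and on $\Envel_A L$ are both derivations extending the differential on $L$, and the averaging commutes with any derivation, so $d \phi = \phi d$ is obtained by distributing $d$ through the sum using the graded Leibniz rule and reindexing via the action of $S_n$ on the tensor factors.

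Next I would reduce the statement that $\phi$ is an isomorphism of $A$-dg-modules to the statement that the underlying map of graded $A^\sharp$-modules (forgetting differentials) is an isomorphism: if the underlying graded map is invertible, then $\phi^{-1}$ is automatically a chain map since $\phi$ is. So it remains to prove the purely graded version of PBW, which reduces to the classical situation over the graded-commutative ring $A^\sharp$ with a graded Lie algebra $L^\sharp$. At this point I would follow Cohn's inductive argument: filter $\Envel_A L$ by $F_n$, the sub-$A$-module generated by products of at most $n$ elements of $L$, and show that the composite
\[
\Sym^{\leq n}_A L \xrightarrow{\phi} F_n \Envel_A L \twoheadrightarrow \gr_n \Envel_A L
\]
is an isomorphism. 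The surjectivity is clear from the definition of the filtration. For injectivity (equivalently, for $\phi$ being an isomorphism), one writes $\Envel_A L$ as the quotient of the tensor algebra $\mathrm T_A L$ by the relations $x \otimes y - (-1)^{|x||y|} y \otimes x - [x,y]$ and checks, using the Jacobi identity with Koszul signs and the characteristic-zero averaging, that any two ways of reordering a monomial into symmetric normal form agree — this is Cohn's combinatorial argument, where signs appear exactly as in the definition of $\varepsilon$.

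The main obstacle will be book-keeping Koszul signs in the inductive step, in particular verifying that the relation $[x,y] = xy - (-1)^{|x||y|} yx$ together with the graded Jacobi identity suffices to reorder any monomial up to strictly lower filtration degree, independently of the chosen reordering path; this is the only place where characteristic zero is genuinely used (to divide by $n!$). Once this is settled, a standard induction on $n$, combined with the five-lemma applied to the short exact sequences $0 \to F_{n-1} \to F_n \to \gr_n \to 0$, upgrades the isomorphism on associated graded pieces to an isomorphism $\Sym_A L \to \Envel_A L$. Naturality in $L$ (and in $A$) is immediate from the formula for $\phi$, concluding the proof.
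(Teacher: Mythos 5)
Your proof is correct, and it rests on the same combinatorial core as the paper's (Cohn's characteristic-zero argument with Koszul signs inserted), but the two arguments are organised differently. The paper first observes that the symmetrisation $\phi \colon \Sym_A L \to \Envel_A L$ is a map of dg-coalgebras (using that $L$ is primitive in $\Envel_A L$), proves surjectivity directly by induction on the length filtration of $\Envel_A L$ -- rewriting $\phi(\el{x}{n}[\otimes^s])$ as $\el{x}{n}[\otimes]$ plus terms involving brackets, hence of lower filtration -- and then concludes injectivity by citing Cohn's explicit formula for a section $\Envel_A L \to \Sym_A L$, so the hard combinatorial consistency check is entirely outsourced to \cite{cohn:pbw}. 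You instead reduce to the underlying graded modules (correctly noting that a bijective chain map has a chain-map inverse), pass to the associated graded of the length filtration, and run the five-lemma induction; the price is that you must yourself verify the diamond-lemma-style claim that all reorderings of a monomial agree modulo lower filtration, which is exactly the step you flag as the main obstacle and which the paper's use of Cohn's section formula avoids. Your route is the more self-contained and standard textbook one and yields the extra information that $\gr \Envel_A L \simeq \Sym_A L$ as graded algebras; the paper's route is shorter on the page because it leans harder on the reference, and it records the coalgebra compatibility of $\phi$, which is what is actually used later (in the description of the Chevalley--Eilenberg complex). Either way the statement follows, and naturality is immediate from the formula in both treatments.
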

\begin{proof}
Recall that $\Envel_A L$ can be endowed with a bialgebra structure such that an element of $L$ is primitive in $\Envel_A L$.
The morphism $L \to \Envel_A L$ therefore induces a morphism of dg-bialgebras
$\Tens_A L \to \Envel_A L$ which can be composed with the symmetrization map $\Sym_A L \to \Tens_A L$ given by
\[
\el{x}{n}[\otimes] \mapsto \frac{1}{n!} \sum_\sigma \varepsilon(\sigma,\bar x) x_{\sigma(1)} \otimes \dots \otimes x_{\sigma(n)}
\]
where $\sigma$ varies in the permutation group $\mathfrak{S}_n$ and where $\varepsilon(-,\bar x)$ is a group morphism $\mathfrak S_n \to \{-1,+1\}$ determined by the value on the permutations $(i~j)$
\[
\varepsilon((i~j),\bar x) = (-1)^{|x_i||x_j|}
\]
We finally get a morphism of $A$-dg-coalgebras $\phi \colon \Sym_A L \to \Envel_A L$.
Let us take $n \geq 1$ and let us assume that the image of $\phi$ contains $\Envel_A^{\leq n-1} L$.
The image of a symmetric tensor 
\[
\el{x}{n}[\otimes^s]
\]
by $\phi$ is the class
\[
\left[\frac{1}{n!}\sum_\sigma \varepsilon(\sigma, \bar x) \el{x}[\sigma(1)]{\sigma(n)}[\otimes] \right]
\]
which can be rewritten
\[
\left[
\el{x}{n}[\otimes] + \sum_\alpha \pm \frac{1}{n!} \el{y^\alpha}{n-1}[\otimes]
\right]
\]
where $y^\alpha_i$ is either some of the $x_j$'s or some bracket $[x_j,x_k]$.
This implies that $\Envel_A^{\leq n} L$ is in the image of $\phi$ and we therefore show recursively that $\phi$ is surjective (the filtration of $\Envel_A L$ is exhaustive).

There is moreover a section
\[
\Envel_A L \to \Sym_A L
\]
for which a formula is given in \cite{cohn:pbw} and which concludes the proof.
\end{proof}

\subsection{Almost finite cellular objects}
Let $A$ be a commutative dg-algebra over $k$.
\begin{df}
Let $M$ be an $A$-dg-module.
\begin{itemize}
\item We will denote by $\Mc(M)$ the mapping cone of the identity of $M$.
\item We will say that $M$ is an almost finite cellular object if there is a diagram
\[
0 \to A^{p_0} = M_0 \to M_1 \to \dots 
\]
whose colimit is $M$ and such that for any $n$, the morphism $M_n \to M_{n+1}$ fits into a cocartesian diagram
\[
\mymatrix{
A^{p_n}[n] \ar[r] \ar[d] & M_n \ar[d] \\ \Mc(A^{p_n}[n]) \ar[r] & M_{n+1} \cocart
}
\]
\end{itemize}
\end{df}

\begin{rmq}
The definition above states that a dg-module $M$ is an almost finite cellular object if it is obtained from $0$ by gluing a finite number of cells in each degree (although the total number of cells is not necessarily finite).
\end{rmq}

\begin{lem} Let $\phi \colon M \to N$ be a morphism of $A$-dg-modules.
\begin{itemize}
\item If $M$ is an almost finite cellular object then it is cofibrant.
\item Assume both $M$ and $N$ are almost finite cellular objects.
The morphism $\phi$ is a quasi-isomorphism if and only if for any field $l$ and any morphism $A \to l$ the induced map $\phi_k \colon M \otimes_A l \to N \otimes_A l$ is a quasi-isomorphism.
\end{itemize}
\end{lem}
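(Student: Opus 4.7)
My plan is to handle the two items in turn. For the first, observe that in the projective model structure on $\MCdgMod_A$ the inclusion $V \hookrightarrow \Mc(V)$ is always a cofibration: the maps $A[n] \to \Mc(A[n])$ generate the cofibrations, and $A^{p_n}[n] \to \Mc(A^{p_n}[n])$ is a finite coproduct of these. Each transition $M_n \to M_{n+1}$ is therefore a pushout of a cofibration, hence a cofibration, and $M$ is obtained from the cofibrant object $M_0 = A^{p_0}$ by a transfinite composition of cofibrations, so $M$ is cofibrant.

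The forward direction of the second item is then immediate: $M$ and $N$ being cofibrant, $-\otimes_A l$ computes the left derived tensor product on them and therefore preserves quasi-isomorphisms.

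For the converse, let $C$ denote the homotopy cofiber of $\phi$, which is cofibrant and satisfies $C \otimes_A l \simeq 0$ for every field $l$ under $A$; $\phi$ is a quasi-isomorphism iff $C$ is acyclic. To show acyclicity of $C$ I rely on two finiteness consequences of the almost-finite cellular structure on $M$ and $N$: first, since all cells occur in non-positive cohomological degree, $C$ is cohomologically bounded above; second, the long exact sequences attached to the cellular filtrations $M_\bullet$ and $N_\bullet$, combined with the fact that each successive subquotient $M_{n+1}/M_n$ (and similarly for $N$) is a finite sum of shifts of $A$ concentrated in a single degree, show that $\homol^q(C)$ is a finitely generated $\homol^0(A)$-module for every $q$.

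With these two properties I argue by downward induction on $q$. Supposing $\homol^{q'}(C) = 0$ for every $q' > q$, so that $C$ is quasi-isomorphic to its good truncation $\tau^{\leq q} C$, the top cohomology $\homol^q(C)$ is a cokernel; right exactness of the derived base change to a residue field $\kappa(\mathfrak{p})$ of $\homol^0(A)$ then yields a surjection $\homol^q(C) \otimes_{\homol^0(A)} \kappa(\mathfrak{p}) \twoheadrightarrow \homol^q\bigl(C \otimes_A^L \kappa(\mathfrak{p})\bigr) = 0$ for every prime $\mathfrak{p}$ of $\homol^0(A)$. Since $\homol^q(C)$ is finitely generated, Nakayama's lemma applied at each prime forces $\homol^q(C) = 0$. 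The main obstacle is to extract the two finiteness properties rigorously from the almost-finite cellular data — in particular the finite generation of $\homol^q(C)$ over $\homol^0(A)$ — so that the Nakayama step goes through without assuming any noetherianness on $A$, and to check that the top-cohomology comparison to the derived base change through the dga $A$ is indeed a surjection.
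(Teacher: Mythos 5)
Your first item and the forward implication of the second are correct and essentially the paper's argument: the paper phrases cofibrancy as a direct lifting argument along the tower $M_0 \to M_1 \to \cdots$, which amounts to your observation that each $M_n \to M_{n+1}$ is a pushout of a generating cofibration.

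The converse direction, however, has a genuine gap, and it is precisely the one you flag at the end: the finite generation of every $\homol^q(C)$ over $\homol^0(A)$ is not an obstacle to be overcome but a statement that is simply \emph{false} without noetherian hypotheses. Already $C = \Mc{}$-type examples break it: for a discrete non-noetherian $A$ and $a \in A$, the cone of $a \colon A \to A$ is a (finite, hence almost finite) cellular object with $\homol^{-1}(C) \cong \mathrm{Ann}(a)$, which need not be finitely generated. So your downward induction cannot proceed past the top degree as written. What is true is only that the \emph{top} nonvanishing cohomology of such a complex is finitely generated; one could try to salvage the induction by killing the top cohomology, re-truncating, and invoking a pseudo-coherence statement over the cdga $A$ at each stage, but you have not supplied that argument and it is the whole difficulty.

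The paper's proof sidesteps finiteness of cohomology modules entirely. The key structural observation you are missing is that all cells attached at stage $\geq n+1$ live in degrees $\leq -n-1$, so the truncation $M^{\geq -n}$ agrees with $M_{n+1}^{\geq -n}$; everything therefore reduces to the finite stages $M_n$, which are \emph{perfect} $A$-dg-modules. After reducing to $N = 0$ (your cofiber reduction is the same as the paper's), the hypothesis says that $M_n \otimes_A l$ has amplitude contained in $[-n-1,-n]$ for every field $l$ under $A$. The pointwise, residue-field criterion for Tor-amplitude of perfect complexes --- which holds with no noetherian assumption, see \cite{toen:ttt} --- then exhibits $M_n$ as an extension $P[n] \to M_n \to Q[n+1]$ of shifted finite projectives, hence $\homol^j(M_n) = 0$ for $j > -n$, hence $\homol^j(M) = 0$ for $j > -n$; letting $n$ grow gives acyclicity. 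To repair your proof, replace the Nakayama step on cohomology modules by this Tor-amplitude argument applied to the perfect stages $M_n$.
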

\begin{proof}
Assume $M$ is an almost finite cellular object. Let us consider a diagram $M \to Q \from P$ where the map $P \to Q$ is a trivial fibration. Each morphism $M_n \to M_{n+1}$ is a cofibration and there thus is a compatible family of lifts $(M_n \to P)$. This gives us a lift $M \to P$. The $A$-dg-module $M$ is cofibrant.

Let now $\phi$ be a morphism $M \to N$ between almost finite cellular objects and that the morphism $\phi_l$ is a quasi-isomorphism for any field $l$ under $A$.
Replacing $M$ with the cone of $\phi$ (which is also an almost finite cellular object) we may assume that $N$ is trivial.
Notice first that an almost finite cellular object is concentrated in non positive degree.
Notice also that for any $n$ the truncated morphism $\phi^{\geq -n} \colon M_{n+1}^{\geq -n} \to M^{\geq -n}$ is a quasi-isomorphism.
We then have 
\[
0 \simeq \homol^j\left(M \otimes_A l\right) \simeq \homol^j\left(M_n \otimes_A l\right)
\]
whenever $-n < j \leq 0$ and for any $A \to l$. Since $\homol^j(M_n \otimes_A l) \simeq 0$ if $j \leq -n-2$ the $A$-dg-module $M_n$ is perfect and of amplitude $[-n-1,-n]$.
This implies the existence of two projective modules $P$ and $Q$ (ie retracts of some power of $A$) fitting in a cofier sequence (see \cite{toen:ttt})
\[
P[n] \to M_n \to Q[n+1]
\]
The dg-module $M_n$ is then cohomologically concentrated in degree $]-\infty,-n]$, and so is $M$.
This being true for any $n$ we deduce that $M$ is contractible.
\end{proof}

The next lemma requires the base $A$ to be noetherian.
Recall that $A$ is noetherian if $\homol^0(A)$ is noetherian and if $\homol^n(A)$ is trivial when $n$ is big enough and of finite type over $\homol^0(A)$ for any $n$.
\begin{lem}\label{afp-cotangent}
Assume $A$ is noetherian.
If $B$ is an object of $\quot{\MCcdga_A}{A}$ such that:
\begin{itemize}
\item The $\homol^0(A)$-algebra $\homol^0(B)$ is finitely presented,
\item For any $n \geq 1$ the $\homol^0(B)$-module $\homol^{-n}(B)$ is of finite type,
\end{itemize}
then the $A$-dg-module $\Lcot_{B/A} \otimes_B A$ is an almost finite cellular object.
\end{lem}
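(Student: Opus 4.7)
The plan is to build an explicit cofibrant replacement of $B$ over $A$, cell by cell in the sense of commutative dg-algebras, using only finitely many generators in each degree, and then read off the almost finite cellular structure on the pullback of the cotangent complex.

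First I would construct a sequence $A = \tilde B_{-1} \to \tilde B_0 \to \tilde B_1 \to \cdots$ of cofibrations in $\MCcdga_A$ equipped with compatible maps $\tilde B_n \to B$, such that each $\tilde B_n$ is obtained from $\tilde B_{n-1}$ by freely attaching finitely many generators in degree $-n$ (i.e.\ a pushout along a map $A^{p_n}[n] \to \mathrm{Mc}(A^{p_n}[n])$ in the category of commutative dg-algebras), and such that the induced map $\tilde B_n \to B$ is an isomorphism on $\homol^{-j}$ for $j \leq n-1$ and surjective on $\homol^{-n}$. The base case $\tilde B_0$ is a polynomial algebra on a finite set of generators for $\homol^0(B)$ as an $\homol^0(A)$-algebra, using the finite presentation hypothesis. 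The inductive step uses that $\homol^0(A)$ is noetherian, hence $\homol^0(B)$ is noetherian, and the finitely generated hypothesis on $\homol^{-n}(B)$: the kernel/cokernel of $\tilde B_{n-1} \to B$ in degree $-n$ is a finitely generated $\homol^0(B)$-module and can therefore be killed by attaching finitely many cells $A^{p_n}[n] \to \mathrm{Mc}(A^{p_n}[n])$.

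Once this is done, set $\tilde B = \colim_n \tilde B_n$. The natural map $\tilde B \to B$ is a quasi-isomorphism (every cohomology group is correct in the limit) and $\tilde B$ is cofibrant over $A$, so
\[
\Lcot_{B/A} \otimes_B A \simeq \Omega^1_{\tilde B/A} \otimes_{\tilde B} A.
\]
The right-hand side carries a canonical almost finite cellular structure: by construction, $\Omega^1_{\tilde B_n/A} \otimes_{\tilde B_n} A$ is obtained from $\Omega^1_{\tilde B_{n-1}/A} \otimes_{\tilde B_{n-1}} A$ by a pushout of the form
\[
\mymatrix{
A^{p_n}[n] \ar[r] \ar[d] & \Omega^1_{\tilde B_{n-1}/A} \otimes_{\tilde B_{n-1}} A \ar[d] \\
\mathrm{Mc}(A^{p_n}[n]) \ar[r] & \Omega^1_{\tilde B_n/A} \otimes_{\tilde B_n} A \cocart
}
\]
since attaching a free commutative generator adds a free $\tilde B_n$-module summand to $\Omega^1$ and base change along $\tilde B_n \to A$ gives the desired cell. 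Passing to the colimit exhibits $\Lcot_{B/A} \otimes_B A$ as an almost finite cellular $A$-module.

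The main obstacle is the inductive step: making sure that at each stage only finitely many new generators are required, and that the map to $B$ can be extended consistently. This is where noetherianity of $A$ (transferring to noetherianity of $\homol^0(B)$) together with the finite generation of each $\homol^{-n}(B)$ is essential; without these hypotheses one would only get a cellular presentation with possibly infinitely many cells per degree. A small amount of care is also required to extend the dg-algebra maps $\tilde B_n \to B$ along the newly attached cells, which is automatic because each pushout along $A^{p_n}[n] \to \mathrm{Mc}(A^{p_n}[n])$ is a trivial cofibration target-wise and the obstructions live in groups that vanish by the inductive control on cohomology.
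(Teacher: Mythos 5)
Your proposal is correct and is essentially the paper's own argument: both build $B$ recursively as an almost finite cellular object of $\quot{\MCcdga_A}{A}$ by attaching, in each degree $-n$, finitely many generators to surject onto $\homol^{-n}(B)$ together with finitely many cells killing the kernel in degree $-n+1$, with noetherianity of $\homol^0(B)$ and finite generation of each $\homol^{-n}(B)$ guaranteeing finiteness at each stage. The only cosmetic difference is that the paper concludes by observing that $(A\to B\to A)\mapsto \Lcot_{B/A}\otimes_B A$ preserves colimits, whereas you track $\Omega^1$ through each cell attachment explicitly; these amount to the same thing.
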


\begin{proof}
Because the functor $(A \to B \to A) \mapsto \Lcot_{B/A} \otimes_B A$ preserves colimits, it suffices to prove that $B$ is an almost finite cellular object in $\quot{\MCcdga_A}{A}$. This means we have to build a diagram
\[
B_0 \to B_1 \to \dots
\]
whose colimit is equivalent to $B$ and such that for any $n \geq 1$ the morphism $B_{n-1} \to B_n$ fits into a cocartesian diagram
\[
\mymatrix{
A[\el{R^{n-1}}{q}]^{dR_i^{n-1} = 0} \ar[r] \ar[d]_{R_i \mapsto dU_i} & B_{n-1} \ar[d] \\
A[\el{U^n}{q}, \el{X^n}{p}]^{dX^n_j = 0} \ar[r] & B_n \cocart
}
\]
where $R_i^{n-1}$ is a variable in degree $-(n-1)$ and $X_j^n$ and $U_i^n$ are variables in degree $-n$.

We build such a diagram recursively.
Let 
\[
\homol^0(B) \cong \quot{\homol^0(A)[\el{X^0}{p_0}]}{(\el{R^0}{q_0})}
\]
be a presentation of $\homol^0(B)$ as a $\homol^0(A)$-algebra.
Let $B_0$ be $A[\el{X^0}{p_0}]$ equipped with a morphism $\phi_0 \colon B_0 \to B$ given by a choice of coset representatives of $\el{X^0}{p_0}$ in $B$.
The induced morphism $\homol^0(B_0) \to \homol^0(B)$ is surjective and its kernel is of finite type (as a $\homol^0(A)$-module).
\\ Let $n \geq 1$. Assume $\phi_{n-1} \colon B_{n-1} \to B$ has been defined and satisfies the properties:
\begin{itemize}
\item If $n = 1$ then the induced morphism of $\homol^0(A)$-modules $\homol^0(B_0) \to \homol^{0}(B)$ is surjective and its kernel $K_0$ is a $\homol^0(A)$-module of finite type.
\item If $n \geq 2$, then the morphism $\phi_{n-1}$ induces isomorphisms $\homol^{-i}(B_{n-1}) \to \homol^{-i}(B)$ of $\homol^0(A)$-modules if $i = 0$ and of $\homol^0(B)$-modules for $1 \leq i \leq n-2$.
\item If $n \geq 2$ then the induced morphism of $\homol^0(B)$-modules $\homol^{-n+1}(B_{n-1}) \to \homol^{-n+1}(B)$ is surjective and its kernel $K_{n-1}$ is a $\homol^0(B)$-module of finite type.
\end{itemize}
Let $n \geq 1$. Let $\el{X^{n}}{p}$ be generators of $\homol^{-n}(B)$ as a $\homol^0(B)$-module and $\el{R^{n-1}}{q}$ be generators of $K_{n-1}$.
Let $B_n$ be the pushout:
\[
\mymatrix{
A[\el{R^{n-1}}{q}]^{dR_i^{n-1} = 0} \ar[r] \ar[d]_{R_i \mapsto dU_i} & B_{n-1} \ar[d] \\
A[\el{U^n}{q}, \el{X^n}{p}]^{dX^n_k = 0} \ar[r] & B_n \cocart
}
\]
Let $\el{r^{n-1}}{q}$ be the images of $\el{R^{n-1}}{q}$ (respectively) by the composite morphism
\[
A[\el{R^{n-1}}{q}]^{dR_i^{n-1} = 0} \to B_{n-1} \to B
\]
There exist $\el{u^n}{q} \in B$ such that $d u^n_i = r^{n-1}_i$ for all $i$.
Those $\el{u^n}{q}$ together with a choice of coset representatives of $\el{X^n}{p}$ in $B$ induce a morphism
\[
A[\el{U^n}{q}, \el{X^n}{p}]^{dX^n_k = 0} \to B
\]
which induces a morphism $\phi_n \colon B_n \to B$.
\begin{description}

\item If $n=1$ then a quick computation proves the isomorphism of $\homol^0(A)$-modules
\[
\homol^0(B_1) \cong \quot{\homol^0(B_0)}{(\el{R^0}{q})} \cong \homol^0(B)
\]
\item If $n \geq 2$ then the truncated morphism $B_n^{\geq 2-n} \to^\sim \B_{n-1}^{\geq 2-n}$ is a quasi-isomorphism and
the induced morphisms $\homol^{-i}(B_n) \to^\simeq \homol^{-i}(B)$ are thus isomorphisms of $\homol^0(B)$-modules for $i \leq n-2$.
We then get the isomorphism of $\homol^0(B)$-modules
\[
\homol^{-n+1}(B_n) \cong \quot{\homol^{-n+1}(B_{n-1})}{(\el{R^{n-1}}{q})} \cong \homol^{-n+1}(B)
\]
\end{description}
The natural morphism $\theta \colon \homol^{-n}(B_n) \to \homol^{-n}(B)$ is surjective.
The $\homol^0(B)$-module $\homol^{-n}(B_n)$ is of finite type and because $\homol^0(B)$ is noetherian, the kernel $K_n$ of $\theta$ is also of finite type.
The recursivity is proven and it now follows that the morphism $\colim_n B_n \to B$ is a quasi-isomorphism.
\end{proof}

\begin{df} Let $L$ be a dg-Lie algebra over $A$.
\begin{itemize}
\item We will say that $L$ is very good if there exists a finite sequence
\[
0 = L_0 \to \el{L}{n}[\to] = L
\]
such that each morphism $L_i \to L_{i+1}$ fits into a cocartesian square
\[
\mymatrix{
\libre(A[-p_i]) \ar[r] \ar[d] & L_i \ar[d] \\ \libre(\Mc(A[-p_i])) \ar[r] & L_{i+1} \cocart
}
\]
where $p_i \geq 2$. 
\item We will say that $L$ is good if it is quasi-isomorphic to a very good dg-Lie algebra.
\item We will say that $L$ is almost finite if it is cofibrant and if its underlying
\emph{graded} module is isomorphic to
\[
\bigoplus_{i\geq 1} A^{n_i} [-i]
\]
\end{itemize}
\end{df}

\begin{lem}
\begin{itemize}
The following assertions are true.
\item Any very good dg-Lie algebra is almost finite.
\item The underlying dg-module of a cofibrant dg-Lie algebra is cofibrant.
\end{itemize}
\end{lem}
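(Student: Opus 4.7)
Both assertions rest on the PBW theorem just established together with standard facts about the cofibrantly generated model structure on $\MCdgLie_A$: its generating cofibrations are obtained by applying $\libre$ to those of $\MCdgMod_A$, namely $\libre(A[-p]) \to \libre(\Mc(A[-p]))$ and $0 \to \libre(\Mc(A[-p]))$.

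\textbf{First assertion.} I would argue by induction on the length $n$ of the filtration $0 = L_0 \to \dots \to L_n = L$ defining a very good dg-Lie algebra. The base $L_0 = 0$ is vacuously almost finite. For the inductive step, the transition $L_i \to L_{i+1}$ is a pushout along a generating cofibration, so $L_{i+1}$ is again cofibrant. To control the underlying graded module, I would apply the universal enveloping algebra functor $\Envel_A$: being left adjoint to the forgetful $\dgAlg_A \to \dgLie_A$, it preserves pushouts, giving
\[
\Envel_A L_{i+1} \simeq \Envel_A L_i \ast_{\Tens_A A[-p_i]} \Tens_A \Mc(A[-p_i]).
\]
By PBW, the underlying graded $A$-module of $\Envel_A L_{i+1}$ is $\Sym_A L_{i+1}$, so the question reduces to a rank count on the associative side. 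Choosing a monomial normal form for the amalgamated free product of tensor algebras and using that $\Mc(A[-p_i])$ lives in degrees $p_i$ and $p_i-1$ with $p_i \geq 2$, each fixed cohomological degree receives only finitely many contributions. Inverting $\Sym_A$ degree by degree then yields a decomposition of $L_{i+1}$ of the required form $\bigoplus_{j \geq 1} A^{n_j}[-j]$.

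\textbf{Second assertion.} Any cofibrant object of $\MCdgLie_A$ is a retract of a transfinite cell complex, i.e.\ a transfinite composition of pushouts of generating cofibrations starting from $0$. Since cofibrations of dg-modules are stable under retracts, transfinite compositions, and pushouts, it suffices to show that a single pushout step
\[
L \longrightarrow L \amalg_{\libre(V)} \libre(W)
\]
along a cofibration $V \to W$ of dg-modules, with $L$ already cofibrant in $\MCdgLie_A$, induces a cofibration of underlying dg-modules. Once more, applying $\Envel_A$ and using PBW identifies the underlying graded $A$-module of the target with the amalgamated free product $\Envel_A L \ast_{\Tens_A V} \Tens_A W$; the normal-form description of this free product, filtered by the number of occurrences of a generator in $W/V$, exhibits the map $\oubli L \to \oubli(L \amalg_{\libre(V)} \libre(W))$ as a transfinite composition of pushouts of cofibrations of the form $\oubli L \otimes (W/V)^{\otimes n} \otimes V^{\otimes \bullet} \to \oubli L \otimes W^{\otimes \bullet}$, each of which is a cofibration of dg-modules since tensor products of cofibrant dg-modules over the cofibrant cdga $A$ are cofibrant.

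The main obstacle is pinning down the underlying graded module of a pushout in $\dgLie_A$ along a free Lie sub-algebra, since Lie pushouts are notoriously awkward; the route through PBW and the amalgamated free product of associative algebras is what makes both statements tractable, after which everything reduces to standard rank counts and filtration arguments in graded modules.
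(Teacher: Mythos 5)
For the first assertion your route through $\Envel_A$ and the amalgamated free product is workable but heavier than necessary: the paper argues directly on the Lie side (free Lie algebras on $A[-p]$, $p\geq 2$, are almost finite, and attaching a cell $\libre(A[-p_i])\to\libre(\Mc(A[-p_i]))$ preserves this), without ever passing to associative algebras. Your ``invert $\Sym_A$ degree by degree'' step is the delicate point, but since everything in sight is degreewise free of finite rank it can be made to work.

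The second assertion is where there is a genuine gap. You write that PBW ``identifies the underlying graded $A$-module of the target with the amalgamated free product $\Envel_A L \ast_{\Tens_A V}\Tens_A W$''; it does not. PBW identifies $\Sym_A\bigl(L\amalg_{\libre(V)}\libre(W)\bigr)$ with that free product, and the underlying module of the Lie pushout is only the weight-one summand of this symmetric algebra. Your subsequent filtration (by the number of occurrences of generators from $W/V$) is a filtration of the \emph{enveloping algebra} of the pushout, and exhibiting $\Envel_A L\to\Envel_A\bigl(L\amalg_{\libre(V)}\libre(W)\bigr)$ as a cofibration of dg-modules does not, by itself, say anything about the map $\oubli\, L\to\oubli\bigl(L\amalg_{\libre(V)}\libre(W)\bigr)$ you actually need to control. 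Descending the filtration from $\Envel_A$ back to the Lie algebra is precisely the ``notoriously awkward'' point you flag, and it is left unresolved.

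The paper sidesteps all of this with a retract argument, which is the real payoff of having just proved PBW: the composite $L\to\Envel_A L\cong\Sym_A L\to\Sym_A^1 L=L$ is the identity, so the underlying dg-module of \emph{any} $L$ is a retract of that of $\Envel_A L$. Since $\Envel_A\colon\MCdgLie_A\to\MCdgAlg_A$ is left Quillen and the forgetful functor $\MCdgAlg_A\to\MCdgMod_A$ preserves cofibrant objects (Schwede--Shipley), a cofibrant $L$ has $\Envel_A L$ cofibrant in $\MCdgMod_A$, hence so is its retract $L$. Note this only establishes cofibrancy of the object, not that each cell attachment induces a cofibration of underlying dg-modules --- a stronger statement that the lemma does not require. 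I would recommend replacing your cellular analysis by this retract argument.
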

\begin{proof}
Any free dg-Lie algebra generated by some $A[-p]$ with $p \geq 2$ is almost finite. Considering a pushout diagram
\[
\mymatrix{
\libre(A[-p]) \ar[r] \ar[d] & L \ar[d] \\ \libre(\Mc(A[-p])) \ar[r] & L' \cocart
}
\]
Whenever $L$ is almost finite, so is $L'$.
This proves the first item.

Let now $L$ be a dg-Lie algebra over $A$. There is a morphism of dg-modules $L \to \Envel_A L$.
The Poincaré-Birkhoff-Witt theorem states that the dg-module $\Envel_A L$ is isomorphic to $\Sym_A L$.
There is therefore a retract $\Envel_A L \to L$ of the universal morphism $L \to \Envel_A L$.
The functor $\Envel_A \colon \MCdgLie_A \to \MCdgAlg_A$ preserves cofibrant objects and using a result of \cite{schwedeshipley:monoidal}, so does the forgetful functor $\MCdgAlg_A \to \MCdgMod_A$. We therefore deduce that if $L$ is cofibrant in $\MCdgLie_A$ it is also cofibrant in $\MCdgMod_A$.
\end{proof}

\begin{df}
Let $\dgLieGood_A$ denote the sub-$(\infty,1)$-category of $\dgLie_A$ spanned by good dg-Lie algebras.
\end{df}

\begin{rmq}
We naturally have an inclusion $\dgLieLib_A \to \dgLieGood_A$.
\end{rmq}

\subsection{Homology and cohomology of dg-Lie algebras}\label{section-liecoho}
The content of this section can be found in \cite{lurie:dagx} when the base is a field.
Proofs are simple avatars of Lurie's on a more general base $A$.
Let then $A$ be a commutative dg-algebra concentrated in non-positive degree over a field $k$ of characteristic zero.
\begin{df}
Let $A[\eta]$ denote the (contractible) commutative $A$-dg-algebra generated by one element $\eta$ of degree -1 such that $\eta^2 = 0$ and $d\eta = 1$.
For any $A$-dg-Lie algebra $L$, the tensor product $A[\eta] \otimes_A L$ is still an $A$-dg-Lie algebra and we can thus define the homology of $L$:
\[
\homol_A (L) = \Envel_A\left(A[\eta] \otimes_A L\right) \otimes_{\Envel_A L} A
\]
where $\Envel_A \colon \MCdgLie_A \to \MCdgAlg_A$ is the functor sending a Lie algebra to its enveloping algebra.
This construction defines a strict functor:
\[
\homol_A \colon \MCdgLie_A \to \comma{A}{\MCdgMod_A}
\]
\end{df}
\begin{rmq}\label{rmq-coalg}
The homology $\homol_A(L)$ of $L$ is isomorphic \emph{as a graded module} to $\Sym_A(L[1])$, the symmetric algebra built on $L[1]$.
The differentials do not coincide though. The one on $\homol_A(L)$ is given on homogenous objects by the following formula:
\begin{align*}
d(\el{\eta.x}{n}[\otimes]) = \sum_{i<j} (-1)^{T_{ij}} \eta.[x_i,x_j] \otimes \eta&.x_1 \otimes \dots \otimes \widehat{\eta.x_i} \otimes \dots \otimes \widehat{\eta.x_j} \otimes \dots \otimes \eta.x_n &\\
-& \sum_i (-1)^{S_i} \eta.x_1 \otimes \dots \otimes \eta.d(x_i) \otimes \dots \otimes \eta.x_n
\end{align*}
where $\eta.x$ denotes the point in $L[1]$ corresponding to $x \in L$.
\begin{align*}
&S_i = i-1+|x_1|+ \dots + |x_{i-1}|\\
&T_{ij} = (|x_i|-1)S_i + (|x_j|-1)S_j + (|x_i|-1)(|x_j|-1)
\end{align*}\todo{$+ |x_i|$ ?}
The coalgebra structure on $\Sym_A(L[1])$ is compatible with this differential and the isomorphism above induces a coalgebra structure on $\homol_A(L)$ given for $x \in L$ homogenous by:
\[
\Delta(\eta.x) = \eta.x \otimes 1 + 1 \otimes \eta.x
\]
\end{rmq}

\begin{prop}\label{homol-inftyfunctor}
The functor $\homol_A$ preserves quasi-isomorphisms.
It induces a functor between the corresponding $(\infty,1)$-categories, which we will denote the same way:
\[
\homol_A \colon \dgLie_A \to \comma{A}{\dgMod_A}
\]
\end{prop}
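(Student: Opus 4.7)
The plan is to exploit the canonical filtration on $\homol_A(L)$ coming from the graded-module identification $\homol_A(L)\cong\Sym_A(L[1])$ noted in \autoref{rmq-coalg}, together with the fact that in characteristic zero symmetric powers are retracts of tensor powers, and hence preserve quasi-isomorphisms of cofibrant dg-modules. Concretely I would set
\[
F^n\homol_A(L)=\bigoplus_{k\leq n}\Sym^k_A(L[1])
\]
and observe, directly from the explicit formula for the differential in \autoref{rmq-coalg}, that the bracket-term strictly decreases the symmetric degree by one while the $d_L$-term preserves it. Thus each $F^n$ is a subcomplex, $\homol_A(L)=\colim_n F^n\homol_A(L)$, and the associated graded $F^n/F^{n-1}$ is exactly $\Sym^n_A(L[1])$ equipped only with the differential induced from $d_L$.

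First I would reduce to the case of a quasi-isomorphism $f\colon L\to L'$ between \emph{cofibrant} dg-Lie algebras: for the statement about preservation of weak equivalences, this is the case one needs to derive the functor, and for the $(\infty,1)$-functoriality it suffices to check that $\homol_A$ sends weak equivalences between cofibrant objects to weak equivalences (one then composes with a cofibrant replacement in $\MCdgLie_A$). By the lemma established earlier in the section, a cofibrant dg-Lie algebra has cofibrant underlying dg-module, via the splitting of $L\to\Envel_A L$ provided by the Poincaré--Birkhoff--Witt theorem we have just proved. Therefore $L[1]$ and $L'[1]$ are cofibrant dg-modules and the tensor powers $(L[1])^{\otimes n}\to(L'[1])^{\otimes n}$ are quasi-isomorphisms. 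Since $\mathrm{char}(k)=0$, the symmetrization idempotent $\tfrac{1}{n!}\sum_\sigma\varepsilon(\sigma)\sigma$ exhibits $\Sym^n_A(L[1])$ as a direct summand of $(L[1])^{\otimes n}$ functorially in $L$, so $\Sym^n_A(f[1])$ is a quasi-isomorphism for every $n$.

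Finally I would assemble these pieces by induction along the filtration: the short exact sequences
\[
0\to F^{n-1}\homol_A(L)\to F^n\homol_A(L)\to \Sym^n_A(L[1])\to 0
\]
are compatible with $f$, so the five lemma gives that $F^n\homol_A(f)$ is a quasi-isomorphism for every $n$. Passing to the filtered colimit (which is exact on dg-modules over $A$) yields that $\homol_A(f)$ is a quasi-isomorphism, and the derived/$(\infty,1)$-functor $\homol_A\colon\dgLie_A\to\comma{A}{\dgMod_A}$ follows by the usual localization of functors that preserve weak equivalences between cofibrant objects. The main potential obstacle is the bookkeeping for the filtration --- checking that the bracket-term of the differential genuinely lowers symmetric degree and that the induced differential on the associated graded is what one expects --- but this is a direct verification from the explicit formula in \autoref{rmq-coalg}.
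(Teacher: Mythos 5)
Your proof follows essentially the same route as the paper's: the canonical filtration on $\Sym_A(L[1])$, the short exact sequences identifying the associated graded with $\Sym^n_A(L[1])$, the characteristic-zero symmetrization exhibiting $\Sym^n_A(L[1])$ as a functorial retract of the $n$-th tensor power, and passage to the filtered colimit. The only divergence is your preliminary reduction to cofibrant dg-Lie algebras (via PBW and the cofibrancy of underlying modules); the paper applies the tensor-power argument directly to an arbitrary quasi-isomorphism $L \to L'$, which tacitly assumes the underlying $A$-dg-modules are flat enough, so your extra step is a reasonable precaution even though it means you verify preservation of quasi-isomorphisms only up to cofibrant replacement rather than on the nose as stated.
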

\begin{proof}
Let $L \to L'$ be a quasi-isomorphism of $A$-dg-Lie algebras.
Both $\homol_A(L)$ and $\homol_A(L')$ are endowed with a natural filtration denoted $\homol_A^{\leq n}(L)$ (resp $L'$) induced by the canonical filtration of $\Sym_A(L[1])$.
Because quasi-isomorphisms are stable by filtered colimits, it is enough to prove that each morphism $\homol_A^{\leq n}(L) \to \homol_A^{\leq n}(L')$ is a quasi-isomorphism.
The case $n = 0$ is trivial. Let us assume $\homol_A^{\leq n-1}(L) \to \homol_A^{\leq n-1}(L')$ to be a quasi-isomorphism.
There are short exact sequences:
\[
\mymatrix{
0 \ar[r] & \homol_A^{\leq n-1}(L) \ar[r] \ar[d] & \homol_A^{\leq n}(L) \ar[r] \ar[d] & \Sym^n_A(L[1]) \ar[r] \ar[d]^\theta & 0 \\
0 \ar[r] & \homol_A^{\leq n-1}(L') \ar[r] & \homol_A^{\leq n}(L') \ar[r] & \Sym^n_A(L'[1]) \ar[r] & 0 \\
}
\]
The base dg-algebra $A$ is of characteristic zero and the morphism $\theta$ is thus a retract of the quasi-isomorphism $L[1]^{\otimes n} \to L'[1]^{\otimes n}$ (where the tensor product is taken over $A$).
\end{proof}

\begin{prop} \label{commute-homol}
Let $A \to B$ be a morphism in $\cdga_k$. The following square is commutative:
\[
\mymatrix{
\dgLie_A \ar[d]_{B \otimes_A -} \ar[r]^-{\homol_A} &  \comma{A}{\dgMod_A} \ar[d]^{B \otimes_A -} \\
\dgLie_B \ar[r]^-{\homol_B} &  \comma{B}{\dgMod_B}
}
\]
\end{prop}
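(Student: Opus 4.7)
The plan is to exhibit, at the strict model-categorical level, a natural transformation realising the would-be commutativity of the square, and then to verify that it becomes an equivalence after passing to the derived base change. Concretely, for any $A$-dg-Lie algebra $L$ the universal property of the enveloping algebra, combined with the obvious isomorphism $B\otimes_A(A[\eta]\otimes_A L) \cong B[\eta]\otimes_B(B\otimes_A L)$ and the canonical identification $B\otimes_A \Envel_A(M)\cong \Envel_B(B\otimes_A M)$ for any $A$-dg-Lie algebra $M$, produces a natural strict comparison morphism
\[
\varphi_L\colon B\otimes_A \homol_A(L) \longrightarrow \homol_B(B\otimes_A L).
\]
Strict naturality in $L$ and in the morphism $A\to B$ means that $\varphi$ assembles into a natural transformation between the two composite functors $\MCdgLie_A \to \comma{B}{\MCdgMod_B}$, along the lines of the relative Kan extension argument used in \autoref{commute-dglie}.

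Next I pass to the $(\infty,1)$-categories. Since the derived base change $B\otimes_A^{\mathbb L}-$ is computed on a cofibrant replacement, and since $\homol_A$ preserves quasi-isomorphisms by \autoref{homol-inftyfunctor}, it is enough to show that $\varphi_L$ is a quasi-isomorphism whenever $L$ is cofibrant in $\MCdgLie_A$.

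The key estimate uses the filtration $\homol^{\leq n}_A(L)$ from \autoref{rmq-coalg}, whose successive subquotients are $\Sym_A^n(L[1])$; the analogous filtration on $\homol_B(B\otimes_A L)$ has successive subquotients $\Sym_B^n((B\otimes_A L)[1])$, and $\varphi_L$ is visibly compatible with these filtrations because the Chevalley–Eilenberg-type differential described in \autoref{rmq-coalg} is built from the internal differential and the Lie bracket, both of which are preserved by base change. Cofibrancy of $L$ as a dg-Lie algebra implies cofibrancy of its underlying dg-module, so $L[1]$ is flat over $A$. In characteristic zero, $\Sym^n$ is a retract of the $n$-fold tensor power (via symmetrisation/antisymmetrisation), so the induced map on the $n$-th subquotient,
\[
B\otimes_A \Sym_A^n(L[1]) \longrightarrow \Sym_B^n\bigl((B\otimes_A L)[1]\bigr),
\]
is an isomorphism. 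The filtered-colimit argument already used in the proof of \autoref{homol-inftyfunctor} then upgrades this, layer by layer, into the assertion that $\varphi_L$ itself is a quasi-isomorphism.

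The main technical obstacle is the verification that $\varphi_L$ is strictly compatible with the PBW filtrations and induces the expected map on associated graded; once this is in place, flatness from cofibrancy and the symmetrisation trick in characteristic zero take care of the rest. Packaging $\varphi$ as a coherent natural transformation of $\infty$-functors (rather than merely a pointwise equivalence) is then a formal consequence of the relative Kan extension formalism, exactly as in the proof of \autoref{commute-dglie}.
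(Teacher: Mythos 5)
Your proposal is correct and follows the same route as the paper, whose entire proof is the one-line remark that the statement ``follows directly from the definition'': since $\Envel_A$, the tensor product $A[\eta]\otimes_A-$, and the relative tensor $-\otimes_{\Envel_A L}A$ all commute strictly with base change, your comparison map $\varphi_L$ is in fact a strict isomorphism, so the only genuine content is the homotopical bookkeeping (cofibrant $L$ has cofibrant underlying module, hence the symmetric-power filtration shows $\homol_A(L)$ is flat and the underived base change computes the derived one), which you supply. Your write-up just makes explicit what the paper leaves implicit.
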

\begin{proof}
This follows directly from the definition.
\end{proof}

\begin{cor}\label{ext-homol}
Let $L$ be in $\MCdgLie_A$ freely generated by some free dg-module $M$.
The homology of $L$ is quasi-isomorphic to the trivial square zero extension $A \to A \oplus M[1]$.
\end{cor}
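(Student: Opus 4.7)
The plan is to reduce to the case of the base field $A = k$ and then invoke (the dual of) Lurie's \autoref{thm-lurie}(i).

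First, since $M$ is a free $A$-dg-module, we may choose a graded basis and write $M \simeq A \otimes_k N$ for some free $k$-dg-module $N$. The free Lie algebra functor commutes with base change, so $\libre_A(M) \simeq A \otimes_k \libre_k(N)$. Applying \autoref{commute-homol} gives
\[
\homol_A(\libre_A(M)) \simeq A \otimes_k \homol_k(\libre_k(N)).
\]
It therefore suffices to prove $\homol_k(\libre_k(N)) \simeq k \oplus N[1]$ in $\comma{k}{\dgMod_k}$, as then tensoring with $A$ over $k$ yields $A \oplus M[1]$.

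Second, over the field $k$, using the description of \autoref{rmq-coalg}, the underlying graded object of $\homol_k(\libre_k(N))$ is $\Sym_k(\libre_k(N)[1])$, equipped with the Chevalley--Eilenberg differential. The composition $N \hookrightarrow \libre_k(N)$ together with the unit $k \to \Sym_k(\libre_k(N)[1])$ defines a morphism of graded modules
\[
\phi \colon k \oplus N[1] \to \homol_k(\libre_k(N)).
\]
This map is a morphism of complexes: on $N[1] \subset \libre_k(N)[1]$ only the internal differential of $N$ contributes, because every bracket term in the differential of \autoref{rmq-coalg} involves at least two tensor factors and therefore vanishes on the weight-$1$ summand.

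Third, one shows $\phi$ is a quasi-isomorphism. The free Lie algebra $\libre_k(N)$ is graded by Lie word length, giving a weight grading on $\Sym_k(\libre_k(N)[1])$ which is preserved by the Chevalley--Eilenberg differential (the bracket $[x,y]$ has weight $w(x)+w(y)$, matching the loss of one factor in the symmetric power). The summand $k \oplus N[1]$ consists precisely of the weight $0$ and weight $1$ parts, so it remains to prove that the weight-$w$ subcomplex of $\homol_k(\libre_k(N))$ is acyclic for $w \geq 2$. By Poincaré--Birkhoff--Witt one has $\Envel_k(\libre_k(N)) \cong \Tens_k(N)$, and the complex in question is then a weight-graded piece of the bar resolution computing $k \otimes^{\mathbb L}_{\Tens_k N} k$; its acyclicity in weights $\geq 2$ is the classical Koszul duality between the tensor algebra and $k \oplus N[1]$. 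In the dg setting one either quotes this directly, or runs the weight spectral sequence: fixing weight $w \geq 2$, the subcomplex is, up to shift, the Koszul complex $\bigoplus_p \Sym^p(\libre(N)[1])^{(w)}$, which is naturally identified with the $w$-th homogeneous piece of the classical computation and hence contractible.

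The main obstacle is this last acyclicity statement; however, once the weight filtration is in place it is a purely combinatorial calculation independent of the dg-structure on $N$ (which only enters through the internal differential, preserved by the filtration), so the dg case follows from the strict case by a standard spectral sequence argument.
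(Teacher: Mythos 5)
Your proof is correct and follows essentially the same route as the paper: reduce to the base field via the base-change compatibility of \autoref{commute-homol} and then use the field case from Lurie's \autoref{thm-lurie}. The only difference is that you additionally unpack the field case with a weight-grading/Koszul-duality argument, whereas the paper simply cites Lurie's result there.
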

\begin{proof}
This a consequence of the previous proposition and the corresponding result over a field in Lurie's \autoref{thm-lurie}.
\end{proof}

\begin{df}\label{liecohomology}
Let $L$ be an object of $\MCdgLie_A$. We define the Chevalley-Eilenberg cohomology of $L$ as the dual of its homology:\glsadd{chevalley}
\[
\coho_A(L) = \dual{\homol_A(L)} = \Homint_A\left(\homol_A(L), A\right)
\]
It is equipped with a commutative algebra structure (see \autoref{rmq-coalg}). This defines a functor:
\[
\coho_A \colon \dgLie_A \to {\left(\quot{\cdgaunbounded_A}{A}\right)}\op
\]
between $(\infty,1)$-categories.
\end{df}

\begin{rmq}
The cohomology of an object of $\dgLieLib_A$ is concentrated in non positive degree as it is, as a graded module, isomorphic to $\Sym_A(\dual L[-1])$.
The following proposition proves the cohomology of a good dg-Lie algebra is also concentrated in non-positive degree.
\end{rmq}

\begin{prop}\label{chevalleycolim}
The functor $\coho_A$ of $(\infty,1)$-categories maps colimit diagrams in $\dgLie_A$ to limit diagrams of $\quot{\cdgaunbounded_A}{A}$.
\end{prop}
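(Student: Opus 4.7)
The plan is to invoke \autoref{dglie-algtheory}, which identifies $\dgLie_A$ with the sifted completion $\sifted(\dgLieLib_A)$. By the universal property of sifted cocompletion (see \cite[5.5.8]{lurie:htt}), a functor out of $\sifted(\Cc)$ to a cocomplete $(\infty,1)$-category preserves all small colimits if and only if it preserves sifted colimits and its restriction to $\Cc$ preserves finite coproducts. It therefore suffices to verify that $\coho_A$ preserves sifted colimits and that its restriction to $\dgLieLib_A$ sends finite coproducts to finite products in $\quot{\cdgaunbounded_A}{A}$.

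For sifted colimits, I first argue that $\homol_A \colon \dgLie_A \to \dgMod_A$ preserves them. As a graded $A$-module, $\homol_A(L) \simeq \Sym_A(L[1]) = \bigoplus_{n\geq 0} \Sym^n_A(L[1])$ (cf. \autoref{rmq-coalg}). In characteristic zero, each $\Sym^n_A$ is a retract of the tensor power $(-)^{\otimes_A n}$ via the symmetrisation idempotent; since the diagonal $K \to K^n$ is cofinal whenever $K$ is sifted, the functor $(-)^{\otimes_A n}$ commutes with $K$-indexed colimits in $\dgMod_A$, and hence so does $\Sym^n_A$. Direct sums commute with all colimits, so at the level of graded modules $\homol_A$ preserves sifted colimits. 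The Chevalley differential (\autoref{rmq-coalg}) is determined by the internal differential and the Lie bracket of $L$, both of which are computed in the colimit Lie algebra, so by naturality the identification persists as dg-modules, and in fact as cocommutative dg-coalgebras. Now the contravariant functor $\Homint_A(-,A)$ sends every colimit to a limit, so $\coho_A = \Homint_A(\homol_A(-),A)$ takes sifted colimits to sifted limits in $\dgMod_A$. The commutative algebra and augmentation structure on $\coho_A$ arise pointwise from the coalgebra structure on $\homol_A$ and are thus compatible with these limits; since the forgetful functor $\quot{\cdgaunbounded_A}{A} \to \dgMod_A$ creates limits, this yields the desired limit diagram in $\quot{\cdgaunbounded_A}{A}$.

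For finite coproducts in $\dgLieLib_A$, the free Lie algebra functor $\libre$ is a left adjoint and so preserves coproducts: $\libre(V) \sqcup \libre(W) \simeq \libre(V \oplus W)$, and the empty coproduct is $0 \simeq \libre(0)$. By \autoref{ext-homol} we have $\homol_A(\libre(V)) \simeq A \oplus V[1]$; since $V \in \dgModLib_A$ is a finite free module, dualising yields $\coho_A(\libre(V)) \simeq A \oplus \dual{V}[-1]$ as a trivial square-zero extension of $A$, while $\coho_A(0) \simeq A$. A direct computation of pullbacks in $\quot{\cdgaunbounded_A}{A}$ then gives
\[
(A \oplus \dual V[-1]) \times_A (A \oplus \dual W[-1]) \simeq A \oplus \dual V[-1] \oplus \dual W[-1] \simeq A \oplus \dual{(V \oplus W)}[-1] \simeq \coho_A(\libre(V) \sqcup \libre(W)),
\]
and $A$ is the terminal object, matching the empty product. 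This handles all finite coproducts, completing the verification.

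The main technical step is the sifted-colimit preservation of $\homol_A$: once the graded-module identification and the naturality of the Chevalley differential are in place, everything reduces to the well-known observation that in characteristic zero symmetric powers commute with sifted colimits. The finite coproduct step is then a direct computation enabled by the explicit formula of \autoref{ext-homol}.
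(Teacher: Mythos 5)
Your overall strategy coincides with the paper's: reduce to sifted colimits plus finite coproducts via the identification $\dgLie_A \simeq \sifted(\dgLieLib_A)$ of \autoref{dglie-algtheory}, handle sifted colimits through the symmetric-power description of $\homol_A$ and the duality $\coho_A = \Homint_A(\homol_A(-),A)$, and compute finite coproducts explicitly on free Lie algebras using \autoref{ext-homol}. The finite-coproduct computation, the reduction to underlying dg-modules, and the characteristic-zero argument that $\Sym^n_A$ preserves sifted colimits (retract of a tensor power, diagonal cofinality) are all fine and match the paper.

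The gap is in how you pass from $\Sym_A((-)[1])$ to $\homol_A$. The identification $\homol_A(L) \cong \Sym_A(L[1])$ holds only as \emph{graded} modules: as \autoref{rmq-coalg} makes explicit, the Chevalley differential has a component sending $\eta.x_1 \otimes \dots \otimes \eta.x_n$ to a sum of terms $\pm\, \eta.[x_i,x_j] \otimes \cdots$, which lowers the symmetric degree by one. So $\homol_A(L)$ is \emph{not} the direct sum of the $\Sym^n_A(L[1])$ as a complex, and your assertion that "by naturality the identification persists as dg-modules, and in fact as cocommutative dg-coalgebras" is false as stated. Nor can you upgrade "preserves sifted colimits at the level of graded modules" to the dg-level by naturality of the differential: sifted colimits are generated by filtered colimits and geometric realizations, and the forgetful functor to graded modules does not commute with geometric realizations (the realization of a simplicial dg-module is a total complex, not a degreewise coequalizer). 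The repair is exactly what the paper does: use the exhaustive functorial filtration $\homol_A^{\leq n}$ already introduced in the proof of \autoref{homol-inftyfunctor}, together with the fiber sequences of functors $\homol_A^{\leq n-1} \to \homol_A^{\leq n} \to \Sym^n_A((-)[1])$. The outer terms preserve sifted colimits (by induction and by your symmetric-power argument), hence so does $\homol_A^{\leq n}$ since finite (co)fiber sequences commute with colimits in the stable category $\dgMod_A$, and $\homol_A$ is the filtered colimit of the $\homol_A^{\leq n}$. With that substitution the rest of your proof goes through unchanged.
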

\begin{proof}[sketch of a]
For a complete proof, the author refers to the proof of proposition 2.2.12 in \cite{lurie:dagx}.
We will only transcript here the main arguments.

A commutative $A$-dg-algebra $B$ is the limit of a diagram $B_\alpha$ if and only if the underlying dg-module is the limit of the underlying diagram of dg-modules.
It is thus enough to consider the composite $\infty$-functor $\dgLie_A \to {\left(\quot{\cdgaunbounded_A}{A}\right)}\op \to {\left(\quot{\dgMod_A}{A}\right)}\op$.
This functor is equivalent to $\dual{(\homol_A(-))}$.
It is then enough to prove $\homol_A \colon \dgLie_A \to \comma{A}{\dgMod_A}$ to preserve colimits.

To do so, we will first focus on the case of sifted colimits, which need only to be preserved by the composite functor $\dgLie_A \to \comma{A}{\dgMod_A} \to \dgMod_A$.
This last functor is the (filtered) colimits of the functors $\homol_A^{\leq n}$ as introduced in the proof of \autoref{homol-inftyfunctor}. We now have to prove that $\homol_A^{\leq n} \colon \dgLie_A \to \dgMod_A$ preserves sifted colimits, for any $n$.
There is a fiber sequence
\[
\homol_A^{\leq n-1} \to \homol_A^{\leq n} \to \Sym_A^n((-)[1])
\]
The functor $\Sym_A((-)[1])$ preserves sifted colimits in characteristic zero and an inductive process proves that $\homol_A$ preserves sifted colimits too.

We now have to treat the case of finite coproducts. The initial object is obviously preserved. Let $L = L' \amalg L''$ be a coproduct of dg-Lie algebras.
We proved in \autoref{dglie-sifted} that $L'$ an $L''$ can be written as sifted colimits of objects of $\dgLieLib_A$.
It is thus enough to prove that $\homol_A$ preserve the coproduct $L = L' \amalg L''$ when $L'$ and $L''$ (and thus $L$ too) are in $\dgLieLib_A$.
This corresponds to the following cocartesian diagram
\[
\mymatrix{
A \ar[r] \ar[d] & A \oplus M' [1] \ar[d] \\ A \oplus M''[1] \ar[r] & A \oplus (M' \oplus M'')[1] \cocart
}
\]
where $M'$ and $M''$ are objects of $\dgModLib_A$ generating $L'$ and $L''$ respectively.
\end{proof}

\begin{df}
The colimit-preserving functor $\coho_A$ between presentable $(\infty,1)$-categories admits a right adjoint which we will denote by $\adjoint_A$.
\end{df}

\begin{lem} \label{commute-coho-good}
Let $B \to A$ be a morphism of $\cdga_k$. The following diagram of $(\infty,1)$-categories commutes: 
\[
\mymatrix{
\dgLieGood_B \ar[r]^-{\coho_B} \ar[d]_{A \otimes_B -} & {\left(\quot{\cdga_B}{B}\right)}\op \ar[d]^{A \otimes_B -} \\
\dgLieGood_A \ar[r]^-{\coho_A} & {\left(\quot{\cdga_A}{A}\right)}\op
}
\]
\end{lem}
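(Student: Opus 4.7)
The plan is to construct a natural comparison map $\alpha_L \colon A \otimes_B \coho_B(L) \to \coho_A(A \otimes_B L)$ in $\cdga_A/A$, functorial in $L \in \dgLie_B$, and to verify it is an equivalence on $\dgLieGood_B$. Unwinding the definition of $\coho$ as the $B$-linear dual of Chevalley-Eilenberg homology (\autoref{liecohomology}) and using \autoref{commute-homol}, the map $\alpha_L$ is the canonical extension-of-scalars map for internal hom
\[
A \otimes_B \Homint_B(\homol_B(L), B) \to \Homint_B(\homol_B(L), A) \simeq \Homint_A\bigl(A \otimes_B \homol_B(L),\, A\bigr),
\]
which respects the commutative algebra structure dual to the comultiplication of \autoref{rmq-coalg}.

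I would first check that $\alpha_L$ is an equivalence when $L = \libre_B(M)$ with $M \in \dgModLib_B$. By \autoref{ext-homol}, $\coho_B(\libre_B(M)) \simeq B \oplus \dual{M}[-1]$, so $A \otimes_B \coho_B(\libre_B(M)) \simeq A \oplus (A \otimes_B \dual{M})[-1]$. Since $M$ is a free $B$-module of finite type, the natural map $A \otimes_B \dual{M} \to \dual{(A \otimes_B M)}_A$ is an equivalence. Combined with $A \otimes_B \libre_B(M) \simeq \libre_A(A \otimes_B M)$ (left adjoints commuting with base change), this gives the equivalence $\alpha_{\libre_B(M)}$.

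For a general good $L$, up to quasi-isomorphism I may assume $L$ is very good and induct on the length of its cell filtration $0 = L_0 \to \dots \to L_n = L$. Each step is a pushout
\[
\mymatrix{
\libre_B(B[-p]) \ar[r] \ar[d] & L_i \ar[d] \\
\libre_B(\Mc(B[-p])) \ar[r] & L_{i+1}
}
\]
in $\dgLie_B$, which by \autoref{chevalleycolim} $\coho_B$ sends to a pullback square in $\cdga_B/B$. Since $\Mc(B[-p])$ is contractible, the upper right corner becomes $B$ and the lower right corner becomes $B \oplus B[p-1]$ (with the canonical augmentation section between them), so the pullback computes, on underlying $B$-dg-modules, the fiber of a map $\coho_B(L_i) \to B[p-1]$.

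The hard part is to show that $A \otimes_B -$ preserves this particular pullback. At the level of underlying dg-modules, the pullback is a fiber sequence with target the perfect $B$-module $B[p-1]$, and the derived tensor product $A \otimes_B -\colon \dgMod_B \to \dgMod_A$ is an exact functor of stable $(\infty,1)$-categories and therefore preserves fiber sequences. The commutative algebra structure is determined by the underlying dg-module and its multiplication, both natural under base change. Comparing with the analogous pushout–pullback picture over $A$ associated to $A \otimes_B L_{i+1}$, and using the inductive hypothesis for $L_i$ together with the free case applied to $\libre_B(B[-p])$ and $\libre_B(\Mc(B[-p]))$, we conclude that $\alpha_{L_{i+1}}$ is an equivalence, closing the induction.
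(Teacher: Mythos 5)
Your proposal is correct and follows essentially the same route as the paper: construct the comparison map from the base-change compatibility of homology, verify it on free Lie algebras, and reduce the general good case to cell attachments using that $\coho$ turns pushouts into pullbacks (\autoref{chevalleycolim}) and that these pullbacks are preserved by $A \otimes_B -$ because they can be tested on underlying dg-modules, where the functor is exact. The only minor divergence is in the free case, where you argue directly from \autoref{ext-homol} and the finiteness of the generating module, while the paper reduces to the base field via \autoref{commute-homol}; both are valid.
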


\begin{proof}
The \autoref{commute-homol} gives birth to a natural transformation $A \otimes_B \coho_B(-) \to \coho_A( A \otimes_B -)$. Let $L \in \dgLieGood_B$. The $B$-dg-module $\coho_A(A \otimes_B L)$ is equivalent to
\[
\coho_A\left(A \otimes_B L\right) \simeq \Homint_B(\homol_B(L),A)
\]
We thus study the natural morphism 
\[
\phi_L \colon A \otimes_B \coho_B(L) \to \Homint_B(\homol_B(L),A)
\]

Let us consider the case of the free dg-Lie algebra $L = \libre(B[-p])$ with $p \geq 1$.
If $B$ is the base field $k$ then $\homol_k(L)$ is perfect (\autoref{ext-homol}) and the morphism $\phi_L$ is an equivalence. If $B$ is any $k$-dg-algebra then $L$ is equivalent to $B \otimes_k \libre(k[-p])$ and we conclude using \autoref{commute-homol} that $\phi_L$ is an equivalence.

To prove the general case of any good dg-Lie algebra it is now enough to ensure that if $L_1 \from L_0 \to L_2$ is a diagram of good dg-Lie algebras such that $\phi_{L_1}$, $\phi_{L_0}$ and $\phi_{L_2}$ are equivalences then so is $\phi_L$, with $L = L_1 \amalg_{L_0} L_2$.
Using \autoref{chevalleycolim}, we see it can be tested in $\dgMod_A$ in which tensor product and fibre product commute.
\end{proof}

\begin{cor}\label{ext-coho}
The composite functor $\dgMod_A \to \dgLie_A \to {\left(\quot{\cdgaunbounded_A}{A}\right)}\op$ is equivalent to the functor $M \to A \oplus \dual M [-1]$.
\end{cor}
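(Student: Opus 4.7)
The strategy is to reduce to objects of $\dgModLib_A$, where we can invoke Lurie's \autoref{thm-lurie}(i) after base change from $k$.

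First I would check that both functors $\dgMod_A \to (\quot{\cdgaunbounded_A}{A})\op$ preserve colimits. For the composition $\coho_A \circ \libre$, this follows because $\libre$ is a left adjoint and $\coho_A$ preserves colimits by \autoref{chevalleycolim}. For $M \mapsto A \oplus \dual M[-1]$, note that as a functor valued in augmented cdgas it sends colimits of $M$'s to limits, because the forgetful functor $\quot{\cdgaunbounded_A}{A} \to \dgMod_A$ is a right adjoint and the duality $M \mapsto \dual M$ turns colimits into limits; the trivial square-zero structure is preserved by limits in $\quot{\cdgaunbounded_A}{A}$.

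By \autoref{dglie-algtheory}, the $(\infty,1)$-category $\dgMod_A$ is the sifted completion of $\dgModLib_A$, so it suffices to produce a natural equivalence of the two functors on $\dgModLib_A$. For $M \in \dgModLib_A$, write $M \simeq A \otimes_k N$ with $N = \bigoplus_{i \geq 1} k^{p_i}[-i] \in \dgModLib_k$. Since $\libre$ is a left adjoint that commutes with base change, $\libre_A(M) \simeq A \otimes_k \libre_k(N)$. Lurie's \autoref{thm-lurie}(i) gives a natural equivalence of augmented cdgas
\[
\coho_k(\libre_k(N)) \simeq k \oplus \dual N[-1].
\]
Applying the base change formula of \autoref{commute-homol} (and its consequence at the level of $\coho$, which dualises), together with the fact that $N$ is perfect so that $A \otimes_k \dual N \simeq \dual{M}$, I obtain
\[
\coho_A(\libre_A(M)) \simeq A \otimes_k \coho_k(\libre_k(N)) \simeq A \oplus \dual M[-1]
\]
as augmented $A$-cdgas, naturally in $M \in \dgModLib_A$.

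The main obstacle is ensuring that the identification is natural and respects the algebra structure (not merely the underlying dg-module). The key point is that \autoref{thm-lurie}(i) already produces an equivalence of augmented cdgas over $k$, and tensoring with $A$ over $k$ preserves this structure; the finite-type freeness of $N$ is what allows the dual to commute with $A \otimes_k -$. Together with the colimit-preservation reduction, this upgrades the equivalence on $\dgModLib_A$ to an equivalence of the two colimit-preserving functors on all of $\dgMod_A$.
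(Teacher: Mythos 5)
Your proof is correct and follows essentially the same route as the paper's (one-line) argument: reduce to $\dgModLib_A$ using that both functors preserve sifted colimits, then identify them there by base change from Lurie's computation over $k$. The only cosmetic difference is that the paper routes the free case through homology (\autoref{ext-homol}, i.e. $\homol_A(\libre M)\simeq A\oplus M[1]$) and then dualises, whereas you work directly with $\coho$; since $M$ is free of finite type the two are interchangeable.
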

\begin{proof}
The $(\infty,1)$-category $\dgMod_A$ is generated under (sifted) colimits by $\dgModLib_A$. The functors at hand coincide on $\dgModLib_A$ and both preserve colimits.
\end{proof}

\begin{lem} \label{good-fullyff}
Assume $A$ is noetherian.
Let $L$ be a good dg-Lie algebra over $A$.
The adjunction morphism $L \to \adjoint_A \coho_A L$ is a quasi-isomorphism.
\end{lem}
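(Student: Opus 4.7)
Since both $\coho_A$ and $\adjoint_A$ preserve quasi-isomorphisms, we may assume $L$ is very good. The plan is to identify $\adjoint_A \coho_A L$ explicitly as the cotangent complex of $\coho_A L$ at the augmentation (up to shift and duality), apply \autoref{afp-cotangent} (using the noetherian hypothesis) to show this complex is almost finite cellular, and then reduce to Lurie's theorem \autoref{thm-lurie} via base change to residue fields.

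First, for any $M \in \dgModLib_A$, the adjunction, together with \autoref{ext-coho} identifying $\coho_A \libre(M) \simeq A \oplus \dual{M}[-1]$ as a trivial square-zero extension, yields
\[
\Map_A(M, \adjoint_A \coho_A L) \simeq \Map_{\cdga_A/A}(\coho_A L,\, A \oplus \dual{M}[-1]) \simeq \Map_A\!\bigl(\Lcot_{\coho_A L/A} \otimes_{\coho_A L} A,\, \dual{M}[-1]\bigr).
\]
The hypotheses of \autoref{afp-cotangent} hold for $\coho_A L$: since $L$ is very good, $\coho_A L$ is concentrated in non-positive cohomological degrees with $\homol^0(\coho_A L)$ a finitely presented $\homol^0(A)$-algebra and each $\homol^{-n}(\coho_A L)$ finitely generated over $\homol^0(\coho_A L)$ (using that each $\Sym$-piece of $\homol_A L$ is finitely generated, together with noetherianity of $A$). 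Therefore $\Lcot_{\coho_A L/A}\otimes_{\coho_A L} A$ is almost finite cellular, hence dualizable enough that the above mapping space identifies with $\Map_A(M, \dual{\Lcot_{\coho_A L/A}\otimes_{\coho_A L} A}[-1])$. By \autoref{dglie-algtheory}, Yoneda on $\dgModLib_A$ then gives, as $A$-dg-modules,
\[
\adjoint_A \coho_A L \;\simeq\; \dual{\Lcot_{\coho_A L/A} \otimes_{\coho_A L} A}\,[-1].
\]

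Now we test the unit $\eta_L$ residue-field-wise. For any morphism $A \to l$ with $l$ a field, base change of cotangent complexes and \autoref{commute-coho-good} give
\[
(\adjoint_A \coho_A L) \otimes_A l \;\simeq\; \dual{\Lcot_{\coho_l(L\otimes_A l)/l} \otimes l}\,[-1] \;\simeq\; \adjoint_l \coho_l(L \otimes_A l).
\]
The dg-Lie algebra $L \otimes_A l$ over $l$ inherits the cell structure and is still very good, so concentrated in positive degree with finite dimensional graded pieces; Lurie's \autoref{thm-lurie}(ii) shows $\eta_{L \otimes_A l}$ is a quasi-isomorphism. Hence $\eta_L \otimes_A l$ is a quasi-isomorphism for every residue field $l$ of $A$. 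Since both $L$ and $\adjoint_A \coho_A L$ are almost finite cellular $A$-dg-modules (the former by very goodness, the latter by the formula above and \autoref{afp-cotangent}), the lemma on almost finite cellular objects (checking quasi-isomorphism at residue fields) concludes that $\eta_L$ itself is a quasi-isomorphism.

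The main obstacle is Step 1: justifying the cotangent-complex formula for $\adjoint_A \coho_A L$ requires checking that the finiteness of \autoref{afp-cotangent} propagates through the Chevalley-Eilenberg construction, and that the resulting dualization interchanges with mapping spaces in the way claimed. The rest is essentially a base-change argument reducing to Lurie's known result over a field.
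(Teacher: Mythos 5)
Your overall route is the paper's: identify $\adjoint_A \coho_A L$ with $\dual{\left(\Lcot_{\coho_A L/A} \otimes_{\coho_A L} A\right)}[-1]$ via \autoref{ext-coho}, feed $\coho_A L$ into \autoref{afp-cotangent} using noetherianity, reduce to residue fields with the almost-finite-cellular lemma, and conclude by \autoref{commute-coho-good} and Lurie's \autoref{thm-lurie}(ii). Steps 1 and 2 of your sketch are essentially what the paper does.

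The gap is in your final paragraph, where you apply the residue-field criterion directly to the unit $\eta_L \colon L \to \adjoint_A \coho_A L$. Neither side of that map is an almost finite cellular object in the sense required by the criterion: $L$ (being very good) has underlying graded module $\bigoplus_{i \geq 1} A^{n_i}[-i]$, concentrated in \emph{positive} degrees, whereas almost finite cellular objects are built from cells $A^{p_n}[n]$ in non-positive degrees; and $\adjoint_A \coho_A L$ is the \emph{dual} of an almost finite cellular object, i.e.\ a priori an infinite product of shifted copies of $A$, not a colimit of cells. For the same reason your base-change identification $(\adjoint_A \coho_A L) \otimes_A l \simeq \adjoint_l \coho_l(L \otimes_A l)$ silently commutes an infinite product with $- \otimes_A l$. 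Both issues are resolved only by using that $A$ is cohomologically bounded (which noetherianity gives), so that $\prod_i A^{n_i}[i] \simeq \bigoplus_i A^{n_i}[i]$; the paper's remark immediately after the lemma, with $L = \libre(A^2[-1])$ over an unbounded $A$, shows the unit genuinely fails to be an equivalence when this step is unavailable, so it cannot be waved away. The paper's proof avoids the problem by running the residue-field argument on the \emph{undualized} map $f \colon \Lcot_{\coho_A L/A} \otimes_{\coho_A L} A \to \dual{L}[-1]$, whose source and target really are almost finite cellular, and then proving separately that $L \to \dual{\dual{L}}$ is an equivalence by the product-versus-sum comparison. You should restructure your last step along these lines (or explicitly prove a dual/co-cellular version of the residue-field lemma together with the compatibility of duals with base change over a bounded base).
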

\begin{proof}
Let us first proves that the morphism at hand is equivalent, as a morphism of dg-modules, to the natural morphism $L \to \dual{\dual L}$.
Because of \autoref{ext-coho}, the composite functor ${\left(\quot{\cdgaunbounded_A}{A} \right)}\op \to \dgLie_A \to \dgMod_A$ is equivalent to the functor :
\[
(A \to B \to A) \mapsto \dual{\left( A \otimes_B \Lcot_{B/A}[1] \right)}
\]
It now suffices to prove that the following morphism is an equivalence:
\[
f \colon \Lcot_{\coho_A L/A} \otimes_{\coho_A L} A \to \dual L [-1]
\]
As soon as $L$ is good and $A$ noetherian, the cohomology $\coho_A L$ of $L$ satisfies the finiteness conditions of \autoref{afp-cotangent}.
Because $L$ is almost finite (as a dg-Lie algebra), the dg-module $\dual L [-1]$ is an almost finite cell object.
Both domain and codomain of the morphism $f$ are thus almost finite cellular $A$-dg-modules. It is then enough to consider $f \otimes_A k$ for any field $k$ and any morphism $A \to k$
\[
f \otimes_A k \colon \left(\Lcot_{\coho_A(L)/A} \otimes A \right) \otimes_A k \to \dual{\left( L \otimes k \right)} [-1]
\]
The \autoref{commute-coho-good} gives us the equivalence $\coho_A(L) \otimes_A k \simeq \coho_k(L \otimes_A k)$ and the morphism $f \otimes_A k$ is thus equivalent to the morphism
\[
\Lcot_{\coho_k(L \otimes_A k)/k} \otimes k \to \dual{(L \otimes k)}[-1]
\]
This case is exactly Lurie's result \ref{thm-lurie} (ii).

We now prove that $L \to \dual{\dual L}$ is an equivalence.
We can assume that $L$ is very good.
The underlying $A$-dg-module of $L$ is cofibrant and the dual can be computed naively. Now $L$ is almost finite. There is therefore a family $(n_i)$ of integers and an isomorphism of \emph{graded} modules
\[
L = \bigoplus_{i \geq 1} A^{n_i}[-i]
\]
The dual of $L$ is isomorphic to $\prod_{i \geq 1} A^{n_i}[i]$ with an extra differential. Because $A$ in concentrated in non positive degree, the dual $\dual L$ is equivalent to $\bigoplus_{i \geq 1} A^{n_i}[i]$ (with the extra differential).
The natural morphism $L \to \dual{\dual L}$ therefore corresponds to the morphism
\[
\bigoplus_{i \geq 1} A^{n_i}[-i] \to \prod_{i \geq 1} A^{n_i}[-i]
\]
which is a quasi-isomorphism as soon as $A$ is cohomologically bounded.
\end{proof}

\begin{rmq}
The base dg-algebra $A$ needs to be cohomologically bounded for that lemma to be true.
Taking $L=\libre(A^2[-1])$ the adjunction morphism is equivalent to
\[
L \to \dual{\dual L}
\]
which is not a quasi-isomorphism.
\end{rmq}

\subsection{Formal stack over a dg-algebra}
Throughout this section $A$ will denote an object of $\cdga_k$.
\begin{df}
Let $\dgExt_A$ denote the full sub-category of $\quot{\cdga_A}{A}$ spanned be the trivial square zero extensions $A \oplus M$, where $M$ is a free $A$-dg-module of finite type concentrated in non positive degree.\glsadd{dgext}
\end{df}
\begin{df}
A formal stack over $A$ is a functor $\dgExt \to \sSets$ preserving finite products. We will denote by $\dStF_A$ the $(\infty,1)$-category of such formal stacks:
\[
\dStF_A = \sifted(\dgExt_A\op)
\]\glsadd{dstf}
\end{df}

\begin{rmq}
The $(\infty,1)$-category $\dStF_A$ is $\mathbb U$-presentable. \end{rmq}

\begin{prop}\label{adjoint-exist}
Let $A$ be in $\cdga_k$ and let $S=\Spec A$.
There is an adjunction 
\[
\formal_A \colon \dgLie_A \rightleftarrows \dStF_A \,: \lie_A
\]
such that
\begin{itemize}
\item The functor $\dStF_A \to \dgLie_A \to \dgMod_A$ is equivalent to the functor $X \mapsto \T_{X/S,x}[-1]$ where $\T_{X/S,x}$ is the tangent complex of $X$ over $S$ at the natural point $x$ of $X$: it is the dg-module representing the product-preserving functor
\[
\app{{\left(\dgModLib_A\right)}\op}{\sSets}{M}{X(A \oplus \dual M [-1])}
\]
\item The functor $\lie_A$ is conservative and preserves sifted colimits. Its restriction to $\dgExt_A \op$ is canonically equivalent to $\adjoint_A$.
\item If moreover $A$ is noetherian then the functors $\lie_A$ and $\formal_A$ are equivalences of $(\infty,1)$-categories.
\end{itemize}
\end{prop}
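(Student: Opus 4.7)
The plan is to build $\formal_A$ as the essentially unique colimit-preserving extension of a functor defined only on $\dgLieLib_A$, then take $\lie_A$ to be its right adjoint. Concretely, \autoref{ext-coho} shows that the restriction of $\coho_A$ to $\dgLieLib_A$ factors through $\dgExt_A\op$, sending $\libre(M)$ to $A\oplus \dual M[-1]$. Composing with the Yoneda embedding $\dgExt_A\op\to \dStF_A$ gives a functor $\dgLieLib_A\to \dStF_A$. Using the equivalence $\dgLie_A\simeq \sifted(\dgLieLib_A)$ of \autoref{dglie-algtheory}, I extend this along sifted colimits to a functor $\formal_A\colon\dgLie_A\to\dStF_A$. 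The fact that it also preserves finite coproducts follows from \autoref{chevalleycolim}: coproducts of free Lie algebras are again free Lie algebras and their cohomology is the product of the respective square-zero extensions, which on the Yoneda image is the coproduct inside $\dStF_A$. Hence $\formal_A$ preserves all small colimits, and between presentable $(\infty,1)$-categories this produces a right adjoint $\lie_A$.

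Next I identify $\lie_A$ concretely. For $M \in \dgModLib_A$ one has
\[
\Map_{\dgMod_A}(M,\lie_A(X))\simeq\Map_{\dgLie_A}(\libre(M),\lie_A(X))\simeq\Map_{\dStF_A}(\formal_A\libre(M),X)\simeq X(A\oplus \dual M[-1]),
\]
which exhibits the underlying dg-module of $\lie_A(X)$ as the desired shifted tangent complex $\T_{X/S,x}[-1]$. Plugging $X=\text{y}(A\oplus N)\in \dStF_A$ for $N\in\dgExt_A$ into the same formula gives $\lie_A(X)(\libre(M))\simeq \Map(\coho_A(\libre(M)),A\oplus N)$, which is exactly $\adjoint_A(A\oplus N)$ evaluated at $\libre(M)$; so $\lie_A|_{\dgExt_A\op}\simeq \adjoint_A$. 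Moreover $\lie_A$ preserves sifted colimits, since sifted colimits in $\dStF_A\subset \presh(\dgExt_A)$ are computed pointwise and $(\lie_A X)(\libre(M))\simeq X(A\oplus\dual M[-1])$ is such a pointwise evaluation. Conservativity of $\lie_A$ is then immediate: an object $X\in\dStF_A$ is determined by its values on $\dgExt_A$, and these values are read off from $\lie_A(X)$ via the formula above.

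For the noetherian case, I show that the unit and counit of the adjunction are equivalences. The unit $L\to \lie_A\formal_A(L)$ is a quasi-isomorphism for $L\in\dgLieLib_A$: such an $L$ is good, so \autoref{good-fullyff} applies and yields $L\simeq \adjoint_A\coho_A(L)\simeq \lie_A\formal_A(L)$ (using the identification of the previous paragraph). Since both $\formal_A$ and $\lie_A$ preserve sifted colimits, and every object of $\dgLie_A\simeq\sifted(\dgLieLib_A)$ is a sifted colimit of objects of $\dgLieLib_A$, the unit is a quasi-isomorphism on all of $\dgLie_A$; thus $\formal_A$ is fully faithful. Given any $X\in\dStF_A$, applying the conservative $\lie_A$ to the counit $\formal_A\lie_A(X)\to X$ produces a retraction of the unit $\lie_A(X)\to \lie_A\formal_A\lie_A(X)$, which is an equivalence; conservativity of $\lie_A$ then forces the counit itself to be an equivalence.

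The main obstacle is the verification that $\formal_A$ actually extends $\coho_A$ on $\dgLieLib_A$ in a way compatible with the two uses of sifted-colimit presentations (namely $\dgLie_A\simeq\sifted(\dgLieLib_A)$ and $\dStF_A=\sifted(\dgExt_A\op)$), and that $\lie_A$ genuinely preserves sifted colimits so that the bootstrapping of \autoref{good-fullyff} from $\dgLieLib_A$ to all of $\dgLie_A$ is legitimate; once this $(\infty,1)$-categorical bookkeeping is in place, the noetherian statement is a formal consequence of the cohomological result already proved.
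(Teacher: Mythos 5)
Your proposal is correct and follows essentially the same route as the paper: define $\formal_A$ by restricting $\coho_A$ to $\dgLieLib_A$, composing with the Yoneda embedding into $\dStF_A=\sifted(\dgExt_A\op)$, and extending along sifted colimits; read off the underlying module of $\lie_A$ and the identification with $\adjoint_A$ from the adjunction formula; deduce conservativity from essential surjectivity of $\coho_A\colon(\dgLieLib_A)\op\to\dgExt_A$ and sifted-colimit preservation from pointwise evaluation; and reduce the noetherian case to \autoref{good-fullyff}. The only cosmetic difference is that the paper exhibits $\lie_A$ explicitly as precomposition with $\coho_A$ rather than invoking the adjoint functor theorem, and leaves the unit/counit bootstrap of the last item implicit where you spell it out.
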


\begin{df}
Let $X$ be a formal stack over $A$. The Lie algebra $\lie_A X$ will be called the tangent Lie algebra of $X$ (over $A$).
\end{df}

\begin{proof}[of \autoref{adjoint-exist}]
Let us prove the first item. The functor $\coho_A$ restricts to a functor
\[
\coho_A \colon \dgLieLib_A \to \dgExt_A\op
\]
which composed with the Yoneda embedding defines a functor $\phi \colon \dgLieLib_A \to \dStF_A$.
This last functor extends by sifted colimits to 
\[
\formal_A \colon \dgLie_A \simeq \sifted(\dgLieLib_A) \to \dStF_A
\]
Because $\coho_A$ preserves coproducts, the functor $\formal_A$ admits a right adjoint $\lie_A$ given by right-composing by $\coho_A$.
The composite functor
\[
\mymatrix{
\dStF_A \ar[r]^-{\lie_A} & \sifted(\dgLieLib_A) \ar[r] & \sifted(\dgModLib_A) \simeq \dgMod_A
}
\]
then corresponds to the functor
\[
X \mapsto X(\coho_A(\libre(-))) \simeq X(A \oplus \dual{(-)}[-1])
\]
This proves the first item.
The functor 
\[
\coho_A \colon \left( \dgLieLib_A \right)\op \to \dgExt_A
\]
is essentially surjective. This implies that $\lie_A$ is conservative.
Let us consider the commutative diagram
\[
\mymatrix{
\sifted\left(\dgExt_A \op \right) \ar[d]^i \ar[r]^-{\lie_A} & \sifted\left( \dgLieLib_A \right) \ar[d]^j \\
\presh\left(\dgExt_A \op \right) \ar[r]^-{\coho_A^*} & \presh\left( \dgLieLib_A \right)
}
\]
The functors $i$, $j$ and $\coho_A^*$ preserve sifted colimits and therefore so does $\lie_A$.
The third item is a corollary of its fully faithfulness (\autoref{good-fullyff}) under the added assumption.
\end{proof}

Until the end of this section, we will focus on proving that the definition we give of a formal stack is equivalent to Lurie's definition of a formal moduli problem in \cite{lurie:dagx}, as soon as the base dg-algebra $A$ is noetherian.

\begin{df}
An augmented $A$-dg-algebra $B \in \quot{\cdga_A}{A}$ is called artinian if there is sequence 
\[
B = \el{B}[0]{n}[\to] = A
\]
and for $0 \leq i < n$ an integer $p_i \geq 1$ such that 
\[
B_i \simeq B_{i+1} \times_{A[\varepsilon_{p_i}]} A
\]
where $A[\varepsilon_{p_i}]$ denote the trivial square zero extension $A \oplus A[p_i]$.

We denote by $\dgArt_A$ the full subcategory of $\quot{\cdga_A}{A}$ spanned by the artinian dg-algebras.
\end{df}

\begin{df}
Assume $A$ is noetherian.
A formal moduli problem over $A$ is a functor
\[
X \colon \dgArt_A \to \sSets
\]
satisfying the conditions:
\begin{enumerate}[label=(F\arabic{*})]
\item For $n \geq 1$ and $B \in \quot{\dgArt_A}{A[\varepsilon_n]}$ the following natural morphism is an equivalence:
\[
X \left( B \timesunder[{A[\varepsilon_n]}] A \right) \to^\sim X(B) \timesunder[{X(A[\varepsilon_n])}] X(A) 
\]
\item The simplicial set $X(A)$ is contractible.
\end{enumerate}
Let $\widetilde{\dStF_A}$ denote the full subcategory of $\presh(\dgArt_A\op)$ spanned by the formal moduli problems.
This is an accessible localization of the presentable $(\infty,1)$-category $\presh(\dgArt_A)$ of simplicial presheaves over $\dgArt_A$.
\end{df}

\begin{prop}\label{formal-dgext}
Let $A \in \cdga_k$ be noetherian.
The left Kan extension of the inclusion functor $i \colon \dgExt_A \to \dgArt_A$ induces an equivalence of $(\infty,1)$-categories
\[
j \colon \dStF_A \to \widetilde{\dStF_A}
\]
\end{prop}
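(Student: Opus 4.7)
The strategy is to exhibit $j$ as the left Kan extension along $i$ restricted to $\dStF_A$, and to identify its inverse with the restriction functor $r \colon \presh(\dgArt_A\op) \to \presh(\dgExt_A\op)$. First I would show that $r$ carries $\widetilde{\dStF_A}$ into $\dStF_A$. Given $X \in \widetilde{\dStF_A}$, I need product preservation on $\dgExt_A$. Every $A \oplus M \in \dgExt_A$ with $M$ free of finite type in non-positive degrees can be built from $A$ by a finite sequence of homotopy pullbacks of the form $B \times_{A[\varepsilon_{n+1}]} A$ in $\dgArt_A$, each of which adjoins a cell $A[n]$ to the augmentation ideal (as one sees by computing the derived pullback via a path-object of $A$ over $A[\varepsilon_{n+1}]$). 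Applying (F1) and (F2) along this cellular construction, noting that each map $B \to A[\varepsilon_{n+1}]$ factors through $A$ and so induces the trivial map after $X$, yields the splitting $X(A \oplus (M \oplus N)) \simeq X(A \oplus M) \times X(A \oplus N)$ by iteration.

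For the other direction I would use the equivalence $\formal_A \colon \dgLie_A \simeq \dStF_A$ of \autoref{adjoint-exist} to write every formal stack as $\formal_A(L)$, and identify $j(\formal_A(L))$ with the Koszul-dual functor $\tilde{\formal}_A(L) \colon B \mapsto \Map_{\dgLie_A}(\adjoint_A(B), L)$, where $\adjoint_A \colon (\cdga_A/A)\op \to \dgLie_A$ is the global right adjoint of $\coho_A$ from \autoref{liecohomology}. Both functors agree on $\dgExt_A$ — by adjunction and the definition of $\formal_A$ via Yoneda on $\dgLieLib_A$ — and both satisfy the universal property of the left Kan extension along $i$; for $\tilde{\formal}_A(L)$ this uses the continuity of $\adjoint_A$ together with the cellular presentation of artinian algebras from \autoref{afp-cotangent}. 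Once the identification is established, condition (F2) reads as $\adjoint_A(A) \simeq 0$ and (F1) reduces to the statement that $\adjoint_A$ converts the pullback $B \times_{A[\varepsilon_n]} A$ into the pushout $\adjoint_A(B) \amalg_{\adjoint_A(A[\varepsilon_n])} \adjoint_A(A)$ in $\dgLie_A$.

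Finally, $r \circ j \simeq \id_{\dStF_A}$ is automatic from the full faithfulness of $i$, and $j \circ r \simeq \id_{\widetilde{\dStF_A}}$ follows by cellular induction: every $B \in \dgArt_A$ is by definition a finite iterated pullback from objects of $\dgExt_A$, so (F1) propagates the equivalence from $\dgExt_A$ (where it is given by the unit of the Kan extension) to all of $\dgArt_A$. The main obstacle is the Koszul-duality identity in the previous paragraph. Over a field this is Lurie's \autoref{thm-lurie}; extending to a noetherian cdga base proceeds by cell-by-cell reduction using \autoref{commute-coho-good} to control how $\adjoint_A$ interacts with base change, together with the explicit formula $\adjoint_A(A[\varepsilon_n]) \simeq \libre(A[-n-1])$ read off from \autoref{ext-coho}.
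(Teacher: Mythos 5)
Your proposal is correct and follows essentially the same route as the paper: both pass through the equivalence $\dgLie_A \simeq \dStF_A$ of \autoref{adjoint-exist}, identify the extended functor with the Koszul-dual functor $B \mapsto \Map(\adjoint_A(B),L)$, and reduce the formal-moduli-problem axioms to the fact that $\adjoint_A$ carries the artinian pullbacks $B \times_{A[\varepsilon_n]} A$ to pushouts of dg-Lie algebras, which is exactly where the noetherian hypothesis enters via \autoref{good-fullyff}. The only presentational difference is that you spell out the product-preservation of the restriction functor on $\dgExt_A$, a point the paper leaves implicit.
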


\begin{proof}
We will actually prove that the composed functor
\[
f \colon \dgLie_A \to \dStF_A \to \widetilde{\dStF_A}
\]
is an equivalence. The functor $f$ admits a right adjoint $g = \lie_A i^*$.

Given $n \geq 1$ and a diagram $B \to A[\varepsilon_n] \from A$ in $\dgArt_A$, \autoref{good-fullyff} implies that the natural morphism
\[
\adjoint_A(B) \amalg_{\adjoint_A(A[\varepsilon_n])} \adjoint_A(A)\to^\sim\adjoint_A \left( B \timesunder[{A[\varepsilon_n]}] A \right)
\] 
is an equivalence.
For any $B \in \dgArt_A$ the adjunction morphism $B \to \coho_A \adjoint_A B$ is then an equivalence.
Given $L \in \dgLie_A$ the functor $\adjoint^* L \colon \dgArt_A \to \sSets$ defined by $\adjoint^*(B) = \Map(\adjoint_A(B),L)$ is a formal moduli problem.
The natural morphism $\id \to \adjoint^* g$ of $\infty$-functors from $\dStF_A$ to itself is therefore an equivalence.
The same goes for the morphism $g \adjoint^*  \to \id$ of $\infty$-functors from $\dgLie_A$ to itself.
The functor $g$ is an equivalence, so is $f$ and hence so is $j$.
\end{proof}

\section{Tangent Lie algebra}

We now focus on gluing the functors built in the previous section, proving the following statement.
\begin{thm}\label{tangent-lie}
Let $X$ be a derived Artin stack locally of finite presentation.
Then there is a Lie algebra $\tgtlie_X$ over $X$ whose underlying module is equivalent to the shifted tangent complex $\T_X[-1]$ of $X$.

Moreover if $f \colon X \to Y$ is a morphism between algebraic stacks locally of finite presentation then there is a tangent Lie morphism $\tgtlie_X \to f^* \tgtlie_Y$.
More precisely, there is a functor
\[
\comma{X}{\dStArtlfp_k} \to \comma{\tgtlie_X}{\dgLie_X}
\]
sending a map $f \colon X \to Y$ to a morphism $\tgtlie_X \to f^* \tgtlie_Y$. The underlying map of quasi-coherent sheaves is indeed the tangent map (shifted by $-1$).
\end{thm}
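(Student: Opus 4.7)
The plan is to globalize the local adjunction $\formal_A \dashv \lie_A$ of \autoref{adjoint-exist} to a relative adjunction $\formal_X \dashv \lie_X$ over any derived Artin stack $X$ locally of finite presentation, and then to apply $\lie_X$ to the formal completion of the diagonal $\Delta \colon X \to X \times X$. First I would upgrade the constructions of the previous section to functors in the base. The assignments $A \mapsto \dgLie_A$ and $A \mapsto \dStF_A$ should extend to functors $\cdga_k \to \PresLeftu U$, using the formalism of \autoref{commute-dglie} (for the Lie side) and the naturality of $\dgExt_A$ in $A$ (for the formal-stack side). The local adjunctions assemble into a morphism of such functors, and restricting to smooth atlases and descending one obtains, for any $X \in \dStArtlfp_k$, an adjunction $\formal_X \colon \dgLie_X \rightleftarrows \dStF_X \noloc \lie_X$ whose underlying functor $\dStF_X \to \Qcoh(X)$ pointwise recovers the tangent at the basepoint shifted by $-1$.

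Second, the formal completion of the diagonal $\hat X_{X \times X}$, viewed as a stack over $X$ via the first projection and pointed by $\Delta$ itself, naturally defines an object of $\dStF_X$: étale-locally on an affine $\Spec A \to X$, it corresponds to the augmented cdga $(A \otimes_k A)^{\wedge}_{I} \to A$ where $I$ is the kernel of the multiplication, which is indeed sent to a product-preserving functor on $\dgExt_A$. Set
\[
\tgtlie_X = \lie_X\left(\hat X_{X \times X}\right) \in \dgLie_X.
\]
The first part of \autoref{adjoint-exist} computes the underlying quasi-coherent module: the tangent complex of $\hat X_{X \times X}$ over $X$ at the diagonal point is the same as the tangent of $X \times X$ over $X$ (first projection) restricted to the diagonal, namely $\T_X$. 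Thus the underlying quasi-coherent complex of $\tgtlie_X$ is $\T_X[-1]$ as required.

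Third, for the functoriality assertion, a morphism $f \colon X \to Y$ in $\dStArtlfp_k$ yields a commutative square
\[
\mymatrix{
X \ar[r]^-{\Delta_X} \ar[d]_f & X \times X \ar[d]^{f \times f} \\ Y \ar[r]^-{\Delta_Y} & Y \times Y
}
\]
and hence a morphism $\hat X_{X \times X} \to f^* \hat Y_{Y \times Y}$ in $\dStF_X$ (using functoriality of formal completions and base change). Applying $\lie_X$ and the compatibility of $\lie$ with pullback produces the promised map $\tgtlie_X \to f^* \tgtlie_Y$, whose underlying morphism of quasi-coherent sheaves is, by the previous paragraph, the differential $\T_X \to f^* \T_Y$ shifted by $-1$. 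The whole construction is visibly a functor $\comma{X}{\dStArtlfp_k} \to \comma{\tgtlie_X}{\dgLie_X}$ since it factors as the composition of the strictly functorial diagonal-completion construction with the functor $\lie_X$.

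The main obstacle will be the first step: rigorously globalizing $\dStF_X$, $\dgLie_X$, and their adjunction. The subtleties are that the adjunction $\formal_A \dashv \lie_A$ is not an equivalence for non-noetherian $A$ (so one cannot just transport via the Lie side), that $\lie_A$ only preserves limits and sifted colimits rather than being exact, and that one must verify smooth (or étale) descent for $\dStF_X$ and compatibility of $\lie$ with smooth pullback. These compatibilities should follow from \autoref{commute-coho-good} and a careful use of \autoref{adjoint-exist}, together with a descent argument modelled on the étale descent used throughout Chapter \ref{chapterIPandTate}. Once this globalization is in place, everything else is formal.
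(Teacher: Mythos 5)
Your proposal is correct and follows essentially the same route as the paper: globalize the adjunction $\formal_A \dashv \lie_A$ to $\formal_X \dashv \lie_X$ by taking limits over $\Spec A \to X$, set $\tgtlie_X = \lie_X$ of the formal completion of the diagonal (the paper encodes this as the restriction of the functor of points of the pointed stack $X \times X$ over $X$ to $\dgExt_A$, avoiding actual adic completions), and deduce functoriality from the graph/diagonal construction. The one point you rightly flag as needing care — that $\lie_X$ does not commute with base change in general — is resolved in the paper exactly as you anticipate, by using that $X$ and $Y$ have global tangent complexes so that $u^*\lie_X \for{Y} \to \lie_A(u^*\for{Y})$ is an equivalence.
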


\subsection{Formal stacks and Lie algebras over a derived Artin stack}

Let $A \to B$ be a morphism in $\cdga_k$. There is an \emph{exact} scalar extension functor $B \otimes_A -\, \colon \dgExt_A \to \dgExt_B$ and therefore an adjunction
\[
\left(B \otimes_A -\right)_! \colon 
\mymatrix{
\dStF_A \ar@<2pt>[r] & \dStF_B \ar@<2pt>[l]
}
: \left(B \otimes_A -\right)^*
\]

\begin{prop}
There is a natural functor $\dStF \colon \cdga_k \to \PresLeftu U$ mapping $A$ to $\dStF_A$.
There moreover exists a natural transformation $\Ff \colon \dgLie \to \dStF$
\end{prop}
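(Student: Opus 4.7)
The plan is to mimic the construction used for \autoref{commute-dglie}. Note first that scalar extension preserves the defining properties of $\dgExt_A$: if $A \to B$ is a morphism in $\cdga_k$ and $M$ is a free $A$-dg-module of finite type concentrated in non-positive degrees, then $B \otimes_A (A \oplus M) \simeq B \oplus (B \otimes_A M)$ lies in $\dgExt_B$. This gives a strictly functorial assignment at the level of model categories, and one can form the Grothendieck construction $\int \MCdgExt \to \MCcdga_k$ whose fibre over $A$ is $\dgExt_A$. Localizing along weak equivalences in both variables produces a coCartesian fibration $\int \dgExt \to \cdga_k$ of $(\infty,1)$-categories (via \cite[2.4.19]{lurie:dagx}), which classifies a functor $\dgExt \colon \cdga_k \to \inftyCatu U$.

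Next, I would define $\dStF$ as the composition of $\dgExt$ with $\sifted \circ (-\op) \colon \inftyCatu U \to \PresLeftu U$. By construction, $\dStF_A = \sifted(\dgExt_A\op)$, which is $\mathbb U$-presentable. For a morphism $A \to B$, the functor $(B \otimes_A -)_! \colon \dStF_A \to \dStF_B$ is the left Kan extension of $B \otimes_A -\, \colon \dgExt_A \to \dgExt_B$ along the Yoneda embedding, and as such is a left adjoint (its right adjoint is restriction along $B \otimes_A -$). Hence $\dStF$ factors through $\PresLeftu U$, producing the desired functor. The main technical point is that sifted completion is itself functorial in $\inftyCatu U$ (with values in $\PresLeftu U$) when the input arrows preserve finite coproducts, which is the case here as $B \otimes_A -$ obviously preserves the finite products of $\dgExt_A\op$.

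For the natural transformation $\Ff$, I would first restrict $\coho$ to a functor $\coho \colon \dgLieLib \to \dgExt\op$ between the corresponding coCartesian fibrations over $\cdga_k$. The required compatibility with base change is elementary: for $L = \libre(M)$ with $M \in \dgModLib_A$, \autoref{ext-coho} gives $\coho_A(L) \simeq A \oplus \dual M[-1]$, and tensoring over $A$ with $B$ yields $B \oplus \dual{(B \otimes_A M)}[-1] \simeq \coho_B(\libre(B \otimes_A M))$, so that one may strictify $\coho|_{\dgLieLib}$ to a functor over $\MCcdga_k$ and localize. Applying $\sifted \circ (-\op)$ to this natural transformation and using the equivalence $\sifted(\dgLieLib) \simeq \dgLie$ from \autoref{commute-dglie}, we obtain a natural transformation $\Ff \colon \dgLie \to \dStF$ of functors $\cdga_k \to \PresLeftu U$ which by construction specializes fibrewise to $\formal_A$ from \autoref{adjoint-exist}.

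The main difficulty is exactly the one already faced in \autoref{commute-dglie}: making sure the pointwise adjunctions $\formal_A \dashv \lie_A$ really assemble into an $\infty$-categorical natural transformation, and not merely a compatible family. As before this is resolved by producing the relevant functors at the strict model-categorical level over $\MCcdga_k$, checking that they descend to the localizations, and then applying \cite[4.3.1.12]{lurie:htt} to deduce that the induced functor between coCartesian fibrations preserves coCartesian edges --- equivalently, that $\Ff_B \circ (B \otimes_A -)_! \simeq (B \otimes_A -)_! \circ \Ff_A$ for every $A \to B$.
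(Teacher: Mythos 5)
Your proposal is correct and follows essentially the same route as the paper: strictify over $\MCcdga_k$, restrict the Chevalley--Eilenberg functor to $\dgLieLib \to \dgExt\op$, verify preservation of coCartesian edges via the base-change behaviour of $\coho$ on free (good) Lie algebras, localise, and apply $\sifted$ to land in $\dStF$ using $\sifted(\dgLieLib_A) \simeq \dgLie_A$. The only cosmetic differences are that the paper first builds the larger fibration $\int(\quot{\cdgaunbounded}{-})\op$ and cites \autoref{commute-coho-good} where you invoke \autoref{ext-coho} directly, and that your closing remark about assembling the adjoints $\lie_A$ is more than the statement requires (the paper later notes $\lie_X$ need not commute with base change).
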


\begin{proof}
Let us recall the category $\int \MCdgLie$ defined in the proof of \autoref{commute-dglie}. Its objects are pairs $(A,L)$ where $A \in \MCcdga_k$ and $L \in \MCdgLie_A$.

We define $\int \left(\quot{\MCcdgaunbounded}{-}\right)\op$ to be the following (1-)category.
\begin{itemize}
\item An object is a pair $(A,B)$ where $A \in \MCcdga_k$ and $B \in \quot{\MCcdgaunbounded_A}{A}$.
\item A morphism $(A,B) \to (A', B')$ is a map $A \to A'$ together with a map $B' \to B \otimes_A^{\Lcot} A'$ of $A'$-dg-algebras.
\end{itemize}
From \autoref{liecohomology}, we get a functor $\coho \colon \int \MCdgLie \to \int \left( \quot{\MCcdgaunbounded}{-}\right)\op$ preserving quasi-isomorphisms. This induces a diagram of $(\infty,1)$-categories
\[
\mymatrix{
\int \dgLie \ar[rr]^-{\coho} \ar[dr] && \int \left(\quot{\cdgaunbounded}{-}\right)\op \ar[dl] \\ & \MCcdga_k &
}
\]
Restricting to the full subcategory spanned by pairs $(A,L)$ where $L \in \dgLieLib_A$, we get a functor
\[
\mymatrix{\int \dgLieLib \ar[rr]^-\coho && \int \dgExt\op}
\]
Using \autoref{commute-coho-good}, we see that this last functor preserves coCartesian morphisms over $\MCcdga_k$.
It defines a natural transformation between functors $\MCcdga_k \to \inftyCatu U$.
Since both the functors at hand map quasi-isomorphisms to categorical equivalences, it factors through the localisation $\cdga_k$ of $\MCcdga_k$.
We now have a natural transformation
\[
\mymatrix{
\cdga_k \dcell[r][{\dgLieLib}][{\dgExt\op}][][=>][12pt] & \inftyCatu U
}
\]
Composing with the sifted extension functor $\sifted$, we get a natural transformation $\formal \colon \dgLie \simeq \sifted(\dgLieLib) \to \dStF$.
\end{proof}

\begin{rmq}\label{formal-nattrans}
The Grothendieck construction defines a functor
\[
\formal \colon \mymatrix{
\int \dgLie \ar[r] & \int \dStF
}
\]
over $\cdga_k$. Note that we also have a composite functor
\[
\mymatrix{
G \colon \int \dgLie \ar[r] & \int (\quot{\cdga}{-})\op \ar[r]^-h & \int \dStF
}
\]
where $h$ is deduced from the Yoneda functor. The functor $\formal$ is by definition the relative left Kan extension of the restriction of $G$ to $\int \dgLieLib$. It follows that we have a natural transformation $\formal \to G$. We will use that fact a few pages below.
\end{rmq}

\begin{prop}
The functor
\[
\dgLie \colon \cdga_k \to \PresLeftu U
\]
is a stack for the ffqc topology.
\end{prop}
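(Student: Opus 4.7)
The plan is to reduce ffqc descent for dg-Lie algebras to the well-established ffqc descent for dg-modules via a monadic argument. Given a faithfully flat map $A \to B$ in $\cdga_k$ with \v{C}ech nerve $C_\bullet = B^{\otimes_A \bullet + 1}$, we have the classical equivalence $\dgMod_A \to^\sim \lim_{[n] \in \Delta} \dgMod_{C_n}$ in $\PresLeftu U$ (cf.\ \cite{toen:hagii}, \cite{lurie:halg}). I will upgrade this to the analogous equivalence $\dgLie_A \to^\sim \lim_{[n] \in \Delta} \dgLie_{C_n}$.

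First I would observe that for every $A \in \cdga_k$ the forgetful functor $\oubli_A \colon \dgLie_A \to \dgMod_A$ is monadic: it admits $\libre_A$ as left adjoint, is conservative, and preserves sifted colimits (a fact used in the proof of \autoref{dglie-algtheory}), so Lurie's $\infty$-categorical Barr-Beck theorem applies. Denote the associated monad by $T_A = \oubli_A \libre_A$. Since $k$ has characteristic zero, $T_A$ admits the standard Schur-type description $T_A(M) \simeq \bigoplus_{n \geq 1} \mathrm{L}(n) \otimes_{k[\Sigma_n]} M^{\otimes_A n}$, where $\mathrm{L}$ denotes the Lie operad over $k$. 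Because base change $B \otimes_A (-)$ commutes with direct sums, tensor products, and coinvariants under finite group actions, this formula yields canonical Beck-Chevalley equivalences of monads $(B \otimes_A -) \circ T_A \simeq T_B \circ (B \otimes_A -)$, functorially in the map $A \to B$.

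Next I would package these compatibilities by viewing $\dgLie$ directly as the functor of $\mathrm{L}$-algebras in the symmetric monoidal stack $\dgMod$ over $\cdga_k$; equivalently, by lifting $\dgLie$ to a functor from $\cdga_k$ to the $(\infty,1)$-category of presentable $(\infty,1)$-categories equipped with a monad and monad-preserving left adjoint functors between them. The symmetric monoidal refinement of ffqc descent for $\dgMod$ (see \cite{lurie:halg}) then implies that the cosimplicial diagram $[n] \mapsto (\dgMod_{C_n}, T_{C_n})$ has limit $(\dgMod_A, T_A)$, and Lurie's general theorem that the formation of operadic algebras preserves such limits \cite{lurie:halg} produces
\[
\dgLie_A \simeq T_A\text{-}\mathbf{Alg}(\dgMod_A) \simeq \lim_{[n]} T_{C_n}\text{-}\mathbf{Alg}(\dgMod_{C_n}) \simeq \lim_{[n]} \dgLie_{C_n},
\]
which is the required sheaf condition.

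The hard part will be the coherent assembly of the Beck-Chevalley isomorphisms across iterated base changes: strictly speaking one needs the lift of $\dgLie$ to factor through a suitable $(\infty,1)$-category of ``presentable categories equipped with a monad,'' which requires genuine care in the $\infty$-categorical framework. The cleanest route, as indicated, is to bypass monads entirely and describe $\dgLie$ directly as $\mathrm{L}\text{-}\mathbf{Alg}$ in the symmetric monoidal stack $\dgMod$, so that descent follows from the general principle that taking $\mathcal O$-algebras for a fixed $\infty$-operad $\mathcal O$ preserves limits of symmetric monoidal presentable $(\infty,1)$-categories along symmetric monoidal left adjoints.
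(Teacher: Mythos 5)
Your proposal is correct and follows essentially the same route as the paper: the paper's proof simply observes that the forgetful natural transformation $\dgLie \to \dgMod$ is pointwise conservative and limit-preserving and deduces descent from descent for $\dgMod$. Your write-up supplies the monadicity, Beck--Chevalley, and operadic-algebra details that the paper's one-line argument leaves implicit, but the underlying idea is identical.
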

\begin{proof}
The functor $\dgLie$ is endowed with a forgetful natural transformation to $\dgMod$, the stack of dg-modules. This forgetful transformation is pointwise conservative and preserves limits. This implies that $\dgLie$ is also a stack.\todo{$\dStF$ champ pour la topologie étale ?}
\end{proof}

\begin{df}\label{dstfoverstack}
Let $X$ be an algebraic derived stack.
The $(\infty,1)$-category of formal stacks over $X$ is
\[
\dStF_X = \lim_{\Spec A \to X} \dStF_A
\]
The $(\infty,1)$-category of dg-Lie algebras over $X$ is
\[
\dgLie_X = \lim_{\Spec A \to X} \dgLie_A
\]
where both limits are taken in $\PresLeftu U$.
There is a colimit preserving functor\glsadd{formal}
\[\formal_X \colon \dgLie_X \to \dStF_X\]
It admits a right adjoint denoted by $\lie_X$.\glsadd{lie}
\end{df}

\begin{rmq}
The functor $\lie_X$ may not commute with base change.
\end{rmq}

\subsection{Tangent Lie algebra of a derived Artin stack locally of finite presentation}

Let us consider $\Cc$ the following category.
\begin{itemize}
\item An object is a pair $(A, F \to G)$ where $A$ is in $\MCcdga_k$ and $F \to G$ is a morphism in the model category of simplicial presheaves over $\MCcdga_A$.
\item A morphism $(A, F \to G) \to (B, F' \to G')$ is the datum of a morphism $A \to B$ together with a commutative square
\[
\mymatrix{
F \ar[r] \ar[d] & F' \ar[d]\\
G \ar[r] & G'
}
\]
of presheaves over $\MCcdga_B$
\end{itemize}
We set $\int {\presh(\dAff)}^{\Delta^1}$ to be the $(\infty,1)$-localization of $\Cc$ along weak equivalences of presheaves.
The natural functor $\int {\presh(\dAff)}^{\Delta^1} \to \MCcdga_k$ is a coCartesian fibration classified by the functor $A \mapsto \presh(\dAff_A)^{\Delta^1}$.

Let $\Dd$ denote the following category.
\begin{itemize}
\item An object is a pair $ (A,F) $ where $F$ is a simplicial presheaf over the opposite category of morphisms in $\MCcdga_A$.
\item A morphism $(A,F) \to (B,G)$ is a morphism $A \to B$ and a map $F \to G$ as simplicial presheaves over ${(\MCcdga_B)\op}^{\Delta^1}$.
\end{itemize}
We will denote by $\int \presh\left(\dAff^{\Delta^1}\right)$ the $(\infty,1)$-category obtained from $\Dd$ by localizing along weak equivalences. The natural functor $\int \presh\left(\dAff^{\Delta^1}\right) \to \MCcdga_k$ is a coCartesian fibration.

\begin{lem}\label{maps-stacks}
There is a relative adjunction
\[
f \colon \int \presh\left(\dAff^{\Delta^1}\right) \rightleftarrows  \int \presh\left(\dAff\right)^{\Delta^1} \,: g
\]
over $\MCcdga_k$.
They can be described on the fibers as follows. Let $A \in \MCcdga_k$.
The left adjoint $f_A$ is given on morphisms between affine schemes to the corresponding morphism of representable functors.
The right adjoint $g_A$ maps a morphism $F \to G$ to the representable simplicial presheaf
\[
\Map\left( -, F \to G\right)
\]
\end{lem}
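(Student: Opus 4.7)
The plan is as follows. Fix $A \in \MCcdga_k$. There is a natural $\infty$-functor $\iota_A \colon \dAff_A^{\Delta^1} \to \presh(\dAff_A)^{\Delta^1}$ sending an arrow $X \to Y$ to the corresponding morphism of representables $h_X \to h_Y$. Since $\presh(\dAff_A)^{\Delta^1}$ is cocomplete, the left Kan extension of $\iota_A$ along the Yoneda embedding $y_A \colon \dAff_A^{\Delta^1} \to \presh(\dAff_A^{\Delta^1})$ exists; call it $f_A$. It is left adjoint to the functor $g_A$ obtained by restriction along $y_A$ of the enriched hom $\Map_{\presh(\dAff_A)^{\Delta^1}}(\iota_A(-), -)$. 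Unfolding, $g_A(F \to G)(X \to Y) \simeq F(X) \times_{G(X)} G(Y)$, which is the space of commutative squares from $(X \to Y)$ to $(F \to G)$ and matches the formula announced in the statement.

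To assemble the fiberwise adjunctions into a relative adjunction over $\MCcdga_k$, I would invoke Lurie's treatment of relative adjunctions (\cite{lurie:htt}, section 7.3.2): it suffices to produce a functor $f$ over $\MCcdga_k$ which on each fiber is $f_A$ and which preserves coCartesian morphisms; the relative right adjoint $g$ is then automatic. First, one assembles the fiberwise $\iota_A$ into a morphism $\int \iota \colon \int \dAff^{\Delta^1} \to \int \presh(\dAff)^{\Delta^1}$ of coCartesian fibrations over $\MCcdga_k$, where the source classifies $A \mapsto \dAff_A^{\Delta^1}$; this is immediate since Yoneda commutes with base change. One then defines $f$ as the relative left Kan extension of $\int \iota$ along the relative Yoneda $\int y \colon \int \dAff^{\Delta^1} \to \int \presh(\dAff^{\Delta^1})$. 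Preservation of coCartesian morphisms reduces to the statement that for any $A \to B$ the two functors $f_B \circ (B \otimes_A^{\mathbb L} -)_!$ and $(B \otimes_A^{\mathbb L} -)_! \circ f_A$ agree: both preserve colimits, and both send a representable $h_{X \to Y}$ to $h_{X_B \to Y_B}$ with $X_B = X \times_{\Spec A} \Spec B$, so they coincide.

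The main obstacle is the coherent construction of the relative Kan extension, i.e.\ verifying the existence of the colimits computing $f_A$ and their compatibility with base change at the level of $(\infty,1)$-categorical fibrations. This reduces entirely to the behaviour of $f_A$ on representables, since both $f_A$ and the base change functors are colimit-preserving. Consequently the verification is a matter of careful bookkeeping rather than a substantial difficulty, and the two descriptions of $f_A$ and $g_A$ given in the statement follow by unwinding the definitions.
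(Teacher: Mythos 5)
Your proposal is correct and follows essentially the same route as the paper: fiberwise, $f_A$ is the left Kan extension along the Yoneda embedding of the ``morphism of representables'' functor, $g_A$ is its restriction right adjoint $\Map(-,F\to G)$, the base-change compatibility is verified on representables, and Lurie's relative adjunction criterion glues the fibers together. The only cosmetic difference is the order of construction --- the paper first defines $g$ globally as a strictly derived functor and then produces $f$ as its relative left adjoint via \cite[8.2.3.11]{lurie:halg}, whereas you build $f$ first and deduce $g$ --- and note that the relative adjunction machinery you invoke lives in \emph{Higher Algebra}, not \cite{lurie:htt}.
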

\begin{proof}
Let us define a functor $\Cc \to \Dd$ mapping $(A,F \to G)$ to the functor 
\[
( S \to T ) \mapsto \Map(S \to T, F \to G)
\]
We can now derive this functor (replacing therefore $F \to G$ with a fibrant replacement). We get a functor
\[
g \colon \int \presh\left(\dAff\right)^{\Delta^1} \to \int \presh\left(\dAff^{\Delta^1}\right)
\]
which commutes with the projections to $\MCcdga_k$.
Let $A$ be in $\MCcdga_k$ and let $g_A$ be the induced functor 
\[
\presh(\dAff_A)^{\Delta^1} \to \presh\left( \dAff_A^{\Delta^1} \right)
\]
It naturally admits a left adjoint. Namely the left Kan extension $f_A$ to the Yoneda embedding
\[
\dAff_A^{\Delta^1} \to \presh(\dAff_A)^{\Delta^1}
\]
For any morphism $A \to B$ in $\MCcdga_k$, there is a canonical morphism
\[
f_B\left( \left(B \otimes_A - \right)_! X \right) \to 
\left(B \otimes_A -\right)_! f_A(X)
\]
which is an equivalence.
[When $X = \Spec A' \to \Spec A''$ is representable then both left and right hand sides are equivalent to $\Spec B' \to \Spec B''$ where $B' = B \otimes_A A'$ and $B'' = B \otimes_A A''$].
We complete the proof using \cite[8.2.3.11]{lurie:halg}.
\end{proof}

Let now $\int \comma{\pt}{\presh(\dAff)} \to \cdga_k$ denote the coCartesian fibration classified by the subfunctor $A \mapsto \comma{\Spec(A)}{\presh(\dAff_A)}$ of $\presh(\dAff)^{\Delta^1}$.
Let also $\int \presh(\dAff^*)$ be defined similarly to $\int \presh(\dAff^{\Delta^1})$. It is classified by a functor
\[
A \mapsto \presh\left(\comma{\Spec A}{\dAff_A}\right)
\]

\begin{prop} \label{pointed-adjunction}
The adjunction of \autoref{maps-stacks} induces a relative adjunction
\[
\int \presh\left(\dAff^\pt\right) \rightleftarrows \int \comma{\pt}{\presh(\dAff)}
\]
over $\MCcdga_k$. It moreover induces a natural transformation
\[
\mymatrix{
\cdga_k \dcell[r][{\presh(\dAff^\pt)}][{\comma{\pt}{\presh(\dAff)}}][][=>][12pt] & \inftyCatu V
}
\]
\end{prop}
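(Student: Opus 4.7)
The plan is to cut down the relative adjunction $f \dashv g$ of \autoref{maps-stacks} to the pointed subcategories, and then observe that the resulting diagram descends from $\MCcdga_k$ to the localisation $\cdga_k$.

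First I would work fibrewise: fix $A \in \MCcdga_k$. The pointed derived affines $\dAff_A^\pt = \comma{\Spec A}{\dAff_A}$ sit inside $\dAff_A^{\Delta^1}$ as the full subcategory of arrows with source $\Spec A$, and analogously $\comma{\pt}{\presh(\dAff_A)}$ is the full subcategory of $\presh(\dAff_A)^{\Delta^1}$ spanned by morphisms whose source is the representable presheaf $\Spec A$. The explicit description of $f_A$ on representables from \autoref{maps-stacks} shows that it sends a pointed affine $\Spec A \to \Spec A''$ to the morphism of representable presheaves $\Spec A \to \Spec A''$, which is again pointed; hence $f_A$ restricts to a functor $\tilde f_A \colon \presh(\dAff_A^\pt) \to \comma{\pt}{\presh(\dAff_A)}$.

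Second I would identify the right adjoint. The formula from \autoref{maps-stacks} sends a pointed presheaf $\Spec A \to G$ to the simplicial presheaf $(\Spec A' \to \Spec A'') \mapsto \Map(\Spec A' \to \Spec A'', \Spec A \to G)$ on $\dAff_A^{\Delta^1}$; its restriction along $\dAff_A^\pt \hookrightarrow \dAff_A^{\Delta^1}$ is the candidate right adjoint $\tilde g_A$. The unit and counit of $f_A \dashv g_A$ both restrict to the pointed setting: for the unit this uses that the Yoneda image of a pointed affine is pointed, for the counit that restricting $g_A(F \to G)$ to arrows with source $\Spec A$ recovers $\tilde g_A$ of the pointing $\Spec A \to G$. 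This yields the fibrewise adjunction $\tilde f_A \dashv \tilde g_A$.

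Third I would assemble these into a relative adjunction over $\MCcdga_k$. Since scalar extension $B \otimes_A -$ preserves both pointed affines and pointed presheaves (the pointing is transported functorially), the full subcategories $\int \presh(\dAff^\pt)$ and $\int \comma{\pt}{\presh(\dAff)}$ are stable under the coCartesian pushforward inside the respective total categories. Consequently the base-change compatibility already checked for $(f, g)$ in \autoref{maps-stacks} restricts verbatim, and a second appeal to \cite[8.2.3.11]{lurie:halg} produces the desired relative adjunction. Finally, both restricted functors preserve quasi-isomorphisms of the underlying cdga's (as they are defined entirely through Yoneda and mapping spaces), so the induced natural transformation between functors $\MCcdga_k \to \inftyCatu V$ descends to the localisation $\cdga_k$.

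The only mild obstacle is the bookkeeping required to verify that the pointed subcategories really are preserved both fibrewise and by the coCartesian transitions, so that one is entitled to invoke \cite[8.2.3.11]{lurie:halg} unchanged; once this is established no new computation is needed beyond \autoref{maps-stacks}.
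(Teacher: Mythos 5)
Your proof is correct and follows essentially the same route as the paper: identify the adjunction fibrewise on representables, take as right adjoint the restriction of $g_A$ along $\dAff_A^\pt \to \dAff_A^{\Delta^1}$, check compatibility with base change, and invoke Lurie's relative-adjunction criterion before descending along quasi-isomorphisms to $\cdga_k$. The only cosmetic difference is that the paper packages the passage from $\dAff^{\Delta^1}$ to $\dAff^\pt$ as a second relative adjunction (restriction and left Kan extension) which it then composes with the adjunction of the mapping-stack lemma, rather than "restricting" $f$ and $g$ directly; since the two constructions agree on representables, which is all that is used to identify the adjoints, this is a difference of presentation only.
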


\begin{proof}
We define the restriction functor
\[
\int \presh\left(\dAff^{\Delta^1}\right) \to \int \presh(\dAff^\pt)
\]
It admits a fiberwise left adjoint, namely the left Kan extension, which commutes with base change. This defines -- using \cite[8.3.2.11]{lurie:halg} -- a relative left adjoint 
\[
\int \presh(\dAff^\pt) \to \int \presh\left( \dAff^{\Delta^1} \right)
\]
Composing with the relative adjunction of \autoref{maps-stacks}, we get a relative adjunction
\[
\int \presh(\dAff^\pt) \rightleftarrows \int \presh(\dAff)^{\Delta^1}
\]
The left adjoint factors through $\int \comma{\pt}{\presh(\dAff)}$ and the composed functor
\[
\int \comma{\pt}{\presh(\dAff)} \to \int \presh(\dAff)^{\Delta^1} \to \int \presh(\dAff^\pt)
\]
is its relative right adjoint.
It follows that the functor $\int \presh(\dAff^\pt) \to \int \comma{\pt}{\presh(\dAff)}$ preserves coCartesian morphisms over $\MCcdga_k$. We get a natural transformation
\[
\mymatrix{
\MCcdga_k \dcell[r][{\presh(\dAff^\pt)}][{\comma{\pt}{\presh(\dAff)}}][][=>][12pt] & \inftyCatu V
}
\]
As both functors at hand map quasi-isomorphisms of cdga's to equivalences of categories, it factors through the localisation $\cdga_k$ of $\MCcdga_k$.
\end{proof}

\begin{prop}
Let $X$ be an algebraic derived stack.
There are functors
\[
\phi \colon \quot{\comma{X}{\presh(\dAff_k)}}{X} \to \lim_{\Spec A \to X} \comma{\Spec A}{\presh(\dAff_A)}
\]
and
\[
\theta \colon \lim_{\Spec A \to X} \comma{\Spec A}{\presh(\dAff_A)} \to \lim_{\Spec A \to X} \presh\left(\dAff_A^\pt\right)
\]
\end{prop}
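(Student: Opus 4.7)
The approach is to construct $\phi$ from the base-change functoriality of pullbacks, and to construct $\theta$ by applying the adjoint functor theorem to the natural transformation provided by \autoref{pointed-adjunction}.

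For $\phi$, the equivalence $\dAff_A \simeq (\dAff_k)_{/\Spec A}$ induces an equivalence $\presh(\dAff_A) \simeq \presh(\dAff_k)_{/\Spec A}$. An object of $\quot{\comma{X}{\presh(\dAff_k)}}{X}$ is a factorisation $X \xrightarrow{s} F \xrightarrow{p} X$ of $\id_X$, which we view as a presheaf $F$ in $\presh(\dAff_k)_{/X}$ pointed by the section $s$. For each $\Spec A \to X$, the pullback $F_A = F \times_X \Spec A$ is an object of $\presh(\dAff_k)_{/\Spec A} \simeq \presh(\dAff_A)$, canonically pointed by the induced map $\Spec A \to F_A$. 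By the universal property of pullback, this construction intertwines the base-change transition functors $\comma{\Spec A}{\presh(\dAff_A)} \to \comma{\Spec B}{\presh(\dAff_B)}$ governing the limit, so by the universal property of the limit it assembles into the desired functor $\phi$.

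For $\theta$, \autoref{pointed-adjunction} yields a natural transformation
\[
L \colon \presh(\dAff^\pt_{-}) \to \comma{\pt}{\presh(\dAff_{-})}
\]
between functors $\cdga_k \to \PresLeftu U$, whose fibrewise value $L_A$ is the relative left adjoint from that proposition (each $L_A$ is colimit-preserving, being a left adjoint in a fibrewise adjunction between presentable categories). Taking the limit over $\Spec A \to X$ in $\PresLeftu U$ produces a colimit-preserving functor
\[
L_X \colon \lim_{\Spec A \to X} \presh(\dAff_A^\pt) \to \lim_{\Spec A \to X} \comma{\Spec A}{\presh(\dAff_A)}
\]
between presentable $(\infty,1)$-categories. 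I then define $\theta$ to be a right adjoint of $L_X$, which exists by the adjoint functor theorem in $\PresLeftu U$.

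The main subtlety is upgrading these pointwise recipes to honest $\infty$-functors. For $\phi$, the pullback assignment $(X \to F \to X) \mapsto \{F_A\}_{\Spec A \to X}$ must be realised as a morphism of coCartesian fibrations over $(\dAff_k \downarrow X)$ and then straightened into a functor into the limit; this is routine but requires invoking the straightening/unstraightening machinery of \cite{lurie:htt}. For $\theta$, once the naturality of $L$ as a transformation of $\PresLeftu U$-valued functors is granted, passing to limits and taking the right adjoint is entirely formal — note in particular that one should not expect $\theta$ to agree fibrewise with the pointwise right adjoints $R_A$ of \autoref{pointed-adjunction}, since the $R_A$ generally fail to commute with base change.
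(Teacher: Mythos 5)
Your proposal is correct and takes essentially the same route as the paper: $\phi$ is base change $F \mapsto F\times_X \Spec A$ combined with the equivalence $\quot{\comma{\Spec A}{\presh(\dAff_k)}}{\Spec A} \simeq \comma{\Spec A}{\presh(\dAff_A)}$, and $\theta$ is obtained as the right adjoint, via the adjoint functor theorem, of the colimit-preserving functor between limits of presentable categories induced by the left adjoints of \autoref{pointed-adjunction}. Your closing caveat that $\theta$ need not be computed fibrewise by the pointwise right adjoints is a reasonable general caution, though note that \autoref{pointed-basechange} asserts that in this case $\theta$ does in fact commute with base change.
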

\begin{proof}
The functor $\phi$ is given by the following construction:
\[
\quot{\comma{X}{\presh(\dAff_k)}}{X} 
\to \lim_{\Spec A \to X} \quot{\comma{\Spec A}{\presh(\dAff_k)}}{\Spec A}
\simeq \lim_{\Spec A \to X} \comma{\Spec A}{\presh(\dAff_A)}
\]
The second functor is constructed as follows.
From \autoref{pointed-adjunction} we get a functor
\[
\lim_{\Spec A \to X} \presh\left(\dAff_A^\pt\right) \to \lim_{\Spec A \to X} \comma{\Spec A}{\presh(\dAff_A)}
\]
It preserves colimits and both left and right hand sides are presentable. It thus admits a right adjoint $\theta$.
\end{proof}

\begin{rmq} \label{pointed-basechange}
Note that the functor $\theta$ commutes with base change. We can indeed draw the commutative diagram (where $S \to T$ is a morphism between affine derived schemes)
\[
\mymatrix{
\quot{\comma{T}{\presh(\dAff_k)}}{T} \ar[r] \ar[d] &
\presh\left(\quot{\comma{T}{\presh(\dAff_k)}}{T}\right) \ar[r] \ar[d] &
\presh\left(\quot{\comma{T}{\dAff_k}}{T}\right) \ar[d] &
\\
\quot{\comma{S}{\presh(\dAff_k)}}{S} \ar[r] &
\presh\left(\quot{\comma{S}{\presh(\dAff_k)}}{S}\right) \ar[r] &
\presh\left(\quot{\comma{S}{\dAff}}{S}\right) &
}
\]
The left hand side square commutes by definition of base change. The right hand side square also commutes as the restriction along a fully faithful functor preserves base change.
\end{rmq}

\begin{df}
Let $X$ be an algebraic derived stack.
Let us define the formal completion functor
\[
\for{(-)} \colon \quot{\comma{X}{\presh(\dAff_k)}}{X} \to 
\lim_{\Spec A \to X} \presh\left(\left(\dgExt_A\right)\op\right)
\]
as the composed functor
\begin{align*}
\quot{\comma{X}{\presh(\dAff_k)}}{X}
&\to \lim_{\Spec A \to X} \comma{\Spec A}{\presh(\dAff_A)} \\
&\to \lim_{\Spec A \to X} \presh\left(\dAff_A^\pt\right) \\
&\to \lim_{\Spec A \to X} \presh\left(\left(\dgExt_A\right)\op\right)
\end{align*}
\end{df}

\begin{rmq}
Let $u \colon S =\Spec A \to X $ be a point.
The functor $u^* \for{(-)}$ maps a pointed stack $Y$ over $X$ to the functor $\dgExt_A \to \sSets$
\[
B \mapsto \Map_{S/-/X}(\Spec B, Y)
\]
\end{rmq}

\begin{df}
Let $X$ be a derived Artin stack.
Let $\dStptArt_X$ denote the full sub-category of 
$\quot{\comma{X}{\presh(\dAff)}}{X}$ spanned by those $X \to Y \to X$ such that $Y$ is a derived Artin stack over $X$.
\end{df}

\begin{lem}
The restriction of $\for{(-)}$ to $\dStptArt_X$ has image in $\dStF_X$.
\end{lem}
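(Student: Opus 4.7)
The plan is to verify the product-preservation condition locally on $X$ and reduce it to a general fact about pointed mapping spaces over derived affine schemes. Since $\dStF_X = \lim_{\Spec A \to X} \dStF_A$ is computed fibrewise and $\dStF_A$ is cut out inside $\presh((\dgExt_A)\op)$ by the finite-product-preservation condition, it suffices to show that for each point $u \colon \Spec A \to X$ the restricted functor
\[
u^* \for{Y} \colon \dgExt_A \to \sSets
\]
preserves finite products, i.e.\ sends the terminal object $A$ to a contractible space and sends the product $A \oplus M \times_A A \oplus M' \simeq A \oplus (M \oplus M')$ (computed in augmented $A$-algebras) to a product of mapping spaces.

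First I would unwind the definition of $\for{(-)}$. Chasing through Proposition~\ref{pointed-adjunction}, \autoref{pointed-basechange} and the inclusion $(\dgExt_A)\op \hookrightarrow \dAff_A^{\pt}$, one obtains, for $(X \to Y \to X) \in \dStptArt_X$ and $B \in \dgExt_A$, a canonical equivalence
\[
u^* \for{Y}(B) \simeq \Map^{\pt}_{\Spec A}(\Spec B, Y_A),
\]
where $Y_A = Y \times_X \Spec A$ is pointed by the section induced by $X \to Y$, and $\Map^{\pt}_{\Spec A}$ denotes maps of pointed derived stacks over $\Spec A$ (i.e.\ the fibre of $\Map_{\Spec A}(\Spec B, Y_A) \to \Map_{\Spec A}(\Spec A, Y_A)$ over the distinguished section). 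The key input here is that the right adjoint in Proposition~\ref{pointed-adjunction} is indeed the one represented by pointed affine schemes.

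Second, the empty product case: the terminal object of $\dgExt_A$ is $B = A$, so $u^* \for{Y}(A)$ is the space of pointed maps $\Spec A \to Y_A$ over $\Spec A$, which is the homotopy fibre of $\id \colon \Map_{\Spec A}(\Spec A, Y_A) \to \Map_{\Spec A}(\Spec A, Y_A)$ over the distinguished point, hence contractible.

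Third, the binary product case: given $M, M' \in \dgModLib_A^{\leq 0}$, the derived affine scheme $\Spec(A \oplus (M \oplus M'))$ is the coproduct of $\Spec(A \oplus M)$ and $\Spec(A \oplus M')$ in \emph{pointed} derived affine schemes over $\Spec A$ (this is dual to the formula $A \oplus M \times_A A \oplus M' \simeq A \oplus (M \oplus M')$ in augmented algebras, and the pointing is essential because the coproduct of un-pointed schemes over $\Spec A$ is of course different). Since $\Map^{\pt}_{\Spec A}(-, Y_A)$ carries coproducts of pointed stacks over $\Spec A$ to products of spaces, we obtain
\[
u^* \for{Y}(A \oplus (M \oplus M')) \simeq u^* \for{Y}(A \oplus M) \times u^* \for{Y}(A \oplus M'),
\]
as required.

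The only subtle step is the first one, identifying $u^* \for{Y}(B)$ with the pointed mapping space; this is where the hypothesis that $Y$ is algebraic matters, to ensure these mapping spaces remain $\mathbb U$-small and the various base-change identifications are valid. The remaining product-preservation check is then essentially formal.
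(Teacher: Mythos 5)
Your overall strategy -- reduce to an affine point $u \colon \Spec A \to X$ via base change, identify $u^* \for{Y}(B)$ with the space of maps under $\Spec A$ and over $X$, and check preservation of finite products in $\dgExt_A$ -- is exactly the paper's, and your empty-product case is fine. The gap is in the binary-product step, which you declare ``essentially formal'': it is not, and it is precisely where the hypothesis $Y \in \dStptArt_X$ does its work. The scheme $\Spec\left(A \oplus (M \oplus M')\right)$ is the coproduct of $\Spec(A\oplus M)$ and $\Spec(A\oplus M')$ along $\Spec A$ only in the category of pointed derived \emph{affine} schemes, i.e.\ it is $\Spec$ of the fibre product $(A\oplus M)\times_A(A\oplus M')$ of augmented algebras. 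It is \emph{not} the pushout in the category of pointed derived stacks over $\Spec A$, so the formal principle ``mapping out of a colimit gives a limit'' does not apply. The assertion
\[
Y\left((A\oplus M)\times_A(A\oplus M')\right) \simeq Y(A\oplus M)\times_{Y(A)} Y(A\oplus M')
\]
is the infinitesimal cohesiveness of $Y$, a genuine condition that fails for general pointed derived stacks over $X$ -- which is exactly why the lemma restricts to $\dStptArt_X$ rather than all of $\quot{\comma{X}{\presh(\dAff)}}{X}$.

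The paper closes this gap in one line by invoking the obstruction theory of the algebraic morphism $Y \to X$ (\autoref{obstruction}): writing $A \oplus (M\oplus M')$ as a square-zero extension of $A\oplus M$ by $M'$ and comparing with the extension $A \oplus M'$ of $A$, the fibres of the comparison map are identified with path spaces in $\Map(x^*\Lcot_{Y/X}, M')$ on both sides, compatibly, which yields the pullback square. To repair your argument, replace the sentence about coproducts of pointed stacks by an application of \autoref{obstruction} (or any equivalent statement of infinitesimal cohesion for derived Artin stacks), and move your closing remark that ``this is where algebraicity matters'' from the mapping-space identification -- which really only needs $Y$ to be a stack -- to this step.
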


\begin{proof}
We have to prove that whenever $Y$ is a pointed algebraic stack over $X$ then $\for Y$ is formal over $X$.
Because of \autoref{pointed-basechange}, it suffices to treat the case of an affine base. The result then follows from the existence of an obstruction theory for $Y \to X$.
\end{proof}

\begin{df}\label{tgtliedef}\glsadd{tgtlie}
Let $X$ be an algebraic stack locally of finite presentation. We define its tangent Lie algebra as the $X$-dg-Lie algebra 
\[
\tgtlie_X = \lie_X\left( \for{(X \times X)} \right)
\]
where the product $X \times X$ is a pointed stack over $X$ through the diagonal and the first projection.
\end{df}

\begin{proof}[of \autoref{tangent-lie}]
Let us denote $Y = X \times X$ and let $u \colon S = \Spec A \to X$ be a point.
The derived stack $X$ has a global tangent complex and the natural morphism $u^* \tgtlie_X = u^* \lie_X \for Y \to \lie_A(u^* \for Y)$ is an equivalence.
The functor $u^* \for Y$ maps $B$ in $\dgExt_A$ to
\[
\Map_{S/-/X}(\Spec B, X \times X) \simeq \Map_{S/}(\Spec B, X)
\]
We deduce using \autoref{ext-coho} that the underlying $A$-dg-module of $\lie_A(u^* \for Y)$ therefore represents the functor
\[
\app{\dgModLib_A}{\sSets}{M}{\Map_{S/}\left( \Spec(A \oplus \dual M[-1]), X \right)}
\]
Using once again that $X$ has a global tangent complex we conclude that the underlying module of $\tgtlie_X$ is indeed $\T_X [-1]$.

Let us now consider the functor 
\[
\comma{X}{\dStArtlfp} \to \dStptArt_X
\]
mapping a morphism $X \to Z$ to the stack $X \times Z$ pointed by the graph map $X \to X \times Z$ and endowed with the projection morphism to $X$.
Composing this functor with $\for{(-)}$ and $\lie_X$ we finally get the wanted functor
\[
\comma{X}{\dStArtlfp} \to \comma{\tgtlie_X}{\dgLie_X}
\]
Observing that $X \times Z$ is equivalent to $X \times_Z (Z \times Z)$ and because $Z$ has a global tangent complex, we deduce
\[
\lie_X\left( \for{(X \times Z)} \right) \simeq u^* \tgtlie_Z
\]
Let us finally note that the underlying map of quasi-coherent sheaves is the tangent map shifted by $-1$.
\end{proof}

\subsection{Derived categories of formal stacks}

The goal of this subsection is to prove the following

\begin{thm}\label{derived-global}
Let $X$ be an algebraic stack locally of finite presentation.
There is a colimit preserving monoidal functor 
\[
\lierep_X \colon \Qcoh(X) \to \dgRep_X(\tgtlie_X)
\]
where $\dgRep_X(\tgtlie_X)$ is the $(\infty,1)$-category of representations of $\tgtlie_X$.
Moreover, the functor $\lierep_X$ is a retract of the forgetful functor.
\end{thm}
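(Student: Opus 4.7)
The strategy is to realise $\lierep_X$ as a formal pull-back along the second projection $q \colon X \times X \to X$ restricted to the formal completion of the diagonal, i.e.\ to the very pointed formal stack $\for{(X \times X)} \in \dStF_X$ whose tangent Lie algebra is, by \autoref{tgtliedef}, the object $\tgtlie_X$. The diagonal section $\Delta \colon X \to X \times X$ satisfies $q \circ \Delta = \id_X$, so once the construction is set up the retract statement will be automatic.

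The first and main task is to extend $\Qcoh$ to formal stacks in a way compatible with the equivalence $\lie_X \dashv \formal_X$. Working relatively over $\cdga_k$, one builds a functor
\[
\Qcoh^{\mathrm{f}} \colon \left(\textstyle\int \dStF\right)\op \to \PresLeftu U, \qquad (A, F) \mapsto \lim_{(A \oplus M) \in \dgExt_A/F} \dgMod_{A \oplus M},
\]
analogously to the definition of $\IQcoh$ in Chapter~\ref{chapterIPandTate}. By Koszul duality (which under the hypotheses of \autoref{adjoint-exist} identifies $\dStF_A$ with $\dgLie_A$ via $\coho_A$ and its adjoint), whenever $F \simeq \formal_A(L)$ for some $L \in \dgLie_A$, there is a natural equivalence $\Qcoh^{\mathrm{f}}_A(F) \simeq \dgRep_A(L)$; it suffices to check this on the generators $L \in \dgLieLib_A$ using \autoref{ext-coho} and the identification of $\dgMod_{A \oplus M}$ with representations of the free Lie algebra generated by $\dual{M}[-1]$. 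Globalising as in \autoref{dstfoverstack}, one obtains a natural equivalence $\Qcoh^{\mathrm{f}}_X(\for{Y}) \simeq \dgRep_X(\lie_X \for{Y})$ for $\for{Y} \in \dStF_X$, under which the pullback along the canonical section $X \to \for{Y}$ corresponds to the forgetful functor to $\Qcoh(X)$.

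With this in hand, applying $\Qcoh^{\mathrm{f}}$ to the map of pointed formal stacks $\for{(X \times X)} \to X$ (where $X$ is viewed as the trivial pointed formal stack over itself) produces a colimit-preserving symmetric monoidal pull-back
\[
q_{\mathrm{f}}^* \colon \Qcoh(X) \longrightarrow \Qcoh^{\mathrm{f}}_X(\for{(X \times X)}) \simeq \dgRep_X(\tgtlie_X),
\]
which we define to be $\lierep_X$. The diagonal section $X \to \for{(X \times X)}$ induces the forgetful functor on the right-hand side, and composing with $\lierep_X$ recovers $\id_{\Qcoh(X)}$ because $q \circ \Delta = \id$; this is precisely the section statement. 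Monoidality and colimit preservation follow because both pieces of the construction (pull-back along $q$ and restriction to the formal completion) preserve these structures. On underlying modules the resulting action $\tgtlie_X \otimes E \to E$ is the universal Atiyah class of $E$, as one can check by reducing to the case $E = \Oo_X$ (where it is tautological) and extending by the colimit-preserving, monoidal character of $\lierep_X$.

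The hard step is the second one: building $\Qcoh^{\mathrm{f}}$ as an $\infty$-functor of stacks and establishing the Koszul-duality identification $\Qcoh^{\mathrm{f}}_A(\formal_A(L)) \simeq \dgRep_A(L)$ naturally in both $A$ and $L$. Concretely, one must promote the pointwise equivalence to a natural transformation of functors $\int \dgLie \to \PresLeftu U$, and verify descent so that it glues along the atlas presenting~$X$. This amounts to a careful use of the $(\infty,1)$-categorical Grothendieck construction combined with the adjunction machinery of Section~1; the rest of the argument is then essentially formal.
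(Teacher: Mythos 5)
Your overall strategy is the paper's: define the derived category of a formal stack as the limit of $\dgMod_B$ over square-zero extensions $\Spec B$ mapping to it (your $\Qcoh^{\mathrm{f}}$ is exactly $\Lqcoh^X$ from \autoref{lqcohdstf}), pull $\Qcoh(X)$ back into the derived category of $\for{(X\times X)}$, transport to Lie representations by Koszul duality, and read off the retract from the diagonal section. But the step you yourself single out as the hard one is asserted in a form that is not true, and this is a genuine gap. You claim a natural \emph{equivalence} $\Qcoh^{\mathrm{f}}_A(\formal_A(L)) \simeq \dgRep_A(L)$, to be checked on generators $L \in \dgLieLib_A$. Koszul duality does not give an equivalence here: for $L \in \dgLieLib_A$ one has $\Qcoh^{\mathrm{f}}_A(\formal_A L) \simeq \dgMod_{\coho_A L}$, and the comparison functor $f^A_L \colon \dgMod_{\coho_A L} \to \dgRep_A(L)$ is only fully faithful (\autoref{derivedcat-ff-local}), not essentially surjective --- already for an ordinary Lie algebra over a field, not every representation arises from a module over the Chevalley--Eilenberg algebra. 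Checking on generators cannot repair this, since essential surjectivity is not detected there. This is why \autoref{ffformallierep} is stated as full faithfulness only. The overstatement is repairable --- the theorem only needs a fully faithful monoidal colimit-preserving functor landing in $\dgRep_X(\tgtlie_X)$ --- but your argument as written relies on an identification that fails.

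There is a second, related gap: you apply the identification directly to $\for{(X\times X)}$, writing $\Qcoh^{\mathrm{f}}_X(\for Y) \simeq \dgRep_X(\lie_X \for Y)$ for an arbitrary formal stack over $X$. This presumes $\for{(X\times X)}$ lies in the essential image of $\formal_X$, which is not known: the adjunction $\formal_X \dashv \lie_X$ is an equivalence only over an affine noetherian base (\autoref{adjoint-exist}), while $X$ is a general Artin stack. The paper's proof therefore inserts a pullback along the counit $\theta \colon \formal_X(\tgtlie_X) = \formal_X\lie_X(\for{(X\times X)}) \to \for{(X\times X)}$ before applying the fully faithful comparison $\Psi \colon \Lqcoh^X(\formal_X \tgtlie_X) \to \dgRep_X(\tgtlie_X)$; your chain of identifications is missing this $\theta^*$. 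Once both repairs are made, your construction becomes the paper's composite $\Psi \circ \theta^* \circ \nu_X$, and the retract statement does follow as you indicate, via \autoref{cocartesianretract}, which identifies the forgetful functor composed with $\Psi$ as pullback along the canonical point $X \to \formal_X(\tgtlie_X)$.
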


We will prove this theorem at the end of the subsection. For now, let us state and prove a few intermediate results.
Let $A$ be any $\MCcdga_k$ and $L \in \MCdgLie_A$.
The category $\MCdgRep_A(L)$ of representations of $L$ is endowed with a combinatorial model structure for which equivalences are exactly the $L$-equivariant quasi-isomorphisms and for which the fibrations are those maps sent onto fibrations by the forgetful functor to $\MCdgMod_A$.
\begin{df}
Let us denote by $\dgRep_A(L)$ the underlying $(\infty,1)$-category of the model category $\MCdgRep_A(L)$.\glsadd{dgrep}
\end{df}
\begin{lem}\label{derivedcat-local}
Let $L$ be an $A$-dg-Lie algebra. There is a Quillen adjunction
\[
f_L^A \colon \MCdgMod_{\coho_A L} \rightleftarrows \MCdgRep_A(L) \noloc g_L^A
\]
Given by 
\begin{align*}
&f_L^A \colon V \mapsto \Envel_A\left(A[\eta] \otimes_A L\right) \otimes_{\coho_A L} V \\
&g_L^A \colon M \mapsto \Homint_{L} \left(\Envel_A\left(A[\eta] \otimes_A L\right), M \right)
\end{align*}
where $A[\eta] \otimes_A L$ is as in \autoref{section-liecoho} and $\Homint_L$ denotes the mapping complex of dg-representations of $L$.
\end{lem}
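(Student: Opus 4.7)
The plan is to construct a $(\Envel_A L, \coho_A L)$-bimodule structure on $\Ee := \Envel_A(A[\eta] \otimes_A L)$, and then to derive the adjunction and its Quillen property from standard tensor--hom formalism.

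The left $\Envel_A L$-action on $\Ee$ is the easy part: the map of dg-Lie algebras $L \to A[\eta] \otimes_A L$, $x \mapsto 1\otimes x$, induces a map of dg-algebras $\Envel_A L \to \Ee$. By the Poincar\'e--Birkhoff--Witt theorem proved in the previous subsection, $\Ee$ is free as a left $\Envel_A L$-module: there is an isomorphism of $A$-dg-modules $\Ee \cong \Envel_A L \otimes_A \Sym_A(L[1])$, compatible with the left $\Envel_A L$-action by multiplication on the first factor. Under this identification, the right factor $\Sym_A(L[1])$ is exactly the Chevalley--Eilenberg complex $\homol_A L$, and the augmentation $\Ee \otimes_{\Envel_A L} A \simeq \homol_A L$ recovers the definition of $\homol_A L$.

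Next, I would put a right $\coho_A L$-action on $\Ee$. The key input is that $\homol_A L$ is naturally a dg-\emph{coalgebra} (as recorded in Remark after its definition), and so $\coho_A L = \dual{\homol_A L}$ acts on $\homol_A L$ from the right by the convolution formula $c \cdot \phi = \sum c_{(1)} \phi(c_{(2)})$. Transporting this along the PBW identification $\Ee \cong \Envel_A L \otimes_A \homol_A L$ gives a right $\coho_A L$-action on $\Ee$ that manifestly commutes with the left $\Envel_A L$-action. Once the bimodule structure is checked, $f_L^A$ and $g_L^A$ are a standard tensor--hom pair, and the adjunction is formal. For the Quillen property, both model structures have their equivalences and fibrations detected by the forgetful functor to $\MCdgMod_A$; since $\Ee$ is free (hence cofibrant) as a left $\Envel_A L$-module by PBW, the functor $g_L^A$ preserves fibrations, and since $\Ee$ is flat on the right over $\coho_A L$ (being, in the PBW decomposition, a free $\coho_A L$-module on the underlying $\Envel_A L$-factor), $f_L^A$ preserves cofibrations and trivial cofibrations.

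The main obstacle will be checking that the right $\coho_A L$-action is compatible with the differential of $\Ee$. The PBW isomorphism $\Ee \cong \Envel_A L \otimes_A \homol_A L$ is only an isomorphism of graded $A$-modules, and the CE-style differential on $\Ee$ genuinely couples the two factors, so a naive componentwise verification is awkward. The cleanest way around this, which I would adopt, is to describe the right action intrinsically: view $\Ee$ as the Chevalley--Eilenberg resolution of $A$ over $\Envel_A L$, observe that the natural coalgebra structure on $\homol_A L$ lifts (via the $A[\eta]$-linearity of $\Envel_A(A[\eta] \otimes_A L)$) to a right $\homol_A L$-coaction on $\Ee$ compatible with $d_\Ee$, and dualize. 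Compatibility of $d_\Ee$ with the coaction then becomes a direct consequence of the fact that the CE differential is by construction a coderivation of the coproduct on $\homol_A L$, and this avoids having to split $d_\Ee$ into pieces.
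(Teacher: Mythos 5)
Your overall route --- equip $\Ee = \Envel_A(A[\eta]\otimes_A L)$ with an $(\Envel_A L,\coho_A L)$-bimodule structure, deduce the adjunction from tensor--hom, and invoke PBW for the model-categorical part --- is exactly what the paper's very terse proof leaves implicit (the paper merely asserts the adjunction is ``immediate'' and checks a preservation property), and your construction of the right $\coho_A L$-action by dualising a right $\homol_A L$-coaction on $\Ee$, together with the coderivation property of the Chevalley--Eilenberg differential, is a legitimate way to fill that in. A slightly cleaner packaging of the same idea: $\Ee$ is a Hopf algebra and $\homol_A L=\Ee\otimes_{\Envel_A L}A$ is its quotient by a coideal, so the coaction is $(\id\otimes\pi)\circ\Delta$ and is a chain map with no computation.

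There is, however, a genuine error in your Quillen verification: $\Ee$ is \emph{not} free, nor even flat, as a right $\coho_A L$-module. The PBW decomposition identifies the second tensor factor of $\Ee\cong\Envel_A L\otimes_A\Sym_A(L[1])$ with $\homol_A L$, \emph{not} with $\coho_A L$, and the right $\coho_A L$-action on that factor is the contraction action of $\dual{\homol_A L}$ on $\homol_A L$. Already for $L$ abelian on one generator in degree $1$ (so that $L[1]$ has an even generator), one gets $\homol_A L\cong A[y]$ as a torsion module over $\coho_A L\cong A[\![\xi]\!]$, with $\xi\cdot y^n=ny^{n-1}$: every element is killed by a power of $\xi$, so this is far from flat (freeness only occurs in the finite exterior case). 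Your justification that $f_L^A$ preserves (trivial) cofibrations therefore collapses. The repair is easy and uses only the \emph{left} structure you already established: by PBW, $g_L^A(M)\cong\Homint_A(\Sym_A(L[1]),M)$ as graded modules, so $g_L^A$ preserves fibrations \emph{and} trivial fibrations (granting, as the paper also tacitly does, that $\Sym_A(L[1])$ is graded-projective and cofibrant over $A$, which holds in all cases where the lemma is used), and that alone makes the adjunction Quillen; no property of the right $\coho_A L$-module structure of $\Ee$ beyond its existence is needed.
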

\begin{rmq}
The image $g_L^A(M)$ is a model for the cohomology $\RHomint_L(A,M)$ of $L$ with values in $M$.
\end{rmq}
\begin{proof}
The fact that $f_L^A$ and $g_L^A$ are adjoint functors is immediate. The functor $f_L^A$ preserves quasi-isomorphisms (see the proof of \autoref{homol-inftyfunctor}) and fibrations. This is therefore a Quillen adjunction.
\end{proof}

\begin{rmq}\label{monoidallierep}
The category $\MCdgRep_A(L)$ is endowed with a symmetric tensor product. If $M$ and $N$ are two dg-representations of $L$, then  $M \otimes_A N$ is endowed with the diagonal action of $L$. The category $\MCdgMod_{\coho_A L}$ is also symmetric monoidal.
Moreover, for any pair of $L$-dg-representations $M$ and $N$, there is a natural morphism\todo{monoidal lie rep}
\[
g_L^A(M) \otimes_{\coho_A L} g_L(N) \to g_L^A\left(M \otimes_A N\right)
\]
This makes $g_L^A$ into a weak monoidal functor.
In particular, the functor $g_L^A$ defines a functor $\MCdgLie_{L} \to \MCdgLie_{\coho_A(L)}$, also denoted $g_L^A$.
\end{rmq}

\begin{prop}\label{derivedcat-ff-local}
Let $L$ be a good dg-Lie algebra over $A$. The induced functor 
\[
f_L^A \colon \dgMod_{\coho_A L} \to \dgRep_A(L)
\]
of $(\infty,1)$-categories is fully faithful.
\end{prop}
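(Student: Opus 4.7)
The plan is to show that the unit $\eta_V \colon V \to g_L^A f_L^A(V)$ of the Quillen adjunction of \autoref{derivedcat-local} is an equivalence for every $V \in \dgMod_{\coho_A L}$, which is equivalent to the full faithfulness of $f_L^A$. Since $f_L^A$ preserves all colimits (being a left adjoint) and $\dgMod_{\coho_A L}$ is generated under $\mathbb{U}$-small colimits by the rank-one free module $\coho_A L$, it will suffice to verify two points: (a) the unit is an equivalence at the generator $V = \coho_A L$, and (b) the composite $g_L^A f_L^A \colon \dgMod_{\coho_A L} \to \dgMod_{\coho_A L}$ preserves the appropriate colimits.

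For (a), note that $f_L^A(\coho_A L) = \Envel_A(A[\eta] \otimes_A L)$. Using PBW and the acyclicity of $A[\eta]$, this is a semifree resolution of $A$ regarded as a trivial left $L$-module via the augmentation $\eta \mapsto 0$. Hence
\[
g_L^A f_L^A(\coho_A L) = \Homint_L\bigl(\Envel_A(A[\eta]\otimes_A L),\, \Envel_A(A[\eta]\otimes_A L)\bigr) \simeq \RHomint_L(A,A) = \coho_A L,
\]
and one checks directly that this identification realises the unit. For (b) I would exploit the goodness hypothesis: after replacing $L$ by a quasi-isomorphic very good model, I would induct on the length of its very-good tower. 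The inductive step is a pushout
\[
L_{i+1} \simeq L_i \amalg_{\libre(A[-p])} \libre(\operatorname{Mc}(A[-p])), \qquad p\geq 2,
\]
with contractible right-hand piece. Along such pushouts $\coho_A$ sends colimits of good dg-Lie algebras to limits of cdga's by \autoref{chevalleycolim}, and full faithfulness of $f$ is inherited through such pushouts by a straightforward two-out-of-three argument on the unit. This reduces the claim to the base case $L = \libre(A[-p])$, and then via $\libre(A[-p]) \simeq A \otimes_k \libre(k[-p])$ and the base-change compatibility of \autoref{commute-coho-good} further reduces it to $A = k$ with $L$ free on a single generator in positive degree. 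There $\coho_k L \simeq k \oplus k[p-1]$ is a finite-dimensional trivial square-zero cdga, and the statement is the standard Koszul duality between $\libre(k[-p])$ and $k \oplus k[p-1]$, which one verifies by an explicit computation of Chevalley--Eilenberg cochains on the rank-one free module.

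The main obstacle will be ensuring the colimit-preservation properties in (b), since $g_L^A$ is only a right adjoint and so not automatically cocontinuous. The key input is that for a very good $L$ the resolution $\Envel_A(A[\eta]\otimes_A L) \simeq \Envel_A L \otimes_A \Sym_A(L[-1])$ is almost finite cellular (in the sense of \autoref{afp-cotangent}) as a left $\Envel_A L$-module; this finiteness is precisely what makes $\RHomint_L(A,-)$ commute with the inductive pushouts of Lie algebras and with the base change along $A\otimes_k -$, which is what closes the induction.
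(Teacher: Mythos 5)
Your skeleton is essentially the adjoint-unit reformulation of the paper's own argument: the paper first proves full faithfulness on $\Perf(\coho_A L)$ by reducing to the generator $B=\coho_A L$ (your step (a) is the same computation, $\Map(B,B)\to\Map(fB,fB)$ being an equivalence essentially by the definition of $\coho_A$), and then propagates to all of $\dgMod_{\coho_A L}$ via a compactness statement. The gap is in your justification of (b), in two places. First, the induction on the very-good tower with a ``two-out-of-three argument on the unit'' along the pushouts $L_{i+1}\simeq L_i\amalg_{\libre(A[-p])}\libre(\operatorname{Mc}(A[-p]))$ does not go through as stated: even granting that $\dgRep_A(-)$ turns such pushouts into limits of categories, the source $\dgMod_{\coho_A(-)}$ does not — by \autoref{chevalleycolim} the Chevalley functor turns the pushout into a fibre product of cdga's, and modules over a fibre product of cdga's are not the fibre product of the module categories. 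So full faithfulness is not inherited along the tower by any straightforward two-out-of-three on the unit.

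Second, and more importantly, the finiteness you invoke in your last paragraph is the wrong one. Almost finite cellularity of $\Envel_A(A[\eta]\otimes_A L)\cong\Envel_A L\otimes_A\Sym_A(L[1])$ as a left $\Envel_A L$-module — finitely many cells in each degree but infinitely many in total — does \emph{not} imply that $g_L^A=\Homint_L(\Envel_A(A[\eta]\otimes_A L),-)$ commutes with filtered colimits: an almost finite cellular module is in general not compact. What your step (b) actually requires is that $f_L^A(\coho_A L)\simeq A$ be a \emph{perfect} (compact) object of $\dgRep_A(L)$, i.e.\ that $A$ admit a \emph{finite} cell resolution over $\Envel_A L$. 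This is exactly what goodness of $L$ provides and is the paper's closing step: the finite tower of cell attachments makes $\Envel_A(A[\eta]\otimes_A L)$ a finite cellular $A$-dg-algebra, whence restriction along the augmentation $\Envel_A L\to A$ preserves perfect objects (the paper cites \cite{toen:ttt} here). With that corrected input your argument closes with no induction at all: $g_L^A$ preserves filtered colimits, $g_L^Af_L^A$ is exact and cocontinuous, and the unit is an equivalence everywhere because it is one at the generator.
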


\begin{proof}
In this proof, we will write $f$ instead of $f_L^A$.
Let $B$ denote $\coho_A L$.
We first prove that the restriction $f_{|\Perf(B)}$ is fully faithful.
Let $V$ and $W$ be two $B$-dg-modules.
There is a map $\Map(V,W) \to \Map(fV,fW)$.
Fixing $V$ (resp. $W$), the set of $W$'s (resp. $V$'s) such that this map is an equivalence is stable under extensions, shifts and retracts.
It is therefore sufficient to prove that the map $\Map(B,B) \to \Map(fB,fB)$ is an equivalence, which follows from the definition (if we look at the dg-modules of morphisms, then both domain and codomain are quasi-isomorphic to $B = \coho_A L$).

To prove that $f \colon \dgMod_B \to \dgRep_A(L)$ is fully faithful, we only need to prove that $f$ preserves perfect objects.
It suffices to prove that $f B \simeq A$ is perfect in $\dgRep_A(L)$.
The (non commutative) $A$-dg-algebra $\Envel_A(A[\eta] \otimes_A L)$ is a finite cellular object (because $L$ is good) and is endowed with a morphism to $A$.
The forgetful functor $\dgMod_A \to \dgMod_{\Envel_A(L)}^{\operatorname{left}}$ therefore preserves perfect objects (see \cite{toen:ttt}).
\end{proof}

Let us consider the category $\int \MCdgLie\op$ such that:
\begin{itemize}
\item An object is a pair $(A,L)$ with $A \in \MCcdga_k$ and with $L \in \MCdgLie_A$ and
\item A morphism $(A,L) \to (B,L')$ is a map $A \to B$ together with a map $L' \to L \otimes_A^\Lcot B$ in $\MCdgLie_B$.
\end{itemize}
It is endowed with a functor $\int \MCdgLie\op \to \MCcdga_k$. Localising along quasi-isomorphisms, we obtain a coCartesian fibration of $(\infty,1)$-categories
\[
\int \dgLie\op \to \MCcdga_k
\]
classified by the functor $A \mapsto \dgLie_A\op$.

Let $\ptfin$ denote the category of pointed finite sets -- see \autoref{ptfin}. For $n \in \N$, we will denote by $\langle n \rangle$ the finite set $\{\pt, 1, \dots, n\}$ pointed at $\pt$.
Let $\int \MCdgRep^{\otimes}$ be the following category.
\begin{itemize}
\item An object is a family $(A,L,\el{M}{m})$ with $A \in \MCcdga_k$, with $L \in \MCdgLie_A$ and with $M_i \in \MCdgRep_A(L)$.
\item A morphism $(A,L,\el{M}{m}) \to (B,L',\el{N}{n})$ is the datum of a map $(A,L) \to (B,L') \in \int \MCdgLie\op$, of a map $t \colon \langle m \rangle \to \langle n \rangle$ of pointed finite sets and for every $1 \leq j \leq n$ of a morphism $\bigotimes_{i \in t^{-1}(j)} M_i \otimes_A B \to N_j$ of $L'$-modules.
\end{itemize}
It comes with a projection functor $\int \MCdgRep^{\otimes} \to \int \MCdgLie\op \times \ptfin$.
We will say that a morphism in $\int \MCdgRep^{\otimes}$ is a quasi-isomorphism if the underlying maps of pointed finite sets and dg-Lie algebras are identities and if the maps of cdga's and of dg-representations it contains are quasi-isomorphisms.
Let us denote by $\int \dgRep^{\otimes}$ the localisation of $\int \MCdgRep^{\otimes}$ along quasi-isomorphisms.
This defines a coCartesian fibration $p \colon \int \dgRep^{\otimes} \to \int \MCdgLie\op \times \ptfin$ (using once again \cite[2.4.29]{lurie:dagx}).

Let now $\int \MCdgMod_{\coho(-)}^{\otimes}$ be the following category
\begin{itemize}
\item An object is a family $(A,L,\el{V}{m})$ with $A \in \MCcdga_k$, with $L \in \MCdgLie_A$ and with $V_i \in \MCdgMod_{\coho_A L}$.
\item A morphism $(A,L,\el{V}{m}) \to (B,L',\el{W}{n})$ is the datum of a map $(A,L) \to (B,L') \in \int \MCdgLie\op$, of a map of pointed finite sets $t \colon \langle m \rangle \to \langle n \rangle$ and for every $1 \leq j \leq n$ of a morphism of $\coho_B L'$-dg-modules $\bigotimes_{i \in t^{-1}(j)} V_i \otimes_{\coho_A L} \coho_B L' \to W_j$.
\end{itemize}
Localising along quasi-isomorphisms of modules, we get a coCartesian fibration of $(\infty,1)$-categories $q \colon \int \dgMod_{\coho(-)}^{\otimes} \to \int \MCdgLie\op \times \ptfin$.

\begin{lem}
The above coCartesian fibrations $p$ and $q$ define functors
\begin{align*}
\dgRep,\, \dgMod_{\coho(-)} \colon \int \dgLie\op \to \monoidalinftyCatu V
\end{align*}
\end{lem}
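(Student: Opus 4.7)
The plan is to invoke the standard correspondence between functors valued in $\monoidalinftyCatu V$ and certain ``doubly coCartesian'' fibrations, as in \cite[chap.~2]{lurie:halg}: to produce a functor $\Cc \to \monoidalinftyCatu V$ it suffices to exhibit a coCartesian fibration $\Ee \to \Cc \times \ptfin$ such that (a) for every $c \in \Cc$, the restriction $\Ee|_{\{c\} \times \ptfin} \to \ptfin$ is a symmetric monoidal $(\infty,1)$-category in the sense of \autoref{monoidalcats}, and (b) for every morphism $c \to c'$ in $\Cc$, the coCartesian transport functor on the fibers over $\ptfin$ is symmetric monoidal. I will check (a) and (b) for both $p$ and $q$, taking $\Cc = \int \dgLie\op$.

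For (a), fix $(A,L)$ and unwind the definition: an object of the fiber of $\int \MCdgRep^{\otimes}$ at $((A,L),\langle n\rangle)$ is an $n$-tuple $(\el{M}{n})$ of objects of $\MCdgRep_A(L)$, and the Segal maps $\delta^i$ recover the $i$-th coordinate. Localisation along coordinatewise quasi-isomorphisms commutes with finite products, so the Segal condition $\dgRep^{\otimes}_{((A,L),\langle n\rangle)} \simeq \dgRep_A(L)^n$ holds. The active map $\mu \colon \langle 2 \rangle \to \langle 1 \rangle$ encodes the tensor product $M \otimes_A N$ equipped with the diagonal $L$-action, which is precisely the symmetric monoidal structure on $\dgRep_A(L)$ recalled in \autoref{monoidallierep}. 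The verification for $q$ is identical with $\coho_A L$-modules in place of $L$-representations.

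For (b), the coCartesian transport of $p$ along a morphism $(A,L) \to (B,L')$ in $\int \dgLie\op$, restricted to $\langle 1 \rangle$, is computed to be the functor $M \mapsto M \otimes_A B$ seen as an $L'$-representation via the structural map $L' \to L \otimes_A^{\Lcot} B$. This functor is symmetric monoidal by inspection: the canonical equivalence $(M \otimes_A N) \otimes_A B \simeq (M \otimes_A B) \otimes_B (N \otimes_A B)$ identifies the transported diagonal $L$-action with the diagonal $L'$-action. The corresponding transport for $q$ is $V \mapsto V \otimes_{\coho_A L} \coho_B L'$ --- using \autoref{commute-coho-good} to provide the structural morphism $\coho_A L \to \coho_B L'$ --- and is monoidal tautologically.

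The main obstacle will be verifying that $p$ and $q$ are indeed coCartesian fibrations after the localisation along quasi-isomorphisms: one must check, for every morphism $(A,L) \to (B,L')$ in $\int \dgLie\op$ and every $t \colon \langle m \rangle \to \langle n \rangle$ in $\ptfin$, that the prescribed pushforward $(\el{M}{m}) \mapsto \bigl( \bigotimes_{i \in t^{-1}(j)} M_i \otimes_A B \bigr)_{1 \leq j \leq n}$ is homotopically well-defined and satisfies the expected universal property. This is handled by cofibrant replacement: the projective model structure on $\MCdgRep_A(L)$ is monoidal, the tensor product of cofibrant representations preserves quasi-isomorphisms, and restriction of scalars along a map of dg-Lie algebras is a left Quillen functor. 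One then concludes by \cite[2.4.29]{lurie:dagx}, applied as in the construction of $\int \dgLie$ earlier in this section.
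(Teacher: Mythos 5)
Your argument is correct and follows essentially the same route as the paper: both reduce the statement to the fiberwise Segal condition for the coCartesian fibrations $p$ and $q$ and then invoke the classification of such fibrations over $\Cc \times \ptfin$ by functors $\Cc \to \monoidalinftyCatu V$. The only cosmetic difference is that the paper first classifies $p$ over the unlocalised base $\int \MCdgLie\op$ and then factors through $\int \dgLie\op$ by observing that quasi-isomorphisms of dg-Lie algebras are sent to equivalences, whereas you work over the localised base directly and justify this with the model-categorical input (cofibrant replacement and \cite[2.4.29]{lurie:dagx}) described at the end.
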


\begin{proof}
For any object $(A,L) \in \int \MCdgLie\op$, the pulled back coCartesian fibration
\[
\int \dgRep^{\otimes} \times_{\int \MCdgLie\op \times \ptfin} \{(A,L)\} \times \ptfin \to \ptfin
\]
defines a symmetric monoidal structure on the $(\infty,1)$-category $\dgRep_A(L)$ -- see \autoref{monoidalcats}.
The coCartesian fibration $p$ is therefore classified by a functor
\[
\int \MCdgLie\op \to \monoidalinftyCatu V
\]
Moreover, this functor maps quasi-isomorphisms of dg-Lie algebras to equivalences. Hence it factors through a functor
\[
\dgRep \colon \int \dgLie\op \to \monoidalinftyCatu V
\]
The case of $\dgMod_{\coho(-)}$ is isomorphic.
\end{proof}

We will now focus on building a natural transformation between those two functors.
Let us build a functor $g \colon \int \MCdgRep^{\otimes} \to \int \MCdgMod_{\coho(-)}^{\otimes}$
\begin{itemize}
\item The image of an object $(A,L,\el{M}{m})$ is the family $(A,L,\el{V}{m})$ where $V_i$ is the $\coho_A L$-dg-module 
\[
g_L^A(M_i) = \Homint_L\left(\Envel_A\left(A[\eta] \otimes_A L\right),M_i\right)
\]
\item The image of an arrow $\bigotimes_{i \in t^{-1}(j)} M_i \otimes_A B \to N_j$ is the composition 
\begin{align*}
\bigotimes g_L^A(M_i) \otimes_{\coho_A L} \coho_B L' \to &
g_L^A\left(\bigotimes M_i\right) \otimes_{\coho_A L} \coho_B L' \\
\to & g_{L'}^B\left(\bigotimes M_i \otimes_A B\right)
\\ \to & g_{L'}^B(N)
\end{align*}
where the second map sends a tensor $\lambda \otimes \mu$ to $(\lambda \otimes \id).\mu$ with
\[
\lambda \otimes \id \colon \Envel_B\left(B[\eta] \otimes_B L'\right) \to \Envel_B\left(B[\eta] \otimes_A L \right) = \Envel_A\left(A[\eta] \otimes_A L\right) \otimes_A B \to \left(\bigotimes M_i\right) \otimes_A B
\]
\end{itemize}
The functor $g$ induces a functor of $(\infty,1)$-categories
\[
g \colon \int \dgRep^{\otimes} \to \int \dgMod_{\coho(-)}^{\otimes}
\]
which commutes with the coCartesian fibrations to $\int \MCdgLie\op \times \ptfin$.

\begin{prop}\label{derivedcat-adjunction}
The functor $g$ admits a left adjoint $f$ relative to $\int \MCdgLie \op \times \ptfin$.
There is therefore a commutative diagram of $(\infty,1)$-categories 
\[
\mymatrix{
\int \dgMod_{\coho(-)}^{\otimes} \ar[rr]^f \ar[rd]_-p && \int \dgRep^{\otimes} \ar[dl]^-q \\ & \int \MCdgLie\op \times \ptfin &
}
\]
where $f$ preserves coCartesian morphisms.
It follows that $f$ is classified by a (monoidal) natural transformation
\[
\mymatrix{
\int \dgLie\op \dcell[r][{\dgMod_{\coho(-)}}][{\dgRep}][f][=>][12pt] & \monoidalinftyCatu V
}
\]
\end{prop}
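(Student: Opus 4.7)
The plan is to assemble the pointwise adjunctions $f_L^A \dashv g_L^A$ of \autoref{derivedcat-local} into a relative adjunction, then verify compatibility with both the $\int\dgLie\op$-direction and the $\ptfin$-direction. We work with the projection $q \colon \int \dgRep^{\otimes} \to \int \MCdgLie\op \times \ptfin$ and its relative analogue for $\int \dgMod_{\coho(-)}^{\otimes}$. We will invoke \cite[7.3.2.6 and 7.3.2.11]{lurie:halg} (relative adjoint functor theorems): to produce the relative left adjoint $f$ it suffices to show that $g$ admits a fiberwise left adjoint and that this pointwise left adjoint is compatible with coCartesian edges (a Beck--Chevalley condition).

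First I would record that the fiberwise left adjoint exists: for each fixed $(A,L,\langle m\rangle)$, the $(\infty,1)$-category $\dgRep_A(L)^{\otimes}_{\langle m\rangle}$ is equivalent to $\dgRep_A(L)^m$ (and similarly for the module side), and the pointwise left adjoint is simply $(V_1,\dots,V_m) \mapsto (f_L^A(V_1),\dots,f_L^A(V_m))$. This is exact by \autoref{derivedcat-local}. Then I would check the two Beck--Chevalley compatibilities:

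\emph{Compatibility in the $\int\dgLie\op$-direction.} For a morphism $(A,L) \to (B,L')$ in $\int \MCdgLie\op$, I must show that the mate
\[
f_{L'}^B \circ ((-) \otimes_{\coho_A L} \coho_B L') \to (B \otimes_A -) \circ f_L^A
\]
is an equivalence of functors $\dgMod_{\coho_A L} \to \dgRep_B(L')$. Both sides preserve colimits and both are equal to $\Envel_B(B[\eta] \otimes_B L') \otimes_{\coho_B L'} \coho_B L' \otimes_{\coho_A L} (-) \simeq \Envel_B(B[\eta] \otimes_B L') \otimes_{\coho_A L}(-)$, so it suffices to test on the generator $V = \coho_A L$, where both sides are computed to be $\Envel_B(B[\eta] \otimes_B L')$ as a $B$-representation of $L'$; this follows from the explicit definition of $f$ and the compatibility established in \autoref{commute-coho-good}.

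\emph{Compatibility in the $\ptfin$-direction.} For a map $t \colon \langle m\rangle \to \langle n\rangle$ of pointed finite sets with $(A,L)$ fixed, I must show that the natural colax-monoidal map of left adjoints
\[
\bigotimes_{i \in t^{-1}(j)} f_L^A(V_i) \to f_L^A\Bigl(\bigotimes_{i \in t^{-1}(j)} V_i\Bigr)
\]
(in $\dgRep_A(L)$, with diagonal $L$-action on the left) is an equivalence. This is the mate of the lax-monoidal structure of $g_L^A$ recorded in \autoref{monoidallierep}. I expect this to be \emph{the main obstacle}: one must use the Hopf-algebra structure on $\Envel_A(A[\eta]\otimes_A L)$ coming from the Lie structure (the coproduct on $L$ by $x \mapsto x\otimes 1 + 1\otimes x$) to exhibit $f_L^A(V\otimes_{\coho_A L}W)\simeq f_L^A(V)\otimes_A f_L^A(W)$ with diagonal action. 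Equivalently one checks it on the generators $V=W=\coho_A L$, where both sides identify with $\Envel_A(A[\eta]\otimes_A L)$ with its standard bimodule structure, and extends by colimit-preservation of both functors.

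Granting these two compatibilities, the relative left adjoint $f$ exists and preserves coCartesian morphisms over $\int \MCdgLie\op \times \ptfin$. In particular $f$ is a map of coCartesian fibrations and is therefore classified by a natural transformation between the classifying functors. Finally, since both $\dgMod_{\coho(-)}$ and $\dgRep$ send quasi-isomorphisms of dg-Lie algebras to equivalences, the natural transformation descends along the localisation $\int \MCdgLie\op \to \int \dgLie\op$, yielding the announced monoidal natural transformation $\dgMod_{\coho(-)} \to \dgRep$ between functors $\int \dgLie\op \to \monoidalinftyCatu V$.
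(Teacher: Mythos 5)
Your proposal is correct and follows essentially the same route as the paper: fiberwise left adjoints from \autoref{derivedcat-local}, verification that the mates along coCartesian edges are equivalences, and Lurie's relative adjoint functor theorem (the paper cites it as [8.3.2.11] of \cite{lurie:halg}; your [7.3.2.x] is the same result under a different edition's numbering). The only cosmetic difference is that you split the Beck--Chevalley check into the $\int\dgLie\op$-direction and the $\ptfin$-direction separately, whereas the paper verifies the combined comparison map in one step by asserting that the relevant squares commute as squares of \emph{monoidal} functors.
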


\begin{proof}
Whenever we fix $(A,L,\langle m \rangle)$ in $\int \MCdgLie \op \times \ptfin$, the functor $g$ restricted to the fibre categories admits a left adjoint (see \autoref{derivedcat-local}).
Moreover when $(A,L) \to (B,L')$ is a morphism in $\int \MCdgLie \op$, the following squares of monoidal functors commutes up to a canonical equivalence induced by the adjunctions
\[
\mymatrix{
\dgMod_{\coho_A L} \ar[r]^-{f_L^A} \ar[d]_{- \otimes_{\coho_A L} \coho_B(L \otimes_A B)} & \dgRep_A(L) \ar[d]^{- \otimes_{\Envel_A L} \Envel_B(L \otimes_A B)} \\
\dgMod_{\coho_B(L \otimes_A B)} \ar[r]^-{f^B_{L \otimes_A B}} \ar[d]_{- \otimes_{\coho_B(L \otimes_A B)} \coho_B(L')} & \dgRep_B(L \otimes_A B) \ar[d]^{\oubli} \\
\dgMod_{\coho_B L'} \ar[r]^-{f^B_{L'}} & \dgRep_B(L')
}
\]
For any family $(\el{V}{m})$ of $\coho_A L$-dg-modules, the canonical morphism
\[
\left(\bigotimes f_L^A(V_i) \right) \otimes_{\coho_A L} \coho_B L' \to f_{L'}^B \left( \left( \bigotimes V_i \right) \otimes_A B \right)
\]
is hence an equivalence.
This proves that $g$ satisfies the requirements of \cite[8.3.2.11]{lurie:halg}, admits a relative left adjoint $f$ which preserves coCartesian morphisms.
\end{proof}

Let us denote by $\int \MCdgMod^{\otimes}$ the category
\begin{itemize}
\item an object is a family $(A,\el{M}{m})$ where $A \in \MCcdga_k$ and $M_i \in \MCdgMod_A$
\item a morphism $(A,\el{M}{m}) \to (B,\el{N}{n})$ is the datum of a morphism $A \to B$, of a map $t \colon \langle m \rangle \to \langle n \rangle$ of pointed finite sets and for any $1 \leq j \leq n$ of morphism of $A$-dg-modules
\[
\otimes_{i \in t^{-1}(j)} M_i \to N_j
\]
\end{itemize}
There is a natural projection $\int \MCdgMod^{\otimes} \to \MCcdga_k \times \ptfin$.
We have three functors
\[
\mymatrix{
\int \MCdgRep^{\otimes} \ar[rr]^-\pi \ar[rd]_g && \int \MCdgMod^{\otimes} \times_{\MCcdga_k} \int \MCdgLie\op  \ar[dl]^\rho
\\ & \int \MCdgMod_{\coho(-)}^{\otimes} &
}
\]
compatible with the projections to $\int \MCdgLie\op \times \ptfin$. The functor $\pi$ is defined by forgetting the Lie action, while $\rho$ maps an $A$-dg-module $M$ and an $A$-dg-Lie algebra $L$ to the $\coho_A L$-dg-module $M$, where $\coho_A L$ acts through the augmentation map $\coho_A L \to A$.
The above triangle does not commute, but we have a natural transformation $g \to \rho \pi$, defined on a triple $(A,L,V)$ by
\[
\mymatrix{
g(A,L,V) = \Homint_L\left(\Envel_A\left(A[\eta] \otimes_A L\right), V\right) \ar[r]^-{\ev_\unit} & V = \rho \pi(A,L,V)
}
\]
We check that this map indeed commutes with the $\coho_A L$-action. Localising all that 	along quasi-isomorphisms, we get a tetrahedron 
\[
\shorthandoff{:;!?}
\xy <6mm,0cm>:
(1,0)*+{\int \dgMod^{\otimes} \times_{\MCcdga_k} \int \MCdgLie\op}="0",
(5,-2)*+{\int \dgRep^{\otimes}}="1",
(-3,-2)*+{\int \dgMod_{\coho(-)}^{\otimes}}="2",
(3,-5)*+{\int \MCdgLie\op \times \ptfin}="3",
\ar "1";"0" _(0.4)\pi
\ar "0";"2" _\rho
\ar "1";"2" ^g
\ar "0";"3" |!{"1";"2"}\hole ^(0.6)r
\ar "1";"3" ^q
\ar "2";"3" _p
\endxy
\]
where $p$, $q$ and $r$ are coCartesian fibrations -- see \cite[2.4.29]{lurie:dagx} -- where the upper face is filled with the natural transformation $g \to \rho \pi$ and where the other faces are commutative.

\begin{lem}\label{cocartesianretract}
The functor $\rho$ admits a relative left adjoint $\tau$ and the functor $\pi$ preserves coCartesian maps.
Moreover, the natural transformation $\tau \to \pi f$ -- induced by $g \to \rho \pi$ and by the adjunctions -- is an equivalence.
\end{lem}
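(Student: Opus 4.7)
The lemma has three parts --- existence of the relative left adjoint $\tau$, that $\pi$ preserves coCartesian morphisms, and the equivalence $\tau \simeq \pi f$ --- and I will address them in that order.

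\emph{Existence of $\tau$.} The plan is to apply \cite[8.3.2.11]{lurie:halg}, which produces a relative left adjoint from a fibrewise left adjoint satisfying the Beck--Chevalley condition. On the fibre over $(A,L,\langle m\rangle)$, the functor $\rho$ sends an $m$-tuple $(M_i)_i$ of $A$-dg-modules to the same tuple viewed as $\coho_A L$-dg-modules via the augmentation $\coho_A L \to A$, and its obvious left adjoint is the componentwise base change $(V_i)_i \mapsto (V_i \otimes_{\coho_A L} A)_i$. For a morphism $(A,L) \to (B,L')$ in $\int\dgLie\op$, the Beck--Chevalley condition reduces to the canonical identification
\[
(V \otimes_{\coho_A L} A) \otimes_A B \simeq V \otimes_{\coho_A L} B \simeq (V \otimes_{\coho_A L} \coho_B L') \otimes_{\coho_B L'} B,
\]
and the compatibility with active maps in $\ptfin$ is associativity of the tensor product. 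This produces $\tau$, given componentwise by $V \mapsto V \otimes_{\coho_A L} A$.

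\emph{$\pi$ preserves coCartesian morphisms.} Unwinding the construction of the coCartesian fibration $\int\dgRep^\otimes$ via \cite[2.4.29]{lurie:dagx}, a coCartesian lift over $(A,L) \to (B,L')$ and $t \colon \langle m\rangle \to \langle n\rangle$ sends $(M_i)_i$ to $\bigl(\bigotimes_{i \in t^{-1}(j)} M_i \otimes_A B\bigr)_j$ equipped with the $L'$-action induced via $L' \to L \otimes_A B$. Applying $\pi$ simply forgets this action, leaving the same $B$-dg-modules with the same universal maps; this is precisely the coCartesian lift in $\int\dgMod^\otimes$.

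\emph{The equivalence $\tau \simeq \pi f$.} Both functors $\tau, \pi f \colon \dgMod_{\coho_A L} \to \dgMod_A$ preserve all colimits: $\tau$ is a left adjoint by the first step; $f$ is a left adjoint by \autoref{derivedcat-adjunction}; and $\pi$ preserves colimits because (co)limits in $\dgRep_A(L)$ are computed on the underlying $A$-dg-modules. Since $\dgMod_{\coho_A L}$ is generated under colimits by $\coho_A L$, it suffices to verify that the natural map is an equivalence on $V = \coho_A L$. Tracing through the two adjunctions, the mate of the evaluation-at-unit natural transformation $g \to \rho\pi$ at $V = \coho_A L$ unwinds to the canonical unit
\[
\tau(\coho_A L) = A \longrightarrow \Envel_A(A[\eta]\otimes_A L) = \pi f(\coho_A L).
\]
The $A$-dg-module $A[\eta]\otimes_A L$ is contractible (since $A[\eta]$ is), so by Poincaré--Birkhoff--Witt in characteristic zero the underlying complex of $\Envel_A(A[\eta]\otimes_A L)$ identifies with $\Sym_A(A[\eta]\otimes_A L)$, which is quasi-isomorphic to $A$ via its unit. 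The main obstacle I anticipate is the careful diagram chase showing that the mate of $g \to \rho\pi$ really is this unit map on the generator $\coho_A L$, together with a check that the identification extends monoidally over the fibres at $\langle m\rangle$ for $m \geq 2$.
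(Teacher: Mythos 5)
Your proof is correct and follows essentially the same route as the paper: the relative left adjoint is produced fibrewise as base change along the augmentation $\coho_A L \to A$ via \cite[8.3.2.11]{lurie:halg}, and the comparison $\tau \to \pi f$ is identified at $(A,L,V)$ with the map $V \otimes_{\coho_A L} A \to V \otimes_{\coho_A L} \Envel_A(A[\eta]\otimes_A L)$ induced by the unit. Your extra steps --- reducing to the generator $\coho_A L$ by colimit-preservation and invoking PBW plus contractibility of $A[\eta]\otimes_A L$ to see that $\Envel_A(A[\eta]\otimes_A L)\simeq A$ --- just make explicit what the paper asserts without comment when it declares that map an equivalence of $A$-dg-modules.
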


\begin{rmq}
It follows from the above lemma the existence of natural transformations
\[
\mymatrix{
\int \dgLie\op \ar@/^20pt/[rr] _{}="up" ^{\dgMod_{\coho(-)}}
\ar[rr] |{\dgRep} ^*=<3pt>{}="midup" _*=<3pt>{}="middown"
\ar@/_20pt/[rr] ^{}="down" _{\dgMod}
&&
\monoidalinftyCatu V 
\ar @{=>} "up";"midup" ^-f
\ar @{=>} "middown";"down" ^-\pi
} 
\]
This lemma also describes the composite $\pi f$ as the base change functor along the augmentation maps $\coho_A L \to A$.
\end{rmq}

\begin{proof}
Let us first prove that $\rho$ admits a relative left adjoint $\tau$.
For any pair $(A,L) \in \int \MCdgLie\op$, the forgetful functor $\dgMod_A \to \dgMod_{\coho_A L}$ admits a left adjoint, namely the base change functor along the augmentation map $\coho_A L \to A$.
This left adjoint is monoidal and commutes with base change. It therefore fulfil the assumptions of \cite[8.3.2.11]{lurie:halg}.
The induced natural transformation $\tau \to \pi f$ maps a triple $(A,L,V) \in \int \dgMod_{\coho(-)}$ to the canonical map
\[
\tau^A_L(V) = V \otimes_{\coho_A L} A \to V \otimes_{\coho_A L} \Envel_A\left( L \otimes_A A[\eta] \right) = \pi^A_L f^A_L(V)
\]
which is an equivalence of $A$-dg-modules.
\end{proof}

Let us consider the functor of $(\infty,1)$-categories
\[
\dAff_k^{\Delta^2} \to \left(\monoidalinftyCatu V\right)\op
\]
mapping a sequence $X \to Y \to Z$ of derived affine schemes to the monoidal $(\infty,1)$-category $\Qcoh(Y)$.
We form the fibre product
\[
\mymatrix{
\Cc \cart \ar[r] \ar[d] & \dAff_k^{\Delta^2} \ar[d]^p \\
\dAff_k \ar[r]_-{\id_-} & \dAff_k^{\Delta^1}
}
\]
where $p$ is induced by the inclusion $(0 \to 2) \to (0 \to 1 \to 2)$. Finaly, we define $\Dd$ as the full subcategory of $\Cc$ spanned by those triangles $\Spec A \to \Spec B \to \Spec A$ where $B \in \dgExt_A$.
We get a functor
\[
F \colon \Dd \to \left(\monoidalinftyCatu V\right)\op
\]
Note that the functor $\Dd \to \dAff_k$ is a Cartesian fibration classified by the functor $A \mapsto \dgExt_A\op$.

Let us denote by $\oint \dStF \to \dAff_k$ the Cartesian fibration classified by the functor $\Spec A \mapsto \dStF_A = \sifted(\dgExt_A\op)$.
The Yoneda natural transformation $\dgExt\op \to \dStF$ defines a functor $\Dd \to \oint \dStF$.
We define
\[
\Lqcoh \colon \oint \dStF \to \left(\monoidalinftyCatu V\right)\op
\]
as the left Kan extension of $F$ along $\Dd \to \oint \dStF$.

Let now $X$ be a derived stack. The category $\dStF_X$ defined in \autoref{dstfoverstack} is equivalent to the category of Cartesian sections $\phi$ as below -- see \cite[3.3.3.2]{lurie:htt}
\[
\mymatrix{
& \oint \dStF \ar[d] \\ \quot{\dAff_k}{X} \ar[ru]^\phi \ar[r] & \dAff_k
}
\]

\begin{df}\label{lqcohdstf}
Let $X$ be a derived stack. We define the functor of derived category of formal stacks over $X$:
\[
\Lqcoh^X \colon \mymatrix{
\dStF_X \simeq \Fct_{\dAff_k}^{\mathrm{Cart}}\left( \quot{\dAff_k}{X}, \oint \dStF \right) \ar[r]^-{\Lqcoh}
& \Fct\left( \quot{\dAff_k}{X}, \left(\monoidalinftyCatu V\right)\op \right) \ar[r]^-{\colim} & \left(\monoidalinftyCatu V\right)\op
}
\]
If $Y \in \dStF_X$ then $\Lqcoh^X(Y)$\glsadd{lqcoh} is called the derived category of the formal stack $Y$ over $X$. We can describe it more intuitively as the limit of symmetric monoidal $(\infty,1)$-categories
\[
\Lqcoh^X(Y) = \lim_{\Spec A \to X} ~ \underset{\Spec B \to Y_A}{\lim_{B \in \dgExt_A}} \dgMod_B
\]
where $Y_A \in \dStF_A$ is the pullback of $Y$ along the morphism $\Spec A \to X$.
\end{df}

The same way, the opposite category of dg-Lie algebras over $X$ is equivalent to that of coCartesian section
\[
\dgLie_X\op \simeq \Fct^{\mathrm{coC}}_{\cdga_k}\left(\left(\quot{\dAff_k}{X}\right)\op, \int \dgLie\op \right)
\]
We can thus define
\begin{df}
Let $X$ be a derived stack. We define the functor of Lie representations over $X$ to be the composite functor $\dgRep_X \colon \dgLie_X\op \to \monoidalinftyCatu V$
\[
\mymatrix{
\dgLie_X\op \ar[r] & \Fct\left(\left(\quot{\dAff_k}{X}\right)\op, \int \dgLie\op \right) \ar[r]^-{\dgRep} & \Fct\left(\left(\quot{\dAff_k}{X}\right)\op, \monoidalinftyCatu V \right) \ar[r]^-\lim & \monoidalinftyCatu V
}
\]
In particular for any $L \in \dgLie_X$, this defines a symmetric monoidal $(\infty,1)$-category 
\[
\dgRep_X(L) = \lim_{\Spec A \to X} \dgRep_A(L_A)
\]
where $L_A \in \dgLie_A$ is the dg-Lie algebra over $A$ obtained by pulling back $L$.
\end{df}

\begin{prop}\label{ffformallierep}
Let $X$ be a derived stack.
There is a natural transformation
\[
\mymatrix{
\dgLie_X\op \dcell[r][{\Lqcoh^X(\formal_X(-))}][{\dgRep_X}][\Psi] & \monoidalinftyCatu V
}
\]
Moreover, for any $L \in \dgLie_X$, the induced monoidal functor $\Lqcoh^X(\formal_X L) \to \dgRep_X(L)$ is fully faithful and preserves colimits.
\end{prop}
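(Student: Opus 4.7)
The plan is to construct $\Psi$ affine-locally on the subcategory of free Lie algebras, extend by sifted colimits, and then assemble over the affine charts of $X$. For an affine base $\Spec A$ and a free Lie algebra $L \in \dgLieLib_A$, the formal stack $\formal_A L$ is by construction the restriction to $\dgExt_A$ of the presheaf corepresented by $\coho_A L$, and $\coho_A L$ itself lies in $\dgExt_A$ thanks to \autoref{ext-coho}. The universal property of the left Kan extension defining $\Lqcoh$ in \autoref{lqcohdstf} therefore yields a canonical monoidal equivalence $\Lqcoh^A(\formal_A L) \simeq \dgMod_{\coho_A L}$. Composing with the monoidal functor $f^A_L \colon \dgMod_{\coho_A L} \to \dgRep_A(L)$ furnished by \autoref{derivedcat-adjunction} produces the desired arrow, and the naturality in $(A,L)$ proved there makes this construction functorial on the full subcategory $\int \dgLieLib\op$ fibered over $\cdga_k$.

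Next I extend $\Psi$ from $\dgLieLib$ to $\dgLie$ by sifted colimits. Given $L \in \dgLie_A$, \autoref{dglie-sifted} presents it as a sifted colimit $L \simeq \colim_\alpha L_\alpha$ with $L_\alpha \in \dgLieLib_A$. Since $\formal_A$ is a left adjoint it commutes with this colimit, and \autoref{lqcohdstf} shows that $\Lqcoh^A$ converts colimits in $\dStF_A$ into limits in $\monoidalinftyCatu V$, so $\Lqcoh^A(\formal_A L) \simeq \lim_\alpha \dgMod_{\coho_A L_\alpha}$. On the other hand, the coCartesian fibration classifying $\dgRep$ in \autoref{derivedcat-adjunction} realises $\dgRep_A$ as a functor $\dgLie_A\op \to \monoidalinftyCatu V$, which in particular sends the colimit $\colim_\alpha L_\alpha$ to the limit $\lim_\alpha \dgRep_A(L_\alpha)$. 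The local transformation $\Psi_A$ is then defined as the limit of the $\Psi^A_{L_\alpha}$, and its compatibility with base change along $A \to B$ is guaranteed by the corresponding compatibility of \autoref{derivedcat-adjunction} already present on the libre subcategory. Taking one further limit over $\quot{\dAff_k}{X}$, where $\Lqcoh^X \circ \formal_X$ and $\dgRep_X$ are both defined as such limits, produces the required global natural transformation
\[
\Psi \colon \Lqcoh^X(\formal_X(-)) \to \dgRep_X
\]

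For the second half of the statement, fully faithfulness is local both on $X$ and on the Lie algebra, so it reduces to showing that each $\Psi^A_L \colon \Lqcoh^A(\formal_A L) \to \dgRep_A(L)$ is fully faithful. For $L \in \dgLieLib_A$ this is precisely \autoref{derivedcat-ff-local}, and for a general $L \simeq \colim_\alpha L_\alpha$ it follows because $\Psi^A_L$ is the limit in $\monoidalinftyCatu V$ of the fully faithful maps $\Psi^A_{L_\alpha}$, and such a limit remains fully faithful. Colimit preservation of the induced functor rests on two observations: first, $\Lqcoh^X \circ \formal_X$ preserves colimits essentially by construction, as $\formal_X$ is a left adjoint and $\Lqcoh^X$ is a colimit of values of a left Kan extension; second, each $f^A_L$ is the left adjoint of \autoref{derivedcat-adjunction} and hence preserves colimits, and this is stable under the subsequent limits used to define $\Psi_X$. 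The main obstacle I anticipate lies in the gluing step of the second paragraph: one must verify that the local identifications $\Lqcoh^A(\formal_A L_\alpha) \simeq \dgMod_{\coho_A L_\alpha}$, together with the functors $f^A_{L_\alpha}$, cohere into a single natural transformation of functors $\dgLie_X\op \to \monoidalinftyCatu V$, compatibly with maps in $\int \dgLieLib\op$ and with base change along $\cdga_k$. This is a matter of assembling the Grothendieck constructions underlying $\Lqcoh$, $\formal$, and $\dgRep$ and promoting the pointwise equivalences to a coCartesian-preserving functor between the relevant fibrations; the machinery already used in \autoref{formal-nattrans} and \autoref{cocartesianretract} provides the blueprint, but its careful application is the delicate point of the proof.
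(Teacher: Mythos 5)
Your proposal follows essentially the same route as the paper: define the transformation on free Lie algebras by identifying $\Lqcoh^A(\formal_A L)$ with $\dgMod_{\coho_A L}$ and composing with the functor $f^A_L$ of \autoref{derivedcat-adjunction}, then extend along sifted colimits and take the limit over $\quot{\dAff_k}{X}$; fully faithfulness comes from \autoref{derivedcat-ff-local} and passes to limits. The coherence issue you flag as the ``delicate point'' at the end is in fact already dispatched in the paper by the relative-adjunction formulation of \autoref{derivedcat-adjunction}, which produces $\dgMod_{\coho(-)} \to \dgRep$ directly as a transformation of functors out of $\int \dgLie\op$, so no further gluing argument is needed there.

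The one place where your argument has a real hole is the sentence asserting that the coCartesian fibration classifying $\dgRep$ ``in particular sends the colimit $\colim_\alpha L_\alpha$ to the limit $\lim_\alpha \dgRep_A(L_\alpha)$.'' Nothing about being classified by a coCartesian fibration forces a functor $\dgLie_A\op \to \monoidalinftyCatu V$ to carry colimits of Lie algebras to limits of categories; this is a substantive statement about categories of representations (equivalently, that $\dgRep$ is the right Kan extension of its restriction to the free Lie algebras), and it is exactly the hinge on which the sifted-colimit extension turns: without it your construction lands in $\lim_\alpha \dgRep_A(L_\alpha)$ rather than in $\dgRep_A(L)$. The paper isolates this as condition \emph{(i)} in the proof of \autoref{lqcohformaltodgrep} and justifies it by citing \cite[2.4.32]{lurie:dagx} (commutation of $\dgRep_A$ with sifted limits). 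You should either cite that result or supply an argument; everything else in your proof is sound.
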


To prove this proposition, we will need the following

\begin{lem}\label{lqcohformaltodgrep}
The natural transformation $\formal \colon \dgLie \to \dStF$ together with the functor
\[
\Lqcoh \colon \left(\oint \dStF\right)\op \to \monoidalinftyCatu V
\]
define a functor $\phi \colon \int \dgLie\op \to \monoidalinftyCatu V$.
There is a pointwise fully faithful and colimit preserving natural transformation $\phi \to \dgRep$.
\end{lem}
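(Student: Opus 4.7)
The plan is to construct $\phi$ by composing $\formal$ and $\Lqcoh$, identify its restriction to free Lie algebras with $\dgMod_{\coho(-)}$, transport the natural transformation from Proposition \ref{derivedcat-adjunction} along this identification, and then extend by sifted colimits.

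More precisely, since $\formal \colon \dgLie \to \dStF$ is a natural transformation of functors $\cdga_k \to \PresLeftu U$, it induces a functor $\int \dgLie \to \oint \dStF$ over $\dAff_k$, and composing with $\Lqcoh \colon \oint \dStF \to (\monoidalinftyCatu V)\op$ gives (after reversing the Lie variable) a functor $\phi \colon \int \dgLie\op \to \monoidalinftyCatu V$ whose value at a pair $(A, L)$ is $\Lqcoh^A(\formal_A L)$. The key computation is that for $L \in \dgLieLib_A$ freely generated by $M \in \dgModLib_A$, Corollary \ref{ext-coho} identifies $\coho_A L$ with the trivial square zero extension $A \oplus \dual M[-1] \in \dgExt_A$, so $\formal_A L$ is representable in $\dStF_A = \sifted(\dgExt_A\op)$ by an object of $\dgExt_A\op$. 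Consequently the slice category $\dgExt_A/\formal_A L$ appearing in the left Kan extension defining $\Lqcoh^A$ has a terminal object $\id \colon \coho_A L \to \coho_A L$, and the limit collapses to yield a natural equivalence $\phi(A, L) \simeq \dgMod_{\coho_A L}$. Restricted to $\int \dgLieLib\op$, the natural transformation $f \colon \dgMod_{\coho(-)} \to \dgRep$ of Proposition \ref{derivedcat-adjunction} therefore lifts to a natural transformation $\phi|_{\dgLieLib} \to \dgRep|_{\dgLieLib}$, whose component at $L$ is the monoidal functor $f_L^A$ of Lemma \ref{derivedcat-local}.

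To extend to all of $\int \dgLie\op$, note that both $\phi$ and $\dgRep$ convert sifted colimits of Lie algebras into limits in $\monoidalinftyCatu V$: for $\phi$ this is because $\formal_A$ is a left adjoint, so it preserves sifted colimits, and $\Lqcoh^A$ (being defined by a limit formula as in Definition \ref{lqcohdstf}) sends colimits of formal stacks to limits; for $\dgRep$ it is because representations restrict along morphisms of Lie algebras. Since Proposition \ref{dglie-algtheory} exhibits every $A$-dg-Lie algebra as a sifted colimit of objects of $\dgLieLib_A$, the restriction of $f$ to free Lie algebras determines the desired natural transformation $\phi \to \dgRep$ uniquely.

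Fully faithfulness and colimit preservation are then checked pointwise. For colimit preservation in the module variable, the induced functor $\phi(A, L) \to \dgRep_A(L)$ is, under the identification of $\phi(A, L)$ as an appropriate limit of $\dgMod_{\coho_A L_\alpha}$'s, the functor $f_L^A$, which is a left adjoint by Lemma \ref{derivedcat-local} and hence cocontinuous. Fully faithfulness holds for $L \in \dgLieLib_A$ (indeed for any good $L$) by Proposition \ref{derivedcat-ff-local}; for general $L$, writing $L$ as a sifted colimit of $L_\alpha \in \dgLieLib_A$, both $\phi(A, L)$ and $\dgRep_A(L)$ compute as the limits of $\phi(A, L_\alpha)$ and $\dgRep_A(L_\alpha)$ respectively, and the comparison map is a limit of fully faithful functors, hence fully faithful. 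The main technical obstacle is carefully reconciling the variances --- $\formal$ is covariant in Lie algebras while $\Lqcoh$ is contravariant in stacks --- so that all the relevant (co)Cartesian fibrations assemble into genuine functors between $\int \dgLie\op$ and $\monoidalinftyCatu V$ compatible with the natural transformation $f$ of Proposition \ref{derivedcat-adjunction}; once this bookkeeping is in place, the rest follows from the sifted-colimit extension argument sketched above.
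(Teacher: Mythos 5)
Your proposal is correct and follows essentially the same route as the paper: identify $\phi$ on free Lie algebras with $\dgMod_{\coho(-)}$ via \autoref{ext-coho}, transport the transformation of \autoref{derivedcat-adjunction}, and extend along sifted colimits using $\dgLie_A \simeq \sifted(\dgLieLib_A)$, with pointwise fully faithfulness supplied by \autoref{derivedcat-ff-local}. The only place where your justification is thinner than it should be is the claim that $\dgRep_A$ sends sifted colimits of Lie algebras to limits of categories --- "representations restrict along morphisms" only gives functoriality, not that the comparison functor $\dgRep_A(\colim L_\alpha) \to \lim \dgRep_A(L_\alpha)$ is an equivalence; the paper invokes \cite[2.4.32]{lurie:dagx} for precisely this point.
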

\begin{proof}
Let $\Ee$ denote the full subcategory of $\int \dgLie\op$ such that the induced coCartesian fibration $\Ee \to \cdga_k$ is classified by the subfunctor
\[
A \mapsto \left(\dgLieLib_A\right)\op \subset \dgLie_A\op
\]
The functor $\phi$ is by construction the right Kan extension of its restriction $\psi$ to $\Ee$.
Moreover, the restriction $\psi$ is by definition equivalent to the composite functor
\[
\mymatrix{
\Ee \ar[r] & \int \dgLie\op \ar[rr]^-{\dgMod_{\coho(-)}} && \monoidalinftyCatu V
}
\]
Using the natural transformation $\dgMod_{\coho(-)} \to \dgRep$ from \autoref{derivedcat-adjunction}, we get
\[
\alpha \colon \psi \to \dgRep_{|\Ee} \in \Fct\left(\Ee, \monoidalinftyCatu V\right)
\]
We will prove the following sufficient conditions.
\begin{enumerate}
\item The functor $\dgRep \colon \int \dgLie\op \to \monoidalinftyCatu V$ is the right Kan extension of its restriction $\dgRep_{|\Ee}$\label{nat-kanext}
\item The natural transformation $\alpha$ is pointwise fully faithful and preserves finite colimits.\label{alphaff}
\end{enumerate}
Condition \ref{alphaff} simply follows from \autoref{derivedcat-ff-local}.
To prove condition \ref{nat-kanext}, it suffices to see that when $A$ is fixed, the functor
\[
\dgRep_A \colon \dgLie_A\op \to \monoidalinftyCatu V
\]
commutes with sifted limits. This follows from \cite[2.4.32]{lurie:dagx}.
\end{proof}

\begin{proof}[of \autoref{ffformallierep}]
Let $X$ be a derived stack and $L$ be a dg-Lie algebra over $X$. Recall that we can see $L$ as a functor
\[
L \colon \left(\quot{\dAff_k}{X}\right)\op \to \int \dgLie\op
\]
By definition, we have $\Lqcoh^X(\formal L) = \lim \phi \circ L$ and $\dgRep_X(L) = \lim \dgRep \circ L$.
We deduce the result from \autoref{lqcohformaltodgrep}.
\end{proof}

We can now prove the promised \autoref{derived-global}.
\begin{proof}[of \autoref{derived-global}]
Let us first build a functor
\[
\nu_X \colon \Qcoh(X) \to \Lqcoh^X\left(\for{(X \times X)}\right)
\]
Let us denote by $\Cc$ the $(\infty,1)$-category of diagrams
\[
\mymatrix{
\Spec A \ar[r] & \Spec B \ar[d]^\alpha \ar[r] & \Spec A \\ & X &
}
\]
where $A \in \cdga_k$ and $B \in \dgExt_A$. There is a natural functor $\Cc\op \to \monoidalinftyCatu V$ mapping a diagram as above to the monoidal $(\infty,1)$-category $\dgMod_B$.
Unwinding the definitions, we contemplate an equivalence of monoidal categories
\[
\Lqcoh^X\left(\for{(X \times X)}\right) \simeq \lim_{\Cc} \dgMod_B
\]
The maps $\alpha$ as above induce (obviously compatible) pullback functors $\alpha^* \colon \Qcoh(X) \to \dgMod_B$. This construction defines the announced monoidal functor
\[
\nu_X \colon \Qcoh(X) \to \Lqcoh^X\left( \for{(X \times X)} \right)
\]
Going back to \autoref{tgtliedef} and \autoref{dstfoverstack}, we have a canonical morphism
\[
\theta \colon \formal_X(\tgtlie_X) = \formal_X \lie_X\left( \for{(X \times X)} \right) \to \for{(X \times X)}
\]
Now using the functor from \autoref{ffformallierep}, we get a composite functor
\[
\lierep_X \colon 
\mymatrix{
\Qcoh(X) \ar[r]^-{\nu_X} & \Lqcoh^X\left(\for{(X \times X)}\right) \ar[r]^-{\theta^*} & \Lqcoh^X(\formal_X(\tgtlie_X)) \ar[r]^-\Psi & \dgRep_X(\tgtlie_X)
}
\]
As every one of those functors is both monoidal and colimit preserving, so is $\lierep_X$.
We still have to prove that $\lierep_X$ is a retract of the forgetful functor $\Theta_X \colon \dgRep_X(\tgtlie_X) \to \Qcoh(X)$.
We consider the composite functor $\Theta_X \lierep_X$
\[
\mymatrix{
\Qcoh(X) \ar[r]^-{\nu_X} & \Lqcoh^X\left(\for{(X \times X)}\right) \ar[r]^-{\theta^*} & \Lqcoh^X(\formal_X(\tgtlie_X)) \ar[r]^-\Psi & \dgRep_X(\tgtlie_X) \ar[r]^-{\Theta_X} & \Qcoh(X)
}
\]
It follows from \autoref{cocartesianretract} that the composite functor $\Theta_X \Psi$ is equivalent to the pullback
\[
\Lqcoh^X(\formal_X (\tgtlie_X)) \to \Lqcoh^X(X) \simeq \Qcoh(X)
\]
along the canonical morphism $X \to \formal_X (\tgtlie_X)$ of formal stacks over $X$.
It follows that $\Theta_X \lierep_X$ is equivalent to the composite functor
\[
\mymatrix{
\Qcoh(X) \ar[r]^-{\nu_X} & \Lqcoh^X\left(\for{(X \times X)}\right) \ar[r]^-{\alpha^*} & \Qcoh(X)
}
\]
where $\alpha$ is the morphism $X \to \for{(X \times X)}$. Unwinding the definition of $\nu_X$, we see that this composite functor is equivalent to the identity functor of $\Qcoh(X)$.
\end{proof}

\subsection{Atiyah class, modules and tangent maps}

\newcommand{\Perfdst}{\underline \Perf}
\begin{df}
Let $\Perfdst$ denote the derived stack of perfect complexes. It is defined as the stack mapping a cdga $A$ to the maximal $\infty$-groupoid in the $(\infty,1)$-category $\Perf(A)$.
For any derived stack $X$, we set $\Perfdst(X)$ to be the maximal groupoid in $\Perf(X)$. It is equivalent to space of morphisms from $X$ to $\Perfdst$ in $\dSt_k$.
\end{df}

\begin{df}\label{atiyah-def}
Let $X$ be a derived Artin stack locally of finite presentation. Any perfect module $E$ over $X$ is classified by a map $\phi_E \colon X \to \Perfdst$. Following \cite{toen:derivedK3}, we define the Atiyah class of $E$ as the tangent morphism of $\phi_E$
\[
\atiyah_E \colon \T_X \to \phi_E^* \T_{\Perfdst}
\]
\end{df}

\begin{rmq}
We will provide an equivalence $\phi_E^* \T_{\Perfdst} \simeq \End(E)[1]$ in the proof of \autoref{derivedcat-atiyah}.
The Atiyah class of $E$ should be thought as the composition
\[
\atiyah_E \colon \T_X[-1] \to \phi_E^* \T_{\Perfdst}[-1] \simeq \End(E)
\]
We will, at the end of this section, compare this definition of the Atiyah class with the usual one -- see \autoref{atiyah-compare}.
\end{rmq}

\begin{prop}\label{derivedcat-atiyah}
Let $X$ be an algebraic stack locally of finite presentation.
When $E$ is a perfect module over $X$, then the $\tgtlie_X$-action on $E$ given by the \autoref{derived-global} is induced by the Atiyah class of $E$.
\end{prop}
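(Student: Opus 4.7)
The plan is to reduce to a universal example via functoriality, and then perform a direct identification there. The construction of $\lierep_X$ is natural in $X$: given $f \colon X \to Y$ between derived Artin stacks locally of finite presentation, the tangent Lie morphism $\tgtlie_X \to f^* \tgtlie_Y$ of \autoref{tangent-lie} promotes $f^*$ to a functor $\dgRep_Y(\tgtlie_Y) \to \dgRep_X(\tgtlie_X)$ compatible with $\lierep_X$ and $\lierep_Y$. Applied to the classifying map $\phi_E \colon X \to \Perfdst$ of a perfect complex $E$, this shows that the $\tgtlie_X$-action on $E \simeq \phi_E^* \mathcal{E}$ factors through the tangent map of $\phi_E$, that is through the Atiyah class $\atiyah_E$ of \autoref{atiyah-def}. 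It therefore suffices to treat the universal situation $X = \Perfdst$, $E = \mathcal{E}$, and to show that there the resulting $\tgtlie_{\Perfdst}$-action on $\mathcal{E}$ is the tautological evaluation, under the identification $\tgtlie_{\Perfdst} \simeq \End(\mathcal{E})$ coming from $\T_{\Perfdst} \simeq \End(\mathcal{E})[1]$.

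For this universal identification I would unwind the three steps of the construction of $\lierep$ in the proof of \autoref{derived-global}. The functor $\nu_{\Perfdst}$ sends $\mathcal{E}$ to its pullback along the second projection over $\for{(\Perfdst \times \Perfdst)}$; the restriction along $\theta \colon \formal_{\Perfdst}(\tgtlie_{\Perfdst}) \to \for{(\Perfdst \times \Perfdst)}$ reinterprets this as a module over the Chevalley--Eilenberg algebra of $\tgtlie_{\Perfdst}$; and the fully faithful functor $\Psi$ of \autoref{ffformallierep} converts it into a $\tgtlie_{\Perfdst}$-representation. Testing on $A$-points with $A \in \dgArt_k$, an infinitesimal $A$-point of $\for{(\Perfdst \times \Perfdst)}$ parametrises an $A$-linear deformation of $\mathcal{E}$, and the pairing between such deformations and $\mathcal{E}$ is precisely the evaluation action of $\End(\mathcal{E})$ on $\mathcal{E}$. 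Using the equivalence between $\dStF_{\Perfdst}$ and formal moduli problems given by \autoref{formal-dgext} makes this computation tractable.

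The main obstacle will be this last step: ensuring that the Chevalley--Eilenberg style action produced by $\Psi$ matches the geometric evaluation pairing on $\mathcal{E}$ rather than merely agreeing up to some twist. Concretely, this requires carefully comparing the unit of the adjunction $\formal \dashv \lie$ of \autoref{adjoint-exist} with the tangent complex computation on $\Perfdst$, using that $\phi_{\mathcal{E}} = \id_{\Perfdst}$ so its Atiyah class is the identity $\T_{\Perfdst} \to \T_{\Perfdst}$. Once the universal identification is in place, functoriality from the first paragraph propagates the claim to arbitrary perfect $E$ on arbitrary $X$ locally of finite presentation.
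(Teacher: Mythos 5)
Your plan follows essentially the same route as the paper's proof: both read the Atiyah class as the tangent map of the classifying morphism $\phi_E \colon X \to \underline{\Perf}$ and then identify $\phi_E^* \tgtlie_{\underline{\Perf}}$ with $\gl(E)$ by comparing the functors they represent on free dg-Lie algebras --- deformations of $E$ over $\coho_A L$ versus $L$-representation structures on $E$ --- which is exactly the content of \autoref{perfequiv}. The only organisational difference is that the paper carries out this representability comparison directly over $X$ instead of first reducing to the universal pair $(\underline{\Perf}, \mathcal E)$, which lets it avoid the functoriality of $\lierep_X$ in $X$ that your reduction step would need to establish.
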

\begin{lem}\label{perfequiv}
Let $A \in \cdga_k$ and $L \in \dgLieLib_A$. The functor
\[
f_L^A \colon \dgMod_{\coho_A(L)} \to \dgRep_A(L)
\]
defined in \autoref{derivedcat-local} induces an equivalence 
\[
\Perf(\coho_A L) \to^\sim \dgRep_A(L) \timesunder[\dgMod_A] \Perf(A)
\]
\end{lem}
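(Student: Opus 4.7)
The plan is to prove the lemma in three steps: fully faithfulness, containment in the target, and essential surjectivity.

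First, since $L\in\dgLieLib_A\subset\dgLieGood_A$, Proposition~\ref{derivedcat-ff-local} already tells us that $f_L^A\colon \dgMod_{\coho_A L}\to \dgRep_A(L)$ is fully faithful on the whole of $\dgMod_{\coho_A L}$; its restriction to $\Perf(\coho_A L)$ is thus automatically fully faithful. Second, Lemma~\ref{cocartesianretract} identifies $\pi\circ f_L^A$ with the base-change functor $V\mapsto V\otimes_{\coho_A L}A$ along the augmentation $\coho_A L\to A$. Base change along a map of cdga's preserves perfect modules, so $f_L^A$ factors through $\dgRep_A(L)\times_{\dgMod_A}\Perf(A)$. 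This gives a fully faithful functor
\[
F\colon \Perf(\coho_A L)\longrightarrow \dgRep_A(L)\times_{\dgMod_A}\Perf(A).
\]

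The main step, and the main obstacle, is essential surjectivity of $F$. Since $f_L^A$ is exact, fully faithful and preserves all colimits (being a left adjoint), its essential image on $\Perf(\coho_A L)$ — the smallest stable idempotent-complete subcategory of $\dgMod_{\coho_A L}$ containing $\coho_A L$ — is the smallest stable idempotent-complete subcategory $\mathcal S\subset \dgRep_A(L)$ containing $f_L^A(\coho_A L)$. I would compute $f_L^A(\coho_A L)=\Envel_A(A[\eta]\otimes_A L)$, which is quasi-isomorphic to $A$ (as $A[\eta]$ is contractible) and, via this equivalence, carries the trivial $L$-action in the derived sense. So $\mathcal S$ is the thick closure of the trivial representation $A$ inside $\dgRep_A(L)$, and it suffices to show
\[
\mathcal S \;=\; \dgRep_A(L)\times_{\dgMod_A}\Perf(A).
\]

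The inclusion $\mathcal S\subseteq \dgRep_A(L)\times_{\dgMod_A}\Perf(A)$ is immediate because the right-hand side is stable, idempotent-complete and contains $A$. For the reverse inclusion, I would exploit that, by Corollary~\ref{ext-coho}, $\coho_A L\simeq A\oplus \dual G[-1]$ is a trivial square-zero extension, where $G\in\dgModLib_A$ generates $L$ and is concentrated in strictly positive cohomological degrees. A representation of $L=\libre(G)$ on $V\in\Perf(A)$ is the same as an $A$-linear map $\phi\colon G\to \End_A(V)$. The argument proceeds by induction on the Tor amplitude of $V$: if $V$ is concentrated in a single degree (so $V$ is a retract of some $A^n[m]$), then $\End_A(V)$ is concentrated in degree $0$, whereas $G$ is in positive degrees, forcing $\phi=0$; hence $V$ is the trivial representation and lies in $\mathcal S$. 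For general $V$, choose a cofibre sequence $V'\to V\to V''$ with $V',V''$ of strictly smaller amplitude; the obstruction to splitting this sequence $L$-equivariantly lives in a group that vanishes for degree reasons analogous to the base case, producing $V$ as an iterated extension in $\dgRep_A(L)$ of representations already in $\mathcal S$. Finite Tor amplitude of perfect modules ensures the induction terminates, completing essential surjectivity and hence the proof.
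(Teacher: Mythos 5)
Your overall architecture is the same as the paper's: full faithfulness is quoted from \autoref{derivedcat-ff-local}, the containment of the image in $\dgRep_A(L)\times_{\dgMod_A}\Perf(A)$ follows because the underlying module of $f_L^A(V)$ is the base change of $V$ along the augmentation $\coho_A L\to A$, and essential surjectivity is reduced to showing that the thick closure $\mathcal S$ of the trivial representation $A\simeq \Envel_A(A[\eta]\otimes_A L)$ exhausts the representations with perfect underlying module. Your base case is a genuine (and self-contained) alternative to the paper's: where the paper invokes Lurie's theorem over a field together with the base-change compatibility of \autoref{derivedcat-adjunction} to handle representations on $A^n$, you observe directly that for $V$ a shifted projective the space of actions $\Map_{\dgMod_A}(G,\End_A(V))$ has $\pi_0=\prod_i H^i(\End_A(V))^{p_i}=0$ because $\End_A(V)$ is a retract of a matrix algebra over $A$, hence connective, while the generators of $G$ sit in degrees $i\geq 1$. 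That part is fine (modulo saying ``$\phi\simeq 0$, hence the representation is equivalent to the trivial one'' rather than ``$\phi=0$'').

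The inductive step, however, has a genuine gap, and as stated it would fail. You aim at ``the obstruction to splitting this sequence $L$-equivariantly'', but splitting is both the wrong target and generally impossible: a representation with perfect underlying module is typically \emph{not} a direct sum of trivial ones (for $A=k$ and $L=\libre_k(k[-1])$, so $\Envel_k(L)=k[x]$ with $x$ in degree $1$, the module $k[x]/(x^2)$ is a non-split extension of two trivial modules), so the obstruction you describe cannot vanish. What you actually need is that the cofibre sequence of $A$-modules \emph{lifts} to a cofibre sequence in $\dgRep_A(L)$; then $M$ is an iterated extension of objects of $\mathcal S$ and stability of $\mathcal S$ under extensions finishes the argument. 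For that, the decomposition must be chosen compatibly with the action: take $P[-b]\to V\to V''$ with $P$ projective placed in the top Tor-degree $b$ of $V$ and $V''$ of amplitude $[a,b-1]$. Because every action operator raises cohomological degree by some $i\geq 1$, the composite $P[-b]\to V\to V[i]\to V''[i]$ lives in $\pi_0\Map_A(P[-b],V''[i])\simeq H^{b+i}\bigl(\RHomint_A(P,V'')\bigr)=0$, so the top cell is preserved by each operator; one must then upgrade this operator-by-operator vanishing to the coherent statement that $P[-b]$ underlies a subrepresentation (for instance by filtering $M$ by the powers of the augmentation ideal of $\Envel_A(L)=\Tens_A(G)$, which is concentrated in strictly positive degrees). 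Without identifying the correct obstruction and supplying this coherence argument, the essential surjectivity step is not yet proved.
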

\begin{proof}
We proved in \autoref{derivedcat-ff-local} the functor $f_L$ to be fully faithful.
Let us denote by $\Cc$ the image category
\[
\Cc = f_L^A\left(\Perf(\coho_A(L))\right)
\]
From Lurie's work (\cite{lurie:dagx}) over a field and from \autoref{derivedcat-adjunction} we deduce that $\Cc$ contains any representation of $L$ whose underlying $A$-module is (equivalent to) $A^n$.
Moreover the category $\Cc$ is stable under pushouts and fibre products. The forgetful functor $\Cc \to \dgMod_A$ also preserves pushouts and fibre products. The category $\Cc$ therefore contains any representation whose underlying module is perfect. Reciprocally, every representation in $\Cc$ has a perfect underlying complex.
\end{proof}
\begin{proof}[of the \autoref{derivedcat-atiyah}]
The sheaf $E$ corresponds to a morphism $\phi_E \colon X \to \Perfdst$. Its Atiyah class is the tangent morphism
\[
\atiyah_E \colon \T_X[-1] \to \phi_E^* \T_{\Perfdst}[-1]
\]
In our setting, we get a Lie tangent map
\[
\atiyah_E \colon \tgtlie_X \to \phi_E^* \tgtlie_{\Perfdst}
\]
We observe here that $\phi_E^* \tgtlie_{\Perfdst}$ represents the presheaf on $\dgLie_X$
\[
L \mapsto \mathrm{Gpd}\left(\Lqcoh^X(\formal_X(L)) \timesunder[\Qcoh(X)] \{E\}\right)
\]
while $\gl(E)$ -- the dg-Lie algebra of endomorphisms of $E$ -- represents the functor
\[
L \mapsto \mathrm{Gpd}\left(\dgRep_X(L) \timesunder[\Qcoh(X)] \{E\}\right)
\]
where $\mathrm{Gpd}$ associates to any $(\infty,1)$-category its maximal groupoid.
We get from \autoref{ffformallierep} a morphism $\phi_E^* \tgtlie_{\Perfdst} \to \gl(E)$ of dg-Lie algebras over $X$.
Restricting to an affine derived scheme $s \colon \Spec A \to X$, we get that $s^* \phi_E^* \tgtlie_{\Perfdst}$ and $s^* \gl(E) \simeq \gl(s^* E)$ respectively represent the functors $(\dgLieLib_A)\op \to \sSets$
\[
L^0 \mapsto \Perfdst(\coho_A L^0) \timesunder[\Perfdst(A)] \{E\} \text{~~~and~~~} 
L^0 \mapsto \mathrm{Gpd}\left(\dgRep_A(L^0) \timesunder[\dgMod_A] \{ E \}\right)
\]
The natural transformation induced between those functors is the one of \autoref{perfequiv} and is thus an equivalence.
We therefore have
\[
\atiyah_E \colon \tgtlie_X \to \gl(E)
\]
and hence an action of $\tgtlie_X$ on $E$.
This construction corresponds to the one of \autoref{derived-global} through the equivalence
$\Perfdst(X) \simeq \Map(X,\Perfdst)$.
\end{proof}

We will now focus on comparing our \autoref{atiyah-def} of the Atiyah class with a more usual one. Let $X$ be a smooth variety.
Let us denote by $X^{(2)}$ the infinitesimal neighbourhood of $X$ in $X \times X$ through the diagonal embedding.
We will also denote by $i$ the diagonal embedding $X \to X^{(2)}$ and by $p$ and $q$ the two projections $X^{(2)} \to X$.
We have an exact sequence
\begin{equation}
i_* \Lcot_X \to \Oo_{X^{(2)}} \to i_* \Oo_X \label{exactseq-atiyah}
\end{equation}
classified by a morphism $\alpha \colon i_* \Oo_X \to i_* \Lcot_X[1]$. The Atiyah class of a quasi-coherent sheaf $E$ is usually obtained from this extension class by considering the induced map -- see for instance \cite{kuznetsovmarkushevich:sympandatiyah}
\begin{equation}
E \simeq p_*(i_* \Oo_X \otimes q^* E) \to p_*(i_* \Lcot_X[1] \otimes q^*E) \simeq \Lcot_X[1] \otimes E \label{defatiyahold}
\end{equation}
From the map $\alpha$, we get a morphism $i^* i_* \Oo_X \to \Lcot_X[1]$. Dualising we get
\[
\beta \colon \T_X[-1] \to \Homint_{\Oo_X}(i^* i_* \Oo_X, \Oo_X) \simeq p_* i_* \Homint_{\Oo_X}(i^* i_* \Oo_X, \Oo_X) \simeq p_* \Homint_{\Oo_{X^{(2)}}}(i_* \Oo_X, i_* \Oo_X)
\]
The right hand side naturally acts on the functor $i^* \simeq p_*(- \otimes_{\Oo_{X^{(2)}}} i_* \Oo_X)$ and hence on $i^* q^* \simeq \id$. This action, together with the map $\beta$, associates to any perfect module $E$ a morphism $\T_X[-1] \otimes E \to E$.
It corresponds to a map $E \to E \otimes \Lcot_X[1]$ which is equivalent to the Atiyah class in the sense of (\ref{defatiyahold}).

\begin{prop}\label{atiyah-compare}
Let $X$ be a smooth algebraic variety and let $E$ be a perfect complex on $X$. The Atiyah class of $E$ in the sense of \autoref{atiyah-def} and the construction (\ref{defatiyahold}) are equivalent to one another.
\end{prop}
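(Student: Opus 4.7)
The strategy is to reduce the comparison to the universal case. Both constructions are manifestly natural in $E$ (the old one by functoriality of $q^*$ and $p_*$, the new one by composition with $\phi_E$), and both are local on $X$. It therefore suffices to compare them when $X = \underline{\Perf}$ and $E = \mathcal{E}$ is the universal perfect complex; the general case then follows by pulling back along $\phi_E$, since the old construction (\ref{defatiyahold}) pulled back along $\phi_E \colon X \to \underline{\Perf}$ is tautologically the old Atiyah class of $E$, while $\phi_{\mathcal{E}}$ pulled back gives the new one by definition.

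The plan is then to identify both constructions with the same obstruction class. On the side of Definition \ref{atiyah-def}, the tangent complex of $\underline{\Perf}$ at a point $[F]$ is $\End(F)[1]$ (this is standard, arising from the obstruction theory of $\underline{\Perf}$: a first-order deformation of $F$ along a square-zero extension $Y \to Y[M]$ is classified by an element of $\Map(F, F\otimes M[1])$). Hence $T\phi_E \colon \T_X \to \End(E)[1]$ is, by the universal property of the cotangent complex, precisely the map that associates to a first-order deformation direction $v \in \T_X$ the obstruction to deforming $E$ trivially along the square-zero extension of $X$ classified by $v$. Unwinding, this is exactly the contraction of the extension class of the universal first-order deformation of $X$, namely the class of the sequence $i_* \Lcot_X \to \Oo_{X^{(2)}}\to i_* \Oo_X$ (which represents $\alpha$), with the identity of $E$.

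On the side of (\ref{defatiyahold}), the map is precisely obtained by tensoring $\alpha$ with $E$ and then identifying $p_*(i_* E) \simeq E$. So both maps $E \to E \otimes \Lcot_X[1]$ are, modulo the identifications above, the same image of $\alpha$ under the action $- \otimes E$. Equivalently, one can use the equivalent description of the new Atiyah class through Theorem \ref{derived-global}: the functor $\nu_X$ sends $E$ to $q^* E \in \Lqcoh^X(\for{(X\times X)})$ and the Lie action comes from the non-triviality of the pointed stack $\for{(X\times X)} \to X$ (via $p$); restricting to first order, this non-triviality is measured exactly by $\alpha$, and the induced action on $q^* E$ is contraction with $\alpha \otimes \id_E$.

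The main obstacle will be the bookkeeping of this identification of first-order data. More precisely, one must show that the composite $\theta^* \circ \nu_X \colon \Qcoh(X) \to \Lqcoh^X(\formal_X(\tgtlie_X))$, which by construction is what produces the Lie representation via $\Psi$, when restricted to the sub-formal stack of $\for{(X\times X)}$ corresponding to the first infinitesimal neighborhood $X^{(2)}$, gives the $\Oo_{X^{(2)}}$-module $q^* E$. Granting this, applying $\lie_X$ to the exact sequence $i_*\Lcot_X \to \Oo_{X^{(2)}} \to i_*\Oo_X$ reproduces the defining extension of $\tgtlie_X$ viewed as a square-zero-like extension (using \autoref{ext-coho} in the smooth case, where $\T_X[-1]$ is in positive degrees after shift and noetherian hypotheses apply), and the induced action on $E$ matches the classical formula.
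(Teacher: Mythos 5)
Your second route --- comparing the two constructions through the action on $i^*$ supplied by \autoref{derived-global} and the pointed formal stack $\for{(X\times X)}$ --- is essentially the paper's strategy, and you have correctly located the crux in what you call ``the main obstacle''. The problem is that you then grant exactly that step, and it is the entire content of the proof. The paper resolves it by two observations you do not isolate: that $\Lqcoh^X(\for{(X^{(2)})})\simeq \Qcoh(X^{(2)})$, and, crucially, that $\lie_X(\for{(X^{(2)})})$ is the \emph{free} Lie algebra $\libre_X(\T_X[-1])$. Freeness is what makes the first-order bookkeeping tractable: affine-locally, with $L=\libre_A(\T_{X,a}[-1])$, one has $\coho_A L\simeq A\oplus \Lcot_{X,a}$ (a trivial square-zero extension) and $\Homint_A(\Envel_A(L\otimes_A A[\eta]),A)\simeq A\oplus(\Lcot_{X,a}\otimes_A A[\eta])$, and a direct computation identifies the universal class $\alpha$ with the dual of the inclusion $\T_{X,a}[-1]\hookrightarrow \Envel_A(L\otimes_A A[\eta])$ --- which is precisely the map through which $f^A_L$ defines the Lie action. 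Without this computation, your claim that ``the induced action on $q^*E$ is contraction with $\alpha\otimes\id_E$'' is a restatement of the proposition rather than a proof of it. \autoref{derivedcat-atiyah} then supplies the remaining link to \autoref{atiyah-def}.

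Your first reduction --- to $X=\underline{\Perf}$ and the universal complex --- is not available as stated: the construction (\ref{defatiyahold}) is only defined for a smooth variety, via the first infinitesimal neighbourhood $X^{(2)}$ of the diagonal, and neither $\underline{\Perf}^{(2)}$ nor the projections $p_*$, $q^*$ in that formula have been given a meaning for the derived Artin stack $\underline{\Perf}$. The assertion that the pullback along $\phi_E$ is ``tautological'' hides a genuine compatibility of infinitesimal neighbourhoods under a non-\'etale map of (derived) stacks. The paper sidesteps this entirely by working with $X^{(2)}$ on $X$ itself and localising to an affine chart.
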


\begin{proof}
We first observe the two following facts
\begin{itemize} 
\item The derived category $\Lqcoh^X(\for{(X^{(2)})})$ is equivalent to $\Qcoh(X^{(2)})$.
\item The tangent Lie algebra $\lie_X(\for{(X^{(2)})})$ is the free Lie algebra generated by $\T_X[-1]$.
\end{itemize}
We moreover have a commutative diagram, where $i \colon X^{(2)} \to X \times X$ is the inclusion.
\[
\mymatrix{
\Qcoh(X) \ar@/^15pt/[rr]^{\lierep_X} \ar[r] \ar[dr]_{q^*} & \Lqcoh^X\left(\for{(X \times X)}\right) \ar[d]^{u^*} \ar[r] & \dgRep_X(\tgtlie_X) \ar[d]\\
& \Qcoh(X^{(2)}) \ar[r] \ar@/_15pt/[rr]_{i^*} & \dgRep_X(\libre_X(\T_X[-1])) \ar[r] & \Qcoh(X)
}
\]
where $u$ is the natural morphism $X^{(2)} \to X \times X$.

From what precedes, the Atiyah class arises from an action on $i^*$, and we can thus focus on the composite
\[
\mymatrix{
\Qcoh(X^{(2)}) \ar[r] & \dgRep_X(\libre_X(\T_X[-1])) \ar[r] & \Qcoh(X)
}
\]
which can be studied locally. Let thus $a \colon \Spec A \to X$ be a morphism.
Let us denote by $L$ the $A$-dg-Lie algebra $\libre_A(\T_{X,a}[-1])$. Pulling back on $A$ the functors above, we get
\[
\mymatrix{
\dgMod_{\coho_A L} \ar[r]^{f^A_L} & \dgRep_A(L) \ar[r] & \dgMod_A
}
\]
where $f_L^A$ is given by the action of $L$ on $\Envel_A(L \otimes_A A[\eta])$ through the natural inclusion.
On the other hand, the universal Atiyah class $\alpha$ defined above can be computed as follows
\[
\mymatrix{
\coho_A L \ar[r] \ar[d]_\simeq & \Homint_A(\Envel_A(L \otimes_A A[\eta]),A) \ar[d]^\simeq \\
A \oplus \Lcot_{X,a} \ar[r] \ar[d] & A \oplus \left(\Lcot_{X,a} \otimes_A A[\eta]\right) \ar[d] \\
0 \ar[r] & \Lcot_{X,a}[1] \cocart
}
\]
The universal Atiyah class is thus dual to the inclusion $\T_{X,a}[-1] \to \Envel_A(L \otimes_A A[\eta])$.
It follows that the action defined by the functor $f^A_L$ is indeed given by the Atiyah class.
We now conclude using \autoref{derivedcat-atiyah}.
\end{proof}

\subsection{Adjoint represention}
In this subsection, we will focus on the following statement.
\begin{prop}\label{adjointrepresentation}
Let $X$ be a derived Artin stack. The $\tgtlie_X$-module $\lierep_X(\T_X[-1])$ is equivalent to the adjoint representation of $\tgtlie_X$.
\end{prop}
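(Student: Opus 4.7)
The plan is to reduce the statement to an identification between the Atiyah class of $\T_X[-1]$ and the Lie bracket on $\tgtlie_X$, using the description of the $\tgtlie_X$-action on $\lierep_X(E)$ provided by Proposition~\ref{derivedcat-atiyah}.

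First I would invoke Proposition~\ref{derivedcat-atiyah}: for any perfect $E \in \Qcoh(X)$, the $\tgtlie_X$-action on $\lierep_X(E)$ is given by the Atiyah class $\atiyah_E \colon \T_X[-1] \otimes E \to E$. Applied to $E = \T_X[-1]$ (which has the same underlying module as $\tgtlie_X$), this yields a pairing $\atiyah_{\T_X[-1]} \colon \T_X[-1] \otimes \T_X[-1] \to \T_X[-1]$. On the other hand, the adjoint representation of $\tgtlie_X$ is by definition the action on $\T_X[-1]$ given by the Lie bracket $[-,-]_{\tgtlie_X} \colon \T_X[-1] \otimes \T_X[-1] \to \T_X[-1]$. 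The statement therefore reduces to the identification $\atiyah_{\T_X[-1]} \simeq [-,-]_{\tgtlie_X}$.

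Next I would exhibit both maps as arising from the formal stack $\for{(X \times X)}$ pointed by the diagonal. By construction (Definition~\ref{tgtliedef}), the bracket on $\tgtlie_X = \lie_X(\for{(X \times X)})$ is extracted from the formal-stack structure of $\for{(X \times X)}$ via the functor $\lie_X$. The proof of Theorem~\ref{derived-global}, meanwhile, defines the $\tgtlie_X$-representation $\lierep_X(E)$ by pulling back $E$ along the second projection $\for{(X \times X)} \to X$ while the formal-stack-over-$X$ structure is given by the first projection; the failure of these two projections to agree is precisely governed by the universal Atiyah class. The identification is then local on $X$: on an étale chart $\Spec A \to X$ (with $A$ noetherian), it becomes a statement about a good dg-Lie algebra $L$ with underlying module $\T_{X,a}[-1]$, where both the bracket and the self-Atiyah pairing are maps $L \otimes_A L \to L$, and a direct computation using the explicit form of $f_L^A$ from Lemma~\ref{derivedcat-local} together with the description of $\coho_A L$ identifies them. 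In the smooth case, this is Kapranov's theorem, recovered via Proposition~\ref{atiyah-compare}.

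The main obstacle is carrying out this local identification in full derived generality. A cleaner alternative, which I would prefer to pursue, uses the functoriality of tangent Lie algebras from Theorem~\ref{tangent-lie} applied to the classifying map $\phi_{\T_X[-1]} \colon X \to \Perfdst$: this produces a Lie algebra morphism $\tgtlie_X \to \phi_{\T_X[-1]}^\ast \tgtlie_{\Perfdst} \simeq \gl(\T_X[-1])$, which by the proof of Proposition~\ref{derivedcat-atiyah} agrees with the $\tgtlie_X$-action on $\lierep_X(\T_X[-1])$. One then checks that this map coincides with the adjoint representation, again reducing to a local model where $\tgtlie_{\Perfdst, E} \simeq \End(E)$ with its canonical commutator bracket, and $\phi_{\T_X[-1]}$ being the classifying map of the tangent complex itself forces the induced Lie map into $\gl(\T_X[-1])$ to be the inner-derivation (i.e.\ adjoint) action.
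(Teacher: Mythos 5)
There is a genuine gap. Your proposal inverts the logical order of the paper: you want to deduce the proposition from the identification $\atiyah_{\T_X[-1]} \simeq [-,-]_{\tgtlie_X}$, whereas the paper obtains that identification as a \emph{consequence} of the proposition (see the remark immediately following it, which says exactly that the proposition coupled with \autoref{derivedcat-atiyah} implies the bracket is the Atiyah class). So the entire burden of proof lands on the step you defer — showing that the self-Atiyah pairing of $\T_X[-1]$ agrees with the bracket, i.e.\ that the Lie algebra map $\tgtlie_X \to \gl(\T_X[-1])$ coming from the classifying map $\phi_{\T_X[-1]} \colon X \to \Perfdst$ is the adjoint representation $\adjrep$. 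Your justification for this ("$\phi_{\T_X[-1]}$ being the classifying map of the tangent complex itself forces the induced Lie map \dots to be the inner-derivation action") is an assertion, not an argument: there is no formal reason the tangent morphism of the classifying map of $\T_X$ should be inner, and in the smooth case this is precisely Kapranov's theorem, which requires real work even there and which \autoref{atiyah-compare} only recovers \emph{after} the proposition is available. Moreover, even granting an identification of the two binary pairings $\T_X[-1] \otimes \T_X[-1] \to \T_X[-1]$, an object of $\dgRep_X(\tgtlie_X)$ is not determined by its underlying action map, so "the statement reduces to $\atiyah_{\T_X[-1]} \simeq [-,-]$" skips the coherence data one would still need to supply.

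The paper's route is designed to sidestep exactly this difficulty: instead of trying to identify two representations whose underlying pairings one would have to match coherently, it \emph{constructs} a canonical comparison morphism $\lierep_X(\T_X[-1]) \to \adjrep_X(\tgtlie_X)$. This uses the relative left adjoint $\phi$ to the underlying-representation functor $\adjrep$ on $\comma{L}{\dgLie_A}$ (\autoref{adjrep-adjoint}, \autoref{adjrep-basechange}) together with the explicit Chevalley--Eilenberg derivation $\delta_L \colon \coho_A L \to g^A_{L_0}(\dual L)[-1]$ of \autoref{adjointderivation}, assembled into the natural transformation $\beta$ and evaluated against $\for{(X \times X)}$. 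Once such a map of representations exists, checking that it is an equivalence only requires looking at underlying quasi-coherent sheaves, where both sides are $\T_X[-1]$ — a triviality. If you want to salvage your approach, you would need to first produce, by some independent means, a coherent equivalence between the Atiyah action and the adjoint action; at that point you would essentially be reconstructing the paper's comparison morphism.
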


The above proposition, coupled with \autoref{derivedcat-atiyah}, implies that the bracket of $\tgtlie_X$ is as expected given by the Atiyah class of the tangent complex.
To prove it, we will need a few constructions.
\excludecomment{oldschoolmerdique}
\includecomment{newschoolcool}
\includecomment{oldschoolgood}
\excludecomment{newschoolbad}
\begin{lem}\label{adjrep-adjoint}
Let $A \in \MCcdga_k$ and $L \in \MCdgLie_A$. To any $A$-dg-Lie algebra $L'$ with a morphism $\alpha \colon L \to L'$ we associate the underlying representation $\psi^A_L(L')$ of $L$ -- ie the $A$-dg-module $L'$ with the action of $L$ through the morphism $\alpha$.
\begin{oldschoolmerdique}
The functor $\psi_L^A$ is a right Quillen functor
\[
\phi^A_L \colon \MCdgRep_A(L) \rightleftarrows \comma{L}{\MCdgLie_A} \noloc \psi^A_L
\]
\end{oldschoolmerdique}
\begin{newschoolcool}
The functor $\psi_L^A$ preserves quasi-isomorphisms. It induces a functor between the localised $(\infty,1)$-categories, which admits a left adjoint $\phi_L^A$:
\[
\phi_L^A \colon \dgRep_A(L) \rightleftarrows \comma{L}{\dgLie_A} \noloc \psi^A_L
\]
\end{newschoolcool}
\end{lem}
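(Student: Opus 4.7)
The plan is to verify the two claims in sequence. First, I would observe that $\psi^A_L$ preserves quasi-isomorphisms essentially tautologically: quasi-isomorphisms in both $\comma{L}{\MCdgLie_A}$ (with the slice model structure inherited from $\MCdgLie_A$) and in $\MCdgRep_A(L)$ are detected on underlying $A$-dg-modules, and $\psi^A_L$ does not alter the underlying $A$-dg-module — it only records the action of $L$ coming from the given arrow $L \to L'$ post-composed with the adjoint action. Hence $\psi^A_L$ descends to a well-defined $(\infty,1)$-functor between the localisations.

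For the existence of the left adjoint, my preferred route is the adjoint functor theorem for presentable $(\infty,1)$-categories. Both $\dgRep_A(L)$ and $\comma{L}{\dgLie_A}$ are presentable: the former because $\MCdgRep_A(L)$ is combinatorial, the latter because $\MCdgLie_A$ is combinatorial and slicing under a fixed object preserves presentability at the level of underlying $\infty$-categories. It then suffices to check that $\psi^A_L$ preserves small limits and filtered colimits. Both are computed on the underlying $A$-dg-module in the respective categories (filtered colimits commute with brackets and with the $L$-action, and limits in the slice $\comma{L}{\dgLie_A}$ are created in $\dgLie_A$ together with the cone structure), so preservation is immediate from the fact that $\psi^A_L$ is the identity on underlying modules and simply rewires how $L$ acts.

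Alternatively — and this is useful for the sequel, where one will want a concrete formula — I would construct $\phi^A_L$ explicitly by the semi-direct product $\phi^A_L(M) \simeq L \ltimes \libre_A(M^c)$, where $M^c$ is a cofibrant replacement of $M$ in $\MCdgRep_A(L)$, $\libre_A$ denotes the free dg-Lie algebra, and $L$ acts on $\libre_A(M^c)$ by extending the action on generators by derivations. The universal property is straightforward strictly: a morphism $L \ltimes \libre_A(M^c) \to L'$ in $\comma{L}{\MCdgLie_A}$ corresponds to a dg-Lie morphism $\libre_A(M^c) \to L'$ compatible with the $L$-actions, hence to an $L$-equivariant map of dg-modules $M^c \to \psi^A_L(L')$.

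The main obstacle is ensuring the explicit construction is homotopy-invariant, since $\libre_A$ only preserves weak equivalences between cofibrant dg-modules — whence the cofibrant replacement above. This is why I would state the adjunction using the abstract adjoint functor theorem as the primary argument, and treat the semi-direct product description as an \emph{a posteriori} formula valid on cofibrant representatives. Verifying that the composite $M \mapsto L \ltimes \libre_A(M^c)$ is independent (up to quasi-isomorphism) of the choice of $M^c$ then reduces to the cofibrancy-preservation properties of the forgetful functor $\MCdgRep_A(L) \to \MCdgMod_A$, completing the identification.
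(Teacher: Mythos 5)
Your primary argument is essentially the paper's own proof: the paper likewise notes that $\psi^A_L$ preserves small limits, that both $(\infty,1)$-categories are presentable, and that accessibility (preservation of filtered colimits) follows from monadicity over $\dgMod_A$, and then invokes the adjoint functor theorem \cite[5.5.2.9]{lurie:htt}. Your supplementary explicit description of $\phi^A_L$ as a (homotopy) semi-direct product of $L$ with a free dg-Lie algebra also matches, in spirit, an alternative construction present in the source (a pushout of free dg-Lie algebras imposing the relations $[x,m]=x(m)$ and $[x,y]=[x,y]_L$), so nothing further is needed.
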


\begin{proof}
\begin{oldschoolmerdique}
Since $\psi^A_L$ by definition preserves fibrations and quasi-isomorphisms, it suffices to provide the left adjoint $\phi^A_L$ to $\psi^A_L$.
We consider a dg-representation $M$ of $L$ and we define $\phi^A_L(M)$ to be pushout
\[
\mymatrix{
\libre_A(L \otimes M \oplus L \otimes L) \ar[r] \ar[d]^\alpha & \libre_A((L \otimes M \oplus L \otimes L)\otimes_A A[\eta]) \ar[d] \\
\libre_A(L \oplus M) \ar[r] & \phi^A_L(M) \cocart
}
\]
where the vertical map $\alpha$ is given by
\begin{align*}
&\alpha(x \otimes m) = [x,m] - x(m) \\
&\alpha(x \otimes y) = [x,y] - [x,y]_L
\end{align*}
where $x,y \in L$ and $m \in M$, $x(m)$ is the action of $x$ on $m$ and $[x,y]_L$ is the original bracket on $L$.
It is the homotopy quotient of the free dg-Lie algebra $\libre(L \oplus M)$ by the relations
\begin{align*}
&[x,m] = x(m) \\
&[x,y] = [x,y]_L
\end{align*}
\todo{Préciser}The fact that $\phi_L^A$ is left adjoint to $\psi^A_L$ is obvious.
\end{oldschoolmerdique}
\begin{newschoolcool}
The functor $\psi^A_L$ preserves small limits and both its ends are presentable $(\infty,1)$-categories. Since both $\dgRep_A(L)$ and $\dgLie_A$ are monadic over $\dgMod_A$, the functor $\psi^A_L$ is accessible for the cardinal $\omega$. The result follows from \cite[5.5.2.9]{lurie:htt}
\end{newschoolcool}
\end{proof}

\begin{oldschoolmerdique}
\begin{rmq}
Let us remark that if $L$ is a free $A$-dg-Lie algebra generated by a module $N$, then $\phi_L^A(M)$ is quasi-isomorphic to the pushout
\[
\mymatrix{
\libre_A(N \otimes M) \ar[r] \ar[d] & \libre_A((N \otimes M)\otimes_A A[\eta]) \ar[d] \\
\libre_A(N \oplus M) \ar[r] & \phi^A_L(M) \cocart
}
\]
\end{rmq}
\end{oldschoolmerdique}

\begin{newschoolbad}
\begin{lem}\label{adjointderivation}
Let $A$ be a cdga and $L$ be a dg-Lie algebra over $A$. For any $M$ in $\MCdgRep_A(L)$ there is a derivation $\delta_M \colon \coho_A (\phi^A_L(M)) \to g^A_{L}(\dual M[-1])$ -- where $g^A_L$ is defined in \autoref{derivedcat-local}.
This construction defines a natural transformation
\[
\mymatrix{
\left(\comma{\MCcdga_A}{\coho_A L}\right)\op && \comma{L}{\MCdgLie_A} \ar[ll]_-{\coho_A} \ar@{<=}[dll] \\
\left( \MCdgMod_{\coho_A L} \right)\op \ar[u]^{\coho_A L \oplus (-)} &&
\MCdgRep_A(L) \ar[u]_{\phi^A_L} \ar[ll]^-{g^A_L(\dual {(-)}[-1])}
}
\]
\end{lem}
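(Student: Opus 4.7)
The plan is to exhibit a natural $M$-weight grading on $\phi^A_L(M)$, transport it through Chevalley--Eilenberg homology, and recognise the weight-$\leq 1$ truncation as the trivial square-zero extension of $\coho_A(L)$ by $g^A_L(\dual M[-1])$. The derivation $\delta_M$ will then be read off as the canonical projection onto this truncation.

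First I would equip $\phi^A_L(M)$ with a grading in which $L$ sits in weight $0$ and $M$ sits in weight $1$. On the explicit strict model from the proof of \autoref{adjrep-adjoint}, this is immediate from the presentation, since both the free generators and the relations $[x,y]=[x,y]_L$ and $[x,m]=x(m)$ are homogeneous (of weights $0$ and $1$ respectively). Abstractly, it reflects the $\Gm$-equivariance of the left adjoint $\phi^A_L$ with respect to weight-$1$ scaling on $\MCdgRep_A(L)$: the action $\lambda \cdot \colon M \to M$ induces an automorphism of $\phi^A_L(M)$ under $L$ whose fixed points are exactly $L$ and whose weight-$1$ part contains the image of $M$. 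Transporting this grading through $\homol_A(\phi^A_L(M)) \simeq \Sym_A(\phi^A_L(M)[1])$ yields an increasing filtration $F^\bullet$ preserved by the CE differential described in \autoref{rmq-coalg}: the terms in the differential that could raise the $M$-weight come only from brackets of two $M$-factors and land in weight $2$, so $F^1$ is stable. One then has $F^0 = \homol_A(L)$, and a direct inspection of the formula in \autoref{rmq-coalg} identifies
\[
F^1/F^0 \simeq \Sym_A(L[1]) \otimes_A M[1]
\]
endowed with precisely the CE differential of $L$ with coefficients in the representation $M[1]$ (the brackets $[x,m]=x(m)$ becoming the $L$-action).

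Dualising the short exact sequence $F^0 \hookrightarrow F^1 \twoheadrightarrow F^1/F^0$ produces
\[
0 \to g^A_L(\dual M[-1]) \to \dual{F^1} \to \coho_A(L) \to 0,
\]
where $\dual{F^1/F^0} \simeq g^A_L(\dual M[-1])$ follows from the identification $\Homint_A(\Sym_A(L[1])\otimes_A M[1], A) \simeq \Homint_A(\Sym_A(L[1]),\dual M[-1])$ together with the fact that the differential on the right hand side is exactly the CE cochain differential with values in the contragredient representation $\dual M[-1]$. Because multiplication on $\coho_A(\phi^A_L(M))$ respects the weight grading (weights add under products), the product of two classes dual to weight $1$ lands in weight $2$ and hence vanishes in $\dual{F^1}$; consequently $\dual{F^1}$ is, as an $A$-cdga, the trivial square-zero extension $\coho_A(L) \oplus g^A_L(\dual M[-1])$. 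The canonical surjection $\coho_A(\phi^A_L(M)) \twoheadrightarrow \dual{F^1}$ is then a map of cdgas over $\coho_A(L)$, which by the universal property of trivial square-zero extensions is equivalent to the desired derivation $\delta_M \colon \coho_A(\phi^A_L(M)) \to g^A_L(\dual M[-1])$ (the target being regarded as a $\coho_A(\phi^A_L(M))$-module via the augmentation $\coho_A(\phi^A_L(M)) \to \coho_A(L)$). Naturality in $M$ is clear, since a morphism $M \to M'$ of $L$-representations induces a map $\phi^A_L(M) \to \phi^A_L(M')$ strictly compatible with the weight gradings, hence with the associated filtrations and their duals.

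The main obstacle will be producing the construction strictly enough to define an honest natural transformation of strict functors, as the diagram in the statement lives at the level of model categories rather than at the level of their localisations. This is handled by performing the construction on a cofibrant resolution of $M$ in $\MCdgRep_A(L)$ --- where the pushout of the proof of \autoref{adjrep-adjoint} presents $\phi^A_L(M)$ explicitly and the weight grading is literal --- and then checking that the resulting cdga map is homotopy-invariant under quasi-isomorphisms of cofibrant representations, so that it descends to the required natural transformation.
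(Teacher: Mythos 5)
Your construction is, at bottom, the same as the paper's: the map $\delta_M$ is obtained by restricting Chevalley--Eilenberg cochains of $\phi^A_L(M)$ along the canonical map $\Sym_A(L[1]) \otimes_A M[1] \to \Sym_A(\phi^A_L(M)[1])$ and applying hom--tensor adjunction, which is exactly your projection onto $\dual{(F^1/F^0)}$. The weight-grading and split square-zero-extension packaging is a nice structural justification for the two verifications the paper dispatches as quick computations (that the restriction commutes with the differentials and is a derivation), but it produces the same map by the same essential idea.
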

\begin{proof}
The canonical map $L \otimes M \to \phi^A_L(M)$ induces a map
\[
\Homint_A(\Sym_A(\phi^A_L(M)[1]),A) \to \Homint_A(\Sym(L[1]) \otimes M[1],A) \simeq \Homint_A(\Sym_A(L[1]),\dual M[-1])
\]
which in turn defines a morphism of \emph{graded} modules 
\[
\delta_L \colon \coho_A(\phi^A_L(M)) \to g_L^A(\dual M[-1])
\]
A quick computation proves it commutes with the differentials. An even quicker computation shows it is a derivation.
This construction defines the required natural transformation.
\end{proof}
\end{newschoolbad}

\begin{oldschoolgood}
\begin{lem}\label{adjointderivation}
Let $A$ be a cdga and $L_0$ be a dg-Lie algebra over $A$.
There is a natural transformation
\[
\mymatrix{
\comma{L_0}{\MCdgLie_A} \ar[rrrr]^-{\coho_A} \ar[dr]_{\psi^A_{L_0}}
&& \ar@{<=}[d] && \left(\quot{\MCcdgaunbounded_A}{\coho_A L_0}\right)\op
\\ & \MCdgRep_A(L_0) \ar[rr]_-{g^A_{L_0}\left(\dual{(-)}\right)} && \left(\MCdgMod_{\coho_A L_0}\right)\op \ar[ur]_-{\hspace{4mm}\coho_A L_0 \oplus (-)[-1]}
}
\]
where $g^A_{L_0}$ was defined in \autoref{derivedcat-local}.
\end{lem}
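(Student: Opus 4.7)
To exhibit the natural transformation, I will associate to each $L \in \comma{L_0}{\MCdgLie_A}$ (with structure map $f \colon L_0 \to L$) a morphism of augmented $\coho_A L_0$-cdga's
\[
\delta_L \colon \coho_A L \longrightarrow \coho_A L_0 \oplus g^A_{L_0}(\dual{\psi^A_{L_0} L})[-1],
\]
and construct it by dualisation. Applying PBW to $\Envel_A(A[\eta] \otimes_A L_0)$ viewed as a left $\Envel_A L_0$-module yields an identification of graded $A$-modules $g^A_{L_0}(V) \simeq \Homint_A(\Sym_A L_0[1], V)$ for any $L_0$-representation $V$. Together with $\coho_A L \simeq \Homint_A(\Sym_A L[1], A)$ and $\dual L[-1] \simeq \Homint_A(L[1],A)$, this gives
\[
\coho_A L_0 \oplus g^A_{L_0}(\dual L)[-1] \simeq \Homint_A\bigl(\Sym_A L_0[1] \otimes_A (A \oplus L[1]),\, A\bigr)
\]
as graded $A$-modules. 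Producing $\delta_L$ therefore reduces, by dualisation, to producing a chain map
\[
\rho_L \colon \Sym_A L_0[1] \otimes_A (A \oplus L[1]) \longrightarrow \Sym_A L[1]
\]
which is moreover a morphism of graded coalgebras (the source carrying the tensor coalgebra structure of $\Sym_A L_0[1]$ with $A \oplus L[1]$, in which $L[1]$ is primitive).

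I define $\rho_L$ to be $\Sym(f)$ on the summand $\Sym_A L_0[1] \otimes A$, and on $\Sym_A L_0[1] \otimes L[1]$ the composite of $\Sym(f) \otimes \id_{L[1]}$ with the algebra multiplication $\Sym_A L[1] \otimes_A L[1] \to \Sym_A L[1]$. The coalgebra compatibility is immediate from the fact that $L[1]$ is primitive in $\Sym_A L[1]$ and that the shuffle coproduct is an algebra map. Dualising then produces $\delta_L$ as a morphism of graded cdga's, and the augmentation compatibility is immediate: dually, the projection to $\coho_A L_0$ corresponds to the inclusion $\Sym_A L_0[1] \hookrightarrow \Sym_A L_0[1] \otimes (A \oplus L[1])$ via $1 \in A$, and one reads off that the composite $\coho_A L \to \coho_A L_0 \oplus g^A_{L_0}(\dual L)[-1] \to \coho_A L_0$ equals $\coho_A(f)$. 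Naturality in $L$ follows because every ingredient depends functorially on $f$.

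\textbf{Main difficulty.} The only non-formal point is checking that $\rho_L$ commutes with the differentials, since both sides carry a mixture of internal and Chevalley contributions whose explicit form is recalled in \autoref{rmq-coalg}. The internal pieces agree because $f$ is a chain map. For the Chevalley pieces, one must match the terms produced by the bracket of $L_0$ on $\Sym_A L_0[1]$ (and its coupling with the extra $L[1]$ factor, which involves brackets of the form $[f(x),y]_L$ for $x \in L_0$, $y \in L$) with the corresponding terms of the Chevalley differential on $\Sym_A L[1]$; this is a direct combinatorial verification on generators, and is precisely where the hypothesis that $f$ is a morphism of dg-Lie algebras enters.
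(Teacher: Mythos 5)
Your construction is the same as the paper's: the paper also produces $\delta_L$ by dualising the composite $\Sym_A(L_0[1])\otimes_A L[1] \to \Sym_A(L[1])\otimes_A L[1] \to \Sym_A(L[1])$ (using the PBW identification of $g^A_{L_0}$ with $\Homint_A(\Sym_A(L_0[1]),-)$ on underlying graded modules), and then verifies compatibility with the Chevalley--Eilenberg differentials by the explicit computation on generators that you correctly identify as the only non-formal step, the Lie-morphism hypothesis entering exactly where you say it does. The argument is correct and essentially identical to the paper's.
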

\begin{proof}
Let $\alpha \colon L_0 \to L$ be a morphism of $A$-dg-Lie algebras.
The composite morphism
\[
\Sym_A(L_0[1]) \otimes_A L[1] \to^\alpha \Sym_A(L[1]) \otimes_A L[1] \to \Sym_A(L[1])
\]
induces a morphism
\[
\Homint_A(\Sym_A(L[1]),A) \to \Homint_A(\Sym(L_0[1]) \otimes L[1],A) \simeq \Homint_A(\Sym_A(L_0[1]), \dual L [-1])
\]
This defines a map of \emph{graded} modules $\delta_L \colon \coho_A L \to g_{L_0}^A(\dual L)[-1]$.
Let us prove that it commutes with the differentials. Recall the notations $S_{i}$ and $T_{ij}$ from \autoref{rmq-coalg}. We compute on one hand
\begin{align*}
\delta(d \xi)(\el{\eta.x}{n}[\otimes]) &(\eta.y_{n+1}) = \sum_{i\leq n+1} (-1)^{S_i+1} \xi(\eta.y_1 \otimes \dots \otimes \eta.dy_i \otimes \dots \otimes \eta.y_{n+1}) \\
& + \sum_{i < j \leq n+1} (-1)^{T_{ij}} \xi(\eta.[y_i,y_j] \otimes \eta.y_1 \otimes \dots \otimes \widehat{\eta.y_i} \otimes \dots \otimes \widehat{\eta.y_j} \otimes \dots \otimes \eta.y_{n+1})\\
& + d(\xi(\el{\eta.y}{n+1}[\otimes]))
\end{align*}
where $y_i$ denotes $\alpha x_i$, for any $i \leq n$. On the other hand, we have
\begin{align*}
(d(\delta \xi)) (\el{\eta.x}{n}[\otimes]) &= \sum_{i\leq n} (-1)^{S_i+1} \delta\xi(\eta.x_1 \otimes \dots \otimes \eta.dx_i \otimes \dots \otimes \eta.x_{n}) \\
& + \sum_{i < j \leq n} (-1)^{T_{ij}} \delta\xi(\eta.[x_i,x_j] \otimes \eta.x_1 \otimes \dots \otimes \widehat{\eta.x_i} \otimes \dots \otimes \widehat{\eta.x_j} \otimes \dots \otimes x_{n}) \\
& + \sum_{i \leq n} (-1)^{(|x_i| + 1)S_i} x_i \bullet \delta \xi(\eta.x_1 \otimes \dots \otimes \widehat{\eta.x_i} \otimes \dots \otimes \eta.x_n) \\
& + d(\delta \xi(\el{\eta.x}{n}[\otimes]))
\end{align*}
where $\bullet$ denotes the action of $L_0$ on $\dual L$.
We thus have
\begin{align*}
(d(\delta \xi)) (&\el{\eta.x}{n}[\otimes])(\eta.y_{n+1}) = \sum_{i\leq n} (-1)^{S_i+1} \xi(\eta.y_1 \otimes \dots \otimes \eta.dy_i \otimes \dots \otimes \eta.y_{n} \otimes \eta.y_{n+1}) \\
& + \sum_{i < j \leq n} (-1)^{T_{ij}} \xi(\eta.[y_i,y_j] \otimes \eta.y_1 \otimes \dots \otimes \widehat{\eta.y_i} \otimes \dots \otimes \widehat{\eta.y_j} \otimes \dots \otimes \eta.y_{n} \otimes \eta.y_{n+1}) \\
& + \sum_{i \leq n} (-1)^{(|y_i| + 1)S_i + (|y_i|-1 + S_{n+1})|y_i|} \xi(\eta.y_1 \otimes \dots \otimes \widehat{\eta.y_i} \otimes \dots \eta.y_n \otimes \eta.[y_{n+1},y_i]) \\
& + d(\delta \xi(\el{\eta.x}{n}[\otimes]))(\eta.y_{n+1})
\end{align*}
Now computing the difference $\delta(d \xi)(\el{\eta.x}{n}[\otimes]) (\eta.y_{n+1}) - (d(\delta \xi))(\el{\eta.x}{n}[\otimes])(\eta.y_{n+1})$ we get
\begin{align*}
(-1)^{S_{n+1} + 1} \xi(\eta.y_1& \otimes \dots \otimes \eta.y_n \otimes \eta.dy_{n+1}) \\ &+ d(\xi(\el{\eta.y}{n+1}[\otimes])) - d( \delta \xi(\el{\eta.x}{n}[\otimes]))(\eta.y_{n+1}) = 0
\end{align*}
It follows that $\delta_L$ is indeed a morphism of complexes $\coho_A L \to g^A_{L_0}(\dual L)[-1]$.
It is moreover $A$-linear.
One checks with great enthusiasm that it is a derivation. This construction is moreover functorial in $L$ and we get the announced natural transformation.
\todo{derivation}
\end{proof}
\end{oldschoolgood}

Let us define the category $\int \comma{\pt}{\MCdgLie}$ as follows
\begin{itemize}
\item An object is a triple $(A,L,L \to L_1)$ where $A \in \MCcdga_k$ and $L \to L_1 \in \MCdgLie_A$.
\item A morphism $(A,L,L \to L_1) \to (B,L',L' \to L_1')$ is the data of
\begin{itemize}
\item A morphism $A \to B$ in $\MCcdga_k$,
\item A commutative diagram
\[
\mymatrix{
L' \ar[r] \ar[d] & L \otimes_A B \ar[d] \\ L_1' & L_1 \otimes_A B \ar[l]
}
\]
\end{itemize}
\end{itemize}

This category comes with a coCartesian projection to $\int \comma{\pt}{\MCdgLie} \to \int \MCdgLie\op$.
The forgetful functor $\comma{L}{\MCdgLie_A} \to \MCdgRep_A(L)$ define a functor $\adjrep$ such that the following triangle commutes
\[
\mymatrix{
\int \comma{\pt}{\MCdgLie} \ar[rr]^-\adjrep \ar[dr] & & \int \MCdgRep \ar[dl] \\ & \int \MCdgLie\op
}
\]
Let us define the category $\int (\quot{\MCcdgaunbounded}{\coho(-)})\op$ as follows
\begin{itemize}
\item An object is a triple $(A,L,B)$ where $(A,L) \in \int \MCdgLie\op$ and $B$ is a cdga over $A$ with a map $B \to \coho_A L$ ;
\item A morphism $(A,L,B) \to (A',L',B')$ is a commutative diagram
\[
\mymatrix{
A \ar[d] \ar[r] & B \ar[r] \ar[d] & \coho_A L \ar[d] \\
A' \ar[r] & B' \ar[r] & \coho_{A'} L'
}
\]
where $\coho_A L \to \coho_{A'} L'$ is induced by a given morphism $L' \to L \otimes^\Lcot_A A'$.
\end{itemize}
Let us remark here that $\coho$ induces a functor $\chi \colon \int \comma{\pt}{\MCdgLie} \to \int (\quot{\MCcdgaunbounded}{\coho(-)})\op$ which commutes with the projections to $\int \MCdgLie\op$.
The construction $\MCdgRep_A(L) \to (\quot{\MCcdgaunbounded_A}{\coho_A L})\op$
\[
V \mapsto \coho_A L \oplus g^A_L(\dual V)[1]
\]
defines a functor
\[
\theta \colon \int \MCdgRep \to \int \left(\quot{\MCcdgaunbounded}{\coho(-)}\right)\op
\]
and \autoref{adjointderivation} gives a natural transformation $\theta \adjrep \to \chi$.
Localising along quasi-isomorphisms, we get a thetaedron
\[
\shorthandoff{:;!?}
\xy <6mm,0cm>:
(1,0)*+{\int \quot{*}{\dgLie}}="0",
(5,-2)*+{\int \dgRep}="1",
(-3,-2)*+{\int (\quot{\cdgaunbounded}{\coho(-)})\op}="2",
(3,-5)*+{\int \MCdgLie\op}="3",
\ar "0";"1" ^(0.6)\adjrep
\ar "0";"2" _\chi
\ar "1";"2" ^\theta
\ar "0";"3" |!{"1";"2"}\hole ^(0.6)r
\ar "1";"3" ^q
\ar "2";"3" _p
\endxy
\]
where the upper face is filled with the natural transformation $\theta \adjrep \to \chi$ and the other faces are commutative.
\begin{lem}\label{adjrep-basechange}
The functor $\adjrep$ admits a relative left adjoint $\phi$ over $\int \MCdgLie\op$. Moreover, the induced natural transformation $\theta \to \theta \adjrep \phi \to \chi \phi$ is an equivalence.
\end{lem}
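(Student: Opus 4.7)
The plan is to build the relative left adjoint first and then check the equivalence after applying the standard reduction to generators, very much in the spirit of \autoref{derivedcat-adjunction} and \autoref{cocartesianretract}.

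For the first claim, I would invoke \cite[8.3.2.11]{lurie:halg}. The fiberwise left adjoints $\phi^A_L$ have already been constructed in \autoref{adjrep-adjoint}, so the only thing to verify is that, for every morphism $(A,L) \to (B,L')$ in $\int\dgLie\op$, the induced square
\[
\mymatrix{
\dgRep_A(L) \ar[r]^-{\phi^A_L} \ar[d] & \comma{L}{\dgLie_A} \ar[d] \\
\dgRep_B(L') \ar[r]^-{\phi^B_{L'}} & \comma{L'}{\dgLie_B}
}
\]
commutes, where the vertical functors are the transport along the chosen morphism (i.e. the base change $-\otimes_A B$ composed with restriction along $L' \to L \otimes_A B$). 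Since $\psi^A_L$ is just the underlying-representation functor, it is clear from the definition that these transport functors commute strictly with the right adjoints $\psi$, and therefore, by adjunction, they commute up to canonical equivalence with the left adjoints $\phi$. This gives the required relative left adjoint $\phi$, and, again by \cite[8.3.2.11]{lurie:halg}, the functor $\phi$ preserves coCartesian morphisms.

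For the second claim I would reduce to the case where $L$ is a free Lie algebra and $V$ is a compact generator of $\dgRep_A(L)$. Since both $\theta$ and $\chi\circ\phi$ are functors $\int\dgRep \to \int\bigl(\quot{\cdgaunbounded}{\coho(-)}\bigr)\op$ taking colimits of representations to limits of augmented cdgas (for $\theta$ by the formula $V\mapsto \coho_A L \oplus g^A_L(\dual V)[1]$ together with \autoref{chevalleycolim} and the fact that $g^A_L$ preserves limits since it is right Quillen; for $\chi\circ\phi$ because $\phi$ is a left adjoint and $\coho_A$ sends colimits of Lie algebras to limits by \autoref{chevalleycolim}), it suffices, after pulling back to a fixed $(A,L)$ with $L\in\dgLieLib_A$, to check the equivalence on the free representations $V = L \otimes_A W$ for $W \in \dgModLib_A$. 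On such generators the remark following \autoref{adjrep-adjoint} gives an explicit presentation of $\phi^A_L(V)$ as a pushout of free dg-Lie algebras.

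The main obstacle is the explicit verification of the equivalence on those generators. Concretely, I would compute: for $L = \libre_A(N)$ with $N$ free of finite type in positive degree and $V = L \otimes_A W$, using the PBW theorem (proven earlier in this chapter) and the pushout formula for $\phi^A_L(V)$, one identifies
\[
\coho_A(\phi^A_L(V)) \;\simeq\; \coho_A(L) \;\oplus\; \dual{(N\otimes W)}[-1]
\]
as augmented cdgas over $\coho_A L \simeq A \oplus \dual N[-1]$, where the second summand sits in square-zero. On the other hand, again by \autoref{ext-coho} and the definition of $g^A_L$, one has
\[
\coho_A L \oplus g^A_L(\dual V)[1] \;\simeq\; \coho_A L \oplus \dual{(N\otimes W)}[-1]
\]
with the same $\coho_A L$-module structure. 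The comparison morphism of \autoref{adjointderivation} is by construction the map induced on Chevalley complexes by the inclusion of generators $V \hookrightarrow \phi^A_L(V)$, and a direct inspection of the formulas shows it realises exactly this identification. Finally, since both sides send sifted colimits of Lie algebras (in the $L$-direction) to limits, one extends by \autoref{dglie-algtheory} from $L \in \dgLieLib_A$ to all $A$-dg-Lie algebras, and similarly in the representation variable from free $V$'s to arbitrary $V$ by sifted colimit. The remaining $A$-variable is harmless, so the natural transformation $\theta \to \chi\phi$ is an equivalence as claimed.
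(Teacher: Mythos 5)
Your first paragraph (existence of the relative left adjoint) matches the paper's argument: it too reduces to \autoref{adjrep-adjoint} plus \cite[8.3.2.11]{lurie:halg}, and checking compatibility with transport through the right adjoints $\psi^A_L$ is exactly the point. The overall strategy for the second claim --- generate $\dgRep_A(L)$ under colimits by free representations and use that both $\theta$ and $\chi\phi$ carry colimits of representations to limits --- is also the paper's. But your check on generators goes wrong. The compact generators of $\dgRep_A(L)$ are the free representations $\Envel_A L \otimes_A W$, not $L\otimes_A W$, and for these one has $\phi^A_L(\Envel_A L\otimes_A W)\simeq L\amalg \libre_A(W)$, so that $\chi\phi$ of such a generator is $\coho_A L\times_A(A\oplus \dual W[-1])\simeq \coho_A L\oplus\dual W[-1]$, the square-zero extension by $\dual W[-1]$ with the augmentation module structure. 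Your claimed answer $\coho_A L\oplus\dual{(N\otimes W)}[-1]$ is not what either side computes to: even for $L=\libre_A(N)$ one gets $\libre_A(N)\amalg\libre_A(W)\simeq\libre_A(N\oplus W)$, hence $A\oplus\dual{(N\oplus W)}[-1]$, whose fibre over $\coho_A L$ is $\dual W[-1]$. The identification that actually requires an argument is on the $\theta$-side, namely $g^A_L(\dual{(\Envel_A L\otimes_A W)}[-1])\simeq \dual W[-1]$; the paper obtains it by observing that $W\mapsto\dual W[-1]$ and $W\mapsto g^A_L(\dual{(\Envel_A L\otimes_A W)}[-1])$, viewed as functors $\dgMod_A\to\dgMod_A\op$, are both left adjoint to the same functor and hence canonically equivalent.

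Your additional reduction to $L\in\dgLieLib_A$ is both unnecessary and under-justified. To carry it out you would need that $L\mapsto\dgRep_A(L)$, together with the natural transformation, behaves well under sifted colimits in $L$ --- the representation category itself varies with $L$, so this is not a formal ``colimits to limits'' statement --- and you would also need the explicit pushout presentation of $\phi^A_L$ that you invoke, which the paper never establishes (\autoref{adjrep-adjoint} only produces $\phi^A_L$ abstractly via the adjoint functor theorem). The adjoint-uniqueness argument sketched above works for arbitrary $L$ and makes both steps superfluous. With the generator corrected to $\Envel_A L\otimes_A W$ and the comparison on it done that way, your proof closes.
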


\begin{proof}
The first statement is a consequence of \autoref{adjrep-adjoint} and \cite[8.3.2.11]{lurie:halg}.
To prove the second one, we fix a pair $(A,L) \in \int \MCdgLie\op$ and study the induced natural transformation
\[
\mymatrix{
\dgRep_A(L) \ar[rr] \ar[rd] && \left(\quot{\cdgaunbounded_A}{\coho_A L}\right)\op \\
& \comma{L}{\dgLie_A} \ar[ur] \ar@{<=}[u] &
}
\]
\begin{newschoolcool}
The category $\dgRep_A(L)$ is generated under colimits of the free representations $\Envel_A L \otimes N$, where $N \in \dgMod_A$.
Both the upper and the lower functors map colimits to limits. 
Since $\phi^A_L(\Envel_A L \otimes N) \simeq L \amalg \libre_A (N)$, we can restrict to proving that the induced morphism
\[
\coho_A L \oplus \dual N[-1] \to \coho_A L \oplus g^A_L(\dual{(\Envel_A L \otimes N)}[-1])
\]
is an equivalence.
We have the following morphism between exact sequences
\[
\mymatrix{
\dual N[-1] \ar[r] \ar[d]^\beta & \coho_A L \oplus \dual N[-1] \ar[d] \ar[r] & \coho_A L \ar[d]^{=} \\
g^A_L(\dual{(\Envel_A L \otimes N)}[-1]) \ar[r] & \coho_A L \oplus g^A_L(\dual{(\Envel_A L)}[-1]) \ar[r] & \coho_A L
}
\]
Since the functors $\dual{(-)}$ and $g^A_L(\dual{(\Envel_A L \otimes -)}[-1])$ from $\dgMod_A$ to $\dgMod_A\op$ are both left adjoint to same functor, the morphism $\beta$ is an equivalence.
\end{newschoolcool}
\begin{oldschoolmerdique}
Those three functors admit right adjoints, and we get
\[
\mymatrix{
\dgRep_A(L) \ar@{<-}[rr]^{\dual{(f^A_L(\Lcot_{-/A}[1] \otimes \coho_A L))}} \ar@{<-}[rd]_{\phi_A} && \left(\quot{\cdgaunbounded_A}{\coho_A L}\right)\op \\
& \comma{L}{\dgLie_A} \ar@{<-}[ur]_{\adjoint_A} \ar@{=>}[u]^-\alpha &
}
\]
Let $A \to B \to \coho_A L$ be maps of cdga's. To see that
\[
\alpha_B \colon \phi_A \adjoint_A(B) \to \dual{\left(f^A_L\left(\Lcot_{B/A}[1] \otimes_B \coho_A L\right)\right)}
\]
is an equivalence, it suffices to test on the underlying $A$-dg-modules. Those two modules are both equivalent to $\dual{\left(\Lcot_{B/A} \otimes_B A\right)}[-1]$ and the map $\alpha_B$ is an equivalence.
\end{oldschoolmerdique}
\end{proof}

Let us now consider the functor
\[
\mymatrix{
\int \dgModLib_{\coho(-)} \ar[r] & \int \dgMod_{\coho(-)} \ar[r]^-f & \int \dgRep
}
\]
\begin{rmq}\label{sqzeronicou}
Duality and \autoref{derivedcat-ff-local} make the composite functor
\[
\mymatrix{
\int \dgModLib_{\coho(-)} \ar[r] & \int \dgRep \ar[r]^-\theta & \int (\quot{\cdgaunbounded}{\coho(-)})\op
}
\]
equivalent to the functor $(A,L,M) \mapsto \coho_A L \bigoplus \dual M[-1]$.
\end{rmq}

Moreover, the composite functor
\begin{align*}
\int \dgModLib_{\coho(-)} \times_{\int \MCdgLie\op} \int (\MCdgLieLib)\op &\to \int \dgRep \times_{\int \MCdgLie\op} \int (\MCdgLieLib)\op\\
&\to \int \comma{\pt}{\dgLie} \times_{\int \MCdgLie\op} \int (\MCdgLieLib)\op
\end{align*}
has values in the full subcategory of good dg-Lie algebras $\int \comma{\pt}{\dgLieGood}$. Using \autoref{commute-coho-good}, we see that the functor
\[
\int \dgModLib_{\coho(-)} \times_{\int \MCdgLie\op} \int (\MCdgLieLib)\op \to \int (\quot{\cdgaunbounded}{\coho(-)})\op \times_{\int \MCdgLie\op} \int (\MCdgLieLib)\op
\]
preserves coCartesian morphisms.
We finally get a natural transformation
\[
\mymatrix{
\int (\dgLieLib)\op \dcell[rr][{\dgModLib_{\coho(-)}}][{(\quot{\cdgaunbounded}{\coho(-)})\op}][][=>][12pt] && \inftyCatu V
}
\]
There is also a Yoneda natural transformation ${(\quot{\cdgaunbounded}{\coho(-)})\op} \to \comma{\Spec(\coho(-))}{\dStF}$ and we get
\[
G \colon \dgModLib_{\coho(-)} \to \comma{\Spec(\coho(-))}{\dStF}
\]
Let us recall \autoref{formal-nattrans}. It defines a natural transformation
\[
\zeta \colon \dgModLib_{\coho(-)} \times \Delta^1 \to \comma{\Spec(\coho(-))}{\dStF}
\]
such that $\zeta(-,0) \simeq \formal \phi f$ and $\zeta(-,1) \simeq G \simeq h \theta f$.

We are at last ready to prove \autoref{adjointrepresentation}.

\begin{proof}[of \autoref{adjointrepresentation}]
Extending the preceding construction by sifted colimits, we get a natural transformation $\beta \colon \Lqcoh(\formal(-)) \times \Delta^1 \to \comma{\formal(-)}{\dStF}$ of functors $\int \dgLie\op \to \inftyCatu V$.
Let now $X$ denote an Artin derived stack locally of finite presentation. We get a functor
\[
\beta_X \colon \Lqcoh^X(\formal_X \tgtlie_X) \times \Delta^1 \to \comma{\formal_X(\tgtlie_X)}{\dStF_X}
\]
On the one hand, the functor $\beta_X(-,0)$ admits a right adjoint, namely the functor
\[
\mymatrix{
\comma{\formal_X(\tgtlie_X)}{\dStF_X} \ar[r]^-{\lie_X} & \comma{\tgtlie_X}{\dgLie_X} \ar[r]^-{\adjrep_X} & \dgRep_X(\tgtlie_X) \ar[r]^-{g_X} & \Lqcoh^X(\formal \tgtlie_X)
}
\]
while on the other hand, using \autoref{sqzeronicou}, the functor
\[
\Map(\beta_X(-,1), \for{(X \times X)})
\]
is represented  by $\nu_X(\T_X[-1])$ where $\nu_X$ is the functor $\Qcoh(X) \to \Lqcoh^X(\for{(X \times X)})$ defined in the proof of \autoref{derived-global}.
We therefore have a morphism
\[
\nu_X(\T_X[-1]) \to g_X \adjrep_X \lie_X(\for{(X \times X)}) \simeq g_X \adjrep_X(\tgtlie_X)
\]
and hence a morphism $\lierep_X(\T_X[-1]) = f_X \nu_X (\T_X[-1]) \to \adjrep_X(\tgtlie_X)$.
It now suffices to test on the underlying quasi-coherent sheaves on $X$, that it is an equivalence. Both the left and right hand sides are equivalent to $\T_X[-1]$.
\end{proof}
\end{chap-tgtlie}

\chapter{Perspectives}\label{chapterperspectives}
\begin{chap-perspect}
In this last chapter, we will expose a few research directions the author would like to pursue.

\section{A symplectic structure on the formal loop space}
We approached in this thesis the following question: if $X$ is a symplectic stack, does its formal loop space $\kaploop^d(X)$ inherits a Tate symplectic structure. We showed in \autoref{L-affine-tate} that $\kaploop^d(X)$ is endowed with a "local" Tate structure.
\begin{pbnonumber}\label{L-symp}
Let $X$ be an $n$-shifted symplectic derived Artin stack. Does the formal loop space $\kaploop^d(X)$ of dimension $d$ with values in $X$ admit an $(n-d+1)$-shifted symplectic structure, as a locally Tate stack?
\end{pbnonumber}
One way of approaching this issue is by considering the nerve of the morphism $\kaploop_V^d(X) \to \kaploop_U^d(X)$. This defines a groupoid object $Z_\bullet$ in ind-pro-stacks. We showed that when $X$ is affine, the space of morphisms $Z_1$ of this groupoid is $\bubblespace^d(X)$ and admits a closed form as soon as $X$ has one.
This groupoid object is moreover expected to be compatible with the closed form so that it should define a closed form on the quotient.
One core idea of this construction is that the quotient of the fore-mentioned groupoid in the right category -- something like the category of Artin ind-pro-stacks -- should be equivalent to $\kaploop^d(X)$.

Moreover, stating and proving that the form on $\kaploop^d(X)$ is non-degenerate encounters several problems. The first one is that the tangent of $\kaploop^d(X)$ is only know to be a \emph{locally} Tate object but not necessarily globally, so that comparing the tangent with its dual, the cotangent, does not really make sense yet.

\section{A formal loop space over a variety}
As we explained in the introduction, it appeared in \cite{kapranovvasserot:loop1} that the formal loop space (in dimension $1$) can be defined over a curve. It moreover admits a factorization structure allowing us to use some local-to-global argument. This strategy also makes the core of \cite{gaitsgorylurie:weil}.
The question of defining the formal loop space over a variety $V$ -- say of dimension $d$ -- then naturally pops up.
The difficulty lies in defining properly the punctured formal neighbourhood of the diagonal embedding $V \to V \times V$. In \cite{kapranovvasserot:loop1}, the authors defined it as a locally ringed space. This approach would also make sense in the derived setting: for any cdga $A$ with morphism $u \colon \Spec A \to V$, one can consider the space $\Spec A$ with a sheaf of cdga's $\Oo_{\hat \Gamma_u \smallsetminus \Gamma_u}$ corresponding to the punctured formal neighbourhood of the graph $\Gamma_u$ of $u$. The assignement
\[
(u \colon \Spec A \to V) \mapsto \Map_{-/V}((\Spec A,\Oo_{\hat \Gamma_u \smallsetminus \Gamma_u}),X)
\]
when $X$ is a derived affine scheme over $V$, should define some sort of formal loop space over $V$.
Although this definition would only make sense for an affine scheme $X$, one could give a broader definition by enforcing étale descent.
And what of the factorization structure? The same kind of construction should allow us to build an object over the Ran space of $V$, hence defining a factorisation structure on this formal loop space over $V$.
\newcommand{\Ran}{\mathrm{Ran}}
Its Ran space can be seen as the colimit in the category of stacks 
\[
\Ran(V) = \colim_{I \in \mathrm{Fin}^{\twoheadrightarrow}} V^I
\]
where $\mathrm{Fin}^\twoheadrightarrow$ is the category of finite sets and \emph{surjective} maps.
We come to the second problem I would like to investigate.
\begin{pbnonumber}
Let $V$ be a smooth and proper variety of dimension $d$. There should be a factorization monoid $\pi \colon \kaploop(X)_{\Ran(V)} \to \Ran(V)$ in ind-pro-stacks such that, for any configuration $\{\el{v}{p}\}$ of points in $V$, we have a cartesian diagram
\[
\mymatrix{
\left(\kaploop^d(X)\right)^p \ar[r] \ar[d] \cart & \kaploop(X)_{\Ran(V)} \ar[d]^\pi \\ \pt \ar[r]_-{\el{v}{p}} & \Ran(V)
}
\]
Moreover, this factorization structure is expected to admit a flat connection -- ie the map $\pi$ is the pullback of a map $Z \to \Ran(V)_\mathrm{dR}$.
\end{pbnonumber}
When $V$ is Calabi-Yau, this factorization monoid is moreover expected to be compatible, in some sense, with the symplectic structure. A local-to-global argument should allow us to define a symplectic form on the (flat) factorization homology $H_V$ of $\kaploop(X)_{\Ran(V)}$.

This factorisation structure could also be linked to some higher dimension chiral differential operators, following the work of Kapranova and Vasserot in \cite{kapranovvasserot:loop4}. Of course, the first step in this direction would be to give a definition of what higher dimensional chiral and vertex algebras would be. 
The author hopes to follow those ideas together with Giovanni Faonte and Mikhail Kapranov.

\section{Local geometry}
Another direction the author is interested in is the development of a so-called local geometry.
In this thesis, we encountered infinite dimensional stacks with a natural ind-pro-structure. This theory of ind-pro-stacks has some good features but seemed a bit tight for some purposes. Namely, the fact that the algebra $k(\!(t)\!)$ of Laurent series over the base field $k$ does not fit in this context.
There is indeed no reasonible ind-pro-stacks representing $k(\!(t)\!)$ and remembering -- as one would want -- that it is in fact a localisation of a pro-algebra. The same problem arises of course when considering higher dimensional "loops", ie the derived scheme $U_k^d$ as defined in \autoref{chapterloops}. 
\begin{pbnonumber}
Is there a natural geometrical context in which the punctured formal neighbourhood lives and behaves?
\end{pbnonumber}
This problem could be answered with the following observation: the ring of Laurent series is an algebra in the category of ind-pro-vector spaces over $k$:
\[
k(\!(t)\!) = \colim_n \lim_p \quot{k[t]}{t^{n+p}} 
\]
where the maps $\quot{k[t]}{t^{n+p}} \to \quot{k[t]}{t^{n+(p-1)}}$ is naturally a morphism of algebras but the transition maps
\[
\quot{k[t]}{t^{n+p}} \to \quot{k[t]}{t^{(n+1)+p}}
\]
are given by the multiplication by $t$ and thus do not preserve the product.
The idea here is to build a geometry on the site of algebras in the category of ind-pro-vector spaces, or more precisely in ind-pro-perfect complexes over $k$. Those algebras come with a natural definition of a cotangent complex and an étale topology therefore arises.

Let us denote by $\dAff_\IP = (\CAlg(\IPP(k)))\op$ this opposite category of commutative algebras in ind-pro-perfect complexes.
The category $\presh(\dAff_\IP)$ admits a localisation $\dSt_\IP$ with respect to the étale topology. We will call objects in $\dSt_\IP$ local (derived) stacks.
The canonical monoidal inclusion $i \colon \dgMod_k \simeq \Indu U(\Perf(k)) \to \IPP(k)$ defines a restriction functor
\[
\presh(\dAff_\IP) \to \presh(\dAff)
\]
which descends to a restriction functor between the category of stacks for the étale topology on both sides: $i^* \colon \dSt_\IP \to \dSt$. Any local stack hence admits an underlying derived stack.
\todo{attention lax monoidal !!}There is also a lax monoidal realisation functor $\rho \colon \IPP(k) \to \dgMod_k$ which then defines a restriction functor
\[
\rho^* \colon \dSt \to \dSt_{\IP}
\]
The functor $\rho^*$ is fully faithful and defines an embedding of derived stacks into local stacks.

One of the expected feature of this geometry of so called local stacks is that it should contain everything needed to define fully and properly the punctured formal neighbourhood -- as the functor on $\dAff_\IP$ represented by $k((t))$ in dimension $1$.
More generally, let $f \colon \Spec(\quot{A}{I}) \to \Spec A$ be a closed embedding of affine schemes. Let us assume that the ideal $I$ is finitely generated by elements $(\el{a}{n})$.
The punctured formal neighbourhood associated to the map $f$ can now be defined as follows: the ind-pro-perfect vector space
\[
\colim_n \lim_p \quot{A}{(\el{a}{n}[])^{n+p}}
\]
is naturally endowed with a multiplication induced by that of $A$ and hence defines an object in our site $\dAff_\IP$. In this definition, the maps in the pro-direction are given by moding out, while the maps in the ind-direction are given by multiplying by $\el{a}{n}[]$.
The punctured formal neighbourhood of $f$ is then the local stack represented by the local affine scheme above.
This construction should generalize to a reasonible closed immersion between finitely presented schemes.

Once we have a punctered formal neighbourhood, we can define the formal loop space $\kaploop_\IP^d(X)$ in $X$ as the internal hom -- in $\dSt_\IP$ -- from $\hat \A^1 \smallsetminus \{0\}$ to $\rho^* X$. This only works because of a great feature of the monoidal product of ind-pro-vector spaces has: for any cdga $A$, we have an equivalence
\[
A(\!(t)\!) \simeq k(\!(t)\!) \otimes A
\]
Note that the restriction $\rho^*(\kaploop_\IP^d(X))$ is naturally equivalent to $\kaploop_U^d(X)$ as defined in \autoref{chapterloops}. The symplectic structure could then follow, using the same strategy as in \autoref{ipdst-form}.

This alleged local geometry would also give a well-behaved construction of the formal loop space over a variety $V$, as well as its factorisation structure.
The author would also be interested in studying the links between this local geometry and differents fields, including Beilinson's adèles, rigid analytic or adic spaces.
\end{chap-perspect}

\clearpage
\printglossaries

\clearpage
\phantomsection
\bibliographystyle{monmien-en}
\addcontentsline{toc}{chapter}{\iflanguage{francais}{Références}{References}}
\bibliography{biblio}

\end{document}